\definecolor{darkred}{RGB}{203,65,84}
\definecolor{darkblue}{RGB}{70,130,180}
\definecolor{brown}{RGB}{139,69,19}
\theoremstyle{plain}
\newtheorem{proposition}{Proposition}[section]
\newtheorem{theorem}[proposition]{Theorem}
\newtheorem{lemma}[proposition]{Lemma}
\newtheorem{corollary}[proposition]{Corollary}
\newtheorem{conjecture}[proposition]{Conjecture}
\newcounter{foo}
\newtheorem{theo}[foo]{Theorem}
\theoremstyle{definition}
\newtheorem{definition}[proposition]{Definition}
\theoremstyle{remark}
\newtheorem{rmk}[proposition]{Remark}
\newcommand{\C}{\mathbb{C}}
\newcommand{\E}{\mathsf{E}}
\newcommand{\Euc}{\mathbf{E}}
\renewcommand{\H}{\mathbf{H}}
\newcommand{\K}{\mathsf{K}}
\newcommand{\M}{\mathcal{M}}
\newcommand{\N}{\mathbb{N}}
\renewcommand{\P}{\mathbf{P}}
\newcommand{\Pp}{\mathsf{P}}
\newcommand{\Nn}{\mathsf{N}}
\newcommand{\R}{\mathbb{R}}
\renewcommand{\S}{\mathbf{S}}
\newcommand{\U}{\mathrm{U}}
\newcommand{\Z}{\mathbb{Z}}
\newcommand{\GG}{\mathcal{G}}
\newcommand{\CC}{\mathcal{C}}
\newcommand{\II}{\mathrm{II}}
\newcommand{\tr}{\operatorname{tr}}
\newcommand{\q}{\mathrm{q}}
\newcommand{\bq}{\mathbf q}
\newcommand{\g}{\textbf{g}}
\newcommand{\G}{\mathsf{G}}
\newcommand{\psld}{\mathsf{PSL}_2(\mathbb R)}
\newcommand{\QS}{\mathcal{QS}}
\newcommand{\Min}{\operatorname{Mink}}
\renewcommand{\epsilon}{\varepsilon}
\newcommand{\SO}{\mathsf{SO}}
\newcommand{\SL}{\mathsf{SL}}
\newcommand{\PSL}{\mathsf{PSL}}
\newcommand{\GR}[1]{\mathcal{G}(#1)}
\newcommand{\Gr}{\operatorname{Gr}_{2,0}\left(E\right)}
\newcommand{\Ein}{\mathbf{Ein}}
\newcommand{\dbar}{\overline\partial}
\newcommand{\End}{\operatorname{End}}
\newcommand{\Hom}{\operatorname{Hom}}
\newcommand{\Id}{\operatorname{Id}}
\newcommand{\I}{\mathrm{I}}
\newcommand{\Sym}{\operatorname{Sym}}
\newcommand{\Stab}{\operatorname{Stab}}
\renewcommand{\span}{\operatorname{span}}
\newcommand{\Hess}{\nabla^2}
\renewcommand{\leq}{\leqslant}
\renewcommand{\geq}{\geqslant}
\newcommand{\Hn}{\mathbf H^{2,n}}
\newcommand{\bHn}{\partial_\infty\Hn}
\newcommand{\T}{\mathsf T}
\renewcommand{\leq}{\leqslant}
\renewcommand{\geq}{\geqslant}
\newcommand{\seqk}[1]{\left\{#1_k\right\}_{k\in\mathbb N}}
\newcommand{\sek}[1]{\left\{#1\right\}_{k\in\mathbb N}}
\newcommand{\diam}{\operatorname{diam}}
\newcommand{\defeq}{\coloneqq}
\newcommand{\eqdef}{\eqqcolon}
\newcommand{\PE}{\mathbf P(E)}
\renewcommand{\d}{{\rm d}}
\renewcommand{\L}{\mathcal L(n)}
\newcommand{\Lp}{{\mathcal L^+(n)}}
\newcommand{\sbt}{\,\begin{picture}(-1,1)(-1,-1)\circle*{2}\end{picture}\ }
\renewcommand{\dot}[1]{\overset{\sbt}{#1}}
\renewcommand{\ddot}[1]{\overset{\sbt\sbt}{#1}}
\newcommand{\bb}{{\mathrm b}}
\newcommand{\sT}{\mathcal{T}}
\newcommand{\Le}{\ell}
\newcommand{\MH}{\mathcal M(n)}
\title[Maximal surfaces in $\Hn$]{Quasicircles and quasiperiodic surfaces in pseudo-hyperbolic spaces}
\author[F. Labourie]{Fran\c cois Labourie}
\address{Universit\'e C\^ote d'Azur, CNRS,  LJAD,  France}
\email{francois.labourie@unice.fr}
\author[J. Toulisse]{J\'{e}r\'{e}my Toulisse}
\address{Universit\'e C\^ote d'Azur, CNRS,  LJAD,  France}
\email{jtoulisse@unice.fr}
\thanks{The authors were supported by the ANR grant DynGeo ANR-11-BS01-013. They  acknowledge support from U.S. National Science Foundation grants DMS 1107452, 1107263, 1107367 \enquote{RNMS: Geometric structures And Representation varieties} (the GEAR Network). 
F.~L. appreciates the support of the Mathematical Sciences Research Institute during the fall of 2019 (NSF DMS-1440140) as well the support of  the Institut Universitaire de France. }
\date{\today}
\begin{document}
\maketitle
\begin{abstract}
We study in this paper {\em quasiperiodic maximal surfaces} in pseudo-hyperbolic spaces and show that they are characterised by a curvature condition, Gromov hyperbolicity or conformal hyperbolicity. We show that the limit curves of these surfaces in the Einstein Universe admits a canonical {\em quasisymmetric} parametrisation, while conversely every quasisymmetric curve in the Einstein Universe bounds a quasiperiodic surface in such a way that the quasisymmetric parametrisation is a continuous extension of the uniformisation; we give applications of these results to asymptotically hyperbolic surfaces, rigidity of Anosov representations and a version of the universal Teichmüller space.	
\end{abstract}

\tableofcontents
\section{Introduction}

We continue our study of complete maximal surfaces in pseudo-hyperbolic spaces of signature $(2,n)$ begun in \cite{LTW} and \cite{CTT}.

The pseudo-hyperbolic space $\Hn$ is constructed in the following way: take a vector space $E$ equipped with a quadratic form $\bq$ of signature $(2,n+1)$  -- that is admitting a vector space of dimension 2 on which $\bq$ is positive definite, while $\bq$ is negative definite on the orthogonal of dimension $n+1$. Then  the pseudo-hyperbolic space of signature $(2,n)$ is 
$$\Hn\defeq \{u\in E\mid \bq(u)=-1\}/\pm\Id\ .$$
For $n=0$, we obtain the Minkowski model of the hyperbolic space.
The pseudo-hyperbolic space of $\Hn$ of signature $(2,n)$ inherits by restriction of the quadratic form $\bq$ a pseudo-Riemannian metric of signature $(2,n)$ and constant sectional curvature $-1$. Let $\G\defeq \SO_0(\bq)$ be the identity component of $\mathsf O(\bq)$,  then   $\G$ acts transitively by isometries  on $\Hn$, turning $\Hn$ into a pseudo-Riemannian symmetric space.

The pseudo-hyperbolic space naturally embeds in ${\P}(E)$ and admits a {\em boundary at infinity}  denoted $\partial_\infty\Hn$ and called the {\em Einstein Universe} which is the quadric in  ${\P}(E)$ associated to $\bq$. This quadric admits a notion of {\em positivity} leading to the concept of positive loops and semi-positive loops: for instance, the intersection of the quadric with a space of type $(2,1)$ is a positive loop. Positivity in this context has been studied by \cite{BurgerIozziWienhard,GuichardWienhard,barbot}.

One of the concepts introduced and studied in the current paper  is  that of  {\em quasisymmetric maps} from the projective real line to the Einstein Universe.
 As for the case of maps into the complex projective line, quasisymmetric maps are defined using a {\em cross-ratio}. These maps are positive and enjoy similar properties as their counterparts in the complex projective line:  they enjoy a uniform Hölder modulus of continuity -- see Theorem \ref{theo:HolderContinuity} -- while equivariant ones are  linked to Anosov representations.

A {\em maximal surface} in $\Hn$ is a spacelike surface on which the mean curvature vector vanishes everywhere.  We proved in \cite{LTW} that every complete maximal surface $\Sigma$ has a limit curve $\partial_\infty\Sigma$ in $\bHn$ which is a  semi-positive loop and, more importantly, that every such semi-positive loop is the boundary of a unique complete maximal surface.

Among homogeneous  complete maximal surfaces, we have the totally geodesic {\em hyperbolic planes} and {\em Barbot surfaces} which are flat and orbits of diagonal subgroup of $\G$. Barbot surfaces are characterised by a rigidity theorem. By a result of Cheng \cite[Theorem 2]{C}, any complete maximal surface has non positive induced curvature, while Barbot surfaces are the only flat ones. We improve this result as a pointwise rigidity theorem.\begin{theo}{\sc [Curvature  rigidity theorem]}\label{theo:RigidityTheoremINTRO}
Let $\Sigma$ be a complete maximal surface in $\Hn$. If the intrinsic curvature of $\Sigma$ is zero at a point, it vanishes everywhere and  $\Sigma$ is a Barbot surface.
\end{theo}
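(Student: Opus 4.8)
The strategy is to derive the result from the Gauss equation for spacelike surfaces in $\Hn$ together with a maximum-principle / elliptic-unique-continuation argument applied to the norm of the second fundamental form. Let $\Sigma\subset\Hn$ be a complete maximal surface with induced metric $g$ of Gaussian curvature $K_\Sigma$, and let $\II$ be its second fundamental form, which is a symmetric $2$-tensor with values in the normal bundle $N\Sigma$ (a bundle of signature $(0,n)$, hence negative definite). Since the ambient space has constant sectional curvature $-1$ and $\Sigma$ is spacelike, the Gauss equation reads
\begin{equation*}
K_\Sigma \;=\; -1 \;-\; \tfrac12\,\bigl(\norm{H}^2 - \norm{\II}^2\bigr)\;,
\end{equation*}
where $H=\tr_g\II$ is the mean curvature vector. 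Here one must be careful with signs coming from the negative-definite normal metric: writing $\II$ in a $g$-orthonormal frame $e_1,e_2$ of $T\Sigma$, the Gauss equation gives $K_\Sigma = -1 + \bq\bigl(\II(e_1,e_1),\II(e_2,e_2)\bigr) - \bq\bigl(\II(e_1,e_2),\II(e_1,e_2)\bigr)$, and since $\bq$ is negative on $N\Sigma$ this becomes $K_\Sigma = -1 - \scal{\II(e_1,e_1)}{\II(e_2,e_2)} + \norm{\II(e_1,e_2)}^2$ with $\scal{\cdot}{\cdot}$ and $\norm{\cdot}$ the \emph{positive} norm on $N\Sigma$. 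The maximal condition $H=0$ gives $\II(e_1,e_1)=-\II(e_2,e_2)$, so the first term contributes $+\norm{\II(e_1,e_1)}^2\geq 0$ and hence $K_\Sigma = -1 - \norm{\II(e_1,e_1)}^2 - \norm{\II(e_1,e_2)}^2 \leq -1 < 0$.

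Wait — this gives $K_\Sigma\leq -1$ strictly, which would make the rigidity statement vacuous. So the sign bookkeeping must be reconsidered: in this signature the relevant quantity is $K_\Sigma = -1 + \tfrac12\norm{\II}^2 \geq -1$ with equality iff $\II\equiv 0$ (totally geodesic), which is the hyperbolic plane, \emph{not} a Barbot surface. Therefore the correct reading of the theorem is that the intrinsic curvature is bounded above by $0$ but can be positive pointwise only in the degenerate... no. Let me instead record the structurally correct plan: one shows $K_\Sigma\leq 0$ by establishing the Gauss equation in the form $K_\Sigma = -1 + f(\II)$ where $f(\II)$ is a quadratic expression in $\II$ that, using $H=0$ and the signature, is bounded above by $1$, with equality characterizing the case where $\Sigma$ is flat. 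Concretely, parametrize $\II$ at a point by the normal vectors $a=\II(e_1,e_1)=-\II(e_2,e_2)$ and $b=\II(e_1,e_2)$; the Gauss equation yields $K_\Sigma = -1 + \text{(a bilinear form in }a,b)$, and the key algebraic lemma is that this bilinear form is at most $1$, with equality forcing a one-dimensional, null-type degeneracy of $\II$ that is exactly the infinitesimal signature of a Barbot surface.

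Granting the pointwise inequality $K_\Sigma\leq 0$, the rigidity half proceeds as follows. Suppose $K_\Sigma(p_0)=0$ at some point $p_0$. The function $u\defeq -K_\Sigma\geq 0$ (or, better, $\norm{\II}^2$, which vanishes exactly where the curvature is extremal) satisfies a second-order elliptic inequality of Bochner type: differentiating the Codazzi equation and using the maximality condition, one obtains $\Delta_g \norm{\II}^2 \geq -C\,\norm{\II}^2$ for a locally uniform constant $C$ (a Simons-type inequality). By the strong maximum principle, a nonnegative solution of $\Delta u + Cu\geq 0$ attaining an interior zero... the right tool is rather \emph{Aronszajn's unique continuation theorem}: since $\norm{\II}^2$ vanishes to infinite order at $p_0$ (it attains its minimum value $0$ there and satisfies the differential inequality), it vanishes on the connected component, so $\II\equiv 0$ — but then $\Sigma$ is totally geodesic, hence a hyperbolic plane. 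The discrepancy with the statement tells me the extremal value of $K_\Sigma$ under $H=0$ is in fact $0$ only for a \emph{non-generic} configuration of $\II$ (rank-one, isotropic image), not for $\II=0$; so the correct target is: the zero set of $u$ is where $\II$ has this special algebraic form, one shows this set is open and closed (openness via the unique-continuation/maximum-principle argument applied to the "defect" $K_\Sigma$, closedness by continuity), hence all of $\Sigma$, and then integrates the resulting first-order structure equations to identify $\Sigma$ with an orbit of a diagonal $\R$-split torus in $\G$, i.e.\ a Barbot surface.

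The main obstacle is the algebraic step: showing that under the maximality constraint $H=0$ and the indefinite ambient signature, the Gauss-equation quadratic form in $\II$ is bounded by the sharp constant with the equality case being precisely the Barbot configuration. This requires choosing a good normal frame (adapting to the fact that $\II$ takes values in a negative-definite bundle and that for maximal surfaces the complexified second fundamental form is related to a holomorphic cubic or quartic differential), reducing the $n$-dimensional normal data to an effectively low-rank problem, and then doing the optimization. The secondary obstacle is promoting the pointwise equality case to a global rigidity: identifying the integrable distribution on $\Sigma$ along which the structure equations become those of a flat torus orbit, and invoking the classification of homogeneous flat maximal surfaces (from \cite{LTW} or by direct integration of the Maurer–Cartan equation) to conclude it is a Barbot surface.
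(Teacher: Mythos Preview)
Your proposal has a genuine gap at the most important step. You write that ``the key algebraic lemma is that this bilinear form is at most $1$,'' meaning that the Gauss expression $K_\Sigma = -1 + \tfrac12\Vert\II\Vert^2$ is bounded above by $0$ by a pointwise algebraic argument. This is false: there is no pointwise algebraic constraint preventing $\Vert\II\Vert^2$ from exceeding $2$. The Gauss equation by itself only gives the \emph{lower} bound $K_\Sigma\geq -1$ (equality iff $\II=0$, the hyperbolic plane). The \emph{upper} bound $K_\Sigma\leq 0$ is an analytic fact, not an algebraic one, and your confusion in the middle of the proposal (where you first compute $K_\Sigma\leq -1$, then reverse yourself) stems from not having identified what the actual mechanism is.

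The paper's argument is as follows. Writing $\II$ as the real part of a holomorphic section $\sigma$ of $\K^2\otimes\Nn^\C\Sigma$, a Bochner computation (using Gauss, Codazzi, and Ricci) yields
\[
\tfrac12\,\Delta K_\Sigma \;=\; 2K_\Sigma\,\Vert\sigma\Vert^2 \;+\; 4\Vert\alpha\wedge\beta\Vert^2 \;+\; \Vert\nabla\sigma\Vert^2 \;\geq\; 2K_\Sigma(1+K_\Sigma)\ ,
\]
where $\alpha=\II(e_1,e_1)$, $\beta=\II(e_1,e_2)$. So at any interior maximum of $K_\Sigma$ one gets $K_\Sigma\leq 0$. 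The subtlety is that on a complete non-compact surface the maximum need not be attained; the paper handles this by working on the space $\MH$ of \emph{all} pointed complete maximal surfaces, which carries a cocompact $\G$-action. The function $(x,\Sigma)\mapsto K_\Sigma(x)$ descends to the compact quotient $\MH/\G$ and therefore attains its maximum at some $(x_0,\Sigma_0)$; the Bochner inequality at $x_0$ then forces $K_{\Sigma_0}(x_0)\leq 0$, hence $K_\Sigma\leq 0$ for every $\Sigma$. This compactness trick is the missing idea in your plan.

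For the rigidity, if $K_\Sigma(x_0)=0$ then near $x_0$ the Bochner inequality reads $\Delta K_\Sigma\geq 4K_\Sigma$, and the strong maximum principle (not Aronszajn unique continuation) forces $K_\Sigma\equiv 0$. The equality case of the Bochner formula then gives $\alpha\wedge\beta=0$ and $\nabla\sigma=0$ everywhere, so $\alpha,\beta$ span a parallel line subbundle $L\subset\Nn\Sigma$; the rank-$4$ bundle $x\oplus\T_x\Sigma\oplus L_x$ is then parallel for the trivial connection on $E$, hence constant, so $\Sigma$ sits inside a totally geodesic copy of $\H^{2,1}$. One concludes by the Bonsante--Schlenker lemma that a complete flat maximal surface in $\H^{2,1}$ is a Barbot surface. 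Your guess that the equality configuration is a ``rank-one, isotropic image'' is off: $\alpha$ and $\beta$ are colinear but \emph{not} isotropic (the normal bundle is negative definite), and the flatness is what characterizes Barbot surfaces.
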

For $n=1$, this theorem was proved by Bonsante and Schlenker in \cite{BonsanteSchlenker}. This result sets the background of the main results of this article, which concern {\em quasiperiodic maximal surfaces}.

 Let us introduce the space ${\MH}$ which consists of pairs $(x,\Sigma)$ where $\Sigma$ is a complete maximal surface and $x$ a point in $\Sigma$. By construction every maximal surface $\Sigma$ maps into ${\MH}$, by  the map which associates $(x,\Sigma)$ to $x$.  By a compactness theorem proved  in \cite{LTW},  $\G$ acts cocompactly on ${\MH}$ when the latter is equipped with the topology of smooth convergence on every compact. We will show that Barbot surfaces give rise to a unique point called the {\em Barbot point} in ${\MH}/\G$. 

We then define {\em quasiperiodic maximal surfaces} -- and justify the terminology later -- as surfaces whose images in $\MH/\G$ have a closure that does not  contain the Barbot point. The first goal  of this paper is to give different characterisations of quasiperiodic maximal surfaces. Let us state here  the most important ones.
\begin{theo}\label{theo:main}{\sc[Quasiperiodic surfaces]}
Let $\Sigma$ be a complete maximal surface in $\Hn$.
	The following statements are equivalent:
	\begin{enumerate}
		\item $\Sigma$ is quasiperiodic, \label{it:QP}
		\item the  induced metric on $\Sigma$  has curvature bounded above by some negative constant, \label{it:CURV}
		\item the  induced metric on $\Sigma$ is Gromov hyperbolic, \label{it:GH}
		\item $\Sigma$ is of  conformal hyperbolic type and any uniformisation is biLipschitz, 
		\item the limit curve $\partial_\infty\Sigma$ is the image of a quasisymmetric map.\label{it:QS}
	\end{enumerate}
\end{theo}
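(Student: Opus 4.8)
I would prove the intrinsic equivalences first, as the cycle $(\mathrm{i})\Rightarrow(\mathrm{ii})\Rightarrow(\mathrm{iii})\Rightarrow(\mathrm{i})$ together with $(\mathrm{ii})\Leftrightarrow(\mathrm{iv})$, all of which run on the machinery of the rigidity Theorem~\ref{theo:RigidityTheoremINTRO} and the cocompactness of the $\G$-action on $\MH$; the equivalence with $(\mathrm{v})$, which converts the interior geometry of $\Sigma$ into a statement about the limit curve in the Einstein Universe, I would treat last, and I expect it to be the crux.

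\textbf{The intrinsic core.} The intrinsic curvature of the induced metric is a $\G$-invariant smooth function $\kappa$ on $\MH$, hence descends to the compact quotient $\MH/\G$; by Theorem~\ref{theo:RigidityTheoremINTRO} one has $\kappa\leq 0$ and $\{\kappa=0\}$ is exactly the Barbot point. Thus $\Sigma$ is quasiperiodic if and only if the closure $K$ of its image in $\MH/\G$ misses the Barbot point, which, $K$ being compact and $\kappa$ continuous and strictly negative off the Barbot point, is equivalent to $\kappa\leq-\epsilon<0$ along $\Sigma$; this is $(\mathrm{i})\Leftrightarrow(\mathrm{ii})$. Since $\MH/\G$ is compact, $\kappa$ and the second fundamental form (and its covariant derivatives) are \emph{a priori} uniformly bounded along any complete maximal surface, so in the quasiperiodic case one in fact obtains a two-sided pinch $-C\leq\kappa_\Sigma\leq-\epsilon<0$. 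For $(\mathrm{ii})\Rightarrow(\mathrm{iii})$: the surfaces are simply connected (\cite{LTW}) and complete, so $\kappa_\Sigma\leq-\epsilon$ makes $(\Sigma,g)$ a $\mathrm{CAT}(-\epsilon)$ space, hence Gromov hyperbolic. For $(\mathrm{iii})\Rightarrow(\mathrm{i})$ I argue by contraposition: if $\Sigma$ is not quasiperiodic there are $g_k\in\G$, $x_k\in\Sigma$ with $(g_k\Sigma,g_kx_k)$ converging smoothly on compacta to a Barbot surface $B$ pointed at some $b$; each $g_k\Sigma$ is isometric to $\Sigma$, hence $\delta$-hyperbolic with the \emph{same} $\delta$, the $g$-balls are convex ($\mathrm{CAT}(0)$) so smooth convergence upgrades to pointed Gromov--Hausdorff convergence of balls, $\delta$-thinness of triangles of bounded perimeter survives the limit, and letting the radius grow we find $B$ is $\delta$-hyperbolic --- impossible, since $B$ is flat, i.e. isometric to $\mathbb R^2$. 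Finally $(\mathrm{ii})\Leftrightarrow(\mathrm{iv})$: the two-sided pinch forces the conformal type to be hyperbolic (a complete metric with $\kappa\leq-\epsilon$ cannot live on $\C$), and writing the uniformisation as $\phi^*g=e^{2u}g_{\H^2}$, the Ahlfors--Schwarz lemma bounds $u$ above while completeness together with the lower curvature bound bounds it below by a comparison/maximum-principle argument, so $\phi$ is biLipschitz; conversely a biLipschitz copy of $\H^2$ is Gromov hyperbolic, giving $(\mathrm{iv})\Rightarrow(\mathrm{iii})$ and closing the equivalence class.

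\textbf{Passing to the limit curve.} For $(\mathrm{i})\Rightarrow(\mathrm{v})$: by $(\mathrm{iv})$ the uniformisation $\phi$ is a quasi-isometry $\H^2\to\Sigma$, and since the limit curve $\partial_\infty\Sigma\subset\bHn$ is precisely the asymptotic boundary of $\Sigma$ (\cite{LTW}), $\phi$ extends to a homeomorphism $\Proj(\R^2)\to\partial_\infty\Sigma$; the quasi-isometry estimate then translates --- after comparing the visual cross-ratio on $\partial\H^2$ with the cross-ratio on $\bHn$ --- into the uniform control of cross-ratios that defines a quasisymmetric map. For the converse, suppose $\partial_\infty\Sigma$ is the image of a quasisymmetric map but $\Sigma$ is not quasiperiodic; translating as above, the surfaces $g_k\Sigma$ converge to a Barbot surface, so by the uniqueness of the complete maximal surface bounding a given loop the curves $g_k\cdot\partial_\infty\Sigma$ converge to the limit curve of a Barbot surface; the quasisymmetric constant is $\G$-invariant and the quasisymmetry condition is stable under this convergence (the uniform Hölder modulus of Theorem~\ref{theo:HolderContinuity} controlling the limiting parametrisations), so the Barbot loop would be quasisymmetric, which it is not --- contradiction.

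\textbf{Where the difficulty lies.} I expect the genuine work to be, first, the biLipschitz estimate in $(\mathrm{iv})$ --- the lower bound on the conformal factor, the subtle half of an Ahlfors--Schwarz-type comparison, which really uses completeness --- and above all the quantitative boundary analysis in $(\mathrm{v})$: showing that the uniformisation extends continuously to $\bHn$ and that the extension satisfies the cross-ratio estimate requires a precise dictionary between the (almost-hyperbolic) interior geometry of $\Sigma$ and the projective cross-ratio on the Einstein Universe, and dually one must verify that the Barbot loop fails the quasisymmetric condition in a manner stable under the limiting procedure used in the converse.
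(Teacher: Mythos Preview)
Your intrinsic cycle $(\mathrm{i})\Leftrightarrow(\mathrm{ii})\Leftrightarrow(\mathrm{iii})\Leftrightarrow(\mathrm{iv})$ is essentially the paper's argument (Theorems~\ref{theo:CurvBound}, \ref{theo:biLip}, \ref{theo:ChaGro}); note only that for $(\mathrm{ii})\Rightarrow(\mathrm{iv})$ the paper runs Ahlfors--Schwarz--Pick \emph{symmetrically}, using the automatic lower bound $K_\Sigma\geq -1$ from Gauss' equation rather than an abstract bounded-geometry argument, which makes the biLipschitz constant explicit. Your $(\mathrm{v})\Rightarrow(\mathrm{i})$ contrapositive is also equivalent to the paper's route (Corollary~\ref{cor:qs2qp} plus Corollary~\ref{cor:qp2Sqp}): the point is that a Barbot crown contains photon segments, so it is not a positive loop and cannot be the image of any positive --- let alone quasisymmetric --- map.

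The genuine gap is in $(\mathrm{i})\Rightarrow(\mathrm{v})$. You are right that the biLipschitz uniformisation $\phi:\H^2\to\Sigma$ extends to a homeomorphism of Gromov boundaries, but the Gromov boundary of $(\Sigma,g_\Sigma)$ is an abstract object; identifying it with $\partial_\infty\Sigma\subset\bHn$ and, more importantly, showing that the extension is quasisymmetric \emph{for the specific cross-ratio} $\bb$ on $\bHn$, is not a formal consequence of quasi-isometry. The missing mechanism is the Horofunction Rigidity Theorem~\ref{theo:HorofunctionRigidity}: the uniform bound $1\leq\Vert\nabla h_z\Vert^2\leq 2-c$ (with $c>0$ from quasiperiodicity) is what makes horofunctions restricted to geodesics have no local maxima, which in turn yields two-sided control on the ``Gromov products''
\[
\left\vert\frac{\langle z,w\rangle}{\langle z,x\rangle\langle x,w\rangle}\right\vert
\]
in terms of $d_\Sigma(x,\text{geodesic through }z,w)$ (Propositions~\ref{pro:UpperBoundGeod}--\ref{pro:ConverseBound}). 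Since $\bb$ is a ratio of such products, this is precisely the dictionary you allude to but do not supply. Without this horofunction input --- which is an independent rigidity result, not a byproduct of the curvature pinch --- the cross-ratio estimate does not follow; one must also verify separately that the boundary extension is \emph{positive} (Proposition~\ref{pro:PhiPos}), which the paper does via a topological argument using the warped projection.
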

These are generalisations of the phenomenon discovered for $\mathsf{PSL}(2,\mathbb R)\times \mathsf{PSL}(2,\mathbb R)$ in \cite{BonsanteSchlenker} and $\mathsf{PSL}(3,\mathbb R)$ in \cite{Benoist-Hulin} and in this last context we see that images of quasisymmetric maps in our sense are also the analogue of the boundary of hyperbolic convex sets in $\P(\R^3)$, as introduced by Benoist in \cite{BenoistQS}.

The  quasi-symmetric parametrisation of $\partial_\infty\Sigma$ is explicit and gives the second goal of this paper:
\begin{theo}{\sc[Extension of uniformisation]}
Let $\Sigma$ be a quasiperiodic maximal surface. Then any uniformisation of $\Sigma$ by the hyperbolic disk extends continuously to a quasisymmetric  homeomorphism between the boundary of the hyperbolic disk to the boundary of $\Sigma$  in $\bHn$. 	\label{it:CONF}
\end{theo}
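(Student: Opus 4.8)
The plan is to feed the internal characterisations of quasiperiodicity supplied by Theorem~\ref{theo:main} into the asymptotic description of $\Sigma\hookrightarrow\Hn$ from \cite{LTW}. Fix a uniformisation $u\colon\mathbb D\to\Sigma$ of $\Sigma$ by the hyperbolic disk; two uniformisations differ by a Möbius automorphism of $\mathbb D$, which is an isometry extending to a Möbius---hence quasisymmetric---homeomorphism of $\partial\mathbb D$, so it is enough to treat a single $u$. By Theorem~\ref{theo:main} quasiperiodicity yields simultaneously: the induced metric $g$ on $\Sigma$ is Gromov hyperbolic, the uniformisation $u$ is biLipschitz from $(\mathbb D,\text{hyp})$ to $(\Sigma,g)$, and the limit curve $\partial_\infty\Sigma\subset\bHn$ is the image of a quasisymmetric map $\xi$. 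In particular $u$ is a quasi-isometry between proper geodesic Gromov hyperbolic spaces, so by the standard boundary extension theorem it induces a homeomorphism $\partial u\colon\partial\mathbb D\to\partial_\infty^{\mathrm{Gr}}\Sigma$ of Gromov boundaries, which is moreover quasisymmetric for the visual metrics.

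The crucial step is to identify the Gromov boundary $\partial_\infty^{\mathrm{Gr}}\Sigma$ of $(\Sigma,g)$ with the limit curve $\partial_\infty\Sigma$ in the Einstein Universe. From the compactness theorem of \cite{LTW}, $\G$ acts cocompactly on $\MH$, so $\Sigma$ has uniformly bounded geometry, and $\bar\Sigma\defeq\Sigma\cup\partial_\infty\Sigma$ is a compact disk inside a suitable compactification of $\Hn$. Consequently any intrinsic geodesic ray in $\Sigma$ is a proper curve in the compact set $\bar\Sigma$, hence subconverges to $\partial_\infty\Sigma$; a convexity and maximum-principle argument near $\partial_\infty\Sigma$---comparing $\Sigma$ with the photon/lightcone structure as in \cite{LTW}---should upgrade this to genuine convergence, show that the limit depends only on the Gromov equivalence class of the ray, and conversely show that two rays with the same endpoint in $\bHn$ stay at bounded $g$-distance. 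Together with surjectivity onto $\partial_\infty\Sigma$ (the set of accumulation points of $\Sigma$ in $\bHn$) and compactness of source and target, this produces a homeomorphism $\Phi\colon\partial_\infty^{\mathrm{Gr}}\Sigma\to\partial_\infty\Sigma$. I expect this identification to be the main obstacle: it demands a quantitative statement that intrinsic divergence in $\Sigma$ matches divergence of directions in $\Hn$, and---more delicately---a comparison of the intrinsic Gromov product of two boundary points with the cross-ratio distance of their images under the cross-ratio $\mathbf b$ on $\bHn$. This is exactly the estimate underlying the construction of the canonical quasisymmetric parametrisation in Theorem~\ref{theo:main}, so one extracts it from that argument or redoes it here.

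Composing, $\bar u\defeq\Phi\circ\partial u\colon\partial\mathbb D\to\partial_\infty\Sigma$ is a homeomorphism, and since $u$ is proper and $\bar\Sigma$ compact it is the continuous boundary extension of $u\colon\bar{\mathbb D}\to\bar\Sigma$. To finish one must see that $\bar u$ is quasisymmetric in the sense of the paper, i.e.\ that it sends the cross-ratio on $\partial\mathbb D=\mathbb{RP}^1$ to a comparable quantity for $\mathbf b$. The cleanest route uses the parametrisation $\xi$ already provided by Theorem~\ref{theo:main}: the map $\xi^{-1}\circ\bar u$ is a self-homeomorphism of $\mathbb{RP}^1$ which, by the Gromov-product comparison above together with the fact that $\partial u$ distorts Gromov products boundedly, is a classical quasisymmetric circle homeomorphism; stability of the notion of quasisymmetric curve under pre-composition by classical quasisymmetric circle maps---a property of $\mathbf b$ recorded earlier---then gives that $\bar u=\xi\circ(\xi^{-1}\circ\bar u)$ is quasisymmetric. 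Alternatively one estimates $\mathbf b(\bar u(x_1),\dots,\bar u(x_4))$ against the hyperbolic cross-ratio of $x_1,\dots,x_4$ directly through the same chain of Gromov-product estimates. In both routes the uniform Hölder bound of Theorem~\ref{theo:HolderContinuity} keeps the continuity statements consistent, and one concludes with the remark that, as all uniformisations differ by Möbius maps, the extension is canonical up to the Möbius group, matching the canonical quasisymmetric parametrisation of $\partial_\infty\Sigma$.
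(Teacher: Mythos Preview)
Your outline has the right architecture, but it is circular at the key step and defers exactly the analytic content that constitutes the proof.

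The circularity is in your use of item~\ref{it:QS} of Theorem~\ref{theo:main}: you invoke the existence of a quasisymmetric parametrisation $\xi$ of $\partial_\infty\Sigma$ as an input. In the paper's logical order, that implication is \emph{obtained from} the Extension of Uniformisation Theorem---see Corollary~\ref{cor:QS2QP}, whose proof reads ``a quasiperiodic loop bounds a quasiperiodic surface, and we have just shown that the boundary at infinity of such a quasiperiodic surface admits a quasisymmetric parametrisation.'' So the $\xi$ you want to compose with is precisely the map $\bar u$ you are trying to construct. The items of Theorem~\ref{theo:main} that are proved independently in Section~\ref{s:AltChar} are the curvature, conformal, Gromov-hyperbolicity and lamination characterisations; the quasisymmetric one is not among them.

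Your ``alternative route''---estimating $\bb(\bar u(x_1),\ldots,\bar u(x_4))$ directly against the classical cross-ratio via Gromov-product comparisons---is not an alternative: it is the actual proof, and the work lies entirely there. Concretely, the paper controls the pseudo-Gromov product $\bigl|\langle z,w\rangle\big/(\langle z,x\rangle\langle x,w\rangle)\bigr|$ from above (Proposition~\ref{pro:UpperBoundGeod}, a soft compactness argument) and from below along quasigeodesics (Proposition~\ref{pro:LowerBoundQG}). The lower bound is the substantive step and is not a generic Gromov-hyperbolic fact: it rests on Lemma~\ref{lem:restrictionhorofunction}, which uses the Horofunction Rigidity Theorem~\ref{theo:HorofunctionRigidity} (the bound $\Vert\nabla h_z\Vert^2\leq 1+h_0<2$ on a quasiperiodic surface) to show that horofunctions have no local maximum along geodesics, hence intrinsic rays cannot have two accumulation points on $\partial_\infty\Sigma$. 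Your ``convexity and maximum-principle argument near $\partial_\infty\Sigma$'' is a placeholder for this; the maximum principle in question is applied to horofunctions, not to the surface itself, and it needs the quantitative gradient bound coming from quasiperiodicity. Once those two-sided bounds are in hand, Proposition~\ref{pro:QSGromovBoundary} reads off the quasisymmetry of $\Phi$ by writing $\bb$ as a product of four such Gromov products, and Proposition~\ref{pro:PhiPos} checks positivity. Your proposal correctly identifies this comparison as ``the main obstacle'' but then says to ``extract it from that argument or redo it here''---that argument \emph{is} the proof of the present theorem, so nothing has been reduced.
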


An important role is played in the proof by pseudo-Riemannian analogues of functions in hyperbolic spaces.
First, a {\em horofunction} is the  function associated to a non-zero lightlike vector $z_0$, whose projection belongs to $\partial_\infty\Sigma$, and defined by
$$
h(x)\defeq\log\vert\braket{x,z_0}\vert\ .
$$
When $n=0$, one recovers usual horofunctions.  Similarly,  the  {\em spatial hyperbolic distance} between two points in spacelike position is given by 
$$
\eth(x,y)\defeq\cosh^{-1}(\vert\braket{x,y}\vert) \ .
$$
When $n=0$, $\eth$ is the hyperbolic distance, while in general $\eth$ is not a distance in $\Hn$. We will however prove that the restriction of $\eth$ to any complete maximal surface is a distance up to a universal constant -- Corollary \ref{cor:ice-distance} to be compared   with \cite{Glorieux-Monclair}.

Associated to these functions are two other characterisations of quasiperiodic surfaces -- Theorems \ref{theo:ChaHo} and \ref{theo:SD} as well as the Rigidity Theorem \ref{theo:HorofunctionRigidity}.

As we already said, a pervasive tool in the paper is the use of the cocompact  $\G$ action on the laminated space $\MH$. Dwelling on that,  we also have a description of quasiperiodic surfaces as leaves in a lamination in Theorem \ref{theo:ChaLam}, thus justifying the terminology quasiperiodic surface. 

Finally, say a complete maximal surface is {\em asymptotically hyperbolic} if its curvature converges to -1 at infinity, or equivalently by Gauss equation (Proposition \ref{pro:GaussEquation}), if the norm of the second fundamental form goes to zero.

Then we have
\begin{theo}{\sc [Asymptotic hyperbolicity]}\label{theo;D}  Any $\CC^1$ spacelike curve in $\bHn$  is quasisymmetric and bounds a  complete maximal asymptotically hyperbolic surface.
\end{theo}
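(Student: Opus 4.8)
My plan is to establish the two assertions in sequence: first that a $\CC^1$ spacelike curve is the image of a quasisymmetric map, then that it bounds a complete maximal surface (via \cite{LTW} and Theorem \ref{theo:main}), and finally that this surface is asymptotically hyperbolic.

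First I would show that a $\CC^1$ spacelike curve $\gamma\subset\bHn$ -- the image of a $\CC^1$ embedding $\xi\colon\ProjR 2\to\bHn$ with everywhere spacelike, nonvanishing velocity -- is the image of a quasisymmetric map, directly from the cross-ratio definition, along the lines of the classical fact that $\CC^1$ circle diffeomorphisms are quasisymmetric. For four parameters lying in a short arc, Taylor's formula lets me replace $\gamma$ by its osculating spacelike circle at an interior point, with an error controlled by the modulus of continuity of $\xi'$; since on the boundary circle of a spacelike plane the Einstein cross-ratio restricts to the ordinary cross-ratio of $\ProjR 2$, this gives a uniform two-sided comparison of cross-ratios on small scales. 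For four parameters that are spread out, I would use compactness of $\ProjR 2$ together with the fact that $\xi$ is an embedding into the spacelike -- hence everywhere photon-transverse -- locus to keep the cross-ratio of the images bounded away from its degenerate values. Patching the two regimes yields the uniform bound defining quasisymmetry; in particular $\gamma$ is a semi-positive loop, so by \cite{LTW} it is the limit curve $\partial_\infty\Sigma$ of a unique complete maximal surface $\Sigma$, and by Theorem \ref{theo:main} this $\Sigma$ is quasiperiodic.

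It remains to prove that $\Sigma$ is asymptotically hyperbolic, which I would do by contradiction using the cocompact $\G$-action on $\MH$. If $\Sigma$ were not asymptotically hyperbolic, then by the Gauss equation (Proposition \ref{pro:GaussEquation}) there would be $\epsilon>0$ and points $x_k\in\Sigma$ leaving every compact set with $\norm{\II_{x_k}}\geq\epsilon$; after extraction, $x_k\to\eta$ for some $\eta\in\gamma$. Choose $g_k\in\G$ with $(g_k x_k,g_k\Sigma)$ in a fixed compact subset of $\MH$; after a further extraction, $(g_k x_k,g_k\Sigma)\to(x_\infty,\Sigma_\infty)$ smoothly on compacta, with $\Sigma_\infty$ a complete maximal surface. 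Since the second fundamental form is $\G$-invariant, $\norm{\II^{\Sigma_\infty}_{x_\infty}}=\lim_k\norm{\II_{x_k}}\geq\epsilon>0$, so $\Sigma_\infty$ is not totally geodesic, hence not a hyperbolic plane. On the other hand, $g_k\gamma=\partial_\infty(g_k\Sigma)$ converges to $\partial_\infty\Sigma_\infty$ -- by the uniform Hölder bound of Theorem \ref{theo:HolderContinuity} (the cross-ratio, hence the quasisymmetry constant, being $\G$-invariant) and continuity of the limit-curve map of \cite{LTW} -- and because $x_k\to\eta$ while $g_k x_k$ stays bounded, the elements $g_k$ act near $\eta$ as an expanding blow-up, so that, $\xi$ being $\CC^1$ at $\eta$ with spacelike derivative, the rescaled curves $g_k\gamma$ converge to the tangent line of $\gamma$ at $\eta$, compactified in $\bHn$. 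This compactified tangent line is the boundary of a spacelike plane $P$, so $\partial_\infty\Sigma_\infty=\partial_\infty P$; since $P$ is itself a complete maximal surface with that limit curve, the uniqueness part of \cite{LTW} forces $\Sigma_\infty=P$, a hyperbolic plane -- a contradiction. Therefore $\Sigma$ is asymptotically hyperbolic.

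The delicate point is the blow-up step. One must set up adapted (horospherical) coordinates near $\eta\in\bHn$, describe the elements $g_k$ that bring $x_k$ back into a fixed compact set of $\MH$ -- contracting transverse to, and dilating along, the null direction at $\eta$, modulo a compact rotational factor that subconverges -- and then check that under this rescaling only the first-order Taylor data of $\xi$ at $\eta$ survives in the limit while higher-order terms are crushed. The spacelikeness of $\xi'(\eta)$ is exactly what guarantees that the limiting tangent line closes up to a genuine spacelike circle rather than to a photon or a degenerate configuration. By contrast, the cross-ratio estimates of the first step reduce to the $n=0$ case once the osculating-spacelike-circle picture is in place, and the remaining verifications are routine.
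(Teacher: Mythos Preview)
Your approach is sound in outline and close in spirit to the paper's, but organized differently at the key step, and the difference matters.

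For quasisymmetry, your two-regime argument (osculating circle for collapsing quadruples, compactness for spread-out ones) is essentially the content of the paper's Proposition~\ref{pro:C2}, which packages both regimes at once by showing that the function $\psi(x,y)=\langle f(x),f(y)\rangle/\omega(x,y)^2$ extends continuously and without zeros across the diagonal; the cross-ratio comparison is then the single observation that $\psi(x,y)\psi(z,t)/\big(\psi(x,t)\psi(z,y)\big)$ is continuous and nowhere vanishing on $\P(V)^4$, hence bounded. Your version would work but the paper's is shorter and avoids splitting into cases.

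For asymptotic hyperbolicity the architecture (renormalize a diverging sequence, pass to a limiting surface, identify the limit boundary as a circle, invoke uniqueness) is the same, but the renormalization is chosen differently, and this is where your sketch has a genuine gap. You pick $g_k$ to bring the \emph{surface} point $x_k$ into a compact of $\MH$ and then assert that $g_k\gamma$ converges to the tangent circle at $\eta$. But the $g_k$ are only constrained by where they send $x_k$ and $\T_{x_k}\Sigma$; nothing forces them to act on $\gamma$ near $\eta$ as a pure dilation in a Minkowski chart, and your horospherical-coordinate outline would require a nontrivial Cartan/Iwasawa analysis (tracking the unipotent and $\SO(1,n)$ factors, which need not stay bounded) to justify that only first-order Taylor data survives. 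The paper sidesteps this entirely by reversing the order: it uses the surjectivity of the barycenter map (Proposition~\ref{pro:BarycenterProper} and Corollary~\ref{cor:collapsing}) to find, for each $x_k$, a positive triple $\tau_k\subset\gamma$ collapsing to $\eta$ with $B(\gamma,\tau_k)=(x_k,\Sigma)$, and then chooses $g_k$ to normalize $\tau_k$ rather than $x_k$. The limit of $g_k\gamma$ is then identified as a circle not by any coordinate blow-up but by the second assertion of Proposition~\ref{pro:C2} (the cross-ratio ratio tends to $1$ on collapsing quadruples) combined with the cross-ratio characterization of circle maps in Proposition~\ref{ex:MW2}; the surface limit follows automatically by continuity of $B$. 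This route delivers the circle conclusion with no geometric analysis of the $g_k$ at all, and in fact no contradiction is needed: one obtains directly that the curvature at $x_k$ tends to $-1$.
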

As a corollary, we obtain
\begin{theo}{\sc [$\CC^1$ rigidity]}\label{theo;E}  An Anosov representation  of a closed surface group in $\G$ whose limit curve is $\CC^1$ and spacelike factors through $(\mathsf{O}(2,1)\times\mathsf{O}(n))_0$. In particular the limit curve is a circle.
\end{theo}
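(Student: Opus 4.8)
The plan is to combine Theorem D with the uniqueness and equivariance properties of complete maximal surfaces. Let $\rho\colon\pi_1(S)\to\G$ be an Anosov representation of a closed surface group whose limit curve $\Lambda$ in $\bHn$ is $\CC^1$ and spacelike. By Theorem D, since $\Lambda$ is a $\CC^1$ spacelike curve it is in particular a semi-positive loop (indeed quasisymmetric), so by the existence and uniqueness result of \cite{LTW} recalled in the introduction, $\Lambda$ bounds a \emph{unique} complete maximal surface $\Sigma$. The first key step is the \emph{equivariance} of $\Sigma$: because $\rho(\pi_1(S))$ preserves $\Lambda$ and the complete maximal surface with a given semi-positive boundary is unique, $\Sigma$ is $\rho(\pi_1(S))$-invariant. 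Moreover, again by Theorem D this $\Sigma$ is asymptotically hyperbolic, i.e.\ its curvature converges to $-1$ at infinity and (Proposition \ref{pro:GaussEquation}) the norm of its second fundamental form tends to zero.

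The second key step is to upgrade \enquote{second fundamental form tends to zero at infinity} to \enquote{second fundamental form vanishes identically}, using cocompactness. Since $\rho$ is Anosov it acts cocompactly on $\Sigma$, so the quotient $\Sigma/\rho(\pi_1(S))$ is a closed surface; the pointed surface $(x,\Sigma)$ therefore has \emph{compact} image in $\MH/\G$ (it is the image of a closed surface). Hence the function $x\mapsto\|\II_x\|$, which is continuous and $\rho$-invariant, attains its maximum; but asymptotic hyperbolicity combined with cocompactness forces this function to be constant, and since it tends to $0$ it is identically zero. (Equivalently: the translates $\gamma\cdot x$ for $\gamma\in\pi_1(S)$ leave every compact set, so along them $\|\II\|\to 0$, while $\|\II\|$ is $\gamma$-invariant; thus $\|\II\|\equiv 0$.) Therefore $\Sigma$ is totally geodesic.

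The third step is to identify a totally geodesic spacelike complete maximal surface. A complete spacelike totally geodesic surface in $\Hn$ is a copy of the hyperbolic plane $\H^2$ sitting inside a totally geodesic $\mathbf H^{2,0}\subset\Hn$, i.e.\ it is $\{u\in E_0\mid\bq(u)=-1\}/\pm\Id$ for a nondegenerate subspace $E_0\subset E$ of signature $(2,1)$. Its stabiliser in $\G$ is (the identity component of) $\mathsf O(E_0)\times\mathsf O(E_0^\perp)$, where $E_0^\perp$ has signature $(0,n)$; that is, $(\mathsf O(2,1)\times\mathsf O(n))_0$. Since $\rho(\pi_1(S))$ preserves $\Sigma$, it preserves $E_0$ and $E_0^\perp$, hence $\rho$ factors through $(\mathsf O(2,1)\times\mathsf O(n))_0$ as claimed. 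Finally, the limit curve of such a representation is the boundary at infinity of $\H^2=\mathbf H^{2,0}$, which is the photon / circle obtained by intersecting the Einstein quadric with $\Proj(E_0)$; this is the \enquote{positive loop of type $(2,1)$} mentioned in the introduction, so $\Lambda$ is a circle.

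I expect the main obstacle to be the second step: making rigorous the passage from the asymptotic decay of $\|\II\|$ to its identical vanishing. The clean way is to invoke the cocompactness of the $\G$-action on $\MH$ together with $\rho$-invariance of $\Sigma$, so that $\Sigma/\rho$ is compact and the $\rho$-invariant quantity $\|\II\|$ — which by Theorem D's asymptotic hyperbolicity decays along any sequence leaving compact sets of $\Sigma$ — must be constant equal to its limiting value $0$. One must check that $\rho$ Anosov indeed implies $\Sigma/\rho$ compact (this follows because the orbit map $\pi_1(S)\to\Sigma$ is a quasi-isometry, a consequence of $\Sigma$ being properly embedded with the induced distance comparable to $\eth$, Corollary \ref{cor:ice-distance}), and that \enquote{leaving compact sets of $\Sigma$} is equivalent to \enquote{going to infinity in $\bHn$} so that asymptotic hyperbolicity applies. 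A subtlety to address is that Theorem D only asserts \emph{some} complete maximal asymptotically hyperbolic surface bounds $\Lambda$; the uniqueness in \cite{LTW} is what guarantees it coincides with the $\rho$-invariant one.
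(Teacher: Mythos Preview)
Your approach is correct and matches the paper's: uniqueness of the complete maximal surface bounded by $\Lambda$ gives $\rho$-invariance, Theorem~D makes it asymptotically hyperbolic, and then invariance of $\|\II\|$ together with its decay at infinity forces $\II\equiv 0$, so $\Sigma$ is a hyperbolic plane and $\rho$ lands in its stabiliser $(\mathsf O(2,1)\times\mathsf O(n))_0$.

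Two minor points. First, your parenthetical argument in step two is the right one and needs only \emph{properness} of the $\rho$-action on $\Sigma$, not cocompactness: since $\G$ acts properly on $\MH$ (Theorem~\ref{theo:CompactnessTheorem}) and $\rho(\pi_1(S))$ is discrete and infinite, any orbit in $\Sigma$ leaves every compact set, and the argument goes through; your proposed justification of cocompactness via Corollary~\ref{cor:ice-distance} is therefore unnecessary (and that corollary does not directly yield it). Second, a terminological slip: the boundary at infinity of a hyperbolic plane is a \emph{spacelike circle} (the intersection of $\bHn$ with $\Proj(E_0)$ for $E_0$ of signature $(2,1)$), not a photon, which is the projectivisation of an isotropic $2$-plane.
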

One should compare  with the fact that any Hitchin representation in $\mathsf{SO}(2,3)$ always has a $\CC^1$-timelike limit curve \cite{LabourieHitchin}, while Potrie and Sambarino \cite{PotrieSambarino} has shown that if the limit map of a Hitchin representation is  $\CC^2$ then the representation  factors though an irreducible $\mathsf{SL}(2,\mathbb R)$.

\medskip
This whole set of results represent a natural extension of the theory of uniformisation and quasisymmetric maps, and quite naturally the last section is devoted to a description of consequences of these results in the spirit of Bers' universal Teichmüller space \cite{Bers}.

\medskip
This whole project started in a collaboration with Mike Wolf and we benefited from  many discussions with him at an earlier stage. We also would like to thank  Nicolas Tholozan for explaining to us  hyperbolic convex sets, as well  as Indira Chatterji, Simion Filip, Olivier Glorieux, François Guéritaud, Qiongling Li, Daniel Monclair, Fanny Kassel for helpful comments and suggestions. 
\subsection{Description of the paper}
\begin{enumerate}
	\item Section \ref{ss:EinsteinUniverse} describes the {\em Einstein Universe} $\bHn$, which will be revealed as a bordification of the pseudo-hyperbolic space later.  	The Einstein Universe admits a conformally flat Minkowski structure, which is described using {\em Minkowski charts and patches}. This Minkowski structure is associated to a {\em positive structure} : positive triples, quadruples and loops. Associated to a positive triple are {\em diamonds and their diamond distance} which play the role of intervals in the real projective line -- corresponding to the $n=0$ case. Critical objects in our study are {\em Barbot crowns} which are semi-positive loops. Finally we define {\em quasiperiodic loops}.

	\item Section \ref{ss:cr-qs} describes the {\em cross-ratio} associated to four generic points in $\bHn$ and introduces the important new concept of {\em quasisymmetric maps}. We describe cross-ratio in Minkowski charts and prove one of the main result of this paper: Theorem \ref{theo:HolderContinuity} which shows that quasisymmetric maps admit a Hölder modulus of continuity.

	\item In section \ref{s:MaxiSurf} we describe the pseudo-hyperbolic space and maximal  surfaces therein.  We describe  {\em horofunctions} and {\em spatial distance}. We spend some time recalling the classical Gauss, Codazzi and Ricci equations governing these maximal surfaces. We finally recall the compactness results of 	\cite{LTW} that we express using the {\em space of pointed maximal surfaces} $\MH$ which is a laminated space on which the isometry group $\G$ of $\Hn$ acts cocompactly. We then describe {\em Barbot surfaces} which are the maximal surfaces associated to Barbot crowns and show their properties. We finally introduce {\em quasiperiodic maximal surfaces} which is one of the main object of study of the current paper.

	\item In section \ref{s:RigidityTheorems}, we prove two Rigidity Theorems that characterise Barbot surfaces. The first one, Theorem \ref{theo:CurvatureRigidity} stated as Theorem \ref{theo:RigidityTheoremINTRO},  shows that Barbot surfaces are extremal for a pointwise curvature condition, while the second one, Theorem \ref{theo:HorofunctionRigidity}, shows that Barbot surfaces are extremal for a pointwise condition involving either the gradient of an horofunction or the spatial distance.

	\item We use these Rigidity Theorems and the compactness results of \cite{LTW} in section \ref{s:AltChar}  to give six alternative characterisations of quasiperiodic surfaces involving curvature, horofunctions, spatial distance, uniformisation,  Gromov hyperbolicity and laminations, some of these being stated in Theorem \ref{theo:main}.

		\item We now move in section \ref{s:ExtUnif} to  prove Theorem \ref{theo:ExtUnif}. This result shows that the uniformisation of quasiperiodic surfaces extends in a precise  and quantitative way to a quasisymmetric map. The proof involves the bounds on horofunctions given in the Rigidity Theorems as well as the study of {\em Gromov products} in this setting, that were also considered by Glorieux and Monclair \cite{Glorieux-Monclair}.

	\item As another application of our techniques,  we study in section \ref{s:AsymHyper} asymptotically hyperbolic maximal surfaces and prove Theorem \ref{theo;D} and  \ref{theo;E}.

	\item In the final section \ref{s:UnivTeich} we show how our results can be used to define an analogue universal Teichmüller space and we discuss briefly the consequence of  an announcement by Qiongling Li and Takuro Mochizuki for the $n=2$ case  well as a related conjecture.
	\item For the sake of completeness, we give in an appendix a proof of  a classical Bochner Formula.

\end{enumerate}
\section{Einstein Universe and positivity}\label{ss:EinsteinUniverse}
In this section, we describe the geometry of the Einstein Universe $\bHn$ of signature $(1,n)$ and study the notion of positivity in $\bHn$. Although it is not relevant at this stage that the Einstein Universe is indeed the boundary of the pseudo-hyperbolic space $\Hn$, we nevertheless stick (as in \cite{LTW}) with the notation $\partial_\infty\Hn$.

The  Einstein Universe has been extensively studied and  part of the material covered here can be found in \cite{charette,CTT,DKG,LTW}.

Although most of what we describe could be described as part of the general theory of parabolic spaces, Bruhat cells {\it etc.} we keep our approach elementary.

Throughout this paper, $E$ will be a real $(n+3)$-dimensional vector space equipped with a signature $(2,n+1)$ quadratic form $\bq$ and  $\G\defeq \SO_0(\bq)$ will be  the connected component of the identity of the group of linear endomorphisms of $E$ preserving $\bq$. 
\subsection{The geometry of the Einstein Universe}

The {\em Einstein Universe} is the quadric associated to $\bq$:
\[\bHn \defeq \big\{ x\in\PE,~\bq(x)=0\big\}\ .\]
The group $\G$ acts transitively on $\bHn $ and the stabiliser of a point in $\bHn $ is a (disconnected maximal parabolic) subgroup $\mathsf P$.

We denote by $\P_+(E)=(E\setminus \{0\})/\R_{>0}$ the set of linear rays in $E$. The double cover $\bHn_+$  of $\bHn$ is defined as the preimage  of $\bHn$ in the double cover 
$\P_+(E)$ of $\P(E)$. The stabiliser in $\G$ of a point in $\bHn_+$ is now isomorphic to the identity component $\mathsf{P}_0$  of $\mathsf{P}$.

\begin{definition}{\sc [Photons and circles]}
\begin{enumerate}
\item A \emph{photon} in $\bHn$ is the projectivisation of an isotropic $2$-plane in $E$: at most one photon passes through two distinct points.
\item A \emph{spacelike circle} is the intersection of $\bHn $ with  the projectivisation of a space of signature $(2,1)$: at most one circle  passes through three distinct points.\end{enumerate}	
\end{definition}
The corresponding subsets of $\bHn_+$ are defined in an analogous way.

\subsubsection{The symmetric space of $\G$}\label{ss:SymmetricSpace} The group $\G$ acts transitively on the Grassmannian $\Gr$ of oriented positive definite 2-planes in $E$. The stabiliser of an element in $\Gr$ is isomorphic to $\SO(2)\times \SO(n+1)$ and is thus a maximal compact subgroup of $\G$, giving an identification between $\Gr$ and the (Riemannian) symmetric space of $\G$.

Given a plane $U$ in $\Gr$, with orthogonal subspace $V$, the quadratic form 
\begin{equation}\bq_U\defeq\left.\bq\right\vert_{U} \oplus \left(-\left.\bq\right\vert_V\right                                                                                       )\ , \label{eq:defqu}\end{equation}
is positive definite on $E=U\oplus V$. In particular, any element in $\Gr$ induces an Euclidean scalar product on $E$ and on the Lie algebra of $\G$. 

\subsubsection{Conformally flat structure} The Einstein Universe $\bHn $ is naturally equipped with a (locally) conformally flat structure $[g_\Ein]$ of signature $(1,n)$. 

Indeed, the  tangent space to $\bHn $ at $x$, is identified with the space $\Hom\left(x,x^\bot/x\right)$. The restriction of $\bq$ to   $x^\bot/x$  has  signature $(1,n)$, providing $\Hom\left(x,x^\bot/x\right)$ with a conformal class of quadratic form.  This structure is conformally flat: any point in $\bHn $ has a neighbourhood which is conformal to a Minkowski space. 

Observe that the notion of spacelike, timelike and lightlike vectors is meaningful in $\bHn$.

\subsubsection{Product structure} \label{sss:ProductStructure}

Consider a positive definite 2-plane $U$ in $\Gr$ with orthogonal $V$, and let $\bq_U$ be the corresponding Euclidean quadratic form on $E$ defined in equation \eqref{eq:defqu}. Any isotropic ray in $\bHn_+$ contains a unique point $(u,v)\in U\oplus V$ with $\bq_U(u)=\bq_U(v)=1$. This gives a diffeomorphism 
\[\bHn_+\cong \S^1\times \S^n\ ,\]
where $\S^1\subset U$ and $\S^n\subset V$ are the unit spheres. In this coordinate system, the conformal metric of $\bHn_+$ is given by
\[[g_\Ein]=[g_{\S^1} \oplus (-g_{\S^n})]~, \]
where $g_{\S^i}$ is the canonical metric on $\S^i$ of curvature 1.

\begin{definition}{\sc[Visual distance and metric]}\label{def:VisDista}
	The \emph{visual metric} associated to a plane $U$ in $\Gr$ is the Riemannian metric on $\bHn_+$ induced by $\bq_U$, namely
\[g = g_{\S^1}\oplus g_{\S^n}\ .\]
This visual metric gives rise to a metric, also called visual, on $\bHn$. The corresponding distances are called {\em visual distances}.
\end{definition}

Finally, observe that isotropic 2-planes in $E$ are exactly graphs of linear maps $\varphi$ from $U$ to $V$ such that for any $u$ in $U$, we have $\bq_U(\varphi(u))=\bq_U(u)$. In particular, photons in the splitting $\bHn_+\cong \S^1\times \S^n$ correspond to graph of isometries from $\S^1$ to $\S^n$.
\subsection{Positive triples}
We discuss now  the important notion of positivity in $\bHn$. 
\subsubsection{Motivation: the projective line}  Let $V$ be a real vector space of dimension $2$. In that context, let us define
\begin{enumerate}
\item A {\em positive triple} in $\P(V)$ to be a triple of pairwise distinct points. 
\item A {\em positive quadruple} $(a,b,c,d)$ is a quadruple of pairwise distinct points such that $b$ and $d$ are not in the same connected component of $\P(V)\setminus\{a,c\}$.
\item A map from $\P(V)$ to itself to be {\em  positive} if it is orientation preserving or orientation reversing.
\end{enumerate}

\subsubsection{Higher dimensions}\label{sss:PositiveTriples}
We now describe the generalisation of positive triples  to $\bHn$ for $n>0$ and will do later the same for positive quadruples and positive maps.
A pair of points $(a,b)$ in $\bHn$ is {\em transverse}, if $\bq(a_0,b_0)\not=0$ where $a_0$ and $b_0$ are non zero vectors in $a$ and $b$ respectively (or equivalently, if $a$ and $b$ do not lie on a same photon).

\begin{definition}{\sc[Positive triple]}\label{def:PositiveTriple}
A triple of points $\tau=(a,b,c)$ of pairwise distinct points in $\bHn$ (or in $\bHn_+$) is {\em positive}  if  $W_\tau\defeq a\oplus b\oplus c$ is  a linear space of signature $(2,1)$. We denote by $\sT(n)$ and $\sT_+(n)$ the set of positive triples in $\bHn$ and $\bHn_+$ respectively.
\end{definition}

There is a unique spacelike circle passing through a positive triple. Conversely, any triple of pairwise distinct points in a spacelike  circle is positive. 
We warn the reader that the terminology {\em positive triples}, though standard, is confusing:  being a positive triple is invariant under all permutations.

\subsubsection{Action of $\G$ on positive triples}
We have the following

\begin{lemma}\label{lem:Orbits} If $n>0$, then
\begin{enumerate}
\item\label{it:TwoOrbits} the action of $\G$ on $\sT_+(n)$ has two orbits,
\item\label{it:OneOrbit} the action of $\G$ on $\sT(n)$ is transitive and the stabiliser of a point is isomorphic to $\SO(n)$.
\end{enumerate} 
\end{lemma}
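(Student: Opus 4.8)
I would prove the two statements by reducing everything to concrete linear algebra in $E$, using a fixed positive triple as a basepoint and computing the stabiliser directly. First fix a model positive triple $\tau_0 = (a,b,c)$, so that $W_{\tau_0} = a\oplus b\oplus c$ has signature $(2,1)$; write $E = W_{\tau_0}\oplus W_{\tau_0}^\perp$, where $W_{\tau_0}^\perp$ has signature $(0,n)$, i.e.\ $\bq$ is negative definite on it. For statement (ii), the key observation is that transitivity of $\G$ on $\sT(n)$ follows from transitivity of $\G$ on the Grassmannian of signature-$(2,1)$ subspaces (any two such subspaces are related by an element of $\mathsf{O}(\bq)$, and one adjusts by connectedness), together with the classical fact that $\SO_0(2,1)$ acts transitively on ordered triples of distinct points on the spacelike circle it defines — this last fact is the $n=0$ content of Lemma \ref{lem:Orbits} applied inside $\mathsf{P}(W_{\tau_0})$ and is exactly the transitivity of $\psld$ on triples in $\partial\mathbf H^2$. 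One must check orientations: a priori $\mathsf{O}(W_{\tau_0})\times\mathsf{O}(W_{\tau_0}^\perp)$ contains elements not in $\G = \SO_0(\bq)$, but since $\mathsf{O}(0,n)$ meets $\SO_0(\bq)$ in a copy of $\mathsf{O}(n)$ that surjects onto the component group issues, the relevant orbit is still a single $\G$-orbit. I would phrase this as: given $\tau,\tau'\in\sT(n)$, choose $g\in\mathsf{O}(\bq)$ with $g W_\tau = W_{\tau'}$ and $g$ carrying $\tau$ to $\tau'$ as ordered triples (possible by the $n=0$ case on the circle, possibly after composing with a reflection fixing $W_{\tau'}$ pointwise and acting on $W_{\tau'}^\perp$), then correct $g$ to land in the identity component by composing with a rotation of $W_{\tau'}^\perp\cong\mathbb R^n$, which fixes $\tau'$.

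**The stabiliser.** Having reduced to a single orbit, $\sT(n)\cong\G/\Stab_\G(\tau_0)$, I compute $H \defeq \Stab_\G(\tau_0)$. An element $h\in H$ fixes each of the rays/lines $a,b,c$ in $\mathbf P(E)$, hence preserves $W_{\tau_0}$ and therefore also $W_{\tau_0}^\perp$. On $W_{\tau_0}$, $h$ acts as an element of $\mathsf{O}(2,1)$ fixing three distinct points of the associated circle; the only such element is the identity (an isometry of $\mathbf H^2$ or of $\partial\mathbf H^2$ fixing three boundary points is trivial — or one allows the orientation-reversing involution fixing a spacelike circle, but that does not extend to $\SO_0$ without a sign on $W^\perp$, which I track). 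So $h|_{W_{\tau_0}} = \Id$ and $h|_{W_{\tau_0}^\perp}\in\mathsf{O}(0,n)\cong\mathsf{O}(n)$; the constraint that $h\in\SO_0(\bq) = \G$ together with $h|_{W_{\tau_0}} = \Id$ forces $h|_{W_{\tau_0}^\perp}\in\SO(n)$ — here one uses that $\G$ meets the subgroup $\{\Id_{W_{\tau_0}}\}\times\mathsf{O}(W_{\tau_0}^\perp)$ exactly in $\{\Id\}\times\SO(n)$, which is a connectedness/determinant bookkeeping check. Conversely every such $h$ fixes $\tau_0$. Hence $H\cong\SO(n)$, proving (ii).

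**Two orbits upstairs.** For statement (i), the set $\sT_+(n)$ of positive triples in the double cover $\bHn_+$ fibres over $\sT(n)$ with fibre $(\pm)^3/(\text{nothing})$ — more precisely each point of $\bHn$ has two preimages in $\bHn_+$, so a positive triple in $\bHn$ lifts to $2^3 = 8$ ordered triples of rays in $\bHn_+$, but these are organised by the action of $\Stab$. The relevant invariant is: the connected component of $\Gr$ (oriented positive $2$-planes) — equivalently an orientation of $W_\tau$ restricted to its positive part — together with a cyclic ordering datum on the lifted triple. Concretely, $\G = \SO_0(\bq)$ preserves an orientation on the positive-definite directions; a lift $\tilde\tau\in\sT_+(n)$ determines, via the oriented spacelike circle through it in $\bHn_+\cong\mathbf S^1\times\mathbf S^n$, a sign $\pm$ recording whether the cyclic order $(\tilde a,\tilde b,\tilde c)$ agrees with the $\mathbf S^1$-orientation. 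This sign is a $\G$-invariant taking both values and is the complete invariant: I would check that $\Stab_{\G}(\tilde\tau_0)\cong\SO(n)$ still (same computation, since lifting only rigidifies signs) and that $\G$ acts transitively on each of the two sign-classes by the same argument as in (ii), now carried out inside the identity component $\mathsf P_0$ of the point stabiliser and using transitivity of $\psld$-lifts on cyclically-ordered triples. The main obstacle throughout is the careful bookkeeping of connected components and orientations — deciding exactly which sign-correcting reflections lie in $\G$ and which do not — rather than anything conceptually deep; the clean statement is that $\mathsf{O}(2,1)\times\mathsf{O}(n)$ has $\G$ as an index-$2$ (or $4$, depending on conventions) subgroup, and the extra components are precisely what collapses the $8$ lifted triples into $2$ orbits downstairs being detected after passing to $\bHn$.
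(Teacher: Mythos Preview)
Your overall strategy---Witt extension to a fixed $W_{\tau_0}$, transitivity on the boundary circle, then a direct stabiliser computation---parallels the paper's approach, which works instead with explicit orthonormal frames $(e_1,e_2,e_3)$ adapted to $\tau$ and the simple transitivity of $\G$ on ``canonical'' bases of $E$. There is, however, a genuine gap in your transitivity argument for item~\ref{it:OneOrbit}.

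You invoke ``the classical fact that $\SO_0(2,1)$ acts transitively on ordered triples of distinct points on the spacelike circle''. This is false: $\SO_0(2,1)$ acts on the boundary circle through $\PSL_2(\R)$, which has \emph{two} orbits on ordered triples of distinct points, distinguished by cyclic orientation. Your proposed correction---composing with a reflection of $W_{\tau_0}^\perp$ fixing $W_{\tau_0}$ pointwise---cannot help, since such a reflection acts trivially on the circle and hence does not change the cyclic order. To reverse the cyclic order within a fixed $W$ you would need an element of $\G$ whose restriction to $W$ has determinant $-1$ on the positive-definite $2$-plane inside $W$; but that $2$-plane is a maximal positive subspace of $E$, and every element of $\G=\SO_0(\bq)$ has determinant $+1$ there. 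A closely related slip appears in your stabiliser computation: an element fixing the three \emph{projective} points $a,b,c$ may act on $W_{\tau_0}$ as $-\Id$ rather than $\Id$. You note this parenthetically but dismiss it; yet $(-\Id_{W_{\tau_0}},A)$ with $\det A=-1$ \emph{does} lie in $\SO_0(\bq)$---it has determinant $+1$ on both the positive $2$-plane and the negative $(n{+}1)$-space, hence lies in the identity component---so it contributes to $\Stab_\G(\tau_0)$. These two points are exactly where the ``careful bookkeeping of connected components and orientations'' that you flag as the main obstacle actually bites, and neither is resolved by your sketch. The paper's frame-by-frame argument is more explicit at precisely this juncture, tracking which orientation of $(e_1,e_2)$ can be achieved, though it too is terse here; your abstract route exposes the difficulty without closing it.
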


\begin{proof}
The group $\G$ preserves a space orientation and a time orientation in $E$. This means that for any positive definite $2$-plane $P$ in $E$, both $P$ and $P^\bot$ carry a natural orientation which is preserved by the action of $\G$.

Recall that an orthonormal basis of $E$ is a basis $(e_1,...,e_{n+3})$ such that $\langle e_i,e_j\rangle= \epsilon_i\delta_{i,j}$ , where $\epsilon_i=1$ for $i=1,2$ and $\epsilon_i=-1$ otherwise. Such a basis will be called {\em canonical} is moreover $(e_1,e_2)$ and $(e_3,...,e_{n+3})$ are positively oriented. The group $\G$ acts simply transitively on the set of canonical bases.

Given a positive triple $(z_1,z_2,z_3)$ in $\bHn_+$ associated to a vector space $V$ of type $(2,1)$, one can find an orthonormal set $(e_1,e_2,e_3)$ basis in   $V$ so that\[\left\{\begin{array}{lll}
e_1+e_3&\hbox{ belongs to }& z_1 \ ,\\
-\frac{1}{2}e_1 +\frac{\sqrt 3}{2}e_2+e_3 &\hbox{ belongs to }& z_2 \ , \\
 -\frac{1}{2}e_1-\frac{\sqrt 3}{2}e_2+e_3	&\hbox{ belongs to }&z_3 \ ,
\end{array}\right.~
 \]
and such a triple can be completed to an orthonormal basis of $E$. The only obstruction of extending $(e_1,e_2,e_3)$ to a canonical basis is the orientation of $(e_1,e_2)$. This proves item \ref{it:TwoOrbits}. 

\medskip

For item \ref{it:OneOrbit}, repeating the construction for a positive triple $(z_1,z_2,z_3)$ in $\bHn$, one notices that we have exactly two  choices for $(e_1,e_2,e_3)$ corresponding to the two lifts of $\tau$ in $\bHn_+$. Only one of the two will give the correct orientation on the pair $(e_1,e_2)$ defined above. If $n>0$, one can find an orthonormal basis $(e_4,...,e_{n+3})$ of $(z_1\oplus z_2 \oplus z_3)^\bot$ such that $(e_3,...,e_{n+3})$ is positively oriented. So $\G$ acts transitively on the space of positive triples in $\bHn$.

The stabiliser of the positive triple $(z_1,z_2,z_3)$ acts simply transitively on the space of canonical basis having $(e_1,e_2,e_3)$ for the first 3 vectors. This group is thus isomorphic to $\SO(n)$.
\end{proof}

\subsubsection{Distances}\label{def:VisDist}

Let us start with a proposition

\begin{proposition}
There exists a $\G$ equivariant map $\tau\mapsto\beta_\tau$ from the space $\sT(n)$ of positive triples in $\bHn$ to the symmetric space $\Gr$ of $\G$.
\end{proposition}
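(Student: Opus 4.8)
The plan is to build $\beta_\tau$ directly inside the signature $(2,1)$ subspace $W_\tau=a\oplus b\oplus c$ --- the space carrying the unique spacelike circle through $\tau$. Recall from the proof of Lemma~\ref{lem:Orbits} that the fixed space and time orientations of $E$ equip every positive definite $2$--plane of $E$ with a natural orientation, so $\Gr$ may be identified $\G$--equivariantly with the set of \emph{unoriented} positive definite $2$--planes of $E$; it therefore suffices to attach canonically to $\tau$ such a plane inside $W_\tau$. My approach is to attach to the cyclically ordered triple $(a,b,c)$ a canonical timelike line $\ell_\tau$ of $W_\tau$, and to let $\beta_\tau$ be its orthogonal complement \emph{taken inside $W_\tau$}, which then has signature $(2,0)$; the point $\beta_\tau\in\Gr$ will be the barycentre of the ideal triangle with vertices $a,b,c$ in the hyperbolic plane of negative lines of $W_\tau$.

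Concretely, I would choose null vectors $\alpha\in a$, $\beta\in b$, $\gamma\in c$. Transversality of the three points forces $\bq(\alpha,\beta),\bq(\beta,\gamma),\bq(\gamma,\alpha)$ to be nonzero, and one checks that $\alpha,\beta,\gamma$ can be rescaled so that these three numbers become equal; this normalisation determines $\alpha,\beta,\gamma$ up to a common nonzero scalar, hence determines the line $\ell_\tau\defeq\R(\alpha+\beta+\gamma)$. A short computation --- most transparently in the standard $(2,1)$ model from the proof of Lemma~\ref{lem:Orbits}, where the common pairing equals $-\tfrac32$ --- shows this common value is negative, so that $\bq(\alpha+\beta+\gamma)=2\big(\bq(\alpha,\beta)+\bq(\beta,\gamma)+\bq(\gamma,\alpha)\big)<0$ and $\ell_\tau$ is timelike. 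I then set $\beta_\tau\defeq W_\tau\cap\ell_\tau^{\bot}$, a positive definite $2$--plane of $E$; in that model $\ell_\tau=\R e_3$ and $\beta_\tau=\span(e_1,e_2)$.

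Equivariance is then formal: $\G$ preserves $\bq$ and sends null vectors to null vectors with the same pairings, so each step above is natural and $\beta_{g\cdot\tau}=g\cdot\beta_\tau$. The only genuine verifications --- the mild ``hard part'' --- are that the equalised normalisation exists and is unique up to a common scalar (so that only $\ell_\tau$, and not any particular scaling of $\alpha+\beta+\gamma$, survives) and that the common pairing is negative; both are elementary and reduce to the explicit $(2,1)$ computation already done for Lemma~\ref{lem:Orbits}. A quicker but less explicit route avoids the construction altogether: by Lemma~\ref{lem:Orbits}(ii) we have $\sT(n)\cong\G/\SO(n)$, and in the notation there the stabiliser $\SO(n)$ of the standard triple fixes $e_1,e_2,e_3$, hence lies inside the stabiliser $\SO(2)\times\SO(n+1)$ of $\span(e_1,e_2)$; this inclusion is exactly the condition for a $\G$--equivariant map $\G/\SO(n)\to\Gr\cong\G/(\SO(2)\times\SO(n+1))$ to exist, and it yields the same $\beta$. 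I would nevertheless keep the explicit barycentric description, as it is what is needed to compute distances between triples in $\Gr$ in the sequel.
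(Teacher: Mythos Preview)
Your proposal is correct and follows essentially the same approach as the paper: both first note the abstract existence via Lemma~\ref{lem:Orbits} (compact stabiliser $\SO(n)$ maps into a maximal compact), and both then give the explicit construction as the barycentre of the ideal triangle $(a,b,c)$ in the hyperbolic plane $\P(W_\tau)\cap\Hn$, with $\beta_\tau$ being the orthogonal in $W_\tau$ of the corresponding negative line. Your algebraic description of that barycentre --- equalise the three pairings and take the line through $\alpha+\beta+\gamma$ --- is simply a concrete realisation of the same point the paper names geometrically, and your verification that the common pairing is negative (hence $\ell_\tau$ timelike) via the model of Lemma~\ref{lem:Orbits} is exactly the computation the paper leaves implicit.
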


\begin{proof} This is a direct consequence of Lemma \ref{lem:Orbits} since the stabiliser of a positive triple is compact. Let us however give an explicit construction of this map.

 Let $\tau=(a,b,c)$ be  positive triple and $W_\tau\defeq a\oplus b\oplus c$. Then the intersection of $W_\tau$ with $\Hn$ is a hyperbolic plane $H_\tau$ and  the points $a$,$b$ and $c$ are the vertices of an ideal hyperbolic triangle  in $H_\tau$. Denote its barycenter by  $b_\tau$. The point $b_\tau $ corresponds to a negative definite line $L$  in $W_\tau$, and we denote by $\beta_\tau$ its 2-dimensional spacelike orthogonal in $W_\tau$. We define the image of $\tau$ in  $\Gr$ to be  $\beta_\tau$.
\end{proof}

In particular, since $\sT(n)$ is a transitive $\G$ space which fibres over $\Gr$, the symmetric space of $\G$ by paragraph \ref{ss:SymmetricSpace}

\begin{corollary}{\sc[Canonical distance]}\label{def:CanonicalDistance}
The space $\sT(n)$ of positive triples in $\bHn$ is a Riemannian homogeneous $\G$-space. 
\end{corollary}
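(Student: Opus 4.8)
The plan is to combine Lemma \ref{lem:Orbits} with the standard averaging construction of an invariant metric on a homogeneous space with compact stabiliser. By item \ref{it:OneOrbit} of Lemma \ref{lem:Orbits}, $\G$ acts transitively on $\sT(n)$ and the stabiliser $\Stab(\tau_0)$ of any positive triple $\tau_0$ is isomorphic to $\SO(n)$, hence compact; thus the orbit map $g\mapsto g\cdot\tau_0$ identifies $\sT(n)$ with the smooth homogeneous space $\G/\Stab(\tau_0)$, so it suffices to produce a $\G$-invariant Riemannian metric on the latter.

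First I would fix an arbitrary Euclidean scalar product on the finite-dimensional vector space $T_{\tau_0}\sT(n)$ and average it, with respect to Haar measure, over the compact group $\Stab(\tau_0)$ acting linearly on $T_{\tau_0}\sT(n)$ through its isotropy representation; this yields a $\Stab(\tau_0)$-invariant scalar product $b_0$ on $T_{\tau_0}\sT(n)$. Then, for $\tau=g\cdot\tau_0$ and tangent vectors $X,Y$ at $\tau$, I would set $b_\tau(X,Y)\defeq b_0(dg^{-1}X,\,dg^{-1}Y)$. This is independent of the choice of $g$ with $g\cdot\tau_0=\tau$ precisely because $b_0$ is $\Stab(\tau_0)$-invariant, and it is smooth and $\G$-invariant by construction; hence $b$ is a $\G$-invariant Riemannian metric on $\sT(n)$, and the associated distance is the announced canonical distance.

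Alternatively, one can phrase the construction through the $\G$-equivariant fibration $\beta\colon\sT(n)\to\Gr$ furnished by the preceding proposition: pull back the symmetric-space metric on $\Gr$ along $\beta$ to obtain a $\G$-invariant non-negative symmetric form, add to it a $\G$-invariant fibrewise metric on the vertical bundle (the fibre $(\SO(2)\times\SO(n+1))/\SO(n)$ being compact, the same averaging trick applies), and observe that the sum is positive definite. Either route is routine: the only substantive input beyond soft homogeneous-space theory is the compactness of the stabiliser, which is exactly item \ref{it:OneOrbit} of Lemma \ref{lem:Orbits}, and the sole point needing (minor) care is the well-definedness of the translated metric, i.e. its invariance under $\Stab(\tau_0)$, which is precisely what the averaging step secures. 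I do not expect a genuine obstacle here.
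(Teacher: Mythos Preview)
Your proposal is correct and takes essentially the same approach as the paper. The paper offers no separate proof environment for this corollary: its entire justification is the preceding sentence, noting that $\sT(n)$ is a transitive $\G$-space which fibres over the symmetric space $\Gr$; your first argument spells out the standard averaging over the compact stabiliser $\SO(n)$ that this implicitly invokes, and your alternative fibration argument is exactly the paper's hinted route.
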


\begin{definition}{\sc [Visual distance]}\label{def:VisDist}
	The {\em visual distance} associated to a positive triple on $\bHn$ is the visual distance associated to $\beta_\tau$ according to definition \ref{def:VisDista}.
\end{definition}

\subsection{Minkowski patch and charts} In the sequel,  $\Euc^{1,n}$  will be an affine  space, whose underlying vector space is  equipped with a quadratic form $\q$ of signature $(1,n)$.

We explain in this paragraph that $\bHn $ is isomorphic to the  ``conformal compactification'' of $\Euc^{1,n}$. Let us first define

\begin{definition}{\sc [Minkowski patch and light cones]}
Let $a$ be a point in $\bHn$. \begin{enumerate}
	\item The {\em light cone $L(a)$} is the  union of all photons passing though $a$.
	\item The  {\em Minkowski patch} of $a$ is $\Min(a)\defeq \bHn\setminus L(a)$. 
\end{enumerate}  
\end{definition}
One may recognise the description of a big Bruhat cell in the parabolic space $\bHn$. 
We now give a natural conformal description of a Minkowski patch:

\begin{proposition}\label{pro:CanChart}
	Let $a$ be a point in $\bHn$ and $b$ be a point in $\Min(a)$. Consider $a_0$ and $b_0$ be non zero vectors  in $a$ and $b$ respectively, with $\braket{a_0,b_0}=1$ and set $F\defeq (a\oplus b)^\bot$.
Define $\psi_0$ as the map from $F$ to $E$ given  by the formula
$$\psi_0(u)=u-b_0+\frac{1}{2}\q(u)a_0~,$$
where $\q=\bq\vert_F$.
Then the projectivisation $\psi:F\to\P(E)$ of $\psi_0$ is a conformal diffeomorphism onto $\Min(a)$ that we call a {\em canonical Minkowski chart}.
\end{proposition}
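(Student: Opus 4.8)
The plan is to work in the $\bq$-orthogonal splitting
\[
E = (a\oplus b)\oplus F,
\]
which is legitimate: since $\bq(a_0)=\bq(b_0)=0$ (as $a,b\in\bHn$) and $\braket{a_0,b_0}=1$, the plane $a\oplus b$ is non-degenerate of signature $(1,1)$, so $F=(a\oplus b)^\bot$ is non-degenerate of signature $(1,n)$ and neither $a_0$ nor $b_0$ lies in $F$. Writing a general vector of $E$ as $u+\alpha a_0+\beta b_0$ with $u\in F$, I would first record three elementary identities: $\bq(\psi_0(u))=\q(u)-\q(u)=0$, $\braket{\psi_0(u),a_0}=-1$, and the fact that the $b_0$-coordinate of $\psi_0(u)$ in this splitting always equals $-1$ — each obtained by expanding and using $\braket{u,a_0}=\braket{u,b_0}=0$, $\bq(a_0)=\bq(b_0)=0$, $\braket{a_0,b_0}=1$. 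The last identity shows $\psi_0$ never vanishes, so $\psi$ is well defined on all of $F$; the first shows $\psi(u)\in\bHn$; and the second, being non-zero, shows $\psi(u)$ is transverse to $a$, i.e. $\psi(u)\in\Min(a)$.

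Next I would establish that $\psi\colon F\to\Min(a)$ is a bijection. For injectivity, from $\psi_0(u')=\lambda\psi_0(u)$, pairing with $a_0$ forces $\lambda=1$, and then comparison of $F$-components forces $u=u'$. For surjectivity, given $x\in\Min(a)$, I choose the lift $x_0\in x$ normalised by $\braket{x_0,a_0}=-1$ (possible by transversality) and write $x_0=u+\alpha a_0+\beta b_0$; then $\braket{x_0,a_0}=-1$ forces $\beta=-1$, and $\bq(x_0)=0$ forces $\alpha=\tfrac12\q(u)$, so $x_0=\psi_0(u)$. Then I would check that $\psi$ is an immersion: $\d\psi_0|_u(v)=v+\braket{u,v}a_0$, and this is a multiple of $\psi_0(u)$ only when the multiple vanishes (compare $b_0$-coordinates), which forces $v=0$; hence $\psi$ is an injective immersion between manifolds of the same dimension $n+1=\dim\Min(a)$, and therefore a diffeomorphism onto $\Min(a)$ by the inverse function theorem.

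The only step requiring real care is conformality, and it is where the description of $[g_\Ein]$ recalled above — the identification $T_x\bHn\cong\Hom(x,x^\bot/x)$ with conformal class represented, after choosing a generator $x_0$ of $x$, by $\bq$ on $x^\bot/x$ — is used. Along $\Min(a)$ the map $\psi_0$ is a smooth section of the tautological line bundle, hence singles out a representative of $[g_\Ein]$; with respect to it the tangent vector $\d\psi|_u(v)$ is represented by $\d\psi_0|_u(v)=v+\braket{u,v}a_0\in\psi_0(u)^\bot$, and since $v\in F$ one has $\braket{v,a_0}=0$ and $\bq(a_0)=0$, so $\bq(v+\braket{u,v}a_0)=\q(v)$. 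Thus $\psi$ pulls that representative back to the constant flat quadratic form $\q$ on $F\cong\Euc^{1,n}$, so $\psi$ is conformal — indeed isometric for this representative. I expect the main obstacle to be purely expository: stating this last computation cleanly, being explicit about which representative of the conformal class is pulled back and noting that a different choice of section only rescales it by a positive function, so that the conclusion $\psi^{*}[g_\Ein]=[\q]$ is unambiguous.
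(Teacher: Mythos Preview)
Your proof is correct and follows essentially the same route as the paper: the same orthogonal splitting $E=(a\oplus b)\oplus F$, the same surjectivity argument via the normalised lift, and the same differential computation $\d\psi_0|_u(v)=v+\braket{u,v}a_0$ yielding $\bq(\d\psi_0|_u(v))=\q(v)$ for conformality. You are simply more explicit than the paper in separating out injectivity and the immersion property (which the paper leaves implicit in its surjectivity-via-inverse construction and its pullback computation), and more careful in unpacking what ``conformal'' means for the class $[g_{\Ein}]$; none of this deviates from the paper's line of argument.
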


\begin{proof} One easily checks  that  $\q$ has signature $(1,n)$. Observe that $\psi_0$ takes value in the set of null vectors in $E$ whose scalar product with $a_0$ is non-zero. Thus  $\psi$ takes values $\Min(a)$. 

We now claim that $\psi$ is surjective: any point $x$ in $\Min(a)$ has a unique lift $x_0$ to $E$ with $\braket{x_0,a_0}=1$. Thus 
\[x_0 = u -b_0+\alpha a_0 ~,\]
for some $\alpha$ in $\R$ and $u$ in $F$. The condition $\bq(x_0)=0$ implies $2\alpha=\q(u)$ and so $x_0=\psi_0(u)$.

Finally, $\psi_0$ is conformal: given $u$ in $F$, we have 
\[\T_u\psi_0 (v)= v +\langle u,v\rangle a_0~,\]
so $(\psi_0^*\bq) (u)= \q(u)$.  
\end{proof}

This leads to the following definition 
\begin{definition}{\sc[Minkowski chart]}
Given a point $a$ in $\bHn$, a \emph{Minkowski chart for $a$} is a conformal  diffeomorphism $\phi$ from $\Euc^{1,n}$ to $\Min(a)$ so that $\phi=\psi\circ A$ where $\psi$ is the canonical Minkowski chart associated to $b$ in $\Min(a)$ and $A$ is an affine  conformal map from $\Euc^{1,n}$ to $F=(a\oplus b)^\perp$.
\end{definition}

The following describes useful properties of Minkowski charts.

\begin{proposition}\label{pro:MinCharts}
Let $a$ be a point in $\bHn$ .
\begin{enumerate}
	\item\label{it:MinCharts2} Given a lightlike line in $\Euc^{1,n}$, the closure of its image by a Minkowski chart is a photon intersecting $L(a)$ in a unique point.
	\item\label{it:MinCharts3} Given a spacelike line in $\Euc^{1,n}$, the closure of its image by a Minkowski chart is a spacelike  circle  passing through $a$.
\end{enumerate}
\end{proposition}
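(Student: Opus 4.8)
\textbf{Proof plan for Proposition \ref{pro:MinCharts}.}

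The plan is to reduce everything to the canonical Minkowski chart $\psi$ of Proposition \ref{pro:CanChart}, since a general Minkowski chart differs from a canonical one by an affine conformal map $A$ of $\Euc^{1,n}$, and such a map sends lightlike lines to lightlike lines and spacelike lines to spacelike lines (affine conformal maps preserve the causal type of affine subspaces). So it suffices to prove both statements for $\psi = \Proj \circ \psi_0$ with $\psi_0(u) = u - b_0 + \tfrac12 \q(u) a_0$, where $F = (a\oplus b)^\perp$ and $\q = \bq|_F$. I would fix a point $p \in F$ and a nonzero vector $w \in F$, and consider the affine line $t \mapsto p + t w$; its image under $\psi_0$ is the curve $t \mapsto (p + tw) - b_0 + \tfrac12 \q(p+tw) a_0$, which is a parametrised conic in $E$ (quadratic in $t$ because $\q(p+tw)$ is quadratic in $t$). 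The span of this curve together with its closure is what we must identify.

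First, for item \ref{it:MinCharts2}: suppose $w$ is lightlike, $\q(w) = 0$. Then $\q(p+tw) = \q(p) + 2t\langle p,w\rangle$ is \emph{affine} in $t$, so $\psi_0(p+tw) = \big(p - b_0 + \tfrac12\q(p) a_0\big) + t\big(w + \langle p,w\rangle a_0\big)$ traces an affine line in $E$. The $2$-plane $\Pi$ it spans contains the two vectors $\psi_0(p)$ (a null vector) and the direction $w + \langle p,w\rangle a_0$; one computes directly that $\bq$ restricted to $\Pi$ is degenerate and in fact identically zero, using $\bq(w+\langle p,w\rangle a_0) = \q(w) + 2\langle p,w\rangle \langle w, a_0\rangle = 0$ (as $w \in F \perp a_0$ and $\q(w)=0$), together with the fact that $\psi_0$ lands in the null cone so every point on the line is null, which by polarisation forces the cross term to vanish as well. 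Hence $\Proj(\Pi)$ is an isotropic $2$-plane, i.e.\ a photon. Since the line misses $a = \Proj(a_0)$ (it lies in $\Min(a)$) but its projective closure must meet the quadric's ``line at infinity'' relative to this chart, the closure adds exactly one point, and that point lies on $L(a)$: concretely the missing point is $\Proj(w + \langle p,w\rangle a_0)$, which is transverse-degenerate to $a$, hence on a photon through $a$. Uniqueness of the intersection point with $L(a)$ follows because a photon meeting $L(a)$ in two points would have to coincide with a photon through $a$, contradicting that the line lies in $\Min(a)$.

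For item \ref{it:MinCharts3}: suppose $w$ is spacelike, $\q(w) = \lambda > 0$. After translating $p$ along $w$ we may assume $\langle p, w\rangle = 0$, so $\q(p+tw) = \q(p) + \lambda t^2$ and $\psi_0(p+tw) = \big(p - b_0 + \tfrac12\q(p)a_0\big) + t w + \tfrac{\lambda}{2}t^2 a_0$. This is a parametrised parabola lying in the $3$-dimensional affine plane $q_0 + \span(w, a_0)$ where $q_0 = p - b_0 + \tfrac12\q(p)a_0$; its projective span $W$ is the linear $3$-space $\span(q_0, w, a_0)$ — here I should check these three are independent, which holds because $a_0 \notin a\oplus b$... wait, $a_0 \in a$, so rather: $w \in F$, $a_0 \in a$, and $q_0$ has a nonzero $-b_0$ component, so the three are independent as $F$, $a$, $b$ are in direct sum. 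Now I compute the signature of $\bq|_W$: on the basis $(a_0, w, q_0)$ one has $\bq(a_0)=0$, $\bq(w)=\lambda>0$, $\langle a_0,w\rangle=0$, $\langle a_0, q_0\rangle = \langle a_0, -b_0\rangle = -1 \neq 0$, and $\bq(q_0)=0$ since $q_0 = \psi_0(p)$ is null, $\langle w, q_0\rangle = \langle w,p\rangle = 0$. So in the basis $(a_0,q_0,w)$ the Gram matrix is $\begin{pmatrix}0&-1&0\\-1&0&0\\0&0&\lambda\end{pmatrix}$, which has signature $(2,1)$. Therefore $\Proj(W) \cap \bHn$ is a spacelike circle by definition, and it contains $a = \Proj(a_0)$ since $a_0 \in W$ is null. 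The closure of $\psi_0$ of the spacelike line is contained in $\Proj(W)\cap\bHn$, is a closed curve (image of a line plus the one point at infinity $\Proj(w)$... but $\bq(w)=\lambda>0$, so $\Proj(w)\notin\bHn$ — instead the point at infinity of the parabola in projective space is $\Proj(a_0)=a$), so in fact the projective closure of the parabola passes through $a$ and equals the full spacelike circle.

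\textbf{Main obstacle.} The calculations themselves are routine linear algebra once the parametrisation is written down; the one genuine subtlety — the step I'd be most careful about — is identifying the point(s) added in passing to the projective closure and confirming in case \ref{it:MinCharts2} that it is a single point lying on $L(a)$ and that the photon meets $L(a)$ nowhere else. This is really a statement about how the affine chart $\Min(a) = \bHn \setminus L(a)$ compactifies: the ``boundary'' added to $\Min(a)$ is precisely $L(a)$, so a lightlike affine line (whose two ends both escape to infinity in the chart, since it is unbounded in both directions) closes up by adding exactly one projective point of $L(a)$ — the point-at-infinity of the line in $\P(E)$ — which I'd verify by the explicit computation above, and a spacelike line closes up through $a$ itself because the parabola's direction at infinity is exactly $\Proj(a_0) = a$. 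Everything else is bookkeeping with the explicit formula for $\psi_0$ and the reduction via the affine conformal map $A$.
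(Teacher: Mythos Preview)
Your proposal is correct and follows essentially the same approach as the paper: reduce to the canonical chart $\psi_0$ and compute the linear span of the image explicitly. The only organisational difference is that the paper first treats the special case of a line through the origin (so $p=0$, which makes the formulas shorter), and then handles a general line and a general chart in one stroke by observing that any two Minkowski charts for $a$ differ by an element of $\G$ fixing $a$, which necessarily preserves photons and spacelike circles; you instead carry the base point $p$ through the computation directly. Both routes are straightforward, and your verification of the signature-$(2,1)$ Gram matrix and the identification of the point at infinity (namely $\Proj(w+\langle p,w\rangle a_0)$ in the lightlike case and $a=\Proj(a_0)$ in the spacelike case) are correct.
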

\vskip 0,1truecm
\begin{proof}[Proof of \ref{it:MinCharts2}] 
Let first prove the result for a canonical Minkowski chart (see proposition \ref{pro:CanChart} for the notations). Let $L$ be a lightlike line in $F$ parametrised by  $c:t\to tw$ with $\q(w)=0$. Thus 
\begin{eqnarray*}
\psi_0(c(t))&=&tw -b_0 +\frac{1}{2}\q(tw)a_0= tw-b_0\ . 
\end{eqnarray*}
Since $\langle b_0,w\rangle=0$, the 2-plane $U\defeq \span(b_0,w)$ is isotropic. Thus $\psi(c(t))$ is contained in the photon $\P(U)$ Moreover,  the intersection of $L(a)$ with  $U$ is  reduced to the isotropic line spanned by the vector $\frac{w}{\langle w,a_0\rangle}-b_0$.

 Given two Minkowski charts  $\psi$ and $\phi$ of $\Min(a)$, the map $\psi\circ \phi^{-1}$  extends to a conformal transformation of $\bHn$ fixing $a$. Because elements in $\G$ maps photons to photons (as well as spacelike circles to spacelike circles), the results proved for $\phi$ and $L$ extend to all charts and lightlike lines. 	
\end{proof}

\begin{proof}[Proof of \ref{it:MinCharts3}] Similarly to the proof of item \ref{it:MinCharts2}, a spacelike linear line in $F$ has the form $c:t\to tw$ with $\q(w)>0$. Its image by $\psi_0$ is given by
\[\psi_0(c(t))=tw-b_0+\frac{t^2}{2}\q(w)a_0\ .\]
Since $w$ is positive definite, the vector space spanned by $\{w,a_0,b_0\}$ has signature $(2,1)$ and intersects $L(a)$ at $a$. 	
\end{proof}

\subsubsection{$\tau$-charts}\label{sss:CanonicalChart} Let  $\R^{1,n}$ be  the vector space  $\mathbb R^{n+1}$ equipped with the quadratic form $\q_{1,n}=\d x_1^2-\d x_2^2 -\cdots -\d x_n^2$ of signature $(1,n)$. We identify the stabiliser in $\mathsf{O}(1,n)$ of $(1,0\ldots,0)$ with $\mathsf{O}(n)$

\begin{definition}{\sc[$\tau$-chart]}\label{def:CanonicalChart}
Let  $\tau=(a,b,c)$ be a positive triple in $\bHn$. A \emph{$\tau$-chart} is a Minkowski chart $\psi_\tau$ for $a$ whose source is 
$\mathbb R^{1,n}$ and 
such that  $b= \psi_\tau(-1,0,\ldots,0)$ and $c= \psi_\tau(1,0,\ldots,0)$.
\end{definition}

We have the following
\begin{proposition}
Let $\tau$ be a positive triple in $\bHn$. Up to the precomposition by an element of $\mathsf O(n)$, there exists a unique $\tau$-chart.	
\end{proposition}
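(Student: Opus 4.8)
The idea is to combine Proposition~\ref{pro:CanChart} with Lemma~\ref{lem:Orbits}: first produce one $\tau$-chart using the explicit normal form for positive triples, then identify the residual freedom with $\mathsf O(n)$. First I would invoke the proof of Lemma~\ref{lem:Orbits}\eqref{it:OneOrbit}: given the positive triple $\tau=(a,b,c)$, the space $W_\tau=a\oplus b\oplus c$ has signature $(2,1)$ and carries a distinguished orthonormal triple $(e_1,e_2,e_3)$ (the one making $(e_1,e_2)$ positively oriented) with $e_1+e_3\in a$, $-\tfrac12 e_1+\tfrac{\sqrt3}{2}e_2+e_3\in b$, $-\tfrac12 e_1-\tfrac{\sqrt3}{2}e_2+e_3\in c$. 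I would then choose $a_0=e_1+e_3\in a$ and $b_0$ the lift of $b$ in $W_\tau$ normalised so that $\braket{a_0,b_0}=1$; a direct computation (using $\braket{e_1,e_1}=1=\braket{e_2,e_2}$, $\braket{e_3,e_3}=-1$) gives $b_0=-\tfrac12 e_1+\tfrac{\sqrt3}{2}e_2+e_3$ after rescaling, and one checks $\braket{a_0,b_0}=1$ is achieved. With $F=(a\oplus b)^\perp$, Proposition~\ref{pro:CanChart} produces the canonical Minkowski chart $\psi\colon F\to\Min(a)$ with $\psi(0)=b$. Since $\q=\bq|_F$ has signature $(1,n)$, I would pick a linear isometry $A_0\colon \R^{1,n}\to F$ with $A_0(1,0,\dots,0)$ equal to the positive-definite vector of $F$ that, under $\psi$, lands on $c$; the point here is that $c$ itself defines a specific spacelike ray in $F$ (up to sign) via the formula $\psi_0^{-1}$, and $\q_{1,n}$-isometries act transitively on unit spacelike vectors, so such $A_0$ exists. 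Composing, $\psi_\tau\defeq\psi\circ A_0$ satisfies $\psi_\tau(-1,0,\dots,0)=b$? — here I would instead arrange $A_0(1,0,\dots,0)\mapsto c$ and $A_0(-1,0,\dots,0)\mapsto b$ simultaneously, which is possible because $b$ and $c$ lie on a common spacelike circle through $a$ (Proposition~\ref{pro:MinCharts}\eqref{it:MinCharts3}) whose preimage in $\R^{1,n}$ is a spacelike line through the origin; rotate so that line becomes the $x_1$-axis and scale so the two marked points go to $\pm(1,0,\dots,0)$ in the right order. This proves existence.

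For uniqueness up to $\mathsf O(n)$, suppose $\psi_\tau$ and $\psi_\tau'$ are two $\tau$-charts. Then $g\defeq \psi_\tau'\circ\psi_\tau^{-1}$ is a conformal self-map of $\Min(a)$ fixing $a$, $b=\psi_\tau(-1,0,\dots,0)$ and $c=\psi_\tau(1,0,\dots,0)$; by the standard fact (used already in the proof of Proposition~\ref{pro:MinCharts}\eqref{it:MinCharts2}) it extends to an element of $\G$ fixing the three points of $\tau$. By Lemma~\ref{lem:Orbits}\eqref{it:OneOrbit} the stabiliser of $\tau$ in $\G$ is isomorphic to $\SO(n)$, and reading this through the chart (the stabiliser acts on $\R^{1,n}$ fixing $0$ and $\pm(1,0,\dots,0)$, hence lies in the $\mathsf O(n)$ fixing $(1,0,\dots,0)$, and matches $\SO(n)$) shows $\psi_\tau^{-1}\circ\psi_\tau'$ is precomposition by an element of $\mathsf O(n)$. (One should be slightly careful whether one gets $\SO(n)$ or $\mathsf O(n)$: working in $\bHn$ rather than $\bHn_+$, and allowing Minkowski charts rather than oriented ones, accounts for the full $\mathsf O(n)$, which matches the statement.)

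The main obstacle I expect is purely bookkeeping: verifying that the distinguished vectors $a_0,b_0$ from the Lemma~\ref{lem:Orbits} normal form can be rescaled to satisfy $\braket{a_0,b_0}=1$ while keeping the point $c$ at a location that a $\q_{1,n}$-isometry of $F$ can send to $(1,0,\dots,0)$ with $b$ going to $(-1,0,\dots,0)$ — i.e.\ checking the two marked points are "symmetric" in the right sense inside $F$. Concretely one computes $\psi_0^{-1}(c)$ in $F$ using $a_0=e_1+e_3$ and the chosen $b_0$, finds it is a spacelike vector, and compares its $\q$-norm with that of $\psi_0^{-1}(b)=0$ — wait, $b$ maps to $0$, so in fact $b$ is automatically the image of the origin, and only the placement of $c$ needs adjusting; then the residual affine conformal freedom $A$ (translations, $\q_{1,n}$-similarities) is exactly what lets one move the single spacelike point $\psi_0^{-1}(c)$ to $(1,0,\dots,0)$ — but one must use the translation part to put $b$ back at the origin after rotating, so the order of operations matters. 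This is elementary linear algebra in $W_\tau\oplus W_\tau^\perp$ and I would present it compactly rather than in full.
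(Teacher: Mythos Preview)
Your proof is correct in outline and follows the same strategy as the paper --- produce a Minkowski chart then adjust by an affine conformal map for existence, and identify the transition between two $\tau$-charts with a stabiliser for uniqueness --- but you have loaded it with machinery that is not needed and introduced one imprecision.

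For existence, the normal form from Lemma~\ref{lem:Orbits} is never actually used: once you have \emph{any} Minkowski chart $\psi$ for $a$, Proposition~\ref{pro:MinCharts}\ref{it:MinCharts3} tells you the segment from $\psi^{-1}(b)$ to $\psi^{-1}(c)$ is spacelike, and the affine conformal group of $\R^{1,n}$ acts transitively on oriented spacelike segments, so some affine conformal $g$ sends $[(-1,0,\dots,0),(1,0,\dots,0)]$ to it; then $\psi\circ g$ is a $\tau$-chart. That is exactly the paper's argument. Your explicit vectors $e_1,e_2,e_3$ and the normalisation $\braket{a_0,b_0}=1$ contribute nothing and are the source of your own mid-proof confusion.

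For uniqueness, the paper works entirely inside $\R^{1,n}$: the transition $\psi_\tau^{-1}\circ\psi_\tau'$ is a conformal self-map of $\R^{1,n}$ which, because it extends across $a$ at infinity and fixes it, is an \emph{affine} similarity; an affine map fixing $(\pm1,0,\dots,0)$ fixes their midpoint $0$, hence is linear, and a linear similarity fixing a unit spacelike vector lies in $\mathsf O(n)$. Your route (``extends to an element of $\G$ fixing $\tau$'') is not correct as stated: the transition extends only to the full conformal group $\mathsf{PO}(\bq)$ of $\bHn$, not necessarily to $\G=\SO_0(\bq)$, so Lemma~\ref{lem:Orbits}\ref{it:OneOrbit} (stabiliser $\cong\SO(n)$ in $\G$) does not directly apply. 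You noticed the $\SO(n)$ versus $\mathsf O(n)$ mismatch but your explanation does not resolve it; the paper's direct affine computation sidesteps the issue entirely.
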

\begin{proof}
Let $\tau=(a,b,c)$ a positive triple. Let  $\psi: \Euc^{1,n} \to \mathcal{M}(a)$ be a Minkowski chart. By proposition \ref{pro:MinCharts}, the affine segment $\delta$ between $\psi^{-1}(b)$ and $\psi^{-1}(c)$ is spacelike. Let $g$ be the conformal transformation that sends the segment $\delta_0=[(-1,0,\ldots,0),(1,0,\ldots,0)]$ to $\delta$. Observe that $\psi\circ g$ is again a Minkowski chart.  It follows that $\psi\circ g$ is a $\tau$-chart. 

Let $\phi$ be another $\tau$-chart. From proposition \ref{pro:MinCharts}, the map $g=\psi^{-1}\circ \phi$ is a conformal transformation of  $\Euc^{1,n}$ fixing   $\delta_0$ and $0$ and so belonging to $\mathsf{O}(n)$.
\end{proof}

\subsection{Diamonds and positive quadruples}

\subsubsection{Diamonds and diamond distance}\label{sss:DiamondDistance}
\begin{proposition}
Given two transverse points $a$ and $b$ in $\bHn$, the set of points $c$ in $\bHn$ so that $(a,b,c)$ is a positive triple has two connected components.
\end{proposition}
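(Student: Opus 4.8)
The plan is to work in a $\tau$-chart to make the statement completely explicit. Pick a Minkowski chart $\psi$ for $a$ whose source is $\Euc^{1,n}$; since $b$ is transverse to $a$ we have $b\in\Min(a)$, so $b$ has a well-defined image point $b'=\psi^{-1}(b)$ in $\Euc^{1,n}$. The condition that $(a,b,c)$ be a positive triple is, in the chart, the condition that $c\in\Min(a)$ (so that $c$ has an image $c'=\psi^{-1}(c)\in\Euc^{1,n}$) together with the requirement that $a\oplus b\oplus c$ have signature $(2,1)$. Using the explicit form of the canonical chart from Proposition~\ref{pro:CanChart}, namely $\psi_0(u)=u-b_0+\tfrac12\q(u)a_0$ with $\q=\bq|_F$ and $F=(a\oplus b)^\bot$, one computes that for $c=\psi(c')$ the plane $a\oplus b\oplus c$ is spanned by $a_0$, $b_0$, and $\psi_0(c')=c'-b_0+\tfrac12\q(c')a_0$, hence by $a_0$, $b_0$, and the vector $c'+\tfrac12\q(c')a_0$; its Gram matrix with respect to $\langle\cdot,\cdot\rangle$ is block-triangular and its signature is $(2,1)$ precisely when $\q(c')>0$, i.e.\ when $c'$ is a spacelike vector of $F\cong\R^{1,n}$ (after translating so that $b'$ is the origin of the affine chart). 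So the set in question is, in the chart, the set of spacelike vectors in $\R^{1,n}$, which is $\{x\in\R^{n+1}: x_1^2-x_2^2-\cdots-x_n^2>0,\ (x_1,\dots,x_n)\neq 0\}$ once we remove the cone point; this set has exactly two connected components, distinguished by the sign of the first coordinate.

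Then I would transfer this back: the chart $\psi$ is a diffeomorphism onto $\Min(a)$, so the preimage of the positive-triple locus (with the $a$-deleted photon cone removed) is an open set with two components, and $\psi$ being a homeomorphism preserves the number of connected components. One should also check that adding back the boundary locus $L(a)\cap\cdots$ does not merge the two components — but in fact the set of $c$ with $(a,b,c)$ positive is contained in $\Min(a)$ by definition (a positive triple has pairwise transverse, in particular distinct, points, and $c$ must be transverse to $a$), so there is nothing extra to remove and the component count is exactly that of the set of spacelike vectors in $\R^{1,n}$.

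The only genuinely substantive point — the ``hard part'', though it is mild — is the signature computation identifying the positive-triple condition with spacelikeness of $c'$ in the chart. This is a direct linear-algebra check: write down the $3\times 3$ Gram matrix of $(a_0, b_0, \psi_0(c'))$ using $\langle a_0,b_0\rangle=1$, $\langle a_0,a_0\rangle=\langle b_0,b_0\rangle=0$, $\langle a_0,u\rangle=\langle b_0,u\rangle=0$ for $u\in F$, and $\langle u,u\rangle=\q(u)$, and read off that the determinant is $-\q(c')$ while the top-left $2\times 2$ minor has signature $(1,1)$; hence the full form has signature $(2,1)$ iff $\q(c')>0$. Everything else is formal: connectedness of the two ``spacelike cones'' $\{x_1>0\}$ and $\{x_1<0\}$ in $\R^{1,n}\setminus\{0\}$ (each is star-shaped, or an open convex cone, hence connected), and invariance of connected-component count under homeomorphism. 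I would also remark, for later use, that the two components are swapped by the unique conformal involution fixing $a$ and $b$ and interchanging the two ``halves'' — but that is not needed for the bare statement.
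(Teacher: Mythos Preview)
Your argument is correct and follows the same route as the paper: pass to a Minkowski chart for $a$ sending $b$ to the origin, identify the positive-triple condition with spacelikeness of $c'$, and observe that the spacelike vectors in $\R^{1,n}$ form two open convex cones. The only difference is that the paper delegates the identification ``$(a,b,c)$ positive $\Leftrightarrow$ $c'$ spacelike'' to Proposition~\ref{pro:MinCharts}\ref{it:MinCharts3} (spacelike lines close up to spacelike circles through $a$), whereas you carry out the Gram-matrix computation directly; your version is slightly more self-contained. Two small cleanups: the quadratic form on $\R^{1,n}$ runs to $x_{n+1}^2$, not $x_n^2$, and the claim that a positive triple is pairwise \emph{transverse} (not merely distinct) deserves one line of justification --- a $(2,1)$ space contains no isotropic $2$-plane, so two isotropic lines in it must pair nontrivially.
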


\begin{proof} Let $\psi: \mathbb R^{1,n} \to \Min(a)$ be a Minkowski chart with  $\psi(0)=b$. It follows  from proposition \ref{pro:MinCharts} that the points $c$ in $\bHn$ such that $(a,b,c)$ is positive are exactly the image of spacelike vectors in $\R^{1,n}$. This set is thus the union of two convex open cones.
\end{proof}

\begin{definition}{\sc[Diamond]}\label{def:Diamond}
Each of these connected components is called  a {\em diamond} defined by $a$ and $b$. Given a positive triple $(a,c,b)$,  the diamond defined by $a$ and $b$ containing $c$ is denoted $\Delta_c(a,b)$, while the diamond not containing $c$ is denoted $\Delta^*_c(a,b)$. 
\end{definition}
When $n=0$, diamonds are intervals. We now construct another metric on diamonds than the visual distance -- as in definition \ref{def:VisDist} -- associated to a positive triple and  which is better designed for our purposes.

The {\em Euclidean metric} on  $\R^{1,n}$ is $dx_1^2+dx_2^2+...+dx_{n+1}^2$. 

\begin{definition}{\sc[Diamond distance]}\label{def:DiamondDist}  Let  $\tau=(a,b,c)$ be a positive triple  in $\bHn$, the \emph{diamond distance} $\delta_\tau$ on the diamond $\Delta^*_a(b,c)$ is the image by (any) $\tau$-chart of the Euclidean metric in $\mathbb R^{1,n}$.
\end{definition}

\begin{figure}[!h] 
\begin{center}
\includegraphics[height=4cm]{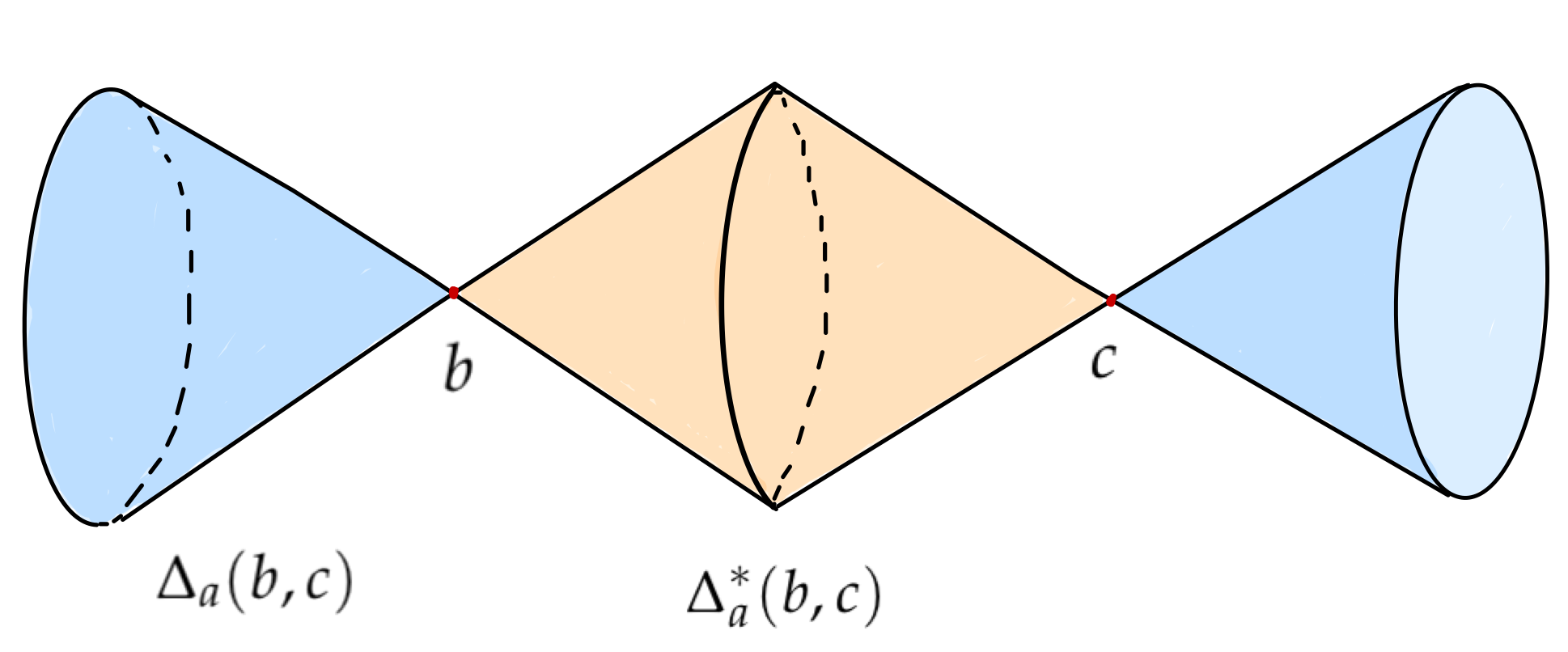}
\end{center}
\caption{Diamonds  in in a $\tau$-chart (the point $a$ is at infinity).} 
\label{f:diamond}
\end{figure}

\begin{proposition}\label{r:BilipschitzDistDiamond} There is a positive constant $C$ so that given a positive triple $\tau=(a,b,c)$, the metrics  $\delta_\tau$  and  $d_\tau$ are $C$-biLipschitz  on $\Delta_a^*(b,c)$.
\end{proposition}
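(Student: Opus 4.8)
The statement compares two Riemannian metrics on the open diamond $\Delta_a^*(b,c)$: the diamond distance $\delta_\tau$, which is by definition the pushforward under a $\tau$-chart of the flat Euclidean metric $dx_1^2+\cdots+dx_{n+1}^2$ on the region of $\R^{1,n}$ corresponding to spacelike vectors (a convex cone); and the visual distance $d_\tau$, which is the pushforward of the round metric $g_{\S^1}\oplus g_{\S^n}$ on $\bHn_+$ under the visual identification associated to $\beta_\tau$. Both are genuine smooth Riemannian metrics on the same domain, and the biLipschitz constant is asserted to be \emph{uniform} over all positive triples. The natural strategy is to exploit the transitivity of $\G$ on $\sT(n)$ (Lemma~\ref{lem:Orbits}, item~\ref{it:OneOrbit}) to reduce the uniformity to a single fixed triple, and then to prove the biLipschitz comparison there by a compactness/continuity argument on the (relatively compact) closure of a fundamental subregion of the diamond.

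\textbf{Step 1: reduction to one triple.} First I would observe that the entire picture is $\G$-equivariant. A $\tau$-chart is canonical up to precomposition by $\mathsf O(n)$, which acts by Euclidean isometries of $\R^{1,n}$ fixing $0$ and the segment $\delta_0$, so $\delta_\tau$ is well-defined independently of the choice of $\tau$-chart. The map $\tau\mapsto\beta_\tau$ into $\Gr$ is $\G$-equivariant, so $d_\tau$ is $\G$-equivariant as well. Hence if $\tau'=g\cdot\tau$ for $g\in\G$, then $g$ carries $(\Delta_a^*(b,c),\delta_\tau,d_\tau)$ isometrically onto $(\Delta_{a'}^*(b',c'),\delta_{\tau'},d_{\tau'})$. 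Since $\G$ acts transitively on $\sT(n)$, it therefore suffices to prove: for \emph{one} fixed positive triple $\tau_0$, the two metrics $\delta_{\tau_0}$ and $d_{\tau_0}$ are $C$-biLipschitz on $\Delta_a^*(b,c)$ for some finite $C$; the same $C$ then works for every triple.

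\textbf{Step 2: the comparison for a fixed triple.} Fix $\tau_0$ and work in a $\tau_0$-chart $\psi_{\tau_0}\colon\R^{1,n}\to\Min(a)$. The diamond $\Delta_a^*(b,c)$ corresponds to one of the two open spacelike cones $\mathcal C\subset\R^{1,n}$; on $\mathcal C$, $\delta_{\tau_0}$ is the flat metric $\sum dx_i^2$, a smooth positive-definite metric. The visual metric $d_{\tau_0}$ is also a smooth Riemannian metric on the same open set $\psi_{\tau_0}(\mathcal C)$ — indeed it extends smoothly across the boundary photon $L(a)$ once we pass to the compact model $\bHn_+\cong\S^1\times\S^n$, because the round metric is globally smooth there and the visual identification is a diffeomorphism. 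The ratio $d_{\tau_0}/\delta_{\tau_0}$ of these two metrics is a smooth positive function of unit tangent directions over $\mathcal C$; the obstacle to a global biLipschitz bound is only the behaviour as $x\to\partial\mathcal C$ (the boundary photon, and the "point at infinity" $a$ of the chart) and as $\|x\|\to\infty$ within $\mathcal C$, where the flat coordinate metric $\delta_{\tau_0}$ degenerates relative to the compact picture. To handle this, I would invoke a further, smaller group of symmetries: the stabiliser in $\G$ of the pair $(a,\{b,c\})$ — or better, a one-parameter hyperbolic subgroup fixing $b$ and $c$ and acting on the diamond — which acts on $\mathcal C$ by Euclidean scalings (dilations centred at $0$) in the $\tau_0$-chart and preserves $d_{\tau_0}$; together with the $\mathsf{O}(n)$-rotations this reduces the comparison to a \emph{compact} region of $\mathcal C$, e.g.\ a slice $\{x\in\mathcal C: \|x\|=1\}$ or a spherical shell. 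On such a compact set both metrics are smooth and positive-definite with no degeneration, so by continuity and compactness their ratio is bounded above and below by positive constants; pulling back through the dilations (which scale $\delta_{\tau_0}$ homogeneously and preserve $d_{\tau_0}$ — so the ratio is \emph{constant along dilation orbits after rescaling}) or more simply noting that the dilations act on the slice-with-scale decomposition, one propagates the bound over all of $\mathcal C$. The corners of the diamond and the point $a$ are absorbed because in the compact model the diamond has compact closure and $d_{\tau_0}$ stays smooth there, while the flat-coordinate degeneration is exactly cancelled by the dilation covariance.

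\textbf{The main obstacle.} The genuinely delicate point is Step~2: matching up the degeneration of the flat coordinate metric $\delta_\tau$ near the vertices (the ideal points $a,b,c$ of the chart) and near the boundary photon with the smooth, non-degenerate visual metric, and doing so with \emph{uniform} constants. The cleanest way around it is precisely the symmetry reduction just sketched — first by $\G$ to a single triple, then by the stabiliser of the triple (which contains $\mathsf{O}(n)$ and, acting on the diamond, a dilation flow) to a compact set where continuity does the rest. Once the picture is set up so that only a compact region needs to be examined, the biLipschitz estimate is automatic; all the work is in identifying enough symmetry to compactify. I expect the write-up to be short, relying on Lemma~\ref{lem:Orbits} for transitivity, on Proposition~\ref{pro:MinCharts} and Definition~\ref{def:DiamondDist} for the concrete description of $\delta_\tau$, and on the product-structure description of $[g_\Ein]$ in \S\ref{sss:ProductStructure} for the smoothness of $d_\tau$ across the boundary.
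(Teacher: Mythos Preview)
Your Step~1 (reduction to a single triple via the transitive $\G$-action on $\sT(n)$) is fine and is exactly what the paper uses implicitly. The problem is Step~2: you have misidentified the region $\Delta_a^*(b,c)$.

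In the $\tau$-chart for $\tau=(a,b,c)$, the point $a$ is at infinity while $b=(-1,0,\dots,0)$ and $c=(1,0,\dots,0)$ lie in $\R^{1,n}$. The diamond $\Delta_a^*(b,c)$ is the component of $\{d:(b,c,d)\text{ positive}\}$ \emph{not} containing $a$; since $a$ is at infinity, this is the \emph{bounded} causal diamond $I_+(b)\cap I_-(c)$ between $b$ and $c$ (cf.\ equation~\eqref{eq:DiamMink} and Figure~\ref{f:diamond}). It is not an unbounded spacelike cone. Consequently there is no ``$\|x\|\to\infty$'' regime to worry about: the closure of the diamond is compact in $\R^{1,n}$, both $\delta_\tau$ (the flat Euclidean metric) and the pullback of the visual metric $d_\tau$ are smooth positive-definite Riemannian metrics on a neighbourhood of that closure, and a standard compactness argument gives the biLipschitz constant. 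This is the whole content of the paper's one-line proof.

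Your dilation argument is not only unnecessary but incorrect as stated. A one-parameter subgroup of $\G$ fixing $b$ and $c$ cannot act as a dilation centred at $0$ in the $\tau$-chart, since such a dilation moves $b$ and $c$; the stabiliser of the triple $(a,b,c)$ in $\G$ is exactly the compact $\SO(n)$ acting orthogonally to the $x_1$-axis (Lemma~\ref{lem:Orbits}), and it contains no nontrivial homotheties. So the mechanism you propose to ``propagate the bound over all of $\mathcal C$'' does not exist. Fortunately, once you recognise the diamond is precompact, none of this is needed.
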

\begin{proof}
Indeed  a diamond has finite diameter with respect to both the diamond and the visual distances, which are both Riemannian on $\mathbb R^{1,n}$.
\end{proof}

\subsection{Positive quadruples and positive maps} 

\begin{definition}{\sc[Positive quadruple]}
	 A quadruple $(a,b,c,d)$ is {\em positive} if all sub-triples are positive and if furthermore
		$$\Delta_b(a,c)=\Delta^*_d(a,c)\ .$$
\end{definition}

\begin{figure}[!h] 
\begin{center}
\includegraphics[height=3cm]{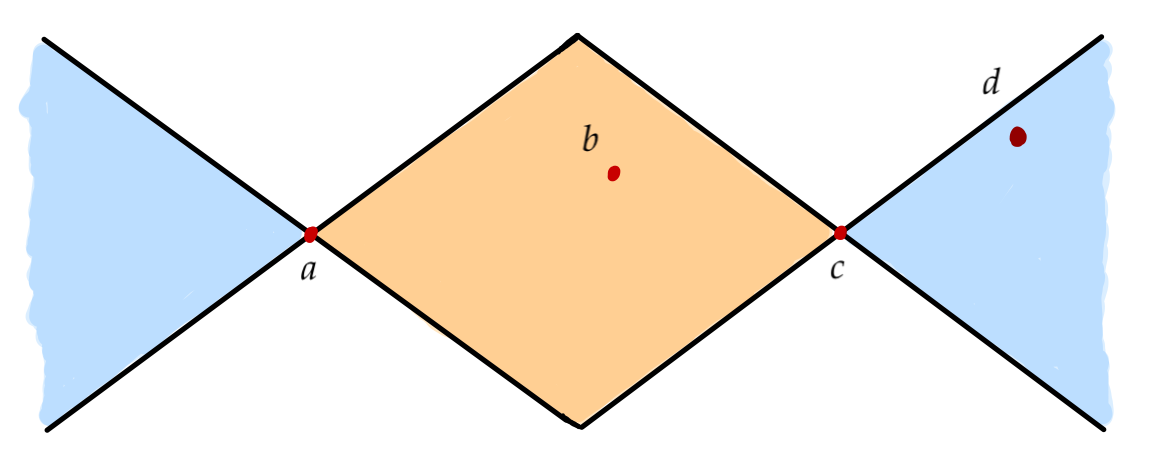}
\end{center}
\caption{A positive quadruple $(a,b,c,d)$.} 
\label{f:positivequad}
\end{figure}

We can finally define define positive maps:

\begin{definition}{\sc[Positive map and positive loop]}
	 A {\em positive map} from  a subset $A$ of $\P(V)$  to $\bHn$ is a map sending positive quadruples to positive quadruples.  
A \emph{positive loop} in $\bHn$  is the image of $\P(V)$ by a continuous positive map. 
\end{definition}

\begin{proposition}
	A continuous map from $\P(V)$ to $\bHn$ sending positive triples to positive triples is positive.
\end{proposition}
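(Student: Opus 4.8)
The plan is to show that if a continuous map $f\colon\P(V)\to\bHn$ sends positive triples to positive triples, then it automatically sends positive quadruples to positive quadruples, which is the definition of a positive map. Fix a positive quadruple $(a,b,c,d)$ in $\P(V)$; by hypothesis each of its four sub-triples maps to a positive triple in $\bHn$, so it remains only to verify the diamond condition $\Delta_{f(b)}(f(a),f(c))=\Delta^*_{f(d)}(f(a),f(c))$, i.e.\ that $f(b)$ and $f(d)$ lie in the two \emph{different} diamonds determined by the transverse pair $(f(a),f(c))$. So the whole content is: positivity of triples, plus continuity, forces the correct "separation" of the images of $b$ and $d$.

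First I would reduce to a local/combinatorial statement. The positive quadruples in $\P(V)$ with first and third entries equal to a fixed transverse pair $a,c$ are governed by which component of $\P(V)\setminus\{a,c\}$ a point lies in: $(a,b,c,d)$ is positive iff $b$ and $d$ lie in different components. On the target side, by the Proposition on diamonds (each transverse pair $a',b'$ in $\bHn$ determines exactly two diamonds, and for a positive triple $(a',e,b')$ the point $e$ lies in exactly one of them), the map $t\mapsto f(t)$, restricted to one component $I$ of $\P(V)\setminus\{a,c\}$, takes values in the set of $e$ with $(f(a),e,f(c))$ positive, which by the cited Proposition is the union of the two disjoint open diamonds $\Delta\sqcup\Delta^*$ determined by $(f(a),f(c))$. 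Since $I$ is connected and $f$ is continuous, $f(I)$ is connected, hence lies entirely in $\Delta$ or entirely in $\Delta^*$. The same holds for the other component $I'$. So it suffices to show that $f(I)$ and $f(I')$ do not land in the \emph{same} diamond.

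The key step — and the main obstacle — is ruling out the "degenerate" case where $f(I)$ and $f(I')$ both fall in the same diamond, say $\Delta$. I would argue this cannot happen by a topological/degree argument using three points. Pick $a,c$ and a third point $e_0$ with $(a,e_0,c)$ positive, say $e_0\in I$; then for any $e_1\in I'$ the quadruple obtained from $a,e_0,c,e_1$ (in an appropriate cyclic order) is positive, so $(f(a),f(e_0),f(c),f(e_1))$ has all sub-triples positive. If $f(I)$ and $f(I')$ were in the same diamond $\Delta$ of $(f(a),f(c))$, then in a $\tau$-chart $\psi_\tau$ centered so that $f(a)$ is at infinity and $\delta=[f(b_-),f(b_+)]$ is the standard spacelike segment (using the $\tau$-chart Proposition and the description of diamonds as convex cones in $\R^{1,n}$ in the proof of the diamond-components Proposition), the images $\psi_\tau^{-1}(f(I))$ and $\psi_\tau^{-1}(f(I'))$ would both lie in the \emph{same} open convex cone of spacelike vectors. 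But then one can produce three points $t_1\in I$, $t_2\in I$, $t_3\in I'$ forming, together with $a,c$, a configuration that is a positive quadruple in $\P(V)$ yet whose image has two sub-triples with incompatible diamond relations — more precisely, since the map on each component is continuous and $\P(V)$ is a circle, I would track the winding of $f$ around the circle $f(W_\tau)\cap\bHn$ and show that "same diamond for both arcs" forces $f$ to be non-injective on a positive triple, contradicting that the image of a positive triple spans a signature $(2,1)$ space (hence consists of three distinct points). Concretely: choosing $t,t'$ in the two components and letting them both tend to $a$, continuity of $f$ and the diamond description force $f(t),f(t')$ to approach $f(a)$ from the two \emph{opposite} sides of the spacelike circle through the positive triple, which is exactly the statement that they lie in opposite diamonds.

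Thus $f(I)\subset\Delta$ and $f(I')\subset\Delta^*$ (or vice versa), which says precisely $\Delta_{f(b)}(f(a),f(c))=\Delta^*_{f(d)}(f(a),f(c))$ for every positive quadruple $(a,b,c,d)$; hence $f$ is a positive map. I expect the delicate point to be the clean formulation of the "opposite sides" claim — making rigorous, via the $\tau$-chart and the convexity of diamond cones, that two continuous arcs approaching the puncture $f(a)$ from within the positive-triple locus must approach from opposite diamonds unless $f$ fails injectivity on some positive triple.
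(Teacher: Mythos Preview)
Your setup is correct and matches the paper's: reduce to showing that the two arcs $f(I)$ and $f(I')$, each of which lies entirely in a single diamond of the pair $(f(a),f(c))$ by connectivity, actually lie in \emph{different} diamonds. You also correctly identify that this separation is the crux.

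However, your argument for this separation has a genuine gap. The ``winding/degree'' sketch and the ``approaching $f(a)$ from opposite sides of a spacelike circle'' idea are both left vague, and neither is easy to make rigorous as stated. For the second, note that as $t\to a$ from $I$ and $t'\to a$ from $I'$, both $f(t)$ and $f(t')$ tend to $f(a)$, which lies on the common boundary of the two diamonds; there is no canonical spacelike circle here that forces the approaches to be from distinct sides, and no evident contradiction with injectivity on triples arises just from both arcs sitting in the same open diamond. You yourself flag this as the delicate point, and indeed it is where the proposal falls short.

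The paper's argument for this step is different and decisive. It uses two facts you have already implicitly set up but do not exploit: (i) each arc $f(I)$, $f(I')$ is a continuous path in $\bHn$ joining $f(a)$ to $f(c)$; and (ii) any point of $f(I)$ is \emph{transverse} to any point of $f(I')$, since for $x'\in I$, $y'\in I'$ the triple $(a',x',y')$ is positive in $\P(V)$ and hence its image is positive in $\bHn$, in particular pairwise transverse. The key geometric observation, visible in a $\tau$-chart for $\Min(f(a))$, is that if $\gamma_1=f(I)$ is a path from $f(a)$ to $f(c)$ inside the diamond $\Delta$, then the union of lightcones $\bigcup_{x\in\gamma_1}L(x)$ \emph{covers} $\Delta$. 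Consequently any point of $\Delta$ lies on the lightcone of some point of $\gamma_1$, i.e.\ is \emph{non}-transverse to that point. By (ii), no point of $\gamma_2=f(I')$ can lie in $\Delta$, so $\gamma_2\subset\Delta^*$. This is the missing mechanism: transversality of all cross-arc pairs, combined with the lightcone-sweep covering the diamond, is what forces the two arcs into opposite diamonds.
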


\begin{proof}
Let $\varphi: \P(V) \to \bHn$ be a continuous map mapping positive triples to positive triples. Let $(a',b',c',d')$  be a  positive quadruple in $\P(V)$ that is mapped to $a,b,c,d$ in $\bHn$. 

Since pairs of distinct points in $\P(V)$ are mapped to transverse pairs in $\bHn$, the image by $\varphi$ of $\P(V)\setminus\{a',c'\}$ consists in two disjoint arcs $\gamma_1$ and $\gamma_2$ containing respectively $b$ and $d$ contained in $\Delta_b(a,c)\sqcup \Delta^*_b(a,c)$ both joining $a$ and $c$.

Using a $\tau$-chart -- with $\tau=(x,b,c)$ --  we see that 
the union of the lightcones  $L(x)$  for $x$ in $\gamma_1$ covers $\Delta_b(a,c)$. Recall that any point  $y$ in $\gamma_2$ is transverse to any point $x$ in $\gamma_1$ and thus cannot lie in $L(x)$. It follows that $\gamma_2$ -- and thus $d$-- lies  $\Delta^*_b(a,c)$.  This contradicts the transversality of $x$ and $y$.  Thus $\Delta_b(a,c)=\Delta^*_b(a,c)$, and $(a,b,c,d)$ is positive.
\end{proof}

Thus in an equivalent way, we could have defined a positive loop in $\bHn$ as a topological circle for which any triple of pairwise distinct points is positive. This is the definition that we used  in \cite{LTW}.

\subsection{Semi-positive loops and Barbot crowns}\label{sss:PositiveLoops}
For a triple of points in $\bHn$, being positive is not a closed condition.  As a result, positive loops do not form a closed set. We recall now the notion  of semi-positive loops introduced in \cite{LTW} that are natural degeneration of positive loops.

\subsubsection{Semi-positivity}
\begin{definition}{\sc[Semi-positive loop]}
A triple of points is {\em non-negative} if the vector space generated is not of type $(1,2)$.

A \emph{semi-positive loop} is the image of an injective continuous map from $\P(V)$ to $\bHn$ (or to $\bHn_+$) sending positive triples to non-negative triples and sending at least one positive triple to a positive one. 
\end{definition}

We proved in \cite[Lemma 2.8]{LTW}:

\begin{proposition}
The preimage in $\bHn_+$ of a positive (respectively semi-positive loop) in $\bHn$ has two connected components, each of which is a positive (respectively semi-positive) loop.
\end{proposition}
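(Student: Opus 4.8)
The plan is to use the projection map $\pi: \bHn_+ \to \bHn$ and the deck transformation $\sigma = -\Id$ acting on $\P_+(E)$, and to transfer known facts about ordinary (positive) loops to the semi-positive case by a limiting argument. First I would recall the statement from \cite[Lemma 2.8]{LTW} cited just above for positive loops: if $\Lambda \subset \bHn$ is a positive loop, then $\pi^{-1}(\Lambda)$ has exactly two connected components, each mapped homeomorphically onto $\Lambda$ by $\pi$, and each is itself a positive loop in $\bHn_+$ (this is the proposition stated immediately before the one we must prove). So the whole content is the semi-positive case, and the positive case inside the parenthetical is already in hand.

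Here is how I would run the semi-positive argument. Let $\gamma: \P(V) \to \bHn$ be an injective continuous map realizing the semi-positive loop $\Lambda = \gamma(\P(V))$, so that $\gamma$ sends positive triples to non-negative triples and some positive triple to a genuinely positive one. First I would show $\pi^{-1}(\Lambda)$ is a closed curve that is \emph{disconnected}: pick a positive triple $(x_1,x_2,x_3)$ in $\P(V)$ whose image $(\gamma(x_1),\gamma(x_2),\gamma(x_3))$ is a \emph{positive} triple in $\bHn$; by the proof of Lemma \ref{lem:Orbits}, its two lifts to $\bHn_+$ correspond to the two possible orientations of a distinguished pair $(e_1,e_2)$, hence lie on the two connected components of $\pi^{-1}$ of the spacelike circle through them, which are disjoint. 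Lifting $\gamma$ along $\pi$ on the simply connected complement $\P(V)\setminus\{x_1\}$, I obtain two lifts $\tilde\gamma_+$ and $\tilde\gamma_- = \sigma\circ\tilde\gamma_+$, and the point is that these do \emph{not} glue into a single closed curve: near $x_1$ the two local lifts are related by $\sigma$, exactly as in the positive case, because on any small arc around $x_1$ one can find a positive sub-triple with positive image (positivity being an open condition and at least one positive triple mapping to a positive one — combined with connectedness of the space of lifts one propagates the ``sheet-switching'' behaviour, exactly as in the proof of the positive-loop proposition). Therefore $\pi^{-1}(\Lambda) = \tilde\gamma_+(\P(V)) \sqcup \tilde\gamma_-(\P(V))$ has two connected components, each a homeomorphic copy of $\P(V)$, interchanged by $\sigma$.

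Next I would check that each component is a semi-positive loop in $\bHn_+$. Injectivity of $\tilde\gamma_\pm$ follows from injectivity of $\gamma$ together with the fact that no two $\sigma$-related points can both lie on one component (since the two components are disjoint and swapped by $\sigma$). For the triple condition: a positive triple in $\P(V)$ maps under $\tilde\gamma_\pm$ to a triple whose image under $\pi$ is non-negative, i.e. the span in $E$ is not of type $(1,2)$; since $\pi$ only quotients by scalars and the type of the span is insensitive to the choice of lifts, the lifted triple spans the same (non $(1,2)$-type) subspace, so it is non-negative in $\bHn_+$ in the sense of the definition. Finally the distinguished positive triple $(x_1,x_2,x_3)$ maps under $\tilde\gamma_\pm$ to a triple spanning a $(2,1)$-space, hence a positive triple in $\bHn_+$; so each component meets the requirement that at least one positive triple goes to a positive one. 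Conversely, $\pi$ carries each component homeomorphically onto $\Lambda$ and sends (non-)negative and positive triples to (non-)negative and positive triples respectively, so the image is genuinely the semi-positive loop $\Lambda$.

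The main obstacle is the gluing step — proving that the two local lifts near the basepoint $x_1$ really fail to match up, so that $\pi^{-1}(\Lambda)$ is disconnected rather than a single double-cover-of-a-circle. In the positive case this is handled in \cite[Lemma 2.8]{LTW} using that \emph{every} triple has positive image; in the semi-positive case one only knows \emph{some} triple does. I would get around this by a density/continuity argument: the set of $x\in\P(V)$ near which $\gamma$ admits a positive sub-triple with positive image is open and, by the hypothesis that at least one positive triple maps to a positive one together with injectivity (transversality of images of distinct points, guaranteeing spans never degenerate below rank $3$), one shows this open set is all of $\P(V)$ except possibly a closed nowhere-dense set; the sheet-switching monodromy of the lift around $x_1$ is a $\Z/2$-valued locally constant invariant on this open dense set, equal to the nontrivial element at any point where the image triple is positive, hence nontrivial, hence $\pi^{-1}(\Lambda)$ is disconnected. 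Once disconnectedness is secured, everything else is the routine bookkeeping sketched above.
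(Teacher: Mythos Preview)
The paper does not give its own proof; the sentence ``We proved in \cite[Lemma 2.8]{LTW}'' covers the \emph{entire} proposition, semi-positive case included, so there is nothing here to compare against beyond the citation.

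Your disconnectedness step has two concrete problems. First, the claim ``transversality of images of distinct points, guaranteeing spans never degenerate below rank $3$'' is false for semi-positive loops: a Barbot crown, for instance, has edges lying on photons, and any two distinct points on such an edge are non-transverse while three of them span only an isotropic $2$-plane. So the open set on which you want your $\mathbb Z/2$-valued invariant defined need not be dense, and your propagation mechanism fails exactly on the photon arcs that distinguish the semi-positive from the positive case. Second, the final inference is inverted: in a double cover the preimage of an embedded loop is two circles precisely when the monodromy is \emph{trivial}; ``nontrivial'' monodromy yields a single connected circle doubly covering the base. For a spacelike circle $C=\P(W)\cap\bHn$ with $W$ of type $(2,1)$, the null cone in $W\setminus\{0\}$ has two components, so $\pi^{-1}(C)$ is two circles and the monodromy along $C$ is trivial --- that is the value you must establish for $\Lambda$. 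The repair is global, not pointwise: the monodromy along $\Lambda$ is a homotopy invariant in $\bHn$, and what the cited lemma supplies is essentially a homotopy from $\Lambda$ to the spacelike circle through its positive triple. Your closing paragraph --- injectivity of each lift and the triple conditions upstairs via invariance of the span under choice of representative --- is fine once disconnectedness is secured.
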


As a result, semi-positive loops in $\bHn_+$ are exactly lift of semi-positive loops in $\bHn$. Positive and semi-positive loops in $\bHn_+$ can be characterized in terms of graphs. A map $f$ between two metric spaces $(X,d_X)$ and $(Y,d_Y)$ is \emph{contracting} if $d_Y(f(x),f(y))<d_X(x,y)$ for any distinct points $x$ and $y$ in $X$. We proved in  \cite[Proposition 2.11]{LTW}:

\begin{proposition}\label{pro:PositiveLoopsAreGraphs}
Let $\Lambda$ be a topological circle in $\bHn_+$ and consider a splitting $\bHn_+\cong \S^1\times \S^n$ (see Paragraph \ref{sss:ProductStructure}). Then
\begin{enumerate}
	\item The set $\Lambda$ is a semi-positive loop if and only if it is the graph of a 1-Lipschitz map from $\S^1$ to $\S^n$ that is different from a global isometry.
\item The set $\Lambda$ is positive if and only if it is the graph of a contracting map from $\S^1$ to $\S^n$. 	
\end{enumerate}
\end{proposition}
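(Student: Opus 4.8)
The plan is to transport everything to a product splitting $\bHn_+\cong\S^1\times\S^n$ as in Paragraph~\ref{sss:ProductStructure} and to reduce both equivalences to one elementary estimate. In such a splitting a point $(u,v)$ represents the isotropic ray through $u+v$ with $\bq_U(u)=\bq_U(v)=1$, and for $a=(u,v)$, $b=(u',v')$ the quantity $c(a,b)\defeq\cos d_{\S^1}(u,u')-\cos d_{\S^n}(v,v')$ equals the $\bq$-pairing $\langle u+v,u'+v'\rangle$ of these representatives. The first thing to prove is the following \emph{sign lemma}: for a triple of pairwise distinct points with pairwise values $c_{12},c_{13},c_{23}$, the Gram matrix of the representative vectors is the symmetric ``hollow'' matrix with zero diagonal and these off-diagonal entries, so it has trace $0$ and determinant $2\,c_{12}c_{13}c_{23}$; inspecting the signs of its three real eigenvalues (which sum to $0$) shows that the span is of signature $(2,1)$ exactly when $c_{12}c_{13}c_{23}<0$, of signature $(1,2)$ exactly when $c_{12}c_{13}c_{23}>0$, and degenerate when the product vanishes. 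By Definition~\ref{def:PositiveTriple} and the definition of \emph{non-negative} triple this reads: a triple is positive iff $c_{12}c_{13}c_{23}<0$, and non-negative iff $c_{12}c_{13}c_{23}\le 0$.

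The two implications ``graph $\Rightarrow$ loop'' are then routine. If $\Lambda=\mathrm{graph}(f)$ with $f\colon\S^1\to\S^n$ continuous, then $\Lambda$ is a topological circle and $c_{ij}=\cos d_{\S^1}(u_i,u_j)-\cos d_{\S^n}(f(u_i),f(u_j))$, which is $\le 0$ for all pairs when $f$ is $1$-Lipschitz and $<0$ for all pairs when $f$ is contracting; by the sign lemma every triple is then non-negative (resp.\ positive), so $\Lambda$ is a semi-positive loop (resp., by the characterisation of positive loops recalled above, a positive loop). In the semi-positive case one still has to exhibit one positive triple on $\Lambda$, and this is where the hypothesis that $f$ is not a global isometry is used.

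For the converse in the \emph{positive} case I expect a short topological argument. If $\Lambda$ is a positive loop then every pair of distinct points is transverse, so $c$ never vanishes on the space $\Lambda^{(2)}$ of \emph{ordered} pairs of distinct points of $\Lambda$, and $\sigma(a,b)\defeq\operatorname{sign} c(a,b)$ is a continuous map $\Lambda^{(2)}\to\{\pm1\}$. Since $\Lambda^{(2)}\cong\S^1\times(0,2\pi)$ is connected, $\sigma$ is constant; it cannot be $+1$, for a positive triple would then force $c_{12}c_{13}c_{23}=+1$, so $c<0$ on every pair. In particular no two distinct points of $\Lambda$ share their $\S^1$-coordinate, so the projection $\Lambda\to\S^1$ is a continuous injection of a circle into a circle, hence onto (a circle does not embed in a proper arc), so $\Lambda=\mathrm{graph}(f)$ for some continuous $f\colon\S^1\to\S^n$; and $c<0$ on all pairs is precisely the statement that $d_{\S^n}(f(u),f(u'))<d_{\S^1}(u,u')$, i.e.\ that $f$ is contracting.

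The hardest step is the converse in the \emph{semi-positive} case, because the non-transverse pairs now form a possibly large closed subset of $\Lambda^{(2)}$ on which $\sigma$ vanishes, so one cannot conclude directly that $c$ has a fixed sign. My plan is to use the degeneration picture of \cite{LTW}: realise the semi-positive loop as a limit of positive loops $\Lambda_k=\mathrm{graph}(f_k)$ with each $f_k$ contracting, hence uniformly $1$-Lipschitz; by Arzel\`a--Ascoli a subsequence of $(f_k)$ converges uniformly to a $1$-Lipschitz $f$ with $\mathrm{graph}(f)=\Lambda$, and $f$ is not a global isometry because a photon — the graph of an isometry — carries no positive triple, while a semi-positive loop does. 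Making this rigorous, i.e.\ showing that every semi-positive loop really is such a limit, with enough control to identify the limit with a graph, is the main obstacle; arguing directly instead would require ruling out ``folding'' of $\Lambda$ over $\S^1$ in the presence of a large non-transverse locus, which is substantially harder than in the positive case.
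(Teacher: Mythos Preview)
The paper does not prove this proposition here; the line just before it reads ``We proved in \cite[Proposition 2.11]{LTW}:'', so the argument lives in the companion paper and there is nothing in the present text to compare your proof against directly.

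On its own merits, your proposal is largely sound. The sign lemma is correct and is exactly the right reduction: the hollow Gram matrix has trace $0$ and determinant $2c_{12}c_{13}c_{23}$, and since signature $(3,0)$ and $(0,3)$ are excluded by the trace condition, the non-degenerate cases are precisely $(2,1)\Leftrightarrow\det<0$ and $(1,2)\Leftrightarrow\det>0$. Your treatment of the positive case in both directions is complete and clean; the connectedness of $\Lambda^{(2)}\cong\S^1\times(0,2\pi)$ forces a constant sign, and the injectivity of the $\S^1$-projection then gives the graph description.

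There are two soft spots. First, in ``graph of a $1$-Lipschitz non-isometry $\Rightarrow$ semi-positive loop'' you need to produce a single \emph{positive} triple, i.e.\ three points with all three $c_{ij}$ \emph{strictly} negative. You say ``this is where the hypothesis that $f$ is not a global isometry is used'' but do not carry it out; this step is not entirely trivial, since a $1$-Lipschitz map can be a local isometry on large arcs, and one has to argue that enough strict contraction survives to get three mutually contracted points.

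Second, and more seriously, your plan for ``semi-positive loop $\Rightarrow$ graph'' is almost certainly circular. Approximating a semi-positive loop by positive loops is exactly what one does \emph{after} knowing it is the graph of a $1$-Lipschitz map (replace $f$ by a strictly contracting perturbation). Since Proposition 2.11 appears early in \cite{LTW} and is used as input throughout, you should not expect an independent approximation result to be available. The honest route is direct: show that $c(a,b)\le 0$ for every pair on $\Lambda$ by combining the constraint $c_{12}c_{13}c_{23}\le 0$ for all triples with the existence of one positive triple, and then run your projection argument. You yourself flag this as ``the main obstacle''; it is, and it is where the real work lies.
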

 We have the following:
\begin{corollary}\label{cor:photonsegment}
If $\Lambda$ is a semi-positive loop which is not positive, then it contains an arc with non empty interior which is the intersection of $\Lambda$ with a photon. 	
\end{corollary}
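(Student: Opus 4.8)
The plan is to work in the splitting $\bHn_+\cong\S^1\times\S^n$ afforded by Paragraph \ref{sss:ProductStructure} and to use the graph characterisation of Proposition \ref{pro:PositiveLoopsAreGraphs}. Lift $\Lambda$ to a semi-positive loop $\Lambda_+$ in $\bHn_+$; by Proposition \ref{pro:PositiveLoopsAreGraphs}, $\Lambda_+$ is the graph of a $1$-Lipschitz map $f:\S^1\to\S^n$ which is not a contracting map (since $\Lambda$ is not positive) and not a global isometry (since $\Lambda$ is semi-positive). Failure of the contracting property gives two distinct points $p,q\in\S^1$ with $d_{\S^n}(f(p),f(q))=d_{\S^1}(p,q)$; pick $p,q$ realising this with $d_{\S^1}(p,q)$ as large as possible among such pairs (this maximum is attained by compactness, and it is strictly less than $\pi$, otherwise $f$ would be a global isometry). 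Let $\alpha$ be the shorter arc of $\S^1$ from $p$ to $q$.

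The first key step is to show that $f$ restricted to $\alpha$ is an \emph{isometric embedding onto a minimising geodesic arc of $\S^n$}. Indeed, for any $x$ on $\alpha$ the triangle inequality and $1$-Lipschitz bound force
\[
d_{\S^1}(p,q)=d_{\S^n}(f(p),f(q))\leq d_{\S^n}(f(p),f(x))+d_{\S^n}(f(x),f(q))\leq d_{\S^1}(p,x)+d_{\S^1}(x,q)=d_{\S^1}(p,q),
\]
so all inequalities are equalities; thus $d_{\S^n}(f(p),f(x))=d_{\S^1}(p,x)$ and $f(x)$ lies on a minimising geodesic of $\S^n$ between $f(p)$ and $f(q)$, whose length is $d_{\S^1}(p,q)<\pi$. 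Running the same argument with any two points $x,y\in\alpha$ shows $f|_\alpha$ is a distance-preserving map onto a subarc of that minimising geodesic.

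The second key step is to identify the image $\Lambda_+\cap(\alpha\times\S^n)$, i.e.\ the graph of $f|_\alpha$, with (an arc of) a photon. By the description at the end of Paragraph \ref{sss:ProductStructure}, photons in $\bHn_+\cong\S^1\times\S^n$ are exactly graphs of isometries $\S^1\to\S^n$ (more precisely, of distance-preserving embeddings of geodesic arcs, extended to a great circle). Since $f|_\alpha$ is such a distance-preserving embedding of a geodesic arc, there is an isometric great circle $\S^n\supset C$ and an isometry $\varphi:\S^1\to C\subset\S^n$ agreeing with $f$ on $\alpha$; the graph of $\varphi$ is a photon $\Pi$, and $\Lambda_+$ agrees with $\Pi$ over the arc $\alpha$. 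Therefore $\Lambda_+\cap\Pi$ contains the arc $\{(x,f(x)):x\in\alpha\}$, which has non-empty interior in $\Lambda_+$ (it is the image of the open arc $\mathrm{int}(\alpha)$ under the embedding $x\mapsto(x,f(x))$). Projecting back down to $\bHn$ gives the desired photon arc inside $\Lambda$, and since $\Lambda\cap\Pi$ is closed while containing this arc, the arc is exactly a connected component of $\Lambda\cap\Pi$ — in particular the intersection of $\Lambda$ with a photon as claimed.

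The main obstacle I anticipate is the second step: making rigorous the passage from ``$f|_\alpha$ is a distance-preserving embedding of a geodesic arc'' to ``its graph is literally an arc of a photon.'' One must check that the great circle $C\subset\S^n$ carrying the image geodesic is an \emph{isometrically embedded} $\S^1$ (so that $\varphi$ has the correct normalisation $\bq_U(\varphi(u))=\bq_U(u)$ demanded in Paragraph \ref{sss:ProductStructure} for the graph to be an isotropic $2$-plane), and that extending $\varphi$ beyond $\alpha$ to all of $\S^1$ does not conflict with semi-positivity of $\Lambda$ elsewhere — but this is automatic since we only claim $\Lambda$ \emph{contains} the photon arc over $\alpha$, not that the whole photon lies in $\Lambda$. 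Everything else is the triangle-inequality saturation argument, which is routine.
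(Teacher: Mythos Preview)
Your approach is the paper's: lift to $\bHn_+$, invoke the graph characterisation, and use triangle-inequality saturation for the $1$-Lipschitz map $f$ to locate an arc on which $f$ agrees with an isometry, hence with a photon. Your first key step is in fact more explicit than the paper's, since you show $f|_\alpha$ preserves \emph{all} pairwise distances (so the image is a genuine geodesic arc), whereas the paper only records equality of distances from the single basepoint $x$ on its set $\sigma$. Two small points still need repair. The claim that the maximal equality distance is strictly less than $\pi$ (``otherwise $f$ would be a global isometry'') is false: take $f:\S^1\to\S^1$ with $f(\theta)=\theta$ on $[0,\pi]$ and $f(\theta)=2\pi-\theta$ on $[\pi,2\pi]$; this is $1$-Lipschitz, not an isometry, yet $d(f(0),f(\pi))=\pi$. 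But you never use maximality, so simply drop it; and if $d_{\S^1}(p,q)=\pi$, replace $q$ by the midpoint of one semicircle from $p$ to $q$, where your own saturation argument shows equality persists.

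The real loose end is your final sentence: closedness of $\Lambda\cap\Pi$ does not force the graph over $\alpha$ to be a connected component, and the statement requires $\Lambda\cap\Pi$ itself to be a single arc. Here is the missing step. Let $S=\{x\in\S^1:f(x)=\varphi(x)\}$, so that $\Lambda_+\cap\Pi$ is the graph over $S$; the set $S$ is closed, proper (else $\Lambda_+=\Pi$ is a photon, contradicting semi-positivity), and contains $\alpha$. Given any $y\in S$, choose $x\in\alpha$ with $d_{\S^1}(x,y)<\pi$ (possible since $\alpha$ is non-degenerate, so not every point of $\alpha$ is antipodal to $y$); then $d_{\S^n}(f(x),f(y))=d_{\S^n}(\varphi(x),\varphi(y))=d_{\S^1}(x,y)$, and rerunning your saturation argument on the shorter arc from $x$ to $y$ forces $f=\varphi$ there, so that arc lies in $S$. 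Hence $S$ is connected, i.e.\ a single closed arc, and $\Lambda\cap\Pi$ is the required arc with non-empty interior.
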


\begin{proof}
Up to taking a lift, we can assume that $\Lambda\subset \bHn_+$. By the previous proposition, given a splitting $\bHn_+\cong \S^1 \times \S^n$, the loop $\Lambda$ is the graph of a $1$-Lipschitz map $f: \S^1\to\S^n$ and since $\Lambda$ is not positive there exists $x\neq y$	 in $\S^1$ such that $d_{\S^1}(x,y)=d_{\S^n}(f(x),f(y))$. The set 
$$\sigma=\big\{z\in \S^1~,~d_{\S^n}(f(x),f(z))=d_{\S^1}(x,z)\big\}\ ,$$
is a closed proper arc in $\S^1$ with non-empty interior. Since  photons are graph of isometries, the graph of $f$ above $\sigma$ is contained in a photon $\phi$. By construction, $\Lambda\cap \phi= (\sigma,f(\sigma))\subset \S^1\times \S^n$.
\end{proof}

Proposition \ref{pro:PositiveLoopsAreGraphs} allows us to have a notion of convergence of semi-positive loops  -- as graph of 1-Lipschitz maps -- and we have:

\begin{corollary}\label{cor:CompactnessSemi-positive}
Let $\seqk\Lambda$ be a sequence of semi-positive loops in $\bHn_+$. Then
\begin{enumerate}
	\item The sequence $\seqk\Lambda$ subconverges to either a semi-positive loop or a photon.
	\item If each $\Lambda_k$ passes through a given positive triple $\tau$, then the sequence $\seqk\Lambda$ subconverges to a semi-positive loop passing through $\tau$.
\end{enumerate}
\end{corollary}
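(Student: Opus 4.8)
The strategy is to pass everything through the graph description of Proposition~\ref{pro:PositiveLoopsAreGraphs} and to invoke the Arzel\`a--Ascoli theorem. First I would fix once and for all a splitting $\bHn_+\cong\S^1\times\S^n$ as in Paragraph~\ref{sss:ProductStructure}, together with its visual metric $g=g_{\S^1}\oplus g_{\S^n}$. By Proposition~\ref{pro:PositiveLoopsAreGraphs}, each $\Lambda_k$ is the graph of a $1$-Lipschitz map $f_k\colon\S^1\to\S^n$ which is not a global isometry. The family $\{f_k\}_{k\in\mathbb N}$ is equicontinuous with values in the compact space $\S^n$, so after passing to a subsequence $f_k$ converges uniformly to a map $f\colon\S^1\to\S^n$; a uniform limit of $1$-Lipschitz maps is $1$-Lipschitz, so $f$ is $1$-Lipschitz. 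Correspondingly the graphs $\Lambda_k$ converge in Hausdorff distance to $\Lambda\defeq\mathrm{graph}(f)$, which is a topological circle in $\bHn_+$, since $x\mapsto(x,f(x))$ is a continuous injection of $\S^1$ into a compact Hausdorff space. This is the notion of subconvergence of semi-positive loops implicit in the statement (convergence of the graphs of the underlying $1$-Lipschitz maps).

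Then I would distinguish two cases according to whether the limit map $f$ is a global isometry. If it is not, Proposition~\ref{pro:PositiveLoopsAreGraphs} says directly that $\Lambda$ is a semi-positive loop. If it is, then the last observation of Paragraph~\ref{sss:ProductStructure} identifies $\mathrm{graph}(f)$ with a photon. This establishes the first assertion.

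For the second assertion, write the positive triple $\tau=(a,b,c)$ in the splitting, with $\S^1$-coordinates $a_1,b_1,c_1$ and $\S^n$-coordinates $a_2,b_2,c_2$. Since each $\Lambda_k$ is a graph containing the three distinct points $a,b,c$, the coordinates $a_1,b_1,c_1$ are pairwise distinct and $f_k(a_1)=a_2$, $f_k(b_1)=b_2$, $f_k(c_1)=c_2$; passing to the limit gives $f(a_1)=a_2$, $f(b_1)=b_2$, $f(c_1)=c_2$, so $\Lambda$ passes through $\tau$. To conclude via the first case it suffices to exclude that $\Lambda$ is a photon: were $\Lambda=\P(U)$ for an isotropic $2$-plane $U$, then $a,b,c$ would be three distinct lines inside $U$, forcing $a\oplus b\oplus c\subset U$ to have dimension at most $2$; but $\tau$ being positive means $a\oplus b\oplus c$ has signature $(2,1)$ by Definition~\ref{def:PositiveTriple}, a contradiction. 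Hence $f$ is not a global isometry and $\Lambda$ is a semi-positive loop through $\tau$.

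The proof is essentially soft, and the only delicate point is the ``boundary'' identification at the end of case analysis --- that a $1$-Lipschitz graph equal to an isometric graph is precisely a photon, and conversely that a genuine positive triple cannot sit on a photon --- for which the product description of Paragraph~\ref{sss:ProductStructure} and the signature bookkeeping of Definition~\ref{def:PositiveTriple} are exactly what is needed.
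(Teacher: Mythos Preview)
Your proof is correct and follows exactly the same route as the paper: compactness of $1$-Lipschitz maps via Arzel\`a--Ascoli, the dichotomy from Proposition~\ref{pro:PositiveLoopsAreGraphs} (semi-positive loop versus photon), and for the second item the observation that passing through~$\tau$ is a closed condition together with the fact that no photon contains a positive triple. The paper's proof is simply a terser version of what you wrote.
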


\begin{proof}
The first item comes from the fact that the space $M$ of 1-Lipschitz maps from $\S^1$ to $\S^n$ is compact. From Proposition \ref{pro:PositiveLoopsAreGraphs}, the graph of such a map in $\S^1\times \S^n\cong \bHn_+$ is either a semi-positive loop or a photon.

For the second item, observe that passing through $\tau$ is a closed condition and that no photon can pass through a positive triple.
\end{proof}

\subsubsection{Barbot crowns}
We define here an important semi-positive loop in $\bHn$, that we call {\em Barbot crown}, from Barbot's work  in \cite{barbot}.

\begin{definition}
A \emph{Barbot crown} $\mathcal{C}$  has four cyclically ordered {\em vertices} in $\bHn$, spanning a four dimensional vector space, and four {\em edges} which are  segment of photons joining consecutive vertices.
\end{definition}

Let $\mathcal C$ be a Barbot crown with vertices $(v_1,v_2,v_3,v_4)$ and let $F_{\mathcal C}\defeq v_1\oplus...\oplus v_4$. Then we denote by   $A_\mathcal C$  the Abelian subgroup of   $\G$  which,  in the splitting   $E=v_1\oplus v_2\oplus v_3 \oplus v_4 \oplus F_\mathcal{C}^\bot$, is defined by the matrices 
\begin{eqnarray}
	a(\lambda,\mu)=\left(\begin{array}{ccccc}
1/\lambda & 0 & 0 &0 &0 \\
0 &  1/\mu  &0 &0 &0 \\
0 & 0&\lambda &0 & 0 \\
 0 & 0& 0& \mu &0 \\
0 & 0& 0& 0& \text{Id}_{F_{\mathcal{C}}^\bot}
\end{array}\right)~.\label{def:CartBarb}
\end{eqnarray}

Then we have
\begin{proposition}\label{pro:BarbCrownG}
If $(v_1,v_2,v_3,v_4)$ are the vertices of a Barbot crown $\mathcal C$  labelled clockwise and $F_{\mathcal C}\defeq v_1\oplus v_2\oplus v_3\oplus v_4$, then $F_{\mathcal C}$ is of type $(2,2)$ and  the Barbot crown $\mathcal C$  lies in the projective space of $F$.

All Barbot crowns are isomorphic under the action of $\G$. Moreover a Barbot crown is globally invariant under the action of a Cartan subgroup of $\G$.
\end{proposition}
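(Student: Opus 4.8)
The plan is to establish the three assertions of Proposition \ref{pro:BarbCrownG} in order: first the signature of $F_{\mathcal C}$, then the transitivity of $\G$ on Barbot crowns, and finally the invariance under a Cartan subgroup.

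\textbf{Step 1: $F_{\mathcal C}$ has type $(2,2)$.} First I would use the defining properties of a Barbot crown. The four vertices $v_1,v_2,v_3,v_4$ span a four-dimensional space, and consecutive vertices $v_i,v_{i+1}$ lie on a common photon, i.e. $v_i\oplus v_{i+1}$ is an isotropic $2$-plane, so $\bq(v_i,v_{i+1})=0$ and each $\bq$ vanishes on $v_i$. On the other hand, non-consecutive vertices $v_1,v_3$ (resp. $v_2,v_4$) are \emph{not} on a common photon — this should follow from the cyclic-order/crown structure together with the fact that the four vertices are linearly independent — so $\bq(v_1,v_3)\neq 0$ and $\bq(v_2,v_4)\neq 0$. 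Choosing lifts $v_i^0$ normalised so that $\braket{v_1^0,v_3^0}=1$ and $\braket{v_2^0,v_4^0}=1$, the Gram matrix of $\bq$ restricted to $F_{\mathcal C}$ in the basis $(v_1^0,v_2^0,v_3^0,v_4^0)$ is a permutation of the standard hyperbolic form: it has zero diagonal, and the only nonzero off-diagonal entries are the $(1,3)$ and $(2,4)$ pairs. This matrix is nondegenerate with eigenvalues $\pm 1$ each with multiplicity $2$, hence $F_{\mathcal C}$ has signature $(2,2)$. Since $\mathcal C\subset\bHn\subset\P(E)$ and all its points lie in $\P(F_{\mathcal C})$ by definition, the crown lies in the projective space of $F_{\mathcal C}$.

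\textbf{Step 2: transitivity of $\G$.} Given any Barbot crown, the computation above shows we can pick vectors $e_1,e_3$ spanning $v_1,v_3$ and $e_2,e_4$ spanning $v_2,v_4$ with Gram matrix prescribed exactly as in Step 1. Complete to a basis of $E$ by choosing any basis $(e_5,\dots,e_{n+3})$ of $F_{\mathcal C}^\bot$, on which $\bq$ is negative definite of rank $n-1$; after an orthogonal change we may take it orthonormal. We also have the freedom of scaling and sign on the lifts, and of the orientation data that $\G$ preserves (space and time orientation, as in the proof of Lemma \ref{lem:Orbits}); as there, one checks these can always be normalised coherently, possibly after relabelling the lifts, so that the resulting basis is canonical in the sense of Lemma \ref{lem:Orbits}. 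Since $\G$ acts simply transitively on canonical bases, any two Barbot crowns are carried onto a common normal form, hence onto each other, by an element of $\G$. I expect this step to be the main obstacle: one must be careful that the cyclic labelling of the vertices is genuinely carried over and that the orientation normalisation does not force a relabelling incompatible with the crown's cyclic order — this is exactly the kind of subtlety that the one-orbit-versus-two-orbits distinction in Lemma \ref{lem:Orbits} warns about.

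\textbf{Step 3: Cartan invariance.} In the normal-form splitting $E=v_1\oplus v_2\oplus v_3\oplus v_4\oplus F_{\mathcal C}^\bot$, the matrices $a(\lambda,\mu)$ of \eqref{def:CartBarb} manifestly preserve $\bq$ (each block $\mathrm{diag}(1/\lambda,\lambda)$ preserves the hyperbolic pairing between $v_1$ and $v_3$, likewise $\mu$ between $v_2$ and $v_4$, and the identity on $F_{\mathcal C}^\bot$ is orthogonal), and preserve each of the lines $v_i$, hence fix each vertex of $\mathcal C$ and therefore preserve the whole crown. The group $A_{\mathcal C}=\{a(\lambda,\mu):\lambda,\mu>0\}$ is abelian, two-dimensional, consists of $\R$-diagonalisable elements, and is its own centraliser intersected with the appropriate torus; since $\G=\SO_0(2,n+1)$ has real rank $2$, $A_{\mathcal C}$ is a maximal $\R$-split torus, i.e. a Cartan subgroup of $\G$. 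This gives the last assertion. The only point needing a word is that $a(\lambda,\mu)$ indeed lands in the identity component $\G$, which follows by connectedness of $A_{\mathcal C}$ and $a(1,1)=\Id$.
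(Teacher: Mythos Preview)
Your Steps 1 and 3 are essentially the paper's argument. One sharpening for Step~1: the reason $v_1,v_3$ must be transverse is not just linear independence but the Witt index. If $\braket{v_1,v_3}=0$ then, together with the photon conditions $\braket{v_1,v_2}=\braket{v_2,v_3}=0$, the span of $v_1,v_2,v_3$ would be a $3$-dimensional totally isotropic subspace, impossible in signature $(2,n+1)$ where the Witt index is $2$. The paper also states ``$v_i\oplus v_{i+2}$ has signature $(1,1)$'' without spelling this out.

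Step 2 is where you genuinely diverge. The paper does \emph{not} normalise to a canonical basis; instead it proves transitivity \emph{after} establishing the Cartan subgroup $A_{\mathcal C}$, by observing that $A_{\mathcal C}$ determines the crown (its vertices are the isotropic weight lines of $A_{\mathcal C}$, and the photon pairings are forced by orthogonality), and then invoking conjugacy of Cartan subgroups in $\G$. This completely bypasses the orientation bookkeeping you flag as ``the main obstacle.'' Your Witt-extension approach also works: the statement only claims $\G$-equivalence of crowns as subsets of $\bHn$, so relabelling the vertices is permitted, and the sign freedoms on the lifts $v_i^0$, together with a swap $v_i\leftrightarrow v_{i+2}$ if needed (and, for $n\geq 2$, the choice of orthonormal basis of $F_{\mathcal C}^\bot$), are enough to reach a canonical basis. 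The paper's route is shorter and more structural; yours is more elementary and avoids appealing to the conjugacy theorem for Cartan subgroups.
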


\begin{proof}
 The $2$-planes $P_i\defeq v_i\oplus v_{i+1}$ (where $i\in \Z/4\Z$) are isotropic and the associated photons $\phi_i\defeq \P(P_i)$ contain the edges of $\mathcal{C}$. The two 2-planes $v_i\oplus v_{i+2}$ have signature $(1,1)$ and are orthogonal to each other. It follows that  $F$  has signature $(2,2)$, and that the  Barbot crown $\mathcal{C}$ is contained in  $ \bHn\cap \P(F_\mathcal{C})$ which is isomorphic under $\G$ to $\partial_\infty{\mathbf H}^{2,1}$. 

The subgroup $A_\mathcal C$  preserves the Barbot crown and is a Cartan subgroup of $\G$. Observe that the Barbot crown is uniquely defined by $A_\mathcal C$ (up to cyclic transformation). Since all Cartan subgroups are conjugate, it follows that all Barbot crowns are isomorphic under the action of $\G$.
\end{proof}

One of our main result is the following:

\begin{proposition}\label{pro:NonPositiveContainsBarbot}
Let $\Lambda$ be a semi-positive loop in $\bHn$. If $\Lambda$ is not positive, then the closure of its $\G$-orbit contains a Barbot crown.	
\end{proposition}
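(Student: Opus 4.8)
The plan is to analyse the structure of a non-positive semi-positive loop $\Lambda$ near one of the photon arcs whose existence is guaranteed by Corollary \ref{cor:photonsegment}, and to use the Cartan group action $a(\lambda,\mu)$ to ``blow up'' this arc and extract a Barbot crown in the limit. First I would fix a lift of $\Lambda$ to $\bHn_+$ and, via Corollary \ref{cor:photonsegment}, find a maximal photon arc $\alpha = \Lambda\cap\phi$ with nonempty interior, contained in a photon $\phi = \P(U)$ with $U$ an isotropic $2$-plane. Let $p$ and $q$ be the two endpoints of $\alpha$; these are the points where $\Lambda$ ``leaves'' the photon. The key geometric observation is that $p$ and $q$ span $U$ (so $U = p\oplus q$), and that just beyond $p$ and $q$ the loop $\Lambda$ enters genuinely positive position, so there are points $p'$ near $p$ on one side and $q'$ near $q$ on the other side with $(p',p,q,q')$ — or some cyclically ordered relabelling — a positive quadruple whose span we will control.

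Next I would set up a one-parameter subgroup inside a Cartan subgroup of $\G$ adapted to this configuration: choose coordinates (a canonical basis) so that $p$ and $q$ are among the first four basis directions, with the remaining loop data spread transversally, and apply the diagonal elements $a(\lambda,\mu)$ of \eqref{def:CartBarb} (or rather a suitable one-parameter subgroup $t\mapsto a(e^t,e^t)$ or $a(e^t,e^{-t})$ fixing $p$ and $q$). Under this flow, the photon arc $\alpha$ is globally fixed, while the two ``excursions'' of $\Lambda$ away from $\phi$ near $p$ and near $q$ get pushed out towards two new vertices $v_2, v_4$ lying on photons through $p$ and $q$ respectively. Using the compactness of semi-positive loops under the $\G$-action — the fact, from Proposition \ref{pro:PositiveLoopsAreGraphs} and Corollary \ref{cor:CompactnessSemi-positive}, that a sequence of semi-positive loops subconverges to a semi-positive loop or to a photon — I would extract a limit $\Lambda_\infty = \lim_{t\to\infty} a_t\cdot\Lambda$. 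Since $\Lambda$ was not positive it has a genuine photon segment, which survives in the limit and forbids $\Lambda_\infty$ from being a single photon; by the cited compactness, $\Lambda_\infty$ is a semi-positive loop. The content is then that $\Lambda_\infty$ is precisely a Barbot crown: I would identify its four vertices as $v_1 = p$, $v_3 = q$, and $v_2, v_4$ the two limiting endpoints of the flowed excursions, check that its four arcs are photon segments joining consecutive vertices (the arcs $p\to v_2$, $v_2\to q$, $q\to v_4$, $v_4\to p$, with $v_1 v_3 = U$ and the other two edge-photons coming from the isotropic $2$-planes $v_1\oplus v_2$ etc.), and verify that $v_1,\dots,v_4$ span a $4$-dimensional subspace (necessarily of type $(2,2)$ by Proposition \ref{pro:BarbCrownG}). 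By Proposition \ref{pro:BarbCrownG} all Barbot crowns are $\G$-equivalent, so $\Lambda_\infty$ is \emph{the} Barbot crown up to the $\G$-action, and since $\Lambda_\infty$ lies in the closure of the $\G$-orbit of $\Lambda$, we are done.

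The main obstacle I expect is the middle step: showing that the rescaling limit is genuinely a Barbot crown and not some larger or degenerate semi-positive loop — in particular, that the $\G$-flow ``straightens'' the non-photon part of $\Lambda$ into exactly two extra photon edges rather than leaving behind a curved remnant or collapsing an edge. This requires choosing the Cartan flow carefully relative to the position of $\Lambda$ (one must pick the contracting/expanding eigendirections so that \emph{every} point of $\Lambda$ off the photon $\phi$ is repelled toward the two-point set $\{v_2,v_4\}$, using that $\Lambda$ minus the arc $\alpha$ stays in the union of the two diamonds $\Delta_p(\cdot,\cdot)\sqcup\Delta^*_p(\cdot,\cdot)$ determined by $p,q$), and then a direct computation in the associated $\tau$-chart — or in the $\S^1\times\S^n$ graph picture, where $\Lambda$ is the graph of a $1$-Lipschitz map $f$ that is a \emph{local isometry} precisely over the arc $\sigma$ corresponding to $\alpha$ — to see that the flowed graphs converge to the graph of the piecewise-isometric map whose graph is a Barbot crown. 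A secondary technical point is ruling out that the limit is a photon: this is handled by noting that the photon arc $\alpha\subset\Lambda$ is fixed pointwise (or setwise with fixed endpoints) by the flow, hence persists in $\Lambda_\infty$ together with at least one point off $\phi$ coming from an excursion that does not fully collapse, so $\Lambda_\infty$ is not contained in a single photon and Corollary \ref{cor:CompactnessSemi-positive} forces it to be a semi-positive loop.
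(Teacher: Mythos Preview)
Your overall strategy --- exploit a photon arc in $\Lambda$, act by a Cartan-type flow, extract a limit via Corollary \ref{cor:CompactnessSemi-positive}, and identify that limit as a Barbot crown --- is exactly the paper's approach. But two concrete points in your execution are off.

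First, a structural error: the endpoints $p,q$ of the photon arc $\alpha$ span the isotropic $2$-plane $U$, so in any limiting Barbot crown they must be \emph{consecutive} vertices (say $v_1,v_2$), since in a Barbot crown $v_i\oplus v_{i+2}$ has signature $(1,1)$, not $(0,0)$. Your labeling $v_1=p$, $v_3=q$ with $v_1\oplus v_3=U$ isotropic is inconsistent with the definition, and your description of the four edges of the limit crown is correspondingly wrong: the arc $\alpha$ should survive as one edge, and the single complementary arc of $\Lambda$ must account for the other three.

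Second, and this is the real gap, your Cartan flow is underdetermined. The group $A_{\mathcal C}$ and the elements $a(\lambda,\mu)$ of \eqref{def:CartBarb} are attached to a \emph{given} Barbot crown, i.e.\ to a choice of all four vertices; you cannot use that flow to ``discover'' the missing vertices as limits, since you need all four to define the flow in the first place. The paper resolves precisely the obstacle you flag by reversing the order of operations: it \emph{chooses} two further points $v_3,v_4$ (arbitrary, not on $\Lambda$) so that $(v_1,v_2,v_3,v_4)$ are already the vertices of a Barbot crown $\mathcal C$, and then iterates a fixed element $g=a(4,2)\in A_{\mathcal C}$. The dynamics of $g$ force every accumulation point of $\{g^k(x)\}$ to lie in $M=\phi_1\cup\phi_2\cup\phi_3\cup\phi_4$, while the arc $\sigma\subset\phi_1$ and at least one point off $\phi_1$ persist in the limit. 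The identification of the limit then becomes the purely topological Lemma \ref{lem:SemipositiveIsBarbot}: $M$ is homeomorphic to the $1$-skeleton of a square, and the only embedded circle in it that is not one of the $\phi_i$ is the Barbot crown itself. This completely bypasses the delicate chart computation you anticipate.
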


Before proving the proposition, let us prove a lemma.

\begin{lemma}\label{lem:SemipositiveIsBarbot}
Let $\Lambda$ be a semi-positive loop in $\bHn$ contained in $M\defeq\bigcup_{i=1}^4 \phi_i$ where the $\phi_i$ are the photons spanned by the vertices of a Barbot crown. Then $\Lambda$ is a Barbot crown. 	
\end{lemma}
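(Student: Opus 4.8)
I want to show that any semi-positive loop $\Lambda$ contained in the union $M = \bigcup_{i=1}^4 \phi_i$ of the four photons of a Barbot crown $\mathcal{C}$ is itself a Barbot crown. The plan is to work in the double cover $\bHn_+$, lift $\Lambda$ and the crown, and use the graph characterisation of semi-positive loops from Proposition \ref{pro:PositiveLoopsAreGraphs}: in a splitting $\bHn_+ \cong \S^1 \times \S^n$, the loop $\Lambda$ is the graph of a $1$-Lipschitz map $f \colon \S^1 \to \S^n$, and each photon is the graph of an isometry $\S^1 \to \S^n$. So the constraint "$\Lambda \subset M$" says that $f$ agrees locally, in each of four arcs covering $\S^1$, with one of four fixed isometries $g_1,\dots,g_4$.

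**Key steps.** First I would set up the combinatorics: $M$, being a union of four photon segments (the edges) plus their four endpoints (the vertices $v_1,\dots,v_4$), is a topological circle, namely the Barbot crown $\mathcal{C}$ itself — more precisely $M$ meets $\bHn_+$ in a curve whose self-intersection structure needs to be pinned down, since two distinct photons $\phi_i$ and $\phi_j$ can intersect. Here I would use the signature computation in Proposition \ref{pro:BarbCrownG}: consecutive photons $\phi_i, \phi_{i+1}$ share exactly the vertex $v_{i+1}$, while opposite photons $\phi_i, \phi_{i+2}$ correspond to orthogonal isotropic planes $P_i, P_{i+2}$ inside the $(2,2)$-space $F_{\mathcal C}$, hence are disjoint in $\P(F_{\mathcal C})$. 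So $M$, as a subset, is exactly the crown: a cyclic chain of four photon-arcs glued at the four vertices, and it is already a topological circle. Second, since $\Lambda \subset M$ is itself a topological circle embedded in the topological circle $M$, and a connected subset of a circle that is again a circle must be the whole circle, I conclude $\Lambda = M = \mathcal{C}$ as sets — provided $\Lambda$ actually meets the interior of each edge, which I would argue from the fact that $\Lambda$ is a loop (so it cannot be contained in a single photon arc, nor in two, nor in three of them, because removing an open edge disconnects $M$ into an arc).

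**The main obstacle.** The delicate point is ruling out degenerate inclusions and correctly identifying the topology of $M$: I need to be sure that the four photons genuinely form a "square" circle and that $\Lambda$, being a semi-positive \emph{loop} (embedded circle sending at least one positive triple to a positive triple), cannot collapse into a proper sub-arc or wrap around $M$ in some unexpected way. The cleanest route is the graph picture: if $\Lambda$ is the graph of the $1$-Lipschitz $f$, then on each maximal sub-arc of $\S^1$ where $f = g_i$ the graph is a sub-arc of $\phi_i$; at a transition point $f$ must agree with two of the isometries $g_i, g_j$, which forces that point to be a common point of $\phi_i$ and $\phi_j$, i.e. a vertex. Thus $\S^1$ is partitioned into arcs on which $f$ is successively $g_{i_1}, g_{i_2}, \dots$, with the transition points being vertices; tracking this cyclically and using that the four vertices are distinct and consecutive vertices lie on consecutive photons, one sees the only possibility (up to the trivial case $f = g_i$ globally, excluded because that graph is a photon, not a semi-positive loop) is that $f$ cycles through all four $g_i$ in the crown's cyclic order, with transition points exactly $v_1,\dots,v_4$ — which says $\Lambda$ traverses all four edges and all four vertices, i.e. $\Lambda = \mathcal{C}$.

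**Conclusion.** Having shown $\Lambda$ coincides as a set with the crown $\mathcal{C}$ whose photons are the $\phi_i$, and since a Barbot crown is by definition a cyclically ordered quadruple of vertices spanning a $4$-dimensional space joined by photon segments (a structure that $M$ manifestly carries), we conclude that $\Lambda$ is a Barbot crown. I would close by noting the vertices of $\Lambda$ are forced to be $v_1,\dots,v_4$ and they span $F_{\mathcal C}$, so all the defining data of a Barbot crown are present. \qed
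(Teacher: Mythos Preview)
Your argument has a genuine gap at the very first step: you have misidentified the set $M$. The $\phi_i$ in the statement are the four \emph{full} photons $\P(P_i)$ spanned by consecutive pairs of vertices, not the four edge segments of the crown. Each $\phi_i$ is a projective line, topologically a circle, containing the two vertices $v_i,v_{i+1}$, which split it into two arcs --- only one of which is an edge of $\mathcal C$. Hence $M$ is strictly larger than the crown and is not a topological circle: it is a graph with four vertices and eight edges (each pair of adjacent vertices joined by two arcs, opposite vertices not joined since $\phi_i\cap\phi_{i+2}=\emptyset$). Your claim that ``$M$, as a subset, is exactly the crown'' is therefore false, and the argument ``an embedded circle inside a circle must be the whole circle'' does not apply.

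The paper's proof works directly with this graph structure of $M$ and classifies the embedded circles in it: such a circle either uses both arcs of a single $\phi_i$ (hence equals that photon, which is not semi-positive), or else visits all four vertices using exactly one arc from each $\phi_i$ --- and any such loop is a Barbot crown by definition. Your alternative route via the $1$-Lipschitz graph description in the double cover could in principle be made to work, but as written it is incomplete: you assert that the cyclic sequence $g_{i_1},g_{i_2},\ldots$ must run once through all four $g_i$, yet all you have established is that consecutive indices differ by $\pm 1$ in $\Z/4\Z$, which on its own permits back-and-forth patterns such as $g_1,g_2,g_1,g_2$. Ruling these out requires tracking which of the finitely many coincidence points of the $g_i$ on $\S^1$ is used at each transition (each can occur at most once) together with the constraint that the loop in $\bHn_+$ is a lift from $\bHn$ --- essentially the same combinatorial analysis the paper performs directly on the graph $M$.
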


\begin{proof} Since $\phi_i$ and $\phi_{i+2}$ do not intersect, while $\phi_i$ and $\phi_{i+i}$ only intersect at a point and all intersection points are distinct, the set $M$ is homeomorphic to the graph drawn in Figure \ref{f:barbot}.
\begin{figure}[!h] 
\begin{center}
\includegraphics[height=5cm]{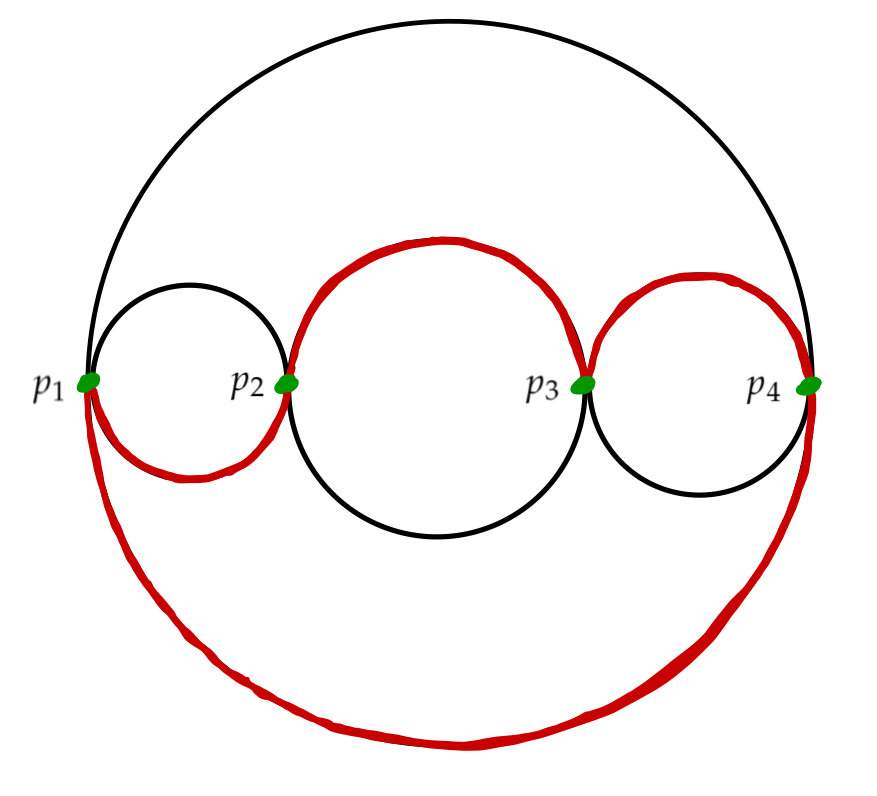}
\end{center}
\caption{The topological space $M$ (in red a Barbot crown)} 
\label{f:barbot}
\end{figure}
Then the image of a continuous injective map from $\S^1$ to $M$ is either one of the photons or is made of four arcs, one in each photon. The first case corresponds to a photon, which is not a semi-positive loop, while the second case is a Barbot crown.
\end{proof}

\begin{proof}[Proof of proposition \ref{pro:NonPositiveContainsBarbot}]
Since $\Lambda$ is not positive,  there is a photon $\phi_1$ so that the intersection of $\phi_1$ with $\Lambda$ is a closed non trivial interval $\sigma$ -- see Corollary \ref{cor:photonsegment}. Denote by $v_1$ and $v_2$ the extremities of this segment.

 Let us pick two other points $v_3$ and $v_4$ in $\bHn$ and not in $\phi_1$ so that $(v_1,v_2,v_3, v_4)$ generates a Barbot crown. Let  $\phi_i$ be the  photon passing though $v_i$ and $v_{i+1}$, and $M$ be the union of this four photons. 

In the splitting $E=v_1\oplus v_2\oplus v_3 \oplus v_4 \oplus F_\mathcal{C}^\bot$, let $g$ be the element  in $\G$ defined by the matrix $a(4,2)$ using the notation \eqref{def:CartBarb}.
The action of $g$ on $\bHn$ has the following dynamics:\begin{enumerate}
	\item $g$ fixes  $v_1$, $v_2$, $v_3$, $v_4$ and globally fixes $\sigma$ and   $M$.
	\item for any $x$ in $\bHn$, any limit values of $\sek{g^k(x)}$ lies in $M$. 
	\item for any $x$ in $\bHn$ and not in $\phi_1$ any limit values of $\sek{g^k(x)}$ is not in $\phi_1$. 

\end{enumerate}
By Corollary \ref{cor:CompactnessSemi-positive}, up to extracting a subsequence, the sequence $\sek{g^k(\Lambda)}$ converges to $\Lambda_\infty$ which is either a photon or a semi-positive loop and lies in $M$. 

Since  $\Lambda_\infty$ contains $\sigma$ by construction and an element not in $\phi_1$ by the third item above, $\Lambda_\infty$ is not a photon. Thus by  lemma  \ref{lem:SemipositiveIsBarbot} and the second item,  $\Lambda_\infty$ is a Barbot crown. 
\end{proof}

\subsection{Quasiperiodic loops and maps}\label{sec:qp-curve}
Let us fix positive triples $\kappa_0$ and $\tau_0$ in $\P(V)$ and $\bHn$ respectively (recall that $V$ is a $2$-dimensional $\R$-vector space). We first introduce the notion of quasiperiodic maps and loops:

\begin{definition}{\sc[Quasiperiodic maps and loops]}
\begin{enumerate}
	\item 	Let $\xi$ be a a positive map from $\P(V)$ to $\bHn$. The map $\xi$ is {\em quasiperiodic}, if for any sequence $\seqk{\kappa}$ of positive triples in $\P(V)$, if 
	\begin{enumerate}
		\item $h_k$ is the element of $\mathsf{PSL}(V)$ so that 
		$h_k(\kappa_0)=\kappa_k$,
		\item $g_k$ is an element in $\G$ so that $g_k(\xi(\kappa_k))=\tau_0$,
	\end{enumerate}
	Then the sequence $\{g_k\circ\xi\circ h_k\}_{k\in\mathbb N}$ subconverges to a positive map.
	\item A positive loop $\Lambda$ in $\bHn$ is {\em quasiperiodic} if for any sequence $\seqk{\tau}$ of positive triples in $\Lambda$, if 
 $g_k$ is an element in $\G$ so that $g_k(\tau_k)=\tau_0$,
then the sequence of {\em  renormalised curves} $\{g_k(\Lambda)\}_{k\in\mathbb N}$ subconverges as a graph to a positive loop.
\end{enumerate}
\end{definition}

Observe that the image of a  continuous quasiperiodic map is a quasiperiodic loop. As a byproduct of our constructions, we will show in corollary  \ref{cor:QS2QP} that quasiperiodic loops admits a quasiperiodic parametrisation.

\begin{definition}{\sc[Pointed loops and spaces]}
\begin{enumerate}
	\item  A {\em pointed semi-positive  loop} is a pair $(\Lambda,\tau)$ where $\Lambda$ is a semi-positive loop and $\tau$ is a positive triple in $\Lambda$. The space  ${\L}$ of those pairs is the {\em space of semi-positive loops}. 
	\item  A {\em pointed positive  loop} is a pair $(\Lambda,\tau)$ where $\Lambda$ is a positive loop and $\tau$ is a positive triple in $\Lambda$. The space  ${\Lp}$ of those pairs is the {\em space of positive loops}. 
	\item The {\em forgetting map} from $\L$ to $\sT(n)$ sends $(\Lambda,\tau)$ to $\tau$. 
\end{enumerate}
\end{definition} 
Identifying ${\L}$ with the space of Lipschitz graph (see proposition \ref{pro:PositiveLoopsAreGraphs}) gives ${\L}$, and hence ${\Lp}$ the structure of a locally compact topological space on which the group $\G$ acts continuously. Corollary \ref{cor:CompactnessSemi-positive} implies the following

\begin{proposition}\label{pro:ContBij}

The forgetting map is $\G$-equivariant and proper. In particular  the action of $\G$ on $\L$ is proper and  $\L/\G$ is  compact.

The space  $\mathcal L_{\tau_0}(n)$ of pointed semi-positive loops  $(L,\tau_0)$ is compact and the  map $\pi_0$ from  $\mathcal L_{\tau_0}(n)$  to  $\L/\G$ is proper and surjective.

\end{proposition}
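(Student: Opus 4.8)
The plan is to deduce Proposition \ref{pro:ContBij} from the compactness statement for semi-positive loops in $\bHn_+$ (Corollary \ref{cor:CompactnessSemi-positive}) together with the identification of $\L$ with a space of Lipschitz graphs.

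\medskip

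First I would establish that the forgetting map $\pi\colon\L\to\sT(n)$ is proper. Fix a positive triple $\tau$ and a compact neighbourhood $K$ of $\tau$ in $\sT(n)$; I must show $\pi^{-1}(K)$ is compact. Take a sequence $(\Lambda_k,\tau_k)$ with $\tau_k\in K$. After extracting, $\tau_k\to\tau_\infty\in K$. Since the group $\G$ acts transitively on $\sT(n)$ with compact (hence bounded) stabiliser (Lemma \ref{lem:Orbits}), there exist $g_k\to g_\infty$ in $\G$ with $g_k(\tau_k)=\tau_0$; equivalently, after replacing each $\Lambda_k$ by $g_k(\Lambda_k)$ we may assume every $\Lambda_k$ passes through the fixed triple $\tau_0$. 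Lifting to $\bHn_+$ and applying the second item of Corollary \ref{cor:CompactnessSemi-positive}, the sequence $g_k(\Lambda_k)$ subconverges (as a graph of a $1$-Lipschitz map, using Proposition \ref{pro:PositiveLoopsAreGraphs}) to a semi-positive loop through $\tau_0$; no photon can arise since photons contain no positive triple. Translating back by $g_k^{-1}\to g_\infty^{-1}$, the original sequence $(\Lambda_k,\tau_k)$ subconverges in $\L$ to a pointed semi-positive loop over $\tau_\infty$, proving properness. Equivariance of $\pi$ is immediate from the definitions.

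\medskip

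Next, properness of $\pi$ forces properness of the $\G$-action on $\L$: if $C\subset\L$ is compact, then $\pi(C)\subset\sT(n)$ is compact, so the set of $g\in\G$ with $g(\pi(C))\cap\pi(C)\neq\emptyset$ is relatively compact because $\G$ acts properly on the Riemannian homogeneous space $\sT(n)$ (Corollary \ref{def:CanonicalDistance}); a fortiori $\{g: g(C)\cap C\neq\emptyset\}$ is relatively compact. Cocompactness of $\L/\G$ then follows: since $\G$ acts transitively on $\sT(n)$, every $\G$-orbit in $\L$ meets the fibre $\pi^{-1}(\tau_0)=\mathcal L_{\tau_0}(n)$, so $\L/\G$ is the image of $\mathcal L_{\tau_0}(n)$ under the quotient map, and $\mathcal L_{\tau_0}(n)$ is compact — indeed it is a closed subset (being cut out by the closed condition ``passes through $\tau_0$'') of the compact space of $1$-Lipschitz graphs from $\S^1$ to $\S^n$ over the arc determined by $\tau_0$, again via Proposition \ref{pro:PositiveLoopsAreGraphs} and Corollary \ref{cor:CompactnessSemi-positive}. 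This simultaneously records that $\mathcal L_{\tau_0}(n)$ is compact.

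\medskip

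Finally, the map $\pi_0\colon\mathcal L_{\tau_0}(n)\to\L/\G$ is the restriction of the (continuous, open) quotient map to the fibre over $\tau_0$; it is surjective by the transitivity argument above, and it is proper since its source is compact and its target Hausdorff. The only point requiring care is Hausdorffness of $\L/\G$, which follows from properness of the $\G$-action on the locally compact space $\L$. The main obstacle is really the bookkeeping in the properness step — keeping track of the $\G$-translations $g_k$ and their limits while passing to the graph-convergence picture in $\bHn_+$ and back — but all the analytic content is already packaged in Corollary \ref{cor:CompactnessSemi-positive} and Proposition \ref{pro:PositiveLoopsAreGraphs}, so no new estimates are needed.
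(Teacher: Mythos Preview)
Your proof is correct and follows exactly the route the paper intends: the paper gives no explicit proof, merely stating that the proposition is implied by Corollary~\ref{cor:CompactnessSemi-positive}, and what you have written is a careful unpacking of that implication using Proposition~\ref{pro:PositiveLoopsAreGraphs} and the transitivity of $\G$ on $\sT(n)$ with compact stabiliser.
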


\subsubsection{The Barbot locus}
We define the \emph{Barbot locus} in $\L/\G$ as the quotient of all pointed Barbot crowns by the action of $\G$.

\begin{proposition}\label{prop:BarbotLocus}
When $n$ equals $1$, the Barbot locus is homeomorphic to a disjoint union of two circles. For $n$ larger than $1$, the Barbot locus is homeomorphic to a circle.
\end{proposition}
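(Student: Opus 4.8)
The idea is to identify the Barbot locus with a homogeneous space $\G\backslash(\G\cdot\mathcal{C}_0)$ and then unwind this quotient explicitly using the stabiliser computations already available. First I would fix a reference Barbot crown $\mathcal{C}_0$ with vertices $(v_1,v_2,v_3,v_4)$ labelled cyclically, spanning $F_{\mathcal{C}_0}$ of type $(2,2)$ as in Proposition \ref{pro:BarbCrownG}. By that proposition $\G$ acts transitively on Barbot crowns, so the space of (unpointed) Barbot crowns is $\G/N$, where $N$ is the stabiliser of $\mathcal{C}_0$ in $\G$. A pointed Barbot crown is a pair $(\mathcal{C},\tau)$ with $\tau$ a positive triple in $\mathcal{C}$; since a Barbot crown is a topological circle that is the union of four photon-segments, the positive triples it contains are exactly the triples of pairwise distinct points that are \emph{not} all contained in a single photon, equivalently that meet at least two distinct edges. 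The Barbot locus is the quotient of the space of such pairs by $\G$; using transitivity on crowns, this quotient is identified with $N\backslash\Theta$, where $\Theta$ is the space of positive triples contained in the fixed crown $\mathcal{C}_0$.

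\textbf{Key steps.} (1) Describe $\Theta$, the space of positive triples in $\mathcal{C}_0\cong \S^1$ (as a topological circle). This is an open subset of the configuration space of ordered triples of distinct points on a circle; removing the triples lying in one photon leaves a space whose homotopy type I would compute directly — it should be connected (for $n\geq 1$ one can slide points across the crossing vertices) and I expect it to retract onto something simple. (2) Identify the group $N$ and, crucially, the group $N_0$ of \emph{orientation-on-the-crown-preserving}, \emph{cyclic-order-preserving} elements. From Proposition \ref{pro:BarbCrownG} the Cartan subgroup $A_{\mathcal{C}_0}$ preserves $\mathcal{C}_0$; in addition the element realising the cyclic shift $v_i\mapsto v_{i+1}$ lies in $N$, and there is an orientation-reversing involution. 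So $N$ is an extension of a finite group (generated by the cyclic shift of order $4$ and a reflection, i.e.\ the dihedral group $D_4$, possibly together with the $\SO(n)$ acting on $F_{\mathcal{C}_0}^\bot$ and a sign) by the connected $A_{\mathcal{C}_0}\times\SO(n)$-type identity component. The point of the case distinction $n=1$ versus $n>1$ is precisely that for $n=1$ there is no room in $F_{\mathcal{C}_0}^\bot$ (which is then $2$-dimensional of signature $(0,2)$, i.e.\ an $\SO(2)$ factor that does \emph{not} act on the crown but does affect whether certain components of $N$ are realised inside $\G=\SO_0(2,2)$), whereas for $n>1$ the extra $\SO(n)$ connectivity merges two components. (3) Carry out the quotient $N\backslash\Theta$. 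The identity component of $N$ (the Cartan $A_{\mathcal{C}_0}$, together with $\SO(n)$ which acts trivially on $\mathcal{C}_0$) acts on $\Theta$ with quotient a circle's worth of "shapes" of ideal triangles, and the finite part of $N$ identifies points of this circle; keeping careful track of orientations, for $n>1$ one obtains a single circle, while for $n=1$ the component group of $N$ inside $\SO_0(2,2)$ is smaller (one of the would-be identifications is not realised by an element of $\G$), leaving two circles.

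\textbf{Main obstacle.} The delicate point is the bookkeeping of connected components and orientations in step (2)–(3): determining exactly which symmetries of the abstract Barbot crown (cyclic shifts, edge-reversals, the two "colours" of opposite pairs of vertices) are induced by elements of $\G=\SO_0(2,n+1)$, and how the extra factor $\SO(n)$ versus $\SO(1)$ changes the component count. Concretely, I would work inside $\G'\defeq$ the stabiliser of $F_{\mathcal{C}_0}$, which is (up to finite index and the $\SO(n)$ on $F_{\mathcal{C}_0}^\bot$) the group $\SO_0(2,2)$ acting on $\P(F_{\mathcal{C}_0})\cong\partial_\infty\mathbf{H}^{2,1}$, use the well-understood structure $\SO_0(2,2)\sim(\SL(2,\R)\times\SL(2,\R))/\pm$ to see the crown as a "square" $\partial\H^2\times\partial\H^2$-configuration, and read off that the normaliser of a maximal $\R$-split torus acts on the space of positive triples with the claimed quotient. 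The $n>1$ improvement then comes from the observation that an element of $\SO(n)\subset\SO_0(2,n+1)$ of determinant $-1$ on $F_{\mathcal{C}_0}^\bot$-complement interplay, combined with $\SO(n)$-connectedness for $n\geq 2$, allows one to connect the two circles that remain separate when $n=1$. Once these component counts are pinned down, the homeomorphism type (a disjoint union of two circles, resp.\ one circle) follows since each piece is a quotient of the circle of triangle-shapes by a finite group acting by rotations.
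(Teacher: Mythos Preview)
Your route via the stabiliser quotient $N\backslash\Theta$ is genuinely different from the paper's. Rather than analysing $N$, the paper constructs an explicit $\G$-invariant map from pointed Barbot crowns to $\P(\R^2)$. For $n=1$: given $(\tau,\mathcal C)$, the spacelike circle $c$ through $\tau$ meets $\mathcal C$ in exactly one further point $p_\tau$ (proved by the intermediate value theorem after passing to a chart $\partial_\infty\H^{2,1}\setminus\P(\beta_\tau^\bot)\cong\S^1\times\R$ in which the two photon families become increasing, respectively decreasing, graphs). Identifying $c$ with $\P(\R^2)$ by sending $\tau$ to $(0,1,\infty)$, the assignment $\tau\mapsto p_\tau$ is the desired invariant; it is two-to-one because a Barbot crown through $\tau$ and a prescribed fourth point on $c$ is determined by the choice of one of the two photons through a fixed point of $\tau$, one per ruling. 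For $n>1$ one reduces to the $(2,2)$-subspace $F_{\mathcal C}$ and notes that the effective stabiliser of $F_{\mathcal C}$ in $\G$ is now the index-two subgroup of $\mathsf{O}(2,2)$ with maximal compact $\SO(2)\times\mathsf{O}(2)$; the element $(\Id,-\Id)$ swaps the two rulings and collapses the two circles into one. You correctly locate the $n=1$/$n>1$ bifurcation in the component group, but the paper's explicit circle-valued invariant sidesteps the bookkeeping you flag as the main obstacle.

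There is also a real error in your description of $\Theta$: positivity requires \emph{pairwise} transversality, so the condition is that no \emph{two} of the three points lie on a common photon, not merely that they are ``not all contained in a single photon''. In particular two interior points of the same edge are never transverse, so a positive triple on $\mathcal C_0$ must have its three points on three distinct edges (the paper uses exactly this: ``by positivity, each edge contains at most one $x_i$''). Your ``meet at least two distinct edges'' admits two points on one edge and is therefore wrong. With the correct $\Theta$---four open $3$-cells indexed by the omitted edge, glued along vertex degenerations---your program of quotienting first by $A_{\mathcal C_0}$ and then by the finite part of $N$ becomes a concrete (if somewhat fiddly) computation; the paper's fourth-point construction simply avoids it.
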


\begin{proof}
Let us first deal with the case $n=1$. The action of $\G\cong \SO_0(2,2)$ on the set of photons has two orbits, each corresponding to a ruling of the one sheeted hyperboloid $\partial_\infty \H^{2,1}$. Two distinct photons intersect if and only if they belong to different families, and any point of $\partial_\infty \H^{2,1}$ is the intersection of a unique pair of photons. In particular, a Barbot crown is made of four photons $\phi_1,\ldots,\phi_4$ where $\phi_i$ and $\phi_{i+1}$ belong to different families.

We  define now  a $\G$-invariant map $\tau\mapsto w_\tau$ sending a pointed Barbot crown $(\tau,\mathcal C)$ to a point in $\P(\R^2)$ and show that this map is  two-to-one . 

Let  $(\tau,\mathcal C)$ be a  Barbot crown where   $\tau=(x_1,x_2,x_3)$. By positivity, each edge contains at most one $x_1$. Let $c$ be the positive circle spanned by $\tau$. 
\medskip

{\sc Claim:}  Assume that  $x_1$, $x_2$ and $x_3$ are interior points of the edges, then  $c$ intersects $\mathcal{C}$ in a fourth point $p_\tau\in c\setminus\{x_1,x_2,x_3\}$. 

\begin{proof}[Proof of the claim]
It follows from \cite[Lemma 2.8]{LTW} that $c$ is disjoint from $\P(\beta_\tau^\bot)$, where $\beta_\tau$ is the barycenter of $\tau$. So using Subsection \ref{sss:ProductStructure}, we get an identification between  $\partial_\infty \H^{2,1}\setminus \P(\beta_\tau^\bot)$ and $\S^1\times \R$ in which $c\cong \S^1\times \{0\}$ and photons in each family give respectively increasing and decreasing maps. Now if $x_i,x_j$ and $x_k$ are points in consecutive edges say $\phi_1,\phi_2$ and  $\phi_3$, then by the intermediate value theorem, the photon $\phi_4$ intersects $\S^1\times\{0\}$ in a unique point $p_\tau$ in the interval $(x_k,x_i)$ of $\S^1\setminus \{x_i,x_j,x_k\}$.
\end{proof}

Observe in the general case that at most one of the $x_i$ -- say $x_1$ -- is a vertex $v_k$: indeed assuming  $x_2$ is a vertex, then by transversality $x_2=x_{k+2}$, but then $\mathcal C$ is included in $L(x_1)\cup L(x_2)$ and thus no element of $\mathcal C$ can be transverse to both $x_1$ and $x_2$. Thus, if one of the $x_i$ is a vertex,  we set unambiguously $p_\tau=x_i$.

Taking the unique projective identification between $c$ and $\P(\R^2)$ sending $\tau$ to $(0,1,\infty)$, the map $w:\tau\mapsto p_\tau$  defines a map  $\tau\mapsto w_\tau$ from the Barbot locus to $\P(\R^2)$.

Let us now prove that $w$ is  two-to-one . The group $\G$ acts simply transitively on the positive triples in $\partial_\infty \H^{2,1}$, so if $\tau_0$ is a fixed positive triple, any point in the Barbot locus has a unique representant which is a pointed Barbot crown $(\tau_0,\mathcal C)$. Identify $\P(\R^2)$ with the circle $c_0$ spanned by $\tau_0$ such that $(0,1,\infty)$ is mapped to $\tau_0$. The result then follows from the fact that a Barbot crown passing through $\tau_0$ and a fourth point $p\in c_0$ is entirely given by a choice of a photon passing through a given point of $\tau_0$, and that there are exactly 2 such photons (one in each family).

\medskip

Finally, for the case $n>1$, we fix a signature $(2,2)$ subspace of $E$. Each pointed Barbot crown in $\bHn$ can be mapped to $\P(F)$. The stabiliser $\Stab_\G(F)\cong (\mathsf{O}(2,2)\times \mathsf{O}(n-1))_0$ does not act effectively on a Barbot crown in $F$ and the effective action is given by the index two subgroup of $\mathsf{O}(2,2)$ whose maximal compact is $\SO(2)\times \mathsf{O}(2)$. The element $(\Id,-\Id)$ intertwine the two family of photons in $\partial_\infty \H^{2,1}$ and thus identify the two disjoint circles representing the Barbot locus for $n=1$.
 \end{proof}

Given a semi-positive loop $\Lambda$, let $T_\Lambda$ be the space of  pairs $(\Lambda,\tau)$ with $\tau$ in $\Lambda$.  We have a natural map from $T_\Lambda$ to ${\L}/\G$.

Now, as a consequence of proposition \ref{pro:NonPositiveContainsBarbot}, we obtain the following characterisation of quasiperiodic loops:

\begin{proposition} 
	A positive loop $\Lambda$ is quasiperiodic if and only if the closure of the image of $T_\Lambda$ in ${\L}/\G$ does not contain a Barbot crown or,  equivalently, if
	the image  $T_\Lambda$ in $\Lp/\G$  is precompact.
\end{proposition}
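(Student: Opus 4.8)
The plan is to prove the two stated equivalences by unwinding the definition of quasiperiodic and using Proposition \ref{pro:NonPositiveContainsBarbot} together with the compactness results from Corollary \ref{cor:CompactnessSemi-positive} and Proposition \ref{pro:ContBij}. First I would establish the contrapositive of the ``only if'' direction: suppose the closure of the image of $T_\Lambda$ in $\L/\G$ contains a Barbot crown. Then there is a sequence of positive triples $\seqk\tau$ in $\Lambda$ such that the pointed loops $(\Lambda,\tau_k)$, pushed by suitable $g_k\in\G$ with $g_k(\tau_k)=\tau_0$, converge in $\L_{\tau_0}(n)$ to a pointed Barbot crown $(\mathcal C,\tau_0)$. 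But a Barbot crown is not positive (it is a semi-positive loop with photon arcs, by Corollary \ref{cor:photonsegment} / Lemma \ref{lem:SemipositiveIsBarbot}), so the renormalised curves $\{g_k(\Lambda)\}$ converge as graphs to something that is not a positive loop. Hence $\Lambda$ is not quasiperiodic. Note here one must be slightly careful: convergence in $\L/\G$ of the images only gives \emph{subconvergence} of the $g_k(\Lambda)$ after passing to the lift in $\bHn_+$ (the space of $1$-Lipschitz graphs), which is precisely the mode of convergence in the definition of quasiperiodic; and the limit graph being a Barbot crown forces it to be non-positive by Proposition \ref{pro:PositiveLoopsAreGraphs}(2).

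Next I would prove the ``if'' direction, again contrapositively. Suppose $\Lambda$ is not quasiperiodic: there is a sequence $\seqk\tau$ of positive triples in $\Lambda$ and $g_k\in\G$ with $g_k(\tau_k)=\tau_0$ such that $\{g_k(\Lambda)\}$ does not subconverge to a positive loop. By Corollary \ref{cor:CompactnessSemi-positive}(2) — each $g_k(\Lambda)$ passes through the fixed positive triple $\tau_0$ — the sequence does subconverge (in the graph topology, after lifting) to some semi-positive loop $\Lambda_\infty$ through $\tau_0$. Since the limit is not a positive loop, $\Lambda_\infty$ is a genuinely semi-positive, non-positive loop. The pointed loops $(\Lambda_\infty,\tau_0)$ then lie in the closure of the image of $T_\Lambda$ in $\L/\G$. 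Now apply Proposition \ref{pro:NonPositiveContainsBarbot}: the closure of the $\G$-orbit of $\Lambda_\infty$ contains a Barbot crown $\mathcal C$; pointing $\mathcal C$ appropriately shows that a pointed Barbot crown lies in the closure of the $\G$-orbit of $(\Lambda_\infty,\tau_0)$, hence in the closure of the image of $T_\Lambda$ in $\L/\G$. This contradicts the hypothesis, so $\Lambda$ is quasiperiodic.

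Finally, for the equivalence with precompactness of the image of $T_\Lambda$ in $\Lp/\G$: by Proposition \ref{pro:ContBij} the action of $\G$ on $\L$ is proper with $\L/\G$ compact, and $\Lp$ is the open subset of positive loops. The image of $T_\Lambda$ in $\Lp/\G$ is precompact in $\L/\G$ (that quotient being compact), so it fails to be precompact \emph{in $\Lp/\G$} exactly when its closure in $\L/\G$ meets the complementary locus $\L/\G \setminus \Lp/\G$. One checks that this complement, restricted to the closure of $T_\Lambda$, is accounted for by the Barbot locus: if a non-positive semi-positive loop appears in the closure then by Proposition \ref{pro:NonPositiveContainsBarbot} a Barbot crown does too, and conversely a Barbot crown is non-positive. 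So ``closure in $\L/\G$ contains no Barbot crown'' $\iff$ ``closure in $\L/\G$ contains no non-positive loop'' $\iff$ ``image of $T_\Lambda$ in $\Lp/\G$ is precompact,'' completing the chain.

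The main obstacle I expect is the careful bookkeeping of the two convergence notions — convergence of pointed loops in $\L/\G$ versus subconvergence of the renormalised curves $g_k(\Lambda)$ as graphs in $\bHn_+$ — and ensuring that a limit point in $\L/\G$ genuinely lifts to a graph-limit of the $g_k(\Lambda)$; this is where Corollary \ref{cor:CompactnessSemi-positive}(2) (subconvergence through a fixed positive triple, with photons excluded) does the essential work. The reduction of the ``complement of $\Lp/\G$ in the closure'' to the Barbot locus is the only other point requiring an argument rather than a definition-chase, and it is exactly the content of Proposition \ref{pro:NonPositiveContainsBarbot}.
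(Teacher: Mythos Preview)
Your argument is correct and is precisely the expansion of what the paper leaves implicit (the paper merely says the proposition follows from Proposition~\ref{pro:NonPositiveContainsBarbot} and gives no further details). One phrasing should be tightened: in the ``if'' direction you write that a pointed Barbot crown lies ``in the closure of the $\G$-orbit of $(\Lambda_\infty,\tau_0)$''. That $\G$-orbit is a single point of $\L/\G$, so its closure is itself. What you need (and what your surrounding argument actually supplies) is that the closure of the image of $T_\Lambda$ in $\L/\G$ contains \emph{every} class $[(\Lambda_\infty,\sigma)]$ for $\sigma$ a positive triple in $\Lambda_\infty$ --- this follows from graph convergence $g_k\Lambda\to\Lambda_\infty$ by approximating any such $\sigma$ by triples on $g_k\Lambda$ --- and then a second diagonal step using $h_j\Lambda_\infty\to\mathcal C$ shows that a pointed Barbot crown lies in the closure of $\{[(\Lambda_\infty,\sigma)]\}_\sigma$, hence in the closure of the image of $T_\Lambda$. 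With that correction the proof is complete.
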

\section{Cross-ratio and quasisymmetric maps}\label{ss:cr-qs}

We  define in this section in definition \ref{def:cross-ratio} a cross-ratio on $\bHn$ which generalises the usual cross-ratio on the real projective line $\P(V)$. We characterise circles using the cross-ratio in proposition \ref{ex:MW2}.

This allows us to define quasisymmetric maps in definition  \ref{def:QSmap}. We then show two  properties of quasisymmetric maps: equicontinuity  in Theorem \ref{theo:Equicontinuity} and the existence of a Hölder modulus of continuity Theorem \ref{theo:HolderContinuity}. Formally the former is a consequence of the latter but we first have to prove equicontinuity.

\subsection{Cross-ratio and positivity}  Equip the  2-dimensional vector space $V$ with a volume form $\omega$. The usual cross-ratio is defined for a quadruple of pairwise distinct points in $\P(V)$ by
\[[x,y,z,t]\defeq \frac{\omega(x_0,y_0)\omega(z_0,t_0)}{\omega(x_0,t_0)\omega(z_0,y_0)}~,\]
where $x_0,y_0,z_0$ and $t_0$ are non-zero vectors in $x,y,z$ and $t$ respectively. Observe that the definition is independent on the choice of $x_0,y_0,z_0$ and $t_0$ and on the choice of $\omega$.
\begin{definition}\label{def:cross-ratio}{\sc [Cross-ratio]}
Given any  quadruple  $(x,y,z,t)$ of pairwise transverse points in $\bHn$, we define the \emph{cross-ratio}
$$\bb(x,y,z,t) \defeq \frac{\langle x_0,y_0\rangle\langle z_0,t_0\rangle}{\langle x_0,t_0\rangle\langle z_0,y_0\rangle}\ ,$$
where $x_0,y_0,z_0$ and $t_0$ are non-zero vectors of $x,y,z$ and $t$.
If $\xi$ is a map from $\P(V)$ to $\bHn$, we write whenever defined for $(x,y,z,t)$ in $\P(V)$,
$$
\bb_\xi(x,y,z,t)\defeq \bb(\xi(x),\xi(y),\xi(z),\xi(t))\ .
$$
\end{definition}

We warn the reader that unlike the usual cross-ratio in $\P(V)$, the value of $\bb(x,y,z,t)$ does not characterise the positivity of a quadruple of pairwise transverse points in $\bHn$. For instance, it can be shown that under the canonical isomorphism $\P(V)\cong \partial_\infty {\bf H}^{2,0}$, we have $\bb(x,y,z,t)= [x,y,z,t]^2$. Hartnick and Strubel defined in \cite{hartnick} a ``functorial'' cross-ratio for Shilov boundary of Hermitian symmetric spaces, which we suspect could be used to define positivity of quadruples. However, we stick with our definition of cross-ratio which is easier to handle.

The following lemma gives an expression of the cross-ratio in a Minkowski chart:

\begin{lemma}\label{lem:Cross-ratioChart}
Let $(a,x,b,y)$ be a quadruple of pairwise transverse points in $\bHn$. Then in any Minkowski chart for $a$ with associated quadratic form $\q$ we have 
\begin{eqnarray}
	\bb(a,x,b,y)=\frac{\q(b-y)}{\q(b-x)}\ .\label{eq:CR-Mink}
\end{eqnarray} 
\end{lemma}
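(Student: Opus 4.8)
The plan is to compute the cross-ratio $\bb(a,x,b,y)$ directly using the canonical Minkowski chart, and then observe that the formula is conformally invariant enough to pass to an arbitrary Minkowski chart. First I would set up the canonical chart $\psi$ associated to a point $b'$ in $\Min(a)$, as in Proposition \ref{pro:CanChart}: choose lifts $a_0 \in a$ and $b'_0 \in b'$ with $\braket{a_0,b'_0}=1$, put $F \defeq (a \oplus b')^\perp$ with $\q = \bq|_F$, and recall that every point of $\Min(a)$ has a unique lift of the form $\psi_0(u) = u - b'_0 + \tfrac12 \q(u)a_0$ with $u \in F$. Thus I write $x = \psi(\xi)$, $b = \psi(\beta)$, $y = \psi(\eta)$ for $\xi,\beta,\eta \in F$, and also note $a$ is the "point at infinity" while its preferred lift for pairing purposes can be taken to be $a_0$ itself.

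The key computation is the pairing $\braket{\psi_0(u), \psi_0(v)}$ for $u,v \in F$: since $F \perp a_0$, $F \perp b'_0$, $\q(b'_0)=0$, $\braket{a_0,b'_0}=1$, and $\braket{a_0,a_0}=0$, one gets
\[
\braket{\psi_0(u),\psi_0(v)} = \braket{u,v} - \tfrac12\q(u) - \tfrac12\q(v) = -\tfrac12\,\q(u-v)\ .
\]
Similarly $\braket{a_0, \psi_0(u)} = \braket{a_0, -b'_0} = -1$ for every $u$. Feeding these into the definition of the cross-ratio with $x_0 = \psi_0(\xi)$, $b_0 = \psi_0(\beta)$, $y_0 = \psi_0(\eta)$ and the lift $a_0$ of $a$:
\[
\bb(a,x,b,y) = \frac{\braket{a_0,x_0}\braket{b_0,y_0}}{\braket{a_0,y_0}\braket{b_0,x_0}} = \frac{(-1)\cdot(-\tfrac12\q(\beta-\eta))}{(-1)\cdot(-\tfrac12\q(\beta-\xi))} = \frac{\q(\beta-\eta)}{\q(\beta-\xi)}\ ,
\]
which is exactly \eqref{eq:CR-Mink} in the canonical chart, once one identifies $\xi,\beta,\eta$ with the chart-coordinates of $x,b,y$ — note that the additive translation built into $\psi_0$ (the $-b'_0$ term) cancels in the differences $\beta-\eta$ and $\beta-\xi$, so only the points $b,x,y$ matter, not $b'$.

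It remains to upgrade from the canonical chart to a general Minkowski chart $\phi = \psi \circ A$, where $A$ is an affine conformal map $\Euc^{1,n} \to F$. An affine conformal map has linear part scaling $\q$ by a constant $\lambda > 0$, so $\q(A(p) - A(p')) = \lambda\, \q_{\Euc}(p-p')$ for all $p,p'$; hence the ratio $\q(\beta-\eta)/\q(\beta-\xi)$ is unchanged when $\beta,\xi,\eta$ are replaced by their $A$-preimages. Since the left side $\bb(a,x,b,y)$ is manifestly independent of all choices, the identity \eqref{eq:CR-Mink} holds in every Minkowski chart for $a$. I do not expect any serious obstacle here; the only point requiring a little care is bookkeeping the signs and the fact that $b$ must be transverse to $a$ (so it genuinely lies in $\Min(a)$ and has well-defined chart coordinates), together with $x \neq b$, $y \neq b$ so that $\q(\beta-\xi), \q(\beta-\eta) \neq 0$ — all guaranteed by the pairwise transversality hypothesis, which in the chart translates to the relevant differences being spacelike, in particular $\q$-nonzero.
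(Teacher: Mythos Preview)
Your proof is correct and follows essentially the same route as the paper's: compute in a canonical chart, then pass to an arbitrary Minkowski chart via conformal invariance of the right-hand side. The only cosmetic difference is that the paper centers its canonical chart at $b$ itself (so $\beta=0$ and one only needs the pairings $\braket{\psi_0(u),a_0}$ and $\braket{\psi_0(u),b_0}$), whereas you center at an arbitrary $b'$ and compute the full pairing $\braket{\psi_0(u),\psi_0(v)}=-\tfrac12\q(u-v)$; both lead to the same two-line calculation. One small imprecision: in your final remark, pairwise transversality in the chart means the relevant differences are \emph{non-null} (i.e.\ $\q$-nonzero), not necessarily spacelike.
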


\begin{proof} Observe that the right hand side of the inequality is invariant by conformal transformation. It follows that it is enough to prove formula  \eqref{eq:CR-Mink} in the canonical chart of $\Min(a)$ associated to $b$ (see proposition \ref{pro:CanChart}). Let us choose as in proposition \ref{pro:CanChart} $a_0$ in $a$ and $b_0$ in $b$ with $\langle a_0,b_0\rangle=1$. The Minkowski chart from $(a\oplus b)^\perp$ to $\Min(a)$ is given by 
$$
\psi_0(u)=u-b_0-\frac{1}{2}\q(u)a_0\ .
$$
Then a straightforward computations give:
$$
\braket{\psi_0(u),a_0}=-1\hbox{ and } \braket{\psi_0(u),b_0}=-\frac{1}{2}\q(u)\ .
$$
In particular,
$$
\bb(a,\psi_0(u),b,\psi_0(v))=\frac
{\braket{\psi_0(u),a_0}\braket{\psi_0(v),b_0}}
{\braket{\psi_0(v),a_0}\braket{\psi_0(u),b_0}}=\frac{\q(v)}{\q(u)}~.
$$
This concludes the proof since $b_0=\psi_0(0)$.\end{proof}

\begin{corollary}\label{lem:bidia}
	Let $(a,b,c)$ be a positive triple in $\bHn$ and let $u$ and $v$ be two points in $\Delta_a^*(b,c)$ such that $\bb (a,b,v,c)<\bb (a,b,u,c)$. Then any element $x$ in $\Delta_a^*(u,v)$  satisfies
\[\bb (a,b,v,c)<\bb(a,b,x,c) <\bb (a,b,u,c)~.\]\end{corollary}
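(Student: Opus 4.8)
The plan is to reduce everything to a computation in a $\tau$-chart with $\tau = (a,b,c)$, using Lemma \ref{lem:Cross-ratioChart} to turn the cross-ratio into a ratio of $\q$-values. Concretely, fix a Minkowski chart for $a$ in which $b$ maps to the origin $0$ and $c$ maps to some point $c'$; then by Lemma \ref{lem:Cross-ratioChart}, for any point $z$ in $\Min(a)$ with image $z'$ we have $\bb(a,b,z,c) = \q(c'-z')/\q(c'-0) = \q(c'-z')/\q(c')$. So setting $f(z') \defeq \q(c' - z')$, the hypothesis reads $f(v') < f(u')$ (after dividing by the positive constant $\q(c')$; recall $\q(c')>0$ since the segment from $b$ to $c$ is spacelike), and the conclusion to prove is $f(v') < f(x') < f(u')$ for every $x$ in $\Delta^*_a(u,v)$.

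The key geometric input is the description of the diamond $\Delta^*_a(u,v)$: by Proposition \ref{pro:MinCharts} and the proof of the proposition defining diamonds, the diamond $\Delta^*_a(u,v)$ corresponds, in the chart, to one of the two connected components of the set of points $x'$ for which $(a, u, x')$ — equivalently $u'-x'$ — is spacelike, i.e. one of the two cones where $\q(x'-u')>0$, namely the one ``pointing toward'' $v'$; similarly it is described by $\q(x'-v')>0$. I would first observe that all four of $b'=0, u', v', c'$ lie on a common spacelike circle through $a$ (they are in a positive configuration), so in the chart they lie on a common spacelike affine line $\ell$ — this follows from Proposition \ref{pro:MinCharts}(ii) because any three of them span a positive triple with $a$, hence a spacelike circle through $a$, and these circles must coincide. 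Parametrising $\ell$ by arc length $t \mapsto p + t w$ with $\q(w) = 1$, the function $f$ restricted to $\ell$ becomes $f(t) = \q(c' - p - tw) = (t - t_c)^2 + \q^\perp$ where $t_c$ is the parameter of $c'$ and $\q^\perp \geq 0$ is a constant (the ``timelike part''); since $c'$ lies on $\ell$ we actually get $\q^\perp = 0$, so $f(t) = (t-t_c)^2$ along $\ell$. Thus on $\ell$, $f$ is strictly monotone on each side of $t_c$, and the ordering condition $f(v') < f(u')$ together with positivity of the quadruple (which forces $u', v'$ to be on the same side of $c'$, and $b' = 0$ to be between them — wait, I should be careful: positivity of $(a,b,v,c)$ and $(a,b,u,c)$ with $u, v$ both in $\Delta^*_a(b,c)$ means $b$ is not between, so $u',v'$ are on the same side of $0$; combined with $b,c$ transverse this pins down the configuration).

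The heart of the argument is then: points $x$ in $\Delta^*_a(u,v)$, while not on $\ell$, still satisfy $\q(x'-u') > 0$ and $\q(x'-v') > 0$, and I must show this forces $f(v') < f(x') < f(u')$. Here I would use the reverse Cauchy–Schwarz / reverse triangle inequality for the Lorentzian form $\q$: writing $x' - c' = (x' - u') + (u' - c')$ and noting $u' - c'$ is spacelike (it lies along $\ell$) while $x' - u'$ is spacelike and, crucially, $x'$ being in the diamond $\Delta^*_a(u,v)$ means the two spacelike vectors $x'-u'$ and — hmm, this needs the precise cone geometry. The cleanest route: the diamond $\Delta^*_a(u,v)$ is, in the chart, a genuine \emph{convex} region (it is the intersection of two convex cones, per the proof of the diamond proposition), and $f(z') = \q(c'-z')$ is an affine-quadratic function whose Hessian is the signature-$(1,n)$ form $2\q$; on a region where $\q$ is... no, $\q$ is indefinite so $f$ need not be convex. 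Instead I would argue directly with coordinates adapted to $\ell$: choose the chart's quadratic form as $\q = \d x_1^2 - \d x_2^2 - \cdots$ with $\ell$ the $x_1$-axis and $c'$ the origin, so $f(z') = z_1^2 - z_2^2 - \cdots - z_{n+1}^2$; the diamond $\Delta^*_a(u,v)$ is then (one component of) $\{z : \q(z - u') > 0\} \cap \{z : \q(z-v')>0\}$, and with $u' = (s,0,\dots,0)$, $v' = (t,0,\dots,0)$, $0 < t < s$ say (so $f(v') = t^2 < s^2 = f(u')$, matching the hypothesis after checking the positivity configuration gives $0$ between them or on the far side — actually $b'=0$ with $u',v'$ in $\Delta^*_a(b,c)$ on the same side, and $c'$ origin, so the order along $\ell$ is $0, v', u'$ or $0, u', v'$; the hypothesis $f(v')<f(u')$ picks $0,v',u'$), the region $\{z_1^2 - z_2^2 - \cdots < (z_1 - s)^2\}$ simplifies to $\{2sz_1 - s^2 < z_2^2 + \cdots\}$ i.e. $z_1 < (s + (z_2^2+\cdots)/s)/2$, and similarly from the $v'$-cone $z_1 < (t + \cdots/t)/2$ or $z_1 > \cdots$ depending on component; intersecting these half-space-like conditions and using $f(z') = z_1^2 - (z_2^2 + \cdots) = z_1^2 - r^2$ one checks by elementary algebra that $t^2 < z_1^2 - r^2 < s^2$ on the relevant component. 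I expect the main obstacle to be exactly this last bookkeeping — correctly identifying which connected component $\Delta^*_a(u,v)$ is and propagating the positivity/ordering constraints through the quadratic inequalities — rather than any deep idea; once the chart is set up with $\ell$ as the $x_1$-axis and $c'$ at the origin it is a finite, if fiddly, computation, and I would present it compactly by exploiting the monotonicity of $f$ along each null direction emanating from a point of $\ell$.
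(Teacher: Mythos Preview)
Your proposal has two genuine gaps.

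First, you misapply Lemma~\ref{lem:Cross-ratioChart}. That formula reads $\bb(a,x,b,y)=\q(b-y)/\q(b-x)$, with the \emph{third} argument appearing in both numerator and denominator. Applied to $\bb(a,b,z,c)$ this gives $\q(z'-c')/\q(z'-b')$, not $\q(c'-z')/\q(c')$; your denominator is a constant when it should depend on $z'$. In effect you have computed $\bb(a,b,c,z)$ rather than $\bb(a,b,z,c)$. The reduction to a single quadratic $f(z')=\q(c'-z')$ therefore collapses: the cross-ratio is a ratio of two $z'$-dependent quadratics, and both must be controlled.

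Second, your claim that $b',u',v',c'$ are collinear in the chart is false. Positivity of the triple $(a,b,u)$ only says that $u'-b'$ is spacelike; it does \emph{not} place $u'$ on the line through $b'$ and $c'$. The circles through $(a,b,c)$ and through $(a,b,u)$ share the two points $a,b$ but a spacelike circle is determined by three points, so they coincide only if $u$ already lies on the first circle --- which is nowhere assumed. The points $u,v$ are arbitrary points of the open $(n{+}1)$-dimensional diamond $\Delta^*_a(b,c)$, so your choice $u'=(s,0,\dots,0)$, $v'=(t,0,\dots,0)$ is illegitimate and the ensuing algebra does not touch the actual geometry.

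The paper's argument sidesteps both issues by working with the full ratio and a causal orientation. Choose the space orientation on the chart so that $c'-b'$ is future-pointing. Along any future-pointing spacelike curve inside $\Delta^*_a(b,c)$ the function $x\mapsto\q(x-b)$ is strictly increasing and $x\mapsto\q(x-c)$ is strictly decreasing; both are positive there, so their ratio $\bb(a,b,x,c)$ is strictly monotone. Since the diamond $\Delta^*_a(u,v)$ is precisely the region swept out by future-pointing spacelike curves from $u$ to $v$, every point $x$ in it satisfies the desired double inequality. This monotonicity-along-causal-curves idea replaces your attempted explicit coordinate computation entirely.
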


\begin{proof}
Use the notations of the previous lemma with $\tau=(a,b,c)$ and define a causal structure on $\R^{1,n}$ (that is, an orientation of spacelike vectors) such that the vector from $b$ to $c$ is future pointing.

Observe that the function $\q(x-b)$ is strictly increasing along future pointing spacelike curves in $\Delta^*_a(b,c)$ while the function $\q(x-c)$ is strictly decreasing. Since both are positive, lemma \ref{lem:Cross-ratioChart} implies that the function $f(x):=\bb(a,b,x,c)$ is strictly decreasing along future pointing spacelike curves. 

The result follows from the fact that $\Delta_a^*(u,v)$ is the union of all future pointing spacelike curves from $u$ to $v$. 
\end{proof}

The proof of the following result follows closely that of \cite[Theorem 5.3]{Labourie:2005}
\begin{proposition}\label{ex:MW2}
A continuous map  $\xi_0$ whose image contains a positive triple  is a circle map if and only if  for all quadruples of pairwise distinct points
	$$
{\bb}_{\xi_0}(x,y,z,t)=[x, y,z,t]^2\ .
$$
\end{proposition}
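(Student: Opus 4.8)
The plan is to prove both implications of Proposition \ref{ex:MW2} by reducing to the model computation of the cross-ratio $\bb$ on a spacelike circle, and then using the characterisation of circles by a positive triple together with a rigidity/uniqueness argument in the spirit of \cite[Theorem 5.3]{Labourie:2005}.

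\medskip

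\textbf{The easy direction.} Suppose $\xi_0$ is a circle map, i.e. its image is a spacelike circle $c$, which by definition is $\bHn\cap\P(W)$ for a signature $(2,1)$ subspace $W$. Such a circle is $\G$-equivariantly identified (via the subgroup $\SO_0(2,1)$ preserving $W$) with $\partial_\infty{\mathbf H}^{2,0}\cong\P(V)$, and the remark following Definition \ref{def:cross-ratio} records that under this identification $\bb(x,y,z,t)=[x,y,z,t]^2$. Concretely one can take coordinates on $W$ as in the proof of Lemma \ref{lem:Orbits}, write the points of the circle as $e_1\cos\theta+e_2\sin\theta+e_3$ (up to scaling), and compute $\langle x_0,y_0\rangle$ explicitly in terms of $\cos(\theta_x-\theta_y)$; comparing with the usual cross-ratio on $\P(V)$ written in the angle parametrisation gives the square. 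Since $\xi_0$ maps bijectively onto $c$ and the statement is about values of the cross-ratio of image points, this yields $\bb_{\xi_0}(x,y,z,t)=[x,y,z,t]^2$ for every quadruple of pairwise distinct points of $\P(V)$.

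\medskip

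\textbf{The main direction.} Conversely, assume $\bb_{\xi_0}(x,y,z,t)=[x,y,z,t]^2$ for all quadruples of pairwise distinct points, and that the image of $\xi_0$ contains a positive triple $(a,b,c)=\xi_0(a',b',c')$. Let $c$ be the unique spacelike circle through $(a,b,c)$, with associated signature $(2,1)$ subspace $W$, and let $\xi_1\colon\P(V)\to c$ be the circle map agreeing with $\xi_0$ on $\{a',b',c'\}$ (unique up to the $\SO_0(2,1)$-stabiliser of the triple, which we fix). By the easy direction, $\bb_{\xi_1}=[\,\cdot\,]^2$ as well. The goal is to show $\xi_0=\xi_1$. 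Fix a fourth point $t'\in\P(V)\setminus\{a',b',c'\}$; I would show that the point $\xi_0(t')$ is forced. First, transversality of $\xi_0$ on pairs (which follows since pairwise distinct points must have finite, nonzero cross-ratio values, forcing $\langle\xi_0(x)_0,\xi_0(y)_0\rangle\neq0$) means $\xi_0(t')$ is transverse to $a$, $b$, $c$. Work in a Minkowski chart for $a$ with quadratic form $\q$: by Lemma \ref{lem:Cross-ratioChart}, $\bb(a,\xi_0(t'),b,c)=\q(c-b)/\q(c-\xi_0(t'))$, and similarly $\bb(a,b,\xi_0(t'),c)=\q(c-\xi_0(t'))^{-1}\q(c-b)$-type expressions; more to the point, the three cross-ratios $\bb(a,b,c,\xi_0(t'))$, $\bb(a,b,\xi_0(t'),c)$, $\bb(a,\xi_0(t'),b,c)$ — all determined by the hypothesis as squares of honest cross-ratios on $\P(V)$ — pin down $\q(\xi_0(t')-b)$, $\q(\xi_0(t')-c)$, and $\q(\xi_0(t')-b)/\q(\xi_0(t')-c)$ and also the mixed quantity $\langle \xi_0(t')-b,\xi_0(t')-c\rangle$ via polarisation from cross-ratios involving the fourth reference point, hence the full position of $\xi_0(t')$ relative to $b$ and $c$ in the chart. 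Since $\xi_1(t')$ lies in the same chart and produces the same cross-ratio values, $\xi_0(t')$ and $\xi_1(t')$ satisfy the same system of quadratic equations; a dimension count (the constraints cut the affine Minkowski space down to a point, once enough reference points are used) forces $\xi_0(t')=\xi_1(t')$. Running this over all $t'$ gives $\xi_0=\xi_1$, so the image of $\xi_0$ is the circle $c$.

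\medskip

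\textbf{Main obstacle.} The delicate point is the final step of the converse: showing that the values of $\bb_{\xi_0}$ on quadruples through the fixed triple $(a,b,c)$ genuinely determine $\xi_0(t')$ as a point of $\Min(a)\cong\Euc^{1,n}$, not just up to the conformal symmetries fixing $a,b,c$. The cross-ratios $\bb(a,b,c,t)$ alone only control $\q(t-b)$ and $\q(t-c)$ (two scalar conditions on an $(n+1)$-dimensional space), so for $n\geq1$ one genuinely needs to feed in a fifth point and use the multiplicativity/cocycle identities of $\bb$ together with polarisation to recover the bilinear pairings $\langle t-b,\,s\rangle$ for enough vectors $s$; organising this so that the system is determined — and matching it against what the model circle map $\xi_1$ gives — is where the argument of \cite[Theorem 5.3]{Labourie:2005} is adapted, and is the step requiring care rather than the model computation itself.
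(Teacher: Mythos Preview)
Your easy direction is fine. The converse, however, has a genuine gap.

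You attempt to pin down $\xi_0(t')$ as a \emph{specific point} of $\Min(a)$ by matching $\q(\xi_0(t')-b)$ and $\q(\xi_0(t')-c)$ against the values for the circle map $\xi_1$. As you yourself note, these are only two scalar constraints on an $(n+1)$-dimensional affine space, and the $\mathsf{O}(n)$-stabiliser of $(a,b,c)$ acts nontrivially on the solution set. Your proposed fix --- introduce a fifth point and polarise --- does not close the system: the only candidates for a fifth reference point in $\bHn$ are further images $\xi_0(s')$, which are themselves unknown, so each new point brings in new unknowns along with its constraints. The ``dimension count'' is asserted but never carried out, and as written the scheme cannot determine $\xi_0(t')$ uniquely.

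The paper does not try to prove $\xi_0(t')=\xi_1(t')$; it proves the weaker (and sufficient) statement that $\xi_0(t')$ lies in the $3$-dimensional span of $\xi_0(a'),\xi_0(b'),\xi_0(c')$, which is precisely what ``on the circle'' means. The mechanism is a determinant: pick two further points $y_0,z_0$ in $\P(V)$, write $(x_1,x_2,x_3,x_4)=(a',b',c',t')$, and form the $4\times 4$ matrix $A$ with entries $a_{ij}=\bb_{\xi_0}(y_0,z_0,x_i,x_j)$. A direct computation shows that $\det A$ is a nonzero scalar multiple of the determinant of the Gram matrix $\big(\langle\xi_0(x_i),\xi_0(x_j)\rangle\big)_{i,j}$. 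By hypothesis each entry of $A$ equals $[y_0,z_0,x_i,x_j]^2$, hence $A$ coincides with the corresponding matrix for the circle map $\xi_1$; for $\xi_1$ the four image vectors span only $3$ dimensions, so $\det A=0$, and therefore the Gram determinant for $\xi_0$ vanishes as well. This Gram-matrix trick --- detecting linear dependence via a determinant of cross-ratios rather than solving for coordinates --- is the key idea you are missing.
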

\begin{proof}
It follows from the hypothesis that $\braket{\xi_0(x),\xi_0(y)}\not=0$ whenever $x$ is different from $y$ and in particular $\xi$ is injective.

	Let $(x_1,x_2,x_3)$ be a positive triple in $\P(V)$ whose image by $\xi$ is positive. It its enough to show that for any $x_4$ so that $(x_1,x_2,x_3,x_4)$ is a positive quadruple, then $\xi(x_4)$ belongs to the circle  $C$ that passes through $\xi(x_1)$, $\xi(x_2)$ and $\xi(x_3)$. Let $y_0$ and $z_0$ be two points in $\P(V)$ different from all the points $x_i$, let also $\xi_0$ be the circle map sending $(x_1,x_2,x_3)$ to $(\xi(x_1),\xi(x_2),\xi(x_3))$. 
	
Let us consider the $4\times 4$ matrix $A(\xi)$ whose coefficients are $a_{i,j}\defeq \bb_{\xi_0}(y_0,z_0, x_i,x_j)$. The determinant of this matrix is a non zero multiple of the Gram matrix $G(\xi)$ whose coefficients are $g_{ij}=\braket{\xi(x_i),\xi(x_j)}$:
$$
\det(A(\xi))=\frac{\braket{\xi(y_0),\xi(z_0)}^4}{\prod_{i=1}^4\braket{\xi(y_0),\xi(x_i)}\prod_{j=1}^4\braket{\xi(z_0),\xi(x_j)}}\det(G(\xi))\ .
$$
Thus, the determinant of $A(\xi)$ is zero if and only if  $\span\{\xi(x_1),...,\xi(x_4)\}$ has dimension $3$, that is if $\xi(x_4)$ belongs to $C$. In particular, $\det(A(\xi_0))=0$. Since our hypothesis implies that $A(\xi)=A(\xi_0)$, it follows that $\det(A(\xi))=0$ and that $\xi(x_4)$ belongs to $C$.
\end{proof}

\subsection{Quasisymmetric maps}\label{ss:QSmaps} Recall that a homeomorphism $\varphi$ of $\P(V)$ is \emph{quasisymmetric} if there exists constants $A$ and $B$ greater than 1 such that for any quadruple of pairwise distinct points in $\P(V)$, we have
\[A^{-1} \leq \left\vert [x,y,z,t] \right\vert \leq A\ \  \hbox{ implies }\ \ B^{-1}\leq \left\vert [\varphi(x),\varphi(y),\varphi(z),\varphi(t)]\right\vert \leq B~.\]
Here we do not impose $\varphi$ to be orientation preserving (this is why we need the absolute value).

We now use our cross-ratio to define quasisymmetric maps from $\P(V)$ to $\bHn$.
This notion is closely related to both the classical notion of quasicircles for Kleinian groups and the notion of Sullivan maps developed in \cite{KahnLabourieMozes}. 

After some preliminaries involving the related notion in a Minkowski patch we prove the main results of this section concerning quasisymmetric maps: equicontinuity (Theorem \ref{theo:Equicontinuity}) and Hölder property (Theorem \ref{theo:HolderContinuity}).

\begin{definition}{\sc [Quasisymmetric]}\label{def:QSmap}
Let $\Omega$ be a dense subset of  $\P(V)$, $\xi$ a map from $\Omega$ to $\partial_\infty\Hn$ and $A,B$ be constants greater than $1$. The map $\xi$ is \emph{$(A,B)$-quasisymmetric on $\Omega$} if it is positive and  for all  quadruple  $(x,y,z,t)$ in $\Omega^4$ we have 
\begin{eqnarray}
	A^{-1} \leq \left\vert [x,y,z,t] \right\vert \leq A\ \  &\hbox{ implies }&\ \ B^{-1}\leq \left\vert \bb_\xi(x,y,z,t)\right\vert \leq B\ \label{cond:defQS}.
\end{eqnarray} 
We will refer to $\Omega$ as the {\em defining set} of $\xi$.

We also say $\xi$ is \emph{quasisymmetric} if it is $(A,B)$-quasisymmetric for some constants $A$ and $B$.

Finally, we call \emph{quasicircle} the image of a quasisymmetric map defined on $\P(V)$.
\end{definition}

\subsection{Preliminaries}
\subsubsection{Choice of constants}
The following lemma tells us the choice of $A$ is not relevant:
\begin{lemma}\label{lem:AB-CD}
For any constants $A$,$B$ and $C$ greater than 1, there exists $D$ such that any $(A,B)$-quasisymmetric map is $(C,D)$-quasisymmetric.
\end{lemma}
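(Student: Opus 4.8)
The statement to prove is Lemma \ref{lem:AB-CD}: if a map is $(A,B)$-quasisymmetric then for any $C>1$ it is $(C,D)$-quasisymmetric for a suitable $D=D(A,B,C)$. The point is a purely combinatorial/arithmetic one about cross-ratios, so the strategy is to reduce the range $A^{-1}\leq|[x,y,z,t]|\leq C$ to a \emph{bounded number of steps} each of which changes the cross-ratio by a controlled multiplicative amount, using the cocycle identities for the cross-ratio. The key observation is that the classical cross-ratio $\bb$ (and our $\bb_\xi$) satisfies multiplicativity in each slot: for pairwise distinct points $x,y,z,t,w$ one has
\[
[x,y,z,w]\cdot[x,w,z,t]=[x,y,z,t]\ ,
\]
and similarly our $\bb_\xi$ inherits this because $\bb_\xi$ is literally built out of the same determinant-type expression (the $\langle\cdot,\cdot\rangle$ factors telescope exactly as the $\omega(\cdot,\cdot)$ factors do). This lets us interpolate: given $(x,y,z,t)$ with $A^{-1}\leq |[x,y,z,t]|\leq C$, we insert intermediate points $w_0=y,w_1,\dots,w_N=t$ along the relevant component of $\P(V)\setminus\{x,z\}$ so that each consecutive quadruple $[x,w_i,z,w_{i+1}]$ has absolute value in $[A^{-1},A]$; then the hypothesis applies to each, and multiplying gives $B^{-N}\leq |\bb_\xi(x,y,z,t)|\leq B^{N}$.

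\textbf{Key steps, in order.} First I would record the cocycle identity for $[\cdot,\cdot,\cdot,\cdot]$ and for $\bb_\xi$; this is an immediate computation from the definitions and I would not grind through it. Second, I would fix $x,z$ and work on a single connected component $I$ of $\P(V)\setminus\{x,z\}$ on which the function $w\mapsto |[x,w,z,\cdot]|$ or rather $s\mapsto[x,y,z,s]$ is a homeomorphism onto an interval of $\R$; using this monotone parametrisation, given $y$ and $t$ on the same side (the positive-quadruple case), I can choose $w_1,\dots,w_{N-1}$ with $[x,w_i,z,w_{i+1}]$ exactly equal to some fixed value $\rho\in(A^{-1},1)$, say $\rho = A^{-1/2}$ to be safe, except for the last step whose cross-ratio then automatically lies in $[A^{-1},1]$ or $[1,A]$. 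The number $N$ of steps needed is controlled by $C$ and $A$ alone: since $[x,y,z,t]=\prod[x,w_i,z,w_{i+1}]$ and each factor is $\rho$ (with $|\log\rho|=\tfrac12\log A$) except possibly one, we get $N\leq 1+\lceil \log C/\log\rho\rceil$ or so, i.e. $N\leq N_0(A,C)$. Third, apply $(A,B)$-quasisymmetry to each of the $N$ consecutive quadruples and multiply the inequalities, giving $|\bb_\xi(x,y,z,t)|\in[B^{-N_0},B^{N_0}]$; set $D\defeq B^{N_0(A,C)}$. Fourth, handle the ``sign'' cases: the condition involves $|[x,y,z,t]|$, which may be $\leq A^{-1}$-to-$C$ but with $y,t$ on opposite sides of $\{x,z\}$ (so $[x,y,z,t]<0$); here I would either permute the quadruple to reduce to the positive-quadruple configuration (the cross-ratio of a permuted quadruple differs by a Möbius transformation of $\R$, and positivity of $\xi$ guarantees the image behaves compatibly), or run the same interpolation argument allowing one intermediate point to cross $x$ or $z$, splitting off a bounded-cross-ratio ``jump over a puncture'' factor.

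\textbf{Anticipated main obstacle.} The purely quantitative interpolation is routine; the subtle part is step four, namely making sure the interpolation stays inside the \emph{defining set} $\Omega$ (which is only dense, not all of $\P(V)$) and that the intermediate quadruples one feeds into the $(A,B)$-hypothesis really are admissible — in particular that $\xi$ is defined and the relevant $\langle\xi(\cdot),\xi(\cdot)\rangle$ are nonzero at the chosen points. Density of $\Omega$ lets me perturb the $w_i$ slightly; I must check that a small perturbation changes each $|[x,w_i,z,w_{i+1}]|$ by little enough to stay in $[A^{-1},A]$ and changes the product by little enough that the final bound still holds after possibly enlarging $D$ by a harmless factor (or, cleaner: prove it first for quadruples in a full-measure subset and then note that the resulting inequality $B^{-N_0}\leq|\bb_\xi|\leq B^{N_0}$ is a closed condition, hence passes to all of $\Omega^4$ by continuity of $\bb_\xi$ where defined). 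I would also need the elementary fact that $[x,w_i,z,w_{i+1}]$ being in a fixed compact sub-interval of $(0,\infty)$ forces the $w_i$ to stay in a compact subset of the open arc $I$, so the process terminates and no factor degenerates; this follows from the explicit monotone parametrisation of the cross-ratio along $I$. Positivity of $\xi$ is what guarantees that the configuration of images mirrors the configuration of the $w_i$ closely enough that no hidden cancellation spoils the telescoping of $\bb_\xi$.
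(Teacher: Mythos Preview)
Your proposal is correct and follows essentially the same approach as the paper: use the multiplicative cocycle identity for both the classical cross-ratio and $\bb_\xi$, interpolate between $y$ and $t$ by finitely many points $w_0=y,\dots,w_N=t$ with each $|[x,w_i,z,w_{i+1}]|\in[A^{-1},A]$, and multiply the resulting bounds on $|\bb_\xi|$ to get $D=B^{N_0(A,C)}$. The paper's proof is terser and does not spell out the density-of-$\Omega$ or sign issues you flag; your observation that a small perturbation of the $w_i$ into $\Omega$ (or choosing the step $\rho$ strictly inside $(A^{-1},A)$) handles this is the right fix, and the transversality of the images needed for $\bb_\xi$ to be defined follows directly from positivity of $\xi$, so that concern is lighter than you suggest.
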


\begin{proof}
If $C\leq A$, then we can take $D=B$. If $C>A$, the result will follow from the cocycle property of the cross-ratio:
\[ \bb(a,c_0,d,c_2)=\bb(a,c_0,d,c_1)\cdot \bb(a,c_1,d,c_2)\ .\]
In fact, if $(x,y,z,t)$ is such that
\[C^{-1}\leq \left\vert[x,y,z,t]\right\vert \leq C\ ,\]
then we can find $N$, with $N\leq 1+\log\frac{C}{A}$, as well as $w_0,\cdots, w_N$ with $w_0=y$, $w_N=t$, so that 
\[A^{-1}\leq \big\vert[x,w_i,z,w_{i+1}]\big\vert \leq A\ .\]
Then from the cocycle property of $\bb$, we have
\[B^{-N}\leq \left\vert\bb_\xi(x,y,z,t]\right\vert \leq B^N\ .\]
In particular, $D=B^{1+\log\frac{B}{A}}$ satisfies the required condition.
\end{proof}

\subsubsection{Quasisymmetry in a Minkowski patch}
In a manifold equipped with a conformal structure of type $(1,n)$ the set of non zero spacelike vectors is the union of two non intersecting convex cones. A {\em space orientation} is a continuous choice of one of this cone over the whole manifold.

Let $F$ be a vector space equipped with a quadratic form $\q$ of signature $(1,n)$. In this case, the choice of one spacelike vector at one point defines the space orientation.

For $\tau=(a,b,c)$ a positive triple in $\bHn$, we choose the space orientation for $\Min(c)$ to be defined by $b-a$. Then we define for any $x$ in $\Min(c)$ the set $I_+(x)$ to be the cone of space like vectors defining the orientation, while $I_-(x)$ is the cone of space like vectors defining the opposite orientation. Similarly $J_+(x)$ and   $J_-(x)$ denote the closure of  $ I_+(x)$ and   $ I_-(x)$ respectively. Then 
\begin{eqnarray}
	\Delta^*_c(a,b)= I_+(a)\cap  I_-(b)\ .\label{eq:DiamMink}
\end{eqnarray}

\begin{definition}{\sc [Positive and semi-positive maps]}
Let $\Omega$ be a dense subset of an interval $O$ of $\mathbb R$. A map $f$ from $\Omega$ to $F$ is
\begin{enumerate}
	\item {\em positive} if for all $x>y$, $f(x)$ belongs to $ I_+(f(y))$ -- equivalently if  $f(y)$ belongs to $I_-(f(x))$; in particular $I_+(f(x))\subset I_+(f(y))$.
		\item {\em Semi-positive} if for all $x>y$, $f(x)$ belongs to $J_+(f(y))$ -- equivalently if  $f(y)$ belongs to $J_-(f(x))$; in particular $J_+(f(x))\subset J_+(f(y))$.
\end{enumerate}
\end{definition}

We have the following

\begin{lemma}
Let $\Omega$ be a dense subset of an interval $O$ and $f$ be a semi-positive map from $\Omega$ to $F$. Then for any $x$ in $O$, the following limits exist
\begin{eqnarray*}
f^+(x)\defeq\lim_{y\to x, y>x} f(y)& \ \ , &
f^-(x)\defeq\lim_{y\to x, y<x} f(y)\ .
	\end{eqnarray*}
\end{lemma}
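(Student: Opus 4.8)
The plan is to establish existence of the one-sided limits by a monotonicity-plus-compactness argument adapted to the causal structure on $F$. First I would fix $x\in O$ and, since $\Omega$ is dense, choose a sequence $\seqk{y}$ in $\Omega$ with $y_k>x$ and $y_k\to x$, decreasing; semi-positivity gives $J_+(f(y_k))\subset J_+(f(y_{k+1}))$, so the sets $J_+(f(y_k))$ form a nested increasing family of closed convex cones based at the points $f(y_k)$. Dually, for any $z\in\Omega$ with $z<x$ we have $f(z)\in J_-(f(y_k))$ for all $k$, which pins the $f(y_k)$ inside a fixed closed region: concretely, pick one $z_0\in\Omega$ with $z_0<x$ and one $w_0\in\Omega$ with $w_0>y_1$; then every $f(y_k)$ lies in $J_+(f(z_0))\cap J_-(f(w_0))$, which is a compact subset of $F$ (the intersection of the future of a point with the past of another in a Lorentzian vector space is compact, being an affine image of a ``diamond''). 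Hence $\seqk{f(y)}$ has a convergent subsequence.

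Next I would upgrade subconvergence to convergence by showing the limit is independent of the subsequence. Suppose two subsequences converge to $p$ and $p'$. Interleaving them and using semi-positivity along the combined (still decreasing) sequence forces $p\in J_+(f(y))$ and $p'\in J_+(f(y))$ for all relevant $y$, and moreover $p'\in J_+(p)\cap J_-(p)$ by comparing tails: given $\epsilon$, tails of both subsequences eventually lie in $J_+$ of any fixed earlier term, and passing to the limit we get $p'\in J_+(p)$ and symmetrically $p\in J_+(p')$. Since $J_+$ is the closure of a salient (proper) convex cone, $J_+(p)\cap J_-(p)=\{p\}$, so $p=p'$. This shows $\lim_{k}f(y_k)$ exists and is the same for every decreasing sequence $y_k\downarrow x$ in $\Omega$; a standard argument then yields that the full limit $f^+(x)=\lim_{y\to x,\,y>x}f(y)$ exists. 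The argument for $f^-(x)$ is identical after reversing the roles of $I_+$ and $I_-$ (equivalently, reflecting the interval).

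The only subtlety worth flagging is the edge case where $x$ is an endpoint of $O$, or where there is no point of $\Omega$ on one side of $x$ — but then the corresponding one-sided limit is vacuous, so one only claims the limit on the side where $O$ extends. I would also remark that the two reformulations in the statement of semi-positivity (``$f(x)\in J_+(f(y))$'' versus ``$J_+(f(x))\subset J_+(f(y))$'') are used interchangeably here: the inclusion form is what makes the nested-cone compactness work, while the membership form is what lets us conclude $p=p'$ via salience of the cone.

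The main obstacle, such as it is, is purely organisational: making precise that the relevant ``diamonds'' $J_+(f(z_0))\cap J_-(f(w_0))$ in $F\cong\Euc^{1,n}$ are compact, and that a proper convex cone $C$ satisfies $\overline C\cap(-\overline C)=\{0\}$ so that equality of futures forces equality of points. Both are elementary facts about Lorentzian vector spaces, already implicit in the discussion of diamonds in $\bHn$ via Proposition \ref{pro:MinCharts} and equation \eqref{eq:DiamMink}, so no genuinely new idea is needed — the proof is a short compactness argument once the causal bookkeeping is set up correctly.
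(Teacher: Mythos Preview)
Your argument is correct and uses essentially the same idea as the paper: the semi-positivity of $f$ makes the future cones $J_+(f(y))$ nest monotonically as $y\downarrow x$, and the salience of the causal cone forces uniqueness of the limit. The paper packages this more tersely---it simply sets $W(x)=\overline{\bigcup_{y>x}J_+(f(y))}$ and asserts $W(x)=J_+(w)$ for a unique $w$, then defines $f^+(x)=w$---whereas you unpack the same content via a sequential compactness argument (trapping the $f(y_k)$ in a compact diamond $J_+(f(z_0))\cap J_-(f(w_0))$) followed by an interleaving argument to show any two subsequential limits coincide. Your version is arguably more complete, since the paper does not spell out why the closure of the union of nested cones is again a cone with a well-defined vertex; your compactness step is exactly what fills that gap.
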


\begin{proof} If $z$ and $y$ belong to $\Omega$ and $z>y$, we have $J_+(f(z))\subset J_+(f(y))$. Let 
$$
W(x)\defeq\overline{\bigcup_{y>x}J_+(f(y))}\ ,
$$ 
then $W(x)=J_+(w)$  for some unique element $w$ in $F$. We then define $f^+(x)\defeq w$.  We construct symmetrically $f^-$. The result follows.
\end{proof}

Given constants $A,B$ greater than $1$, we say that a map $f$ from $\Omega$ to $F$ is  $(A,B)$-{\em quasisymmetric}  if it is positive and 
	if $(x,y,z)$ are pairwise distinct  in $\Omega$ then
		\begin{eqnarray}
		\frac{1}{A}\leq \left\vert\frac{x-y}{z-x}\right\vert \leq A &\hbox{ implies }& \frac{1}{B}\leq \frac{\q(f(x)-f(y))}{\q(f(z)-f(x))}\leq B\ . 
	\end{eqnarray}
This  slight abuse of language is justified by lemma \ref{lem:Cross-ratioChart}.

\begin{lemma}\label{lem:basic-sp} Given a semi-positive map $f$ from a dense subset  $\Omega$ of an interval $O$  to $F$, 
	\begin{enumerate}
	\item 	the maps $f^+$ and $f^-$ are semi-positive and respectively rightcontinuous  and leftcontinuous .
	\item If $f^+$ (respectively  $f^-$) is continuous, then $f^+$  (respectively  $f^-$)  is the unique semi-positive continuous  extension of $f$ to $O$.
	\item Assume furthermore that $f$ is $(A,B)$-quasisymmetric, then $f^+$ and $f^-$ are both $(A,B)$-quasisymmetric.
	\end{enumerate}
\end{lemma}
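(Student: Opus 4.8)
The plan is to prove the three items in order, using the monotonicity packaged in the cones $J_\pm$ and the already-established existence of the one-sided limits $f^\pm$.

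For item (i), I would first check that $f^+$ is semi-positive. Take $x < y$ in $O$; approximating both from the right by points of $\Omega$ and using that $f$ is semi-positive on $\Omega$ (so $f(y')\in J_+(f(x'))$ whenever $x'<y'$ in $\Omega$), the inclusion $J_+(f(y'))\subset J_+(f(x'))$ passes to the limit because the $J_+$ cones are closed and vary continuously with their apex. Hence $f^+(y)\in J_+(f^+(x))$. Right-continuity of $f^+$: given $x_k \downarrow x$, I would compare $W(x_k)$ with $W(x)$ in the notation of the preceding lemma; since $W(x)=\overline{\bigcup_{y>x}J_+(f(y))}=\overline{\bigcup_k W(x_k)}$ and each $W(x_k)=J_+(f^+(x_k))$ is nested decreasing in $k$, the apexes $f^+(x_k)$ converge to $f^+(x)$. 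The statements for $f^-$ are symmetric (reverse the order on $O$).

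For item (ii), suppose $f^+$ is continuous. It is then a continuous extension of $f$: indeed, at every point $x$ of $\Omega$ one has $f^+(x)=f(x)$ because $f$ itself is semi-positive, so the right-limit of $f$ agrees with $f(x)$ — more precisely $f(x)\in J_+(f(y))\cap J_-(f(z))$ for $y<x<z$ in $\Omega$, forcing $f(x)=\lim_{y\downarrow x}f(y)=f^+(x)$; actually one must be slightly careful, the cleanest argument is that continuity of $f^+$ together with $f^+\geq f$ (in the cone sense) from both sides pins $f^+(x)=f(x)$ on the dense set $\Omega$. Uniqueness: any two continuous semi-positive extensions agree on the dense set $\Omega$, hence everywhere. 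The argument for $f^-$ is identical.

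For item (iii), assume $f$ is $(A,B)$-quasisymmetric. Fix pairwise distinct $x,y,z\in O$ with $A^{-1}\leq |(x-y)/(z-x)|\leq A$; I want the same bound for the ratio $\q(f^+(x)-f^+(y))/\q(f^+(z)-f^+(x))$. Choose sequences $x_k\to x$, $y_k\to y$, $z_k\to z$ in $\Omega$, approaching from the right, and in such a way that $x_k,y_k,z_k$ remain in the same cyclic order and $|(x_k-y_k)/(z_k-x_k)|$ stays inside, say, $[A^{-1},A]$ — this is possible because the ratio is continuous in $(x,y,z)$ and the inequality is non-strict, so if equality holds at an endpoint one can slide the ratio slightly inward while keeping the approximating triple; to be safe one first proves the conclusion for the open condition $A^{-1}<\cdots<A$ and then takes $A'\downarrow A$, $A''\uparrow A^{-1}$ to recover the closed one (invoking Lemma \ref{lem:AB-CD} if a change of constant is needed). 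Then $\q(f(x_k)-f(y_k))/\q(f(z_k)-f(x_k))\in[B^{-1},B]$ for all $k$, and passing to the limit using $f(x_k)\to f^+(x)$, etc., and continuity of $\q$ gives the bound for $f^+$, provided the denominator limit $\q(f^+(z)-f^+(x))$ is nonzero — which it is, since the ratio stays bounded away from $0$. Positivity of $f^+$ was established in item (i) (semi-positivity plus: if $x>y$ then the quasisymmetry bound forbids $\q(f^+(x)-f^+(y))=0$, so $f^+(x)\in I_+(f^+(y))$ rather than merely $J_+$). The case of $f^-$ is symmetric.

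The main obstacle I anticipate is the bookkeeping in item (iii): one must approximate a triple $(x,y,z)$ satisfying a \emph{closed} cross-ratio inequality by triples in $\Omega$ satisfying the same inequality, which is delicate exactly when $|(x-y)/(z-x)|$ equals $A$ or $A^{-1}$, and one must simultaneously control that the denominators $\q(f(z_k)-f(x_k))$ do not degenerate to $0$ in the limit. Handling this cleanly — most likely by first proving everything with strict inequalities and then a limiting argument in the constants via Lemma \ref{lem:AB-CD} — is where the real work lies; the monotonicity arguments in items (i) and (ii) are routine consequences of the cone formalism already set up.
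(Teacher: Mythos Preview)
Your treatment of items (i) and (ii) is fine and matches the paper's implicit view that these are routine consequences of the cone formalism.

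The gap is in item (iii), and it is precisely the circularity you partly anticipate but do not resolve. You try to establish the $(A,B)$-bound for $f^+$ by passing to the limit in the ratio $\q(f(x_k)-f(y_k))/\q(f(z_k)-f(x_k))$, and you justify nonvanishing of the denominator by saying ``the ratio stays bounded away from $0$''. This does not work: the ratio lying in $[B^{-1},B]$ is perfectly compatible with \emph{both} numerator and denominator tending to zero at comparable rates, giving a $0/0$ limit. You then invoke the quasisymmetry bound for $f^+$ to deduce positivity of $f^+$ --- but that bound is exactly what you were trying to prove, and its derivation required positivity. So neither step stands on its own.

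The paper breaks this circularity by proving positivity of $f^-$ (and $f^+$) \emph{first}, by a direct geometric argument that uses only the positivity of $f$ on $\Omega$, not the quantitative quasisymmetry bound. Given $y<z$ in $O$, pick $x_0<w_0$ in $\Omega\cap(y,z)$; then the diamond $J_+(f(x_0))\cap J_-(f(w_0))$ is nondegenerate (since $f$ is positive) and is contained in $J_+(f^-(y))\cap J_-(f^-(z))$, forcing the latter to be a genuine diamond rather than a lightlike segment, hence $\q(f^-(y)-f^-(z))>0$. Once positivity is in hand, all the $\q$-values in the ratio are strictly positive and the limit argument you sketch goes through without difficulty. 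Your concern about the closed versus open inequality in the constant $A$ is then harmless: density of $\Omega$ and continuity of the (now everywhere nonzero) ratio suffice.
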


\begin{proof} The only non trivial point is item (iii). We first show that if $f$ is positive on $\Omega$, then $f^-$ (or equivalently $f^+$) is positive on $O$. Let us first observe that, since $f^-$ is semi-positive, if $y<x<z$ and $x$ is in $O$, $y$ and $z$ in $O$, then by density of $\Omega$ 
	$$
	f(x)\in J_+(f^-(y))\cap  J_-(f^-(z))\eqdef J(y,z)\ .
	$$
	Thus  $f^-(]y,z[)$ is included in $J(y,z)$. Moreover since $\Omega$ is dense, there exists $x_0$ and $w_0$  in $\Omega$  so that  $J(y,z)$ contains 
	$$J_0=J_+(f(x_0))\cap  J_-(f(w_0))\ ,$$
	 Since $f$ is positive, $J_0$ -- hence $J(y,z)$ -- contains a space like segment. In particular $J(y,z)$ is not a lightlike line. This implies that  $f^-(y)$ and $f^-(z)$ are not joined by a lightlike segment, and so $f^-$ is positive. The proof for $f^+$ is analogous.

We conclude the proof of the last item using  that $\Omega$ is dense and the cross-ratio is continuous.
		\end{proof}
		
\subsubsection{Quasisymmetry  in the Minkowski case and continuity}
	\begin{proposition}\label{pro:QS-cont}
	Let $f$ be a positive map from a dense subset $\Omega$ of an interval $O$ to $F$. If $f$ is $(A,B)$-quasisymmetric, then $f$ extends uniquely on $O$  to a continuous $(A,B)$-quasisymmetric map.
\end{proposition}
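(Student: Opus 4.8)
Here is how I would attack Proposition~\ref{pro:QS-cont}.

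\medskip
\noindent\textbf{Reduction.} The plan is to first feed the hypotheses into Lemma~\ref{lem:basic-sp}: since $f$ is positive and $(A,B)$-quasisymmetric on the dense set $\Omega$, the one-sided limits $f^{+}$ and $f^{-}$ exist, are semi-positive, positive on $O$, respectively right- and left-continuous, and are again $(A,B)$-quasisymmetric as maps $O\to F$. A continuous extension of $f$ to $O$ is automatically unique (its values are forced on the dense set $\Omega$) and must coincide with both $f^{+}$ and $f^{-}$; so the entire content of the proposition is the claim that $f^{+}=f^{-}$, i.e.\ that $f^{+}$ is continuous. As $f^{+}$ is already right-continuous, this is equivalent to $f^{+}$ being left-continuous at every point $x_0\in O$, which is what I would try to prove by contradiction. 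By Lemma~\ref{lem:AB-CD} I may also assume, whenever convenient, that $A$ is as large as I like (at the cost of enlarging $B$).

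\medskip
\noindent\textbf{Ruling out a spacelike jump.} Suppose $f^{+}$ is discontinuous at $x_0$, and set $v:=f^{+}(x_0)-\lim_{y\to x_0^{-}}f^{+}(y)=f^{+}(x_0)-f^{-}(x_0)\ne 0$. Semi-positivity of $f^{+}$ forces $v$ to lie in the closed future cone $J_{+}$. Working in a $\tau$-chart, I would test the Minkowski quasisymmetry inequality for $f^{+}$ on the triples $(x_0,\;x_0-\delta,\;x_0+\delta)$, whose cross-ratio quantity $\bigl|\tfrac{x_0-(x_0-\delta)}{(x_0+\delta)-x_0}\bigr|$ equals $1$ and is thus admissible for every $A\ge 1$: for all small $\delta>0$,
\[
\frac{1}{B}\le \frac{\q\bigl(f^{+}(x_0)-f^{+}(x_0-\delta)\bigr)}{\q\bigl(f^{+}(x_0+\delta)-f^{+}(x_0)\bigr)}\le B .
\]
Letting $\delta\to 0$, the numerator tends to $\q(v)$ (using $f^{+}(x_0-\delta)\to f^{-}(x_0)$) and the denominator to $0$ (right-continuity of $f^{+}$). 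If $v$ is spacelike then $\q(v)>0$ and the quotient blows up, contradicting the upper bound $B$. Hence $v$ must be \emph{lightlike}, $\q(v)=0$.

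\medskip
\noindent\textbf{Ruling out a lightlike jump (the crux).} When $\q(v)=0$ the test above is a harmless $0/0$, and this is the genuinely delicate case; notice it cannot be dispatched by any finite collection of triples, since the quasisymmetry bound is scale-invariant and $B$ is fixed, so the argument must exploit infinitely many scales at once. The route I would pursue: (i) combine the quasisymmetry inequality on a self-similar, dyadically rescaled family of triples concentrated at $x_0$ to show that the $\q$-sizes of the increments of $f^{+}$ on the two sides of $x_0$, at all comparable scales, are mutually comparable with one fixed constant; (ii) use the Lorentzian geometry of $F$: for future-spacelike vectors the ``Lorentzian length'' $u\mapsto\sqrt{\q(u)}$ is \emph{super-additive} under splitting into future-spacelike pieces (reverse Cauchy--Schwarz), so telescoping the left-hand increments $E_k:=f^{+}(x_0-2^{-k-1}\delta)-f^{+}(x_0-2^{-k}\delta)$ yields $\sum_k\sqrt{\q(E_k)}\le\sqrt{\q\bigl(f^{+}(x_0)-f^{+}(x_0-\delta)\bigr)}\to\sqrt{\q(v)}=0$; (iii) compare this with the fact that the scale-invariant quasisymmetry estimate keeps the $\q(E_k)$ from decaying too fast, while the degeneracy $\q(v)=0$ of a null vector pins the directions of the $E_k$ onto the null ray $\R_{\ge 0}v$ — which is incompatible with the $E_k$ being \emph{strictly} spacelike (positivity of $f^{+}$), or with their having finite causal sum. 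I expect this bookkeeping — tracking $\q$-orders and directions through the chain of quasisymmetry inequalities in the null case, uniformly in $B$ — to be the main obstacle; the rest is routine.

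\medskip
\noindent\textbf{Conclusion.} Once $f^{+}=f^{-}$, set $\bar f:=f^{+}$. It is continuous (right-continuous and equal to its left limit everywhere), it agrees with $f$ on $\Omega$, it is positive on $O$, and it is $(A,B)$-quasisymmetric — this last either by Lemma~\ref{lem:basic-sp} or directly from the density of $\Omega$ together with the continuity of the cross-ratio, which lets one pass the inequality \eqref{cond:defQS} from $\Omega^4$ to $O^4$. Uniqueness is clear: any continuous extension of $f|_\Omega$ to $O$ coincides with $\bar f$ on the dense set $\Omega$, hence on all of $O$.
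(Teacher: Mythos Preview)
Your reduction via Lemma~\ref{lem:basic-sp} and the elimination of the spacelike jump are correct and parallel the paper's First Step; minor point: the left limit you should use is $(f^{+})^{-}(x_0)$ rather than $f^{-}(x_0)$, but this is cosmetic since the goal is precisely the left-continuity of $f^{+}$.

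The lightlike case, however, is a genuine gap. You explicitly flag it as ``the main obstacle'' and leave it as a sketch, and the sketch does not close. The telescoping bound $\sum_k\sqrt{\q(E_k)}\le\sqrt{\q(f^{+}(x_0)-f^{+}(x_0-\delta))}$ is fine, but both sides depend on $\delta$, and letting $\delta\to 0$ only says that a $\delta$-dependent series has a bound tending to $0$---no contradiction by itself. Step (iii) is where you would need to pin down why the $E_k$ cannot all be small, and ``directions pinned onto the null ray $\R_{\ge 0}v$'' is not an argument: each $E_k$ is strictly spacelike, and quasisymmetry at comparable scales only gives ratios like $B^{-1}\le\q(E_{k+1})/\q(E_k)\le B$, which is perfectly compatible with fast decay and a finite (even vanishing) sum.

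The paper resolves the null case by a completely different, one-shot geometric trick rather than a multiscale argument. Working with a left-continuous $(A,B)$-quasisymmetric map $f$ on $O$ (namely $f^{-}$), set $a=f(x)$, $a^{+}=f^{+}(x)$, and assume $a\ne a^{+}$, so the line $L$ through $a,a^{+}$ is lightlike. For any $z<x$, positivity forces $f(z)\notin L$, and the $2$-plane $P_z$ spanned by $f(z)$ and $L$ has signature $(1,1)$. Let $L'_z$ be the second lightlike line in $P_z$ through $f(z)$, and $g(z)=L'_z\cap L$. A direct computation in $P_z$ gives
\[
\frac{\q(f(z)-a^{+})}{\q(f(z)-a)}=\frac{|g(z)-a^{+}|}{|g(z)-a|}\ ,
\]
an affine ratio on $L$. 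On the other hand, taking $x_k\to x^{+}$ one has $\bigl|\tfrac{x_k-z}{x-z}\bigr|\to 1$, so quasisymmetry bounds $\q(f(z)-f(x_k))/\q(f(z)-a)$ by $B$; letting $k\to\infty$ yields $|g(z)-a^{+}|/|g(z)-a|\le B$ for every $z<x$. But left-continuity gives $f(z)\to a$, hence $g(z)\to a$, and the affine ratio blows up---contradiction. This is the missing idea; your dyadic scheme can be abandoned in favour of this single third point $z$.
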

\begin{proof}
Let us first assume that $f$ is defined on $O$ and is leftcontinuous.  By lemma \ref{lem:basic-sp}, it is enough to show that $f=f^+$, since then $f$ will be both rightcontinuous and leftcontinuous and thus continuous. Let $x$ be in $O$,   $a\defeq f(x)$ and $a^+\defeq f^+(x)$. Our goal is to show that $a^+=a$
	
	\medskip
	
	\noindent{\sc First step:} {\em we have $\q(a^+-a)=0$ .}

 Let us consider  $y^0_k=x+\frac{1}{k}$ and $y^1_k=x+\frac{2}{k}$, so that both sequences  $\seqk{y^0}$ and $\seqk{y^1}$  converge to $x$ when $k$ goes to infinity and satisfy  $x< y^0_k<y^1_k$. Moreover  $$
 	\frac{\vert y^0_k-y^1_k\vert}{\vert y^0_k-x\vert}=1\ .
	$$
	By the quasisymmetric property, the ratio
 $$
\frac{\q(f(y_k^0)-f(y_k^1))}{\q(f(y_k^0)-f(x))}\ ,
  $$
  is bounded  from below. Since  $$\lim_{k\to\infty} \q(f(y_k^0)-f(y_k^1))=\q(a^+-a^+)=0\ ,$$ it follows that 
  $$
  \q(a^+-a)=\lim_{k\to\infty}(\q(f(y_k^0)-f(x)))=0 ~.
  $$

\medskip
\noindent{\sc Second step}. We now prove that $a^+=a$.

Assume $a\neq a^+$. By the first step the line $L$ through $a^+$ and $a$ is lightlike. For any $z<x$, since $f$ is positive $\braket{f(z),a}$ is non zero. Thus $f(z)$ does  not belong to $L$ and  moreover if $P_z$ be the plane containing $f(z)$ and $L$ then the restriction of $\q$ to $P_z$ is non degenerate and therefore has signature $(1,1)$. Let $L'_z$ be the unique  lightlike line in $P_z$ through $f(z)$ and not parallel to $L$. 
Since  $f$ is leftcontinuous, 
$$
\lim_{z\to x, z<x}f(z)=a\ .
$$ 
Then if $g(z)$ is the intersection of $L'_z$ with $L$ (see Figure \ref{f:continuity}). We have
$$
\lim_{z\to x, z<x}g(z)=a\ .
$$
\begin{figure}[!h] 
\begin{center}
\includegraphics[height=4cm]{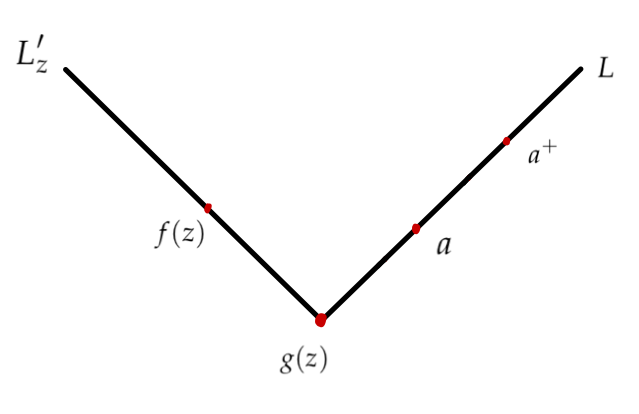}
\end{center}
\caption{Picture drawn in the plane $P_z$.} 
\label{f:continuity}
\end{figure}
We now observe that 
$$
\frac{\q(f(z)-a^+)}{\q(f(z)-a)}=\frac{\vert g(z)-f(z)\vert}{\vert g(z)-f(z)\vert}\cdot \frac{\vert g(z)-a^+\vert}{\vert g(z)-a\vert}=\frac{\vert g(z)-a^+\vert}{\vert g(z)-a\vert} \ ,
$$
where the later ratio is the ratio of three points in an affine line.

Let us now choose a sequence $\seqk{x}$ in $O$ converging to $x$ and so that $x_k>x$. Then for all $z$ strictly smaller than $x$, 
$$
\lim_{k\to\infty}\left\vert\frac{x_k-z}{x-z}\right\vert=1~.
$$  
Thus   we have 
$$
\frac
{\vert g(z)-a^+\vert}{\vert g(z)-a\vert}
=\frac
{\q(f(z)-a^+)}{\q(f(z)-a)}=\lim_{k\to\infty}
\frac
{\q(f(z)-f(x_k))}{\q(f(z)-f(x))}\leq B\ .
$$  
where the last inequality comes from the $(A,B)$-quasisymmetry.
Since $\lim_{z\to x, z<x}g(z)=a$ it follows that $\lim_{z\to x, z<x}g(z)=a^+$. Thus $a=a^+$ and this concludes the proof of the particular case when $f$ is leftcontinuous  and defined on $O$.

Turning to the general case, let $f$ be an $(A,B)$-quasisymmetric map defined on a dense subset of $O$. Using item $(iii)$ of lemma \ref{lem:basic-sp} and the previous discussion we obtain that $f^-$ is continuous. Thus $f^-$ is the unique continuous extension of $f$ to $O$ by the second item of lemma \ref{lem:basic-sp}. This concludes the proof of the general case.
\end{proof}

\subsubsection{Limit of  quasisymmetric maps in the Minkowski case}
We now prove
\begin{proposition}\label{pro:lim-QS}
	Let $\seqk{f}$ be a sequence of $(A,B)$-quasisymmetric maps from $\R$ to $F$. Assume that we have two points $y$ and $z$ so that $\sek{f_k(y)}$ and $\sek{f_k(z)}$ converges respectively to $a$ and $b$ so that $\q(a-b)>0$. 
	
Then after extracting a subsequence $\seqk{f}$ converges uniformly to a positive $(A,B)$-quasisymmetric map $f$ on the interval $[y,z]$.
\end{proposition}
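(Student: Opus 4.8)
The plan is to extract a pointwise subsequential limit of the $f_k$ on a countable dense set, to recognise that limit as a \emph{positive} quasisymmetric map (this is where the hypothesis $\q(a-b)>0$ enters), to promote it to a continuous map via Proposition \ref{pro:QS-cont}, and finally to upgrade to uniform convergence using the monotonicity that positivity provides.

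\emph{Step 1 (set-up and compactness).} After an affine reparametrisation of $\R$ — which changes neither positivity nor the quasisymmetry condition, since $\frac{x-y}{z-x}$ is affine-invariant — I may assume $[y,z]=[0,1]$. Fix a $\q$-orthonormal basis of $F$ with coordinates $v=(v_1,v')\in\R\times\R^n$, chosen so that the open cone $C_+$ of spacelike vectors of $F$ determining the space orientation is $\{v_1>\|v'\|\}$, and set $g_k\defeq v_1\circ f_k$. Positivity of $f_k$ gives $f_k(t)-f_k(s)\in C_+$ for $s<t$, so each $g_k$ is strictly increasing and $\|f_k(t)'-f_k(s)'\|\leq g_k(t)-g_k(s)$; it also confines $f_k([0,1])$ to the diamond $I_+(f_k(0))\cap I_-(f_k(1))$, which, since $f_k(0)\to a$, $f_k(1)\to b$ and $b-a\in C_+$ (limit of vectors of $C_+$, spacelike by hypothesis), is eventually contained in a fixed compact $K\subset F$. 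Let $D\defeq(\Q\cap[0,1])\cup\{0,1\}$. A diagonal argument extracts a subsequence with $f_k(d)\to f(d)$ for every $d\in D$, where $f\colon D\to K$; in particular $f(0)=a$ and $f(1)=b$. The map $f$ is semi-positive, and passing to the limit in the quasisymmetry inequality — whose denominator $\q(f_k(z)-f_k(x))$ is always positive because $f_k$ is positive — yields, for pairwise distinct $x,y,z\in D$ with $A^{-1}\leq\left\vert\frac{x-y}{z-x}\right\vert\leq A$, the dichotomy: either $\q(f(x)-f(y))$ and $\q(f(z)-f(x))$ both vanish, or both are positive with ratio in $[B^{-1},B]$.

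\emph{Step 2 (positivity of the limit — the main obstacle).} I claim $f$ is in fact positive on $D$, i.e. $\q(f(t)-f(s))>0$ whenever $s<t$ in $D$; by Lemma \ref{lem:AB-CD} I may first assume $A\geq 2$. Suppose not, say $\q(f(t_0)-f(s_0))=0$. Splitting $f(t_0)-f(s_0)$ along any finite subdivision of $D\cap[s_0,t_0]$ into a sum of vectors of $\overline{C_+}$ whose total is lightlike, the equality case of the triangle inequality (applied to the spatial parts, together with $\q\geq0$ on each summand) forces every summand onto one fixed lightlike ray; hence $f(D\cap[s_0,t_0])$ lies on a single lightlike line $\ell$ and is monotone along it. The dichotomy lets one enlarge this interval: if $D\cap[s_0,\beta]$ maps into $\ell$ and $\beta<1$, choosing $\beta_1\in D$ with $\beta<\beta_1<\min\{1,\beta+A(\beta-s_0)\}$ and an auxiliary $m\in D\cap(s_0,\beta)$ with $\left\vert\frac{\beta-m}{\beta_1-\beta}\right\vert\in[A^{-1},A]$ forces $\q(f(\beta_1)-f(\beta))=0$, and a couple more applications of the dichotomy (with configurations based at $\beta_1$) pin its direction to that of $\ell$, so $D\cap[s_0,\beta_1]$ maps into $\ell$; symmetrically to the left. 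Since the admissible increment is a fixed multiple of the current length (this is where $A\geq 2$ is used), a supremum argument pushes the interval to all of $[0,1]$, so $f(D)\subset\ell$ and $\q(f(1)-f(0))=\q(b-a)=0$, contradicting $\q(a-b)>0$. Hence $f$ is positive, therefore $(A,B)$-quasisymmetric on $D$, and Proposition \ref{pro:QS-cont} provides a continuous $(A,B)$-quasisymmetric extension $\bar f\colon[0,1]\to F$ with $\bar f|_D=f$; being $(A,B)$-quasisymmetric, $\bar f$ is positive.

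\emph{Step 3 (uniform convergence).} The $g_k=v_1\circ f_k$ are increasing, converge pointwise on the dense set $D$ to $g\defeq v_1\circ\bar f$, and $g$ is continuous and strictly increasing because $\bar f$ is continuous and positive; by the elementary fact that monotone functions converging pointwise on a dense set to a continuous limit converge uniformly on compacta, $g_k\to g$ uniformly on $[0,1]$. Combining this with $\|f_k(t)'-f_k(s)'\|\leq|g_k(t)-g_k(s)|$, the pointwise convergence of $f_k$ on $D$, and a subdivision of $[0,1]$ by points of $D$ adapted to the uniform continuity of $g$ and $\bar f'$, a standard three-$\varepsilon$ estimate shows $f_k'\to\bar f'$ uniformly, whence $f_k\to\bar f$ uniformly on $[0,1]$. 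This $\bar f$ is the desired positive $(A,B)$-quasisymmetric map. Everything except Step 2 is soft compactness together with the one-dimensional monotone-convergence lemma; Step 2, where $\q(a-b)>0$ is essential, is where the real work lies.
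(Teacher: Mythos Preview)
Your argument is essentially correct, but you have made Step~2 far harder than it needs to be, and the paper's route is worth knowing.

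For positivity of the limit, the paper argues directly rather than by contradiction: for any $x\in D\cap(y,z)$, Lemma~\ref{lem:AB-CD} gives a constant $D_x$ (depending only on $A,B$ and the fixed ratio $\lvert\frac{y-z}{x-y}\rvert$) such that $\frac{\q(f_k(y)-f_k(z))}{\q(f_k(y)-f_k(x))}\leq D_x$ for every $k$. Since the numerator converges to $\q(a-b)>0$, the denominator is eventually bounded below, so $\q(f(y)-f(x))>0$. One more pass --- now pivoting at an arbitrary $s\in D$ and using the just-established $\q(f(s)-f(y))>0$ as the new ``anchor'' --- gives $\q(f(s)-f(t))>0$ for all distinct $s,t\in D$. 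This two-line bootstrap replaces your entire lightlike-propagation argument; the point is to apply quasisymmetry \emph{before} passing to the limit, where the denominators are still strictly positive.

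Your contradiction approach does work in principle, but the sentence ``a couple more applications of the dichotomy (with configurations based at $\beta_1$) pin its direction to that of $\ell$'' glosses over a real difficulty: a configuration with pivot $\beta_1$ would need a comparison point beyond $\beta_1$, which you do not yet control. What actually succeeds is to pivot at the auxiliary $m\in(s_0,\beta)$ and compare $\q(f(m)-f(\beta_1))$ with $\q(f(m)-f(w))=0$ for a suitable $w\in(s_0,\beta)$; this forces $\q(f(\beta_1)-f(m))=0$, and then the equality case of the cone inequality applied to $(f(\beta_1)-f(\beta))+(f(\beta)-f(m))$ puts $f(\beta_1)$ on $\ell$. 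You should also note explicitly that once $f(\beta_1)\in\ell$, your own subdivision argument places all of $f(D\cap[s_0,\beta_1])$ on $\ell$, handling the intermediate points. With these fixes the argument is complete, but considerably more delicate than the paper's direct bound.

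Your Step~3 is fine. The paper uses instead a coordinate-free diamond-trapping argument: for $w<x<t$ in $D$ one has $f_k(x_k)\in J_+(f_k(w))\cap J_-(f_k(t))$, and by continuity of the limit these diamonds shrink to $\{f(x)\}$ as $w,t\to x$. Both proofs are short; the paper's avoids singling out the first coordinate.
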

\begin{proof}
	Let us choose a dense countable subset $\Omega$ of $[y,z]$ containing $y$ and $z$.  For $k$ large enough, we observe that $f_k$ takes values in a compact neighbourhood of $I_+(a)\cap I_-(b)$. Then, using Cantor's diagonal argument and  extracting subsequences,  we obtain a map $f$ defined on $\Omega$ so that $\seqk{f}$ converges pointwise to $f$ on $\Omega$. Observe that $f$ is semi-positive. Using the $(A,B)$-quasisymmetry and lemma \ref{lem:AB-CD}, we deduce that for any $x$ in $\Omega$ and $]y,z[$ the ratios
	$$
	\frac{\q(f_k(y)-f_k(z))}{\q(f_k(y)-f_k(x))}\ ,
	$$
	are uniformly bounded. Since the numerator converges to a non zero value it follows that $\q(f(y)-f(x))>0$. Repeating the argument we obtain that for all distinct $x$ and $t$ in $\Omega$, then
	$\q(f(t)-f(x))>0$. In other words, $f$ is positive. Obviously $f$  is $(A,B)$-quasisymmetric. Thus  by proposition \ref{pro:QS-cont},  $f$ extends uniquely to a continuous positive $(A,B)$-quasisymmetric map on $[y,z]$, extension that we also denote $f$ . 
	
	Let us prove that $\seqk{f}$ converges  to $f$  uniformly on $[y,z]$.  Let $\seqk{x}$ be a sequence of points in $[y,z]$ converging to $x$.  Let $w$ and $t$ in $\Omega$ and $[y,z]$, with $w<x<t$. Let $c$ be a limit value of $\sek{f_k(x_k)}$. By semi-positivity of the functions $f_k$ for $k$ large enough $f_k(x_k)$ belongs to 
	$$
	J_+(f_k(w))\cap J_-(f_k(t))\ .
	$$
	Since  $\sek{f_k(w)}$ and $\sek{f_k(w)}$  converge to $f(w)$ and $f(t)$ respectively, $c$ belongs to
	$$
	J(w,t)\defeq J_+(f(w))\cap J_-(f(t))\ .
	$$
	Since $f$ extends to a continuous function, it follows 
	$$
	\lim_{w,t\to x}J(w,t)=\{f(x)\}\ ,
	$$
	thus $c=f(x)$. We have shown that $\lim_{k\to\infty}f_k(y_k)=f(x)$. This concludes the proof of uniform convergence.
	\end{proof}
\subsection{Equicontinuity} We now come back to quasisymmetric maps from $\P(V)$ to $\bHn$. Fix $\kappa_0$ and $\tau_0$ positive triples in $\P(V)$ and $\bHn$ respectively. Recall that distances between positive triples is given in definition \ref{def:CanonicalDistance}.
\begin{theorem}{\sc[Equicontinuity]}\label{theo:Equicontinuity}
 Every sequence of  $(A,B)$-quasisymmetric maps sending $\kappa_0$ to $\tau_0$ possesses a subsequence that converges uniformly to a continuous $(A,B)$-quasisymmetric map. Equivalently, the action by precomposition and postcomposition of $\PSL(V)\times\G$ on the space of $(A,B)$ quasisymmetric maps is cocompact.
\end{theorem}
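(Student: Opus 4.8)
The plan is to reduce the statement to the Minkowski case treated in Proposition \ref{pro:lim-QS}, reading everything in charts based at the three points of $\tau_0$. Write $\kappa_0=(p_1,p_2,p_3)$ and $\tau_0=(x_1,x_2,x_3)$, and let $\seqk\xi$ be a sequence of $(A,B)$-quasisymmetric maps with $\xi_k(\kappa_0)=\tau_0$ (if the defining set of some $\xi_k$ is only dense, first extend it continuously to $\P(V)$ using Proposition \ref{pro:QS-cont} read in charts). Since each $\xi_k$ is positive it sends distinct points to transverse points, so for $i\in\{1,2,3\}$ the restriction $\xi_k|_{U_i}$, with $U_i\defeq\P(V)\setminus\{p_i\}$, takes values in $\Min(x_i)$. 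Fix once and for all a Möbius identification $U_i\cong\R$ sending $p_i$ to $\infty$ and the two remaining marked points to $\pm1$, and a Minkowski chart $\Psi_i\colon F\to\Min(x_i)$ whose space orientation is chosen so that the diamond into which $\xi_k$ sends the arc of $\P(V)$ between the two other marked points corresponds to the positive cone. Reading $\xi_k|_{U_i}$ in these coordinates gives a map $f^{(i)}_k\colon\R\to F$. By Lemma \ref{lem:Cross-ratioChart} the cross-ratio inequality defining $(A,B)$-quasisymmetry of $\xi_k$ translates verbatim into the Minkowski quasisymmetry inequality for $f^{(i)}_k$, while positivity of $\xi_k$, together with Corollary \ref{lem:bidia} and the fact that the ordering of $\tau_0$ pins down the monotonicity direction, shows that $f^{(i)}_k$ is positive; thus each $f^{(i)}_k$ is a positive $(A,B)$-quasisymmetric map from $\R$ to $F$.

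Now I would apply Proposition \ref{pro:lim-QS} in each chart. Every $f^{(i)}_k$ sends the two marked points $\ne p_i$ to the two fixed points $\Psi_i^{-1}(x_j)$, $\Psi_i^{-1}(x_l)$ (with $\{i,j,l\}=\{1,2,3\}$), and these are spacelike separated because $(x_1,x_2,x_3)$, being a positive triple, remains positive under every permutation. Passing to a subsequence common to the three indices, Proposition \ref{pro:lim-QS} yields for each $i$ a positive $(A,B)$-quasisymmetric map $f^{(i)}$ on $[-1,1]$ with $f^{(i)}_k\to f^{(i)}$ uniformly on $[-1,1]$. Transporting back by the homeomorphisms $\Psi_i$ (uniformly continuous on the relevant compact sets), $\xi_k$ converges uniformly on each of the three closed arcs cut out by $p_1,p_2,p_3$; since these arcs cover $\P(V)$ and the local limits agree on the one-point overlaps, the $\xi_k$ converge uniformly on $\P(V)$ to a continuous map $\xi$ with $\xi(\kappa_0)=\tau_0$.

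It remains to check that $\xi$ is $(A,B)$-quasisymmetric. Recalling that a continuous map sending positive triples to positive triples is automatically positive, it suffices to show $\xi$ sends positive triples to positive triples; a positive triple of $\P(V)$ is either $\kappa_0$, whose image $\tau_0$ is positive, or it lies in some $U_i$, and there one computes in the canonical chart that $\braket{\psi_0(p),\psi_0(q)}=-\tfrac12\q(p-q)$, whence the $3\times3$ Gram matrix of the images of an ordered triple has determinant $-\tfrac14\,\q(p-q)\,\q(q-r)\,\q(p-r)$; by strict Minkowski monotonicity of $f^{(i)}$ all three values of $\q$ are positive (the last because $p-r$ is a sum of two vectors of the open convex cone), so this determinant is negative while a $2\times2$ principal minor is also negative, forcing signature $(2,1)$, i.e. a positive triple. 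For the cross-ratio bound, let $x,y,z,t$ be pairwise distinct with $A^{-1}\le|[x,y,z,t]|\le A$; since $\xi$ is positive the points $\xi(x),\dots,\xi(t)$ are pairwise transverse, hence so are $\xi_k(x),\dots,\xi_k(t)$ for $k$ large and $\bb_{\xi_k}(x,y,z,t)\to\bb_\xi(x,y,z,t)$ by continuity of the cross-ratio; as $B^{-1}\le|\bb_{\xi_k}(x,y,z,t)|\le B$ for all $k$, the same holds in the limit. Thus $\xi$ is $(A,B)$-quasisymmetric. Finally, for the reformulation: $\PSL(V)$ acts simply transitively on ordered positive triples of $\P(V)$, and $\G$ acts transitively (Lemma \ref{lem:Orbits}) on those of $\bHn$ with compact stabiliser; since $\bb$ is $\G$-invariant, every orbit of $\PSL(V)\times\G$ on the metrisable space of $(A,B)$-quasisymmetric maps meets the subspace of those sending $\kappa_0$ to $\tau_0$, which the above shows to be compact, so the quotient is compact and the action is cocompact.

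The main obstacle is making the Minkowski reduction genuinely global: a single chart controls only an arc, and a uniform limit of positive maps is a priori merely semi-positive, so one must set up the three charts and their space orientations coherently from the fixed ordered triples and use in an essential way the spacelike hypothesis $\q(a-b)>0$ in Proposition \ref{pro:lim-QS} — this is precisely what keeps each limit $f^{(i)}$ strictly monotone, hence the glued map $\xi$ positive rather than only semi-positive.
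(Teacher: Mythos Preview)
Your approach is the paper's: read each $\xi_k$ in the three Minkowski charts based at $x_1,x_2,x_3$, apply Proposition~\ref{pro:lim-QS} on each arc $[-1,1]$, glue, and pass the $(A,B)$ bound to the limit by continuity. You also supply details the paper omits (the Gram computation, the limit for the cross-ratio inequality, the cocompactness reformulation), which is fine.

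There is, however, a gap in your positivity step. You argue that a triple in $U_i=\P(V)\setminus\{p_i\}$ has positive image by computing the Gram determinant in the $i$-chart and invoking ``strict Minkowski monotonicity of $f^{(i)}$'' to make all three $\q$-differences positive. But $f^{(i)}$ is only the limit on $[-1,1]$, i.e.\ on the single arc $I_i$; a triple in $U_i$ need not lie in $I_i$ (take one point in each open arc: this triple lies in every $U_i$ but in no single $[-1,1]_j$), and two distinct points of $\P(V)$ need not share any arc at all. So Minkowski positivity on $[-1,1]$ does not by itself give $\q(p-q)>0$ for arbitrary pairs in $U_i$, nor even that the images land in $\Min(x_i)$.

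The fix is short. First, $\xi(a)\pitchfork x_i$ for every $a\neq p_i$: either $a\in I_i$ and then $\xi(a)=\Psi_i(f^{(i)}(\cdot))\in\Min(x_i)$ by construction, or $a\in I_j$ with $j\neq i$, in which case $p_i\in I_j$ as well and positivity of $f^{(j)}$ on $[-1,1]$ gives $\xi(a)\pitchfork x_i$. Second, for arbitrary $a\neq b$, pick $p_i,p_j\notin\{a,b\}$ and use Lemma~\ref{lem:AB-CD} to bound $|\bb_{\xi_k}(p_i,a,b,p_j)|$; its numerator involves $\langle x_i,\xi_k(a)\rangle\langle \xi_k(b),x_j\rangle$, which converges to a nonzero limit by the first step, so the factor $\langle \xi_k(a),\xi_k(b)\rangle$ in the denominator cannot tend to $0$. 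Thus $\xi$ sends distinct points to transverse points, the Gram determinant of any image triple is a nonzero limit of negative numbers, and your zero-diagonal/interlacing argument then forces signature $(2,1)$.
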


Here are several corollaries, the first one is immediate,  the next ones  are proved  right after the Equicontinuity Theorem.
\begin{corollary}\label{cor:qs2qp}
	A quasisymmetric map is quasiperiodic.
\end{corollary}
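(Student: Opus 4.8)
The plan is to deduce this immediately from the Equicontinuity Theorem (Theorem \ref{theo:Equicontinuity}). Fix a quasisymmetric map $\xi\colon\P(V)\to\bHn$, say $(A,B)$-quasisymmetric, and let $\seqk\kappa$ be a sequence of positive triples in $\P(V)$ together with $h_k\in\PSL(V)$ satisfying $h_k(\kappa_0)=\kappa_k$ and $g_k\in\G$ satisfying $g_k(\xi(\kappa_k))=\tau_0$, as in the definition of a quasiperiodic map. Set $\xi_k\defeq g_k\circ\xi\circ h_k$. By construction $\xi_k(\kappa_0)=g_k(\xi(\kappa_k))=\tau_0$, so each $\xi_k$ sends $\kappa_0$ to $\tau_0$. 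If I can show that each $\xi_k$ is again $(A,B)$-quasisymmetric, then Theorem \ref{theo:Equicontinuity} provides a subsequence of $\seqk\xi$ converging uniformly to a continuous $(A,B)$-quasisymmetric map, which is in particular positive; this is exactly the conclusion required for $\xi$ to be quasiperiodic.

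It remains to check that $\xi_k$ is $(A,B)$-quasisymmetric. Positivity is clear: $h_k\in\PSL(V)$ is orientation preserving, hence a positive map of $\P(V)$, and $g_k\in\G$ preserves positive quadruples in $\bHn$, so $\xi_k=g_k\circ\xi\circ h_k$ sends positive quadruples to positive quadruples. For the cross-ratio condition one uses two invariance statements. The projective cross-ratio is $\PGL(V)$-invariant, so
\[[h_k(x),h_k(y),h_k(z),h_k(t)]=[x,y,z,t]\]
for every quadruple of pairwise distinct points. On the target side, since $g_k\in\G=\SO_0(\bq)$ preserves the bilinear form associated to $\bq$, the cross-ratio $\bb$ of Definition \ref{def:cross-ratio} is $\G$-invariant, whence
\[\bb_{\xi_k}(x,y,z,t)=\bb\big(g_k\xi h_k(x),g_k\xi h_k(y),g_k\xi h_k(z),g_k\xi h_k(t)\big)=\bb_\xi\big(h_k(x),h_k(y),h_k(z),h_k(t)\big).\]
Combining the two: if $A^{-1}\le|[x,y,z,t]|\le A$, then the same bounds hold for $[h_k(x),h_k(y),h_k(z),h_k(t)]$, hence $B^{-1}\le|\bb_\xi(h_k(x),h_k(y),h_k(z),h_k(t))|\le B$ by the $(A,B)$-quasisymmetry of $\xi$, i.e.\ $B^{-1}\le|\bb_{\xi_k}(x,y,z,t)|\le B$. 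Thus $\xi_k$ is $(A,B)$-quasisymmetric, as claimed.

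This argument is short and purely formal; there is no genuine obstacle, all the analytic content having been absorbed into the Equicontinuity Theorem. The only points requiring (routine) care are the two invariance properties of the cross-ratios and the observation that the quasisymmetry constants $A$ and $B$ are untouched by the renormalisation $\xi\mapsto g_k\circ\xi\circ h_k$ — which is precisely the reason the cross-ratio characterisation of quasisymmetry was set up.
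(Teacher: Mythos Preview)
Your proof is correct and is precisely the argument the paper has in mind: the paper simply declares this corollary to be ``immediate'' from the Equicontinuity Theorem, and your write-up is exactly the routine unpacking of that immediacy via the $\PSL(V)$- and $\G$-invariance of the two cross-ratios.
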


\begin{corollary}\label{cor:invQS}
	For every constants $A$, $B$, $C$ greater than 1, there exists a constant $D$ such that if $\xi$ is $(A,B)$-quasisymmetric, if $(a,b,c,d)$ is a quadruple in $\P(V)$, then
	$$
	C^{-1}\leq 
	\left\vert \bb(\xi(a),\xi(b),\xi(c),\xi(d))\right\vert\leq  C
	 \hbox{\ \  implies\ \  } D^{-1}\leq \left\vert [a;b;c;d]\right\vert\leq D\ .
	$$
\end{corollary}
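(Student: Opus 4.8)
\emph{Proof idea.} The plan is to argue by contradiction, feeding a would‑be counterexample into the Equicontinuity Theorem \ref{theo:Equicontinuity}; the contradiction will come from the fact that points of $\bHn$ are isotropic, so that the cross‑ratio $\bb$ degenerates as soon as two of its arguments collide.

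Fix the reference positive triples $\kappa_0\subset\P(V)$ and $\tau_0=(\tau_0^1,\tau_0^2,\tau_0^3)\subset\bHn$ of Theorem \ref{theo:Equicontinuity}, and identify $\P(V)$ with $\R\cup\{\infty\}$ so that $\kappa_0=(0,1,\infty)$; then $[0,1,\infty,p]=1/p$ for the cross‑ratio attached to a volume form on $V$. Were the corollary false, there would be $(A,B)$‑quasisymmetric maps $\xi_k$ and quadruples $(a_k,b_k,c_k,d_k)$ of pairwise distinct points with $C^{-1}\leq|\bb_{\xi_k}(a_k,b_k,c_k,d_k)|\leq C$ for all $k$ and $|[a_k,b_k,c_k,d_k]|\to 0$ or $\infty$; after extracting we may assume it tends to one fixed extreme, say $\infty$ (the case $0$ is symmetric). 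Now renormalise: pick a projective transformation $h_k$ of $\P(V)$ with $h_k(0,1,\infty)=(a_k,b_k,c_k)$, and $g_k\in\G$ with $g_k(\xi_k(a_k),\xi_k(b_k),\xi_k(c_k))=\tau_0$ — the latter exists because $\xi_k$ is positive, hence sends the positive triple $(a_k,b_k,c_k)$ to a positive triple, and $\G$ acts transitively on positive triples (Lemma \ref{lem:Orbits}). Projective transformations of $\P(V)$ preserve the cross‑ratio and, being homeomorphisms, positivity of quadruples, while $\G$ preserves $\bq$ — hence $\bb$ — and positivity; so $\eta_k\defeq g_k\circ\xi_k\circ h_k$ is again $(A,B)$‑quasisymmetric, sends $\kappa_0$ to $\tau_0$, and, writing $p_k\defeq h_k^{-1}(d_k)$, satisfies $[0,1,\infty,p_k]=[a_k,b_k,c_k,d_k]$ and $\bb_{\eta_k}(0,1,\infty,p_k)=\bb_{\xi_k}(a_k,b_k,c_k,d_k)$. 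In particular $|1/p_k|\to\infty$, so $p_k\to 0$ in $\P(V)$, while $C^{-1}\leq|\bb_{\eta_k}(0,1,\infty,p_k)|\leq C$.

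By Theorem \ref{theo:Equicontinuity}, after a further extraction $\eta_k$ converges uniformly to a continuous map $\eta$ with $\eta(0,1,\infty)=\tau_0$. By uniform convergence and continuity of $\eta$, and since $p_k\to 0$, we get $\eta_k(p_k)\to\eta(0)=\tau_0^1$, together with $\eta_k(0)\to\tau_0^1$, $\eta_k(1)\to\tau_0^2$, $\eta_k(\infty)\to\tau_0^3$. Choosing representatives in $E$ so that those of $\eta_k(0)$ and $\eta_k(p_k)$ converge to a common representative of $\tau_0^1$, Definition \ref{def:cross-ratio} yields
\[
\bb_{\eta_k}(0,1,\infty,p_k)=
\frac{\langle\eta_k(0),\eta_k(1)\rangle\,\langle\eta_k(\infty),\eta_k(p_k)\rangle}
{\langle\eta_k(0),\eta_k(p_k)\rangle\,\langle\eta_k(\infty),\eta_k(1)\rangle}
\ \longrightarrow\
\frac{\langle\tau_0^1,\tau_0^2\rangle\,\langle\tau_0^3,\tau_0^1\rangle}
{\langle\tau_0^1,\tau_0^1\rangle\,\langle\tau_0^3,\tau_0^2\rangle}.
\]
The three pairings $\langle\tau_0^i,\tau_0^j\rangle$ with $i\ne j$ are nonzero because $\tau_0$ is a positive triple (its points are pairwise transverse), whereas $\langle\tau_0^1,\tau_0^1\rangle=\bq(\tau_0^1)=0$ because $\tau_0^1\in\bHn$. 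Hence $|\bb_{\eta_k}(0,1,\infty,p_k)|\to\infty$, contradicting the bound by $C$, and this proves the corollary.

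The only genuinely delicate point is the passage $\eta_k(p_k)\to\eta(0)$: it uses the \emph{uniform} convergence supplied by Theorem \ref{theo:Equicontinuity} (equivalently, the equicontinuity of the renormalised family), pointwise convergence being insufficient since the argument $p_k$ also moves. Everything else is bookkeeping with the $\PGL(V)$‑invariance of the cross‑ratio and the $\G$‑invariance of $\bb$; if one insists on applying Theorem \ref{theo:Equicontinuity} verbatim for $\PSL(V)$, one first extracts so that all the triples $(a_k,b_k,c_k)$ lie in a single $\PSL(V)$‑orbit and chooses $h_k$ in that orbit, the other orbit being handled identically with a reference triple chosen in it.
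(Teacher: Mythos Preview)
Your proof is correct and close in spirit to the paper's: both extract the result from the cocompactness supplied by the Equicontinuity Theorem~\ref{theo:Equicontinuity}. The paper packages this as a soft exhaustion argument --- it considers the $\PSL(V)\times\G$-invariant closed sets $\mathcal B_n(C)=\{C^{-1}\le|\bb_\xi|\le C,\ n^{-1}\le|[a,b,c,d]|\le n\}$ inside the cocompact space of pairs (an $(A,B)$-quasisymmetric map, a quadruple), and argues by compactness of the quotient that some $\mathcal B_p(C)$ already equals $\mathcal B(C)$. You instead unwind the compactness into an explicit sequential contradiction and compute directly that $\bb$ blows up, the key point being that $\bq$ vanishes on $\bHn$, so $\langle\eta_k(0),\eta_k(p_k)\rangle\to 0$ while the three other pairings stay bounded away from zero by transversality of $\tau_0$. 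Your version makes the mechanism more transparent (and sidesteps the mild delicacy of checking closedness in the paper's exhaustion); the paper's version is a reusable template --- the same packaging is applied verbatim to prove Corollary~\ref{cor:QS-sequences}.
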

\begin{corollary}\label{cor:QShomeo}
	For every constants $A$, $B$,   greater than 1, there exists a  constant  $D$ such that if $\phi$ is a homeomorphism of $\P(V)$ so that  both $\xi$ and $\xi\circ\phi$  are $(A,B)$-quasisymmetric, then $\phi$ is $(A,D)$ quasisymmetric. 
	\end{corollary}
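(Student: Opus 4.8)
The plan is to chain the quasisymmetry hypothesis on $\xi\circ\phi$ with the ``reverse'' estimate of Corollary \ref{cor:invQS} applied to $\xi$. Since $\phi$ is by assumption a homeomorphism of $\P(V)$, it suffices to produce a constant $D>1$, depending only on $A$ and $B$, such that every quadruple $(x,y,z,t)$ of pairwise distinct points in $\P(V)$ with $A^{-1}\leq\vert[x,y,z,t]\vert\leq A$ satisfies $D^{-1}\leq\vert[\phi(x),\phi(y),\phi(z),\phi(t)]\vert\leq D$; this is exactly the statement that $\phi$ is $(A,D)$-quasisymmetric in the sense recalled at the beginning of Subsection \ref{ss:QSmaps}.

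So I would fix such a quadruple $(x,y,z,t)$. As $\phi$ is a homeomorphism the points $\phi(x),\phi(y),\phi(z),\phi(t)$ are again pairwise distinct, and since $\xi$ is positive their images are pairwise transverse in $\bHn$, so all the cross-ratios below are defined. Feeding $(x,y,z,t)$ into the $(A,B)$-quasisymmetry of $\xi\circ\phi$ yields
$$B^{-1}\leq\left\vert\bb\bigl(\xi(\phi(x)),\xi(\phi(y)),\xi(\phi(z)),\xi(\phi(t))\bigr)\right\vert\leq B\ .$$

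Next I would invoke Corollary \ref{cor:invQS} for the map $\xi$ with the choice $C=B$: it provides a constant $D=D(A,B)>1$ such that for any quadruple $(a,b,c,d)$ in $\P(V)$,
$$B^{-1}\leq\left\vert\bb(\xi(a),\xi(b),\xi(c),\xi(d))\right\vert\leq B\implies D^{-1}\leq\vert[a,b,c,d]\vert\leq D\ .$$
Applying this to $(a,b,c,d)=(\phi(x),\phi(y),\phi(z),\phi(t))$ together with the displayed bound above gives $D^{-1}\leq\vert[\phi(x),\phi(y),\phi(z),\phi(t)]\vert\leq D$, which completes the argument.

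Since this is a pure composition argument, I do not expect a genuine obstacle: all the analytic content sits inside Corollary \ref{cor:invQS}, hence ultimately in the Equicontinuity Theorem \ref{theo:Equicontinuity}. The one point to watch is uniformity of the output constant $D$, which must depend only on $A$ and $B$ and not on $\phi$ or $\xi$; this is automatic, since Corollary \ref{cor:invQS} already delivers a uniform $D$, while the value $B$ we feed into it as the constant $C$ is fixed by the hypothesis, independently of $\phi$ and $\xi$.
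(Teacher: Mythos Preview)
Your proof is correct and follows essentially the same route as the paper: use the $(A,B)$-quasisymmetry of $\xi\circ\phi$ to bound $\bb$ on the $\phi$-image of the quadruple, then apply Corollary~\ref{cor:invQS} to $\xi$ with $C=B$ to pull this back to a bound on the classical cross-ratio. Your added remarks on well-definedness of the cross-ratios and on the uniformity of $D$ are fine and make the argument slightly more explicit than the paper's version.
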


\begin{corollary}\label{cor:QS-sequences}
 Let $A,B$ be constant greater than 1, then for any positive real  $a$ there is a positive real  $b$ so that the following holds. If $\xi$ be a $(A,B)$-quasisymmetric map, then for any positive triples $\kappa_1$ and $\kappa_2$, we have 
\begin{eqnarray}
d(\kappa_1,\kappa_2) \leq a \ \  &\hbox{ implies } &d(\xi(\kappa_1),\xi(\kappa_2)) \leq b \label{pro:QS-boundtau}\ .
\end{eqnarray}
\end{corollary}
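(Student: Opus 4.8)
The plan is to deduce this from the Equicontinuity Theorem by a standard compactness argument, exploiting that distances between positive triples are realised as distances in the Riemannian homogeneous space $\sT(n)$, on which $\G$ acts by isometries. First I would observe that, by the $\G$-equivariance built into Definition \ref{def:CanonicalDistance} and the fact that $\PSL(V)$ acts transitively on positive triples of $\P(V)$, it suffices to prove the statement in the normalised situation: fix once and for all the base triple $\kappa_0$, and it is enough to show that for every $a>0$ there is $b>0$ so that whenever $\xi$ is $(A,B)$-quasisymmetric with $\xi(\kappa_0)=\tau_0$ and $\kappa$ is a positive triple with $d(\kappa_0,\kappa)\leq a$, then $d(\tau_0,\xi(\kappa))\leq b$. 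Indeed, given arbitrary $\kappa_1,\kappa_2$ with $d(\kappa_1,\kappa_2)\leq a$, pick $h\in\PSL(V)$ with $h(\kappa_0)=\kappa_1$ and $g\in\G$ with $g(\xi(\kappa_1))=\tau_0$; then $\xi'\defeq g\circ\xi\circ h$ is again $(A,B)$-quasisymmetric (precomposition by $\PSL(V)$ and postcomposition by $\G$ preserve both positivity and the cross-ratio condition), sends $\kappa_0$ to $\tau_0$, and $d(\kappa_0,h^{-1}(\kappa_2))=d(\kappa_1,\kappa_2)\leq a$ by isometry, while $d(\xi(\kappa_1),\xi(\kappa_2))=d(\tau_0,\xi'(h^{-1}(\kappa_2)))$, again by isometry. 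So the normalised statement implies the general one.

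Next I would argue the normalised statement by contradiction. Suppose it fails for some $a>0$: then there is a sequence of $(A,B)$-quasisymmetric maps $\seqk{\xi}$ with $\xi_k(\kappa_0)=\tau_0$, and positive triples $\seqk{\kappa}$ with $d(\kappa_0,\kappa_k)\leq a$, such that $d(\tau_0,\xi_k(\kappa_k))\to\infty$. By the Equicontinuity Theorem, after extracting a subsequence $\seqk{\xi}$ converges uniformly to a continuous $(A,B)$-quasisymmetric map $\xi_\infty$. Since the closed ball of radius $a$ around $\kappa_0$ in the (locally compact, indeed homogeneous) Riemannian space $\sT(n)$ is compact, after a further extraction $\kappa_k\to\kappa_\infty$ with $d(\kappa_0,\kappa_\infty)\leq a$. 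Now $\xi_\infty$ is in particular continuous on $\P(V)$, hence continuous as a map of triples, so $\xi_k(\kappa_k)\to\xi_\infty(\kappa_\infty)$: here one uses uniform convergence $\xi_k\to\xi_\infty$ together with continuity of $\xi_\infty$ and $\kappa_k\to\kappa_\infty$ — the triple $\xi_\infty(\kappa_\infty)$ is positive because $\xi_\infty$ is positive, so it is a genuine point of $\sT(n)$ and convergence there is convergence for $d$. But then $d(\tau_0,\xi_k(\kappa_k))\to d(\tau_0,\xi_\infty(\kappa_\infty))<\infty$, contradicting $d(\tau_0,\xi_k(\kappa_k))\to\infty$. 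This proves the normalised statement, and with it the corollary.

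The main obstacle is making the convergence $\xi_k(\kappa_k)\to\xi_\infty(\kappa_\infty)$ in $\sT(n)$ precise, i.e. checking that uniform convergence of the maps on $\P(V)$ together with convergence of the argument triples really forces convergence of the images \emph{as positive triples}, with respect to the canonical distance $d$ on $\sT(n)$ rather than merely pointwise in $\bHn$. This requires knowing that positivity is an open condition on triples of transverse points together with the fact that the forgetting/identification of $\sT(n)$ with the symmetric space $\Gr$ of $\G$ (Corollary \ref{def:CanonicalDistance}) is a homeomorphism onto its image for the natural topologies, so that sequential convergence of triples in $\bHn$ staying inside the positive locus coincides with $d$-convergence; this is routine but deserves a sentence. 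A secondary, equally routine point is the stability of the $(A,B)$-quasisymmetry condition under the $\PSL(V)\times\G$ action, which is immediate from the invariance of $[\,\cdot\,]$ under $\PSL(V)$ and of $\bb$ under $\G$, together with positivity being preserved by both actions.
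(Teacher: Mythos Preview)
Your proof is correct and is essentially the same argument as the paper's, just unpacked explicitly. The paper phrases it abstractly: it considers the space $\mathcal A(a)$ of triples $(\xi,\kappa_1,\kappa_2)$ with $d(\kappa_1,\kappa_2)\leq a$, notes that $\PSL(V)\times\G$ acts cocompactly on it by the Equicontinuity Theorem, and concludes that the continuous invariant function $(\xi,\kappa_1,\kappa_2)\mapsto d(\xi(\kappa_1),\xi(\kappa_2))$ is bounded. Your normalisation step together with the sequential compactness argument (Equicontinuity plus compactness of closed balls in the space of triples) is exactly the standard way to unpack that cocompactness statement, so the two proofs coincide in substance. One small notational slip: when you write ``the closed ball of radius $a$ around $\kappa_0$ in $\sT(n)$'', you mean the space of positive triples in $\P(V)$, not $\sT(n)$ (which is triples in $\bHn$); this does not affect the argument.
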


\begin{proof}[Proof of the Equicontinuity Theorem \ref{theo:Equicontinuity}]

Let $\seqk{\xi}$ be a sequence of  quasisymmetric map sending a triple $\kappa_0=(x_0,y_0,z_0)$  in $\P(V)$ to a positive triple $\tau_0=(a_0,b_0,c_0)$.

Let us identify $\P(V)\setminus \{x_0\}$ with $\mathbb R$, so that $y_0$ and $z_0$ are identified with $-1$ and $1$ respectively.

Using the $\tau_0$-chart of $\Min(a_0)$ , we obtain a sequence $\seqk{f}$ of maps from the interval  $[-1,1]$ to $\mathbb R^{1,n}$ so that the two sequences $\sek{f_k(-1)}$ and  $\sek{f_k(1))}$ are constant. 

By lemma \ref{lem:Cross-ratioChart}, the maps $f_k$ are $(A,B)$-quasisymmetric as maps from $[-1,1]$ to $\mathbb R^{1,n}$. By proposition \ref{pro:lim-QS}, we can extract a subsequence so that $\seqk{f}$ converges uniformly on $[-1,1]$ to an $(A,B)$ quasi symmetric map in the Minkowski sense. Using lemma \ref{lem:Cross-ratioChart} again, $\seqk{\xi}$ converges uniformly on the interval $\overline{\Delta^*_{x_0}[y_0,z_0]}$ to a map which is $(A,B)$-quasisymmetric. Cycling through $(x_0,y_0,z_0)$ and using the fact that 
$$
\P(V)=\overline{\Delta^*_{x_0}[y_0,z_0]}\cup \overline{\Delta^*_{y_0}[x_0,z_0]}\cup \overline{\Delta^*_{z_0}[x_0,y_0]}\ ,
$$ 
we complete the proof of the theorem. \end{proof}
\begin{proof}[Proof of corollary \ref{cor:invQS}]
Let us  fix some positive constants $A$ $B$  and let 
\begin{eqnarray*}
	\mathcal B&=&\{(\xi,a,b,c,d)\mid \xi \hbox{ is $(A,B)$-quasisymmetric }, (a,b,c,d)\in \P(V)^4\}\ .
\end{eqnarray*}
Then $\mathcal B$ is equipped with an action of $\mathsf H\defeq \PSL(V)\times\G$ given by
$$
(h,g)\cdot (\xi,a,b,c,d))=(g\circ\xi\circ h^{-1},h(a),h(b),h(c),h(d))\ .
$$
This action  is cocompact by the Equicontinuity Theorem \ref{theo:Equicontinuity}. Let $C$ be a constant greater than 1 and $n$ a positive integer. Let then consider the following subsets of $\mathcal B$
\begin{eqnarray*}
	\mathcal B(C)&=&\{(\xi,a,b,c,d)\in\mathcal B\hbox{ such that } C^{-1}\leq \left\vert \bb(\xi(a),\xi(b),\xi(c),\xi(d))\right\vert\leq C\}\ .\\
		\mathcal B_n(C)&=&\{(\xi,a,b,c,d)\in\mathcal B(C)\hbox{ such that } n^{-1}\leq \left\vert [a;b;c;d]\right\vert\leq n\}\ .
\end{eqnarray*}
These sets are closed and $\mathsf H$ invariant. Thus their projections in $\mathcal B/\mathsf H$ are compact. Since the union for all $n$ in $\mathbb N$  of the sets $\mathcal B_n(C)$ is equal to $\mathcal B(C)$, it follows by compactness that there exists some $p$ so that $\mathcal B_{p}(C)$ is equal to $\mathcal B(C)$. Thus we obtain the corollary by taking $D=p$.
	\end{proof}
\begin{proof}[Proof of corollary \ref{cor:QShomeo}] Assume that the cross-ratio of the quadruple $(a,b,c,d)$ in $\P(V)^4$ is in $[A,A^{-1}]$ then the cross-ratio of $\xi\circ\phi(a,b,c,d)$ is in $[B,B^{-1}]$, since $\xi\circ\phi$ is $(A,B)$-quasisymmetric. Hence,  by corollary \ref{cor:invQS} and since $\xi$ is $(A,B)$-quasisymmetric, there exists a constant $D$ only depending on $A$ and $B$ so that  the cross-ratio of $\phi(a,b,c,d)$ is in $[D,D^{-1}]$ . Thus $\phi$ is $(A,D)$-quasisymmetric.\end{proof}

\begin{proof}[Proof of corollary \ref{cor:QS-sequences}]
 Let us fix some  constants $A$ and $B$ greater than 1,  and let  $a$ be positive. Let  
\begin{eqnarray*}
	\mathcal A(a)=\{(\xi,\tau_0,\tau_1)&\mid& \xi \hbox{ is $(A,B)$-quasisymmetric }\cr
	& & \tau_0\ , \tau_1  \hbox{ are  positive triples in } \P(V)\hbox{ with } d(\tau_0,\tau_1)\leq a\}\ .
\end{eqnarray*}
As in the previous corollary  $\mathcal A(a)$ is equipped with an action of $\mathsf H\defeq \PSL(V)\times\G$, and this action is cocompact by the Equicontinuity Theorem. In particular since the function 
which associates to $
(\xi,\tau_0,\tau_1)$,  the real $d(\xi(\tau_0),\xi(\tau_1))$ is continuous and $\mathsf H$-invariant, it is bounded. This concludes the proof of the corollary.\end{proof}

\subsection{Hölder property}
Recall  any positive triple $\tau$ in $\partial_\infty\Hn$ defines a visual distance $d_{\tau}$ on  $\partial_\infty\Hn$  (see definition \ref{def:VisDist}).
\begin{theorem}{\sc [Hölder Modulus of Continuity]} \label{theo:HolderContinuity}
For any  constants $A,B$ greater than 1, there exist positive constants  $M$ and $\alpha$  with the following property:  if $\xi$ is an  $(A,B)$-quasisymmetric map from a dense subset of $\P(V)$ to  $\bHn$, then $\xi$ extends uniquely to a map from $\P(V)$ to $\bHn$ such that  if $\tau_0$ is any positive triple in $\P(V)$, then for all $x$ and $y$ in $\P(V)$
\[d_{\xi(\tau_0)}(\xi(x),\xi(y))\leq M \cdotp d_{\tau_0}(x,y)^\alpha\ .\]

\end{theorem}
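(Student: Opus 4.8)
The plan is to proceed in three stages, mirroring the classical proof that a quasisymmetric homeomorphism of $\P(V)$ is Hölder: first extend $\xi$ using the one–dimensional (Minkowski) theory already in place; then reduce to a compact normalised family via the Equicontinuity Theorem; finally run a dyadic subdivision argument in which the cross-ratio plays the role of ratios of lengths.

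First I would dispose of the extension and set up the normalisation. Since $\xi$ is positive, its defining set contains a positive triple $\kappa_0=(p,q,r)$ with positive image; read in a $\xi(\kappa_0)$-chart, the restriction of $\xi$ to each of the three diamonds cut out by $\kappa_0$ is, by Lemma~\ref{lem:Cross-ratioChart}, a positive $(A,B)$-quasisymmetric map in the Minkowski sense on a dense subset of an interval, so Proposition~\ref{pro:QS-cont} extends it uniquely and continuously there, the three extensions agreeing on the dense overlaps; this yields the required unique continuous extension $\xi\colon\P(V)\to\bHn$, still $(A,B)$-quasisymmetric by density and continuity of the cross-ratio. Next, the inequality to be proved is invariant under $(\xi,\tau_0)\mapsto(g\circ\xi\circ h^{-1},\,h(\tau_0))$ for $(h,g)\in\PSL(V)\times\G$, since $\tau\mapsto\beta_\tau$ is equivariant, hence $d_{h(\tau)}(h\cdot,h\cdot)=d_\tau(\cdot,\cdot)$ and $d_{g(\sigma)}(g\cdot,g\cdot)=d_\sigma(\cdot,\cdot)$. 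Using transitivity of $\PSL(V)$ on positive triples and then post-composition by $\G$, I may assume $\tau_0=\kappa_0$ is a fixed reference triple and $\xi$ lies in the family $\KK$ of $(A,B)$-quasisymmetric maps sending $\kappa_0$ to a fixed positive triple $\rho_0$ in $\bHn$; this $\KK$ is compact by the Equicontinuity Theorem~\ref{theo:Equicontinuity}. Writing $d_0\defeq d_{\kappa_0}$, $d_*\defeq d_{\rho_0}$ and using finiteness of diameters, it then suffices to find $M,\alpha$ depending only on $A,B$ with $d_*(\xi x,\xi y)\leq M d_0(x,y)^\alpha$ for all $\xi\in\KK$ and $d_0(x,y)\leq\epsilon_0$ ($\epsilon_0>0$ fixed). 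Covering $\P(V)$ by the closures of the three diamonds of $\kappa_0$ (slightly enlarged so that any two $\epsilon_0$-close points lie in one), I may assume $x,y\in\DD\defeq\Delta^*_p(q,r)$; by positivity $\xi(\DD)$ sits inside a diamond, on which (Proposition~\ref{r:BilipschitzDistDiamond}) the visual distance is uniformly biLipschitz to the Euclidean metric of a Minkowski chart. Choosing the $\kappa_0$- and $\rho_0$-charts, $\DD$ becomes a fixed bounded convex diamond in $\R^{1,n}$, and by Lemma~\ref{lem:Cross-ratioChart} $\xi$ becomes a positive $(A,B)$-quasisymmetric map $F\colon\R\to\R^{1,n}$ in the Minkowski sense with $F(\mp1,0,\dots,0)=(\mp1,0,\dots,0)$ and $F([-1,1])$ in a fixed bounded diamond; the family $\FF$ of such $F$ is compact by Proposition~\ref{pro:lim-QS}. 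So the task reduces to: $|F(u)-F(v)|\leq M|u-v|^\alpha$ for $F\in\FF$, $u,v\in[-1,1]$, $|u-v|\leq\epsilon_0$.

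For the core I would argue by dyadic bisection. In $\R^{1,n}$ the cone $I_+$ of future spacelike vectors is convex and satisfies the reverse Cauchy--Schwarz inequality $\langle v_1,v_2\rangle\geq\sqrt{\q(v_1)\q(v_2)}$ for $v_1,v_2\in I_+$, whence $\sqrt{\q(v_1+v_2)}\geq\sqrt{\q(v_1)}+\sqrt{\q(v_2)}$. For $a<m<b$, positivity gives $F(m)-F(a),\,F(b)-F(m)\in I_+$, so $\sqrt{\q(F(b)-F(a))}\geq\sqrt{\q(F(m)-F(a))}+\sqrt{\q(F(b)-F(m))}$; taking $m$ to be the midpoint of $[a,b]$, the quasisymmetry gives $\q(F(m)-F(a))\geq B^{-1}\q(F(b)-F(m))$, hence $\sqrt{\q(F(m)-F(a))}\leq\theta\sqrt{\q(F(b)-F(a))}$ with $\theta\defeq(1+B^{-1/2})^{-1}<1$, and symmetrically for $[m,b]$. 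Iterating, the image of a dyadic subinterval of generation $k$ has $\sqrt\q$-content at most $2\theta^k$. To upgrade this to control of the Euclidean diameter of the image — which by positivity is comparable to $|F(b)-F(a)|$ for the endpoints — I would invoke that along the compact family $\FF$ images of distinct points stay uniformly spacelike, i.e. uniformly away from the light cone; granting this, a generation-$k$ subinterval has image of Euclidean diameter $\leq C\mu^k$ for a uniform $\mu<1$ and $C$ the diameter of the bounding diamond. Any $u,v$ with $|u-v|\leq\epsilon_0$ lie in at most two adjacent dyadic subintervals of generation $k\geq\log_2(\epsilon_0/|u-v|)-1$, so $|F(u)-F(v)|\leq 2C\mu^k\leq M|u-v|^\alpha$ with $\alpha=\log_2(1/\mu)$; undoing the reductions yields the theorem for every $\tau_0$, with the extension built above.

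The hard part is the uniform spacelikeness used in the last step. Compactness of $\FF$ together with positivity (all chords spacelike) gives it at once for pairs of points whose distance is bounded below, but it is needed at every scale; the way around this is a renormalisation — given nearby points with a nearly lightlike image, conjugate by an element of $\PSL(V)\times\G$ blowing up a neighbourhood of that pair, note that the conjugates remain in $\KK$ (resp.\ $\FF$), and pass to a limit to reduce to the bounded-distance case — the one subtlety being that the blow-up must be performed conformally rather than by a mere affine or Lorentzian dilation. Everything else — the biLipschitz comparisons of distances, the monotonicity and cocycle properties of the cross-ratio (Corollary~\ref{lem:bidia}), the three-diamond covering, and the interpolation — is routine bookkeeping.
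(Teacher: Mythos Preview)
Your dyadic bisection with the reverse Cauchy--Schwarz inequality is a nice idea, and it does give contraction by $\theta=(1+B^{-1/2})^{-1}$ in the Lorentzian quantity $\sqrt{\q}$. The gap is exactly where you locate it, but your proposed fix does not work. The quantity you need to control, the ratio $\q(v)/|v|^2$ measuring how far a spacelike chord is from the light cone, is \emph{not} invariant under the Lorentz-boost part of the $\G$-action. When you conjugate by $(h,g)\in\PSL(V)\times\G$ so that the offending pair $(u,v)$ lands on $(-1,1)$ and its image on $(\mp1,0,\dots,0)$, the new chord is $(2,0,\dots,0)$ with $\q/|\cdot|^2=1$ \emph{regardless} of what the original chord was. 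So the renormalisation does not reduce to the bounded-distance case: it simply erases the near-lightlikeness you were trying to contradict. Compactness of $\FF$ does give uniform spacelikeness for pairs at source-distance bounded below (limits in $\FF$ are positive), but the dyadic argument needs it at all scales, and nothing in your sketch bridges that.

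The paper sidesteps this entirely with the Contraction Lemma~\ref{lem:contracting}: it builds a nested sequence of diamonds $\Delta^*_{a_0}(b_i,c_i)$ all sharing the common vertex $a_0=\xi(t_0)$ at infinity, with the cross-ratios $\bb(a_0,b_i,b_{i+1},c_i)$ and $\bb(a_0,b_i,c_{i+1},c_i)$ in $[B^{-1},B]$ by quasisymmetry. The element of $\G$ carrying one $\tau_i$-chart to the next fixes $a_0$, hence acts as a Lorentz similarity on $\R^{1,n}$; a direct analysis of lightlike segments in the region $\{B^{-1}\le\bb(a_0,b,\cdot,c)\le B\}$ bounds its operator norm in the \emph{Euclidean} metric by $(B-1)/(B+1)$, giving Euclidean contraction immediately. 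The hyperbolic-geometry lemma then produces $N\gtrsim-\log d_{\tau_0}(x,y)$ such nested diamonds. So the paper never needs to pass from $\sqrt{\q}$ to $|\cdot|$: the contraction is in the diamond (Euclidean) distance from the start, and this is the idea your argument is missing.
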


Observe that equicontinuity is a formal consequence of this theorem but we actually use equicontinuity in the proof of this theorem.

\subsubsection{A contraction lemma}

\begin{lemma}{\sc[Contraction Lemma]}\label{lem:contracting}
For any $B>1$, if $\tau=(a,b,c)$ and $\tau'=(a,x,y)$ are positive triples in $\bHn $ satisfying
\begin{enumerate}
	\item the diamond  $\Delta^*_a(x,y)$ is included in $\Delta^*_a(b,c)$,
	\item if we have the inequalities 
\begin{eqnarray*}
B^{-1}\leq \bb(a,b,x,c)\leq B \ &,& 
B^{-1}\leq \bb(a,b,y,c)\leq B \ ,
\end{eqnarray*}
\end{enumerate}
then, on $\Delta^*_a(x,y)$,  the diamond distances $\delta_{\tau}$ and $\delta_{\tau'}$  (see definition \ref{def:DiamondDist}) satisfy  
$$\delta_{\tau}\leq \frac{B-1}{B+1}\cdotp \delta_{\tau'}\  .  $$ \end{lemma}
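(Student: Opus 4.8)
The plan is to pass to a $\tau$-chart, where the statement becomes a computation in Minkowski space, reduce it to a single scalar inequality for the affine conformal map relating the $\tau$- and $\tau'$-charts, and then prove that inequality.

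First, I would fix a $\tau$-chart $\psi_\tau\colon\R^{1,n}\to\Min(a)$; with $e_1=(1,0,\dots,0)$ one has $b=\psi_\tau(-e_1)$, $c=\psi_\tau(e_1)$, and under this identification $\delta_\tau$ is the Euclidean metric of $\R^{1,n}$, while $\Delta^*_a(b,c)$ is, by \eqref{eq:DiamMink} (after relabelling the triple), the Alexandrov diamond $I_+(b)\cap I_-(c)=\{(s,v)\in\R\times\R^n:|v|<1-|s|\}$ — here $I_+(p)=p+C_+$, $C_+=\{(s,v):s>|v|\}$ the forward cone — and, by Lemma~\ref{lem:Cross-ratioChart} and Corollary~\ref{lem:bidia}, $f\defeq\bb(a,b,\cdot,c)$ is the function $f(z)=\q(z-c)/\q(z-b)$, which decreases strictly along future spacelike curves from $+\infty$ at $b$ to $0$ at $c$. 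Being itself a Minkowski chart for $a$, the $\tau'$-chart equals $\psi_\tau\circ A$ for an affine conformal self-map $A$ of $\R^{1,n}$ (cf. Proposition~\ref{pro:MinCharts}), say $A(w)=\lambda Rw+p$ with $\lambda>0$, $R\in\mathsf O(1,n)$, $p\in\R^{1,n}$; dropping $\psi_\tau$ from the notation, the normalisation $A(-e_1)=x$, $A(e_1)=y$ forces $p=\tfrac12(x+y)$ and $\lambda Re_1=\tfrac12(y-x)$. Since $\delta_{\tau'}=A_*(g_{\mathrm{Eucl}})$ on $\Delta^*_a(x,y)=A\bigl(\Delta^*_a(b,c)\bigr)$, a tangent vector $\xi$ satisfies $\norm{\xi}_{\delta_{\tau'}}=\lambda^{-1}\norm{R^{-1}\xi}_{\mathrm{Eucl}}\ge\lambda^{-1}\norm{R}_{\mathrm{op}}^{-1}\norm{\xi}_{\mathrm{Eucl}}=\lambda^{-1}\norm{R}_{\mathrm{op}}^{-1}\norm{\xi}_{\delta_\tau}$, so it suffices to prove $\lambda\,\norm{R}_{\mathrm{op}}\le\frac{B-1}{B+1}$.

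Next I would compute this quantity. From $\lambda Re_1=\tfrac12(y-x)$ and $\q(Re_1)=1$ we get $\lambda=\tfrac12\sqrt{\q(y-x)}$, and $Re_1=(y-x)/\sqrt{\q(y-x)}$ is the future unit $\q$-timelike vector at hyperbolic distance $\rho$ from $e_1$, with $\cosh\rho=(y-x)_1/\sqrt{\q(y-x)}$. Every $R\in\mathsf O(1,n)$ with that image is the product of the pure boost of rapidity $\rho$ by an element of $\mathsf O(n)$ (which is Euclidean-orthogonal, hence does not change the operator norm), so $\norm{R}_{\mathrm{op}}=e^{\rho}$, and a one-line computation gives
\[\lambda\,\norm{R}_{\mathrm{op}}=\tfrac12 e^{\rho}\sqrt{\q(y-x)}=\tfrac12\bigl((y-x)_1+|(y-x)'|\bigr)\eqdef m,\]
where $y-x=((y-x)_1,(y-x)')\in\R\times\R^n$ (for $n=0$ this is simply $\tfrac12(y-x)$). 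Then I would unwind the hypotheses: (i) is exactly the causal chain $b\preceq x\prec y\preceq c$, because $\Delta^*_a(x,y)=I_+(x)\cap I_-(y)$ and $I_+(x)\cap I_-(y)\subset I_+(b)\cap I_-(c)$ iff $x-b\in\overline{C_+}$ and $c-y\in\overline{C_+}$; setting $u=x-b$, $v=y-x$, $w=c-y$ one has $u,w\in\overline{C_+}$, $v\in C_+$, $u+v+w=2e_1$, and $f(x)=\q(v+w)/\q(u)$, $f(y)=\q(w)/\q(u+v)$, $m=\tfrac12(v_1+|v'|)$, while (ii) says $f(x),f(y)\in[B^{-1},B]$, whence $f(x)/f(y)\le B^{2}$ (as $f(x)\ge f(y)$). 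Only $(u_1,|u'|)$, $(v_1,|v'|)$, $(w_1,|w'|)$ and the relation $u'+v'+w'=0$ enter, and three vectors of prescribed norms add to zero in some $\R^k$ precisely when they satisfy the triangle inequalities, so one may assume $u,v,w$ — hence $x,y,b,c$ — lie in a fixed Lorentzian $3$-plane; there a direct optimisation, using that $f$ restricted to any null line is a Möbius function of the affine parameter (vanishing, resp. with a pole, where the line meets the light cone of $c$, resp. of $b$), yields the sharp estimate
\[\frac{1+m}{1-m}\ \le\ \sqrt{\frac{f(x)}{f(y)}}\ \le\ B,\]
i.e. $m\le\frac{B-1}{B+1}$. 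Equality would force $\sqrt{f(x)/f(y)}=B$ and a degenerate triangle, i.e. $v=y-x$ lightlike; but then $x$ and $y$ would lie on a common photon and $(a,x,y)$ would fail to be a positive triple, so in fact $m<\frac{B-1}{B+1}$ always, which gives the Lemma.

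The steps above up to the reduction are routine bookkeeping about Minkowski charts; the real difficulty is the last displayed inequality. Recasting the containment hypothesis as the clean identity $u+v+w=2e_1$, reducing honestly to low dimension, and controlling the spatial part $|(y-x)'|$ of $y-x$ in terms of the values of $f$ — which is exactly where both the precise constant $\frac{B-1}{B+1}$ and the identification of the degenerate extremal configuration come from — is the delicate part; the case $n=0$ amounts to a classical computation with cross-ratios on the real line.
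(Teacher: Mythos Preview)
Your reduction to the affine map $A=\lambda R+p$ and the identity $\lambda\,\Vert R\Vert_{\mathrm{op}}=\tfrac12\bigl((y-x)_1+|(y-x)'|\bigr)\eqdef m$ is correct and essentially matches the paper's use of the Cartan decomposition $g_0\in\K\mathsf A\K$: both arrive at the same quantity $m=\Vert g_0\Vert$, you via the boost--times--$\mathsf O(n)$ factorisation, the paper via $\K\mathsf A\K$. The substantive divergence is in how $m$ is bounded.

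The paper observes that, by Corollary~\ref{lem:bidia}, $\Delta^*_a(x,y)$ lies in the shell $\mathcal C_B=\{w:B^{-1}\le f(w)\le B\}$, and then proves directly that every lightlike segment contained in $\mathcal C_B$ has Euclidean length at most $\frac{B-1}{B+1}\sqrt2$: on a null line $\phi$ one has $f(\phi(t))=\lambda\frac{1-t}{t}$ for some $\lambda>0$ (exactly the M\"obius behaviour you invoke), and maximising the length of $\{t:B^{-1}\le\lambda\frac{1-t}{t}\le B\}$ over $\lambda$ gives the constant $\frac{B-1}{B+1}$. Since the longest null segment in $\Delta^*_a(x,y)$ has length $\sqrt2\,m$, this yields $m\le\frac{B-1}{B+1}$.

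You instead assert the sharper inequality $\frac{1+m}{1-m}\le\sqrt{f(x)/f(y)}$. This is true---and in fact follows from the very same null--segment computation applied to the tighter shell $\{f(y)\le f\le f(x)\}$, where the supremum over $\lambda$ becomes $\sqrt2\,\frac{\sqrt{f(x)/f(y)}-1}{\sqrt{f(x)/f(y)}+1}$---but you do not prove it: ``a direct optimisation'' is not a proof, and this displayed inequality is the entire content of the lemma. Your dimension reduction to a Lorentzian $3$-plane is valid (indeed $\q(v+w)=(2-u_1)^2-|u'|^2$ etc.\ depend only on the norms once $u'+v'+w'=0$), but it does not by itself supply the missing estimate; you still have to carry out the optimisation, and the cleanest way to do that is precisely the null--line argument the paper uses.

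In short: correct skeleton, but the decisive step is stated without justification. The paper's route is more economical because it bounds $m$ against $B$ directly via $\mathcal C_B$, bypassing $f(x)/f(y)$; your route, if completed, would give a marginally sharper intermediate statement.
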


\begin{proof}
Let  $g$ in  $\G$ so that  $g\tau=\tau'$. Then $g$ defines an isometry 
$$(\Delta^*_a(b,c),\delta_\tau)\to (\Delta^*_a(x,y),\delta_{\tau'})\ .$$
The statement is thus equivalent to the fact that $g$ is $\frac{B-1}{B+1}$-contracting when seen as a map from $(\Delta^*_a(b,c),\delta_\tau)$ to itself.

Fix a $\tau$-chart $\psi_\tau: \R^{1,n} \to \Min(a)$ and identify $\R^{1,n}$ with its image by $\psi_\tau$. Denote respectively by $\q$ and $\q_\tau$ the quadratic form of signature $(1,n)$ the Euclidean quadratic form on $\R^{1,n}$. To lighten notations, we write $\Delta=\Delta_a^*(b,c)$.

Since $g$ fixes the point $a$, it is a conformal transformation of the Minkowski patch $\Min(a)$ which we identify using the $\tau$-chart as  a conformal transformation  $g'$ of $\R^{1,n}$. We may thus write
$$
g'=\ g_0+u\ ,
$$
where $u$ is a translation and $g_0$ belongs to $\mathsf H\defeq \mathbb R^*\times \SO(\q)$.

The subgroup $\K$ of $\mathsf H$ fixing the first coordinate is both a maximal compact subgroup of $\mathsf H$ and a subgroup of $\SO(q_\tau)$. Observe also that $\K$ preserves $\Delta$.

Let $\mathsf A$ be the subgroup of $\mathsf H$, fixing  the last $n-1$ coordinates. Then $\mathsf{A}$ is a Cartan subgroup of $\mathsf H$. Using the  Cartan decomposition $\mathsf H = \K\mathsf{A}\K$, we can write  $g_0=k_0\alpha k_1$,  
where $k_0$ and $k_1$ belongs to $\K$ and $\alpha$ to $\mathsf{A}$. In particular, 
$$\Vert g'\Vert=\Vert g_0\Vert=\Vert \alpha \Vert\ ,$$ where the norm is computed with respect to $\q_\tau$.

Let  $I^\pm$ be the two lightlike lines in the plane defined by the first two coordinates. Then  $\Vert \alpha \Vert=\max\{\lambda_+,\lambda_-\}$ where $\lambda_\pm$ are the eigenvalues of $\alpha$ on $I^\pm$.  Observe now that 
$$
\lambda^\pm= \frac{\Le\left(\alpha (I^\pm\cap \Delta)\right)}{\Le \left(I^\pm\cap\Delta\right)}=\frac{\Le\left(\alpha (I^\pm \cap \Delta)\right)}{\sqrt{2}}\ ,
$$
where   $\Le$ is the length associated to $\q_\tau$, and the last inequality comes from the fact that 
$$
\Le\left(I^\pm\cap\Delta\right)=\sqrt{2}\ .
$$
Since $\K$ globally fixes $\Delta$ and preserves $\ell$, and the translations preserve $\ell$,  we have
\begin{eqnarray*}
	\Le\left(\alpha ( I^\pm\cap  \Delta)\right)
	&=&\Le\big((k_0\alpha k_1) (I^\pm \cap \Delta)\big) \\
		&=&\Le\big((k_0\alpha k_1+u)\cdotp I^\pm \cap (k_0\alpha k_1+u)\cdotp \Delta\big) \\
	&=&\Le\left(g'( I^\pm\cap \Delta)\right) \ .
\end{eqnarray*}
Let  $\Delta'=g'\cdotp \Delta=\Delta_a^*(x,y)$, we get 
\begin{eqnarray}
\Vert g'\Vert \leq \sup\left\{\frac{\ell (\phi\cap \Delta')}{\sqrt{2}}\mid \phi \hbox{ lightlike line }\right\}\ .\label{eq:proo-ineqvertg}
\end{eqnarray}
Corollary \ref{lem:bidia} implies that 
$$
\Delta'\subset \CC_B\defeq\left\{w\in\Delta,~B^{-1}\leq \bb\left(a,b,\psi_\tau(w),c\right)\leq B \right\}\ .
$$
Thus the result follows from the inequality \eqref{eq:proo-ineqvertg} and the following

\medskip

\noindent{\sc Claim:} {\em any lightlike segment contained in $\CC_B$
 has length  less that $\frac{B-1}{B+1}\sqrt{2}$.}
\medskip

\begin{figure}[!h] 
\begin{center}
\includegraphics[height=5cm]{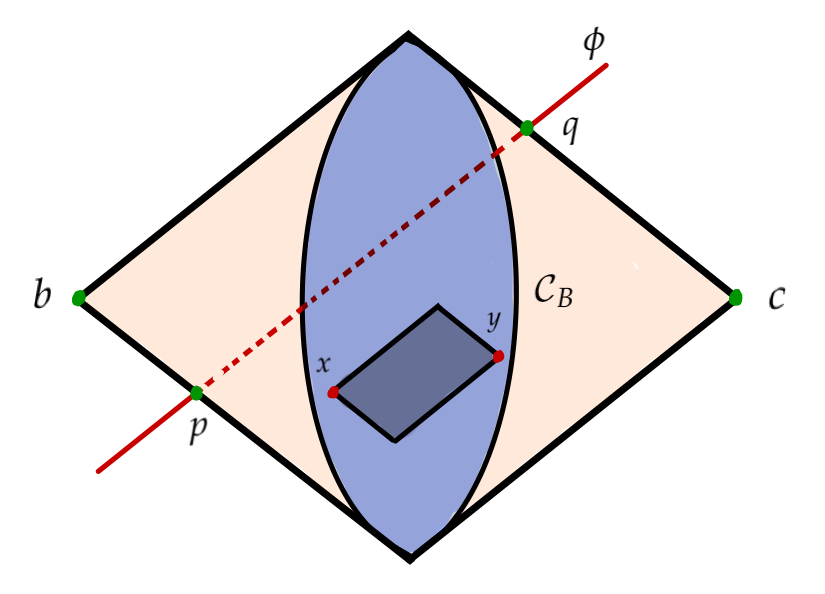}
\end{center}
\caption{The set $\mathcal{C}_B$ drawn in $\partial_\infty {\mathbf H}^{2,1}$} 
\label{f:setCK}
\end{figure}

Let us prove this claim. Let $\phi$ be a lightlike ray intersecting  the lightcone of $b$ and $c$ in $p$ and $q$ respectively (see Figure \ref{f:setCK}). For $u=q-p$, the intersection between $\phi$ and $\Delta$ is parametrised by
\[\phi(t)=p+tu~,~t\in [0,1]~.\]
From lemma \ref{lem:Cross-ratioChart}, we have
\[\bb(a,b,\phi(t),c) = \frac{\q(\phi(t)-c)}{\q(\phi(t)-b)}\ .\]
Writing $\phi(t)-b= tu+(p-b)$ and using the fact that $u$ and $(p-b)$ are isotropic (and similarly for $\q(\phi(t)-c)$), we obtain
\[ \left\{\begin{array}{lll}
 \q(\phi(t)-c) & = & -2(1-t)\langle u,q-c\rangle\ ,\\	
\q(\phi(t)-b) & = & 2t\langle u,p-b\rangle \ .
\end{array}\right.\]
We get
\begin{eqnarray*}
\bb(a,b,\phi(t),c)\ )&=& \lambda\cdotp \frac{1-t}{t}\ \ ,  \hbox{ with } \lambda=\frac{\langle u,q-c\rangle}{\langle u,p-b\rangle}\ .
\end{eqnarray*}
Observe that $\lambda$ is positive.
Thus $\phi\cap \CC_B$ is parametrised by the compact segment $J_\phi$ of $]0,1[$, defined by
$$
J_\Phi=\left[\frac{1}{B/\lambda +1}, \frac{1}{1/B\lambda +1}\right]\ , 
$$
Hence 
$$
\frac{\ell(\phi\cap C_B)}{\ell\left(\phi\cap \Delta\right)}=\left\vert \frac{1}{B/\lambda +1}-\frac{1}{1/B\lambda +1}\right\vert=\frac{(B^2-1)\lambda}{(B+\lambda)(\lambda B+1)}\leq \frac{B-1}{B+1}\ .
$$
It follows that 
$$
\ell(\phi\cap C_B)\leq \frac{B-1}{B+1}\ell\left(\phi\cap \Delta\right)\leq \frac{B-1}{B+1}\sqrt{2}\ .
$$
This completes the proof of the claim and thus of the contraction lemma.
\end{proof}

\subsubsection{Proof of Theorem \ref{theo:HolderContinuity}}
	
The proof follows the same scheme as the proof of the corresponding theorem in \cite{KahnLabourieMozes}.

Since $\left(\P(V),d_{\kappa_0}\right)$ has finite diameter, let us first show that  it is enough to prove the statement for $d_{\kappa_0}(x,y)\leq\epsilon$ for a given $\epsilon>0$.

Indeed, let $k$ so that $\epsilon\geq \frac{1}{k}\cdotp\diam(\P(V)$. Let $x$ and $y$ in $\P(V)$.  Using a geodesic between $x$ and $y$, let $(x_0,...,x_k)$ so that $d(x_i,x_i+1)=\frac{1}{k}d(x,y)$ and $x_0=x$, $x_k=y$. It follows that $d(x_i,x_{i+1})\leq\frac{1}{k}\diam(\P(V))\leq\epsilon$. Thus
\begin{eqnarray*}
d_{\tau_0}(\xi(x),\xi(y)) & \leq & \sum_{i=0}^{k-1} d_{\tau_0}(\xi(x_i),\xi(x_{i+1})) 
 \leq  M\sum_{i=1}^{k-1} d_{\kappa_0}(x_i,x_{i+1})^\alpha \\
& \leq & Mk\left(\frac{d_{\kappa_0}(x,y)}{k}\right)^\alpha=(Mk^{1-\alpha})d_\kappa(x,y)^\alpha\ .
\end{eqnarray*} 

\medskip
Let now  $\tau_0=(t_0,x_0,y_0)$ be positive triple in $\P(V)$, $\kappa_0\defeq \xi(\tau_0)=(a_0,b_0,c_0)$. Let  $t_0$ the barycenter of $\tau_0$, and
$\gamma$ the geodesic in $\H^2$ starting at $t_0$ and orthogonal to the geodesic between $x_0$ and $y_0$.

 Fix $\epsilon>0$ small enough and consider two points $x$ and $y$ in $\P(V)$ with $d_{\tau_0}(x,y)<\epsilon$. Let us first assume that $x$ and $y$ are symmetric with respect to $\gamma$  and contained in the diamond $\Delta^*_{t_0}(x_0,y_0)$ (recall that in $\P(V)$ the diamond $\Delta^*_{t_0}(x_0,y_0)$ is the interval with extremities $x_0$ and $y_0$ not containing $t_0$). Finally, since the image of $\Delta^*_{t_0}(x_0,y_0)$ by $\xi$ is contained in $\Delta^*_{a_0}(b_0,c_0)$, and the visual distance -- see definition \ref{def:VisDist}-- $d_{\tau_0}$ is biLipschitz to the diamond distance $\delta_{\tau_0}$ on $\Delta^*_{a_0}(b_0,c_0)$ (see remark \ref{r:BilipschitzDistDiamond}), it is enough to prove the result for the distance $\delta_{\tau_0}$.

Let us now prove an elementary fact from hyperbolic geometry

\begin{lemma}
There exists a nested sequence of intervals
\[\Delta^*_{t_0}(x_0,y_0)=\Delta_0\supset \Delta_1\supset...\supset \Delta_N\ ,\]
with $\Delta_i= \Delta^*_{t_0}(x_i,y_i)$ and such that
\begin{enumerate}
	\item $x$ and $y$ belong to $\Delta_N$.
	\item $x_{i+1}$ and $y_{i+1}$ belong to the set of those $u$ so that $\frac{1}{A}\leq[t_0, x_i,u,y_i] $.
	\item $N>-C \log(d_{\kappa_0}(x,y))$ for some constant $C$ only depending on $A$.
\end{enumerate}	
\end{lemma}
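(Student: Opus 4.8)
The plan is to realize the nested sequence of intervals as the images under $\xi$ of the ``shadows'' of a sequence of horoball-like regions shrinking towards the geodesic $\gamma$; equivalently, to work in the hyperbolic disk $\H^2$ with the standard uniformization and produce the intervals by a purely hyperbolic construction, then transport everything through $\xi$. Concretely, I would set up coordinates so that $\gamma$ is the vertical geodesic through $t_0$ with endpoints the midpoints of the two arcs of $\P(V)\setminus\{x_0,y_0\}$, and $x,y$ symmetric with respect to $\gamma$. The interval $\Delta_0=\Delta^*_{t_0}(x_0,y_0)$ has a definite ``size'' measured by the cross-ratio $[t_0,x_0, \cdot, y_0]$, and I want each $\Delta_{i+1}$ to be the sub-interval centered (with respect to the symmetry about $\gamma$) inside $\Delta_i$ whose endpoints $x_{i+1},y_{i+1}$ satisfy $[t_0,x_i,x_{i+1},y_i]=A$ and $[t_0,x_i,y_{i+1},y_i]=A$ (or $\geq 1/A$, matching the inequality in item (ii)). This is manifestly a well-defined, strictly nested construction as long as $\Delta_i$ is large enough to contain such points, and by symmetry the $\Delta_i$ shrink geometrically around a point on $\gamma$.

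The key steps, in order: (1) fix the normalization of $\P(V)\cong\partial\H^2$ by $\tau_0$ and translate the visual distance $d_{\tau_0}$ on $\P(V)$ into a cross-ratio quantity — recall $d_{\tau_0}$ is biLipschitz to the diamond distance, and inside the interval $\Delta_0$ the cross-ratio $[t_0,x_0,z,y_0]$ is a monotone coordinate, so $d_{\tau_0}(x,y)$ is comparable to the difference of cross-ratio values at $x$ and $y$. (2) Define $x_{i+1},y_{i+1}$ from $x_i,y_i$ by the fixed cross-ratio condition in item (ii); verify that the resulting intervals are strictly nested and, crucially, estimate how fast $\delta_{\tau_0}$-size (or cross-ratio size) of $\Delta_i$ decreases: each step multiplies the ``cross-ratio radius'' by a definite factor $\theta=\theta(A)<1$, because adding a cross-ratio gap of $\log A$ at each end corresponds in hyperbolic geometry to advancing a bounded hyperbolic distance along $\gamma$, which contracts the shadow by a fixed ratio. (3) Stop at the first $N$ for which $x$ and $y$ both lie in $\Delta_N$ but (say) not both in $\Delta_{N+1}$; then the cross-ratio radius of $\Delta_N$ is comparable to $d_{\tau_0}(x,y)$, i.e. $\theta^N\asymp d_{\tau_0}(x,y)$, which after taking logarithms gives exactly $N> -C\log d_{\tau_0}(x,y)$ with $C$ depending only on $A$ (through $\theta$). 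Since $\xi$ sends $\tau_0$ to $\kappa_0$ and the statement measures the target in $d_{\kappa_0}=d_{\xi(\tau_0)}$, and item (iii) as written uses $d_{\kappa_0}(x,y)$ — here one should note $d_{\kappa_0}(x,y)$ really means the $\P(V)$-distance $d_{\tau_0}$, or else one appeals to equicontinuity/continuity of $\xi$ (Theorem \ref{theo:Equicontinuity}) to compare the two; in either reading the estimate is the same up to adjusting $C$.

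I would carry out step (2) by a direct computation in the upper half-plane model: place $t_0$ at $\infty$, so $\Delta_0$ becomes a finite real interval $[-r_0,r_0]$ symmetric about $0$ (after using the $\gamma$-symmetry), $x_0=-r_0$, $y_0=r_0$, and the cross-ratio $[t_0,x_0,z,y_0]=[\infty,-r_0,z,r_0]$ is an explicit Möbius function of $z$; imposing $[\infty,-r_0,x_1,r_0]=A$ pins down $x_1=-r_1$ with $r_1/r_0$ equal to a fixed function of $A$ strictly less than $1$. Iterating, $r_i=\theta^i r_0$ with $\theta=\theta(A)\in(0,1)$, and $x,y\in\Delta_i\iff \max(|x|,|y|)\leq r_i$. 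The stopping index $N$ then satisfies $r_N\asymp\max(|x|,|y|)\asymp d_{\tau_0}(x,y)$ (using the biLipschitz comparison of $d_{\tau_0}$ with the diamond/Euclidean distance on $\Delta_0$), giving $N\asymp -\log d_{\tau_0}(x,y)/\log\theta^{-1}$, hence item (iii).

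The main obstacle is item (iii), or rather making the constant $C$ depend \emph{only} on $A$ and not on $\tau_0$, $x$, $y$, or $\xi$: this requires the ``cross-ratio radius'' of $\Delta_i$ to contract by a factor bounded away from $1$ \emph{uniformly}, which is why one must measure size in a cross-ratio / hyperbolic-invariant way rather than a metric way, and why the reduction to $x,y$ symmetric about $\gamma$ and contained in $\Delta^*_{t_0}(x_0,y_0)$ (already made before the lemma) is essential — it is exactly what makes the recursion exactly self-similar with ratio $\theta(A)$. A secondary, more bookkeeping, obstacle is confirming that the construction does not terminate prematurely: one needs $\Delta_i$ to be large enough (in cross-ratio terms) to contain points at cross-ratio distance $\geq\log A$ from each end, which holds precisely because we only iterate down to the scale of $d_{\tau_0}(x,y)>0$ and the cross-ratio radius stays positive and comparable to $\theta^i r_0$ throughout. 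Items (i) and (ii) are immediate from the definition of the $\Delta_i$.
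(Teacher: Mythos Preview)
Your construction is correct and coincides with the paper's: the paper places points $z_0,\dots,z_N$ along $\gamma$ at equal hyperbolic spacing $\delta=\delta(A)$ and lets $\Delta_i$ be the interval cut out on $\partial_\infty\H^2$ by the geodesic perpendicular to $\gamma$ at $z_i$, which in your half-plane model with $t_0=\infty$ is exactly the scaling $r_{i+1}=\theta\,r_i$ with $\theta=e^{-\delta}=\tfrac{A-1}{A+1}$; the bound $N\gtrsim -\log d(x,y)$ then follows identically in both versions. One clarification: the lemma lives entirely in $\P(V)$ and does not involve $\xi$, so your opening sentence and the hedge about invoking equicontinuity are unnecessary --- the $d_{\kappa_0}$ in item~(iii) should simply be read as the source-side visual distance (the paper's own proof writes $d_{\tau_0}$), and uniformity of $C$ in $A$ alone is automatic from the $\PSL(V)$-invariance of the cross-ratio, exactly as your self-similarity argument shows.
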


\begin{proof}
Denote by $z_0$ the intersection of $\gamma$ with the geodesic through $x_0$ and $y_0$, and by $p\in \gamma$ is the barycenter of $(t_0,x,y)$. As remarked in \cite[Equation (41)]{KahnLabourieMozes}, there is a universal constant $C_0>0$ such that
\[d_{\H^2}(p,z_0)\geq -C_0 \log\left(d_{\tau_0}(x,y) \right)\ .\]

There exists $\delta>0$, only depending on $A$, such that if $(a,b)$ and $(u,v)$ are the end points of two geodesics at a distance $\delta$ orthogonal to $\gamma$, with $(t_0,a,u,v,b)$ going counter-clockwise, then 
\[[t_0,a,u,b]=\frac{1}{A}\ , ~[t_0,a,v,b]=A\ .\]

Let $z_1,...,z_N$ be the points on $\gamma$ such that $d_{\H^2}(z_i,z_{i+1})=\delta$ for all $i=0,...,N-1$ and $d_{\H^2}(z_N,p)<\delta$. Thus $N>-C_0\log\left(d_{z_0}(x,y) \right)$, with $C_0$ only depending on $A$. The diamond $\Delta_i=\Delta^*_{t_0}(x_i,y_i)$, with $x_i$ and $y_i$ the intersections of $\partial_\infty\H^2$ with the geodesic orthogonal to $\gamma$ passing through $z_i$ satisfy the required properties (see Figure \ref{f:Holder}).\end{proof}

\begin{figure}[!h] 
\begin{center}
\includegraphics[height=6cm]{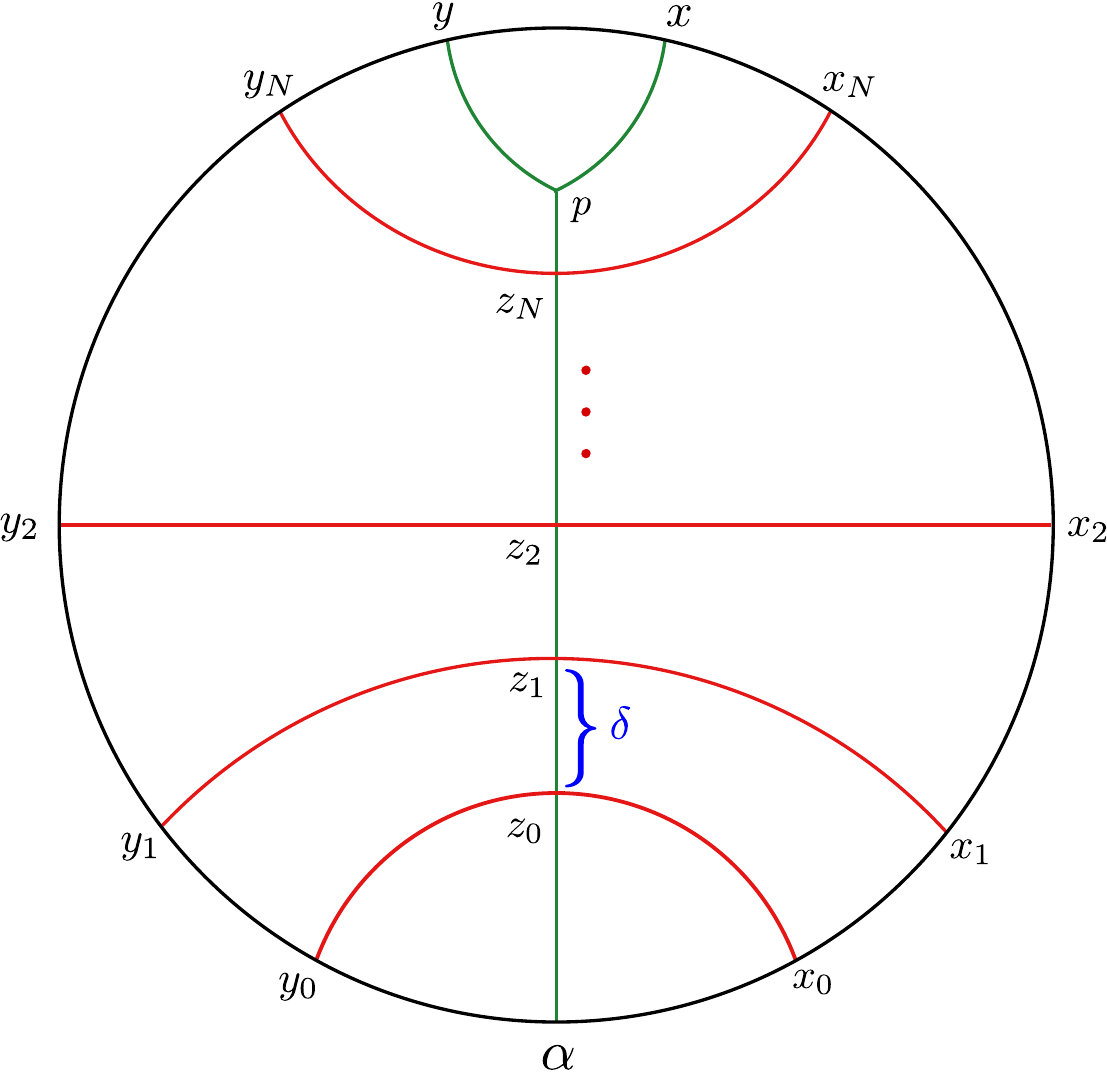}
\end{center}
\caption{ } 
\label{f:Holder}
\end{figure}

Theorem \ref{theo:HolderContinuity} in the geometric case when $x$ and $y$ are symmetric with respect to $\gamma$ is then a consequence of lemma \ref{lem:contracting} and the previous lemma as we show now.

Fix $\Delta_i=\Delta^*_{t_0}(x_i,y_i)$ as in the previous lemma and $\kappa_i=(a_0,b_i,c_i)=\xi(t_0,x_i,y_i)$. Since $\xi$ is $(A,B)$-quasisymmetric, any pair of diamonds $\Delta^*_{a_0}(b_i,c_i)$ and $\Delta^*_{a_0}(b_{i+1},c_{i+1})$ satisfies the conditions of lemma \ref{lem:contracting}, so we have $\delta_{\kappa_{i}}\leq \lambda \delta_{\kappa_{i+1}}$ on $\Delta^*_{a_0}(b_N,c_N)$ with $\lambda=\frac{B-1}{B+1}$. We get
\[\delta_{\kappa_0}\big(\xi(x),\xi(y)\big)\leq \lambda^N\delta_{\kappa_N}\big(\xi(x),\xi(y)\big) \leq 2k^{-C \log\left(d_{\kappa_0}(x,y)\right)}\leq M_0 \left(d_{\tau_0}(x,y)\right)^\alpha\ .\]
Where we used $\delta_{\kappa_N}(x,y)\leq 2$ since $x$ and $y$ both belongs to $\Delta_{a_0}(b_N,c_N)$.

We have thus proved that if $x$ and $y$ are both elements of the interval  $\Delta^*_{t_0}(x_0,y_0$ and symmetric with respect to $\gamma$, then 
$$
d_{\xi(\kappa_0)}(\xi(x),\xi(y))\leq M_1 d_{\kappa_0}(x,y)^\alpha\ .
$$
Let us move to the general case. Using a rotation $R$ by the barycenter of $\kappa_0$, we can find $\kappa=R(\kappa_0)$ so that the condition above are satisfied. It follows that
$$
d_{\xi(\kappa)}(\xi(x),\xi(y))\leq M_1 d_{\kappa}(x,y)^\alpha \ .
$$
Now we notice that there exists a constant $N$ only depending on $\H^2$, so that $d(\kappa,\kappa_0)\leq N$. By Condition \eqref{pro:QS-boundtau} of corollary  \ref{cor:QS-sequences}  $d(\xi(\kappa),\xi(\kappa_0))\leq M$, it follows, by the continuity of the dependence of the visual distance with the positive triple that there exists constants $C$ and $D$ only depending on $A$ and $B$, so that
\begin{eqnarray*}
	C^{-1} d_\tau\leq d_{\tau_0}\leq C d_\tau\ \ & ,  &\ \  D^{-1}d_{\xi(\tau)}\leq d_{\xi(\tau_0)}\leq D d_{\xi(\tau)}\ .
\end{eqnarray*}
Thus 
$$
d_{\xi(\kappa_0)}(\xi(x),\xi(y))\leq M_2 d_{\kappa_0}(x,y)^\alpha\ .
$$
with $M_2=M_1C^\alpha D$. This completes the proof of the theorem.
\subsubsection{The equivariant case}\label{sss:EquivariantCase} In this paragraph, we explain the relation between quasisymmetric maps and the theory of maximal representations of surface groups into $\G$.

Let $\Gamma$ be a cocompact Fuchsian subgroup of $\PSL(V)$ and $\rho$ a morphism from $\Gamma$ to $\G$.  We first have 
\begin{proposition}
	Any positive continuous equivariant map is quasisymmetric.
\end{proposition}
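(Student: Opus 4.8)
The plan is to exploit the compactness of $\Gamma\backslash\PSL(V)$ together with the Equicontinuity Theorem \ref{theo:Equicontinuity}, in the following way. Let $\xi$ be a positive continuous $\rho$-equivariant map from $\P(V)$ to $\bHn$, meaning $\xi\circ\gamma = \rho(\gamma)\circ\xi$ for all $\gamma$ in $\Gamma$. Fix a base positive triple $\kappa_0$ in $\P(V)$ and set $\tau_0 \defeq \xi(\kappa_0)$. We want to produce constants $A$ and $B$ greater than $1$ such that $\xi$ is $(A,B)$-quasisymmetric, i.e. whenever a quadruple $(x,y,z,t)$ has $A^{-1}\leq |[x,y,z,t]|\leq A$, then $B^{-1}\leq |\bb_\xi(x,y,z,t)|\leq B$. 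By the cocycle property of the cross-ratio and Lemma \ref{lem:AB-CD}, it suffices to handle one convenient scale $A$; equivalently, it suffices to show that the function $(x,y,z,t)\mapsto |\bb_\xi(x,y,z,t)|$ is bounded above and below on the (closed, $\PSL(V)$-invariant-up-to-$\Gamma$) set of quadruples whose real cross-ratio is pinned to a fixed value, say $|[x,y,z,t]| = 2$.

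First I would set up the relevant parameter space. Let $\CC$ be the space of quadruples $(x,y,z,t)$ of pairwise distinct points in $\P(V)$ with $|[x,y,z,t]| = 2$, and consider the continuous $\Gamma$-invariant function $F\colon \CC \to \R_{>0}$, $F(x,y,z,t) \defeq |\bb_\xi(x,y,z,t)|$; this is well defined and positive because positivity of $\xi$ forces $\langle\xi(p)_0,\xi(q)_0\rangle\neq 0$ for $p\neq q$, so $\xi$ is injective and the cross-ratio $\bb_\xi$ is finite and nonzero on quadruples of distinct points. The function $F$ is $\Gamma$-invariant: for $\gamma$ in $\Gamma$ one has $\bb_\xi(\gamma x,\gamma y,\gamma z,\gamma t) = \bb(\rho(\gamma)\xi(x),\dots) = \bb_\xi(x,y,z,t)$, because the cross-ratio $\bb$ is $\G$-invariant (each term $\langle\cdot,\cdot\rangle$ is $\G$-invariant and the scaling ambiguities cancel), and $[\gamma x,\gamma y,\gamma z,\gamma t] = [x,y,z,t]$ since $\Gamma\subset\PSL(V)$ preserves the projective cross-ratio. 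So $F$ descends to $\bar F$ on $\Gamma\backslash\CC$.

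The key step is then to argue that $\Gamma\backslash\CC$ is compact, so that $\bar F$, being continuous and positive, is bounded between two positive constants $B^{-1}$ and $B$ on it; this gives that $\xi$ is $(2,B)$-quasisymmetric, hence quasisymmetric by Lemma \ref{lem:AB-CD}. To see compactness of $\Gamma\backslash\CC$: the quadruples with $|[x,y,z,t]|=2$ form a single $\PSL(V)$-orbit union a finite set of orbits (fixing three points and then the cross-ratio constrains the fourth to finitely many positions up to the orientation subtleties), so $\CC \cong \PSL(V)\times \Phi$ for a finite set $\Phi$ after choosing, say, the first three points as coordinates; since $\Gamma$ is cocompact in $\PSL(V)$, the quotient $\Gamma\backslash\CC$ is a finite union of copies of the compact manifold $\Gamma\backslash\PSL(V)$ and hence compact. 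Alternatively, and more robustly, one can invoke the Equicontinuity Theorem directly: the closure of the $\Gamma$-orbit of $\xi$ inside the space of equivariant maps, renormalised so that some triple goes to $\tau_0$, would be compact if $\xi$ were quasisymmetric — but to get the implication the cleanest route is the explicit cocompactness of the action on quadruples just described, which requires no quasisymmetry hypothesis and only uses that $\Gamma$ is a cocompact lattice.

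The main obstacle I anticipate is the care needed around the "finitely many orbits" point — the real cross-ratio $[x,y,z,t]$ together with its sign only pins the fourth point to finitely many $\PSL(V)$-positions, and one must check that $\Gamma\backslash\CC$ is genuinely compact rather than merely the quotient of something noncompact (the subtlety being that $\CC$ is an open condition on quadruples of \emph{distinct} points, but the cross-ratio being pinned to the value $2$, bounded away from $0,1,\infty$, already forces the four points to stay at definite projective distance from each other, so no degeneration escapes to the diagonal). Once that is in hand, continuity and positivity of $\bar F$ on a compact space immediately yield the uniform two-sided bound, completing the proof.
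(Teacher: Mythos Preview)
Your approach is essentially the paper's own: the paper's one-line proof reads ``this is a direct consequence of the cocompactness of the action of $\Gamma$ on the set of positive quadruples in $\P(V)$ whose cross-ratios are in $[A^{-1},A]$ for $A>1$,'' which is exactly your cocompactness-plus-continuity argument once the verbosity is stripped away.

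One small imprecision worth flagging: you pin $|[x,y,z,t]|$ to the single value $2$ and then invoke Lemma~\ref{lem:AB-CD}, but that lemma as stated presupposes $(A,B)$-quasisymmetry on a \emph{range} $[A^{-1},A]$, not a single level set, so it cannot be applied directly. The cleanest fix is simply to replace your level set $\{|[x,y,z,t]|=2\}$ by the set of \emph{positive} quadruples with $|[x,y,z,t]|\in[A^{-1},A]$, as the paper does; the positivity constraint (that $y$ and $t$ lie in different components of $\P(V)\setminus\{x,z\}$) keeps the fourth point bounded away from the other three even when the cross-ratio is near $1$, so $\Gamma$-cocompactness still holds on this larger set, and your continuity and $\Gamma$-invariance argument for $F$ goes through verbatim to yield $(A,B)$-quasisymmetry directly. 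The detour through the Equicontinuity Theorem is, as you correctly note, unnecessary.
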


\begin{proof}
	This is a direct consequence of the cocompactness of the action of $\Gamma$ on the set of positive quadruples in $\P(V)$ whose cross-ratios are in $[A^{-1},A]$ for $A>1$.
\end{proof}

Since $\G$ is of Hermitian type we can define as in \cite{BurgerIozziWienhard,bradlow} maximal representations. From \cite[Corollary 6.3]{BILW}, $\rho$ is maximal if and only there exists an equivariant positive continuous map $\xi: \P(V) \to \bHn$. As a corollary

\begin{proposition}\label{prop:MaximalReps}
Let $\rho$ be a representation from $\Gamma$ into $\G$. There exists a $\rho$-equivariant quasisymmetric map from $\P(V)$ into $\partial_\infty \Hn$ if and only if $\rho$ is maximal.
\end{proposition}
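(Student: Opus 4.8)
The statement to prove is Proposition~\ref{prop:MaximalReps}: a representation $\rho\colon\Gamma\to\G$ admits a $\rho$-equivariant quasisymmetric map $\xi\colon\P(V)\to\bHn$ if and only if $\rho$ is maximal. The plan is to chain together two facts already available in the excerpt. The nontrivial implication in one direction is essentially the quoted result \cite[Corollary~6.3]{BILW}: since $\G$ is of Hermitian type, $\rho$ is maximal precisely when it admits a $\rho$-equivariant \emph{positive continuous} map $\xi\colon\P(V)\to\bHn$. The other ingredient is the Proposition immediately preceding this statement, which asserts that any positive continuous equivariant map is quasisymmetric. So the logical skeleton is: maximal $\iff$ (by \cite{BILW}) exists positive continuous equivariant $\xi$ $\iff$ exists quasisymmetric equivariant $\xi$.

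\textbf{Steps, in order.} First I would establish the ``only if'' direction: suppose $\xi$ is a $\rho$-equivariant quasisymmetric map. A quasisymmetric map is by definition positive; moreover by Theorem~\ref{theo:HolderContinuity} it extends uniquely to a map on all of $\P(V)$ with a H\"older modulus of continuity, hence is in particular \emph{continuous}. Thus $\xi$ is a positive continuous $\rho$-equivariant map, and \cite[Corollary~6.3]{BILW} forces $\rho$ to be maximal. Second, for the ``if'' direction: assume $\rho$ is maximal. Apply \cite[Corollary~6.3]{BILW} in the reverse direction to produce a $\rho$-equivariant positive continuous map $\xi\colon\P(V)\to\bHn$. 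Now invoke the preceding Proposition: because $\xi$ is positive, continuous and equivariant under the cocompact Fuchsian group $\Gamma$, it is quasisymmetric. This closes the equivalence.

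\textbf{Where the care is needed.} There is no deep obstacle here; the content is a bookkeeping assembly of already-proved (or already-cited) results. The one point that deserves a sentence of justification is why a quasisymmetric map in the sense of Definition~\ref{def:QSmap} --- which is a priori only defined on a dense subset $\Omega\subset\P(V)$ --- gives rise to a genuine continuous map on all of $\P(V)$; this is exactly the content of the extension clause in Theorem~\ref{theo:HolderContinuity}, and in the equivariant case one checks that the continuous extension inherits $\rho$-equivariance by density of $\Omega$ and continuity of the $\G$-action. The cocompactness of the $\Gamma$-action on the set of positive quadruples with cross-ratio in $[A^{-1},A]$ --- which underlies the preceding Proposition --- is what makes the quasisymmetry constants uniform, and I would simply cite that Proposition rather than reprove it.

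\begin{proof}
Suppose first that there exists a $\rho$-equivariant quasisymmetric map $\xi$ from $\P(V)$ to $\bHn$. By definition a quasisymmetric map is positive, and by Theorem~\ref{theo:HolderContinuity} it admits a H\"older modulus of continuity, hence is continuous on $\P(V)$. Thus $\xi$ is a $\rho$-equivariant positive continuous map, and by \cite[Corollary~6.3]{BILW} the representation $\rho$ is maximal.

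Conversely, assume $\rho$ is maximal. Again by \cite[Corollary~6.3]{BILW}, since $\G$ is of Hermitian type, there exists a $\rho$-equivariant positive continuous map $\xi\colon\P(V)\to\bHn$. By the previous proposition, any positive continuous equivariant map is quasisymmetric, so $\xi$ is a $\rho$-equivariant quasisymmetric map. This proves the equivalence.
\end{proof}
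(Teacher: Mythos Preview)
Your proof is correct and follows exactly the argument the paper intends. In fact the paper gives no separate proof: the proposition is introduced with ``As a corollary'' immediately after citing \cite[Corollary~6.3]{BILW} and the preceding proposition that positive continuous equivariant maps are quasisymmetric, and your write-up simply makes explicit the two implications (quasisymmetric $\Rightarrow$ positive and continuous via Theorem~\ref{theo:HolderContinuity}, hence maximal by \cite{BILW}; and conversely maximal $\Rightarrow$ positive continuous equivariant by \cite{BILW}, hence quasisymmetric by the preceding proposition).
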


In particular,  the space of $\G$-orbits of quasisymmetric maps from $\P(V)$ to $\bHn$ contains the space $\operatorname{Rep}^{\operatorname{max}}(\Gamma,\G)$ of maximal representations of any cocompact Fuchsian group $\Gamma$.

\section{Maximal surfaces in pseudo-hyperbolic spaces}\label{s:MaxiSurf}
We describe the pseudo-hyperbolic space $\Hn$ in paragraph \ref{sss:PointedHyp},  maximal surfaces in paragraph \ref{ss:MaxiSurf}, the equations governing them in paragraph \ref{ss:Equations}  and finally the space of all maximal surfaces  in paragraph \ref{ss:SpaceMaximal} with its compactness properties coming from \cite{LTW}.
\subsection{The pseudo-hyperbolic space}\label{sss:PointedHyp} Recall that $E$ is a $(n+3)$-dimensional real vector space equipped with a signature $(2,n+1)$ quadratic form $\bq$. The \emph{pseudo-hyperbolic space} is defined by
\[\Hn \defeq \{ x\in E~,~\bq(x)=-1\}/\{\pm\Id\}~.\]
We will in the sequel freely identify $\Hn$ with its projection in $\P(E)$. The space $\Hn$ is equipped with a natural pseudo-Riemannian metric $\g$ of signature $(2,n)$ and constant curvature $-1$. The group $\G$ acts transitively on $\Hn$ preserving this metric, turning $\Hn$ into a pseudo-Riemannian symmetric space of $\G$. The boundary of $\Hn$ in $\P(E)$ is the Einstein Universe $\bHn$.

\subsubsection{Pointed hyperbolic plane} Given a 3-dimensional linear subspace $F$ of $E$ of signature $(2,1)$, the projective plane $\P(F)$ intersects $\Hn$ in a totally geodesic subspace isometric to the hyperbolic disk, called  a \emph{hyperbolic plane}.

A \emph{pointed hyperbolic plane} is then a pair $\Pp=(q,H)$ where $H$ is a hyperbolic plane and $q$ a point in $H$. Equivalently, a pointed hyperbolic plane corresponds to an orthogonal decomposition $E=q\oplus U \oplus W$ where $U$ is a positive definite 2-plane such that $H=\P(q\oplus U)\cap \Hn$ and $q$ is a line.

In \cite[Section 3.1]{LTW}  given any pointed hyperbolic plane $P=(q,H)$ we showed the existence of a {\em warped projection} $\pi_P$ from $\Hn$ to $H$ with the following properties
\begin{enumerate}
	\item The projection is natural: $\pi_{gP}(gx)=g\pi_P(x)$ for any element $g$ in $\G$.
	\item For any $z$ in $H$, $\pi_P(z)=z$.
	\item The preimage of a point is diffeomorphic to a projective space of dimension $n$ and has type $(0,n)$.
\end{enumerate}

The map sending a pointed hyperbolic plane $\Pp=(q,H)$ to $\T_q H$ gives an isomorphism between the space of pointed hyperbolic planes and the set $\GR{\Hn}$ of pairs $(x,P)$ where $x\in \Hn$ and $P$ is a positive definite 2-plane in  $\T_x\Hn$. Observe that $\GR{\Hn}$ is a $\G$-homogeneous space and the stabiliser of a point in $\GR{\Hn}$ is compact. In particular, $\GR{\Hn}$ carries a natural Riemannian metric whose associated distance will be denoted by $d_\GG$ (see paragraph \ref{ss:SymmetricSpace}). The map $(x,\P(F))\mapsto x^\perp\cap F$ defines a Riemannian fibration from $\GR{\Hn}$ to $\Gr$, the symmetric space of $\G$.

Finally, note that if $\tau$ is a positive triple in $\bHn$, then there exists a unique pointed hyperbolic plane $\Pp_\tau=(q_\tau,H_\tau)$ such that $H_\tau$ is the hyperbolic plane containing $\tau$ in its boundary and $q_\tau$ is the barycenter of the triple, seen as the ideal vertices of a triangle. The map $\tau\mapsto(q_\tau,H_\tau)$ defines a proper $\G$-equivariant map from the set $\sT(n)$  of positive triples in $\bHn$ to $\GR{\Hn}$.

\subsubsection{Spatial distance}\label{ss:SpatialDistance} Let $x,y$ be two distinct points in $\Hn$. The complete geodesic between $x$ and $y$ is given by the intersection of $\Hn$ and the projective line $\P(x\oplus y)$. We call the pair $(x,y)$ \emph{acausal} if the geodesic between them is spacelike, meaning that the induced metric is positive definite. This happens exactly when $\bq$ restricts to a signature $(1,1)$ quadratic form on $x\oplus y$. The following was proved in \cite{Glorieux:2016us}[Proposition 3.2]

\begin{lemma}\label{lem:SpacelikePosition}
Let $x,y$ be two distinct points in $\Hn$ and $x_0,y_0$ be vectors in $x$ and $y$ respectively satisfying $\bq(x_0)=\bq(y_0)=-1$. The pair $(x,y)$ is acausal if and only if $\vert \langle x_0,y_0\rangle\vert$ is greater than $1$. Moreover, in this case, the length of the geodesic between $x$ and $y$ is equal to $\cosh^{-1}(\vert\langle x_0,y_0\rangle\vert)$.
\end{lemma}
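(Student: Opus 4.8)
The statement is local to the $2$-plane $\Pi\defeq x_0\oplus y_0\subset E$, so I would first analyse the restriction $\bq|_\Pi$, and then parametrise the geodesic explicitly.

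\emph{First step: the acausality criterion.} In the basis $(x_0,y_0)$ the Gram matrix of $\bq|_\Pi$ is
\[
\begin{pmatrix} -1 & c \\ c & -1 \end{pmatrix},\qquad c\defeq\langle x_0,y_0\rangle,
\]
of determinant $1-c^2$. Since $x_0$ is a negative vector, $\bq|_\Pi$ already has a negative direction, so its signature is $(0,2)$ when $c^2<1$, it is degenerate of rank one (with negative non-degenerate part) when $c^2=1$, and it is $(1,1)$ when $c^2>1$. I would then observe that the complete geodesic $\Hn\cap\P(\Pi)$ between $x$ and $y$ is spacelike exactly in the last case: if $\bq|_\Pi$ is negative definite, $\{\bq=-1\}\cap\Pi$ is an ellipse and the geodesic is a closed timelike curve; if $\bq|_\Pi$ is degenerate, $\Hn\cap\P(\Pi)$ is $\P(\Pi)$ minus the projectivised kernel line and its tangent direction is that (null) kernel, so the geodesic is lightlike; and in the $(1,1)$ case the computation of the next step exhibits it as spacelike. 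Hence $(x,y)$ is acausal if and only if $|\langle x_0,y_0\rangle|>1$.

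\emph{Second step: the length formula.} Assume $|c|>1$. Replacing $y_0$ by $-y_0$, which does not change $y\in\Hn$, I may assume $c>1$. Since $\bq|_\Pi$ has signature $(1,1)$, the line $x_0^\perp\cap\Pi$ is $\bq$-positive; pick a unit vector $v$ spanning it. Then I would set $\gamma(t)\defeq\cosh(t)\,x_0+\sinh(t)\,v$ and check $\bq(\gamma(t))=-\cosh^2 t+\sinh^2 t=-1$ and $\bq(\gamma'(t))=-\sinh^2 t+\cosh^2 t=1$, so $\gamma$ is a unit-speed geodesic; its image in $\P(E)$ is one branch of $\{\bq=-1\}\cap\Pi$, hence after the quotient by $\pm\Id$ all of the connected geodesic $\Hn\cap\P(\Pi)$. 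Writing $y_0=\pm\gamma(t_0)$ and using $\langle x_0,\gamma(t)\rangle=-\cosh t$, I get $|\langle x_0,y_0\rangle|=\cosh t_0=\cosh|t_0|$; since $t$ is arc length, the segment from $x=[\gamma(0)]$ to $y=[\gamma(t_0)]$ has length $|t_0|=\cosh^{-1}\!\bigl(|\langle x_0,y_0\rangle|\bigr)$.

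\emph{On the difficulties.} There is essentially no obstacle here; the only points requiring a little care are that $\langle x_0,y_0\rangle$ is defined only up to sign on $\Hn$ (whence the absolute value), and the verification that, after passing to the $\pm\Id$-quotient, the two branches of the hyperbola $\{\bq=-1\}\cap\Pi$ glue into one connected geodesic, so that $y$ indeed lies on the curve $\gamma$. Alternatively one may simply invoke \cite{Glorieux:2016us}.
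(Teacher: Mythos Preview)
Your argument is correct. The paper does not actually prove this lemma: it simply cites \cite[Proposition 3.2]{Glorieux:2016us}, which is exactly the alternative you mention at the end. Your Gram-matrix signature analysis and the explicit parametrisation $\gamma(t)=\cosh(t)\,x_0+\sinh(t)\,v$ give a clean self-contained proof. One cosmetic remark: after your normalisation to $c>1$ you necessarily have $y_0=-\gamma(t_0)$ rather than $+\gamma(t_0)$ (since $\langle x_0,\gamma(t)\rangle=-\cosh t<0$), but your ``$\pm$'' already absorbs this and the conclusion is unaffected.
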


Then the \emph{spatial distance} is the map $\eth$ from $\Hn\times \Hn$ to $\R$ so that 
\begin{eqnarray}
	\eth(x,y)=\cosh^{-1}(\vert \langle x_0,y_0\rangle\vert)\ ,\label{def:SpatialDistance}
\end{eqnarray}
if the pair $(x,y)$ is acausal and $0$ otherwise. By lemma \label{lem:SpacelikePosition},  $\eth$ restricts to the usual hyperbolic distance on any hyperbolic plane but we will see that $\eth$ fails to satisfy the triangular inequality.

\subsubsection{Horofunctions}\label{sss:horofunction} Given a non-zero isotropic vector $z_0$ in $E$, we define the associated \emph{horofunction}, on $\Hn\setminus \P(z_0^\bot)$  is given by
\[h_{z_0}(x) = \log \vert \langle x_0,z_0\rangle\vert~,\]
where $x_0$ is a vector in $x$ with $\q(x_0)=-1$. Rescaling $z_0$ by a non-zero multiplicative constant changes $h_{z_0}$ by an additive constant. In particular, the gradient of $h_{z_0}$ only depends on the class of $z_0$ in $\bHn$. Observer that, given a point $x\in \Hn\setminus\P(z_0^\bot)$, the gradient of $h_{z_0}$ is given by
\begin{equation}\label{eq:GradientHorofunction} (\nabla h_{z_0})_x = \frac{\pi(z_0)}{\langle x_0,z_0 \rangle}~,
\end{equation}
where $\pi$ is the orthogonal projection on $\T_x\Hn$ and $x_0$ is such that $\bq(x_0)=-1$ and $\langle x_0,z_0\rangle>0$. 

When $n=0$, the function $h_{z_0}$ is the classical Busemann function of hyperbolic geometry.

\subsection{Spacelike surfaces in $\Hn$}\label{ss:MaxiSurf}

 Recall that a \emph{spacelike surface} in $\Hn$ is an immersion of a connected 2-dimensional manifold $\Sigma$ (without boundary) into $\Hn$ whose induced metric is positive definite. Such a surface is called \emph{complete} if the induced metric is complete.

The following is proved in \cite[Proposition 3.10]{LTW}

\begin{proposition}\label{pro:BoundaryCompleteSurf} Let $\Sigma$ be a complete spacelike surface in $\Hn$. Then
\begin{enumerate}
	
 \item any pair of distinct points on $\Sigma$ is acausal.
\item The boundary $\partial_\infty \Sigma$ of $\Sigma$ in $\P(E)$ is a semi-positive loop contained in $\bHn$.
\end{enumerate}
\end{proposition}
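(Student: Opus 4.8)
The plan is to prove the two assertions of Proposition \ref{pro:BoundaryCompleteSurf} essentially separately, the second building on the first, relying only on the definitions of acausality and semi-positivity together with the completeness hypothesis.

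First I would address (i), that any pair of distinct points on $\Sigma$ is acausal. Fix $x$ and $y$ on $\Sigma$ and lift them to vectors $x_0,y_0$ in $E$ with $\bq(x_0)=\bq(y_0)=-1$. By Lemma \ref{lem:SpacelikePosition} this is equivalent to showing $|\langle x_0,y_0\rangle|>1$. Since $\Sigma$ is complete and connected, join $x$ to $y$ by a geodesic segment $c:[0,L]\to\Sigma$ of the induced metric, parametrised by arclength; being spacelike, its velocity is a unit spacelike vector in $\T\Hn$ at each point. Lift $c$ to a path $c_0:[0,L]\to E$ landing in the quadric $\bq=-1$. The key computation is to control the function $f(t)\defeq\langle x_0,c_0(t)\rangle$ (after fixing a sign for $x_0$): differentiating twice and using that $\ddot c_0$ decomposes into the normal component $-c_0$ (the curvature $-1$ term, since $\langle c_0,c_0\rangle=-1$) plus a term in $\T\Hn$ paired to $x_0$ via the immersion's second fundamental form, one obtains a differential inequality of Riccati/Jacobi type forcing $|f(t)|\geq \cosh(t)$ — or at least $|f|$ to grow like $\cosh$ of the induced distance. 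This is precisely the mechanism by which spacelike completeness rules out causal pairs; it is the content of \cite[Proposition 3.10]{LTW} and I would invoke that reference rather than redo the estimate, but the sketch above is how one proves it from scratch. In particular acausality holds, and moreover $\eth(x,y)$ equals the spatial hyperbolic distance which is at most the induced distance.

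Next, for (ii), I would show $\partial_\infty\Sigma\subset\bHn$ and that it is a semi-positive loop. First, $\partial_\infty\Sigma$ makes sense as the set of accumulation points in $\P(E)$ of $\Sigma$ (viewed projectively) that are not themselves in $\Sigma$. Take a sequence $x_k\in\Sigma$ escaping every compact subset of $\Hn$; lifting to the quadric and projectivising, any limit point $\xi$ lies in $\overline{\Hn}\setminus\Hn=\bHn$, so $\partial_\infty\Sigma\subset\bHn$. To see it is a topological circle: pick any basepoint $p\in\Sigma$; completeness gives that $\exp_p$ is a diffeomorphism from $\T_p\Sigma\cong\R^2$ onto $\Sigma$, so radial rays give, for each unit tangent direction $v$, a complete geodesic ray $\gamma_v$ in $\Sigma$; using the growth estimate from step (i) one shows $\gamma_v(t)$ converges in $\P(E)$ as $t\to\infty$ to a well-defined boundary point $\xi(v)$, and that $v\mapsto\xi(v)$ is a continuous injective map from the circle of directions onto $\partial_\infty\Sigma$, so the latter is a topological circle; this too is in \cite{LTW}. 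The remaining point is semi-positivity: given three distinct boundary points $\xi(u),\xi(v),\xi(w)$, one must check the $3$-dimensional span is \emph{not} of signature $(1,2)$. Here is where step (i) is used again: acausality of all pairs on $\Sigma$ translates, in the limit, to the condition that the Gram matrix of suitable lifts of the three boundary points is non-negative in the appropriate sense — equivalently the span cannot be negative-definite-heavy of type $(1,2)$ — which is exactly the non-negativity condition in the definition of semi-positive loop; and the immersed (hence somewhere "spread out") nature of $\Sigma$ guarantees at least one positive triple, ruling out the degenerate photon case. I would therefore conclude by citing \cite[Proposition 3.10]{LTW} for the precise statement, having indicated the two inputs — the $\cosh$-growth estimate for spacelike complete surfaces and the resulting Gram-matrix positivity at infinity.

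The main obstacle is the growth estimate underlying (i): extracting from completeness plus the spacelike condition a uniform lower bound $|\langle x_0,y_0\rangle|\geq\cosh(d_\Sigma(x,y))$ requires a genuine comparison argument (a Jacobi-field or Riccati comparison against the totally geodesic hyperbolic plane), not a formal manipulation. Everything downstream — that $\partial_\infty\Sigma$ is a circle, lands in $\bHn$, and is semi-positive — follows rather directly once that estimate is in hand, since it controls the projective behaviour of escaping sequences and forces the limiting Gram matrices to have the correct signature. Given that this is all established in \cite{LTW}, the honest proof here is simply to quote \cite[Proposition 3.10]{LTW}.
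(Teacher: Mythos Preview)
Your proposal matches the paper's treatment exactly: the paper gives no proof at all and simply records that the statement is \cite[Proposition 3.10]{LTW}, which is precisely what you conclude. One small slip in your sketch: the $\cosh$-growth estimate you describe yields $\eth(x,y)\geq d_\Sigma(x,y)$, not $\leq$, in agreement with Theorem~\ref{theo:SD} later in the paper.
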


As a result, if $\Sigma$ is a complete spacelike surface in $\Hn$, the restriction $\eth_\Sigma$ of the spatial distance to $\Sigma$ is non-zero on any pair of distinct points. Observe however that in general $\eth_\Sigma$ fails to be a distance: the triangle inequality fails on triples $(x,y,z)$ such that $x\oplus y\oplus z$ has signature $(1,2)$.

Another important aspect of complete spacelike surfaces in $\Hn$ is the fact that they are entire graphs. More specifically,  for any pointed hyperbolic plane $\Pp=(q,H)$ and complete maximal surface $\Sigma$, the warped projection $\pi_P$ restricts to a diffeomorphism from  $\Sigma$ to $H$. We refer to \cite[Section 3.1]{LTW} for more details. As a corollary 
\begin{corollary}\label{cor:ProjPointedHyp}
Let $\Sigma$ be a complete spacelike surface and $\Pp=(q,H)$ be a pointed hyperbolic plane. There exists a 	unique point in $\Sigma$ whose warped projection on $H$ is equal to $q$.
\end{corollary}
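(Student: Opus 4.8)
The plan is to deduce the statement directly from the entire-graph property recalled just above: for a complete spacelike surface $\Sigma$, the warped projection $\pi_\Pp$ attached to $\Pp=(q,H)$ restricts to a diffeomorphism $\pi_\Pp\vert_\Sigma\colon\Sigma\to H$ (this is \cite[Section 3.1]{LTW}). Once this is in hand, $\pi_\Pp\vert_\Sigma$ is in particular a bijection, so the fibre over $q\in H$ is a single point of $\Sigma$, which is exactly what the corollary claims.

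It is worth spelling out the two inputs hiding in that diffeomorphism statement, since the corollary carries no content beyond them. Surjectivity and properness of $\pi_\Pp\vert_\Sigma$ come from completeness of the induced metric, which makes $\Sigma$ an entire graph over $H$. Injectivity is the substantive point: by Proposition \ref{pro:BoundaryCompleteSurf} any two distinct points of a complete spacelike surface are acausal, whereas the fibres of $\pi_\Pp$ are negative definite, of type $(0,n)$, by the third listed property of the warped projection; hence two distinct points of $\Sigma$ cannot sit in the same fibre. An injective proper immersion between the connected surfaces $\Sigma$ and $H$ is then a diffeomorphism.

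Thus the main (and only) obstacle is external to the corollary: it is the structural result of \cite{LTW} on warped projections, which we import wholesale. Modulo that, the argument is the one-line observation that a bijection has singleton fibres; in particular no maximality hypothesis is used here, only that $\Sigma$ is complete and spacelike.
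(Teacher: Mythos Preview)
Your proof is correct and follows the paper exactly: the corollary is stated immediately after recalling (from \cite[Section 3.1]{LTW}) that $\pi_\Pp\vert_\Sigma$ is a diffeomorphism onto $H$, and the paper gives no further argument beyond that. Your second paragraph sketching the ingredients behind the diffeomorphism is helpful context but goes beyond what the paper itself supplies.
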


Observe that in particular, a complete spacelike surface is always properly embedded. In what follows, we will always assume that our spacelike surfaces are embedded.

\subsubsection{Second fundamental forms and shape operators}
Let $\Sigma$ be  a spacelike surface in $\Hn$. The  
 \emph{normal bundle}  $\Nn\Sigma$  of $\Sigma$ is the orthogonal to $\T\Sigma$. The Levi-Civita connection  $\nabla$ of $\Hn$ decomposes in the splitting $\T \Hn=\T\Sigma\oplus \Nn\Sigma$ as
\begin{equation}\label{e:ConnectionDecomposition}
\nabla = \left(\begin{array}{ll}\nabla^T & B \\ \II & \nabla^N \end{array}\right)~.\end{equation}

The forms $\II$, seen as an element of  $\Omega^1(\Sigma, \Hom(\T\Sigma,\Nn\Sigma))$, and $B$, seen as an element of  $\Omega^1(\Sigma,\Hom(\Nn\Sigma,\T\Sigma))$, are called the \emph{second fundamental form} and the \emph{shape operator} respectively. The second fundamental form is symmetric, that is satisfies
$\II(x,y)=\II(y,x)$
for any vectors $x,y$ of $\T \Sigma$. In particular, $\II$ can be considered as a section of $\Sym^2(\T^*\Sigma)\otimes \Nn\Sigma$. 

The connections $\nabla^T$ and $\nabla^N$ are unitary with respect to the induced metrics on $\T\Sigma$ and $\Nn\Sigma$ respectively. Finally, if $x,y$ are sections of $\T\Sigma$ and $\xi$ is a section of $\Nn \Sigma$, then  derivating  $\langle y,\xi\rangle = 0$ in the direction $x$ gives the classical relation
\begin{equation}\label{e:SecondFundAndShapeOp} \langle \II(x,y),\xi\rangle = - \langle y, B(x,\xi)\rangle~.\end{equation}

\subsection{Maximal surfaces and their fundamental equations}\label{ss:Equations}
Below is some standard material that we recall for completeness and to emphasise our sign conventions.
\begin{definition}
A spacelike surface $\Sigma$ in $\Hn$ is called \emph{maximal} if $\tr_{g_\I}\II =0$ where $g_\I$ is the induced metric on $\Sigma$.
\end{definition}
Maximal surfaces are critical points  for the area functional (see \cite[Section 3.3]{LTW} for more details).

Let $\Sigma$ be a complete maximal surface in $\Hn$ and fix an orthonormal framing $(e_1,e_2)$ of $\T\Sigma$. Define the norm of the second fundamental form as
\[\Vert \II \Vert^2 = - \sum_{i=1}^2 \langle \II(e_i,e_j) , \II(e_i,e_j)\rangle~,\]
and observe that since the restriction of $\g$ is negative definite on $\Nn \Sigma$, we have $\Vert \II\Vert^2\geq 0$.

\begin{proposition}{\sc[Gauss  equation]}\label{pro:GaussEquation}
Using the above notations, if $\Sigma$ is a complete maximal surface in $\Hn$ and $K_\Sigma$ is its  intrinsic curvature, then
\begin{equation}
	K_\Sigma = -1 +\frac{1}{2}\Vert \II\Vert^2\ . \label{eq:Gauss}
\end{equation}

\end{proposition}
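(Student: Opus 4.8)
The plan is to specialise the general Gauss equation for spacelike submanifolds of a pseudo-Riemannian space form to the case of a maximal surface in $\Hn$. First I would recall the abstract Gauss equation relating the intrinsic curvature of $\Sigma$ to the ambient curvature and the second fundamental form: for an orthonormal frame $(e_1,e_2)$ of $\T\Sigma$, one has
\[
K_\Sigma = \bar K(e_1,e_2) + \frac{\langle \II(e_1,e_1),\II(e_2,e_2)\rangle - \langle \II(e_1,e_2),\II(e_1,e_2)\rangle}{\langle e_1,e_1\rangle\langle e_2,e_2\rangle - \langle e_1,e_2\rangle^2}\ ,
\]
where $\bar K$ denotes the sectional curvature of the ambient space. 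The derivation is the classical one: decompose $\nabla_{e_i}\nabla_{e_j}e_k$ using the splitting \eqref{e:ConnectionDecomposition}, using that $B$ is determined by $\II$ via \eqref{e:SecondFundAndShapeOp}, and read off the tangential component of the ambient curvature tensor $\bar R(e_1,e_2)e_2$.

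Next I would plug in the specific geometry. Since $\Hn$ has constant sectional curvature $-1$, we have $\bar K(e_1,e_2) = -1$ for every spacelike plane, and with $(e_1,e_2)$ orthonormal and spacelike the denominator above equals $1$. Thus
\[
K_\Sigma = -1 + \langle \II(e_1,e_1),\II(e_2,e_2)\rangle - \langle \II(e_1,e_2),\II(e_1,e_2)\rangle\ .
\]
Now I would use maximality: $\tr_{g_\I}\II = \II(e_1,e_1) + \II(e_2,e_2) = 0$, so $\II(e_2,e_2) = -\II(e_1,e_1)$. Writing $\II(e_1,e_1) = -\II(e_2,e_2) \eqqcolon h$ and $\II(e_1,e_2) \eqqcolon k$ (both sections of $\Nn\Sigma$), the previous line becomes
\[
K_\Sigma = -1 - \langle h,h\rangle - \langle k,k\rangle\ .
\]
On the other hand, by definition $\Vert\II\Vert^2 = -\bigl(\langle\II(e_1,e_1),\II(e_1,e_1)\rangle + 2\langle\II(e_1,e_2),\II(e_1,e_2)\rangle + \langle\II(e_2,e_2),\II(e_2,e_2)\rangle\bigr) = -\bigl(2\langle h,h\rangle + 2\langle k,k\rangle\bigr)$, using $\langle\II(e_2,e_2),\II(e_2,e_2)\rangle = \langle h,h\rangle$. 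Hence $\langle h,h\rangle + \langle k,k\rangle = -\tfrac12\Vert\II\Vert^2$, and substituting gives exactly $K_\Sigma = -1 + \tfrac12\Vert\II\Vert^2$, as claimed.

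The only genuinely delicate point is keeping the sign conventions consistent: the normal bundle $\Nn\Sigma$ is negative definite, so inner products of normal vectors are $\le 0$, which is precisely why $\Vert\II\Vert^2$ is defined with an overall minus sign and why the term $\tfrac12\Vert\II\Vert^2$ enters with a plus sign — raising the curvature rather than lowering it, in contrast to the Riemannian (e.g. minimal surfaces in $\mathbf H^3$) case. I would therefore be careful, when invoking the abstract Gauss equation, to use the ambient curvature tensor with the paper's sign convention and to double-check the identity $\langle\II(e_1,e_1),\II(e_2,e_2)\rangle - \langle\II(e_1,e_2),\II(e_1,e_2)\rangle = -\langle h,h\rangle - \langle k,k\rangle$ against the definition of $\Vert\II\Vert^2$; everything else is bookkeeping. (A frame-independence remark is also worth noting: the right-hand side does not depend on the choice of orthonormal frame since $\Vert\II\Vert^2$ is a trace norm, so the pointwise identity is well posed.)
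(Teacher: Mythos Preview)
Your proof is correct and follows essentially the same route as the paper: derive the abstract Gauss equation from the connection splitting \eqref{e:ConnectionDecomposition} and \eqref{e:SecondFundAndShapeOp}, specialise to the orthonormal frame $(e_1,e_2)$ with ambient sectional curvature $-1$, use maximality to replace $\II(e_2,e_2)$ by $-\II(e_1,e_1)$, and identify the resulting expression with $\tfrac12\Vert\II\Vert^2$. The paper's write-up is slightly terser (it does not name $h,k$ explicitly), but the content and sign bookkeeping are identical.
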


\begin{proof}
Using the decomposition of the Levi-Civita connection $\nabla$ of $\Hn$ given in equation (\ref{e:ConnectionDecomposition}), we have
\[\langle \nabla_x\nabla_y z,t\rangle = \langle \nabla^T_x\nabla_y^Tz,t\rangle + \langle B(x,\II(y,z)),t\rangle~,\]
where $x,y,z$ and $t$ are sections of $\T\Sigma$. Using the convention 
$$R(a,b)c=\nabla_a\nabla_bc-\nabla_b\nabla_ac-\nabla_{[a,b]}c\ ,$$
and equation (\ref{e:SecondFundAndShapeOp}), we get
\[\langle R(x,y)z,t\rangle = \langle R^T(x,y)z,t\rangle - \langle \II(y,z),\II(x,t)\rangle + \langle \II(x,z),\II(y,t)\rangle~,\]
where $R^T$ is the curvature tensor of $\nabla^T$. The result then follows from applying the equation to $x=t=e_1$ and $y=z=e_2$, using $\II(e_1,e_1)=-\II(e_2,e_2)$ and the fact that $\nabla$ has sectional curvature $-1$.
\end{proof}

Denote by $\omega$ the area form of $\Sigma$ (with its induced metric). Observe first that there exists a unique endomorphism $\varphi$ of $\Nn \Sigma$ so that for any $U$ and $V$ in $\Nn \Sigma$, $u$ and $v$ in $\T\Sigma$, 
\begin{equation}
	\langle B(v,\U), B(u,V)\rangle-\langle B(u,U), B(v,V)\rangle= \omega(u,v) \langle\varphi(U),V\rangle\ . 
\end{equation}

 \begin{proposition}{\sc[Ricci  equation]}\label{pro:RicciEquation}
Using the above notations, the curvature tensor $R_N$ of $\nabla^N$ is given by
\begin{equation}
	R_N = \omega \otimes \varphi~, \label{eq:Ricc}
\end{equation}

Moreover if $(e_1,e_2)$ is an orthonormal basis of $\T\Sigma$ and $\alpha$ and $\beta$ are the sections of $\Nn\Sigma$ defined by 
$$
\alpha\defeq\II(e_1,e_1)\ , \ \ \beta\defeq \II(e_1,e_2)\ ,$$
then
\begin{equation}
	\langle\varphi(\alpha),\beta\rangle= -2\Vert\alpha\wedge\beta\Vert^2\defeq -2\left(\Vert \alpha\Vert^2\Vert \beta\Vert^2 - \langle \alpha,\beta\rangle^2\right)\ . \label{eq:RNalpha}
\end{equation}
\end{proposition}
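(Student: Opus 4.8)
The plan is to carry out the classical derivation of the Ricci equation from the block decomposition \eqref{e:ConnectionDecomposition} of the Levi-Civita connection, using at one key point that $\Hn$ has constant curvature $-1$ in order to discard the ambient contribution; the existence and uniqueness of $\varphi$ having already been granted, what remains is to identify this $\varphi$ with the one appearing in $R_N=\omega\otimes\varphi$ and then to compute $\langle\varphi(\alpha),\beta\rangle$.

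First I would take local sections $x,y$ of $\T\Sigma$ and $U,V$ of $\Nn\Sigma$ and compute $\nabla_x\nabla_y U$, $\nabla_y\nabla_x U$ and $\nabla_{[x,y]}U$ by repeated use of \eqref{e:ConnectionDecomposition} (writing $\nabla_y U=B(y,U)+\nabla^N_y U$, differentiating again, and separating tangential from normal parts at each stage). Collecting the $\Nn\Sigma$-components, the tangential terms drop out and one is left with
\[
\big(R(x,y)U\big)^{\perp}=R_N(x,y)U+\II\big(x,B(y,U)\big)-\II\big(y,B(x,U)\big)\ .
\]
Since $\Hn$ has constant sectional curvature $-1$ its ambient curvature tensor is $R(a,b)c=\langle a,c\rangle b-\langle b,c\rangle a$, which vanishes when $a,b$ are tangent and $c$ is normal, so the left-hand side is $0$. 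Pairing the resulting identity with $V$ and invoking \eqref{e:SecondFundAndShapeOp} to replace each $\langle\II(p,q),V\rangle$ by $-\langle q,B(p,V)\rangle$ gives
\[
\langle R_N(x,y)U,V\rangle=\langle B(y,U),B(x,V)\rangle-\langle B(x,U),B(y,V)\rangle\ ,
\]
which is exactly the defining relation of $\varphi$ evaluated at $(u,v)=(x,y)$. Hence $\langle R_N(x,y)U,V\rangle=\omega(x,y)\langle\varphi(U),V\rangle$, which is \eqref{eq:Ricc}.

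For the explicit value \eqref{eq:RNalpha}, I would pass to an oriented orthonormal framing $(e_1,e_2)$, so that $\omega(e_1,e_2)=1$ and $\langle\varphi(\alpha),\beta\rangle=\langle B(e_2,\alpha),B(e_1,\beta)\rangle-\langle B(e_1,\alpha),B(e_2,\beta)\rangle$. Maximality gives $\II(e_1,e_1)=-\II(e_2,e_2)=\alpha$ and $\II(e_1,e_2)=\beta$, so \eqref{e:SecondFundAndShapeOp} expresses, for a normal field $\xi$, the tangent vectors $B(e_1,\xi)=-\langle\alpha,\xi\rangle e_1-\langle\beta,\xi\rangle e_2$ and $B(e_2,\xi)=-\langle\beta,\xi\rangle e_1+\langle\alpha,\xi\rangle e_2$. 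Substituting $\xi=\alpha$ and $\xi=\beta$ and expanding the two inner products (using that $(e_1,e_2)$ is orthonormal and spacelike), the difference collapses to $2\big(\langle\alpha,\beta\rangle^2-\langle\alpha,\alpha\rangle\langle\beta,\beta\rangle\big)$; since the induced metric on $\Nn\Sigma$ is negative definite one has $\Vert\alpha\Vert^2\Vert\beta\Vert^2=\langle\alpha,\alpha\rangle\langle\beta,\beta\rangle$, so this equals $-2\Vert\alpha\wedge\beta\Vert^2$, as required.

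The computations are routine connection bookkeeping and $2$-dimensional linear algebra; the places that need care are the signs in \eqref{e:ConnectionDecomposition} and \eqref{e:SecondFundAndShapeOp}, and the fact that $\langle\cdot,\cdot\rangle$ is negative definite on $\Nn\Sigma$ — which is why the Gram-type quantity $\langle\alpha,\alpha\rangle\langle\beta,\beta\rangle-\langle\alpha,\beta\rangle^2$ coincides with $\Vert\alpha\Vert^2\Vert\beta\Vert^2-\langle\alpha,\beta\rangle^2$ and is non-negative. The one genuinely geometric ingredient, and the step I would double-check most carefully, is the vanishing of the normal part of $R(x,y)U$: it uses that $\Hn$ actually has constant curvature $-1$, not merely that it is a symmetric space.
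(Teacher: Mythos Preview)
Your proof is correct and follows essentially the same route as the paper: derive the Ricci equation by expanding $\langle R(u,v)U,V\rangle$ via the block decomposition \eqref{e:ConnectionDecomposition}, use constant curvature $-1$ to kill the ambient term, and then compute $B(e_i,\xi)$ explicitly from \eqref{e:SecondFundAndShapeOp} together with $\II(e_2,e_2)=-\alpha$ to obtain \eqref{eq:RNalpha}. The only cosmetic difference is that the paper pairs with $V$ from the outset rather than first isolating the normal component of $R(x,y)U$.
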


\begin{proof}
Given $u$ and $v$ sections of $\T \Sigma$, equation (\ref{e:ConnectionDecomposition}) gives
\[\langle \nabla_{u}\nabla_{v}U,V\rangle = \langle \nabla^N_{u}\nabla^N_{v}U,V \rangle + \langle \II(u,B(v,U)),V\rangle=\langle \nabla^N_{u}\nabla^N_{v}U,V\rangle - \langle B(v,U)),B(u,V)\rangle ~.\]
This gives Ricci equation:
\begin{eqnarray*}
	0&=&\langle R({u},{v})U,V\rangle \\
	&=& \langle R_N({u},{v})U,V\rangle - \langle B({v},U),B({u},V)\rangle + \langle B(u,U),B(v,V)\rangle\\
	&=&\langle R_N(u,v)U,V\rangle- \omega(u,v)\langle\varphi(U),V\rangle\ .
\end{eqnarray*} 
The second statement follows from using the relation
\[B({e_1},\alpha)= \sum_{i=1}^2 \langle B({e_1},\alpha),e_i\rangle \cdotp e_i = - \sum_{i=1}^2 \langle \alpha, \II({e_1},e_i)\rangle \cdotp e_i= -\langle\alpha,\alpha\rangle \cdotp e_1 - \langle\alpha,\beta\rangle \cdotp e_2\ ,\]
and similar formulas for the other terms.
\end{proof}

For the last equation, we denote by $D$ the connection on $\Hom(\T\Sigma,\Nn\Sigma)$ induced by $\nabla^T$ and $\nabla^N$. 
The {\em $D$-exterior derivative} is the map
\[ \d^D\ : \ \Omega^1(\Sigma,\Hom(\T\Sigma,\Nn\Sigma))\to \Omega^2(\Sigma,\Hom(\T\Sigma,\Nn\Sigma))\ ,\]
defined by
	\[\d^D \theta (x,y) \defeq D_x(\theta(y))-D_y(\theta(x))- \theta([x,y])~. \]
  
\begin{proposition}{\sc[Codazzi  equation]}\label{pro:CodazziEquation}
Using the same notations as above, seeing $\II$ as an element of $\Omega^1(\Sigma,\Hom(\T\Sigma, \Nn \Sigma))$, we have $\d^D \II =0$.
\end{proposition}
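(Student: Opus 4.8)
The plan is to deduce the Codazzi equation from the single fact that $\Hn$ has constant sectional curvature $-1$, exactly as in the classical theory of submanifolds of space forms; the maximality of $\Sigma$ plays no role and the same argument works for any spacelike surface. First I would unwind the definition of the induced connection $D$ on $\Hom(\T\Sigma,\Nn\Sigma)$: for a section $S$ of $\Hom(\T\Sigma,\Nn\Sigma)$ and tangent vector fields $x,z$ one has $(D_x S)(z)=\nabla^N_x(S(z))-S(\nabla^T_x z)$. Applying this with $S=\II(y)$, viewed as the map $z\mapsto \II(y,z)$, and expanding $\d^D\II(x,y)=D_x(\II(y))-D_y(\II(x))-\II([x,y])$ evaluated on a tangent field $z$, I obtain
$$(\d^D\II(x,y))(z)=\nabla^N_x(\II(y,z))-\II(y,\nabla^T_x z)-\nabla^N_y(\II(x,z))+\II(x,\nabla^T_y z)-\II([x,y],z).$$

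Next I would compute the $\Nn\Sigma$-component of the ambient curvature $R(x,y)z$ for $x,y,z$ tangent to $\Sigma$, using the block decomposition \eqref{e:ConnectionDecomposition} of $\nabla$. Expanding $\nabla_x\nabla_y z=\nabla_x(\nabla^T_y z+\II(y,z))$ and keeping only the normal terms — noting that $\nabla_x$ applied to the tangent field $\nabla^T_y z$ contributes the normal term $\II(x,\nabla^T_y z)$, while applied to the normal field $\II(y,z)$ it contributes the normal term $\nabla^N_x(\II(y,z))$ — and doing the same for $\nabla_y\nabla_x z$ and $\nabla_{[x,y]}z$, I find that the $\Nn\Sigma$-component of $R(x,y)z$ is precisely the right-hand side displayed above, i.e. $(\d^D\II(x,y))(z)$. (Choosing $x,y$ to be coordinate vector fields, so $[x,y]=0$, shortens the bookkeeping but is not essential.)

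Finally, since $\Hn$ has constant sectional curvature $-1$ with the sign convention fixed in the proof of Proposition~\ref{pro:GaussEquation}, its Riemann tensor is $R(x,y)z=-(\langle y,z\rangle x-\langle x,z\rangle y)$; for $x,y,z$ tangent to $\Sigma$ this is a linear combination of $x$ and $y$, hence tangent to $\Sigma$, so its $\Nn\Sigma$-component vanishes. Therefore $(\d^D\II(x,y))(z)=0$ for all tangent vector fields $x,y,z$, which is exactly the assertion $\d^D\II=0$. There is no genuine obstacle; the only points requiring care are matching the sign convention for $R$ and the precise action of $D$ on $\Hom(\T\Sigma,\Nn\Sigma)$-valued one-forms, both of which are routine.
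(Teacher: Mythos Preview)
Your proof is correct and follows essentially the same approach as the paper: both identify the normal component of the ambient curvature $R(x,y)z$ with $(\d^D\II(x,y))(z)$ via the block decomposition \eqref{e:ConnectionDecomposition}, and then use that $\langle R(x,y)z,\xi\rangle=0$ for tangent $x,y,z$ and normal $\xi$ (equivalently, that the constant-curvature tensor $R(x,y)z$ is tangent). The paper's proof is simply a terse version of yours, recording only the key identity $\langle \nabla_x\nabla_yz,\xi\rangle= \langle \II(x,\nabla^T_yz),\xi\rangle + \langle \nabla^N_x(\II(y,z)),\xi\rangle$ and leaving the antisymmetrisation implicit.
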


\begin{proof}
This is obtained using that $\langle R(x,y)z,\xi\rangle=0$ for any sections $x,y,z$ of $\T\Sigma$ and $\xi$ of $\Nn\Sigma$, together with the formula
\[\langle \nabla_x\nabla_yz,\xi\rangle= \left\langle \II(x,\nabla^T_yz),\xi\right\rangle + \left\langle \nabla^N_x(\II(y,z)),\xi\right\rangle~.\qedhere \]	
\end{proof}

\subsection{The space of all maximal surfaces}\label{ss:SpaceMaximal}

We denote by ${\MH}$ the set of all pointed complete maximal surfaces in $\Hn$
\begin{equation}
	{\MH}\defeq\left\{(x,\Sigma)\mid x\in \Sigma, \ \Sigma \hbox{ is maximal complete}\right\}\ .
\end{equation}
 Convergence of pointed spacelike surfaces on every compact induces a natural topology on those spaces (see \cite[Appendix A]{LTW} for more details).

\subsubsection{Cocompact actions}
Let  $\GR{\Hn}$ be the  Riemannian manifold defined in Paragraph \ref{sss:PointedHyp}. 
As a consequence of results in \cite{LTW}, we have

\begin{theorem}{\sc[Compactness Theorem]}\label{theo:CompactnessTheorem}
The map from ${\MH}$ to $\GR{\Hn}$ sending $(x,\Sigma)$ to $\T_x\Sigma$ is proper.
The group $\G$ acts properly, continuously and cocompactly on $\MH$.
\end{theorem}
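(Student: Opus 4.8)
The plan is to reduce the whole statement to the properness of the natural map
$\mathfrak p\colon\MH\to\GR{\Hn}$, $(x,\Sigma)\mapsto\T_x\Sigma$, which is the substantial point and is essentially the content of the main a priori estimate of \cite{LTW}. First I would record the easy facts: $\mathfrak p$ is continuous (by the very definition of the topology on $\MH$ as smooth convergence of pointed surfaces on compact sets) and $\G$-equivariant (because $\G$ acts on $\Hn$ by isometries carrying complete maximal surfaces to complete maximal surfaces). I would also recall, from Paragraph \ref{sss:PointedHyp}, that $\G$ acts transitively on $\GR{\Hn}$ with compact point stabilisers, so that the $\G$-action on $\GR{\Hn}$ is proper.

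For the properness of $\mathfrak p$, take a sequence $(x_k,\Sigma_k)$ in $\MH$ with $\T_{x_k}\Sigma_k\to(x_\infty,P_\infty)$ in $\GR{\Hn}$, and let $\Pp_\infty=(x_\infty,H_\infty)$ be the pointed hyperbolic plane corresponding to $(x_\infty,P_\infty)$ under the identification of $\GR{\Hn}$ with pointed hyperbolic planes. By the warped projection of \cite{LTW} and Corollary \ref{cor:ProjPointedHyp}, each $\Sigma_k$ is an entire graph over a pointed hyperbolic plane close to $\Pp_\infty$, hence, for $k$ large, near $x_k$ it is the graph of a function $u_k$ over a fixed geodesic ball $B\subset H_\infty$, with $x_k$ lying over a point converging to $x_\infty$. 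The maximal surface equation is a uniformly elliptic quasilinear equation for the $u_k$, and the universal bounds on the second fundamental form and its covariant derivatives for complete maximal surfaces established in \cite{LTW} give uniform $\mathcal C^\infty$ bounds for the $u_k$ on $B$. By Arzelà–Ascoli together with a diagonal argument over an exhaustion of $H_\infty$ by geodesic balls, a subsequence of the $\Sigma_k$ converges in $\mathcal C^\infty_{\mathrm{loc}}$ to a maximal surface $\Sigma_\infty$ which is complete (the curvature bound being uniform), contains $x_\infty$, and has tangent plane $P_\infty$ there, with $x_k\to x_\infty$ along the convergence. Conceptually, the non-degeneration hidden in this step is that the boundary loops $\partial_\infty\Sigma_k$, which are semi-positive loops by Proposition \ref{pro:BoundaryCompleteSurf}, cannot subconverge to a photon in the sense of Corollary \ref{cor:CompactnessSemi-positive} once the pointed tangent plane stays in a compact set. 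Thus $(x_k,\Sigma_k)\to(x_\infty,\Sigma_\infty)$ in $\MH$, so $\mathfrak p$ is proper.

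It then remains to extract the $\G$-statements formally. If $K\subset\MH$ is compact then $\mathfrak p(K)$ is compact in $\GR{\Hn}$, and $\{g\in\G:\ gK\cap K\neq\emptyset\}\subset\{g\in\G:\ g\,\mathfrak p(K)\cap\mathfrak p(K)\neq\emptyset\}$, which is precompact since $\G$ acts properly on $\GR{\Hn}$; hence the $\G$-action on $\MH$ is proper. For cocompactness, fix $p_0\in\GR{\Hn}$: the fibre $\mathfrak p^{-1}(p_0)$ is compact by properness of $\mathfrak p$, and since $\mathfrak p$ is equivariant and $\GR{\Hn}=\G\cdot p_0$, every $\G$-orbit of $\MH$ meets $\mathfrak p^{-1}(p_0)$, so $\MH/\G$ is the continuous image of the compact set $\mathfrak p^{-1}(p_0)$ and is therefore compact. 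The main obstacle, and the only genuinely analytic part, is the first step above — the global $\mathcal C^\infty$ a priori estimates for complete maximal surfaces — which is precisely why the result is phrased as a consequence of \cite{LTW}.
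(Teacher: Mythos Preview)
Your proposal is correct and follows exactly the same strategy as the paper: the properness of $\mathfrak p$ is quoted from \cite[Theorem 6.1]{LTW}, and the properness and cocompactness of the $\G$-action are then deduced formally from the equivariance of $\mathfrak p$ together with the fact that $\G$ acts properly and transitively on $\GR{\Hn}$. The paper's own proof is in fact terser than yours---it simply cites \cite{LTW} for the first assertion and calls the second an ``immediate application''---whereas you have helpfully unpacked both the analytic content behind the properness (graph parametrisation, a priori estimates, Arzelà--Ascoli) and the formal group-theoretic deduction.
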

The first assertion is a rewriting of the first item in \cite[Theorem 6.1]{LTW} while the second assertion is an immediate application of the first using the fact that $\G$ acts properly and transitively on $\GR{\Hn}$.

Finally, we remark the cocompactness of the action of $\G$ on some spaces related to $\MH$, again as a consequence of \cite[Theorem 6.1]{LTW}.

\begin{corollary}\label{cor:Np-compact}
The group $\G$ acts cocompactly and continuously on the spaces $\mathcal N_p(n)$ given by 
$$
\mathcal N_p(n)\defeq\{(x,\Sigma, z_1,\ldots,z_p), \hbox{ with } x\in \Sigma\ ,\ \ z_i\in \Sigma\cup\partial_\infty\Sigma \}\ .
$$
\end{corollary}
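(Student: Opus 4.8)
The plan is to deduce Corollary~\ref{cor:Np-compact} from the Compactness Theorem~\ref{theo:CompactnessTheorem} together with the compactness of the space of pointed semi-positive loops from Proposition~\ref{pro:ContBij}. The key observation is that, by Proposition~\ref{pro:BoundaryCompleteSurf}, the boundary $\partial_\infty\Sigma$ of a complete maximal surface $\Sigma$ is a semi-positive loop in $\bHn$, so the data $(x,\Sigma,z_1,\ldots,z_p)$ with $z_i\in\Sigma\cup\partial_\infty\Sigma$ is controlled by the data $(x,\Sigma)\in\MH$ together with $p$ points in $\Sigma\cup\partial_\infty\Sigma$.

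First I would set up the precise statement: for $(x,\Sigma)\in\MH$, the set $\Sigma\cup\partial_\infty\Sigma$ is a compact topological disk (the spacelike surface $\Sigma$ is properly embedded and an entire graph over any pointed hyperbolic plane by Corollary~\ref{cor:ProjPointedHyp}, and its closure in $\cHn$ adds the semi-positive loop $\partial_\infty\Sigma$). I would then describe the topology on $\mathcal N_p(n)$ as that induced by convergence of pointed spacelike surfaces on compact sets (as in \cite[Appendix A]{LTW}) together with convergence of the marked points in $\cHn$. The forgetful map $\mathcal N_p(n)\to\MH$, $(x,\Sigma,z_1,\ldots,z_p)\mapsto(x,\Sigma)$, is $\G$-equivariant and continuous, so it suffices to show this map is proper: indeed, properness of this map plus cocompactness of the $\G$-action on $\MH$ (Theorem~\ref{theo:CompactnessTheorem}) yields cocompactness on $\mathcal N_p(n)$, and the $\G$-action is continuous because it is so on each factor.

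For properness of the forgetful map, take a sequence $(x_k,\Sigma_k,z_1^k,\ldots,z_p^k)$ whose image $(x_k,\Sigma_k)$ stays in a compact subset of $\MH$; after extracting, $(x_k,\Sigma_k)\to(x_\infty,\Sigma_\infty)$ in $\MH$, so $\Sigma_k\to\Sigma_\infty$ smoothly on compacts and, by \cite[Theorem 6.1]{LTW} (the convergence statement underlying Theorem~\ref{theo:CompactnessTheorem}), the boundary loops $\partial_\infty\Sigma_k$ converge to $\partial_\infty\Sigma_\infty$ as semi-positive loops in the sense of Proposition~\ref{pro:PositiveLoopsAreGraphs} (graphs of $1$-Lipschitz maps). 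Since $\cHn$ is compact, each sequence $(z_i^k)_k$ subconverges, say $z_i^k\to z_i^\infty\in\cHn$; the point is that the limit lies in $\Sigma_\infty\cup\partial_\infty\Sigma_\infty$. This is where the compactness of $\Sigma_k\cup\partial_\infty\Sigma_k$ and uniform control of these sets is used: a point $z$ of $\Sigma_k\cup\partial_\infty\Sigma_k$ is characterized by the fact that a lift of $z$ has non-positive $\bq_U$-distance (for any auxiliary $U\in\Gr$) from the entire graph, a closed condition that passes to the limit once $\Sigma_k\to\Sigma_\infty$ and $\partial_\infty\Sigma_k\to\partial_\infty\Sigma_\infty$; hence $z_i^\infty\in\Sigma_\infty\cup\partial_\infty\Sigma_\infty$. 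This gives a limit point $(x_\infty,\Sigma_\infty,z_1^\infty,\ldots,z_p^\infty)\in\mathcal N_p(n)$, proving properness.

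The main obstacle is the bookkeeping in the last step: making precise that ``$z_i^k\in\Sigma_k\cup\partial_\infty\Sigma_k$'' is a closed condition under simultaneous convergence of the surfaces and their boundary loops, and in particular ruling out that a sequence of interior points $z_i^k\in\Sigma_k$ escapes to a point of $\cHn\setminus(\Sigma_\infty\cup\partial_\infty\Sigma_\infty)$. This is handled by the entire-graph picture: fixing a pointed hyperbolic plane $\Pp$, each $\Sigma_k$ is the graph of a map $H\to\cHn$ extending continuously to $\overline H$ with boundary values parametrizing $\partial_\infty\Sigma_k$, and the warped projections $\pi_\Pp$ restricted to $\Sigma_k$ converge; the closure $\overline{\Sigma_k}$ in $\cHn$ is then the image of $\overline H$ under a sequence of maps converging uniformly, so any limit of points $z_i^k\in\overline{\Sigma_k}$ lies in $\overline{\Sigma_\infty}=\Sigma_\infty\cup\partial_\infty\Sigma_\infty$. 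Once this is granted the corollary follows formally.
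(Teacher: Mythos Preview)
Your argument is correct and follows essentially the same route as the paper: both reduce cocompactness on $\mathcal N_p(n)$ to cocompactness on $\MH$ together with the fact that the condition $z_i\in\Sigma\cup\partial_\infty\Sigma$ is closed under simultaneous convergence of pointed surfaces and points in $\cHn$. The paper phrases this as ``$\mathcal N_p(n)$ is closed in $\MH\times(\cHn)^p$'' and cites \cite[Theorem~6.1, item (iii)]{LTW} directly for that closedness, whereas you rephrase it as properness of the forgetful map and sketch the closedness via the entire-graph description; these are the same argument in slightly different packaging.
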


\begin{proof} Let  consider the compactification $\overline{\H}^{2,n}\defeq\Hn\cup \partial_\infty\Hn$ of $\Hn$ in the projective model. Observe first that the diagonal action of $\G$ on $\MH\times\left(\overline{\H}^{2,n}\right)^p$ is cocompact, since the action of $\G$ on $\MH$ is cocompact. Thus the corollary follows from the third item in \cite[Theorem 6.1]{LTW} which guarantees that  
$\mathcal N_p(n)$ is a closed set in 	$\MH\times\left(\overline{\H}^{2,n}\right)^p$. 
\end{proof}

\subsubsection{Laminated space}
We end this section by describing a natural product structure on $\MH$ that turns it into a laminated space, or a Riemann surface lamination -- see appendix in  \cite{Sullivan-Lamin}. This structure will be comes from  the graph property of complete maximal surfaces (see Section \ref{ss:MaxiSurf}).

Fix a pointed hyperbolic plane $\Pp=(q,H)$ and recall that any complete maximal surface is a graph above $H$ with respect to the warped projection $\pi$ from $\Hn$ to $H$. Let
\[\mathcal M_0=\big\{(y,\Sigma)\in \MH\mid \pi(y)=q\big\}~.\]
Then the map from $\MH$ to $\mathcal M_0\times H$ so that the image of $(x,\Sigma)$ is $((y,\Sigma),\pi(x))$ -- where $y$ is the unique point of $\Sigma$ projecting to $q$ -- is a homeomorphism. Moreover, leaves in $\M(n)$ correspond to sets of the form $\{*\}\times H$, in other words maximal surfaces.

We may thus consider $\MH$ as a {\em laminated space} whose leaves identify with maximal surfaces. Observe also that the natural $\G$ action on $\MH$ preserves the leaves. In the sequel, we will freely identify maximal complete surface $\Sigma$ with the leaf it corresponds to in $\MH$.

\subsection{Loops and surfaces}
The next theorem is the main result of \cite[Theorem B]{LTW}:
\begin{theorem}{\sc [Asymptotic Plateau problem]}\label{theo:MainTheoPaper1}
Any semi-positive loop  $\Lambda$ in $\bHn$ bounds a unique complete maximal surface $\Sigma(\Lambda)$ in $\Hn$.	
\end{theorem}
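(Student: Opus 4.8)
The plan is to establish existence by a compact exhaustion argument supported by uniform a priori estimates, and uniqueness by a maximum principle. The organising observation is that, by Corollary \ref{cor:ProjPointedHyp}, a complete spacelike surface in $\Hn$ is an entire graph over any pointed hyperbolic plane $\Pp=(q,H)$ via the warped projection $\pi_\Pp$; so the problem amounts to producing a spacelike section of $\pi_\Pp$, with trace-free second fundamental form, having prescribed asymptotic values along $\Lambda$. I would first treat the case where $\Lambda$ is a \emph{positive} loop: in the splitting $\bHn_+\cong\S^1\times\S^n$ of Paragraph \ref{sss:ProductStructure}, $\Lambda$ is the graph of a strictly contracting map (Proposition \ref{pro:PositiveLoopsAreGraphs}), which provides, through each of its points, a photon meeting $\Lambda$ nowhere else; these photons, together with totally geodesic hyperbolic planes, furnish the barriers that confine approximating surfaces and keep them uniformly spacelike.

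For existence in the positive case I would proceed in three steps. (i) Fix $\Pp$ and, on an exhausting family of geodesic disks $D_R\subset H$, solve the Dirichlet problem for the maximal surface system --- a quasilinear elliptic system on spacelike sections of $\pi_\Pp$ --- with boundary data chosen to converge to $\Lambda$; solvability follows once the barriers above are in place. (ii) Derive interior estimates independent of $R$: a uniform spacelikeness (gradient) bound by comparison with hyperbolic planes, then a bound on $\Vert\II\Vert$, equivalently on $K_\Sigma$ by the Gauss equation (Proposition \ref{pro:GaussEquation}), via a Bochner--Simons type computation using the Codazzi and Ricci equations (Propositions \ref{pro:CodazziEquation} and \ref{pro:RicciEquation}) together with the Bochner formula of the appendix, and finally higher-order bounds by Schauder theory. (iii) Let $R\to\infty$ and extract a smooth limit $\Sigma$, an entire spacelike maximal graph over $H$ which one checks is complete and satisfies $\partial_\infty\Sigma=\Lambda$. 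For a general semi-positive loop $\Lambda$, approximate it by positive loops $\Lambda_k$ (replace the $1$-Lipschitz graphing map by $(1-\tfrac1k)$-Lipschitz ones), form the surfaces $\Sigma(\Lambda_k)$, and pass to the limit using the same estimates together with the compactness of semi-positive loops (Corollary \ref{cor:CompactnessSemi-positive}) and the compactness of pointed maximal surfaces (Theorem \ref{theo:CompactnessTheorem}).

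For uniqueness, let $\Sigma_1$ and $\Sigma_2$ be complete maximal surfaces with $\partial_\infty\Sigma_1=\partial_\infty\Sigma_2=\Lambda$. Since both are graphs over a common $H$, the comparison reduces to a scalar quantity --- the difference of $\pi_\Pp$-time-coordinates, or a horofunction pairing as in Paragraph \ref{sss:horofunction} --- which, after subtracting the two quasilinear maximal surface equations, satisfies a linear elliptic equation without zeroth-order term. The strong maximum principle then forces $\Sigma_1=\Sigma_2$, \emph{provided} one knows that this comparison function tends to $0$ at infinity, so that a region where one surface lies strictly on one side of the other cannot leak to the ideal boundary.

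The main obstacle is twofold. On the existence side it is the $R$-independent a priori estimate keeping the approximating surfaces uniformly spacelike and uniformly curved on compact sets: in codimension $n$ and signature $(2,n)$ this is a genuine Bochner-type argument for a system rather than a scalar mean-curvature estimate. On the uniqueness side it is the asymptotic control just mentioned, namely deducing uniform decay of the comparison function near $\Lambda$ from the sole equality of the two ideal boundaries. Both are carried out in \cite{LTW}.
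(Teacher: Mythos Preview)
The paper does not prove this theorem: it is stated as ``the main result of \cite[Theorem~B]{LTW}'' and simply quoted. There is therefore no proof in the present paper to compare your proposal against.

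That said, your outline is a plausible summary of the strategy one expects in \cite{LTW} --- Dirichlet exhaustion with uniform spacelikeness and curvature bounds, a limiting argument, and a maximum-principle uniqueness --- and you correctly identify the two genuine difficulties (the $R$-independent a priori estimates in higher codimension, and the asymptotic control needed for uniqueness) while explicitly deferring them to \cite{LTW}. As written, then, your proposal is not a self-contained proof but rather a roadmap pointing to the cited reference; that is exactly the status the statement has in this paper, so there is no discrepancy to flag.
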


We will now extend this correspondence to a map from the space $\L$ of pointed semi-positive loops to $\MH$. Recall from section \ref{sss:PointedHyp} that any positive triple $\tau$ defines a pointed hyperbolic plane $\Pp_\tau=(q_\tau,H_\tau)$. Moreover, by Corollary \ref{cor:ProjPointedHyp}, if $\Sigma$ is a complete maximal surface, there is a unique point $b_\tau$ on $\Sigma$ that projects to $q_\tau$.  This defines

\begin{definition}{\sc [Barycenter map]}
Using the above notations, the {\em barycenter map}	from $\L$ to $\MH$ is the map $B$ which associates to $(\Lambda,\tau)$ the pointed maximal surface $(b_\tau,\Sigma(\Lambda))$. 
\end{definition}

We now prove 
\begin{proposition}{\sc[Barycenter map]}\label{pro:BarycenterProper}
The barycenter map is $\G$-equivariant, continuous, proper and surjective. 
\end{proposition}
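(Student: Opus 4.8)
The strategy is to verify each of the four properties in turn, leaning heavily on results already established: the Asymptotic Plateau problem (Theorem~\ref{theo:MainTheoPaper1}), the properties of the warped projection, Corollary~\ref{cor:ProjPointedHyp}, the properness of the forgetting map (Proposition~\ref{pro:ContBij}), and the Compactness Theorem~\ref{theo:CompactnessTheorem}.

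First I would record $\G$-equivariance. Every ingredient is natural: $\Sigma(g\Lambda)=g\Sigma(\Lambda)$ by uniqueness in Theorem~\ref{theo:MainTheoPaper1}; the pointed hyperbolic plane attached to a positive triple satisfies $\Pp_{g\tau}=g\Pp_\tau$ because the barycenter construction is $\G$-equivariant; and the warped projection satisfies $\pi_{g\Pp}(gx)=g\pi_\Pp(x)$. Hence the unique point $b_{g\tau}$ of $g\Sigma(\Lambda)$ projecting to $q_{g\tau}=gq_\tau$ is exactly $gb_\tau$, so $B(g\Lambda,g\tau)=gB(\Lambda,\tau)$.

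For continuity, suppose $(\Lambda_k,\tau_k)\to(\Lambda,\tau)$ in $\L$. Using Proposition~\ref{pro:PositiveLoopsAreGraphs} the semi-positive loops converge as $1$-Lipschitz graphs, so by the discussion in \cite{LTW} (or by combining Theorem~\ref{theo:MainTheoPaper1} with the Compactness Theorem~\ref{theo:CompactnessTheorem}) the maximal surfaces $\Sigma(\Lambda_k)$ converge smoothly on compacts to $\Sigma(\Lambda)$. Meanwhile $\tau_k\to\tau$ gives $q_{\tau_k}\to q_\tau$ and $\Pp_{\tau_k}\to\Pp_\tau$, and since the warped projection depends continuously on the pointed hyperbolic plane, the unique intersection point $b_{\tau_k}=\Sigma(\Lambda_k)\cap\pi_{\Pp_{\tau_k}}^{-1}(q_{\tau_k})$ converges to $b_\tau$; hence $B(\Lambda_k,\tau_k)\to B(\Lambda,\tau)$ in $\MH$.

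Surjectivity is direct: given $(x,\Sigma)\in\MH$, its boundary $\partial_\infty\Sigma$ is a semi-positive loop by Proposition~\ref{pro:BoundaryCompleteSurf}, and one must produce a positive triple $\tau$ in it with $b_\tau=x$. Equivalently, by Corollary~\ref{cor:ProjPointedHyp}, one needs a positive triple $\tau\subset\partial_\infty\Sigma$ whose associated pointed hyperbolic plane $\Pp_\tau=(q_\tau,H_\tau)$ has $\pi_{\Pp_\tau}(x)=q_\tau$; it suffices that $x$ itself be the barycenter point, which can be arranged by choosing $H_\tau$ to be any hyperbolic plane through $x$ whose spacelike ideal boundary circle meets $\partial_\infty\Sigma$ in a positive triple whose barycenter is $x$ — such a plane exists because the map $(x,P)\mapsto$ (boundary circle) from $\GR{\Hn}$ surjects onto spacelike circles and, using that $\Sigma$ is an entire graph, one can push a small spacelike circle through $x$ out to $\partial_\infty\Sigma$; I expect to phrase this via the $\G$-equivariant map $\tau\mapsto\Pp_\tau$ from $\sT(n)$ to $\GR{\Hn}$ together with the graph property. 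Finally, properness: this is where the real work is, and I would deduce it from the properness of the forgetting map $\L\to\sT(n)$ (Proposition~\ref{pro:ContBij}) together with the properness statement in the Compactness Theorem~\ref{theo:CompactnessTheorem} — namely that $(x,\Sigma)\mapsto\T_x\Sigma$ is proper from $\MH$ to $\GR{\Hn}$. Concretely, given a compact $K\subset\MH$, its image in $\GR{\Hn}$ is compact, hence pulls back under the proper map $\tau\mapsto\Pp_\tau$ to a set of positive triples lying over a compact subset of $\GR{\Hn}$; combined with properness of the forgetting map this controls $(\Lambda,\tau)$ in $\L$, giving that $B^{-1}(K)$ is compact. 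The main obstacle is checking that $B^{-1}(K)$ is genuinely relatively compact rather than merely bounded in one factor: one must rule out the limit loops degenerating to a photon, which is prevented exactly because each $\tau_k$ is a fixed-size positive triple (Corollary~\ref{cor:CompactnessSemi-positive}(ii)), so a limiting semi-positive loop through the limit positive triple exists and carries a complete maximal surface whose pointed structure is the limit of the $B(\Lambda_k,\tau_k)$.
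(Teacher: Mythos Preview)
Your treatment of $\G$-equivariance and continuity is fine and matches the paper. The substantive problems are in surjectivity and, to a lesser extent, properness.

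\textbf{Surjectivity is not direct.} You assert that given $(x,\Sigma)$ one can find a positive triple $\tau\subset\partial_\infty\Sigma$ with $b_\tau=x$ by ``pushing a small spacelike circle through $x$ out to $\partial_\infty\Sigma$''. This is not an argument: there is no reason a priori that the three-parameter family of positive triples in $\partial_\infty\Sigma$ hits every point of the two-dimensional surface $\Sigma$ under the barycenter map, and your sketch gives no mechanism for producing such a $\tau$. The paper treats this as the main step. For a \emph{positive} loop $\Lambda$ it first proves a collapsing lemma: if a sequence of positive triples $\tau_k\subset\Lambda$ degenerates to a boundary triple (at least two entries equal), then $b_{\tau_k}\to\pi(\tau_\infty)\in\Lambda$. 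This allows the barycenter map to extend continuously from $\Lambda^3$ to $\Sigma(\Lambda)\cup\Lambda$, and surjectivity then follows from a degree-one argument on a suitably embedded disk in $\Lambda^3$. The general semi-positive case is obtained by density of positive loops together with properness and continuity of $B$ (so that the image of $B$ is closed). Your proposal contains none of this.

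\textbf{Properness: a confusion in the pullback.} You propose to take the image of $K\subset\MH$ in $\GR{\Hn}$ via $(x,\Sigma)\mapsto\T_x\Sigma$ and then ``pull back under the proper map $\tau\mapsto\Pp_\tau$''. But $\Pp_\tau$ has no reason to equal or even be close to $\T_{b_\tau}\Sigma$; the condition $B(\Lambda,\tau)=(x,\Sigma)$ only says that $x$ projects to $q_\tau$ under the warped projection, not that the pointed hyperbolic plane $\Pp_\tau$ sits in any prescribed compact of $\GR{\Hn}$. So the pullback you describe does not control $\tau$. The paper instead argues at the level of quotients: since $\L/\G$ and $\MH/\G$ are compact and the induced map $B_0\colon\L/\G\to\MH/\G$ is continuous, and since $\G$ acts properly on both spaces, properness of $B$ follows from a short general-topology lemma (take $K$, project to $\MH/\G$, pull back to a closed hence compact subset of $\L/\G$, lift via the proper section $\L_{\tau_0}\to\L/\G$, and bound the set of $g\in\G$ moving this compact into $K$). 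Your Corollary~\ref{cor:CompactnessSemi-positive}(ii) observation is correct in spirit but does not by itself organize into a properness proof without the quotient-compactness framework.
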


\begin{proof}[First  step: proof of $\G$-equivariance, continuity and properness] The $\G$-equivariance is obvious and the continuity is a direct consequence of  the second item in Theorem \cite[Theorem 6.1]{LTW}.  It follows that the induced map $B_0$ from $\L/\G$ to $\MH/\G$ is  continuous.
The fact that the map $B$ is proper is now a consequence of some general topology argument, using the following facts
\begin{enumerate}
	\item The group $\G$ acts properly on $\L$ and $\MH$.
	\item The map $B_0$ from $\L/\G$  to $\M/\G$ is continuous,
	\item The spaces  $\L/\G$ to $\MH/\G$ are compact,
	\item The projection from  $\pi_0$ from $\L_{\tau_0}$ to $\L/\G$ is proper  (see proposition \ref{pro:ContBij}).
\end{enumerate} 

Let us show then that $B$ is proper: let $K$ be a compact in $\MH$, let $K_1$ be the projection of $K$ in 
$\MH/\G$, let $K_2$ its preimage in $\L/\G$ and $K_3=B(\pi^{-1}_0(K_2))$.
We have that $K_1$ is compact by continuity, $K_2$ is compact  since it is a closed set (by continuity of $B_0$) in the compact space $\L/\G$, and finally  $K_3$ is compact by properness and continuity.

Then by properness of the action of $\G$
$$
K_4\defeq\{g\in \G\mid gK_3\cap K\not=\emptyset\}\ ,
$$
is compact. Then one sees that $B^{-1}(K)$ is a closed subset of
$
 K_4\cdotp K$, and is therefore compact. \end{proof} 
\begin{proof}[Second step: proof of  surjectivity] Assume  first that $\Lambda$ is positive. Then the set $T_\Lambda$ of positive triples contained in $\Lambda$ identifies with the set of pairwise distinct triples of points in $\Lambda$. The boundary $\partial T_\Lambda$ of $T_\Lambda$ in $\Lambda^3$ is made of triples  such that  at least two of the elements of the triple are equal. We have a projection
$\pi$ from $\partial T_\Lambda$ to $\Lambda$ given by 
$$\pi(t,s,s)=\pi(s,t,s)=\pi(s,s,t)=s\ .$$
Let us now prove
\begin{lemma} Let $\Lambda$ be a positive loop and $\seqk{\tau}$ a sequence in $T_\Lambda$ converging to $\tau_\infty$ in $\partial T_\Lambda$. Then the sequence $\sek{q_{\tau_k}}$ of barycenters in $\Hn$ converges to $\pi(\tau_\infty)$ in $\bHn$.
\end{lemma}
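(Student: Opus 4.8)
The plan is to make the whole statement explicit: express the barycenter of an ideal triangle through isotropic vectors representing its three vertices, choose such representatives \emph{coherently} along $\Lambda$, and then read off the limit. First I would record the formula
\[
q_\tau=\bigl[\,\langle b_0,c_0\rangle\, a_0+\langle c_0,a_0\rangle\, b_0+\langle a_0,b_0\rangle\, c_0\,\bigr]\in\Hn\subset\P(E)\ ,
\]
valid for any positive triple $\tau=(a,b,c)$ and any isotropic representatives $a_0\in a$, $b_0\in b$, $c_0\in c$: the bracketed vector lies in $W_\tau=a\oplus b\oplus c$ and is well defined up to a scalar (rescaling $a_0$ by $\mu$ rescales it by $\mu$), and with the normalisation $\langle a_0,b_0\rangle=\langle b_0,c_0\rangle=\langle c_0,a_0\rangle=-1$ — possible since $W_\tau$ has signature $(2,1)$ — it equals $-(a_0+b_0+c_0)$, a negative vector spanning the trivial summand of the permutation action of the symmetry group of the ideal triangle, hence the line corresponding to the barycenter $q_\tau$.

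Next I would produce isotropic representatives along $\Lambda$ with a definite sign. Lift $\Lambda$ to a positive loop $\tilde\Lambda\subset\bHn_+$ and fix a splitting $\bHn_+\cong\S^1\times\S^n$ with underlying orthogonal decomposition $E=U\oplus V$, $U$ positive definite; by Proposition \ref{pro:PositiveLoopsAreGraphs} the loop $\tilde\Lambda$ is the graph of a contracting map $f\colon\S^1\to\S^n$. To $\zeta\in\bHn_+$ with coordinates $(\theta,\phi)$ I attach the isotropic representative $\zeta^\star:=u_\theta+v_\phi$, where $u_\theta\in U$ has $\bq(u_\theta)=1$ and $v_\phi\in V$ has $\bq(v_\phi)=-1$; this depends continuously on $\zeta$. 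Using $U\perp V$ and that $-\bq|_V$ induces the round metric on $\S^n$, one computes for two distinct points of $\tilde\Lambda$ with coordinates $(\theta_1,f(\theta_1))$, $(\theta_2,f(\theta_2))$
\[
\langle\zeta_1^\star,\zeta_2^\star\rangle=\cos d_{\S^1}(\theta_1,\theta_2)-\cos d_{\S^n}\bigl(f(\theta_1),f(\theta_2)\bigr)<0\ ,
\]
the strict inequality because $f$ is contracting and $\cos$ is strictly decreasing on $[0,\pi]$. Thus the distinguished representatives of any two distinct points of $\Lambda$ have negative inner product.

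Now for the limit. Put $s:=\pi(\tau_\infty)$. Since the closure $\Hn\cup\bHn$ of $\Hn$ in $\P(E)$ is compact it suffices to show that every subsequence of $(q_{\tau_k})$ has a further subsequence converging to $s$; so, after extraction, the lifts $\tilde a_k,\tilde b_k,\tilde c_k\in\tilde\Lambda$ converge to the lifts of $a_\infty,b_\infty,c_\infty$, and by definition of $\pi$ at least two of these limits are the lift $\tilde s$, whose distinguished representative I call $s_0$. Write $a_k^\star,b_k^\star,c_k^\star$ for the distinguished representatives of the three vertices and $P_k=\langle a_k^\star,b_k^\star\rangle$, $Q_k=\langle b_k^\star,c_k^\star\rangle$, $R_k=\langle c_k^\star,a_k^\star\rangle$ — all negative by the previous step — so that $q_{\tau_k}=[v_k]$ with $v_k=Q_k a_k^\star+R_k b_k^\star+P_k c_k^\star$. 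If exactly two limit vertices coincide, say $a_\infty=b_\infty=s$ and $c_\infty=t\neq s$, then $P_k\to 0$ while $Q_k,R_k\to\langle s_0,t_0\rangle<0$, hence $v_k\to 2\langle s_0,t_0\rangle\,s_0\neq 0$ and $q_{\tau_k}\to[s_0]=s$. If all three coincide, then $a_k^\star,b_k^\star,c_k^\star\to s_0$; writing $a_k^\star=s_0+\alpha_k$, $b_k^\star=s_0+\beta_k$, $c_k^\star=s_0+\gamma_k$ with $\alpha_k,\beta_k,\gamma_k\to 0$ gives $v_k=(P_k+Q_k+R_k)\,s_0+(Q_k\alpha_k+R_k\beta_k+P_k\gamma_k)$, and since $P_k,Q_k,R_k$ have the same sign, $|P_k+Q_k+R_k|=|P_k|+|Q_k|+|R_k|$ dominates the error term, so $v_k/(P_k+Q_k+R_k)\to s_0$ and again $q_{\tau_k}\to[s_0]=s$.

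The only genuinely delicate point is the fully degenerate case $a_\infty=b_\infty=c_\infty$: there $v_k\to 0$, so one cannot pass to the limit directly in the formula, and a priori cancellation among $P_k,Q_k,R_k$ could steer $[v_k]$ away from $[s_0]$. The content of the second step is precisely that this cannot occur: because $\Lambda$ is a positive loop — a contracting graph — the three relevant inner products are all negative, so no cancellation happens and the leading term $(P_k+Q_k+R_k)s_0$ controls $v_k$. Without the positivity hypothesis the statement would require real care, since isotropic representatives of three mutually nearby points of $\bHn$ need not have coherently signed inner products.
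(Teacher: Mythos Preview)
Your proof is correct and follows the same overall strategy as the paper: choose coherent lifts of $\Lambda$ to $E\setminus\{0\}$, write down an explicit formula for the barycenter in these lifts, and pass to the limit. The differences are in execution. For the lifts, you use the product structure $\bHn_+\cong\S^1\times\S^n$ and Proposition~\ref{pro:PositiveLoopsAreGraphs} to obtain representatives whose pairwise inner products are all negative; the paper instead picks a point $p$ with $\P(p^\bot)$ disjoint from $\Lambda$ and normalises against $p$. Your barycenter formula with weights $\langle b_0,c_0\rangle,\langle c_0,a_0\rangle,\langle a_0,b_0\rangle$ is the rational version of the paper's square-root weights $\lambda^i$ --- they define the same projective point, and yours is a touch cleaner to manipulate. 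The substantive divergence is the fully degenerate case $a_\infty=b_\infty=c_\infty$: the paper argues geometrically that the barycenter lies inside the ideal triangle, which collapses to the limit point; you argue algebraically, using the common sign of $P_k,Q_k,R_k$ to prevent cancellation in the leading coefficient $P_k+Q_k+R_k$. Your route is more explicit and isolates precisely the role of positivity of $\Lambda$ (strictness of the contraction), which is a nice feature.
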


\begin{proof}
Let $p$ be  a vector in $E$ such that $\P(p^\bot)$ is disjoint from $\Lambda$. The existence of $p$ is guaranteed   by \cite[Proposition 2.15 (iii)]{LTW}. Let $F\defeq p^\bot$. Then in the splitting $$E=F\oplus \R p\ ,$$  any point in $\Lambda$ has the form $[(x,1)]$ for a unique $x$ in $F$.
Let $\tau=(t^1,t^2,t^3)$ be  a positive triple in $\Lambda$. We write  write $t^i=[u^i]\in \P(E)$ where $u^i=(s^i,1)\in E$ with  $s^i$ in $F$.  Observe that 
$\braket{u^i,u^i}=0$ while  $\braket{u^i,u^{i+1}}>0$. Let 
\[
 \lambda^i = \sqrt{\frac{\langle u^j,u^k\rangle}{\langle u^i,u^j \rangle\langle u^i,u^k \rangle}}~, \hbox{ where $(i,j,k)$ are pairwise distinct}.
\]

We claim that the barycenter $q_\tau$ of $\tau$ is the line spanned by 
\[v\defeq \lambda^1u^1+\lambda^2u^2+\lambda^3u^3~.\]
In fact,  the $\lambda^i$ are defined in such a way that $\langle \lambda^iu^i,\lambda^ ju^j\rangle =1$ for any $i\neq j$. So for $(i,j,k)$ pairwise distinct, the line $\delta$ between $t^i$ and $[v]$ intersects the geodesic $\gamma$ between $t^j$ and $t^k$ along $x=[u^j+u^k]$, and $\T_x \gamma= [u^j-u^k]$ is orthogonal to $\T_x\delta=[-t^i+t^j+t^k]$. So $[v]=q_\tau$.

Observe that for $(i,j,k)$ pairwise distinct, we have$$
\frac{\lambda^{i}}{\lambda^{j}}=
 \frac{
 \braket{u^j, u^{k}}
 }
 {
 \braket{u^{i},u^{k}}
 }\ \ \ .$$ 
Assume now that we have a sequence $\seqk{\tau}$, where $\tau_k=(t^1_k,t^2_k,t^3_k)$ where
$$\lim_{k\to\infty} t^1_k=\lim_{k\to\infty} t^2_k=t_\infty= [(s_\infty,1)]\not=w_\infty=\lim_{k\to\infty} t^3_k\ .$$ 
 Then 
 $$\lim_{k\to\infty}\braket{u^1_k,u^2_k}=0\ , \ \lim_{k\to\infty}\braket{u^2_k,u^3_k}=a\neq0\  , \ \lim_{k\to\infty}\braket{u^1_k,u^3_k}=b\neq0\ .$$ 
Thus 
 \[
 \lim_{k\to\infty}\lambda^3_k=0~,~\lim_{k\to\infty}\lambda^2_k=\lim\lambda^1_k=\infty \text{ and }   \lim_{k\to\infty}\frac{\lambda^2_k}{\lambda^1_k}=\frac{a}{b}\neq 0\ .\]
Then the barycenter
\[q_{\tau_k} = \left[ \left(\frac{\lambda^1_ks^1_k+\lambda^2_ks^2_k+\lambda^3_ks^3_k}{\lambda^1_k+\lambda^2_k+\lambda^3_k},1 \right) \right]\ ,\]
converges to $t_\infty = [(s_\infty,1)]$.

The case when $\sek{t^1_k}$, $\sek{t^2_k}$ and $\sek{t^3_k}$ converge to the same point $t_\infty$ is simpler: in this case, the sequence of ideal hyperbolic triangles with vertices $t^1_k,t^2_k$ and $t^3_k$ converges to the point $t_\infty$. Since any barycenter $b_{\tau_k}$ is contained in the ideal triangle, the result follows.
\end{proof}

We obtain the following corollary:

\begin{corollary}\label{cor:collapsing} Assume $\Lambda$ is positive.
If $\seqk{\tau}$ is a sequence in $T_\Lambda$ converging to $\tau_\infty$ in $\partial T_\Lambda$, then the sequence $\{b_{\tau_k}\}_{k\in \N}$ in $\Sigma(\Lambda)$ converges to $\pi(\tau_\infty)$.
\end{corollary}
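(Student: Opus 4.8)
The plan is to combine the preceding lemma — which gives $q_{\tau_k}\to\pi(\tau_\infty)$ in $\cHn$ — with the properness of the barycenter map, and then to identify the limit of $\{b_{\tau_k}\}$ using the fibre structure of the warped projection. This last point is the crux, since $b_{\tau_k}$ is the point of $\Sigma(\Lambda)$ lying over $q_{\tau_k}$ for the warped projection $\pi_{\Pp_{\tau_k}}$ attached to $\Pp_{\tau_k}=(q_{\tau_k},H_{\tau_k})$, hence in general distinct from $q_{\tau_k}$, so the lemma alone does not suffice.

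First I would show that every subsequential limit of $\{b_{\tau_k}\}$ in $\cHn$ lies on $\Lambda$. Since $\tau_\infty\in\partial T_\Lambda$ is a triple of points of $\Lambda$ that are not pairwise distinct, the pair $(\Lambda,\tau_\infty)$ does not belong to $\L$; as $\Lambda$ is fixed and $\tau_k\to\tau_\infty$, the sequence $(\Lambda,\tau_k)$ has no convergent subsequence in $\L$ and thus eventually leaves every compact subset of $\L$. By the properness of the barycenter map $B$ established in the first step of Proposition~\ref{pro:BarycenterProper}, the sequence $B(\Lambda,\tau_k)=(b_{\tau_k},\Sigma(\Lambda))$ eventually leaves every compact subset of $\MH$. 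If some subsequence of $\{b_{\tau_k}\}$ converged to a point $b_\infty$ of $\Sigma(\Lambda)$, then $\T_{b_{\tau_k}}\Sigma(\Lambda)\to\T_{b_\infty}\Sigma(\Lambda)$ by smoothness of the fixed surface $\Sigma(\Lambda)$, so by the Compactness Theorem~\ref{theo:CompactnessTheorem} the corresponding subsequence of $(b_{\tau_k},\Sigma(\Lambda))$ would remain in a compact subset of $\MH$, a contradiction. Since $\cHn$ is compact, every subsequential limit of $\{b_{\tau_k}\}$ therefore lies on $\partial_\infty\Sigma(\Lambda)=\Lambda$.

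Next I would identify this limit. Extracting a subsequence, assume $b_{\tau_k}\to b_\infty\in\Lambda$ and $q_{\tau_k}\to\pi(\tau_\infty)$. Each $b_{\tau_k}$ lies in the fibre $\pi_{\Pp_{\tau_k}}^{-1}(q_{\tau_k})$, which by the properties of warped projections recalled in Paragraph~\ref{sss:PointedHyp} is a projective subspace of type $(0,n)$, that is of the form $\P(V_k)$ for a negative definite $(n+1)$-dimensional subspace $V_k\subset E$; moreover $q_{\tau_k}\in\P(V_k)$ because $\pi_{\Pp_{\tau_k}}(q_{\tau_k})=q_{\tau_k}$. Extracting once more, $V_k$ converges in the Grassmannian of $(n+1)$-planes of $E$ to a subspace $V_\infty$ on which $\bq$ is negative semi-definite and which contains a representative of $\pi(\tau_\infty)$. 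Then $b_\infty$ and $\pi(\tau_\infty)$ both lie in $\P(V_\infty)\cap\bHn$, which equals the projectivisation of the radical of $\bq\vert_{V_\infty}$ (a negative semi-definite form vanishes exactly on its radical); since $\pi(\tau_\infty)$ lies in that radical, every point of $\P(V_\infty)\cap\bHn$ is non-transverse to $\pi(\tau_\infty)$. But two distinct points of the positive loop $\Lambda$ are always transverse, so $b_\infty=\pi(\tau_\infty)$.

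As $\cHn$ is compact and every subsequential limit of $\{b_{\tau_k}\}$ equals $\pi(\tau_\infty)$, we conclude that $b_{\tau_k}\to\pi(\tau_\infty)$, which is the assertion. The delicate step is the identification of the limit: one must read off from the three properties of $\pi_P$ imported from \cite{LTW} that the fibre over $q_\tau$ is exactly a negative definite $\P^n$ passing through $q_\tau$, and one must control the possible degeneration of the planes $V_k$. The latter is harmless because a negative semi-definite quadratic form meets the Einstein Universe only along its radical, and every point of that radical is automatically non-transverse to the limiting point $\pi(\tau_\infty)$, so the transversality built into the notion of positive loop forces the coincidence.
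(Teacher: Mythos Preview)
Your argument is correct and takes a genuinely different route from the paper's. The paper proceeds by a single contradiction: if $b_{\tau_k}$ fails to converge to $\pi(\tau_\infty)$, then after extraction the pair $(q_{\tau_k},b_{\tau_k})$ accumulates at a pair of \emph{distinct} points of $\Sigma(\Lambda)\cup\Lambda$; by the acausality of $\Sigma(\Lambda)\cup\Lambda$ imported from \cite[Proposition~3.10]{LTW} this limiting pair is in spacelike position, hence so is $(q_{\tau_k},b_{\tau_k})$ for large $k$, contradicting the fact that the geodesic between $q_{\tau_k}$ and $b_{\tau_k}$ is always timelike (both lie in the same type $(0,n)$ fibre of $\pi_{\Pp_{\tau_k}}$). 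Your Step~1, ruling out interior accumulation via the already-established properness of the barycenter map, is a clean alternative that sidesteps the acausality citation entirely. Your Step~2 then trades the spacelike/timelike dichotomy for a Grassmannian degeneration: the negative-definite planes $V_k$ limit to a semi-definite $V_\infty$, forcing $b_\infty$ and $\pi(\tau_\infty)$ into its isotropic radical and hence to coincide by positivity of $\Lambda$. One caveat: your inference that the fibre $\pi_{\Pp_\tau}^{-1}(q_\tau)$ is a \emph{linear} projective subspace $\P(V_k)\cap\Hn$ does not follow from the three properties listed in Paragraph~\ref{sss:PointedHyp} alone (``diffeomorphic to $\P^n$ and of type $(0,n)$'' is weaker); you need the explicit construction in \cite[Section~3.1]{LTW}. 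Note, however, that the paper's own proof makes the same implicit use of this fact when it asserts the geodesic is ``always timelike''.
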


\begin{proof}
Suppose the result false. Then the sequence $\{b_{\tau_k}\}_{k\in \N}$ either remains in a compact set, or accumulates at a point $y$ in $ \Lambda$ different from $\pi(\tau_\infty)$.

Since $\Sigma(\Lambda)$ is a complete spacelike surface and $\Lambda$ is positive, by \cite[Proposition 3.10]{LTW} $\Sigma(\Lambda)\cup \Lambda$ is acausal: any two points are joined by a spacelike geodesic. This implies that for $k$ large enough, the geodesic between $q_{\tau_k}$ and $b_{\tau_k}$ is spacelike  which is impossible since this geodesic is  always timelike.
\end{proof}
We now prove the surjectivity of $B$ from $T_\Lambda$ to $\Sigma(\Lambda)$ when $\Lambda$ is positive: as a consequence of the previous corollary the barycenter map extends continuously to a map from 
$\Lambda^3$ to $\Sigma(\Lambda)\cup\Lambda$. Fixing $x_0$ in $\Lambda$, let $D$ be a connected component of $T_\Lambda\cap\{x_0\}\times \Lambda^2$. 
$$
D=\{(x_0,y,z)\mid (x_0,y,z) \hbox{ is positively oriented}\}\ .
$$ 
The set $D$ is an open disk and the injection of $D$ in $T_\Lambda$ extends to a continuous map $f$  on $\partial D$ sending a closed interval to $x_0$, and is an homeomorphism from the complementary of the interval to the complementary of $x_0$. It follows that  the barycenter map restricted to $\partial D$ has degree 1 hence, that its restriction to the disk  --- with values in $\Sigma(\Lambda)$ --- is surjective.

To prove in general that $B$ is surjective, we first observe that its image is closed since $B$ is continuous and proper. Secondly, the image is dense: by our first discussion, it contains all pairs $(x,\Sigma)$ so that the boundary of $\Sigma$ is positive.
\end{proof}

\subsection{Barbot surfaces} A {\em Barbot surface}
is a maximal surface whose boundary at infinity is a Barbot crown.
Some of the results explained here can also be found in \cite{BonsanteSchlenker} (where Barbot surfaces are called  horospheres). Let us start with an explicit description of a Barbot surface. Let
$z_0,z_1,z_2,z_3$ be a quadruple of vectors so that 
\begin{equation}
	\langle z_i,z_i\rangle=\langle z_i,z_{i+1}\rangle=0\ , \ \langle z_i,z_{i+2}\rangle=-\frac{1}{4}\ .
\end{equation} 
In particular if $v_i$ are the lines generated by $z_i$, then $(v_0,v_1,v_2,v_3)$ are the vertices of a Barbot crown $\mathcal C$. 
Let also $A_\mathcal C$ be the abelian subgroup defined by equation \eqref{def:CartBarb}. Then we have 
\begin{proposition}\label{pro:bcrown-descr}
	The Barbot surface  for a barbot crown $\mathcal C$ is the orbit  $V\defeq A_\mathcal C\cdotp x$, where $x=z_0+z_1+z_2+z_3$.
\end{proposition}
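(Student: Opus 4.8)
The plan is to verify directly that $V\defeq A_{\mathcal C}\cdot x$ with $x=z_0+z_1+z_2+z_3$ is a complete maximal surface whose boundary at infinity is the Barbot crown $\mathcal C$; by the uniqueness part of the Asymptotic Plateau problem (Theorem \ref{theo:MainTheoPaper1}), this identifies $V$ as \emph{the} Barbot surface for $\mathcal C$. First I would record the normalisation: with $\langle z_i,z_i\rangle=\langle z_i,z_{i+1}\rangle=0$ and $\langle z_i,z_{i+2}\rangle=-\tfrac14$ (indices mod $4$), one computes $\bq(x)=2\langle z_0,z_2\rangle+2\langle z_1,z_3\rangle=-1$, so $x$ indeed represents a point of $\Hn$. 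The abelian group $A_{\mathcal C}$ acts in the splitting $E=v_0\oplus v_1\oplus v_2\oplus v_3\oplus F_{\mathcal C}^\perp$ by $a(\lambda,\mu)z_0=\lambda^{-1}z_0$, $a(\lambda,\mu)z_1=\mu^{-1}z_1$, $a(\lambda,\mu)z_2=\lambda z_2$, $a(\lambda,\mu)z_3=\mu z_3$ (matching the matrix in \eqref{def:CartBarb} once one checks that these are the correct weights making $a(\lambda,\mu)$ preserve $\bq$), so that
\[
a(\lambda,\mu)\cdot x=\lambda^{-1}z_0+\mu^{-1}z_1+\lambda z_2+\mu z_3\ .
\]
Setting $\lambda=e^{s}$, $\mu=e^{t}$ gives an explicit parametrisation $F:(s,t)\mapsto e^{-s}z_0+e^{-t}z_1+e^{s}z_2+e^{t}z_3$ of $V$, and one checks $\bq(F(s,t))=-1$ for all $(s,t)$, so $F$ maps into $\Hn$ and is injective since the $z_i$ are linearly independent.

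Next I would compute the induced metric. From $\partial_s F=-e^{-s}z_0+e^{s}z_2$ and $\partial_t F=-e^{-t}z_1+e^{t}z_3$, using the orthogonality relations among the $z_i$ one gets $\langle \partial_s F,\partial_s F\rangle=-2\langle z_0,z_2\rangle=\tfrac12$, $\langle \partial_t F,\partial_t F\rangle=-2\langle z_1,z_3\rangle=\tfrac12$, and $\langle \partial_s F,\partial_t F\rangle=0$ (every cross term pairs a $z_i$ with an adjacent $z_{i\pm1}$ or with itself). Hence the induced metric is $\tfrac12(ds^2+dt^2)$, which is flat and complete; in particular $V$ is a complete spacelike surface, so by Proposition \ref{pro:BoundaryCompleteSurf} its boundary $\partial_\infty V$ is a semi-positive loop in $\bHn$. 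To see that $V$ is maximal, I would compute the second fundamental form: $\nabla_{\partial_s}\partial_s F=\partial_s^2 F$ projected onto the normal bundle. Since $\partial_s^2 F=e^{-s}z_0+e^{s}z_2=\text{(something)}$, and likewise $\partial_t^2 F=e^{-t}z_1+e^{t}z_3$, one has $\partial_s^2F+\partial_t^2F=F$, i.e.\ $\II(\partial_s,\partial_s)+\II(\partial_t,\partial_t)$ equals the normal component of $F$ plus the ambient acceleration correction; because the ambient connection on the quadric $\bq=-1$ satisfies $\nabla_X Y=\partial_X Y+\langle X,Y\rangle F$ (the second fundamental form of $\Hn\subset E$), we get $\nabla_{\partial_s}\partial_s F=\partial_s^2F+\langle\partial_sF,\partial_sF\rangle F$ and similarly for $t$, so $\tr\II$ is the normal projection of $(\partial_s^2+\partial_t^2)F+(\tfrac12+\tfrac12)F=F+F=2F$, whose normal projection vanishes since $F$ is the position vector (orthogonal to $\T_FV$ only up to the normal bundle — here I need to be careful). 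More cleanly: $\tr_{g}\II$ is the normal component of $g^{ss}\nabla_{\partial_s}\partial_s F+g^{tt}\nabla_{\partial_t}\partial_t F=2(\nabla_{\partial_s}\partial_sF+\nabla_{\partial_t}\partial_tF)=2\big((\partial_s^2F+\partial_t^2F)+(\tfrac12+\tfrac12)F\big)=2(F+F)=4F$, and $F$ is normal to $\Hn$ but \emph{tangent} considerations show its $\Hn$-component is zero, so the mean curvature vector vanishes. (In the write-up I'd phrase this as: the position vector $F$ is $\bq$-orthogonal to $\T_F\Hn$, hence its projection to $\Nn V\subset\T_F\Hn$ is zero, so $\tr_g\II=0$.)

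Finally I would identify $\partial_\infty V$ with $\mathcal C$. As $s\to+\infty$ (with $t$ fixed or bounded), $F(s,t)$ is asymptotic to the ray $\R_{>0}\,e^{s}z_2$, so accumulates on $[z_2]=v_2$; as $s\to-\infty$ it accumulates on $v_0$; similarly $t\to\pm\infty$ gives $v_3$ and $v_1$. Sending $s\to+\infty$ while $s+t\to c$ constant (so $e^{s-t}\to\infty$ but $e^{s}+e^{-t}$ controlled) one sees the limit points sweep out the photon segment in $\P(v_2\oplus v_3)$ joining $v_2$ to $v_3$, and the four such degenerations produce exactly the four photon edges of $\mathcal C$; more carefully, a sequence $(s_k,t_k)$ leaving every compact set has, after passing to a subsequence, $F(s_k,t_k)$ converging to a point of one of the four photons $\P(v_i\oplus v_{i+1})$, lying on the edge between the appropriate vertices. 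Hence $\partial_\infty V\subseteq\mathcal C$, and since $\partial_\infty V$ is a loop containing all four vertices $v_i$ (from the axis limits) it must be all of $\mathcal C$. By Theorem \ref{theo:MainTheoPaper1}, $V=\Sigma(\mathcal C)$. I expect the main obstacle to be the careful bookkeeping in this last step — controlling the limits of $F(s_k,t_k)$ in projective space along all escaping directions to pin the boundary down to exactly the photon edges (rather than, a priori, a larger semi-positive loop) — together with getting the weight conventions for $a(\lambda,\mu)$ consistent with \eqref{def:CartBarb}; the maximality and metric computations are routine once the ambient second fundamental form $\nabla_XY=\partial_XY+\langle X,Y\rangle F$ is invoked.
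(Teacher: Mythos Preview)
Your approach is essentially the same as the paper's: parametrise the orbit, observe $\partial_s^2F+\partial_t^2F=F$, and conclude that the mean curvature vanishes because the ambient position vector has zero component in $\Nn V\subset\T_F\Hn$; the paper does exactly this via the two curves $\alpha,\beta$ and the relation $\ddot\alpha(0)+\ddot\beta(0)=x$, and likewise dismisses the boundary identification as ``easy''. One bookkeeping correction: the quadric formula is $\nabla^{\Hn}_XY=D_XY-\langle X,Y\rangle F$ (minus, not plus), so your ``clean'' line should read $2\big((\partial_s^2+\partial_t^2)F-(\tfrac12+\tfrac12)F\big)=2(F-F)=0$, which gives $H=0$ directly rather than via a projection of $4F$.
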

\begin{proof}
Observe first that $\langle x,x\rangle=-1$. Let us now consider the following curves which are drawn on $V$. 
\begin{eqnarray*}
	\alpha(t)\defeq e^tz_0+z_1+e^{-t}z_2+z_3\  \ \ , \ \
	\beta(t)\defeq  z_0+e^tz_1+z_2+e^{-t}z_3\ .
\end{eqnarray*}   Then
\begin{eqnarray*}
	\alpha(0)=x\ , &\ \dot\alpha(0)=z_0-z_2\ , &\ \ \ddot \alpha(0)=z_0+z_2\ ,\\
	\beta(0)=x\ , &\ \dot\beta(0)=z_1-z_3\ , &\ \ \ddot \alpha(0)=z_1+z_3\  ,\\
\langle \dot\alpha(0),\dot\beta(0)\rangle=0\ , & \ \ 
	\ddot\alpha(0)+\ddot\beta(0)=x\ ,& \ \ \Vert \dot\alpha(0)\Vert^2=\Vert \dot\beta(0)\Vert^2=\frac{1}{2}\ .\label{ineq;Barbot5}
\end{eqnarray*}
Denote by $H$ the mean curvature vector. Then  $$\frac{1}{2}H= \II(\dot\alpha(0),\dot\alpha(0))+
	\II(\dot\beta(0),\dot\beta(0))\ ,$$ is the normal part of $\ddot\alpha(0)+\ddot\beta(0)$. It follows that $H=0$, hence by homogeneity that $V$ is a maximal surface. It is then easy to show that the boundary of $V$ in the projective model is the Barbot crown  $\mathcal C$.
\end{proof}
The following summarises some properties of Barbot surfaces.

\begin{proposition}\label{pro:prop-Bcrown}
	We have
	\begin{enumerate}
	\item A Barbot surface is an orbit of a Cartan subgroup in $\G$.
	\item All Barbot surfaces are equivalent under the action of $\G$.
	\item The induced metric on  a Barbot surface is flat.
		\item  At any point of a Barbot surface, there exists a horofunction whose square of the norm of the  gradient is  $2$.	
		\item  Let $\Sigma$ be a Barbot surface,  $d_\Sigma$ the induced distance on $\Sigma$ and $\eth$ the spatial distance defined in equation \eqref{def:SpatialDistance}, then 		\begin{equation}
			\sup_{x,y\in\Sigma}\left(\frac{\eth(x,y)}{d_\Sigma(x,y)}\right)= \sqrt{2}\ .
		\end{equation}
\end{enumerate}
\end{proposition}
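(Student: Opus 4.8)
The plan is to prove the five assertions essentially in the order stated, using the explicit model furnished by Proposition \ref{pro:bcrown-descr}: the Barbot surface is the orbit $V=A_\mathcal C\cdot x$ with $x=z_0+z_1+z_2+z_3$, and the curves $\alpha(t)=e^tz_0+z_1+e^{-t}z_2+z_3$ and $\beta(s)=z_0+e^sz_1+z_2+e^{-s}z_3$ give a global parametrisation $(t,s)\mapsto e^tz_0+e^sz_1+e^{-t}z_2+e^{-s}z_3$ of (a lift of) $V$.

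\textbf{Items (i)-(iii).} Item (i) is Proposition \ref{pro:BarbCrownG}: $A_\mathcal C$ is a Cartan subgroup and $V$ is one of its orbits. Item (ii) follows from the conjugacy of Cartan subgroups, exactly as in Proposition \ref{pro:prop-Bcrown} for Barbot crowns, together with the fact that all Barbot crowns are $\G$-equivalent and the uniqueness statement in Theorem \ref{theo:MainTheoPaper1}. For (iii), I would compute the induced metric directly in the $(t,s)$ coordinates: using $\langle z_i,z_i\rangle=\langle z_i,z_{i+1}\rangle=0$ and $\langle z_i,z_{i+2}\rangle=-\tfrac14$, one finds that the point $u(t,s)=e^tz_0+e^sz_1+e^{-t}z_2+e^{-s}z_3$ satisfies $\langle u,u\rangle=-1$ and that $\langle \partial_t u,\partial_t u\rangle=\langle\partial_s u,\partial_s u\rangle=\tfrac12$, $\langle\partial_t u,\partial_s u\rangle=0$; hence the induced metric is $\tfrac12(dt^2+ds^2)$, which is flat, and the induced distance is $d_\Sigma\big(u(t_1,s_1),u(t_2,s_2)\big)=\tfrac{1}{\sqrt2}\sqrt{(t_1-t_2)^2+(s_1-s_2)^2}$.

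\textbf{Item (iv).} At the base point $x$, take the lightlike vector $z_0$; since $\langle x,z_0\rangle=\langle z_0+z_2,z_0\rangle=-\tfrac14\neq0$, the horofunction $h_{z_0}$ is defined near $x$. By equation \eqref{eq:GradientHorofunction}, with $x_0$ the lift of $x$ normalised so $\langle x_0,z_0\rangle>0$, i.e. $x_0=-x$ and $\langle x_0,z_0\rangle=\tfrac14$, the gradient is $(\nabla h_{z_0})_x=\pi(z_0)/\langle x_0,z_0\rangle=4\,\pi(z_0)$, where $\pi$ is orthogonal projection onto $\T_x V$. Now $\T_x V=\span(z_0-z_2,z_1-z_3)$ and $\Nn_x V$ is the orthogonal of $x$ in the orthogonal of $\T_xV$; writing $z_0$ in this splitting one computes $\|\nabla h_{z_0}\|^2 = 16\,\|\pi(z_0)\|^2$. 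Projecting $z_0=\tfrac12(z_0-z_2)+\tfrac12(z_0+z_2)$ onto $\T_xV$ and using $\|z_0-z_2\|^2=\langle z_0+z_2,z_0+z_2\rangle = 2\langle z_0,z_2\rangle=-\tfrac12<0$ — wait, I must be careful with signs: on the spacelike plane $\T_xV$ the induced form is positive, and $\langle z_0-z_2,z_0-z_2\rangle = -2\langle z_0,z_2\rangle=\tfrac12>0$, so $\pi(z_0)=\tfrac{\langle z_0,z_0-z_2\rangle}{\langle z_0-z_2,z_0-z_2\rangle}(z_0-z_2)+0 = \tfrac{\langle z_0,-z_2\rangle}{1/2}(z_0-z_2)=\tfrac{1/4}{1/2}(z_0-z_2)=\tfrac12(z_0-z_2)$, whence $\|\pi(z_0)\|^2=\tfrac14\cdot\tfrac12=\tfrac18$ and $\|\nabla h_{z_0}\|^2_x=16\cdot\tfrac18=2$. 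By homogeneity (the stabiliser $A_\mathcal C$ acts transitively on $V$ and $h$'s in its orbit are translates of $h_{z_0}$) this holds at every point.

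\textbf{Item (v).} For the supremum of $\eth/d_\Sigma$: by Lemma \ref{lem:SpacelikePosition} and the parametrisation, with $x_0=-u(t_1,s_1)$, $y_0=-u(t_2,s_2)$ one computes $\langle x_0,y_0\rangle$ explicitly: only the $z_iz_{i+2}$ pairings survive, giving $\langle x_0,y_0\rangle = -\tfrac14\big(e^{t_1-t_2}+e^{t_2-t_1}+e^{s_1-s_2}+e^{s_2-s_1}\big) = -\tfrac12\big(\cosh(t_1-t_2)+\cosh(s_1-s_2)\big)$, so $|\langle x_0,y_0\rangle|=\tfrac12(\cosh\Delta t+\cosh\Delta s)\geq1$. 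Thus $\eth(x,y)=\cosh^{-1}\!\big(\tfrac12(\cosh\Delta t+\cosh\Delta s)\big)$ while $d_\Sigma(x,y)=\tfrac{1}{\sqrt2}\sqrt{\Delta t^2+\Delta s^2}$. The ratio $\eth/d_\Sigma$ is then an elementary one-variable analysis problem after setting, say, $\Delta t=r\cos\theta$, $\Delta s=r\sin\theta$: one shows the supremum over $(r,\theta)$ is attained in the limit $r\to0$ along $\theta=0$ (or any fixed $\theta$), where a Taylor expansion gives $\cosh^{-1}(\tfrac12(\cosh\Delta t+\cosh\Delta s))\sim\tfrac{1}{\sqrt2}\sqrt{\Delta t^2+\Delta s^2}\cdot(\text{factor})$; tracking the constant yields $\sqrt2$ as the least upper bound, and one checks monotonicity to confirm it is not exceeded for large separations (using $\cosh^{-1}(c)\leq$ linear growth while $d_\Sigma$ grows linearly with the better constant). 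The main obstacle is this last step: verifying that the pointwise ratio $\eth/d_\Sigma$ is genuinely maximised in the infinitesimal regime and bounded by $\sqrt2$ everywhere, which requires a careful — though elementary — estimate on the function $(\Delta t,\Delta s)\mapsto \cosh^{-1}\!\big(\tfrac12(\cosh\Delta t+\cosh\Delta s)\big)\big/\sqrt{\tfrac12(\Delta t^2+\Delta s^2)}$; one convenient route is to bound $\cosh^{-1}(\tfrac12(\cosh a+\cosh b))\leq\sqrt{\tfrac12(a^2+b^2)}\cdot\sqrt{2}$, i.e. $\cosh(\tfrac{1}{\sqrt2}\sqrt{a^2+b^2}\cdot\sqrt2) = \cosh\sqrt{a^2+b^2}\geq\tfrac12(\cosh a+\cosh b)$, which follows from convexity of $c\mapsto\cosh\sqrt{c}$ on $[0,\infty)$ applied to $c=a^2$ and $c=b^2$, with equality only in the limit, giving the supremum $\sqrt2$ as claimed.
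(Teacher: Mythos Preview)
Items (i)–(iv) are fine; your computations in (iii) and (iv) are slightly more explicit than the paper's but reach the same conclusions.

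Item (v) has two genuine errors. First, your claim that the supremum is attained as $r\to 0$ is false: a Taylor expansion gives $\tfrac12(\cosh\Delta t+\cosh\Delta s)=1+\tfrac14(\Delta t^2+\Delta s^2)+O(r^4)$, hence $\eth\sim\tfrac{1}{\sqrt2}\sqrt{\Delta t^2+\Delta s^2}$, which equals $d_\Sigma$ — the ratio tends to $1$, not $\sqrt2$. The supremum is in fact approached as $r\to\infty$ along $\theta=0$ (i.e.\ $\Delta s=0$, $\Delta t\to\infty$), where $\eth\sim\Delta t$ and $d_\Sigma=\Delta t/\sqrt2$. The paper makes this explicit by parametrising geodesics as $\gamma(t)=e^{\lambda t}z_0+e^{\mu t}z_1+e^{-\lambda t}z_2+e^{-\mu t}z_3$, observing that $\eth/d_\Sigma$ is increasing in $t$ along each geodesic, and computing the limit $\sqrt{2/(1+\mu^2/\lambda^2)}$, which is maximised at $\mu=0$.

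Second, your convexity argument for $\cosh\sqrt{a^2+b^2}\geq\tfrac12(\cosh a+\cosh b)$ is backwards: convexity of $c\mapsto\cosh\sqrt{c}$ yields $\cosh\sqrt{(a^2+b^2)/2}\leq\tfrac12(\cosh a+\cosh b)$, the wrong direction. Your inequality is nonetheless true — for instance because $\sqrt{a^2+b^2}\geq\max(|a|,|b|)$ and $\cosh(\max(|a|,|b|))\geq\tfrac12(\cosh a+\cosh b)$ — and it does give the upper bound $\eth\leq\sqrt2\,d_\Sigma$. But with the $r\to0$ claim gone, you still owe a sequence showing the bound is sharp; the $\Delta s=0$, $\Delta t\to\infty$ limit above supplies it.
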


As a consequence of the first two items, Barbot surfaces define a unique point in ${\MH}/\G$ that we call the {\em Barbot point} and denote $m_0$.

\begin{proof} The first item follows by  Proposition \ref{pro:bcrown-descr}.  The second follows from the fact that Cartan subgroups are conjugate. 
The third one from the fact that it is a transitive orbit of $\mathbb R^2$ acting by isometries. 

For the fourth item, let us consider the horofunction $h$ associated to the lightlike vector $Z\defeq z_0$. Then 
$$h(\alpha(t))=-t +\log\left(\frac{1}{4}\right)\ , \ h(\beta(t))=\log\left(\frac{1}{4}\right)\ .$$ 
 Thus 
$\braket{\nabla h,\dot\alpha(0)}=-1$ and $  \braket{\nabla h,\dot\beta(0)}=0$. It follows that 
$
\nabla h=-2\dot\alpha(0)
$
and 
$$
\Vert\nabla h\Vert^2=2\ .
$$
For the last item, by the transitive action of $\mathbb R^2$, it suffices to prove the result for a fixed $x$. Let $\lambda,\mu$ be two constants and define
\[\gamma(t)= e^{\lambda t}z_0 + e^{\mu t}z_1+e^{-\lambda t}z_2 + e^{-\mu t}z_3~.\]
Observe that $\gamma(t)$ is a geodesic. Let us assume, without loss of generality that $\lambda= \max\{\pm \lambda,\pm \mu\}$. Since $\bq \left(\dot\gamma(t)\right)=\frac{1}{2}(\lambda^2+\mu^2)$ we have 
$$d_\Sigma(\gamma(0),\gamma(t))=t\sqrt{\frac{\lambda^2+\mu^2}{2}}\ .$$
Moreover we have
\[\vert\braket{\gamma(0),\gamma(t)}\vert=\frac{1}{2}\left(\cosh(\lambda t)+\cosh(\mu t)\right)\ .\]
It follows that the function
\[\frac{\eth(\gamma(0),\gamma(t))}{d_\Sigma(\gamma(0),\gamma(t))}=\frac{\cosh^{-1}\vert \langle\gamma(0),\gamma(t)\rangle\vert}{d_\Sigma(\gamma(0),\gamma(t))}\ ,\]
is increasing and its limit as $t$ goes to infinity is equal to $\sqrt{\frac{2}{1+\frac{\mu^2}{\lambda^2}}}$. The result follows.\end{proof}

The following characterisation of Barbot surfaces from \cite[Lemma 5.5]{BonsanteSchlenker} will be used
\begin{lemma}{\sc[Bonsante--Schlenker]}\label{lem:BS}
	If $\Sigma$ is a complete flat maximal surface in $\mathbf H^{2,1}$, then $\Sigma$ is a Barbot surface. 
\end{lemma}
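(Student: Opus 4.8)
The plan is to integrate the structure equations and to recognise $\Sigma$ as one of the explicit orbits of Proposition~\ref{pro:bcrown-descr}; this is in effect the argument of \cite[Lemma 5.5]{BonsanteSchlenker}, adapted to our conventions.

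\emph{Set-up I would use.} Since $\Sigma$ is a complete spacelike surface it is an entire graph over a hyperbolic plane (\cite[Section 3.1]{LTW}), hence diffeomorphic to $\R^2$, in particular simply connected; being complete and flat it is then isometric to the Euclidean plane, so it carries \emph{global} flat orthonormal coordinates $(s,t)$ with a parallel orthonormal frame $(e_1,e_2)=(\partial_s,\partial_t)$. In signature $(2,1)$ the normal bundle $\Nn\Sigma$ has rank one with negative definite fibre metric, so for a unit section $\nu$ one has $\langle\nu,\nu\rangle\equiv-1$, hence $\nabla^N\nu=0$, so $\nabla^N$ is flat; and as $\Sigma$ is simply connected there is a \emph{global} parallel unit section $\nu$. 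Writing $\II=\mathbf b\otimes\nu$, maximality gives $\tr_{g_\I}\mathbf b=0$, and since $K_\Sigma=0$ the Gauss equation (Proposition~\ref{pro:GaussEquation}) yields $\Vert\mathbf b\Vert^2=2$.

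\emph{Step 1: $\mathbf b$ is parallel.} In the frame $(e_1,e_2)$ set $\mathbf b(e_1,e_1)=a$, $\mathbf b(e_2,e_2)=-a$, $\mathbf b(e_1,e_2)=c$, so $a^2+c^2=1$. Because $\nu$ is parallel for the induced connection $D$, the Codazzi equation $\d^D\II=0$ (Proposition~\ref{pro:CodazziEquation}) reduces to $\partial_s c=\partial_t a$ and $\partial_t c=-\partial_s a$, i.e. to the Cauchy--Riemann equations saying that $h\defeq c+\mathrm i a$ is holomorphic in $z=s+\mathrm i t$. Since $\vert h\vert^2=a^2+c^2\equiv1$, $h$ is constant; thus $a,c$ are constant. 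Rotating the parallel orthonormal frame by a constant angle (which replaces $(s,t)$ by rotated flat coordinates, still flat) I may assume $a=1$, $c=0$, and replacing $\nu$ by $-\nu$ if needed,
\[\II(e_1,e_1)=\nu,\qquad \II(e_2,e_2)=-\nu,\qquad \II(e_1,e_2)=0\ .\]

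\emph{Step 2: integration.} Identify $\Sigma$ with its image and let $x(s,t)$ denote the $E$-valued position. Recalling that the flat connection $\bar\nabla$ on $E$ satisfies $\bar\nabla_XY=\nabla_XY+\langle X,Y\rangle x$ along $\Hn$, the Gauss and Weingarten formulas together with Step~1 read
\[\partial_s^2 x=x+\nu\ ,\quad \partial_t^2 x=x-\nu\ ,\quad \partial_s\partial_t x=0\ ,\quad \partial_s\nu=\partial_s x\ ,\quad \partial_t\nu=-\partial_t x\ .\]
Hence $\nu-x$ depends only on $t$ and $\nu+x$ only on $s$; writing $\nu+x=W(s)$, $\nu-x=P(t)$ the first two relations give $W''=2W$, $P''=2P$, so $W(s)=e^{\sqrt2\,s}A+e^{-\sqrt2\,s}B$ and $P(t)=e^{\sqrt2\,t}C+e^{-\sqrt2\,t}D$ for fixed $A,B,C,D\in E$, with $x=\tfrac12(W-P)$. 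Imposing $\langle e_i,e_j\rangle=\delta_{ij}$ and $\langle x,x\rangle=-1$ forces $A,B,C,D$ isotropic with $\langle A,B\rangle=\langle C,D\rangle=-1$ and $\operatorname{span}\{A,B\}\perp\operatorname{span}\{C,D\}$. Then $z_0\defeq\tfrac12A$, $z_1\defeq-\tfrac12C$, $z_2\defeq\tfrac12B$, $z_3\defeq-\tfrac12D$ satisfy exactly $\langle z_i,z_i\rangle=\langle z_i,z_{i+1}\rangle=0$, $\langle z_i,z_{i+2}\rangle=-\tfrac14$, the lines $[z_i]$ are the (cyclically ordered) vertices of a Barbot crown $\mathcal C$, and $x(s,t)$ runs exactly through the $A_{\mathcal C}$-orbit of $z_0+z_1+z_2+z_3$ (with $a(\lambda,\mu)$ of \eqref{def:CartBarb} given by $\lambda=e^{-\sqrt2\,s}$, $\mu=e^{\sqrt2\,t}$). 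By Proposition~\ref{pro:bcrown-descr} this orbit is the Barbot surface of $\mathcal C$, so $\Sigma$ is a Barbot surface.

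\emph{Where the work is.} Conceptually the only non-routine point is Step~1 — recognising the Codazzi equations as Cauchy--Riemann and exploiting that $c+\mathrm i a$ is holomorphic of constant modulus, hence constant. The remaining care is the passage from local to global: using completeness and simple connectedness to obtain global flat coordinates and a global parallel normal, after which the structure equations become a constant-coefficient linear system whose solution is, essentially by inspection, a Barbot surface.
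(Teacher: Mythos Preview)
Your argument is correct. The paper does not give its own proof of this lemma; it simply records the statement and attributes it to \cite[Lemma~5.5]{BonsanteSchlenker}. So there is nothing to compare against, and your direct integration of the structure equations is exactly the kind of argument Bonsante--Schlenker carry out.

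A couple of small remarks. First, the identification $\mu=e^{\sqrt2\,t}$ at the very end has the wrong sign (matching with \eqref{def:CartBarb} gives $\mu=e^{-\sqrt2\,t}$), but this is irrelevant since the $A_{\mathcal C}$--orbit is the same set either way. Second, you implicitly use that $A,B,C,D$ are linearly independent so that $[z_0],\dots,[z_3]$ really are the vertices of a Barbot crown; this holds because $\operatorname{span}\{A,B\}$ and $\operatorname{span}\{C,D\}$ are orthogonal planes of signature $(1,1)$, hence together span the whole $(2,2)$ space $E$. It would not hurt to say this explicitly.
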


\subsection{Periodic and quasiperiodic surfaces}

We now follow a path similar to the one we followed in section \ref{sec:qp-curve} for loops in the Einstein Universe.

\begin{definition} Let $\Sigma$ be a complete maximal surface. We say \begin{enumerate}
	\item The surface $\Sigma$ is \emph{periodic} if it is invariant by a discrete subgroup of $\G$ isomorphic to the fundamental group of a closed surface of genus at least 2.
	\item  The surface $\Sigma$ is {\em quasiperiodic} if the closure of its $\G$-orbit in $\MH$ does not contain a Barbot surface.
	\end{enumerate}
\end{definition}

We have the following

\begin{proposition}
Periodic surfaces are quasiperiodic.
\end{proposition}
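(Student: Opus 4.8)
The plan is to prove two things: that the $\G$--orbit of a periodic surface is already closed in $\MH$, and that a periodic surface is never a Barbot surface. Since all Barbot surfaces lie in a single $\G$--orbit (Proposition \ref{pro:prop-Bcrown}), these two facts together show that the closure of the $\G$--orbit of a periodic surface contains no Barbot surface.

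So let $\Sigma$ be periodic, invariant under a discrete subgroup $\Gamma$ of $\G$ isomorphic to $\pi_1(S)$ for some closed surface $S$ of genus at least $2$. First I would examine how $\Gamma$ acts on $\Sigma$. Being a complete spacelike surface, $\Sigma$ is an entire graph over a hyperbolic plane, hence diffeomorphic to $\R^2$; in particular it is simply connected. Identify $\Sigma$ with the corresponding leaf of $\MH$. Then the $\Gamma$--action on $\Sigma$ is the restriction of the $\Gamma$--action on $\MH$, which is properly discontinuous because $\Gamma$ is discrete in $\G$ and $\G$ acts properly on $\MH$ (Theorem \ref{theo:CompactnessTheorem}); moreover it is free since $\Gamma$ is torsion--free. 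Consequently $\Sigma\to\Sigma/\Gamma$ is the universal covering of a surface whose fundamental group is $\pi_1(S)$. As $\pi_1(S)$ is not free for $S$ closed of genus at least $2$, the surface $\Sigma/\Gamma$ cannot be noncompact; thus $\Gamma$ acts cocompactly on $\Sigma$.

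Next I would deduce that $\G\cdot\Sigma$ is closed in $\MH$. Let $p\colon\MH\to\MH/\G$ be the projection. Since $\Gamma\subset\G$, the composite $\Sigma\to\MH\to\MH/\G$ factors through $\Sigma/\Gamma$, so $p(\Sigma)$ is the image of the compact space $\Sigma/\Gamma$ and is therefore compact. Because the $\G$--action on $\MH$ is proper, the quotient $\MH/\G$ is Hausdorff, hence $p(\Sigma)$ is closed in $\MH/\G$; its saturation $p^{-1}(p(\Sigma))$, which is exactly the orbit $\G\cdot\Sigma$, is then closed in $\MH$. Therefore the closure of the $\G$--orbit of $\Sigma$ equals $\G\cdot\Sigma$, and this set contains a Barbot surface if and only if $g\Sigma$ is a Barbot surface for some $g\in\G$, i.e. if and only if $\Sigma$ itself is a Barbot surface.

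Finally I would rule this last possibility out. If $\Sigma$ were a Barbot surface, $\Gamma$ would inject into $\Stab_\G(\Sigma)$. But the stabiliser of a Barbot surface is, up to a compact factor, virtually abelian: an element of $\G$ preserving $\Sigma$ preserves its boundary Barbot crown $\mathcal C$, hence preserves the signature $(2,2)$ subspace $F_{\mathcal C}$ spanned by the vertices together with its negative definite orthogonal (Proposition \ref{pro:BarbCrownG}); it therefore acts on $F_{\mathcal C}^\bot$ through the compact group $\mathsf O(F_{\mathcal C}^\bot)$, and on $F_{\mathcal C}$ it permutes the four vertex lines --- finitely many possibilities --- while on the finite index subgroup fixing each vertex line it lies in the Cartan subgroup $A_{\mathcal C}$ of \eqref{def:CartBarb}, which is abelian. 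Such a group contains no nonabelian free subgroup, whereas $\pi_1(S)$ does, a contradiction. The main point requiring care is the first step --- that $\Gamma$ acts properly discontinuously and cocompactly on $\Sigma$ --- together with the explicit identification of $\Stab_\G(\Sigma)$ for a Barbot surface; both are essentially bookkeeping, the former using only that a simply connected surface with non--free deck group is compact, the latter reading off \eqref{def:CartBarb} and Proposition \ref{pro:BarbCrownG}.
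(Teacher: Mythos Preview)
Your proof is correct and follows essentially the same strategy as the paper's: show that the image of $\Sigma$ in $\MH/\G$ is compact (you phrase this as the $\G$--orbit being closed, via cocompactness of $\Gamma$ on $\Sigma$), and then rule out that $\Sigma$ is a Barbot surface. The paper is terser---it asserts compactness of the leaf without spelling out the ``non-free fundamental group $\Rightarrow$ compact quotient'' step, and it excludes the Barbot case by noting that groups acting freely and cocompactly on a flat surface are abelian, whereas you argue via the structure of $\Stab_\G(\Sigma)$; these are equivalent routes to the same conclusion.
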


\begin{proof}
Observe that a maximal surface is quasiperiodic if and only if its image is precompact in $(\MH/\G)\setminus\{m_0\}$ where $m_0$ is the Barbot point. By definition, a periodic surface corresponds to a compact leaf in $\MH/\G$ which is different from $b$ (since the only groups acting freely and cocompactly on a Barbot surface are abelian). The result follows.
\end{proof}

\begin{rmk}
It was proved in \cite{CTT} that periodic surfaces are in one-to-one correspondence with maximal representations of (closed) surface groups in $\G$. 
\end{rmk}

As a corollary of  proposition \ref{pro:BarycenterProper}
 
\begin{corollary}\label{cor:qp2Sqp}
A complete maximal surface  is quasiperiodic if and only if its boundary at infinity is quasiperiodic.
\end{corollary}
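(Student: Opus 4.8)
The plan is to deduce Corollary \ref{cor:qp2Sqp} directly from the properness of the barycenter map (Proposition \ref{pro:BarycenterProper}) together with the characterisations of quasiperiodicity on both sides in terms of precompactness modulo $\G$. Recall that a complete maximal surface $\Sigma$ is quasiperiodic exactly when the image of $\Sigma$ (as a leaf, i.e.\ the set of pointed surfaces $(x,\Sigma)$ with $x\in\Sigma$) in $\MH/\G$ is precompact and avoids the Barbot point $m_0$; equivalently its closure in $\MH/\G$ does not contain $m_0$. On the loop side, $\partial_\infty\Sigma$ is quasiperiodic exactly when the image of $T_{\partial_\infty\Sigma}$ in $\L/\G$ is precompact and does not contain a Barbot crown in its closure (Proposition preceding Section \ref{ss:cr-qs}, i.e.\ the last proposition of Section \ref{sec:qp-curve}). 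So the statement amounts to transporting these two closed/precompactness conditions through the map $B$.

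First I would set $\Lambda=\partial_\infty\Sigma$ and observe that the barycenter map $B$ restricts to a map from $T_\Lambda$ (pointed loops $(\Lambda,\tau)$) onto the leaf $\Sigma\subset\MH$, by Proposition \ref{pro:BarYcenterProper}: indeed $B(\Lambda,\tau)=(b_\tau,\Sigma(\Lambda))$ and $\Sigma(\Lambda)=\Sigma$ by the uniqueness in Theorem \ref{theo:MainTheoPaper1}, and surjectivity onto the leaf was shown in the second step of the proof of Proposition \ref{pro:BarycenterProper}. Passing to quotients, $B$ descends to a continuous proper map $B_0\colon \L/\G\to\MH/\G$ which carries the image of $T_\Lambda$ onto the image of $\Sigma$. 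Since $B_0$ is proper and continuous between locally compact spaces, it is in particular a closed map, so the closure of the image of $\Sigma$ in $\MH/\G$ is the $B_0$-image of the closure of the image of $T_\Lambda$ in $\L/\G$, and preimages of compact sets are compact. This already gives the equivalence of the two precompactness statements.

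Next I would identify the two "bad" loci. By Proposition \ref{pro:prop-Bcrown}, the barycenter map sends pointed Barbot crowns to pointed Barbot surfaces, and since all Barbot surfaces form the single point $m_0$ in $\MH/\G$, we have $B_0(\text{Barbot locus})=\{m_0\}$. Conversely $B_0^{-1}(m_0)$ is exactly the Barbot locus: if $B(\Lambda,\tau)$ lies in the $\G$-orbit of a Barbot surface then $\Sigma(\Lambda)$ is a Barbot surface, hence flat, hence (by uniqueness of the surface bounding $\Lambda$ and the description of Barbot surfaces) $\Lambda$ is a Barbot crown. Therefore, using that $B_0$ is closed, the closure of the image of $T_\Lambda$ in $\L/\G$ meets the Barbot locus if and only if the closure of the image of $\Sigma$ in $\MH/\G$ contains $m_0$. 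Combining: $\Sigma$ is quasiperiodic $\iff$ the closure of the image of $\Sigma$ avoids $m_0$ and is compact $\iff$ the closure of the image of $T_\Lambda$ avoids the Barbot locus and is compact $\iff$ $\Lambda=\partial_\infty\Sigma$ is quasiperiodic.

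\textbf{Main obstacle.} The routine part is the topology bookkeeping with proper maps and quotients; the point requiring genuine care is the equality $B_0^{-1}(m_0)=\text{Barbot locus}$, i.e.\ that the only semi-positive loops whose associated complete maximal surface is (equivalent to) a Barbot surface are the Barbot crowns. This uses the rigidity input: a complete maximal surface with flat induced metric is a Barbot surface — in the $\mathbf{H}^{2,1}$ case this is Lemma \ref{lem:BS}, and in general it follows from the Curvature Rigidity Theorem \ref{theo:RigidityTheoremINTRO} — together with the uniqueness in the Asymptotic Plateau Theorem \ref{theo:MainTheoPaper1} to conclude that the boundary loop must then be a Barbot crown. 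One must also be slightly careful that "precompact" and "closure does not contain the Barbot point" are used consistently on both sides, but this is exactly the reformulation recorded just before Section \ref{ss:cr-qs} and in the proof of the Proposition on periodic surfaces, so no new argument is needed there.
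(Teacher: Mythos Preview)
Your approach is the one the paper intends (the paper gives no proof beyond ``as a corollary of Proposition \ref{pro:BarycenterProper}''), and the general structure --- transport precompactness through the proper equivariant barycenter map and identify the Barbot loci on both sides --- is correct. Two points deserve comment.

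First, your ``main obstacle'' is not one. The identity $B_0^{-1}(m_0)=\{\text{Barbot locus}\}$ is immediate from the \emph{definition} of a Barbot surface: a Barbot surface is by definition a maximal surface whose boundary is a Barbot crown, so if $\Sigma(\Lambda)$ is ($\G$-equivalent to) a Barbot surface then $\Lambda=\partial_\infty\Sigma(\Lambda)$ is a Barbot crown. No flatness, no rigidity theorem, no Lemma \ref{lem:BS} is needed here.

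Second, there is a small genuine gap. The characterisation you invoke (the last proposition of Section \ref{sec:qp-curve}) is stated only for \emph{positive} loops, whereas $\Lambda=\partial_\infty\Sigma$ is a priori only semi-positive. In the direction ``$\Sigma$ quasiperiodic $\Rightarrow$ $\Lambda$ quasiperiodic'' you conclude that $\overline{[T_\Lambda]}$ avoids the Barbot locus, but you still need $\Lambda$ to be positive before the cited proposition applies. This is easily repaired via Proposition \ref{pro:NonPositiveContainsBarbot}: if $\Lambda$ were semi-positive but not positive, there would be $g_k\in\G$ with $g_k\Lambda\to\mathcal C$ a Barbot crown; choosing a positive triple $\tau_0$ in $\mathcal C$ and nearby positive triples on $g_k\Lambda$, one produces a sequence in $T_\Lambda$ whose image in $\L/\G$ accumulates on the Barbot locus, contradicting what you just proved. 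Relatedly, the surjectivity of $B\vert_{T_\Lambda}$ onto the leaf $\Sigma$ is established in the paper only for positive $\Lambda$; fortunately you only need it in the direction ``$\Lambda$ quasiperiodic $\Rightarrow$ $\Sigma$ quasiperiodic'', where $\Lambda$ is positive by hypothesis, while the other direction uses only the inclusion $B_0([T_\Lambda])\subseteq[\Sigma]$.
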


\section{Rigidity Theorems}\label{s:RigidityTheorems}

In this section we  show that Barbot surfaces play a special role in the theory. Cheng \cite{C} proved that 
\begin{theorem}{\sc [Cheng]}\label{theo:Cheng}
Let $\Sigma$ be a complete maximal surface in $\Hn$. Then
 the intrinsic curvature of $\Sigma$ is non positive. If $\Sigma$ is flat then $\Sigma$ is a Barbot surface.
\end{theorem}
As a consequence of this theorem and Gauss  equation,  we get an {\it a priori} bound on the norm of the second fundamental form.
\begin{corollary}
Let $\Sigma$ be a complete maximal surface in $\Hn$. Then
the norm of the second fundamental form $\Sigma$ is no greater than 2.
\end{corollary}
 The  corollary is an improvement over Ishihara bounds \cite{Ishihara} in which 
 the norm of the second fundamental form by $2n$ in $\Hn$. Note that Ishihara's  bounds are more general and optimal for maximal complete spacelike submanifolds of dimension $p$ in ${\bf H}^{p,q}$ for $q<p$, where he obtains the bounds $pq$. 
 
\vskip 0.2 truecm
We improve Cheng to a pointwise rigidity result;

\begin{theorem}{\sc [Curvature  rigidity theorem]}\label{theo:CurvatureRigidity}
Let $\Sigma$ be a complete maximal surface in $\Hn$. Then
 if the intrinsic curvature $\Sigma$ is zero at a point, it vanishes everywhere and  $\Sigma$ is a Barbot surface.
\end{theorem}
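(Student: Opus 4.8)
The plan is to derive a Bochner-type differential inequality for a well-chosen scalar function built from the second fundamental form, and then apply a maximum principle together with the Bonsante--Schlenker characterisation of flat maximal surfaces (Lemma \ref{lem:BS}). First I would set up the complexified picture: fix a local conformal coordinate $z$ on $\Sigma$ coming from the induced metric $g_\I = e^{2u}|\de z|^2$, and encode the second fundamental form through the normal-bundle-valued quadratic differential $\II^{2,0}$, i.e. the $(2,0)$-part of $\II$ with respect to the conformal structure. Since $\Sigma$ is maximal, $\tr_{g_\I}\II = 0$ forces $\II(e_1,e_1) = -\II(e_2,e_2)$, so writing $\alpha = \II(e_1,e_1)$ and $\beta = \II(e_1,e_2)$ as in Proposition \ref{pro:RicciEquation}, the relevant scalar is $\phi \defeq \Vert\II\Vert^2 = 2(\Vert\alpha\Vert^2 + \Vert\beta\Vert^2)$ (up to a constant), and by the Gauss equation \eqref{eq:Gauss} we have $K_\Sigma = -1 + \tfrac12\phi$. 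So item (i), non-positivity of $K_\Sigma$, is equivalent to $\phi \le 2$ everywhere, and item (ii) is that $\phi = 2$ at one point forces $\phi \equiv 2$ and $\Sigma$ a Barbot surface.

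Next I would compute $\Delta \log \phi$ (away from the zero set of $\phi$) using the Codazzi equation (Proposition \ref{pro:CodazziEquation}, $\de^D\II = 0$), which makes the normal-valued quadratic differential $\II^{2,0}$ ``holomorphic'' in the appropriate twisted sense, together with the Ricci equation (Proposition \ref{pro:RicciEquation}), which controls the curvature of the normal connection via $R_N = \omega\otimes\varphi$ and the key identity \eqref{eq:RNalpha}, $\langle\varphi(\alpha),\beta\rangle = -2\Vert\alpha\wedge\beta\Vert^2 \le 0$. The expected outcome is a Bochner formula of the shape
\[
\Delta\log\phi \;\geq\; 2\phi - 4 \;+\; (\text{nonnegative curvature-of-normal-bundle terms}),
\]
valid on $\{\phi > 0\}$; here the constant $-4$ comes from the ambient sectional curvature $-1$ entering through Gauss, and the crucial point is that the Ricci term \eqref{eq:RNalpha} enters with a favourable sign because $\langle\varphi(\alpha),\beta\rangle\le 0$. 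The classical Bochner computation needed for this is exactly what the appendix of the paper is said to supply, so I would invoke it there.

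Then I would run a maximum-principle argument. For item (i): suppose $\phi$ exceeds $2$ somewhere. Since $\Sigma$ is complete with $K_\Sigma$ bounded below (again by Gauss, once one knows $\phi$ is locally bounded, or by an a priori compactness input from \cite{LTW}), apply the Omori--Yau maximum principle to $\log\phi$ on $\{\phi>2\}$: at a near-maximum one gets $\Delta\log\phi \le \epsilon$ while the inequality gives $\Delta\log\phi \ge 2\phi - 4 > 0$ bounded away from $0$, a contradiction. Hence $\phi \le 2$, i.e. $K_\Sigma \le 0$. For item (ii): if $\phi(x_0) = 2$, then $x_0$ is an interior maximum of $\phi$ (hence of $\log\phi$ where $\phi>0$); the strong maximum principle applied to the inequality $\Delta\log\phi \ge 2\phi - 4 \ge 0$ forces $\phi \equiv 2$, and moreover forces all the nonnegative terms to vanish, in particular $\Vert\alpha\wedge\beta\Vert \equiv 0$ and the normal connection to be flat along $\Sigma$. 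Flatness of $R_N$ plus $\Vert\alpha\wedge\beta\Vert\equiv 0$ should pin down that $\II$ takes values in a parallel line subbundle of $\Nn\Sigma$, reducing the situation to a complete flat maximal surface inside a totally geodesic $\mathbf H^{2,1}$; Lemma \ref{lem:BS} then identifies it as a Barbot surface. (One must also handle the zero set of $\phi$: if $\phi$ vanishes at some point but $\phi\equiv 2$ is being claimed, there is nothing to reconcile since $2\ne 0$; in the item (i) argument the set $\{\phi>2\}$ is automatically inside $\{\phi>0\}$.)

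The main obstacle I anticipate is twofold: getting the Bochner inequality with the \emph{correct sign and constant} — in particular verifying that the normal-curvature contributions and the term coming from \eqref{eq:RNalpha} genuinely have the right signs when $n\ge 2$ (for $n=1$ this is Bonsante--Schlenker), which requires carefully tracking the $\Omega^1(\Sigma,\Hom(\T\Sigma,\Nn\Sigma))$ formalism and the definition of $\varphi$ — and then, in the equality case, rigorously extracting the geometric conclusion (parallel line subbundle, reduction to $\mathbf H^{2,1}$) rather than merely the vanishing of some tensors. A secondary technical point is justifying the maximum principle on a possibly non-compact complete surface: one either uses Omori--Yau with the lower curvature bound, or appeals to the cocompactness of the $\G$-action on $\MH$ from Theorem \ref{theo:CompactnessTheorem} to reduce to a uniform estimate, and I would likely prefer the latter since it is already available in this paper.
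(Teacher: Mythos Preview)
Your proposal is correct and follows essentially the same approach as the paper: Bochner formula for the holomorphic section $\sigma$ encoding $\II$, favourable sign from the Ricci identity \eqref{eq:RNalpha}, cocompactness of $\MH/\G$ (your preferred option) to locate a global maximum, strong maximum principle for the equality case, and reduction via a parallel line subbundle of $\Nn\Sigma$ to a copy of $\mathbf H^{2,1}$ and Bonsante--Schlenker. The only cosmetic difference is that the paper works with $f=\tfrac12\Vert\sigma\Vert_h^2$ directly rather than $\log\phi$, obtaining the explicit identity $\Delta f = 2K_\Sigma\Vert\sigma\Vert_h^2 + 4\Vert\alpha\wedge\beta\Vert^2 + \Vert\nabla\sigma\Vert_h^2$ and hence $\tfrac12\Delta K_\Sigma \geq 2K_\Sigma(1+K_\Sigma)$, which sidesteps the zero-set issue you flag.
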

We have a similar corollary
\begin{corollary}{\sc [Rigidity for the second fundamental form]}
Let $\Sigma$ be a complete maximal surface in $\Hn$. Then
if  this norm is  2 at a point, it is 2  everywhere and $\Sigma$ is a Barbot surface.
\end{corollary}

We have a similar rigidity result in term of the norms of gradient of horofunctions and spatial distance. Observe that although the gradient of the spatial distance to a point $z$ in $\Sigma$ is not defined at $z$, we can extend continuously its norm by imposing the value 1 at $z$.

\begin{theorem}{\sc [Horofunction and spatial distance  rigidity theorem]}\label{theo:HorofunctionRigidity}
Let $\Sigma$ be a complete maximal surface in $\Hn$ with intrinsic curvature $K_\Sigma$. Let $c=\inf(-K_\Sigma)$ and let $z$ be a point in $\Sigma\sqcup\partial_\infty\Sigma$. Let $h_z$ be 
\begin{itemize}
	\item the horofunction associated to $z$,  if $z$ belongs to $\partial_\infty\Sigma$;
  	\item  the function spatial distance to  $z$, if $z$ belongs to $\Sigma$.
\end{itemize}
Then 
\begin{enumerate}
	\item The square of the norm of the gradient of $h_z$ is not less than 1 and no greater than $2-c$.
	\item If for some $z$,  the square of the norm of the gradient of $h_z$  is equal to $2$ at a point, then  $\Sigma$ is a Barbot surface.
\end{enumerate}
\end{theorem}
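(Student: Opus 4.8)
The plan is to reduce everything to a pointwise Bochner-type computation on $\Sigma$, analogous to (and refining) the one used for the Curvature Rigidity Theorem \ref{theo:CurvatureRigidity}. First I would treat the horofunction case, $z\in\partial_\infty\Sigma$. Fix a point $x_0\in\Sigma$ and a lightlike lift $z_0$ of $z$ with $\langle (x_0)_0,z_0\rangle>0$; set $h=h_{z_0}$ and $u=\|\nabla h\|^2$. From \eqref{eq:GradientHorofunction} and a direct computation on $\Hn$ one finds that $h$ satisfies a nice second-order equation: its ambient Hessian is $\Hess^{\Hn} h = g - \de h\otimes \de h$ (up to the sign convention), since $h$ is essentially a Busemann function of the constant-curvature $-1$ space. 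Restricting to the maximal surface $\Sigma$ and using the Gauss decomposition \eqref{e:ConnectionDecomposition}, the intrinsic Hessian of $h|_\Sigma$ picks up a correction involving $\II$ paired with the normal component $\nu$ of $\nabla^{\Hn} h$. Because $\Sigma$ is maximal, $\tr_{g_\I}\II=0$, so the trace of this correction term drops out and we get a clean intrinsic equation of the form $\Delta_\Sigma h = 2 - u$ (the constant $2$ coming from $\dim\Sigma=2$ and curvature $-1$), where $u = \|\nabla_\Sigma h\|^2 + \|\nu\|^2 \geq \|\nabla_\Sigma h\|^2$. The first inequality $u\geq 1$ should come from the fact that $h$ is a (restricted) Busemann-type function with a definite gradient lower bound: indeed on $\Hn$ one has $\|\nabla^{\Hn}h\|^2 = 1$ identically, and since $\nabla^{\Hn}h = \nabla_\Sigma h + \nu$ with $\nu$ timelike (normal bundle has signature $(0,n)$), we get $\|\nabla_\Sigma h\|^2 = 1 - \|\nu\|^2_{g} = 1 + \|\nu\|^2$ where the last norm is the positive one; hence $u = \|\nabla_\Sigma h\|^2 \geq 1$ with equality iff $\nu=0$, i.e. $\nabla h$ is tangent to $\Sigma$.

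For the upper bound $u\leq 2-c$, I would apply the Bochner formula (the one recalled in the appendix) to $h|_\Sigma$:
\[
\tfrac12\Delta_\Sigma \|\nabla_\Sigma h\|^2 = \|\Hess_\Sigma h\|^2 + \langle \nabla_\Sigma h, \nabla_\Sigma \Delta_\Sigma h\rangle + K_\Sigma\,\|\nabla_\Sigma h\|^2.
\]
Using $\Delta_\Sigma h = 2-u$ and $u = \|\nabla_\Sigma h\|^2$ (in the equality-analysis we know $\nu$'s contribution), together with $K_\Sigma\leq -c<0$ and the Cauchy–Schwarz bound $\|\Hess_\Sigma h\|^2\geq \tfrac12(\Delta_\Sigma h)^2 = \tfrac12(2-u)^2$, one obtains a differential inequality of the form $\tfrac12\Delta_\Sigma u \geq \tfrac12(2-u)^2 - \langle\nabla u,\nabla u\rangle \cdot(\text{something}) - c\,u$ — more precisely an inequality showing that wherever $u$ attains an interior maximum, $\Delta_\Sigma u\leq 0$ forces $(2-u)^2 \leq 2(2-u) + \dots$ pinning $u \leq 2-c$. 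To make this rigorous on a \emph{complete} (non-compact) surface I would invoke the Omori–Yau maximum principle, which applies because $K_\Sigma$ is bounded below (the surface is complete and, by Theorem \ref{theo:CurvatureRigidity} and Gauss, $-2\leq K_\Sigma\leq 0$, so Ricci is bounded below). The spatial-distance case $z\in\Sigma$ is handled identically: if $\eta(x)=\eth(z,x)$ then on $\Sigma$, $\cosh\eta = |\langle x_0,z_0\rangle|$ and one checks $\|\nabla^{\Hn}(\cosh\eta)\|$ relates to $\sinh\eta$ in the same way, giving the same equation $\Delta_\Sigma\eta = (2-u)\coth\eta$ or equivalently a clean equation for $\cosh\eta$, with the boundary value $u\to 1$ as $x\to z$ forced by smoothness; the same Bochner/Omori–Yau argument applies, and the artificial value $1$ at $z$ is consistent with the lower bound.

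For the rigidity statement (item (ii)): if $u=2$ somewhere, then since $u\leq 2-c\leq 2$ we must have $c=0$, i.e.\ $\inf(-K_\Sigma)=0$, and moreover $u$ attains its maximum value $2$ at an interior point. Running the differential inequality in the equality case: at that point all the intermediate inequalities are equalities — in particular $\|\Hess_\Sigma h\|^2 = \tfrac12(\Delta_\Sigma h)^2$ (so $\Hess_\Sigma h$ is pure trace) and $K_\Sigma\|\nabla_\Sigma h\|^2$ contributes $0$, forcing $K_\Sigma=0$ at that point (since $\|\nabla_\Sigma h\|^2 = 2\neq 0$). By the strong maximum principle applied to the inequality satisfied by $u$, $u\equiv 2$ on all of $\Sigma$, hence $K_\Sigma\equiv 0$, and then Theorem \ref{theo:CurvatureRigidity}(2) identifies $\Sigma$ as a Barbot surface. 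Alternatively and more cheaply: $u=2$ at one point already gives $K_\Sigma=0$ at that point via the equality analysis, and then Theorem \ref{theo:CurvatureRigidity} immediately propagates flatness everywhere — so item (ii) follows directly from item (i) of Theorem \ref{theo:HorofunctionRigidity} combined with the already-proven Curvature Rigidity Theorem, provided we track the equality case carefully. I expect the main obstacle to be the bookkeeping in the Bochner computation: correctly separating the tangential and normal contributions of $\nabla^{\Hn}h$, verifying that maximality exactly kills the trace of the $\II$-correction, and getting the constants right so that the bound reads $2-c$ rather than merely $2$. The completeness/non-compactness issue is the second delicate point, but it is dispatched by Omori–Yau once the curvature lower bound from Theorem \ref{theo:CurvatureRigidity} is in hand.
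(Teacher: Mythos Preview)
Your derivation of the lower bound $\Vert\nabla_\Sigma h_z\Vert^2\geq 1$ via the tangential/normal decomposition of $\nabla^{\Hn}h$ is correct and is essentially the paper's Lemma~\ref{lem:low--grad}. The upper bound, however, does not follow from the Bochner inequality you sketch, and this is a genuine gap. At a maximum of $u=\Vert\nabla_\Sigma h\Vert^2$ one has $\Hess_\Sigma h(\nabla h,\cdot)=0$, hence $\Vert\Hess_\Sigma h\Vert^2=(2-u)^2$ exactly (not merely $\geq\tfrac12(2-u)^2$), and Bochner reads $0\geq (2-u)^2+K_\Sigma u$. To turn this into a lower bound on $\tfrac12\Delta u$ you must use a \emph{lower} bound on $K_\Sigma$, namely $K_\Sigma\geq -1$; your replacement of $K_\Sigma u$ by $-cu$ goes the wrong way since $K_\Sigma\leq -c$. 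With the correct sign one only gets $(2-u)^2\leq u$, i.e.\ $u\leq 4$ at worst, which neither gives $u\leq 2$ nor the refined $u\leq 2-c$. Similarly, at a point where $u=2$ the inequality $(2-u)^2+K_\Sigma u\leq 0$ becomes $2K_\Sigma\leq 0$, which is vacuous and does not force $K_\Sigma(x)=0$; so your rigidity argument also breaks.

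The paper's argument supplies the missing ingredient. Writing $\Hess_\Sigma h=\phi_z\bigl(g-\d h\otimes\d h+\beta\bigr)$ with $\beta(u,v)=\langle\II(u,v),z_0\rangle/\langle x,z_0\rangle$ (Lemma~\ref{lem:HessianHorofunction}), the critical-point condition gives $\beta(\nabla h,\nabla h)=u(u-1)$ (Lemma~\ref{lem:CriticalPoint}). The sharp bound then comes from a \emph{second}, independent estimate on $\beta$: Cauchy--Schwarz in $\Nn\Sigma$ together with $\Vert\II(w,w)\Vert^2\leq(1+K_\Sigma)\Vert w\Vert^4$ and the normal-component bound $\Vert z_0^N\Vert^2/\langle x,z_0\rangle^2\leq u-1$ yield $\beta(w,w)^2\leq\Vert w\Vert^4(u-1)(1+K_\Sigma)$ (Lemma~\ref{lem:InequalityCriticalPoint}). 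Taking $w=\nabla h$ and combining gives $u-1\leq 1+K_\Sigma$, hence $u\leq 2+K_\Sigma\leq 2-c$, with equality forcing $K_\Sigma(x)=0$ at that very point, whence Theorem~\ref{theo:CurvatureRigidity} yields the Barbot conclusion. Note finally that the paper avoids Omori--Yau: it uses cocompactness of the $\G$-action on $\mathcal N_1(n)$ (Corollary~\ref{cor:Np-compact}) to guarantee the supremum of $(x,\Sigma,z)\mapsto\Vert\nabla h_z(x)\Vert^2$ is actually attained.
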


Recall that by the Cheng's Theorem, $c$ is non negative.

\subsection{Rigidity for the curvature}

We first translate the problem into holomorphic terms. Then in the next subsection, we use Bochner's formula of Proposition \ref{pro:BochnerFormula} to prove Theorem \ref{theo:CurvatureRigidity}. Finally, in the last subsection, we prove Theorem \ref{theo:HorofunctionRigidity}.

\subsubsection{A holomorphic picture}\label{ss:holomorphicpicture}

Let $\Sigma$ be a complete maximal surface in $\Hn$ with induced metric $g_T$, normal bundle $(\Nn\Sigma,g_N)$ and second fundamental form $\II$.

Denote by $\K$ the canonical bundle over $\Sigma$, that is, the complex line bundle $\Hom_\C(\T^{1,0}\Sigma,\C)$. Define the complex vector bundle 
\[\E\defeq \K^2\otimes \Nn^\C \Sigma~,\]
 where $\Nn^\C\Sigma=\Nn \Sigma \otimes \C$.

The metrics $g_T$ on $\T\Sigma$ and $(-g_N)$ on $\Nn \Sigma$ induce a positive definite metric $g$ on the bundle  $\Sym^2(\T^*\Sigma)\otimes\Nn\Sigma$. The Hermitian extension of $g$ restricts to a Hermitian metric $h$ on $\E$.

Similarly, the connections $\nabla^T$ and $\nabla^N$  on $\T\Sigma$ and $\Nn\Sigma$ (see equation (\ref{e:ConnectionDecomposition})) define a unitary connection $\nabla^E$ on $(\E,h)$ whose $(0,1)$-part is a Dolbeaut operator $\dbar_\E$ (see Appendix \ref{app:BochnerFormula} for definitions). In this setting, we have the following:

\begin{lemma}\label{lem:Kf}
Using the same notations as above, there is a holomorphic section $\sigma$ of $\E$ such that the second fundamental form of $\Sigma$ is the real part of $\sigma$. Moreover, the intrinsic curvature $K_\Sigma$ of $\Sigma$ satisfies
\[K_\Sigma = -1 + \Vert \sigma\Vert_h^2\ .\]	
\end{lemma}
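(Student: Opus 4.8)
The statement to prove is Lemma~\ref{lem:Kf}: the existence of a holomorphic section $\sigma$ of $\E=\K^2\otimes\Nn^\C\Sigma$ whose real part is $\II$, together with the curvature formula $K_\Sigma=-1+\Vert\sigma\Vert_h^2$. The plan is to use the Codazzi equation (Proposition~\ref{pro:CodazziEquation}, $\de^D\II=0$) as the integrability condition that turns $\II$ into a holomorphic object, and then to compare the Hermitian norm $\Vert\sigma\Vert_h^2$ with the intrinsic quantity $\tfrac12\Vert\II\Vert^2$ appearing in the Gauss equation (Proposition~\ref{pro:GaussEquation}).

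First I would set up the complexification. Work in a local conformal coordinate $z=x+iy$ on $\Sigma$, so that $g_T=e^{2u}|\de z|^2$ and $\partial_z=\tfrac12(\partial_x-i\partial_y)$ is a local frame of $\T^{1,0}\Sigma$, with $\de z^{\otimes 2}$ the dual frame of $\K^2$. Since $\Sigma$ is maximal, $\II(e_1,e_1)=-\II(e_2,e_2)$ and $\II$ is symmetric; extending $\II$ complex-bilinearly, one checks that the $(2,0)$-part $\II^{2,0}$, i.e.\ the component $\II(\partial_z,\partial_z)$ viewed as a section of $\K^2\otimes\Nn^\C\Sigma$, already determines $\II$: indeed the tracelessness $\tr_{g_T}\II=0$ kills the $(1,1)$-part, and reality of $\II$ means $\II$ is exactly twice the real part of $\sigma\defeq \II(\partial_z,\partial_z)\,\de z^{\otimes2}$ (up to a normalising constant I would fix once and for all so that $\mathrm{Re}\,\sigma=\II$ on the nose). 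So I would \emph{define} $\sigma$ this way and then prove it is $\dbar_\E$-holomorphic.

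The holomorphicity is where Codazzi enters: $\dbar_\E\sigma$ is, in the coordinate frame, essentially $D_{\partial_{\bar z}}\bigl(\II(\partial_z,\partial_z)\bigr)$, and one rewrites this using the torsion-free and metric properties of $\nabla^T$ (so that $\nabla^T_{\partial_{\bar z}}\partial_z$ is pure $(1,0)$, being proportional to $\partial_z$ by conformality) to identify it with $(\de^D\II)(\partial_{\bar z},\partial_z)$ up to terms that vanish. By Proposition~\ref{pro:CodazziEquation} this is zero, so $\sigma$ is holomorphic. This step is routine but must be done carefully with the conventions of Appendix~\ref{app:BochnerFormula} for $\dbar_\E$ and the connection $D$; the only subtlety is bookkeeping the Christoffel terms of $\nabla^T$, which are controlled entirely by $u$ and disappear against the definition of $D$.

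Finally, the curvature identity: I would compute $\Vert\sigma\Vert_h^2$ in the frame. With $h$ the Hermitian extension of $g=g_T\otimes(-g_N)$, one has $\Vert\de z^{\otimes2}\Vert^2=e^{-4u}$ (fourth power from the square of the bundle $\K^2$) and $\Vert\II(\partial_z,\partial_z)\Vert^2_{(-g_N)}$ expanded in terms of $\alpha=\II(e_1,e_1)$ and $\beta=\II(e_1,e_2)$ from Proposition~\ref{pro:RicciEquation}; since $\partial_z=\tfrac12 e^{-u}(e_1-ie_2)$, one gets $\II(\partial_z,\partial_z)=\tfrac14 e^{-2u}(\alpha-i\beta - (-\alpha) \cdot(-1)\ldots)$ — more precisely $\tfrac14 e^{-2u}\bigl(\II(e_1,e_1)-\II(e_2,e_2)-2i\II(e_1,e_2)\bigr)=\tfrac12 e^{-2u}(\alpha-i\beta)$, whose squared $(-g_N)$-norm is $\tfrac14 e^{-4u}(\Vert\alpha\Vert^2+\Vert\beta\Vert^2)$. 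Meanwhile $\Vert\II\Vert^2=2(\Vert\alpha\Vert^2+\Vert\beta\Vert^2)$ from the definition and $\II(e_1,e_1)=-\II(e_2,e_2)$. Matching constants (this is exactly what fixes the normalisation in the definition of $\sigma$) gives $\Vert\sigma\Vert_h^2=\tfrac12\Vert\II\Vert^2$, and then Gauss's equation $K_\Sigma=-1+\tfrac12\Vert\II\Vert^2$ rewrites as $K_\Sigma=-1+\Vert\sigma\Vert_h^2$. The main obstacle is not conceptual but is entirely a matter of pinning down the normalising constant consistently across the three places it appears (the definition $\mathrm{Re}\,\sigma=\II$, the Hermitian metric $h$, and the Gauss formula); once the conventions of the appendix are fixed, everything is forced.
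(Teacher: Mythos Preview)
Your approach is correct and essentially identical to the paper's: define $\sigma$ as (twice) the $(2,0)$-part of $\II$, use maximality to kill the $(1,1)$-part, invoke Codazzi for holomorphicity, and match $\Vert\sigma\Vert_h^2=\tfrac12\Vert\II\Vert^2$ against Gauss. The paper avoids your conformal-factor bookkeeping by working directly with an orthonormal frame $(e_1,e_2)$ and setting $\partial_z=\tfrac12(e_1-ie_2)$, which is cleaner; note in passing that in your conventions $\partial_z=\tfrac12 e^{u}(e_1-ie_2)$ rather than $e^{-u}$, so your intermediate factors $e^{-2u}$, $e^{-4u}$ carry sign errors in the exponent---as you anticipated, once the normalisations are fixed everything is forced and the paper's choice $\sigma=2\,\II^{2,0}=(\alpha-i\beta)\,\de z^2$ makes $\Vert\sigma\Vert_h^2=\Vert\alpha\Vert^2+\Vert\beta\Vert^2$ on the nose.
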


\begin{proof} The complexification $\phi_\mathbb C$ of 
a  section $\phi$ of $\Sym^2(\T^*\Sigma)\otimes \Nn\Sigma$ decomposes under types as
\[\phi_\mathbb C= \phi^{2,0} + \phi^{1,1}+\overline{\phi^{2,0}}~.\]
Observe that $\phi^{2,0}$ is a section of $\E$. 

Let $(e_1,e_2)$ be a local orthonormal framing of $\T\Sigma$. Let $\partial_z=\frac{1}{2}(e_1-ie_2)$ the associated section of $\K^*$ and $\d z$ the section of $\K$ dual to $\partial_z$.  We have
\[\phi^{1,1}=\phi(\partial_z,\overline\partial_z)\ \d z\d\bar z=\frac{1}{2}\tr_{g_T}(\phi) \d z\d\bar z\ .\]
Applying the previous decomposition to $\II$, one sees that $\Sigma$ is maximal if and only if $\II^{1,1}=0$. In particular, $\II$ is the real part of the section $\sigma\defeq 2 \II^{2,0}$ of $\E$. One easily checks that the Codazzi equation (proposition \ref{pro:CodazziEquation}) is equivalent to the holomorphicity of $\sigma$ : $\dbar_\E \sigma = 0$. Using a local frame $(e_1,e_2)$ and defining $\alpha =\II(e_1,e_2)$ and $\beta= \II(e_1,e_2)$, we have
\begin{eqnarray*}
	\sigma = 2 \II(\partial_z,\partial_z) \d z^2 = (\alpha -i\beta)\d z^2~,\\
	\Vert \sigma\Vert_h^2 = -g_N(\alpha,\alpha)-g_N(\beta,\beta)=\frac{1}{2}\Vert \II\Vert^2~.
\end{eqnarray*} 
The result now follows from Gauss  equation (proposition \ref{pro:GaussEquation}).
\end{proof}
We now use Bochner formula to have the following corollary

\begin{corollary}\label{cor:maxK}
	Let $\Sigma$ be a complete maximal surface. Assume that $x$ in $\Sigma$ is a maximum of the curvature. Then $K_\Sigma(x)\leq 0$. If furthermore at this point $x$, $K_\Sigma(x)=0$, then  $\Sigma$ is a Barbot surface.
\end{corollary}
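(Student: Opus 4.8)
The plan is to run a Bochner argument on the holomorphic section $\sigma$ of $\E=\K^{2}\otimes\Nn^{\C}\Sigma$ furnished by Lemma \ref{lem:Kf}, for which $\II=\Re\sigma$ and $K_\Sigma=-1+\|\sigma\|_h^{2}$. Set $u\defeq\log\|\sigma\|_h^{2}$ on the open set where $\sigma\neq 0$. Since $\dbar_\E\sigma=0$, Bochner's formula (Proposition \ref{pro:BochnerFormula}) takes the shape
\[
\Delta u=\frac{\|\nabla'\sigma\|^{2}\,\|\sigma\|^{2}-|\langle\nabla'\sigma,\sigma\rangle|^{2}}{\|\sigma\|^{4}}-\frac{\langle i\Lambda F_\E\,\sigma,\sigma\rangle}{\|\sigma\|^{2}}\ ,
\]
where $\nabla'$ is the $(1,0)$-part of $\nabla^{E}$ and the first term is $\geq 0$ by Cauchy--Schwarz. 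Splitting $F_\E=F_{\K^{2}}\otimes\Id+\Id\otimes F_{\Nn^{\C}\Sigma}$, the canonical factor accounts (up to the Laplacian normalisation of Proposition \ref{pro:BochnerFormula}) for a $2K_\Sigma$ term, since the curvature of $\K$ is $K_\Sigma$ times the area form, while the Ricci equation — Proposition \ref{pro:RicciEquation}, in particular \eqref{eq:RNalpha}, together with the negative-definiteness of $g_N$ — identifies the normal contribution as a nonnegative multiple of $\|\alpha\wedge\beta\|^{2}/\|\sigma\|^{2}$. Thus, wherever $\sigma\neq 0$,
\[
\Delta u=G+2K_\Sigma+N\ ,\qquad G\geq 0,\quad N=4\,\|\alpha\wedge\beta\|^{2}/\|\sigma\|^{2}\geq 0\ .
\]

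Now let $x$ be the assumed maximum of $K_\Sigma$. If $\sigma(x)=0$ then $K_\Sigma(x)=-1<0$ by Lemma \ref{lem:Kf}, which proves $K_\Sigma(x)\leq 0$ and makes the hypothesis of the rigidity statement vacuous; so assume $\sigma(x)\neq 0$. As $K_\Sigma=-1+e^{u}$, the point $x$ is an interior maximum of the function $u$, which is smooth near $x$, so $\Delta u(x)\leq 0$, and the identity above gives $2K_\Sigma(x)\leq-G(x)-N(x)\leq 0$. This is the first assertion.

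For the rigidity assertion, assume moreover $K_\Sigma(x)=0$. Then $u(x)=0$, and since $x$ is a global maximum of $K_\Sigma$ we have $K_\Sigma\leq 0$, hence $u\leq 0$, on all of $\Sigma$. Using $e^{u}-1\geq u$ we get $\Delta u\geq 2K_\Sigma=2(e^{u}-1)\geq 2u$, so $u$ is a subsolution of $\Delta-2$ attaining its nonnegative maximum $0$ at the interior point $x$; by the strong maximum principle $u\equiv 0$ on the connected component of $\{\sigma\neq 0\}$ containing $x$, and as $\|\sigma\|^{2}\equiv 1$ there, that component is open and closed, hence all of $\Sigma$. So $\Sigma$ is flat, $\sigma$ is nowhere zero, and $\Delta u\equiv 0$ forces $G\equiv 0$ and $N\equiv 0$. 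From $N\equiv 0$ the image of $\II$ is a line subbundle $\LL\subset\Nn\Sigma$; from $G\equiv 0$ with $\|\sigma\|$ constant and $\dbar_\E\sigma=0$ one gets $\nabla'\sigma=0$, hence $\nabla^{E}\sigma=0$, hence $\LL$ is $\nabla^{N}$-parallel. Then, writing $q\subset E$ for the position line of $q\in\Sigma$ (recall $\Hn\subset\Proj(E)$), the assignment $q\mapsto\T_q\Sigma\oplus\LL_q\oplus q$ is a subbundle of the trivial bundle $\underline E$ over $\Sigma$ that is parallel for the flat connection of $E$, of constant signature $(2,0)+(0,1)+(0,1)=(2,2)$; over the connected $\Sigma$ it is therefore a fixed $(2,2)$-subspace $F_{0}\subset E$, so $\Sigma$ lies in the totally geodesic copy of $\mathbf H^{2,1}$ cut out by $\Proj(F_{0})$, where it is a complete flat maximal surface. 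By the Bonsante--Schlenker Lemma \ref{lem:BS}, $\Sigma$ is a Barbot surface.

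The main obstacle I expect is the sign bookkeeping in the first paragraph: checking, from the Ricci equation and the negative-definiteness of $g_N$, that the normal-curvature term enters with the sign making $N\geq 0$ — that is, reinforcing rather than opposing the conclusion $K_\Sigma\leq 0$ — and matching the normalisation of the canonical-bundle curvature against the Laplacian convention of Proposition \ref{pro:BochnerFormula}. A secondary subtlety is the rigidity step: upgrading the pointwise equality at $x$ to $u\equiv 0$ while correctly handling the zero locus of $\sigma$, and then assembling the parallel $(2,2)$-subbundle cleanly enough to invoke Lemma \ref{lem:BS}.
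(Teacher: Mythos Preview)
Your proof is correct and follows essentially the same strategy as the paper: apply Bochner's formula to the holomorphic section $\sigma$, use the maximum principle to get $K_\Sigma\leq 0$, and invoke the strong maximum principle for the rigidity, then identify the parallel $(2,2)$-subbundle and appeal to Lemma~\ref{lem:BS}.

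The only substantive difference is that you work with $u=\log\|\sigma\|_h^2$ while the paper works directly with $f=\tfrac12\|\sigma\|_h^2$. The paper's choice has the minor advantage that $f$ is smooth everywhere, so one never has to worry about the zero locus of $\sigma$; this lets them write the single clean inequality $\tfrac12\Delta K_\Sigma\geq 2K_\Sigma(1+K_\Sigma)$ globally and apply the strong maximum principle to $K_\Sigma$ directly. Your logarithmic version forces you to handle $\{\sigma=0\}$ separately and then argue the open--closed step, which you do correctly. Conversely, your formulation makes the Cauchy--Schwarz term $G\geq 0$ explicit, which is pleasant. Regarding the worry you flag about the coefficient of $K_\Sigma$: the paper's computation indeed gives $2K_\Sigma\|\sigma\|^2$ in $\Delta f$, so after dividing by $\|\sigma\|^2$ and subtracting the gradient term you would get $4K_\Sigma$ rather than $2K_\Sigma$ in $\Delta u$ with the paper's Laplacian convention; this is harmless for the argument but worth tracking. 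Finally, to deduce that $\LL$ is $\nabla^N$-parallel you do need $\nabla^E\sigma=0$, not just $N\equiv 0$; in the paper this comes immediately from $\|\nabla\sigma\|^2=0$, whereas you recover it from $G\equiv 0$ together with $\|\sigma\|$ constant --- a slightly longer but equally valid route.
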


\begin{proof}
We use the notation introduced in the previous paragraph.
Let $f\defeq\frac{1}{2}\Vert\sigma\Vert_h^2$. 
Bochner formula (see proposition \ref{pro:BochnerFormula})  gives
\[\Delta f = h( R_\E(e_1,e_2)\sigma, J\sigma) + \Vert \nabla\sigma\Vert_h^2~. \]
Let  $(e_1,e_2)$ be a local orthonormal frame of $\T\Sigma$. Let $\partial_z=\frac{1}{2}(e_1-ie_2)$ be the local section  the complexified bundle of $\T\Sigma$.  Let  $\d z$ be the section of $\K$ dual to the section $\partial_z$. Let  $\alpha=\II(e_1,e_1),~\beta=\II(e_1,e_2)$. Then  we have
\begin{eqnarray*}
R_\E(e_1,e_2)\sigma & = & R_\E(e_1,e_2)\big((\alpha-i\beta)\d z^2\big) \\
& = & 2(\alpha-i\beta)\d z\otimes\left(R_\K(e_1,e_2)\d z\right) + \big(R_N(e_1,e_2)(\alpha-i\beta)\big)\d z^2~.
\end{eqnarray*}
We have 
$R_T(e_1,e_2)\partial_z=-i K_\Sigma \cdotp\partial_z$  and hence   $R_\K(e_1,e_2)\d z=iK_\Sigma \cdotp \d z$.
On the other hand, if $h_N$ is the Hermitian extension of the (positive definite) metric $(-g_N)$ to $\Nn^\C\Sigma$, and using Ricci  equation (proposition \ref{pro:RicciEquation}), we have
\begin{eqnarray*}
h_N\big(R_N(e_1,e_2)(\alpha-i\beta),i(\alpha-i\beta)\big) & = & -\langle R_N(e_1,e_2)(\alpha-i\beta),\beta-i\alpha\rangle \\
 =  -2\langle R_N(e_1,e_2)\alpha,\beta\rangle  
& = & 4\Vert\alpha\wedge\beta\Vert^2~.
\end{eqnarray*}
This gives, using lemma \ref{lem:Kf} for the first equality
\begin{equation}\label{e:Laplacian}
\frac{1}{2}\Delta K_\Sigma=\Delta f = 2K_\Sigma \Vert \sigma\Vert_h^2 +4\Vert\alpha\wedge\beta\Vert^2+ \Vert \nabla \sigma\Vert_h^2\geq 2K_\Sigma (1+K_\Sigma)\  .
\end{equation}
 At a maximum $x$ of $K_\Sigma$, we have $\Delta K_\Sigma\leq 0$ and  thus get $K_\Sigma(x)\leq 0$. This shows the first part of the corollary.
 
Assume now that $K_\Sigma(x)=0$. Thus on a neighbourhood of $x$ we have  $\Delta K_\Sigma \geq 4 K_\Sigma$. By 
the Strong  Maximum Principle  \cite[Theorem 3.5]{GB},  $K_\Sigma=0$ everywhere. Moreover, by equation (\ref{e:Laplacian}), we have that $\Vert\alpha\wedge\beta\Vert^2=0$ and $\nabla \sigma =0$ everywhere. 

This implies that $\alpha$ and $\beta$ are colinear and the line bundle $L\defeq\span\{\alpha,\beta\}$ is a parallel subbundle of $\Nn \Sigma$. Hence, the subbundle $V$ of type $(2,2)$ of the flat  trivial $E$ whose fibre at at a point $x$ is $L_x\oplus \T_x\Sigma\oplus  x$ is parallel with respect to the flat trivial connection and in particular constant.  Thus $\Sigma$ is a flat maximal surface contained in the totally geodesic copy of $\H^{2,1}$ given by $\P(V)\cap\Hn$. By Bonsante--Schlenker Lemma \ref{lem:BS},  $\Sigma$ is a  Barbot surface.
\end{proof}

\subsubsection{Proof of  the Curvature Rigidity Theorem \ref{theo:CurvatureRigidity}}\label{proof:RigidityTheorem}
  Recall that  by Theorem \ref{theo:CompactnessTheorem} the space $\M(n)/\G$ is compact. Thus the continuous  function $\M(n)/\G\to \mathbb R$, $(x,\Sigma)\mapsto K_\Sigma(x)$  reaches its maximum at a point $(x_0,\Sigma_0)$. By corollary \ref{cor:maxK} $K_{\Sigma_0}(x_0)\leq 0$, hence for all $(x,\Sigma)$,  we have 
 $K_\Sigma(x)\leq K_{\Sigma_0}(x_0)\leq 0$.
 
 \subsection{Horofunction Rigidity} Consider a complete maximal surface $\Sigma$ in $\Hn$, $z$ a point in $\Sigma\sqcup \partial_\infty \Sigma$ and $z_0$ a non zero vector in $\Sigma$. Then any point $x$ in $\Sigma$ has a unique lift to $E$, that we still denote by $x$, such that $\bq(x)=-1$ and $\langle z_0,x\rangle<0$. In the sequel, we will implicitly use this canonical lift to define scalar products.
 We prove the next proposition in paragraph \ref{sec:proo-MaxHoro}. Then the Horofunction Rigidity Theorem in \ref{sec:proof-HoRi}.
\begin{proposition}\label{pro:MaxHoro}{\sc [Gradient bounds]}
	Let $\Sigma$ be a complete maximal surface in $\Hn$ and $z$ be a point in $\Sigma\sqcup \partial_\infty\Sigma$. Assume a point  $x$ in $\Sigma$  is a critical point of $\Vert \nabla h_z\Vert$. Then we have the following inequality
	\begin{equation}
\Vert\nabla h_z\Vert^2\leq 2+K_\Sigma\ .
	\end{equation}
In particular $\Vert\nabla h\Vert^2\leq 2$ with equality and only if $K_\Sigma=0$.
\end{proposition}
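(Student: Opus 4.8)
The plan is to set up a Bochner-type computation for the function $u \defeq \Vert\nabla h_z\Vert^2$ and evaluate it at a critical point $x$. First I would record the relevant structural facts about the horofunction: by equation \eqref{eq:GradientHorofunction}, when $z\in\partial_\infty\Sigma$, the ambient gradient of $h_{z_0}$ on $\Hn$ is $\nabla h_{z_0} = \pi(z_0)/\langle x_0,z_0\rangle$, and $h_{z_0}$ satisfies a simple second-order equation in $\Hn$ (it is essentially an eigenfunction of the Laplacian up to the normalization, reflecting the fact that horofunctions have constant "acceleration"); concretely $\Hess^{\Hn} h_{z_0} = \mathrm{d}h_{z_0}\otimes\mathrm{d}h_{z_0} + g_{\Hn}$ or the analogous identity (this is the pseudo-Riemannian analogue of the classical fact that Busemann functions have Hessian equal to the metric on horospheres). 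For $z\in\Sigma$ one uses instead the spatial distance function $\eth(\cdot,z)$, for which Lemma \ref{lem:SpacelikePosition} gives $\vert\langle x,z\rangle\vert = \cosh\eth$, so $\cosh\eth$ plays the role of $e^{h_z}$ and satisfies a comparable Hessian identity. In both cases I would then project onto $\Sigma$: writing $f\defeq h_z\circ\iota$ for the restriction to the surface, the Hessian of $f$ on $\Sigma$ differs from the ambient one by the second fundamental form term, $\Hess^\Sigma f(X,Y) = \Hess^{\Hn}h_z(X,Y) - \langle\II(X,Y),(\nabla h_z)^\perp\rangle$, where $(\nabla h_z)^\perp$ is the normal component.

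Next I would compute $\Delta_\Sigma u$ where $u = \Vert\nabla^\Sigma f\Vert^2 = g_T(\nabla^\Sigma f,\nabla^\Sigma f)$, using the Bochner formula on $\Sigma$:
\[
\tfrac12\Delta_\Sigma u = \Vert\Hess^\Sigma f\Vert^2 + \langle\nabla^\Sigma(\Delta_\Sigma f),\nabla^\Sigma f\rangle + \mathrm{Ric}_\Sigma(\nabla^\Sigma f,\nabla^\Sigma f)\ .
\]
Since $\Sigma$ is a surface, $\mathrm{Ric}_\Sigma = K_\Sigma\, g_T$, so the Ricci term is exactly $K_\Sigma\, u$. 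The term $\Delta_\Sigma f$ is controlled by the Hessian identity above together with maximality ($\tr_{g_T}\II = 0$), which kills the normal contribution to the trace; so $\Delta_\Sigma f = \tr_{g_T}\Hess^{\Hn}h_z$ restricted appropriately, and this is an explicit expression in $u$ and the first fundamental data (for instance, of the form $2 + (\text{something})\,u$ depending on the normalization). Feeding this back, the term $\langle\nabla^\Sigma(\Delta_\Sigma f),\nabla^\Sigma f\rangle$ becomes a multiple of $\langle\nabla^\Sigma u,\nabla^\Sigma f\rangle$, which vanishes at the critical point $x$ since $\nabla^\Sigma u(x) = 0$. The Hessian-squared term $\Vert\Hess^\Sigma f\Vert^2$ is nonnegative and I would bound it below by a Cauchy–Schwarz / trace argument to extract the factor involving $\Vert\II\Vert^2$, i.e. via $\Vert\Hess^\Sigma f\Vert^2 \geq$ (the part coming from the $\II$ term), eventually producing the curvature via the Gauss equation $K_\Sigma = -1 + \tfrac12\Vert\II\Vert^2$.

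At the critical point $x$ we have $\Delta_\Sigma u(x)\leq$ or $\geq 0$ depending on whether $x$ is a max or min of $u$; the hypothesis is just that $x$ is critical, so I would instead argue that criticality forces the gradient term to drop out and then use the sign of $\Vert\Hess^\Sigma f\Vert^2\geq 0$ together with the Gauss equation to rearrange the resulting algebraic inequality into $\Vert\nabla h_z\Vert^2 \leq 2 + K_\Sigma$. (More precisely: at a critical point of $u$ one does not directly get a sign on $\Delta u$; the cleanest route is to observe that the Bochner identity at $x$ reads $\tfrac12\Delta_\Sigma u(x) = \Vert\Hess^\Sigma f\Vert^2(x) + K_\Sigma(x)u(x) + (\text{explicit term in }u)$, and to combine this with a separate pointwise identity — coming from the very definition of $h_z$ and the splitting of $\nabla h_z$ into tangent and normal parts, $\Vert(\nabla h_z)^\top\Vert^2 + \Vert(\nabla h_z)^\perp\Vert^2 = \Vert\nabla^{\Hn}h_z\Vert^2$ which is a known constant — so that the unknown $\Vert\Hess^\Sigma f\Vert^2$ can be eliminated and replaced by $\Vert\II\Vert^2$ times $\Vert(\nabla h_z)^\perp\Vert^2$.) The last sentence of the statement, $\Vert\nabla h\Vert^2\leq 2$ with equality only if $K_\Sigma=0$, then follows because $K_\Sigma\leq 0$ everywhere by the Curvature Rigidity Theorem \ref{theo:CurvatureRigidity}, so $2+K_\Sigma\leq 2$, and equality forces $K_\Sigma(x)=0$ hence (again by Theorem \ref{theo:CurvatureRigidity}) $\Sigma$ is Barbot.

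The main obstacle I anticipate is getting the Hessian identity for $h_z$ on $\Hn$ (and the analogous one for $\cosh\eth$) in precisely the right normalized form, and then carefully tracking the normal-component term $(\nabla h_z)^\perp$ through the computation: the subtlety is that $\Nn\Sigma$ is negative definite, so signs in $\Vert\II(\cdot,\cdot)\Vert$ versus $\langle\II(\cdot,\cdot),\cdot\rangle$ must be handled with care, and the "extra term in $u$" produced by $\Delta_\Sigma f$ must come out with the correct coefficient so that the final inequality closes at exactly the bound $2+K_\Sigma$ rather than something weaker. The case distinction between $z\in\partial_\infty\Sigma$ and $z\in\Sigma$ should be essentially cosmetic once the right function ($\log\vert\langle x,z\rangle\vert$ versus $\cosh^{-1}\vert\langle x,z\rangle\vert$) is chosen, but I would verify the boundary behavior of $\Vert\nabla h_z\Vert$ near $z$ in the spatial-distance case (the claimed continuous extension by the value $1$ at $z$) separately.
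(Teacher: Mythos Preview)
Your Bochner approach has a genuine gap that you yourself flag but do not actually close. The Bochner identity
\[
\tfrac12\Delta_\Sigma u = \Vert\Hess^\Sigma f\Vert^2 + \langle\nabla^\Sigma(\Delta_\Sigma f),\nabla^\Sigma f\rangle + K_\Sigma\, u
\]
produces \emph{two} terms you cannot control at a mere critical point: the left-hand side $\Delta_\Sigma u$ has no sign, and the term $\langle\nabla^\Sigma(\Delta_\Sigma f),\nabla^\Sigma f\rangle$ does not vanish just because $\nabla u=0$ (it involves $\nabla f$, not $\nabla u$). Your proposed fix --- eliminating $\Vert\Hess^\Sigma f\Vert^2$ via the tangent/normal splitting --- does not touch $\Delta_\Sigma u$, so the identity remains one equation in two unknowns and no inequality can be extracted.

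The paper avoids this entirely by working one order lower. The criticality condition $\nabla u(x)=0$ is exactly $\Hess^\Sigma h(X,\nabla h)=0$ for all $X$; evaluating at $X=\nabla h$ and plugging in the explicit Hessian $\Hess^\Sigma h = \phi_z\bigl(g - \d h\odot\d h + \beta_x\bigr)$ with $\beta_x(u,v)=\langle\II(u,v),z_0\rangle/\langle x,z_0\rangle$ (this is your Hessian identity, up to the sign of the $\d h\odot\d h$ term) gives the algebraic identity
\[
\beta_x(\nabla h,\nabla h)=\Vert\nabla h\Vert^2\bigl(\Vert\nabla h\Vert^2-1\bigr)\ .
\]
Separately, Cauchy--Schwarz on the normal bundle gives $\beta_x(u,u)^2\le\Vert\II(u,u)\Vert^2\,\Vert z_0^N\Vert^2/\langle x,z_0\rangle^2$; then $\Vert z_0^N\Vert^2/\langle x,z_0\rangle^2\le\Vert\nabla h\Vert^2-1$ comes from the tangent/normal decomposition of $z_0$ (this is where your ``splitting'' observation enters), and $\Vert\II(u,u)\Vert^2\le(1+K_\Sigma)\Vert u\Vert^4$ comes from Gauss. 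Squaring the identity and comparing with this bound, after cancelling the common factor $\Vert\nabla h\Vert^4(\Vert\nabla h\Vert^2-1)$ (using $\Vert\nabla h\Vert\ge1$), yields $\Vert\nabla h\Vert^2-1\le 1+K_\Sigma$ directly. No Laplacian of $u$ ever appears.

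You already have all the ingredients (the Hessian formula, the splitting, Gauss); the mis-step is packaging them through Bochner rather than through the single equation $\Hess h(\nabla h,\nabla h)=0$.
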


\subsubsection{The gradient}

Let $z$ be a point in $\Sigma\sqcup\partial_\infty\Sigma$, seen as a line in $E$, and let $z_0$ be a non zero vector in the line $z$.
Then
\begin{lemma}\label{lem:low--grad}
	Let $x$ be a point in $\Sigma$ not equal to $z$, then for any $u$ in $\T_x\Sigma$, 
	$$
	{\rm d}_x h_z(u)=-\frac{\braket{u,z_0}}{\sqrt {\braket{x,z_0}^2+\braket{z_0,z_0}}}\ \  , \ \ 
	\Vert \nabla h_z\Vert \geq 1 ,
	$$
	where the gradient is taken along $\Sigma$.
\end{lemma}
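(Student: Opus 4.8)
The plan is to compute $\d_x h_z$ by hand from the two defining formulas, and then read off the gradient and its norm from an orthogonal decomposition of $z_0$ along $E=\R x\oplus\T_x\Sigma\oplus\No_x\Sigma$.

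First I would observe that, once the sign normalisation $\langle x,z_0\rangle<0$ has been fixed, replacing $z_0$ by a positive multiple changes $h_z$ only by an additive constant in the horofunction case and not at all in the spatial distance case, while it leaves the right-hand side of the claimed identity unchanged; so I may assume $z_0$ isotropic when $z\in\partial_\infty\Sigma$ and $\bq(z_0)=-1$ when $z\in\Sigma$. Then $h_z(x)=\log(-\langle x,z_0\rangle)$, resp. $h_z(x)=\cosh^{-1}(-\langle x,z_0\rangle)$, where $x$ stands for the canonical lift. Differentiating along a curve $t\mapsto x(t)$ in $\Sigma$ with $x(0)=x$, $\dot x(0)=u$ and $\bq(x(t))=-1$ (so that $\langle u,x\rangle=0$) gives $\d_x h_z(u)=\langle u,z_0\rangle/\langle x,z_0\rangle$ in the first case and $\d_x h_z(u)=-\langle u,z_0\rangle/\sqrt{\langle x,z_0\rangle^2-1}$ in the second; using $\langle z_0,z_0\rangle=0$, resp. $=-1$, and $\langle x,z_0\rangle<0$, both collapse to the stated expression $\d_x h_z(u)=-\langle u,z_0\rangle/\sqrt{\langle x,z_0\rangle^2+\langle z_0,z_0\rangle}$. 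The only thing to check here is positivity of the radicand: in the horofunction case this is exactly the transversality $\langle x,z_0\rangle\neq0$ built into the definition of $h_z$, and in the spatial distance case it is the strict inequality $|\langle x_0,z_0\rangle|>1$ provided by Lemma \ref{lem:SpacelikePosition} together with the acausality of distinct points of $\Sigma$ from Proposition \ref{pro:BoundaryCompleteSurf}.

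For the gradient bound I would decompose $z_0=-\langle x,z_0\rangle\,x+\pi^T(z_0)+\pi^N(z_0)$, with $\pi^T(z_0)\in\T_x\Sigma$ and $\pi^N(z_0)\in\No_x\Sigma$ the $\bq$-orthogonal projections and the coefficient of $x$ read off from $\langle x,x\rangle=-1$. Since vectors of $\T_x\Sigma$ are orthogonal to $x$ and to $\No_x\Sigma$, the formula above becomes $\d_x h_z(u)=-\langle u,\pi^T(z_0)\rangle/N$ with $N\defeq\sqrt{\langle x,z_0\rangle^2+\langle z_0,z_0\rangle}$, whence $\nabla h_z=-\pi^T(z_0)/N$ for the induced (positive definite) metric on $\Sigma$. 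Expanding $\langle z_0,z_0\rangle$ in the decomposition yields $N^2=\langle\pi^T(z_0),\pi^T(z_0)\rangle+\langle\pi^N(z_0),\pi^N(z_0)\rangle$, so that $\|\nabla h_z\|^2=\langle\pi^T(z_0),\pi^T(z_0)\rangle/\bigl(\langle\pi^T(z_0),\pi^T(z_0)\rangle+\langle\pi^N(z_0),\pi^N(z_0)\rangle\bigr)$. As $\T_x\Sigma$ is positive definite and $\No_x\Sigma$ negative definite (signatures $(2,0)$ and $(0,n)$), one has $\langle\pi^N(z_0),\pi^N(z_0)\rangle\leq0\leq\langle\pi^T(z_0),\pi^T(z_0)\rangle$, and since $N^2>0$ necessarily $\langle\pi^T(z_0),\pi^T(z_0)\rangle>0$; hence $\|\nabla h_z\|^2\geq1$, with equality exactly when $\pi^N(z_0)=0$.

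I do not expect a real obstacle: the argument is a direct computation. The only two points needing a word of justification are the positivity of the radicand $N^2$ — immediate in the horofunction case, and in the spatial distance case a consequence of the acausality of $\Sigma$ — and the sign of $\langle\pi^N(z_0),\pi^N(z_0)\rangle$, which uses nothing beyond the fact that the normal bundle of a spacelike surface in $\Hn$ is negative definite.
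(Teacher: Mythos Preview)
Your proof is correct and follows essentially the same route as the paper's: differentiate the two defining formulas for $h_z$ to get the common expression $-\langle u,z_0\rangle/\sqrt{\langle x,z_0\rangle^2+\langle z_0,z_0\rangle}$, then decompose $z_0$ orthogonally along $\R x\oplus\T_x\Sigma\oplus\No_x\Sigma$ and use that the normal component has non-positive norm. Your write-up is slightly more explicit than the paper's (you spell out the gradient as $-\pi^T(z_0)/N$, check positivity of the radicand via Proposition~\ref{pro:BoundaryCompleteSurf} and Lemma~\ref{lem:SpacelikePosition}, and record the equality case $\pi^N(z_0)=0$), but there is no substantive difference in approach.
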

\begin{proof} If $z$ is a point of $\partial_\infty\Sigma$ and $z_0$ is a non zero vector in $z$ , then
$h_z(x)=\log(-\braket{x,z_0})$. It follows that
	$$
	{\rm d}_x h_z(u)=\frac{\braket{u,z_0}}{\braket{x,z_0}}=-\frac{\braket{u,z_0}}{\sqrt {\braket{x,z_0}^2+\braket{z_0,z_0}}} ~ .
	$$
	Assume now that $z$ belongs to $\Sigma$ and let again $z_0$ be a non zero vector in $z$. Since the derivative of 
	$\cosh^{-1}$ at a point $x$ is $(x^2-1)^{-\frac{1}{2}}$, we get for $z$ in $\Sigma$, 
	$$
	{\rm d}_x h_z(u)=-\frac
	{\braket{u,z_0}}
	{\sqrt {\braket{x,z_0}^2-1}}=-\frac{\braket{u,z_0}}{\sqrt {\braket{x,z_0}^2+\braket{z_0,z_0}}}\  .
	$$
Let us now write 
$$
z_0=-\braket{x,z_0}x +z_0^T+z_0^N\ ,
$$
where $z_0^T$ and $z_0^N$ are the tangential and normal  component of $z_0$ at $x$.
Then 
$$
\bq(z_0^T)\geq \bq(z_0^T)+\bq(z_0^N)=\bq(z_0^T+z_0^N)=\bq(z_0+\braket{x,z_0}x)=\braket{z_0,z_0}+\braket{x,z_0}^2\ .
$$
It follows that 
$$
\Vert\nabla h_z\Vert^2=\frac{\bq(z_0^T)}{\braket{z_0,z_0}+\braket{x,z_0}^2}\geq 1\ .\qedhere
$$
\end{proof}

\subsubsection{Some preliminary lemmas}
For $\Sigma, z_0$ and $x$ as above, we  define $\beta_x\in \Sym^2(\T^*_x\Sigma)$ by
\[\beta_x(u,v)=
\frac{\braket{ \II(u,v), z_0}}{\braket{x,z_0}}~,\]
where $\II$ is the second fundamental form of $\Sigma$. Observe that  $\beta_x$ only depends on the projectivisation $z$ of $z_0$.
In the sequel we write $h\defeq h_z$.

\begin{lemma}\label{lem:HessianHorofunction} Let $\Sigma$ be a spacelike surface. 
For any point $x$ in $\Sigma$, the Hessian $\Hess h$ of $h$ at $x$ on $\Sigma$ satisfies
\[\Hess_x h = \phi_z\cdotp\left( g - \d h\odot \d h +\beta_x\right)\ , \]
where $g$ is the induced metric on $\Sigma$ and
$$
\phi_z(x)=-\frac{\braket{x,z_0}}
{\sqrt
{\braket{x,z_0}^2+\braket{z_0,z_0}}
}\ ,
$$
and in particular $\phi_z=1$ if $z$ is in $\partial_\infty\Sigma$.
\end{lemma}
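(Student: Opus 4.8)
```latex
The plan is to compute the Hessian of $h=h_z$ directly, by differentiating the formula for $\d h$ obtained in Lemma \ref{lem:low--grad} and using the decomposition of the Levi-Civita connection of $\Hn$ along $\Sigma$ given in equation \eqref{e:ConnectionDecomposition}. Throughout, I use the canonical lift $x\mapsto x_0\in E$ with $\bq(x_0)=-1$ and $\langle x_0,z_0\rangle<0$, so that, writing $\rho(x)\defeq \sqrt{\langle x,z_0\rangle^2+\langle z_0,z_0\rangle}$, Lemma \ref{lem:low--grad} reads $\d_x h(u)=-\langle u,z_0\rangle/\rho(x)$, and $\phi_z(x)=-\langle x,z_0\rangle/\rho(x)$.

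The key computation is as follows. Let $u,v$ be sections of $\T\Sigma$; extend $v$ so that $(\nabla^T_u v)_x=0$ at the point under consideration. Then $\Hess_x h(u,v)=u\cdot(\d h(v))=u\cdot\bigl(-\langle v,z_0\rangle/\rho\bigr)$. When we differentiate $\langle v,z_0\rangle$, we view $v$ and $z_0$ as $E$-valued; since $z_0$ is a constant vector of $E$ and $\nabla$ is the ambient connection, $u\cdot\langle v,z_0\rangle=\langle \nabla_u v,z_0\rangle$. Decomposing $\nabla_u v=\nabla^T_u v+\II(u,v)=\II(u,v)$ (using $\nabla^T_u v=0$ at $x$) plus the part of $\nabla_u v$ along the position vector $x_0$ — which from $\bq(x_0)=-1$ is $\langle v,u\rangle\, x_0 = g(u,v)\,x_0$, because $\Hn\subset E$ has its position vector as the ``radial'' normal with $\nabla_u x_0=u$ — we get
\[
u\cdot\langle v,z_0\rangle = g(u,v)\,\langle x_0,z_0\rangle + \langle \II(u,v),z_0\rangle\ .
\]
For the $1/\rho$ factor, $u\cdot\rho^{-1}=-\rho^{-2}\,(u\cdot\rho)$ and $u\cdot\rho = \langle x_0,z_0\rangle\langle u,z_0\rangle/\rho$, so $u\cdot(\rho^{-1})=-\langle x_0,z_0\rangle\langle u,z_0\rangle/\rho^{3}$. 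Assembling the product rule,
\[
\Hess_x h(u,v)= -\frac{g(u,v)\langle x_0,z_0\rangle+\langle \II(u,v),z_0\rangle}{\rho}
+\frac{\langle v,z_0\rangle\langle u,z_0\rangle\langle x_0,z_0\rangle}{\rho^{3}}\ .
\]
Now divide and recognise the pieces: $-\langle x_0,z_0\rangle/\rho=\phi_z$, the term $-\langle\II(u,v),z_0\rangle/\rho=\phi_z\cdot\langle\II(u,v),z_0\rangle/\langle x_0,z_0\rangle=\phi_z\,\beta_x(u,v)$, and the last term equals $\phi_z\cdot\bigl(\langle u,z_0\rangle/\rho\bigr)\bigl(\langle v,z_0\rangle/\rho\bigr)=\phi_z\cdot\bigl(-\d h(u)\bigr)\bigl(-\d h(v)\bigr)=\phi_z\,(\d h\odot\d h)(u,v)$. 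This gives exactly $\Hess_x h=\phi_z\cdot(g-\d h\odot\d h+\beta_x)$. Finally, when $z\in\partial_\infty\Sigma$ one has $\langle z_0,z_0\rangle=0$, hence $\rho=|\langle x_0,z_0\rangle|=-\langle x_0,z_0\rangle$ and $\phi_z=1$, matching the stated special case; this case can also be read off directly from $h=\log(-\langle x_0,z_0\rangle)$.

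The only point requiring care — and the main obstacle — is the clean bookkeeping of the normal-to-$E$ (radial) component of $\nabla_u v$ versus the normal-to-$\Sigma$-in-$\Hn$ component: one must be careful that equation \eqref{e:ConnectionDecomposition} splits $\T\Hn$, not $E$, and that the extra $g(u,v)\,x_0$ term comes from the embedding $\Hn\hookrightarrow E$ (equivalently, from $\nabla^{E}_u v - \nabla^{\Hn}_u v = \langle u,v\rangle x_0$, the second fundamental form of the quadric $\{\bq=-1\}$). Once that identity is in hand, the rest is the elementary product-rule computation above, and the identification of the three resulting terms with $g$, $\d h\odot\d h$, and $\beta_x$ is purely algebraic.
```
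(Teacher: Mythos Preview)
Your proof is correct and follows essentially the same approach as the paper: both compute the second covariant derivative of $h$ by differentiating $\d h(v)=-\langle v,z_0\rangle/\rho$ and using the decomposition $D_u v=g(u,v)\,x_0+\nabla^T_u v+\II(u,v)$, with the tangential part killed either by taking a $\Sigma$-geodesic (the paper) or by choosing an extension with $\nabla^T_u v=0$ at $x$ (you). There is one harmless sign slip in your identification of the third term: since $\langle x_0,z_0\rangle/\rho=-\phi_z$, that term is $-\phi_z\,\d h(u)\,\d h(v)$, not $+\phi_z\,\d h(u)\,\d h(v)$, which is exactly what produces the minus sign in the final formula you state.
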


\begin{proof}
Let $(x_t)_{t\in (-\epsilon,\epsilon)}$ be a smooth geodesic in $\Sigma$ with $x_0=x$. We have
\[\left.\frac{\d^2}{\d t^2}\right\vert_{t=0}h(x_t)=- \frac{\langle D_{\dot x} \dot x , z_0\rangle}{\sqrt{\braket{x,z_0}^2+\braket{z_0,z_0}}}
+ \frac{\langle \dot x,z_0\rangle^2  \langle  x,z_0\rangle}
{\left(\sqrt{\braket{x,z_0}^2+\braket{z_0,z_0}}\right)^3} \ ,\]
where $D$ is the Levi-Civita connection of $E$. The decomposition of $D$ in the splitting $E= \langle x\rangle \oplus \T_x\Sigma \oplus \Nn_x\Sigma$ gives
\[(D_ab)_x = \langle a,b\rangle x + (\nabla_ab)_x + \II(a,b) ~,\]
where $\nabla$ is the Levi-Civita connection of $\Sigma$. 
Thus 
\begin{eqnarray*}
\left.\frac{\d^2}{\d t^2}\right\vert_{t=0}h(x_t)
	&=& 
 -\frac{	\langle\dot x,\dot x\rangle\braket{x,z_0}}{\sqrt{\braket{x,z_0}^2+\braket{z_0,z_0}}}
	- \frac{\langle \II(\dot x,\dot x), z_0\rangle}
	{\sqrt{\braket{x,z_0}^2+\braket{z_0,z_0}}} \\
& &+ \left(\frac{\langle\dot x,z_0\rangle}{{\sqrt{\braket{x,z_0}^2+\braket{z_0,z_0}}}
}\right)^2
\frac
{\braket{x,z_0}}
{\sqrt{\braket{x,z_0}^2+\braket{z_0,z_0}}}\\
	&=&\phi(x)\left( g(\dot x,\dot x) - \d h(\dot x)^2 +\beta_x(\dot x,\dot x)\right)\ ,
\end{eqnarray*} 
which is what we wanted to prove.
\end{proof}

\begin{lemma}\label{lem:CriticalPoint} Let $\Sigma$ be a spacelike surface. 
If $x$ is a critical point of  the function  $\Vert \nabla h \Vert^2$ in $\Sigma$ and $x$ is different from $z$, then
\[\beta_x(\nabla h,\nabla h) = \Vert \nabla h \Vert^2(\Vert \nabla h \Vert^2-1)\ .\]

\end{lemma}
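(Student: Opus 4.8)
The plan is to differentiate $\Vert\nabla h\Vert^2$ using the Hessian formula from Lemma~\ref{lem:HessianHorofunction} and read off the critical point condition. Writing $F\defeq\Vert\nabla h\Vert^2=g(\nabla h,\nabla h)$ on $\Sigma$, the standard identity $\tfrac12\,\d F(v)=g(\nabla_v\nabla h,\nabla h)=\Hess h(v,\nabla h)$ shows that $x$ is a critical point of $F$ if and only if $\Hess_x h(v,\nabla h)=0$ for all $v\in\T_x\Sigma$. In particular, evaluating at $v=\nabla h$ gives $\Hess_x h(\nabla h,\nabla h)=0$.

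Next I would substitute the expression $\Hess_x h = \phi_z\cdot\bigl(g - \d h\odot\d h + \beta_x\bigr)$ from Lemma~\ref{lem:HessianHorofunction}, evaluated on the pair $(\nabla h,\nabla h)$. Here one uses $g(\nabla h,\nabla h)=\Vert\nabla h\Vert^2$ and $\d h(\nabla h)=g(\nabla h,\nabla h)=\Vert\nabla h\Vert^2$, so that $(\d h\odot\d h)(\nabla h,\nabla h)=\d h(\nabla h)^2=\Vert\nabla h\Vert^4$. This turns the vanishing of $\Hess_x h(\nabla h,\nabla h)$ into
\[
0=\phi_z(x)\left(\Vert\nabla h\Vert^2-\Vert\nabla h\Vert^4+\beta_x(\nabla h,\nabla h)\right)\ .
\]

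Finally I would check $\phi_z(x)\neq 0$, which is immediate from its explicit formula: when $z\in\partial_\infty\Sigma$ one has $\phi_z\equiv 1$, and when $z\in\Sigma$ the hypothesis $x\neq z$ together with Proposition~\ref{pro:BoundaryCompleteSurf} (the pair $(x,z)$ is acausal, so $\braket{x,z_0}^2>1=-\braket{z_0,z_0}$) forces $\braket{x,z_0}\neq 0$, hence $\phi_z(x)\neq0$. Dividing out $\phi_z(x)$ then yields $\beta_x(\nabla h,\nabla h)=\Vert\nabla h\Vert^4-\Vert\nabla h\Vert^2=\Vert\nabla h\Vert^2(\Vert\nabla h\Vert^2-1)$, as claimed. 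I expect no real obstacle here; the only point requiring a line of care is the non-vanishing of $\phi_z$, and the correct bookkeeping of $\d h(\nabla h)=\Vert\nabla h\Vert^2$ versus $g(\nabla h,\nabla h)$ in the symmetric-tensor evaluation.
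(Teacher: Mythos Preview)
Your argument is correct and follows essentially the same route as the paper: compute $\tfrac12\,\d(\Vert\nabla h\Vert^2)(X)=\Hess h(X,\nabla h)$, set $X=\nabla h$, and plug in the Hessian formula of Lemma~\ref{lem:HessianHorofunction}. The paper's proof silently drops the factor $\phi_z$, whereas you explicitly check it is nonzero; that extra care is welcome. One small remark: your appeal to Proposition~\ref{pro:BoundaryCompleteSurf} for the case $z\in\Sigma$ presupposes completeness, while the lemma is stated for arbitrary spacelike surfaces. The nonvanishing of $\phi_z(x)$ follows more directly from the hypothesis itself: for $\nabla h_z$ (and its Hessian) to be defined at $x\neq z$ one needs $|\langle x,z_0\rangle|>1$, hence $\langle x,z_0\rangle\neq 0$.
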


\begin{proof}
Let $(e_1,e_2)$ be an orthonormal framing of $\T\Sigma$. Then for any vector field $X$ on $\Sigma$, we have at $x$
\begin{eqnarray*}
	0= \frac{1}{2} X\cdotp (\Vert \nabla h\Vert^2)= \sum_{i=1}^2\left( X\cdotp \d h(e_i)\right)\d h(e_i)= \sum_{i=1}^2 \Hess h(X,\d h(e_i)e_i) = \Hess h (X,\nabla h)\ .
\end{eqnarray*}

Applying the previous lemma to $X=\nabla h$ gives 
\begin{equation*}
	0=\Vert \nabla h \Vert^2-\Vert \nabla h \Vert^4 +\beta_x(\nabla h,\nabla h)\ .
\end{equation*}
And thus the result.
\end{proof}

\begin{lemma}\label{lem:InequalityCriticalPoint} 
Assume  that $\Sigma$ is maximal and $u$ is a tangent vector in $\Sigma$ at $x$.  \begin{eqnarray}
	\beta_x (u,u)^2&\leq& \Vert u\Vert^4(\Vert \nabla h\Vert^2-1) (1+K_\Sigma(x))\ .
\end{eqnarray}

\end{lemma}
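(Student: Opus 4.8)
The inequality to be proven is
\[
\beta_x(u,u)^2 \leq \|u\|^4(\|\nabla h\|^2-1)(1+K_\Sigma(x)).
\]

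The plan is to unwind the definition of $\beta_x$ and reduce the inequality to a Cauchy--Schwarz estimate combined with the Gauss equation. First I would expand $\beta_x(u,u) = \langle\II(u,u),z_0\rangle/\langle x,z_0\rangle$, and recall from the proof of Lemma \ref{lem:low--grad} that at the point $x$ the tangential part $z_0^T$ of $z_0$ satisfies $\|\nabla h\|^2 = \bq(z_0^T)/(\langle z_0,z_0\rangle + \langle x,z_0\rangle^2)$, while $\langle z_0,z_0\rangle + \langle x,z_0\rangle^2 = \langle x,z_0\rangle^2 \phi_z(x)^{-2}$ (or $=\langle x,z_0\rangle^2$ when $z \in \partial_\infty\Sigma$). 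Since $\II(u,u)$ is normal to $\Sigma$, we have $\langle\II(u,u),z_0\rangle = \langle\II(u,u),z_0^N\rangle$ where $z_0^N$ is the normal component; using $\langle z_0,z_0\rangle = \bq(z_0^T) + \bq(z_0^N) - \langle x,z_0\rangle^2$ one gets $\bq(z_0^N) = (1 - \|\nabla h\|^2)(\langle z_0,z_0\rangle + \langle x,z_0\rangle^2)$, so $z_0^N$ is a normal vector whose (negative definite) norm-squared is controlled by $\|\nabla h\|^2 - 1 \geq 0$.

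Next I would apply Cauchy--Schwarz on the negative definite normal bundle $(\Nn_x\Sigma, -g_N)$: writing everything in terms of the positive definite metric $-g_N$,
\[
\langle\II(u,u),z_0^N\rangle^2 \leq \bigl(-g_N(\II(u,u),\II(u,u))\bigr)\bigl(-\bq(z_0^N)\bigr).
\]
Dividing by $\langle x,z_0\rangle^2$, the left side becomes $\beta_x(u,u)^2$ and the factor $-\bq(z_0^N)/\langle x,z_0\rangle^2$ becomes $(\|\nabla h\|^2-1)$ after using the identity above (the $\phi_z$ factors cancel correctly). It remains to bound $-g_N(\II(u,u),\II(u,u))$ by $\|u\|^4(1+K_\Sigma(x))$. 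Writing $u = \|u\|(\cos\theta\, e_1 + \sin\theta\, e_2)$ in an orthonormal framing with $\alpha = \II(e_1,e_1)$, $\beta = \II(e_1,e_2)$ and using maximality ($\II(e_2,e_2) = -\alpha$), one has $\II(u,u) = \|u\|^2(\cos 2\theta\,\alpha + \sin 2\theta\,\beta)$, so $-g_N(\II(u,u),\II(u,u)) = \|u\|^4(\cos^2 2\theta\,\|\alpha\|^2 + 2\sin 2\theta\cos 2\theta\,\langle\alpha,\beta\rangle + \sin^2 2\theta\,\|\beta\|^2)$, where here $\|\cdot\|^2$ denotes the positive quantity $-g_N(\cdot,\cdot)$. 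The maximum over $\theta$ of this quadratic form is the largest eigenvalue of the $2\times 2$ matrix with entries $\|\alpha\|^2, \langle\alpha,\beta\rangle, \|\beta\|^2$, namely $\tfrac12(\|\alpha\|^2+\|\beta\|^2) + \tfrac12\sqrt{(\|\alpha\|^2-\|\beta\|^2)^2 + 4\langle\alpha,\beta\rangle^2}$. Since $\|\alpha\|^2 = \|\beta\|^2$ automatically for a maximal surface (because $\Vert\II\Vert^2 = 2\|\alpha\|^2 + 2\|\beta\|^2 \cdot$... — more precisely $\sigma = (\alpha - i\beta)\d z^2$ being holomorphic forces a relation; in fact from $\|\II\|^2 = \sum\langle\II(e_i,e_j),\II(e_i,e_j)\rangle$ one computes $\|\II\|^2 = 2(\|\alpha\|^2 + \|\beta\|^2)$), this maximum simplifies to $\|\alpha\|^2 + \sqrt{\langle\alpha,\beta\rangle^2} = \|\alpha\|^2 + |\langle\alpha,\beta\rangle|$. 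Hmm — I should be careful here; rather than assuming $\|\alpha\| = \|\beta\|$, I would keep the general eigenvalue bound and show directly it is $\leq 1 + K_\Sigma(x)$ using $K_\Sigma(x) = -1 + \tfrac12\|\II\|^2 = -1 + \|\alpha\|^2 + \|\beta\|^2$ (Gauss equation, Proposition \ref{pro:GaussEquation}), so $1 + K_\Sigma(x) = \|\alpha\|^2 + \|\beta\|^2$; thus it suffices that the largest eigenvalue is at most the trace $\|\alpha\|^2 + \|\beta\|^2$, which holds because the matrix is positive semidefinite.

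The main obstacle — and the place requiring genuine care rather than routine computation — is tracking the $\phi_z$ normalization and the distinction between the $z \in \partial_\infty\Sigma$ and $z \in \Sigma$ cases through the identity $-\bq(z_0^N)/\langle x,z_0\rangle^2 = (\|\nabla h\|^2-1)$, making sure the factor is exactly $(\|\nabla h\|^2 - 1)$ and not $(\|\nabla h\|^2-1)\phi_z^{\pm2}$; this is exactly the computation already latent in the proof of Lemma \ref{lem:low--grad}, so I expect it to go through, but it is the one spot where a sign or a stray factor could creep in. Everything else is Cauchy--Schwarz on the normal bundle plus the Gauss equation, both already available.
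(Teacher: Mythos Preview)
Your proof is correct and follows the same approach as the paper: Cauchy--Schwarz on the normal bundle to control $\beta_x(u,u)^2$ by $\Vert\II(u,u)\Vert^2\cdot\Vert z_0^N\Vert^2/\langle x,z_0\rangle^2$, then the Gauss equation to bound $\Vert\II(u,u)\Vert^2$. The paper streamlines both of your sticking points: your $\phi_z$ worry disappears by using the crude inequality $\langle z_0,z_0\rangle\leq 0$ directly (so $\Vert z_0^N\Vert^2/\langle x,z_0\rangle^2\leq \Vert\nabla h\Vert^2-1$ with no $\phi_z$ bookkeeping), and the bound $\Vert\II(u,u)\Vert^2\leq (1+K_\Sigma)\Vert u\Vert^4$ is immediate if you pick the frame with $e_1=u/\Vert u\Vert$, since then $\Vert\II(u,u)\Vert^2+\Vert\II(u,Ju)\Vert^2=(1+K_\Sigma)\Vert u\Vert^4$ with no optimization over $\theta$.
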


\begin{proof} Let us use the notation $\Vert u\Vert^2=\vert \langle u,u\rangle\vert$. In the orthogonal splitting $E=\R.x\oplus \T_x\Sigma \oplus \Nn_x\Sigma$, write $z_0= -\langle x,z_0\rangle x +z_0^T +z_0^N$. It follows that 
\begin{eqnarray}
\Vert z_0^T\Vert^2-\Vert z_0^N\Vert^2=\braket{z_0,z_0}+\braket{x,z_0}^2\leq  \braket{x,z_0}^2\ .\label{eq:zzTN}
\end{eqnarray}
This implies that
\begin{eqnarray} \frac{\Vert z_0^N\Vert^2}{\langle x,z_0\rangle^2}\leq\ \frac{\Vert z^T\Vert^2}{\braket{z_0,z_0} +\langle x,z_0\rangle^2}-1=\Vert\nabla h\Vert^2-1\ .\label{eq:zzTN2}
\end{eqnarray}
Now
$$
\beta_x(u,u)^2= \frac{\braket{
\II(u,u),z_0^N}^2}{\langle x,z_0\rangle^2}
\leq   \Vert \II(u,u)\Vert^2 \frac{\Vert z_0^N\Vert^2}{\langle x,z_0\rangle^2} \leq \Vert \II(u,u)\Vert^2(\Vert \nabla h \Vert^2-1) \ .
$$
From Gauss  equation (proposition \ref{pro:GaussEquation}), and since $\Sigma$ is maximal, we have
\[\Vert \II(u,u)\Vert^2\leq \Vert \II(u,u)\Vert^2 + \Vert \II(Ju,u)\Vert^2=(1+K_\Sigma)\Vert u\Vert^4  ~,\]
where $J$ is the complex structure on $\Sigma$. The result follows.
\end{proof}
\subsubsection{Proof of proposition \ref{pro:MaxHoro}}\label{sec:proo-MaxHoro}

Applying lemma \ref{lem:CriticalPoint} we obtain that
$$
\beta_x(\nabla h,\nabla h)=\Vert \nabla h \Vert^2(\Vert \nabla h \Vert^2-1)\ .
$$
Then using lemma \ref{lem:InequalityCriticalPoint} for $u=\nabla h$, we obtain
\begin{equation}
	\left(\Vert \nabla h \Vert^2(\Vert \nabla h \Vert^2-1)\right)^2\leq \Vert \nabla h \Vert^4(\Vert \nabla h \Vert^2-1)(1+K_\Sigma(x))\ .\label{ineq:horof2}
\end{equation}
Since  $\Vert \nabla h \Vert\geq 1$ by lemma \ref{lem:low--grad}, we get
\begin{equation}
	(\Vert \nabla h \Vert^2-1)^2\leq (\Vert \nabla h \Vert^2-1)(1+K_\Sigma(x))\ .\label{ineq:horof3}
\end{equation}
Assuming now that $\Vert \nabla h \Vert>1$, we get
\begin{equation}
	\Vert \nabla h \Vert^2\leq 2+K_\Sigma(x)\ .\label{ineq:horof3}
\end{equation}
This gives the result since $K_\Sigma(x)\leq 0$ by 
 by the  Curvature Rigidity Theorem \ref{theo:CurvatureRigidity}.
 Observe that Inequality \eqref{ineq:horof3} is also true whenever $\Vert \nabla h \Vert=1$ since $K_\Sigma(x)\geq -1$. This concludes the proof of the proposition.
\subsubsection{Proof of the Horofunction  and Spatial Distance RigidityTheorem}\label{sec:proof-HoRi}

By corollary \ref{cor:Np-compact}, the action of $\G$ on the space $\mathcal N_1(n)$  of triples $(x,\Sigma,z)$, where $(x,\Sigma)$ is a complete maximal surface and $z$ a point in $\Sigma\sqcup\partial_\infty \Sigma$, is cocompact. Then the function 
$$
\Phi\ : \ (x,\Sigma, z)\mapsto \Vert\nabla h_z(x)\Vert^2\ ,
$$
where $h$ is the horofunction associated to $z$ is continuous and $\G$ invariant. It follows that $\Phi$ is bounded. 

Let now $(x,\Sigma, z)$ be a point in $\mathcal N$ at which the maximum of $\Phi$ is reached. Observe then that $x$ is a critical point of $\Vert\nabla h(x)\Vert^2$ along $\Sigma$. 

The result now follows from  Proposition \ref{pro:MaxHoro} and the Curvature Rigidity Theorem \ref{theo:CurvatureRigidity}.

\section{Characterisations of Quasiperiodicity}\label{s:AltChar}
In this section, we give different characterisation of quasiperiodic maximal surfaces using our previous results

\subsection{Curvature bounds}
\begin{theorem}{\sc [Curvature characterisation]}\label{theo:CurvBound}
A  complete maximal surface $\Sigma$ in $\Hn$  is quasiperiodic if and only if there exists a positive constant $c$ so that the intrinsic curvature of $\Sigma$ satisfies $K_\Sigma<-c$.
\end{theorem}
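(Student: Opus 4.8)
The plan is to transfer everything to the compact quotient $\MH/\G$ and combine the Compactness Theorem \ref{theo:CompactnessTheorem} with the Curvature Rigidity Theorem \ref{theo:CurvatureRigidity}. The first step is to note that $(x,\Sigma)\mapsto K_\Sigma(x)$ defines a continuous $\G$-invariant function $\mathbf K$ on $\MH$: $\G$-invariance is clear since $\G$ acts by isometries, and continuity follows from the fact that convergence in $\MH$ is $\CC^\infty$-convergence of the pointed immersions on compact sets (see \cite[Appendix A]{LTW}), so that the induced metrics and hence their curvatures converge at the base points. Thus $\mathbf K$ descends to a continuous function, still written $\mathbf K$, on $\MH/\G$.

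Next I would extract from Theorem \ref{theo:CurvatureRigidity} the two facts that $\mathbf K\le 0$ on all of $\MH/\G$ and that $\{\mathbf K=0\}=\{m_0\}$, where $m_0$ is the Barbot point. Indeed, if $\mathbf K(x,\Sigma)=0$ then the second item of Theorem \ref{theo:CurvatureRigidity} forces $\Sigma$ to be a Barbot surface; since a Barbot surface is a single orbit of a Cartan subgroup and all Barbot surfaces are $\G$-equivalent (Proposition \ref{pro:prop-Bcrown}), the group $\G$ acts transitively on pointed Barbot surfaces, so $(x,\Sigma)$ represents $m_0$. Conversely $\mathbf K(m_0)=0$ because Barbot surfaces are flat.

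With these preliminaries both directions are purely topological. Recall (as observed after the definition of quasiperiodic surfaces) that $\Sigma$ is quasiperiodic if and only if the image of $\Sigma$ in $\MH/\G$ is precompact in $(\MH/\G)\setminus\{m_0\}$. If $K_\Sigma<-c$ for some $c>0$, then the image of $\Sigma$ lies in $\{\mathbf K\le -c\}$, a closed --- hence compact, since $\MH/\G$ is compact --- subset of $\MH/\G$ that misses $m_0$ because $\mathbf K(m_0)=0$; therefore $\Sigma$ is quasiperiodic. Conversely, if $\Sigma$ is quasiperiodic, let $C$ be the closure of the image of $\Sigma$ in $\MH/\G$: it is a compact subset of $(\MH/\G)\setminus\{m_0\}$, on which $\mathbf K$ is continuous and, by the previous step, strictly negative; by compactness $\max_C\mathbf K=-c_0<0$, so $K_\Sigma\le -c_0$ everywhere, which gives the desired bound $K_\Sigma<-c$ with $c=c_0/2$.

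The argument is a compactness argument and contains no hard analytic step once Theorems \ref{theo:CurvatureRigidity} and \ref{theo:CompactnessTheorem} are granted; the only point deserving care is the continuity of $\mathbf K$ on $\MH$ --- that smooth convergence of pointed maximal surfaces on compact sets yields convergence of the Gauss curvature at the base point --- which is guaranteed by the definition of the topology on $\MH$ together with the Gauss equation \eqref{eq:Gauss} expressing $K_\Sigma$ through the (converging) second fundamental form.
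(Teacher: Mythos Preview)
Your argument is correct and follows essentially the same route as the paper: both proofs descend the curvature function to $\MH/\G$, use compactness (Theorem~\ref{theo:CompactnessTheorem}) to attain a maximum on the closure of the image of $\Sigma$, and invoke the Curvature Rigidity Theorem~\ref{theo:CurvatureRigidity} to identify the zero set of $\mathbf K$ with the Barbot point. Your write-up is in fact slightly more detailed than the paper's, spelling out the continuity of $\mathbf K$ and the characterisation $\{\mathbf K=0\}=\{m_0\}$ explicitly.
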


\begin{proof}Remark that the function $
x\mapsto K_\Sigma(x)$
is continuous and  $\G$-invariant from $\MH$ to $\mathbb R$. By compactness of $\M/\G$ (see Theorem \ref{theo:CompactnessTheorem}), it reaches its maximum on the closure $C$ of  the image of $\Sigma$ in $\MH/\G$. By Theorem \ref{theo:HorofunctionRigidity},  this maximum is 0 if and only if $C$ contains a Barbot surface. The result follows.
	\end{proof}
\subsection{Horofunction bounds}
Similarly 
\begin{theorem}{\sc [Horofunction characterisation]} \label{theo:ChaHo}
A surface  is quasiperiodic if and only if 	there exists a  positive real  $h_0$  strictly less than 1 so  the square of norm of the gradient of  any horofunction is bounded by $1+h_0$
\end{theorem}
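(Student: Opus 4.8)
The plan is to run the same compactness argument that proved the Curvature Characterisation (Theorem \ref{theo:CurvBound}), but now applied to the gradient-of-horofunction function on the space $\mathcal N_1(n)$ rather than to the curvature on $\MH$. First I would recall from Corollary \ref{cor:Np-compact} that $\G$ acts cocompactly and continuously on $\mathcal N_1(n)$, the space of triples $(x,\Sigma,z)$ with $(x,\Sigma)$ a pointed complete maximal surface and $z\in\Sigma\sqcup\partial_\infty\Sigma$. The function $\Phi(x,\Sigma,z)\defeq\Vert\nabla h_z(x)\Vert^2$ is continuous and $\G$-invariant (this was already observed in the proof of the Horofunction Rigidity Theorem), so it descends to a continuous function on the compact quotient $\mathcal N_1(n)/\G$. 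Hence for any complete maximal surface $\Sigma$, the supremum of $\Phi$ over the closure $\widetilde{C}$ of the set $\{(x,\Sigma,z)\}$ inside $\mathcal N_1(n)/\G$ is attained at some point $(x_0,\Sigma_0,z_0)$, with $\Sigma_0$ a leaf in the closure $C$ of the image of $\Sigma$ in $\MH/\G$.

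Next I would invoke the lower bound $\Vert\nabla h_z\Vert\geq 1$ from Lemma \ref{lem:low--grad} together with the Horofunction and Spatial Distance Rigidity Theorem \ref{theo:HorofunctionRigidity}: the maximum value of $\Phi$ on the closure equals $2$ if and only if that closure contains a Barbot surface (one direction is item (ii) of Theorem \ref{theo:HorofunctionRigidity}; the converse is item (iv) of Proposition \ref{pro:prop-Bcrown}, which exhibits at every point of a Barbot surface a horofunction with $\Vert\nabla h\Vert^2=2$, so a Barbot surface in the closure forces the sup to be $2$ by continuity of $\Phi$). Therefore: $\Sigma$ is not quasiperiodic $\iff$ $C$ contains a Barbot surface $\iff$ $\sup_{\widetilde C}\Phi=2$ $\iff$ there is no $h_0<1$ with $\Vert\nabla h_z\Vert^2\leq 1+h_0$ uniformly over all horofunctions on $\Sigma$. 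Reading this chain of equivalences in the contrapositive gives exactly the statement of the theorem.

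One subtlety to be careful about is the quantifier on $z$: the bound must hold for \emph{any} horofunction, and the $h_0$ must be uniform; this is precisely what the compactness of $\mathcal N_1(n)/\G$ delivers, since $\Phi$ is bounded there and its sup over the (compact, being closed inside a compact) set $\widetilde C$ is attained. A second point: when $\Sigma$ is quasiperiodic, $C$ is compact and disjoint from the Barbot point $m_0$, so $\sup_{\widetilde C}\Phi<2$ strictly, and one sets $h_0\defeq\sup_{\widetilde C}\Phi-1<1$; conversely if such an $h_0<1$ exists then no Barbot surface lies in the closure, since a Barbot surface would force the value $2$. The main obstacle, such as it is, is bookkeeping rather than mathematics: one must make sure that "the closure of the image of $\Sigma$ in $\MH/\G$ contains a Barbot surface" is genuinely equivalent to "the closure of the image of $\{(x,\Sigma,z)\}$ in $\mathcal N_1(n)/\G$ contains a triple whose surface is Barbot", which follows because the forgetful map $\mathcal N_1(n)/\G\to\MH/\G$ is continuous, proper and surjective (both spaces compact), so images of closures match up correctly. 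Everything else is a direct citation of Theorem \ref{theo:HorofunctionRigidity}, Lemma \ref{lem:low--grad}, Proposition \ref{pro:prop-Bcrown}(iv) and Corollary \ref{cor:Np-compact}.
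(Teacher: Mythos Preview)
Your proposal is correct and follows essentially the same approach as the paper's proof: both use the cocompactness of the $\G$-action on $\mathcal N_1(n)$ (Corollary \ref{cor:Np-compact}), the continuity and $\G$-invariance of $\Phi(x,\Sigma,z)=\Vert\nabla h_z(x)\Vert^2$, and then the Horofunction Rigidity Theorem \ref{theo:HorofunctionRigidity} to conclude that the maximum over the closure equals $2$ if and only if a Barbot surface is present. Your write-up is in fact more careful than the paper's---you spell out both directions of the ``max equals $2$ iff Barbot'' equivalence via Proposition \ref{pro:prop-Bcrown}(iv) and the bookkeeping about matching closures under the forgetful map $\mathcal N_1(n)/\G\to\MH/\G$---whereas the paper leaves these implicit.
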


\begin{proof}  Let $\mathcal N_1(n)$ be the space of  of triples $(x,\Sigma,z)$ where $(x,\Sigma)$ is a complete maximal surface and $z$ a point in $\partial_\infty \Sigma$. By corollary \ref{cor:Np-compact},  the action of $\G$ on the space   $\mathcal N_1(n)$  is cocompact. Denoting $h_z$ the horofunction associated to $z$ in $\partial_\infty \Sigma$,  the function associating to $(x,\Sigma,z)$ the value $\Vert\nabla h_z(x)\Vert^2$
is continuous and  $\G$-invariant. Thus it reaches its maximum on the closure $C$ of  the preimage of $\Sigma$ in $\mathcal N_1(n)/\G$. By Theorem \ref{theo:HorofunctionRigidity},  this maximum is 2 if and only if $C$ contains a Barbot surface. The result follows.
\end{proof}

\subsection{Conformal characterisation}

\begin{theorem}\label{theo:biLip}{\sc[Conformal characterisation]}
A complete maximal surface $\Sigma$ is quasiperiodic if and only if there exists a conformal biLipschitz $\Phi$ map from $\H^2$ to $\Sigma$.
\end{theorem}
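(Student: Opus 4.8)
The plan is to prove both implications by reducing to the curvature characterisation of Theorem \ref{theo:CurvBound} and then invoking the Ahlfors--Schwarz lemma in the form due to Yau.

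For the direct implication, suppose $\Sigma$ is quasiperiodic. By Theorem \ref{theo:CurvBound} there is $c>0$ with $K_\Sigma\le -c$, while Gauss equation \eqref{eq:Gauss} gives $K_\Sigma\ge -1$; thus the induced metric $g_\Sigma$ satisfies $-1\le K_\Sigma\le -c<0$. Since a complete maximal surface is an entire graph over a hyperbolic plane (Section \ref{ss:MaxiSurf}) it is diffeomorphic to $\H^2$, hence simply connected, and I would first check that it is conformally $\H^2$: it is not conformally $\mathbf P^1$ (non-compactness), and not conformally $\C$, for if $\Sigma\cong(\C,e^{2u}\vert\d z\vert^2)$ then the identity $(\C,\vert\d z\vert^2)\to(\Sigma,g_\Sigma)$ is a non-constant holomorphic map from a complete surface of vanishing Gauss curvature to one of curvature $\le -c$, contradicting Yau's Schwarz lemma. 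Writing $\Phi\colon\H^2\to\Sigma$ for an orientation-preserving conformal diffeomorphism and $\Phi^*g_\Sigma=e^{2u}g_{\H^2}$, I would then apply Yau's Schwarz lemma twice: to $\Phi\colon(\H^2,g_{\H^2})\to(\Sigma,g_\Sigma)$ — complete source of Gauss curvature $-1$, target of curvature $\le -c$ — to obtain $\Phi^*g_\Sigma\le c^{-1}g_{\H^2}$, i.e. $e^{2u}\le c^{-1}$; and to $\Phi^{-1}\colon(\Sigma,g_\Sigma)\to(\H^2,g_{\H^2})$ — complete source of Gauss curvature $\ge -1$, target of curvature $-1$ — to obtain $(\Phi^{-1})^*g_{\H^2}\le g_\Sigma$, i.e. $e^{2u}\ge 1$. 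Hence $g_{\H^2}\le\Phi^*g_\Sigma\le c^{-1}g_{\H^2}$, so $\Phi$ is conformal and biLipschitz.

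For the converse I would argue by contraposition: assume $\Sigma$ is not quasiperiodic, so by definition the closure of its $\G$-orbit in $\MH$ contains a point $(x_\infty,\Sigma_\infty)$ with $\Sigma_\infty$ a Barbot surface, and pick $g_k\in\G$, $x_k\in\Sigma$ with $(g_kx_k,g_k\Sigma)\to(x_\infty,\Sigma_\infty)$ smoothly on compacts (Compactness Theorem \ref{theo:CompactnessTheorem}). Given a conformal biLipschitz diffeomorphism $\Phi\colon\H^2\to\Sigma$ with $A^{-1}\le e^{2u}\le A$, the isometric copy $g_k\Sigma$ carries the conformal biLipschitz parametrisation $\Phi_k:=g_k\circ\Phi\circ\alpha_k$ with the same constants, where $\alpha_k\in\Isom(\H^2)$ normalises the value and the $1$-jet of $\Phi_k$ at a fixed basepoint. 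The conformal-change identity $\Delta_{\H^2}u_k=-1-K_{g_k\Sigma}\,e^{2u_k}$ together with $-1\le K_{g_k\Sigma}\le 0$ bounds $\Delta_{\H^2}u_k$ uniformly, so by elliptic regularity and the smooth convergence $g_k\Sigma\to\Sigma_\infty$ a subsequence of $(u_k,\Phi_k)$ converges in $C^\infty_{\mathrm{loc}}$ to $(u_\infty,\Phi_\infty)$ with $\Phi_\infty\colon\H^2\to\Sigma_\infty$ conformal, $\Phi_\infty^*g_{\Sigma_\infty}=e^{2u_\infty}g_{\H^2}$, $A^{-1}\le e^{2u_\infty}\le A$. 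As $e^{2u_\infty}g_{\H^2}$ is complete, $\Phi_\infty$ is a local isometry onto $\Sigma_\infty$ from a complete surface, hence a Riemannian covering, hence (since $\Sigma_\infty$ is simply connected) a diffeomorphism; so $\Sigma_\infty$ is conformally $\H^2$. But a Barbot surface is flat (Proposition \ref{pro:prop-Bcrown}) and simply connected, hence isometric to Euclidean $\R^2$ and conformally $\C$ — a contradiction, so $\Sigma$ must be quasiperiodic.

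The main obstacle is the limiting argument in the converse: one must pass to the limit the uniformising maps $\Phi_k$ themselves, not merely the conformal factors, and check that the limit stays a global diffeomorphism. This is exactly where the uniform biLipschitz bound is indispensable — conformal type is not continuous under smooth limits of surfaces, but a two-sided bound on the conformal factor of the uniformisation is an isometry invariant that survives the passage to the limit, through elliptic estimates for the conformality equation and the completeness-forces-covering principle. In the direct implication the only delicate point is excluding the parabolic conformal type, which Yau's Schwarz lemma handles (alternatively, one may invoke the classical fact that a complete surface whose Gauss curvature is bounded above by a negative constant is conformally hyperbolic).
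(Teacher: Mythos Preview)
Your proposal is correct and follows essentially the same route as the paper.

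For the direct implication you do precisely what the paper does: reduce to the curvature bound of Theorem~\ref{theo:CurvBound}, then apply the Ahlfors--Schwarz--Pick/Yau lemma in both directions. You are a bit more explicit than the paper in first ruling out parabolic conformal type, but the substance is identical.

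For the converse the skeleton is the same as the paper's: assume $\Sigma$ is not quasiperiodic, find $(x_k,\Sigma)$ converging modulo $\G$ to a Barbot surface, carry the biLipschitz uniformisation to the limit, and contradict the geometry of the Barbot surface. The only difference is in how you extract the contradiction. The paper simply notes that the limit would make the flat Barbot metric quasi-isometric to the hyperbolic metric, which is impossible. You instead work to preserve \emph{conformality} in the limit via elliptic estimates on the conformal factor, obtain a conformal diffeomorphism $\H^2\to\Sigma_\infty$, and then contradict the conformal type (parabolic versus hyperbolic). This is fine, but heavier than necessary: since the $\Phi_k$ are uniformly biLipschitz with normalised basepoint, Arzel\`a--Ascoli already gives a biLipschitz limit map $\H^2\to\Sigma_\infty$, and that alone contradicts the flatness of the Barbot surface (e.g.\ by volume growth or failure of Gromov hyperbolicity). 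The elliptic bootstrap and the covering argument are correct, just not needed for the contradiction.
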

\begin{proof} We have already seen that if $\Sigma$ is quasiperiodic, its curvature $K_\Sigma$ is less than a negative constant $-c$. 

By the classical Ahlfors--Schwarz--Pick Lemma \cite[Theorem A.1]{CTT}, since $K_\Sigma \geq -1$, any conformal map from $\H^2$ to $\Sigma$ is length increasing. 

Similarly, because $K_\Sigma\leq -c$, any conformal map from $\Sigma$ to $\H^2_c$ is length increasing (here $\H^2_c$ is the disk equipped with the metric $\frac{1}{c}g_{\H^2}$ of constant curvature $-c$). The uniformisation thus  gives $g_{\H^2}\leq g_\Sigma \leq \frac{1}{c}g_{\H^2}$.

Conversely, if $\Sigma$ were not quasiperiodic, there would be a sequence of point $\seqk{x}$ so that the sequence of pointed maximal surface $\{(x_k,\Sigma)\}_{k\in\mathbb N}$ converges to a Barbot surface. Then the sequence of Riemannian surface $\{(x_k,\Sigma)\}_{k\in\mathbb N}$ would converge uniformly on every compact to a
	flat metric. In particular the flat metric would be quasiisometric to a hyperbolic metric which is impossible.
\end{proof}

\subsection{Gromov hyperbolicity}

\begin{theorem}{\sc[Gromov Hyperbolicity]}\label{theo:ChaGro}
A complete maximal surface $\Sigma$ is quasiperiodic if and only if the induced Riemannian metric is Gromov-hyperbolic.
\end{theorem}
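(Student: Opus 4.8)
The plan is to deduce both implications from the already-established characterizations, principally the Curvature Characterization (Theorem \ref{theo:CurvBound}), the Conformal Characterization (Theorem \ref{theo:biLip}), and the Compactness Theorem (Theorem \ref{theo:CompactnessTheorem}). The ``if quasiperiodic then Gromov hyperbolic'' direction is the easy half: if $\Sigma$ is quasiperiodic, then by Theorem \ref{theo:CurvBound} its intrinsic curvature is bounded above by a negative constant $-c$, and by the completeness of $\Sigma$ (and hence of the induced metric) together with the Gauss equation bound $-1\leq K_\Sigma\leq -c<0$, the surface is a Cartan--Hadamard surface with pinched negative curvature. By the standard Cartan--Hadamard/CAT$(-c)$ comparison, a complete simply connected surface with $K_\Sigma\leq -c<0$ is CAT$(-c)$ and therefore Gromov hyperbolic; alternatively, one invokes Theorem \ref{theo:biLip} to get a biLipschitz map from $\H^2$, and Gromov hyperbolicity is a quasi-isometry invariant, so the metric is Gromov hyperbolic because $\H^2$ is.

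For the converse, I would argue by contraposition: suppose $\Sigma$ is not quasiperiodic. Then, by definition, the closure of the $\G$-orbit of $\Sigma$ in $\MH$ contains a Barbot surface; equivalently there is a sequence of points $\seqk{x}$ in $\Sigma$ so that the pointed surfaces $\{(x_k,\Sigma)\}_{k\in\N}$ converge, in the topology of smooth convergence on compacts, to a Barbot surface $\Sigma_0$ (a pointed Barbot surface). By Proposition \ref{pro:prop-Bcrown}(iii) the induced metric on a Barbot surface is flat. So along balls of radius $R$ around $x_k$, the induced metric on $\Sigma$ converges smoothly to the flat metric on a ball of radius $R$ in $\Euc^{2}$. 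The strategy is then to show that a space which contains, for arbitrarily large $R$, metric balls that are biLipschitz-close (with constants tending to $1$) to Euclidean $R$-balls cannot be Gromov hyperbolic: a flat $R$-ball contains geodesic triangles whose ``thinness constant'' grows linearly in $R$, and this degeneracy is detected on a ball of fixed finite radius around its barycenter, hence transfers to $\Sigma$ for $k$ large. More concretely, fix $\delta>0$; in a Euclidean ball of radius $R\gg\delta$ one finds a geodesic triangle all of whose sides lie in the ball and which fails to be $\delta$-thin (a point on one side at distance $>\delta$ from the union of the other two). Since the metric on the $R$-ball about $x_k$ is $C^\infty$-close to flat for $k$ large, $\Sigma$ contains, for every $\delta$, a geodesic triangle that is not $\delta$-thin; hence $\Sigma$ is not Gromov hyperbolic.

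The main obstacle I anticipate is making the last transfer argument rigorous: geodesics and geodesic triangles are not continuous under merely $C^0$ or even $C^2$ convergence of metrics in a way that automatically preserves ``non-thinness'', because a geodesic triangle in $\Sigma$ need not stay inside the ball where the metric is controlled, and minimizing geodesics could wander. The clean fix is to work with the biLipschitz comparison rather than with geodesics directly: convergence of $(x_k,\Sigma)$ to the flat Barbot surface gives, for each $R$ and each $\epsilon>0$, a $(1+\epsilon)$-biLipschitz embedding of the Euclidean $R$-ball into $\Sigma$ for $k$ large; since Gromov hyperbolicity with a fixed constant $\delta$ is preserved (up to a controlled change of constant) under $(1+\epsilon)$-biLipschitz maps on balls of radius $\gg\delta$, and since $\R^2$ with its flat metric is not Gromov hyperbolic (its hyperbolicity constant on $R$-balls grows like $R$), we conclude that $\Sigma$ cannot be $\delta$-hyperbolic for any $\delta$. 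This is essentially the same mechanism already used in the proof of Theorem \ref{theo:biLip} to rule out a quasi-isometry between the flat limit and $\H^2$, so it should be possible to phrase the argument as a short corollary of that proof together with the quasi-isometry invariance of Gromov hyperbolicity.
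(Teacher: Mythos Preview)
Your proposal is correct and follows essentially the same approach as the paper: the forward direction via the curvature bound (Theorem \ref{theo:CurvBound}), and the converse by contraposition, using that a non-quasiperiodic surface has pointed limits that are Barbot (hence flat) and therefore contains arbitrarily thick triangles. The paper's own proof is terser and does not spell out the transfer argument you worry about; your concern about geodesics wandering out of the controlled ball is legitimate in general, but here it dissolves because $\Sigma$ has $K_\Sigma\leq 0$ everywhere (Theorem \ref{theo:CurvatureRigidity}), so it is Cartan--Hadamard and all geodesics are globally minimizing---a triangle with vertices in the $R$-ball has sides that stay in a slightly larger ball, and the smooth convergence on compacts suffices.
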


 This result is to be compared to some of  Benoist--Hulin results for $\mathsf{SL}(3,\mathbb R)$ \cite{Benoist-Hulin}. 

\begin{proof} By the curvature characterisation, we see that a quasiperiodic complete maximal surface has curvature bounded from above by a negative constant and hence is Gromov hyperbolic. 

Conversely, if a surface is not quasiperiodic, there would be a sequence of point $\seqk{x}$ so that the sequence of pointed maximal surface $\{(x_k,\Sigma)\}_{k\in\mathbb N}$ converges to a Barbot surface. Then the sequence of Riemannian surface $\{(x_k,\Sigma)\}_{k\in\mathbb N}$ would converge uniformly on every compact to a
	flat metric. This would guarantee the existence of arbitrarily thick triangles and contradict Gromov hyperbolicity.
\end{proof}

\subsection{Characterisation using the spatial distance}

Recall that from proposition \ref{pro:BoundaryCompleteSurf}, complete maximal surfaces are acausal. In particular, it is natural to compare the induced distance with the spatial distance $\eth$ defined in definition \ref{def:SpatialDistance}. Our characterisation is the following

\begin{theorem}{\sc[Spatial distance]}\label{theo:SD}
Let $\Sigma$ be a complete maximal surface, then we have the following inequality
\begin{equation}
	 d \leq \eth\leq \sqrt{2} d\ .\label{ineq:SD1}
\end{equation}
Moreover, $\Sigma$ is quasiperiodic if and only if there exists $c<\sqrt{2}$ so that 
\begin{equation}
 \eth\leq c d\ .\label{ineq:SD2}
\end{equation}
\end{theorem}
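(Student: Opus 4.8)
The theorem has two parts: the universal inequalities $d \leq \eth \leq \sqrt 2\, d$ for every complete maximal surface, and the characterization of quasiperiodicity via the strict bound $\eth \leq c\, d$ with $c < \sqrt 2$. For the first part I would argue infinitesimally where possible and then integrate. The lower bound $d \leq \eth$ is essentially immediate: along any spacelike curve joining $x$ to $y$ in $\Sigma$, the length with respect to the induced metric dominates... wait, that is the wrong direction. Instead, note that $\eth(x,y) = \cosh^{-1}(|\langle x_0,y_0\rangle|)$ is the \emph{ambient} geodesic distance between $x$ and $y$ along the spacelike geodesic of $\Hn$ joining them (Lemma \ref{lem:SpacelikePosition}), and since $\Sigma$ is a spacelike submanifold the intrinsic distance $d(x,y)$ is at least the ambient spacelike distance --- this is exactly the statement that a spacelike geodesic of $\Hn$ realizes the shortest spacelike path, which gives $\eth \leq d$. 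Hmm, I need to be careful about the direction: the ambient distance between two points is $\leq$ the length of \emph{any} path joining them, including paths inside $\Sigma$, so $\eth(x,y) \leq d(x,y)$. So the genuinely nontrivial inequality is $\eth \leq \sqrt 2\, d$, i.e. the upper bound; combined with $\eth \geq d$... no. Let me restate: I'll prove $d \leq \eth$ by a convexity/comparison argument for the function $\eth$ restricted to geodesics of $\Sigma$, and $\eth \leq \sqrt 2 d$ by bounding the gradient of $\eth(z,\cdot)$ along $\Sigma$.

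The cleanest route to both universal bounds is to fix $z \in \Sigma$, write $h = h_z$ for the spatial-distance function to $z$ (i.e. $h(x) = \eth(z,x)$), and use the Rigidity Theorem \ref{theo:HorofunctionRigidity}, which gives $1 \leq \Vert \nabla h_z \Vert^2 \leq 2 - c$ with $c = \inf(-K_\Sigma) \geq 0$ (by the Curvature Rigidity Theorem \ref{theo:CurvatureRigidity}, $c\geq 0$, so $\Vert\nabla h_z\Vert^2 \leq 2$ always). Now for any $y \in \Sigma$ and any smooth path $\gamma$ in $\Sigma$ from $z$ to $y$,
\[
\eth(z,y) = h(y) - h(z) = \int \langle \nabla h, \dot\gamma\rangle \, dt \leq \int \Vert \nabla h\Vert \, \Vert \dot\gamma\Vert \, dt \leq \sqrt 2 \, \mathrm{length}(\gamma),
\]
so taking the infimum over $\gamma$ gives $\eth(z,y) \leq \sqrt 2 \, d(z,y)$. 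Conversely, the lower bound $\Vert \nabla h\Vert \geq 1$ forces $|h(y)-h(x)| \leq \eth$... again this needs a little care: I would instead flow along the gradient of $h$ starting from $y$; since $\Vert\nabla h\Vert\geq 1$ everywhere and $h$ is proper with $h(z)=0$, the gradient flow line from $y$ reaches $z$ with intrinsic length at most $h(y) = \eth(z,y)$, giving $d(z,y) \leq \eth(z,y)$. Both universal inequalities then follow for all pairs by taking $z$ arbitrary.

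For the characterization: if $\Sigma$ is quasiperiodic, Theorem \ref{theo:CurvBound} gives $K_\Sigma \leq -c_0 < 0$ for some $c_0 > 0$, hence $c = \inf(-K_\Sigma) \geq c_0 > 0$, and the Rigidity Theorem \ref{theo:HorofunctionRigidity} upgrades the gradient bound to $\Vert \nabla h_z\Vert^2 \leq 2 - c < 2$ uniformly in $z$ and the base point; repeating the integration argument gives $\eth \leq \sqrt{2 - c}\, d$ with $\sqrt{2-c} < \sqrt 2$. Conversely, suppose $\Sigma$ is not quasiperiodic; then by definition there is a sequence of points $x_k \in \Sigma$ such that $(x_k, \Sigma)$ converges in $\MH$ to a Barbot surface $\Sigma_\infty$. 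The last item of Proposition \ref{pro:prop-Bcrown} says $\sup_{x,y \in \Sigma_\infty} \eth(x,y)/d_{\Sigma_\infty}(x,y) = \sqrt 2$, realized in the limit along a suitable geodesic. Using the smooth convergence on compact sets of the pointed surfaces (and the fact that $\eth$ and the induced distance both depend continuously on the surface in this topology), for $x,y$ on $\Sigma_\infty$ with ratio close to $\sqrt 2$ I can find nearby points on the translated copies $g_k^{-1}(x_k,\Sigma)$ of $\Sigma$ whose $\eth$-to-$d$ ratio is arbitrarily close to $\sqrt 2$; since $\eth$ and $d$ are $\G$-invariant this shows no bound $c < \sqrt 2$ can hold on $\Sigma$. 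I expect the main obstacle to be the converse direction's convergence argument: one must check that the ratio $\eth/d$ behaves well under the smooth-convergence-on-compacts topology on $\MH$, in particular that the intrinsic distance does not jump down in the limit (it is lower semicontinuous, which is the direction one needs) and that $\eth$, being defined by an ambient algebraic formula, passes to the limit continuously --- together these make the approximation legitimate.
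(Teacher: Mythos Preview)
Your argument is correct and largely parallels the paper's: both derive the universal bounds from $1\leq\Vert\nabla h_z\Vert^2\leq 2$ (Theorem~\ref{theo:HorofunctionRigidity}) by integration---your gradient-flow argument for $d\leq\eth$ is equivalent to the paper's appeal to the length-increasing warped projection---and both handle ``not quasiperiodic $\Rightarrow$ no $c<\sqrt 2$'' by converging to a Barbot surface and invoking Proposition~\ref{pro:prop-Bcrown}.

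The one genuine difference is the direction ``quasiperiodic $\Rightarrow$ $\exists\,c<\sqrt 2$''. You prove it directly: the curvature characterisation (Theorem~\ref{theo:CurvBound}) gives $c=\inf(-K_\Sigma)>0$, the quantitative bound $\Vert\nabla h_z\Vert^2\leq 2-c$ from Theorem~\ref{theo:HorofunctionRigidity} then applies uniformly, and integration yields $\eth\leq\sqrt{2-c}\,d$. The paper instead runs the contrapositive: assuming $\sup(\eth/d)=\sqrt 2$, it locates points on $\Sigma$ where $\Vert\nabla\eth(\cdot,y)\Vert$ is forced arbitrarily close to $\sqrt 2$ (by an averaging argument along a geodesic), passes to a limit in the compact space $\mathcal N_1(n)/\G$, and uses the rigidity clause of Theorem~\ref{theo:HorofunctionRigidity} to exhibit a Barbot surface in the closure. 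Your route is shorter and avoids this compactness extraction; the paper's route has the minor advantage of not invoking Theorem~\ref{theo:CurvBound}, but since that result precedes this one anyway the saving is cosmetic.

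One small correction: in the approximation step for the converse you need \emph{upper} semicontinuity of the intrinsic distance under smooth convergence on compacts (so that $d$ on the approximating surfaces does not exceed $d$ on the Barbot limit, keeping $\eth/d$ large), not lower semicontinuity as you write. In fact the intrinsic distance between points in a fixed compact is continuous under this convergence, so the slip is harmless.
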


\begin{proof}[Proof of Theorem \ref{theo:SD}]
The inequality $d\leq \eth$ is consequence of  lemma \ref{lem:low--grad} or equivalently to  the fact that the warped projection on a hyperbolic plane is length increasing \cite[Lemma 3.7]{LTW}.

The inequality $\eth\leq \sqrt{2} d$ comes from the Horofunction and Spatial Distance Rigidity Theorem \ref{theo:HorofunctionRigidity}.
 
If $\Sigma$ is not quasiperiodic, then we can find a sequence $\seqk{x}$ in $\Sigma$ so that $\{(x_k,\Sigma)\}$ converges (up to the action of $\G$) to $(x,\Sigma_0)$ where $\Sigma_0$ is a Barbot surface. It follows by proposition \ref{pro:prop-Bcrown}, that we have 
\begin{equation}
\sup\left(\frac{\eth}{d}\right)\geq \sqrt 2 \ .\label{ineq:SD2}
\end{equation}
For any positive $\epsilon$, we can find a geodesic $\gamma$ and a positive $t$, so that, 
we have 
$$
\int_0^{t} \d f(\dot\gamma)=\eth(\gamma(0),\gamma(t))\geq \sqrt{2}(1-\epsilon)t\ .
$$
Then setting $y=\gamma(0)$ and $f:x\mapsto\eth(y,x)$, we have 
$$
\int_{\epsilon t}^{t}\d f(\dot\gamma) \ \d t =  \int_0^{t}\d f(\dot\gamma)\ \d t  -  \int_0^{\epsilon t}\d f(\dot\gamma)\ \d t \geq t \sqrt{2}(1-\epsilon)- t\sqrt{2}\epsilon\geq (1-2\epsilon)\sqrt{2} t\ ,
$$
since the  gradient of $f$ has norm less than $\sqrt{2}$ by Theorem \ref{theo:HorofunctionRigidity}.
Thus there exists $x=\gamma(s)$, with  $s$ in $[\epsilon t,t]$ so that $$\Vert\nabla \eth^{y}(x)\Vert\geq \sqrt{2}\frac{1-2\epsilon}{1-\epsilon}\ .$$ 
Since $\epsilon$ is arbitrary,  we can find sequences $\seqk{x}$ and $\seqk{y}$ so that
$$
\lim_{k\to\infty}\Vert \nabla \eth^{y_k}(x_k)\Vert\geq \sqrt{2}\ .$$
Let $f_k(x)=\eth(x,y_k)$, then after  extracting a  subsequences, we can assume that  $(x_k,\Sigma, y_k)$ converges  to $(x,\Sigma_0,z)$ in $\mathcal N_1(n)$. Thus $\sek{f_k-f_k(x_k)}$ converges to a function $h_z$ which is, up to some additive constant, either a horofunction or the spatial distance to a point in $\Sigma$,  and so that $\Vert\nabla h_z(x)\Vert^2\geq 2$. It follows  by Theorem \ref{theo:HorofunctionRigidity} that $\Sigma_0$ is a Barbot surface and thus that $\Sigma$ is not quasiperiodic.
\end{proof}

\subsection{Lamination characterisation}

Here is another interpretation of quasiperiodic leaves.

\begin{theorem}{\sc[Laminated interpretation]}\label{theo:ChaLam}
	A complete maximal surface $\Sigma$ is quasiperiodic if and only if  there exists 
	\begin{enumerate}
	\item 	a compact space $\mathcal F$ laminated by hyperbolic Riemann surfaces.
	\item  a $\Hn$-bundle  $\mathcal H$ over $\mathcal F$ equipped with a flat connection along the leaves whose parallel transport is transversely continuous in the smooth topology along the leaves.
	\item  a section of $\mathcal H$, lifting  all leaves to maximal complete surfaces so that  $\Sigma$ is one of these lifts.
	\end{enumerate}
\end{theorem}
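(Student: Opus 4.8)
The plan is to run both implications through the laminated space $\MH$, its compact quotient $\MH/\G$, and the tautological flat $\Hn$-bundle over the latter. The structural facts I will rely on are: $\MH$ is a Riemann surface lamination with smooth transverse structure on which $\G$ acts properly, cocompactly and leafwise (Theorem \ref{theo:CompactnessTheorem} and \cite[Appendix A]{LTW}); the function $\mathcal K\colon(x,\Sigma)\mapsto K_\Sigma(x)$ is continuous, $\G$-invariant, non-positive, and vanishes exactly at the Barbot point $m_0$ (Curvature Rigidity Theorem \ref{theo:CurvatureRigidity}); every complete maximal surface is an entire graph over a hyperbolic plane, hence simply connected; and $\Sigma$ is quasiperiodic iff the image of its leaf in $\MH/\G$ avoids $m_0$, equivalently has curvature bounded above by a negative constant (Theorem \ref{theo:CurvBound}).

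For the forward implication, suppose $\Sigma$ is quasiperiodic and let $\mathcal F$ be the closure in $\MH/\G$ of the image of the leaf $\Sigma$. It is compact, being closed in the compact space $\MH/\G$, and it is saturated since the closure of a union of leaves in a lamination with local product structure is again a union of leaves; thus $\mathcal F$ is a compact lamination by surfaces. Quasiperiodicity gives $m_0\notin\mathcal F$, so by continuity of $\mathcal K$ there is $c>0$ with $\mathcal K\leq -c$ on $\mathcal F$; since each leaf is a complete simply connected surface with curvature $\leq -c$, it is a hyperbolic Riemann surface, which is item (i). For (ii) and (iii) I will push forward the trivial flat bundle: the fixed vector space $E$ gives a trivial bundle $\MH\times E$ on which $\G$ acts diagonally, so $\mathcal H_0\defeq(\MH\times\Hn)/\G\to\MH/\G$ is a flat $\Hn$-bundle whose leafwise parallel transport descends from the trivial connection on $\MH\times E$; its transverse continuity in the smooth topology is precisely the smooth transverse continuity of the lamination $\MH$. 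Restricting $\mathcal H_0$ to $\mathcal F$ and taking the tautological section $[(x,\Sigma)]\mapsto[((x,\Sigma),x)]$ yields a section whose restriction to each leaf develops that leaf to the corresponding maximal complete surface; the leaf $\Sigma$ is one of these lifts by construction.

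For the converse, given $(\mathcal F,\mathcal H,s)$ I will define $\Phi\colon\mathcal F\to\MH/\G$ by: a point $p$ lies on a leaf $\ell$; the section $s$ together with the leafwise flat connection develops the universal cover $\widetilde\ell$ to a maximal complete surface $\Sigma_p\subset\Hn$ and sends $p$ to a point $x_p\in\Sigma_p$, and I set $\Phi(p)\defeq[(x_p,\Sigma_p)]\in\MH/\G$. Transverse continuity of the parallel transport in the smooth topology forces $\Sigma_{p'}\to\Sigma_p$ smoothly on compacta as $p'\to p$, so $\Phi$ is continuous, and it is leafwise (along $\ell$ it traces the image of $\Sigma_\ell$). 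Since $\mathcal F$ is compact, $\Phi(\mathcal F)$ is a compact, leaf-saturated subset of $\MH/\G$; it cannot contain $m_0$, because a leaf of $\mathcal F$ is a hyperbolic Riemann surface whereas a Barbot surface, being flat and complete, is conformally $\mathbb{C}$, so no hyperbolic leaf can develop to a Barbot surface. As $\Sigma$ is one of the lifts, the image of the leaf $\Sigma$ in $\MH/\G$ lies in $\Phi(\mathcal F)$, hence is precompact and disjoint from $m_0$; by Theorem \ref{theo:CurvBound} this means $\Sigma$ is quasiperiodic.

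The routine parts are the topological bookkeeping (saturation, compactness) and the explicit tautological bundle. The step needing genuine care is the dictionary between "the parallel transport of $\mathcal H$ is transversely continuous in the smooth topology" and "the associated family of developed maximal surfaces converges smoothly on compact sets" — in one direction this certifies property (ii) for $\mathcal H_0$, in the other it gives continuity of $\Phi$, and its proof goes back to the smooth transverse structure of $\MH$ from \cite[Appendix A]{LTW}. A secondary subtlety is leaves with non-trivial holonomy (periodic surfaces): "lifting a leaf to a maximal surface" must be read via the developing map of its universal cover, and I will phrase the section/developing-map correspondence so that the simply connected and periodic cases are handled uniformly, in particular so that $\Sigma$ itself, and not merely a quotient, appears as a lift.
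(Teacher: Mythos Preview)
Your converse direction is essentially the paper's: the image of $\Sigma$ in $\MH/\G$ lies in a compact set whose leaves are all of hyperbolic conformal type, hence avoids the Barbot point since Barbot surfaces are parabolic; thus $\Sigma$ is quasiperiodic.

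The forward direction has a genuine gap. You take $\mathcal F$ to be a leaf closure in $\MH/\G$, but $\MH/\G$ is \emph{not} a lamination by surfaces. The group $\G$ is not discrete: the image of a leaf $\Sigma'$ in $\MH/\G$ is $\Sigma'/\Stab_\G(\Sigma')$, and this collapses whenever $\Sigma'$ admits a positive-dimensional isometry group. A totally geodesic hyperbolic plane $H$ has $\Stab_\G(H)$ acting transitively on $H$, so its leaf becomes a single point in $\MH/\G$; and such a plane does occur in the closure of many quasiperiodic $\Sigma$ --- any asymptotically hyperbolic one, for instance (Theorem~\ref{theo:AsymHyp}). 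So your $\mathcal F$ need not be a lamination by surfaces at all. The same defect breaks the bundle: the fibre of $(\MH\times\Hn)/\G\to\MH/\G$ over $[m]$ is $\Hn/\Stab_\G(m)$, not $\Hn$, so you do not obtain an $\Hn$-bundle. You anticipated discrete holonomy (periodic leaves) but not continuous stabilisers.

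The paper's fix is a single move: replace $\G$ by a cocompact \emph{torsion-free lattice} $\Gamma<\G$. Then $\Gamma$ acts freely on $\MH$, the quotient $\MH/\Gamma$ is compact (since $\Gamma\backslash\G$ is compact and $\G$ acts cocompactly on $\MH$) and is a genuine surface lamination, and $(\MH\times\Hn)/\Gamma$ is an honest flat $\Hn$-bundle. Taking $\mathcal F$ as the leaf closure in $\MH/\Gamma$, the rest of your construction goes through verbatim.
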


The case when $\mathcal F$ is a compact surface is the periodic case and it is customary to describe the compact laminated case as quasiperiodic. 

\begin{proof}
	Let $\Gamma$ be a cocompact torsion free lattice in $\G$. The space $\MH/\Gamma$ is then laminated and we take as $\mathcal F$ the closure of the image of $\Sigma$ in $\MH/\Gamma$. 
	The bundle $\mathcal H$ is the induced bundle from the trivial bundle with the $\Gamma$-action.
	All leaves in $\mathcal F$ are hyperbolic because they are uniformised by the hyperbolic disk by  the Conformal Characterisation Theorem \ref{theo:biLip}.
	
	Conversely, such a leaf  in a space $\mathcal F$ is quasiperiodic since its closure of its $\G$ orbit in $\MH$ only contains complete surfaces which are hyperbolic, whereas Barbot surfaces are parabolic.
\end{proof}

\section{Extension of uniformisation}\label{s:ExtUnif}

Let us fix a point $x_0$ in $\H^2$ and identify $\partial_\infty \H^2$ with $\P(V)$.

\begin{theorem}{\sc[Extension of uniformisation]}\label{theo:ExtUnif} For any positive constant $c$, there exists some constants $A$ and $B$ with the following property:

Assume that $(x,\Sigma)$ is a quasiperiodic surface whose curvature is bounded by $-c$. Then  there exists a continuous map $\Psi$  from $\H^2\cup\partial_\infty\H^2$ to $\Sigma\cup\partial_\infty\Sigma$ which respects interiors and boundaries,  is conformal in the interior and  $(A,B)$-quasisymmetric on the boundary and so that $\Psi(x_0)=x$.
\end{theorem}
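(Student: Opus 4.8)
The plan is to combine the Conformal Characterisation Theorem \ref{theo:biLip} with the Hölder Modulus of Continuity Theorem \ref{theo:HolderContinuity}, routed through the space $\NN_1(n)$ of Corollary \ref{cor:Np-compact} so that all estimates become uniform over the quasiperiodic locus. First I would fix a uniformisation $\Phi\colon\H^2\to\Sigma$ with $\Phi(x_0)=x$, which by Theorem \ref{theo:biLip} is conformal and biLipschitz; biLipschitz constants here depend only on $c$, since the lower curvature bound $K_\Sigma\geq-1$ and the upper bound $K_\Sigma\leq-c$ give $g_{\H^2}\leq\Phi^*g_\Sigma\leq\frac1c g_{\H^2}$ via Ahlfors--Schwarz--Pick. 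In particular $\Phi$ is a quasi-isometry, so it extends to a homeomorphism $\partial_\infty\Phi\colon\partial_\infty\H^2\to\partial_\infty\Sigma$, where $\partial_\infty\Sigma$ is identified as usual with the semi-positive (in fact positive, by Corollary \ref{cor:qp2Sqp} and the curvature bound ruling out photon segments) limit curve in $\bHn$. Setting $\Psi\defeq\Phi$ in the interior and $\Psi\defeq\partial_\infty\Phi$ on the boundary, we get a map respecting interiors and boundaries, conformal inside, with $\Psi(x_0)=x$; it remains to prove $\partial_\infty\Phi$ is $(A,B)$-quasisymmetric with $A,B$ depending only on $c$.

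For this I would exploit the cocompactness of the $\G$-action and reduce the quasisymmetry estimate to a compactness argument. The set $\QS_c$ of pointed quasiperiodic surfaces with curvature $\leq-c$ is, by Theorem \ref{theo:CurvBound} and the Compactness Theorem \ref{theo:CompactnessTheorem}, a $\G$-invariant subset of $\MH$ whose image in $\MH/\G$ is precompact; one checks it is closed (the curvature bound $K_\Sigma\leq -c$ is a closed condition, and it excludes the Barbot point by Theorem \ref{theo:RigidityTheoremINTRO}), hence its image is compact. The quasisymmetry constant of $\partial_\infty\Phi$ — namely, for a fixed threshold $A_0$, the supremum of $|\bb_{\partial_\infty\Phi}(x,y,z,t)|$ over quadruples with $A_0^{-1}\leq|[x,y,z,t]|\leq A_0$ together with the analogous lower bound — is a $\G$-invariant function on $\QS_c$ (invariant because both the hyperbolic cross-ratio on $\partial_\infty\H^2$ and $\bb$ on $\bHn$ are invariant under the respective isometry groups, and $\Phi$ intertwines the $\G$-action on the surface with the $\PSL(V)$-action on its uniformisation up to the Fuchsian lattice). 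The key point is that this function is finite and continuous on $\QS_c$: finiteness at each point is exactly the statement that the extension of a biLipschitz conformal map of Gromov hyperbolic surfaces is quasisymmetric on the boundary — a Morse-type estimate showing that the cross-ratio of four boundary points is controlled, up to constants depending only on the biLipschitz and hyperbolicity constants, by the "shadow" configuration of the corresponding geodesics, which $\Phi$ distorts by a bounded amount. Continuity follows from smooth convergence of surfaces on compacts, which forces convergence of the uniformising maps and hence of the boundary extensions uniformly (here one uses that the biLipschitz constant is uniform, so no geodesic escapes to infinity). A continuous $\G$-invariant function on a space with compact quotient is bounded, giving the uniform constants $A,B$.

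The main obstacle I expect is the finiteness-and-continuity claim for the boundary cross-ratio function — that is, proving carefully that the boundary extension of a uniformly biLipschitz conformal map between Gromov hyperbolic metrics on the disk is quasisymmetric \emph{for the $\bHn$ cross-ratio $\bb$}, with constants controlled by the data, and that this depends continuously on the surface. The subtlety is that $\bb$ is not the naive square of a projective cross-ratio on the limit curve but the ambient pseudo-Riemannian cross-ratio, so one must relate the Gromov product on $\Sigma$ (which governs the hyperbolic-side geometry, as in the work of Glorieux--Monclair cited in the excerpt) to the diamond and visual distances on $\bHn$ restricted to $\partial_\infty\Sigma$; this is precisely the content of the Gromov-product analysis promised for Section \ref{s:ExtUnif}. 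Granting that comparison, together with the Hölder Theorem \ref{theo:HolderContinuity} to control the boundary behaviour and the contraction estimate of Lemma \ref{lem:contracting} to convert Gromov-product separation into cross-ratio bounds, the argument closes. A cleaner alternative, if available, would be to produce directly a quasi-isometry between $\Sigma$ with its induced metric and $\H^2$ sending geodesics uniformly close to geodesics, and then invoke the standard fact that such maps induce quasisymmetric boundary maps for cross-ratios — but the conversion to the specific $\bb$-cross-ratio on the Einstein Universe still requires the diamond-distance dictionary, so the essential difficulty is the same.
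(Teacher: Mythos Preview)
Your strategic outline is sound at the bookends --- the uniformisation is biLipschitz with constants depending only on $c$, and the real work is the comparison between intrinsic Gromov products on $\Sigma$ and the ambient cross-ratio $\bb$ --- but the compactness argument in your second paragraph is either circular or redundant. If, as you say, ``finiteness at each point'' holds with constants depending only on the biLipschitz and hyperbolicity constants, then those already depend only on $c$ and you are done; no cocompactness is needed. Conversely, the \emph{continuity} you invoke is not free: convergence of pointed surfaces in $\MH$ gives convergence of uniformisations only on compacta of $\H^2$, and passing this to convergence of the boundary extensions is precisely the content of the Boundary Compactness Theorem \ref{theo:BoundCompact}, which the paper proves \emph{after} (and using) Theorem \ref{theo:ExtUnif}. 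So you cannot use it here. There is also a hidden step in your first paragraph: the claim that the biLipschitz map extends to a homeomorphism onto $\partial_\infty\Sigma\subset\bHn$ presupposes that the Gromov boundary of $(\Sigma,d_\Sigma)$ is naturally identified with the limit curve in $\bHn$; this identification is part of what must be proved and again rests on the Gromov-product estimates.

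The paper's route is more direct and avoids the circularity. It establishes two-sided bounds on the quantity $\left\vert\dfrac{\langle z,w\rangle}{\langle z,x\rangle\langle x,w\rangle}\right\vert$ for $z,w\in\Sigma\cup\partial_\infty\Sigma$ and $x$ within bounded distance of a (quasi)geodesic from $z$ to $w$. The upper bound (Proposition \ref{pro:UpperBoundGeod}) is a soft compactness argument on $\NN_2(n)$. The lower bound (Propositions \ref{pro:LowerBoundQG} and \ref{pro:ConverseBound}) is the substantive step: it uses the Horofunction Rigidity Theorem \ref{theo:HorofunctionRigidity} to show that the restriction of any horofunction to a geodesic in a quasiperiodic surface has no local maximum (Lemma \ref{lem:restrictionhorofunction}), which via a limiting argument forces the Gromov product to stay bounded below. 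With both bounds in hand, one defines $\Phi$ on $\partial_\infty\H^2$ by choosing arbitrary accumulation points of geodesic rays, and the two-sided Gromov-product control translates algebraically into a two-sided bound on $\bb_\Phi$ (Proposition \ref{pro:QSGromovBoundary}); positivity is checked separately (Proposition \ref{pro:PhiPos}), and then the H\"older Theorem \ref{theo:HolderContinuity} gives continuity and uniqueness of $\Phi$, hence that it genuinely extends the uniformisation. The constants throughout depend only on $c$ because the horofunction gradient bound and the biLipschitz constant do.
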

Such a map $\Psi$ is called {\em an extension of uniformisation} for $(x,\Sigma)$. Recall that any quasiperiodic surface has curvature bounded by some negative constant by Theorem \ref{theo:CurvBound}. The next result tells us extensions behave well under limits. 
\begin{theorem}{\sc[Boundary compactness]}\label{theo:BoundCompact}
	Let $\{(x_k,\Sigma_k)\}_{k\in\mathbb N}$ be a sequence of quasiperiodic surfaces converging to a quasiperiodic surface $(x,\Sigma)$.  Assume that there is a positive constant $c$ so that  the curvature of all surfaces $\Sigma_k$ is bounded from above by $-c$.  Let $\seqk{\Psi}$ be the corresponding extensions of conformal parametrisation. Then $\seqk{\Psi}$ subconverges uniformly to an extension of conformal parametrisation of $(x,\Sigma)$.
	\end{theorem}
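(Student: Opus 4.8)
The plan is to deduce boundary compactness from three ingredients already available: the Compactness Theorem \ref{theo:CompactnessTheorem} (cocompactness of $\G$ on $\MH$), the Equicontinuity Theorem \ref{theo:Equicontinuity} together with its corollaries on $(A,B)$-quasisymmetric maps, and the Hölder Modulus of Continuity Theorem \ref{theo:HolderContinuity}, plus the uniqueness built into Theorem \ref{theo:ExtUnif}. First I would observe that the hypotheses are $\G$-invariant: the curvature bound $K_{\Sigma_k}\leq -c$ and convergence $(x_k,\Sigma_k)\to(x,\Sigma)$ in $\MH$ are preserved under the $\G$-action, and by construction $\Psi_k(x_0)=x_k$. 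Because $\G$ acts properly and cocompactly on $\MH$, after acting by suitable $g_k\in\G$ we may assume the $(x_k,\Sigma_k)$ — hence $(x,\Sigma)$ — stay in a fixed compact subset of $\MH$; but in fact convergence already gives this for free, so the genuine content is to control the boundary maps $\Psi_k\colon \partial_\infty\H^2\to\partial_\infty\Sigma_k\subset\bHn$.

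Next I would extract the uniform quasisymmetry constants. By Theorem \ref{theo:ExtUnif}, each $\Psi_k$ restricted to $\partial_\infty\H^2=\P(V)$ is $(A,B)$-quasisymmetric with $A,B$ depending only on $c$, not on $k$. Fix a positive triple $\kappa_0$ in $\P(V)$; since $\Psi_k(\kappa_0)$ is a positive triple in $\partial_\infty\Sigma_k$ and these loops converge (as graphs, via Proposition \ref{pro:PositiveLoopsAreGraphs} / Corollary \ref{cor:CompactnessSemi-positive}) because $(x_k,\Sigma_k)\to(x,\Sigma)$ forces $\partial_\infty\Sigma_k\to\partial_\infty\Sigma$, the triples $\Psi_k(\kappa_0)$ lie in a compact set of $\sT(n)$. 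Post-composing by elements $\gamma_k\in\G$ with $\gamma_k(\Psi_k(\kappa_0))=\tau_0$ (a fixed triple), the Equicontinuity Theorem \ref{theo:Equicontinuity} gives a subsequence along which $\gamma_k\circ\Psi_k\big|_{\P(V)}$ converges uniformly to an $(A,B)$-quasisymmetric map; since $\gamma_k\to\gamma_\infty$ in $\G$ (the $\Psi_k(\kappa_0)$ converge), the original boundary maps $\Psi_k\big|_{\P(V)}$ themselves subconverge uniformly to an $(A,B)$-quasisymmetric map $\psi_\infty\colon\P(V)\to\bHn$, with image the semi-positive loop $\lim\partial_\infty\Sigma_k=\partial_\infty\Sigma$.

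For the interior, I would use that $(x_k,\Sigma_k)\to(x,\Sigma)$ in the topology of smooth convergence on compacts, so the induced hyperbolic-type metrics (uniformisations exist and are biLipschitz by Theorem \ref{theo:biLip}, with uniform constants depending on $c$) converge; hence the conformal maps $\Phi_k\colon\H^2\to\Sigma_k$ normalized by $\Phi_k(x_0)=x_k$ — which is exactly $\Psi_k$ on the interior — form a normal family (equicontinuous on compacts by the uniform biLipschitz bound and standard elliptic/normal-family arguments), so subconverge locally uniformly to a conformal map $\Phi_\infty\colon\H^2\to\Sigma$ with $\Phi_\infty(x_0)=x$. Finally I would glue: the Hölder Modulus of Continuity Theorem \ref{theo:HolderContinuity} provides a uniform modulus of continuity for each $\Psi_k$ on $\H^2\cup\partial_\infty\H^2$ (the same $M,\alpha$ for all $k$ since $A,B$ are uniform), so the limit $\Psi_\infty$ (equal to $\Phi_\infty$ inside and $\psi_\infty$ on the boundary) is continuous up to the boundary and respects interiors/boundaries, and the convergence $\Psi_k\to\Psi_\infty$ is uniform on $\H^2\cup\partial_\infty\H^2$. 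By the uniqueness clause of Theorem \ref{theo:ExtUnif}, $\Psi_\infty$ is \emph{the} extension of uniformisation of $(x,\Sigma)$, which completes the proof.

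The main obstacle I expect is the last gluing/continuity-up-to-the-boundary step: matching the interior limit $\Phi_\infty$ with the boundary limit $\psi_\infty$ and upgrading locally-uniform-plus-boundary-uniform convergence to genuine uniform convergence on the closed disk. This is where the uniform Hölder modulus from Theorem \ref{theo:HolderContinuity} is essential — it is what prevents the interior conformal maps from oscillating near the boundary and guarantees that the limit of the boundary values agrees with the boundary values of the limit; without it one only gets a limit that is conformal in the interior and quasisymmetric on the boundary but a priori not continuous across.
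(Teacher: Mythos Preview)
Your strategy---extract interior and boundary limits separately, then glue---is natural, and you correctly identify the gluing as the main obstacle. But the remedy you propose does not work. Theorem~\ref{theo:HolderContinuity} is a statement about quasisymmetric maps from $\P(V)$ to $\bHn$; it controls only the boundary restriction $\Psi_k\vert_{\partial_\infty\H^2}$ and says nothing about how $\Psi_k(w)$ for $w\in\H^2$ approaches $\Psi_k(z)$ as $w\to z\in\partial_\infty\H^2$. The biLipschitz estimate of Theorem~\ref{theo:biLip} governs the interior and the H\"older estimate governs the boundary, but neither gives equicontinuity \emph{across} the boundary, so you cannot justify the interchange $\lim_m\lim_k\Psi_k(w_m)=\lim_k\Psi_k(z)$ for $w_m\to z$. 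A related symptom: your claim that $\Psi_k(\kappa_0)$ stays bounded in $\sT(n)$ because $\partial_\infty\Sigma_k\to\partial_\infty\Sigma$ is not a valid deduction---triples on converging loops can collapse; what would pin them down is their relation to the base points $x_k$, which is again the missing cross-boundary control.

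The paper does not glue two separate limits. It proves directly the diagonal statement: if $z_k\to z$ in $\partial_\infty\H^2$ then $\Psi_k(z_k)$ converges to the value at $z$ of the boundary map constructed in \S\ref{sec:QSmap}. The tool is not H\"older continuity but the Gromov-product estimates of Propositions~\ref{pro:UpperBoundGeod}, \ref{pro:LowerBoundQG} and especially~\ref{pro:ConverseBound}. The argument runs by contradiction: if two sequences $z_k,y_k\to z$ in $\H^2$ had $\Psi_k(z_k)$ and $\Psi_k(y_k)$ converging to distinct points of $\partial_\infty\Sigma$, then the quantity
\[
\left\vert\frac{\langle\Psi_k(z_k),\Psi_k(y_k)\rangle}{\langle\Psi_k(z_k),\Psi_k(x_0)\rangle\,\langle\Psi_k(x_0),\Psi_k(y_k)\rangle}\right\vert
\]
would stay bounded away from zero, and Proposition~\ref{pro:ConverseBound} then forces $x_0$ to lie within bounded distance of the geodesic $[z_k,y_k]$ in $\H^2$---impossible since that geodesic escapes to infinity. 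This single estimate supplies exactly the cross-boundary equicontinuity you are missing, and the paper uses it to prove Theorems~\ref{theo:ExtUnif} and~\ref{theo:BoundCompact} simultaneously.
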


Here is an interesting corollary of this construction whose statement does not involve maximal surfaces. Note that we have not been able to give a direct proof of this corollary, without using the maximal surface solution to the asymptotic Plateau problem.
\begin{corollary}\label{cor:QS2QP}
Every quasiperiodic loop admits a quasisymmetric -- hence quasiperiodic -- parametrisation.
\end{corollary}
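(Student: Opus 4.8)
The plan is to route the proof through maximal surfaces, as anticipated by the remark preceding the statement: the corollary follows by assembling results already in place. First I would let $\Lambda$ be a quasiperiodic loop. By definition a quasiperiodic loop is in particular a positive loop, hence a semi-positive loop, so the Asymptotic Plateau Problem (Theorem \ref{theo:MainTheoPaper1}) produces a unique complete maximal surface $\Sigma\defeq\Sigma(\Lambda)$ with $\partial_\infty\Sigma=\Lambda$. Since $\partial_\infty\Sigma$ is quasiperiodic, Corollary \ref{cor:qp2Sqp} shows that $\Sigma$ is a quasiperiodic maximal surface, and then the Curvature Characterisation (Theorem \ref{theo:CurvBound}) provides a positive constant $c$ with $K_\Sigma<-c$ everywhere.

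Next I would invoke the Extension of Uniformisation Theorem \ref{theo:ExtUnif}. Fixing the identification $\partial_\infty\H^2\cong\P(V)$ and any point $x\in\Sigma$, it yields constants $A,B>1$ (depending only on $c$) together with a continuous map $\Psi\colon\H^2\cup\partial_\infty\H^2\to\Sigma\cup\partial_\infty\Sigma$ respecting interiors and boundaries, conformal on the interior, and $(A,B)$-quasisymmetric on $\partial_\infty\H^2$. Set $\psi\defeq\Psi\vert_{\partial_\infty\H^2}$, viewed as a map $\P(V)\to\bHn$. Then $\psi$ is an $(A,B)$-quasisymmetric map; being positive it is injective (a positive map sends distinct points to transverse points, as in the proof of Proposition \ref{ex:MW2}), and since $\Psi\vert_{\H^2}$ is a uniformisation of $\Sigma$, its continuous boundary extension is onto $\partial_\infty\Sigma=\Lambda$. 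Hence $\psi$ is a homeomorphism from $\P(V)$ onto $\Lambda$, that is, a quasisymmetric parametrisation of $\Lambda$. Finally, Corollary \ref{cor:qs2qp} states that any quasisymmetric map is quasiperiodic, so $\psi$ is the desired quasiperiodic parametrisation.

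The point requiring the most care is not this short assembly but the weight of its ingredients: the construction rests on the solution of the asymptotic Plateau problem and, above all, on Theorem \ref{theo:ExtUnif}, whose proof itself uses the Rigidity Theorems and the analysis of Gromov products. Within the corollary proper, the only genuinely new verifications are bookkeeping ones — that a quasiperiodic loop is literally a semi-positive loop so that Theorem \ref{theo:MainTheoPaper1} applies, and that the boundary map delivered by Theorem \ref{theo:ExtUnif} really is a homeomorphism onto $\Lambda$ (injectivity from positivity, surjectivity from $\Psi$ being a uniformisation). This is presumably also the reason the authors note they know of no proof avoiding the maximal-surface detour.
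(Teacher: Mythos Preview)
Your proof is correct and follows exactly the route the paper takes: a quasiperiodic loop bounds a quasiperiodic maximal surface (via Theorem~\ref{theo:MainTheoPaper1} and Corollary~\ref{cor:qp2Sqp}), and then Theorem~\ref{theo:ExtUnif} furnishes the quasisymmetric boundary parametrisation. The paper's own proof is the two-line version of precisely this argument; you have simply filled in the intermediate citations (the curvature bound from Theorem~\ref{theo:CurvBound}, the injectivity/surjectivity bookkeeping for the boundary map) that the paper leaves implicit.
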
	

\begin{proof}
	Indeed a quasiperiodic loop bounds a quasiperiodic surface, and we have just shown that the boundary at infinity of such a quasiperiodic surface admits a quasisymmetric parametrisation.
\end{proof}

We then define an {\em $(A,B)$-quasicircle} to be a quasiperiodic loop such that any extension of unformisation is $(A,B)$-quasisymmetric. We then have some converse to Theorem \ref{theo:ExtUnif}\begin{corollary}\label{cor:QuasiCircle-curv} For any constants $A$ and $B$ greater than 1, there exists a positive $c$ so that if the boundary of a complete maximal surface has an $(A,B)$-quasisymmetric parametrisation then the curvature of $\Sigma$ is bounded by $-c$ from above. 
	\end{corollary}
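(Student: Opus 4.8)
The plan is to argue by contradiction, exploiting the compactness of the space $\L$ of pointed semi-positive loops together with the Equicontinuity Theorem \ref{theo:Equicontinuity} and the Curvature Rigidity Theorem \ref{theo:CurvatureRigidity}. Suppose the statement fails for some $A,B>1$: then for each $k$ there is a complete maximal surface $\Sigma_k$ with boundary $\Lambda_k\defeq\partial_\infty\Sigma_k$ admitting an $(A,B)$-quasisymmetric parametrisation $\xi_k$, together with a point $x_k\in\Sigma_k$ at which $K_{\Sigma_k}(x_k)\geq -1/k$; since $K_{\Sigma_k}\leq 0$ by the Curvature Rigidity Theorem, we get $K_{\Sigma_k}(x_k)\to 0$. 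I would first normalise. By surjectivity of the barycenter map (Proposition \ref{pro:BarycenterProper}) there is a positive triple $\sigma_k$ in $\Lambda_k$ whose barycenter point on $\Sigma_k$ is $x_k$, so that the pair $(\Lambda_k,\sigma_k)\in\L$ is sent to $(x_k,\Sigma_k)$. Using that $\L/\G$ is compact (Proposition \ref{pro:ContBij}), I would choose $g_k\in\G$ so that, after passing to a subsequence, $g_k\cdot(\Lambda_k,\sigma_k)$ converges in $\L$ to some $(\Lambda_\infty,\sigma_\infty)$; crucially $\sigma_\infty$ is again a \emph{positive} triple, since marking triples of elements of $\L$ are positive by definition, so no degeneration occurs. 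By $\G$-equivariance and continuity of the barycenter map, the pointed surfaces $g_k\cdot(x_k,\Sigma_k)$ then converge in $\MH$ to $(x_\infty,\Sigma_\infty)$ with $\Sigma_\infty=\Sigma(\Lambda_\infty)$.

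Next I would identify $\Sigma_\infty$. As the curvature function is continuous and $\G$-invariant on $\MH$, we get $K_{\Sigma_\infty}(x_\infty)=\lim_k K_{\Sigma_k}(x_k)=0$, so by the Curvature Rigidity Theorem $\Sigma_\infty$ is a Barbot surface and $\Lambda_\infty$ is a Barbot crown. To reach a contradiction I would push the parametrisations to the limit: precompose $\xi_k$ with the element $h_k\in\PSL(V)$ taking a fixed positive triple $\kappa_0$ to $\xi_k^{-1}(\sigma_k)$, and postcompose with $g_k$ and with $\phi_k\in\G$ chosen so that $\phi_k\to\Id$ and $\phi_k(g_k\sigma_k)=\sigma_\infty$. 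Each $\phi_k\circ g_k\circ\xi_k\circ h_k$ is still $(A,B)$-quasisymmetric — pre- and postcomposition by $\PSL(V)$ and $\G$ preserve positivity and leave the cross-ratios $[\,\cdot\,]$ and $\bb$ unchanged — and it sends $\kappa_0$ to the fixed positive triple $\sigma_\infty$. By the Equicontinuity Theorem \ref{theo:Equicontinuity} a subsequence converges uniformly to an $(A,B)$-quasisymmetric, in particular continuous and positive, map $\xi_\infty$, whose image is the limit of the loops $\phi_k g_k\Lambda_k$; but $\phi_k g_k\Lambda_k\to\Lambda_\infty$ because $g_k\Lambda_k\to\Lambda_\infty$ in $\L$ and $\phi_k\to\Id$, so the image of $\xi_\infty$ is $\Lambda_\infty$. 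Thus the Barbot crown $\Lambda_\infty$ is the image of a quasisymmetric map, whence by Theorem \ref{theo:main} the surface $\Sigma_\infty=\Sigma(\Lambda_\infty)$ is quasiperiodic — contradicting the fact that a Barbot surface is never quasiperiodic (the closure of its $\G$-orbit in $\MH$ contains itself). Equivalently, the argument shows that the closure in $\MH/\G$ of the set of pointed surfaces whose boundary is an $(A,B)$-quasicircle avoids the Barbot point, so the continuous $\G$-invariant function $(x,\Sigma)\mapsto -K_\Sigma(x)$, which by Curvature Rigidity vanishes exactly at the Barbot point, attains a positive minimum $c$ on that closure; this $c$ is the desired constant.

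The step I expect to be the main obstacle is the normalisation in the first paragraph: the $\G$-action must be used to make the surfaces converge while simultaneously keeping the quasisymmetric parametrisations under control, and a naive choice of normalising triple could degenerate, since positivity is not a closed condition in $\bHn$. Working inside $\L$ — the space of semi-positive loops pointed by a \emph{positive} triple — is what resolves this: $\L$ carries a cocompact $\G$-action, its marking triples cannot degenerate in the limit, and it is linked to $\MH$ through the barycenter map, so a single sequence $g_k\in\G$ normalises the surface, the distinguished point, and the marking triple at once. Once this is in place, the Equicontinuity Theorem and the Curvature Rigidity Theorem are applied essentially off the shelf.
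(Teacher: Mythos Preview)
Your argument is correct, but more circuitous than the paper's. You argue by contradiction, normalise first in the space $\L$ of pointed semi-positive loops, then separately re-align the quasisymmetric parametrisations via the auxiliary elements $\phi_k\to\Id$, and finally derive the contradiction from the limit being a Barbot crown. The paper instead works directly on the space of pairs $(\xi,\tau)$ where $\xi$ is an $(A,B)$-quasisymmetric map and $\tau$ is a positive triple in $\P(V)$: the diagonal action of $\PSL(V)\times\G$ on this space is cocompact by the Equicontinuity Theorem~\ref{theo:Equicontinuity}, the function $(\xi,\tau)\mapsto K_{\Sigma(\xi)}\big(b_{\xi(\tau)}\big)$ is continuous and invariant, hence attains its maximum, and this maximum is strictly negative by the Curvature Rigidity Theorem (were it zero the surface would be Barbot, but its boundary, being the image of a quasisymmetric map, is a positive loop). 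Surjectivity of the barycenter map then gives the bound at every point of every such surface. The two approaches rest on the same ingredients --- Equicontinuity, Rigidity, surjectivity of $B$ --- but the paper's packaging of the parametrisation and the marking triple into a single cocompact $\PSL(V)\times\G$-space avoids both the detour through $\L$ and the $\phi_k$-adjustment; your closing ``equivalently'' sentence is in fact a sketch of precisely this direct argument. One small point: invoking Theorem~\ref{theo:main} at the end is heavier than needed --- the contradiction is already immediate from the fact that the image of a quasisymmetric (hence positive) map is a positive loop while a Barbot crown is not, or alternatively from Corollary~\ref{cor:qs2qp}.
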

\begin{proof} The space of pairs $(\xi,\tau)$ where $\xi$ is an $(A,B)$-quasisymmetric map and $\tau$ a positive triple in $\P(V)$ is equipped with a cocompact action of $\mathsf H\defeq\PSL(V)\times \G$.  Denote by $\Sigma(\xi)$ the solution to the asymptotic Plateau problem for $\xi(\P(V))$. The barycenter map $B$ being continuous by proposition \ref{pro:BarycenterProper}, then map which associates to $(\xi,\tau)$ the curvature of $B(\Sigma(\xi),b_\tau)$ is continuous, $\mathsf H$-invariant reaches it maximum at a pair $(\xi_0,\tau_0)$. This maximum is negative by the Rigidity Theorem \ref{theo:CurvatureRigidity}.  Since $B$ is surjective, the result is proved. 
	\end{proof}

\begin{corollary}\label{cor:QuasiCircle} For any constants $A$ and $B$ greater than 1, there exists $C$ so that  the image of an $(A,B)$-quasisymmetric map is an $(A,C)$-quasicircle.
\end{corollary}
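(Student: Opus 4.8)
The plan is to combine Corollary \ref{cor:QuasiCircle-curv} with Theorem \ref{theo:ExtUnif} and Corollary \ref{cor:QShomeo}. First I would take constants $A,B>1$ and let $\xi$ be any $(A,B)$-quasisymmetric map from $\P(V)$ to $\bHn$. By Corollary \ref{cor:QuasiCircle-curv}, there is a positive constant $c$, depending only on $A$ and $B$, so that the complete maximal surface $\Sigma = \Sigma(\xi(\P(V)))$ solving the asymptotic Plateau problem for the quasicircle $\xi(\P(V))$ has curvature bounded above by $-c$. In particular $\Sigma$ is quasiperiodic by Theorem \ref{theo:CurvBound}.

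Next I would apply Theorem \ref{theo:ExtUnif} to this value of $c$: there exist constants $A'$ and $B'$, depending only on $c$ (hence only on $A$ and $B$), and an extension of uniformisation $\Psi : \H^2\cup\partial_\infty\H^2 \to \Sigma\cup\partial_\infty\Sigma$ which is conformal in the interior and $(A',B')$-quasisymmetric on $\partial_\infty\H^2 \cong \P(V)$. Its image on the boundary is $\partial_\infty\Sigma = \xi(\P(V))$, so the boundary restriction $\psi \defeq \Psi|_{\P(V)}$ is an $(A',B')$-quasisymmetric parametrisation of the same loop that $\xi$ parametrises. Thus $\xi(\P(V))$ is a quasiperiodic loop admitting an $(A',B')$-quasisymmetric parametrisation, and by the definition of $(A',B')$-quasicircle given just before this corollary we are almost done — except that we must handle the arbitrariness of the extension of uniformisation.

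The one subtle point — which I expect to be the main obstacle — is that the definition of "$(A,B)$-quasicircle" quantifies over \emph{all} extensions of uniformisation, while Theorem \ref{theo:ExtUnif} only produces \emph{one}. To bridge this, observe that any two extensions of uniformisation of $(x,\Sigma)$ differ on the boundary by precomposition with a homeomorphism $\phi$ of $\P(V)$: if $\psi_1=\Psi_1|_{\P(V)}$ and $\psi_2=\Psi_2|_{\P(V)}$, then $\phi \defeq \psi_1^{-1}\circ\psi_2$ is a homeomorphism of $\P(V)$ with $\psi_1$ and $\psi_1\circ\phi=\psi_2$ both $(A',B')$-quasisymmetric. (Here I use that a quasisymmetric map on $\P(V)$ is injective, which follows from positivity as in the proof of Proposition \ref{ex:MW2}, so $\psi_1^{-1}$ makes sense.) By Corollary \ref{cor:QShomeo} applied with constants $A'$ and $B'$, there is a constant $D$, depending only on $A'$ and $B'$, such that $\phi$ is $(A',D)$-quasisymmetric; that the self-map $\phi$ of $\P(V)$ is quasisymmetric in the classical sense then allows any arbitrary extension $\psi_2$ to be estimated: the cross-ratio of $\psi_2(a,b,c,d)=\psi_1(\phi(a),\phi(b),\phi(c),\phi(d))$ is controlled once the classical cross-ratio of $\phi(a,b,c,d)$ is, which it is because $\phi$ is classically quasisymmetric. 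Running this through the cocycle/covering argument of Lemma \ref{lem:AB-CD} and Corollary \ref{cor:invQS}, one finds a single constant $C$, depending only on $A$ and $B$, so that \emph{every} extension of uniformisation of $(x,\Sigma)$ is $(A,C)$-quasisymmetric after normalising the first constant to $A$ via Lemma \ref{lem:AB-CD}. Hence $\xi(\P(V))$ is an $(A,C)$-quasicircle, which is exactly the statement.
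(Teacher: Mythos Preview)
Your overall strategy is the same as the paper's: apply Corollary~\ref{cor:QuasiCircle-curv} to get a uniform curvature bound $-c$, then feed $c$ into Theorem~\ref{theo:ExtUnif} to obtain uniform quasisymmetry constants for the extension of uniformisation, and finally adjust the first constant to $A$ via Lemma~\ref{lem:AB-CD}. This is correct and matches the paper's (very terse) argument.

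However, the ``subtle point'' you flag --- that the definition of $(A,C)$-quasicircle quantifies over \emph{all} extensions of uniformisation while Theorem~\ref{theo:ExtUnif} produces only one --- is not actually an obstacle, and your detour through Corollary~\ref{cor:QShomeo} is unnecessary. An extension of uniformisation is, in the interior, a conformal bijection from $\H^2$ onto $\Sigma$; any two such conformal bijections differ by precomposition with an element of $\PSL(V)=\operatorname{Aut}(\H^2)$. On the boundary $\partial_\infty\H^2=\P(V)$ this element preserves the classical cross-ratio exactly, so if one boundary map is $(A',B')$-quasisymmetric then every other one is as well, with the \emph{same} constants. Thus the passage from ``there exists an extension'' to ``every extension'' is immediate and requires no appeal to Corollary~\ref{cor:QShomeo} or to the cocycle argument. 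The paper's proof omits this discussion entirely for that reason.
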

\begin{proof} We may as well assume that the quasisymmetric map send a given positive triple to a fixed positive triple.  Since the curvature of any complete maximal surface whose boundary admits an $(A,B)$-quasisymmetric parametrisation bounded by $-c$, the result is a consequence of the Uniformisation Theorem \ref{theo:ExtUnif}. 
\end{proof}

We also have another corollary that answers a question asked to us by François Guéritaud, showing that quasisymmetric maps are extension of quasiisometries with respect to the spatial distance, thus generalising the corresponding fact in the context of rank 1 symmetric spaces.

\begin{corollary}\label{cor:QuestionGueritaud}
	Every quasisymmetric map from $\partial_\infty{\bf H}^2$ to $\bHn$, is a continuous extension of a map $ f$ from ${\bf H}^2$ to $\bHn$ such that there exists some positive constants $A$ and $B$ so that for all $x$ and $y$ in ${\bf H}^2$
	$$
	A^{-1}d(x,y)-B\leq \eth\left(f(x),f(y)\right)\leq 	Ad(x,y)+B\ .
	$$
\end{corollary}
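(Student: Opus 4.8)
The plan is as follows. Let $\xi$ be an $(A,B)$-quasisymmetric map from $\partial_\infty\mathbf H^2=\P(V)$ to $\bHn$; by Theorem \ref{theo:HolderContinuity} we may assume $\xi$ is defined and continuous on all of $\P(V)$. As a positive continuous loop map its image $\Lambda\defeq\xi(\P(V))$ is a positive, hence semi-positive, loop, so by the Asymptotic Plateau Theorem \ref{theo:MainTheoPaper1} it bounds a unique complete maximal surface $\Sigma\defeq\Sigma(\Lambda)$ in $\Hn$. By Corollary \ref{cor:QuasiCircle-curv} there is $c>0$, depending only on $A$ and $B$, with $K_\Sigma\leq-c$; in particular $\Sigma$ is quasiperiodic. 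I will produce the desired $f$ as a conformal uniformisation of $\Sigma$ precomposed with a quasi-isometry of $\mathbf H^2$, and then estimate $\eth$ along its image by chaining three comparisons: $\eth$ against the induced distance on $\Sigma$ (Theorem \ref{theo:SD}), the induced distance against the hyperbolic metric (biLipschitz uniformisation), and the hyperbolic metric against itself under the correcting quasi-isometry.

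First I would invoke the Extension Theorem \ref{theo:ExtUnif} to get a continuous map $\Psi_0$ from $\mathbf H^2\cup\partial_\infty\mathbf H^2$ to $\Sigma\cup\partial_\infty\Sigma$, respecting interiors and boundaries, conformal on the interior and $(A',B')$-quasisymmetric on the boundary, with $A',B'$ depending only on $c$. Since $-1\leq K_\Sigma\leq-c$, the Ahlfors--Schwarz--Pick comparison used in the proof of Theorem \ref{theo:biLip} gives $d_{\mathbf H^2}(p,q)\leq d_\Sigma\bigl(\Psi_0(p),\Psi_0(q)\bigr)\leq c^{-1/2}d_{\mathbf H^2}(p,q)$ for all $p,q\in\mathbf H^2$; that is, $\Psi_0$ restricts to a biLipschitz homeomorphism $\mathbf H^2\to\Sigma$ with constant $c^{-1/2}$. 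Write $\psi\defeq\Psi_0|_{\partial_\infty\mathbf H^2}$; both $\psi$ and $\xi$ parametrise $\Lambda$, so $\phi_\partial\defeq\psi^{-1}\circ\xi$ is a self-homeomorphism of $\P(V)$ with $\psi\circ\phi_\partial=\xi$. Enlarging the quasisymmetry constants by Lemma \ref{lem:AB-CD} so that $\psi$ and $\xi$ are simultaneously $(A'',B'')$-quasisymmetric, Corollary \ref{cor:QShomeo} applied to the map $\psi$ and the homeomorphism $\phi_\partial$ shows $\phi_\partial$ is a quasisymmetric homeomorphism of the circle, with constants depending only on $A,B$. It is classical that every quasisymmetric homeomorphism of $\partial_\infty\mathbf H^2$ is the continuous boundary extension of a quasi-isometry of $\mathbf H^2$, so $\phi_\partial$ extends to an $(L,C)$-quasi-isometry $\phi$ of $\mathbf H^2$, with $L,C$ depending only on $A,B$.

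Then I would set $f\defeq\Psi_0\circ\phi\colon\mathbf H^2\to\Sigma\subset\Hn$. Since $\phi$ extends continuously to $\phi_\partial$ and $\Psi_0$ extends continuously to $\psi$, the map $f$ extends continuously to $\psi\circ\phi_\partial=\xi$, so $\xi$ is the continuous boundary extension of $f$. For $x,y\in\mathbf H^2$ the points $f(x),f(y)$ lie on $\Sigma$, which is acausal by Proposition \ref{pro:BoundaryCompleteSurf}, so $\eth(f(x),f(y))$ is the value of the spatial distance \eqref{def:SpatialDistance} and agrees with the restriction of $\eth$ to $\Sigma$; hence by \eqref{ineq:SD1} of Theorem \ref{theo:SD},
\[ d_\Sigma\bigl(f(x),f(y)\bigr)\ \leq\ \eth\bigl(f(x),f(y)\bigr)\ \leq\ \sqrt2\,d_\Sigma\bigl(f(x),f(y)\bigr)\ . \]
Combining this with the biLipschitz bound for $\Psi_0$ and the quasi-isometry bound for $\phi$ yields, on the one hand, $\eth(f(x),f(y))\geq d_{\mathbf H^2}(\phi(x),\phi(y))\geq L^{-1}d(x,y)-C$ and, on the other hand, $\eth(f(x),f(y))\leq\sqrt2\,c^{-1/2}d_{\mathbf H^2}(\phi(x),\phi(y))\leq\sqrt2\,c^{-1/2}\bigl(L\,d(x,y)+C\bigr)$. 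Taking $A_0\defeq\max\{L,\sqrt2\,c^{-1/2}L\}$ and $B_0\defeq\max\{C,\sqrt2\,c^{-1/2}C\}$ gives $A_0^{-1}d(x,y)-B_0\leq\eth(f(x),f(y))\leq A_0\,d(x,y)+B_0$ for all $x,y\in\mathbf H^2$, which is the assertion (with constants depending only on $A,B$).

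The only genuinely non-formal step is the second paragraph: one has to recognise that the discrepancy $\phi_\partial=\psi^{-1}\circ\xi$ between the boundary map of the abstract uniformisation and the prescribed parametrisation $\xi$ is again quasisymmetric — this is exactly the content of Corollary \ref{cor:QShomeo} — and then feed it into the classical boundary-extension theory for quasisymmetric circle homeomorphisms. The remainder is bookkeeping of Lipschitz and quasi-isometry constants, so I expect no further obstacle.
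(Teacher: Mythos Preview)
Your argument is correct and follows essentially the same route as the paper's own proof: uniformise the maximal surface $\Sigma(\Lambda)$ via Theorem \ref{theo:ExtUnif}, compare $\eth$ to the induced metric via Theorem \ref{theo:SD} and the induced metric to the hyperbolic one via Theorem \ref{theo:biLip}, then correct the boundary parametrisation using Corollary \ref{cor:QShomeo} and extend the resulting quasisymmetric circle map to a quasi-isometry of $\mathbf H^2$ (the paper invokes the Douady--Earle extension \cite{Douady} explicitly for this last step). You are in fact slightly more careful than the paper in justifying the curvature hypothesis of Theorem \ref{theo:ExtUnif} via Corollary \ref{cor:QuasiCircle-curv}.
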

However we do not know whether the converse to that corollary is true. \begin{proof}[Proof of Corollary \ref{cor:QuestionGueritaud}]
Let $\xi$ be a quasisymmetric map from $\P(V)$ to $\bHn$, and let $\Lambda=\xi(\P(V))$. By Theorem \ref{theo:ExtUnif}, and identifying $\P(V)$ with $\partial_\infty \H^2$, there is a quasisymmetric parametrisation $\xi_0$ of $\Lambda$ which is a continuous extension of a conformal map $f_0$ from $\H^2$ to the maximal surface $\Sigma(\Lambda)$. By Theorem \ref{theo:biLip}, the map $f_0$ is biLipschitz. Thus by Theorem \ref{theo:SD}, there is a constant $C$ greater that $1$ such that for any $x,y$ in $\H^2$
\[C^{-1}d(x,y) \leq \eth(f_0(x),f_0(y))\leq C d(x,y)\ .\]
By corollary \ref{cor:QShomeo}, there is a quasisymmetric homeomorphism  $\phi$ of $\partial_\infty \H^2$ such that $\xi= \xi_0\circ\varphi$. By the Douady--Earle extension Theorem \cite[Theorem 2]{Douady}, $\varphi$ is a continuous extension of a quasiisometry $\Phi$ from $\H^2$ to itself. Thus $\xi$ is the continuous extension of $f:=f_0\circ \Phi$ which satisfies the required condition. 
\end{proof}

\subsection{Control of Gromov products} We study quantities --  also considered by Glorieux and Monclair  \cite{Glorieux:2016us} --  related to Gromov products when $n=0$.

We prove  three propositions that provide a control on these quantities.

In the sequel, we will lift any complete maximal surface in $\Hn$ to $\{x\in E~,~\bq(x)=-1\}$. This will allow us to define the scalar product $\langle x,y\rangle$ between two points $x,y$ in $\Sigma$. Observe that the quantity $\vert \langle x,y\rangle\vert$ is independent on the lift.

\begin{proposition} 
	\label{pro:UpperBoundGeod} There exists a positive constant $M_1$ such that for any pointed complete maximal surface $(x,\Sigma)$ in ${\MH}$ and points $z,w$ in $\Sigma\cup\partial_\infty \Sigma$, we have
\[\left\vert \frac{\langle z,w\rangle}{\langle z,x\rangle \langle x,w\rangle}\right\vert \leq M_1~. \]	
\end{proposition}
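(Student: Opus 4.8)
The plan is to exploit the cocompactness of the $\G$-action, together with continuity and a suitable limiting argument to handle the boundary points. First I would observe that the quantity
\[
F(x,\Sigma,z,w)\defeq\left\vert \frac{\langle z,w\rangle}{\langle z,x\rangle \langle x,w\rangle}\right\vert
\]
is manifestly $\G$-invariant and independent of the chosen lifts of $x,z,w$ to $\{\bq=-1\}$ (for $x$ and, when $z,w\in\Sigma$, for $z,w$ as well; for $z,w\in\partial_\infty\Sigma$ each factor $\langle z,w\rangle$, $\langle z,x\rangle$, $\langle x,w\rangle$ scales the same way under rescaling a lift of $z$, so the ratio is well defined). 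By Corollary \ref{cor:Np-compact}, the group $\G$ acts cocompactly on the space $\mathcal N_2(n)$ of quadruples $(x,\Sigma,z,w)$ with $x\in\Sigma$ and $z,w\in\Sigma\cup\partial_\infty\Sigma$. Hence it suffices to prove that $F$ is finite and continuous on $\mathcal N_2(n)$: a continuous $\G$-invariant function on a space with cocompact $\G$-action is bounded, and the bound $M_1$ is exactly its supremum over a compact fundamental domain.

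The remaining point is therefore to check that $F$ extends continuously to all of $\mathcal N_2(n)$, including the strata where $z$ or $w$ lies on $\partial_\infty\Sigma$ and where $z$, $w$, or both degenerate to $x$ or to each other. For $z,w\in\Sigma$ distinct from $x$, continuity is clear since each scalar product is a continuous nonvanishing function (nonvanishing because any two distinct points of a complete maximal surface are in acausal position by Proposition \ref{pro:BoundaryCompleteSurf}, so $\vert\langle z,x\rangle\vert>1$ and likewise $\vert\langle x,w\rangle\vert>1$). The potential issue is whether $F$ blows up as $z\to\partial_\infty\Sigma$: passing to a Minkowski chart for a point of $\partial_\infty\Sigma$, or using the projective model with a vector $p$ as in the proof of Proposition \ref{pro:BarycenterProper} so that $\P(p^\perp)$ misses $\Sigma\cup\partial_\infty\Sigma$, one writes every point of $\Sigma\cup\partial_\infty\Sigma$ as $[(s,1)]$ with $s$ in a bounded region of $p^\perp$; the scalar products $\langle z,w\rangle$, $\langle z,x\rangle$, $\langle x,w\rangle$ become (up to the same nonvanishing normalising factors that cancel in the ratio) polynomial expressions in $s_z,s_w,s_x$, so the ratio stays bounded as long as the denominators do not vanish — which is guaranteed since $z$ and $w$ remain transverse to $x$ by acausality, and in the limit $z\to\partial_\infty\Sigma$ transversality is preserved because $\partial_\infty\Sigma$ is a semi-positive loop disjoint from $L(x)$ when $x\in\Sigma$. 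The same bounded-polynomial description shows that $F$ extends continuously across the diagonal strata: when $z\to x$ the numerator $\langle z,w\rangle\to\langle x,w\rangle$ while $\langle z,x\rangle\to\langle x,x\rangle=-1$, so $F\to \vert\langle x,w\rangle/(-1)\cdot 1/\langle x,w\rangle\vert = 1$, and similarly when $w\to x$ or $z\to w$.

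The main obstacle I anticipate is precisely the verification that the denominators stay uniformly bounded away from zero on $\mathcal N_2(n)$ — equivalently, that $x\in\Sigma$ is transverse to every point of $\Sigma\cup\partial_\infty\Sigma$ uniformly. This is where I would lean on the structure results already established: acausality of complete maximal surfaces (Proposition \ref{pro:BoundaryCompleteSurf}) gives strict transversality on the interior with the quantitative bound $\vert\langle x,z\rangle\vert>1$, and the fact that $\partial_\infty\Sigma$ is a semi-positive loop, hence disjoint from the lightcone $L(x)$ of any interior point $x$, gives transversality on the boundary; the uniformity over $\mathcal N_2(n)$ then follows from compactness of $\mathcal N_2(n)/\G$ together with continuity, closing the argument. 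Once this is in place, continuity of $F$ on all strata plus $\G$-invariance plus cocompactness of the action give the uniform bound $M_1$, completing the proof.
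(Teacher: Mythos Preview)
Your approach is correct and matches the paper's: $F$ is $\G$-invariant and continuous on $\mathcal N_2(n)$, and cocompactness (Corollary~\ref{cor:Np-compact}) gives the bound. The paper shortcuts your stratum-by-stratum continuity check via the identity $F(x,\Sigma,z,w)=\big\vert\langle\pi_x(z),\pi_x(w)\rangle\big\vert$, where $\pi_x$ is the radial projection onto the affine hyperplane $P_x=\{v:\langle v,x\rangle=-1\}$, and then cites \cite{LTW} for the boundedness of $\pi_x(\Sigma\cup\partial_\infty\Sigma)$ in $P_x$; this single observation simultaneously yields well-definedness (i.e.\ $\langle z,x\rangle\neq 0$ for every $z\in\Sigma\cup\partial_\infty\Sigma$, which is the fact you are reaching for when you write ``disjoint from $L(x)$''---note $L(\cdot)$ is only defined for points of $\bHn$) and continuity across all strata.
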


\begin{proof} Let $\mathcal N_2(n)$ 
 of quadruples $(x,\Sigma, z,w)$, where $(x,\Sigma)$ is a pointed quasiperiodic maximal surface and $z$ and $w$ are points in $\Sigma\cup\partial_\infty\Sigma$. Let $P_x$ be the affine  hyperplane $\{v\mid \braket{v,x}=-1\}$. Let   $\pi_x$  be the radial projection  on the hyperplane. By \cite[Proposition 2.27 and Proposition 2.5 (iii)]{LTW}  the radial projection of $\Sigma\cup\partial\Sigma$ is bounded in $P_x$. It follows that the function
$$
F:(x,\Sigma,z,w)\mapsto \left\vert \frac{\langle z,w\rangle}{\langle z,x\rangle \langle x,w\rangle}\right\vert=\big\vert\braket{\pi_x(z),\pi_x(w)}\big\vert\ , $$
is well defined, $\G$-invariant and continuous on $\mathcal N_2(n)$.  Since the action of $\G$ is cocompact on $\mathcal N_2(n)$ by corollary \ref{cor:Np-compact}, $F$ is bounded and the result follows.	
\end{proof}
 
 \begin{corollary}\label{cor:ice-distance}
 	 There exists a constant $C$ only depending on $n$, so that for all complete maximal surface $\Sigma$ in $\Hn$, for all triple of points $x,y,z$ in $\Sigma$
 	 \begin{eqnarray*}
 	 	\eth(x,y)\leq\eth(x,z)+\eth(z,y) +C\ .
 	 \end{eqnarray*} 
 	 in particular, the function $\eth_0$ so that  $\eth_0(x,y)=\eth(x,y)+C$ if $x$ is different than $y$ and $\eth_0(x,x)=0$ is a distance.
 \end{corollary}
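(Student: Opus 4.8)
The plan is to deduce the corollary directly from Proposition \ref{pro:UpperBoundGeod}, the only real content being the passage from the multiplicative estimate it provides to an additive one. Let $M_1$ be the constant of that proposition; it depends only on $n$, and, after enlarging it if necessary, we may assume $M_1\geq 1$ (indeed, taking $z=w=x$ in Proposition \ref{pro:UpperBoundGeod} already gives $1=1/|\langle x,x\rangle|\leq M_1$). Set $C\defeq\log(2M_1)>0$, and define $\eth_0(x,y)\defeq\eth(x,y)+C$ for $x\neq y$ and $\eth_0(x,x)\defeq0$.

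Fix a complete maximal surface $\Sigma$ in $\Hn$ and three points $x,y,z\in\Sigma$. If two of the three coincide, the inequality $\eth(x,y)\leq\eth(x,z)+\eth(z,y)+C$ is immediate, since $C>0$ and $\eth$ is symmetric with $\eth(p,p)=0$; so I may assume $x,y,z$ are pairwise distinct. By the first item of Proposition \ref{pro:BoundaryCompleteSurf} together with Lemma \ref{lem:SpacelikePosition}, any two distinct points of $\Sigma$ are acausal, hence $|\langle p,q\rangle|>1$ and $\eth(p,q)=\cosh^{-1}|\langle p,q\rangle|$ for the canonical lifts with $\bq=-1$. Applying Proposition \ref{pro:UpperBoundGeod} to the pointed surface $(z,\Sigma)$ and the points $x,y$ yields
\[
\cosh\eth(x,y)=|\langle x,y\rangle|\leq M_1\,|\langle x,z\rangle|\,|\langle z,y\rangle|=M_1\,\cosh\eth(x,z)\,\cosh\eth(z,y).
\]

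Now I would invoke the elementary bounds $\tfrac12 e^{t}\leq\cosh t\leq e^{t}$ valid for $t\geq0$, which turn the last display into
\[
\tfrac12\,e^{\eth(x,y)}\leq M_1\,e^{\eth(x,z)+\eth(z,y)}\ ;
\]
taking logarithms gives $\eth(x,y)\leq\eth(x,z)+\eth(z,y)+\log(2M_1)$, i.e. the stated inequality with the constant $C=\log(2M_1)$, which depends only on $n$. Finally, that $\eth_0$ is a distance follows formally: it is symmetric since $\eth$ is; it vanishes exactly on the diagonal since $\eth_0(x,y)=\eth(x,y)+C\geq C>0$ for $x\neq y$; and for pairwise distinct $x,y,z$ one has $\eth_0(x,y)=\eth(x,y)+C\leq\eth(x,z)+\eth(z,y)+2C=\eth_0(x,z)+\eth_0(z,y)$, while the cases where points coincide are checked directly. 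I do not expect any genuine obstacle here: the substance lies entirely in Proposition \ref{pro:UpperBoundGeod} (whose proof uses the cocompactness of the $\G$-action on $\mathcal N_2(n)$ from Corollary \ref{cor:Np-compact}), and the only point requiring a moment's care is that the uniformity of $M_1$ in $n$ is precisely what makes the additive constant $C$ depend only on $n$.
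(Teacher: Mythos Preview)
Your proof is correct and follows essentially the same route as the paper: apply Proposition \ref{pro:UpperBoundGeod} at the pivot point, convert $\cosh$ to exponentials via $\tfrac12 e^t\leq\cosh t\leq e^t$, and take logarithms to obtain $C=\log(2M_1)$. You additionally spell out the verification that $\eth_0$ is a distance, which the paper leaves implicit.
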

 This  final remark was pointed out to us by Indira Chatterji.
\begin{proof}
Since $\cosh(\eth(x,y))=\vert \langle x,y\rangle\vert$ and $\frac{1}{2}e^u\leq \cosh(u)\leq e^u$ for any positive $u$, we get
\[\left\vert\frac{\langle z,w\rangle}{\langle x,z\rangle\langle x,w\rangle}\right\vert\geq \frac{1}{2}\exp\big(\eth(z,w)-\eth(x,w)-\eth(x,z)\big)~.\]
Thus by proposition \ref{pro:UpperBoundGeod}, we get $\eth(z,w)-\eth(x,w)-\eth(x,z)\leq C:=\log(2M_1)$.
\end{proof}

We prove the next two propositions in paragraph \ref{sss:proo2pro}
\begin{proposition}\label{pro:LowerBoundQG}
For any  positive $R$ and $K$, there is positive $M_2$ such that for any quasiperiodic  maximal surface $\Sigma$, if $\eta$ is an oriented $K$-quasigeodesic of extremities $z$ and $w$ in $\Sigma$ such that $x$ is at within a distance  at most $R$ from $\eta$,  then
\[\left\vert \frac{\langle z,w\rangle}{\langle z,x\rangle \langle x,w\rangle}\right\vert\geq M_2~.\]	
\end{proposition}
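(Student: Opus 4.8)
The plan is to argue by compactness/contradiction, mirroring the proof of Proposition \ref{pro:UpperBoundGeod} but now exploiting the quasiperiodicity hypothesis, which was not needed for the upper bound. Suppose the conclusion fails: then there is a sequence $(x_k,\Sigma_k)$ of pointed quasiperiodic maximal surfaces, oriented $K$-quasigeodesics $\eta_k$ in $\Sigma_k$ with endpoints $z_k,w_k \in \Sigma_k\cup\partial_\infty\Sigma_k$, and points $p_k\in\eta_k$ with $d(x_k,p_k)\leq R$, such that
\[
\left\vert \frac{\langle z_k,w_k\rangle}{\langle z_k,x_k\rangle\langle x_k,w_k\rangle}\right\vert \longrightarrow 0\ .
\]
Using the cocompactness of the $\G$-action on $\mathcal N_2(n)$ (Corollary \ref{cor:Np-compact}), up to replacing the data by a $\G$-translate and extracting a subsequence, I may assume $(x_k,\Sigma_k,z_k,w_k)$ converges in $\mathcal N_2(n)$ to some $(x,\Sigma,z,w)$ with $\Sigma$ a complete maximal surface through $x$; since $\seqk{\Sigma}$ are quasiperiodic with no Barbot surface in the closure of the orbit — and here I would invoke that the limit is again quasiperiodic, using the Curvature Characterisation Theorem \ref{theo:CurvBound} together with the fact that a uniform curvature bound $K_{\Sigma_k}<-c$ on the orbit closures can be assumed (after noting the orbit closure of any quasiperiodic surface in $\MH/\G$ is compact and misses the Barbot point) — the limit $\Sigma$ is quasiperiodic. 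By continuity of the scalar-product expression (as in Proposition \ref{pro:UpperBoundGeod}, it equals $|\langle\pi_x(z),\pi_x(w)\rangle|$ with $\pi_x$ the radial projection to the affine hyperplane $P_x$), the limiting value is $0$, which forces $\langle z_0,w_0\rangle=0$ for the canonical lifts, i.e.\ $z$ and $w$ lie on a common photon; in particular $z\neq w$ and the pair $(z,w)$ is \emph{not} acausal.

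The heart of the argument is then to rule this out. The quasigeodesics $\eta_k$, being $K$-quasigeodesics in the complete metrics $g_{\Sigma_k}$, pass within bounded distance of $x_k$ and have endpoints $z_k,w_k$; I would show that in the limit they converge (uniformly on compacta, say as $K$-quasigeodesic rays) to a $K$-quasigeodesic $\eta$ in $\Sigma$ joining $z$ to $w$ and passing within distance $R$ of $x$. Since $\Sigma$ is quasiperiodic, it is Gromov hyperbolic (Theorem \ref{theo:ChaGro}) and biLipschitz to $\mathbf H^2$ (Theorem \ref{theo:biLip}), so any two distinct boundary points are joined by a genuine bi-infinite quasigeodesic staying in $\Sigma$; and by Proposition \ref{pro:BoundaryCompleteSurf} all distinct pairs of points of $\Sigma\cup\partial_\infty\Sigma$ are acausal — contradicting $\langle z_0,w_0\rangle=0$. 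A little care is needed at the endpoints: if one of $z_k,w_k$ is an interior point, the limit is interior and the acausality of $(z,w)$ is immediate from Proposition \ref{pro:BoundaryCompleteSurf}(i); if both are boundary points, one uses that $\partial_\infty\Sigma$ is a semi-positive loop which, for a quasiperiodic surface, is in fact a genuine quasicircle (Corollary \ref{cor:QS2QP}), hence contains no photon segment, so distinct boundary points are transverse. Either way $\langle z_0,w_0\rangle\neq 0$, the desired contradiction.

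I expect the main obstacle to be the bookkeeping in the compactness step: extracting a subsequential limit of the quasigeodesics $\eta_k$ \emph{together} with its endpoints in the topology of $\mathcal N_2(n)$, and verifying that the limit curve is still a (possibly degenerate, but in fact non-degenerate) $K$-quasigeodesic joining $z$ to $w$ rather than, say, escaping to the boundary prematurely or collapsing. This requires the uniform curvature bound (so the metrics $g_{\Sigma_k}$ converge smoothly on compacta to $g_\Sigma$, by the Compactness Theorem \ref{theo:CompactnessTheorem}) and a uniform control, via quasiperiodicity, preventing the base point $x_k$ from running off toward a Barbot-type degeneration — which is exactly where the hypothesis ``$\Sigma$ quasiperiodic'' enters, in contrast to Proposition \ref{pro:UpperBoundGeod}. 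The remaining ingredients — continuity of $F$, acausality of complete maximal surfaces, the quasicircle property of quasiperiodic boundaries — are all already available in the excerpt.
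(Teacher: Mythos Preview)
Your approach is genuinely different from the paper's, and the gap you label ``bookkeeping'' is in fact the whole difficulty.

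First, a logical slip: from $\langle z_0,w_0\rangle=0$ you write ``in particular $z\neq w$''. This is backwards. Since $\partial_\infty\Sigma$ is a positive loop for $\Sigma$ quasiperiodic, any two \emph{distinct} boundary points are transverse; hence $\langle z_0,w_0\rangle=0$ forces $z=w$ (once you have shown, via Corollary~\ref{cor:GP-lower}, that both $z_k,w_k$ escape to the boundary). The case to exclude is precisely $z=w$, and your mechanism for excluding it --- that the limit bi-infinite quasigeodesic $\eta$ has two distinct Gromov-boundary endpoints which \emph{coincide with} $z$ and $w$ --- is not established. Convergence of $\eta_k$ to $\eta$ is only on compacta; nothing forces the \emph{projective-compactification} limits of $z_k,w_k$ to agree with the \emph{Gromov-boundary} endpoints of $\eta$. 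Identifying these two boundaries is essentially the content of Theorem~\ref{theo:ExtUnif}, which is proved \emph{using} Proposition~\ref{pro:LowerBoundQG}; your appeal to Corollary~\ref{cor:QS2QP} is circular for the same reason.

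The paper avoids this circularity entirely. It first replaces the quasigeodesic by the geodesic $\gamma$ with the same endpoints (a one-line Morse-lemma reduction), so that $x$ is at bounded distance from $\gamma$, and then invokes Corollary~\ref{cor:LowerBoundGeodII}. The substance lies in Lemma~\ref{lem:LowerBoundGeodI}, whose proof also reaches the conclusion $u=v$ by positivity, but then derives the contradiction via horofunctions: one chooses $w_k\in\partial_\infty\Sigma_k$ converging to $u$ and observes that $h_{w_k}(\gamma_k(-s_k))\to-\infty$, $h_{w_k}(\gamma_k(t_k))\to-\infty$, while $h_{w_k}(\gamma_k(0))\to 0$, so $h_{w_k}$ has a local maximum on $\gamma_k$, contradicting Lemma~\ref{lem:restrictionhorofunction}. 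The point is that the horofunction $h_z(x)=\log\lvert\langle x,z_0\rangle\rvert$ is built directly from the scalar product, so it bridges the intrinsic geodesic behaviour and the extrinsic quantity $\lvert\langle z,w\rangle/(\langle z,x\rangle\langle x,w\rangle)\rvert$ without ever needing the Gromov boundary to be identified with $\partial_\infty\Sigma$.
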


We also need the converse property
\begin{proposition}\label{pro:ConverseBound}
	For any positive $c$, any positive $K$ and $B$, then for any quasiperiodic maximal surface $\Sigma$ with curvature bounded by $-c$ and  any $K$-quasigeodesic arc $\eta$ in $\Sigma$ with extremities $w$ and $z$, then if $x$ in $\Sigma$ is so that 
	\[\left\vert \frac{\langle z,w\rangle}{\langle z,x\rangle \langle x,w\rangle}\right\vert \geq B \ , \]	
	then $d_\Sigma(x,\eta) \leq A$.
\end{proposition}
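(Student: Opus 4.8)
\textbf{Proof proposal for Proposition \ref{pro:ConverseBound}.}

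The plan is to argue by contradiction using the cocompactness of the $\G$-action and a limiting argument, combined with the lower bound on Gromov products given by Proposition \ref{pro:LowerBoundQG}. Suppose the statement fails. Then one would have fixed constants $c, K, B$ and a sequence of quasiperiodic maximal surfaces $\Sigma_k$ with curvature bounded above by $-c$, points $x_k\in\Sigma_k$, and $K$-quasigeodesic arcs $\eta_k$ in $\Sigma_k$ with extremities $w_k,z_k$, such that
\[
\left\vert \frac{\langle z_k,w_k\rangle}{\langle z_k,x_k\rangle\langle x_k,w_k\rangle}\right\vert \geq B
\qquad\text{while}\qquad
d_{\Sigma_k}(x_k,\eta_k)\to\infty\ .
\]
Using the cocompact action of $\G$ on $\MH$ (Theorem \ref{theo:CompactnessTheorem}), I would apply an element $g_k\in\G$ so that $(g_k x_k, g_k\Sigma_k)$ stays in a fixed compact set of $\MH$; after extracting a subsequence it converges to a pointed complete maximal surface $(x_\infty,\Sigma_\infty)$, whose curvature is still bounded above by $-c$ since this is a closed condition, so $\Sigma_\infty$ is in particular quasiperiodic and Gromov hyperbolic by Theorem \ref{theo:ChaGro}.

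The next step is to track the quasigeodesics. Replacing $x_k,z_k,w_k,\eta_k$ by their images under $g_k$, one of two things happens to the endpoints $z_k,w_k$: either (up to subsequence) they converge in $\cHn=\Hn\cup\bHn$ to points $z_\infty,w_\infty$, or at least one escapes to the boundary. In all cases I would use the fact that in a Gromov hyperbolic space with a lower curvature bound the endpoints of a $K$-quasigeodesic through a far-away region force $x$ to lie near that quasigeodesic \emph{only if} the Gromov product $(z_k|w_k)_{x_k}$ is bounded — this is the contrapositive of the statement we want. Concretely, since $d_{\Sigma_k}(x_k,\eta_k)\to\infty$ and $\eta_k$ is a $K$-quasigeodesic from $z_k$ to $w_k$, Gromov hyperbolicity with curvature $\leq -c$ (hence a uniform hyperbolicity constant $\delta$ depending only on $c$ and $K$, via stability of quasigeodesics) gives that the Gromov product $(z_k|w_k)_{x_k}$ computed in the induced distance $d_{\Sigma_k}$ tends to $+\infty$. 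Now I would convert this distance Gromov product into the scalar-product quantity appearing in the statement: by Corollary \ref{cor:ice-distance} the spatial distance $\eth_{\Sigma_k}$ differs from $d_{\Sigma_k}$ by at most the universal constant $C$, and since $\cosh(\eth(a,b))=|\langle a,b\rangle|$ together with $\tfrac12 e^u\le\cosh u\le e^u$, one gets
\[
\left\vert \frac{\langle z_k,w_k\rangle}{\langle z_k,x_k\rangle\langle x_k,w_k\rangle}\right\vert
\;\leq\; 2\exp\bigl(\eth(z_k,w_k)-\eth(x_k,z_k)-\eth(x_k,w_k)\bigr)
\;\leq\; 2\,e^{2C}\exp\bigl(-2(z_k|w_k)_{x_k}\bigr)\ ,
\]
which tends to $0$, contradicting the lower bound $B$. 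This finishes the argument.

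The main obstacle I anticipate is the careful handling of the case where the endpoints $z_k$ or $w_k$ run off to $\partial_\infty\Sigma_\infty$ (or where $\eta_k$ is unbounded): one must make sure the translation of "far from a quasigeodesic" into "large Gromov product" is uniform, i.e. that the hyperbolicity constant and the quasigeodesic stability constant depend only on $c$ and $K$ and not on $k$. This requires invoking the uniform curvature bound $K_{\Sigma_k}\le -c$ (giving a uniform comparison with the curvature $-c$ model via Ahlfors--Schwarz--Pick, as in the proof of Theorem \ref{theo:biLip}), together with the cocompactness in $\MH$, to extract a single constant controlling the stability of $K$-quasigeodesics across the whole family. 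A secondary technical point is to make the passage from $d_{\Sigma_k}$-Gromov products to the scalar-product expression robust when points approach the boundary, which is handled precisely by the projective compactification estimates of \cite{LTW} already used in the proof of Proposition \ref{pro:UpperBoundGeod}.
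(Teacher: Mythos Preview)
Your argument has a genuine gap at the conversion step. You write that ``by Corollary \ref{cor:ice-distance} the spatial distance $\eth_{\Sigma_k}$ differs from $d_{\Sigma_k}$ by at most the universal constant $C$'', but that corollary says no such thing: it only records a quasi-triangle inequality $\eth(x,y)\leq\eth(x,z)+\eth(z,y)+C$ for $\eth$ itself. The only comparison available between $\eth$ and $d_\Sigma$ is the \emph{multiplicative} one $d_\Sigma\leq\eth\leq\sqrt{2}\,d_\Sigma$ of Theorem \ref{theo:SD}, and a multiplicative comparison does not transfer Gromov products. Concretely,
\[
\eth(z_k,x_k)+\eth(x_k,w_k)-\eth(z_k,w_k)\;\geq\;d_\Sigma(z_k,x_k)+d_\Sigma(x_k,w_k)-\sqrt{2}\,d_\Sigma(z_k,w_k)\;=\;2(z_k\vert w_k)_{x_k}^{d_\Sigma}-(\sqrt{2}-1)\,d_\Sigma(z_k,w_k),
\]
and if $d_\Sigma(z_k,w_k)$ grows fast enough the right-hand side need not tend to $+\infty$. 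So your displayed bound $\leq 2e^{2C}\exp\bigl(-2(z_k\vert w_k)_{x_k}\bigr)$ is unjustified, and the contradiction does not follow. Note also that the whole cocompactness normalisation and the limit surface $\Sigma_\infty$ play no role in the inequality you actually use; that machinery is never cashed in.

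The paper avoids this metric-comparison trap entirely by working directly with the scalar-product quantities. After reducing to $\eta$ a geodesic, one uses thin triangles (curvature $\leq -c$) to find a point $y$ on $\eta$ which is $R$-close to both geodesic sides $[x,z]$ and $[x,w]$, with $R$ depending only on $c$. Applying the lower bound of Corollary \ref{cor:LowerBoundGeodII} at $y$ along each of these two geodesics, multiplying the two inequalities together with the hypothesis, and invoking the upper bound of Proposition \ref{pro:UpperBoundGeod} at $y$, one obtains an explicit upper bound on $\vert\langle x,y\rangle\vert$, hence on $\eth(x,y)$, hence on $d_\Sigma(x,y)$ via Theorem \ref{theo:SD}. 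No passage between Gromov products in two different distances is required.
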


\subsubsection{Some {\it a priori} bounds}
In this paragraph, we do not require the complete maximal surface to be quasiperiodic.
\begin{lemma}\label{lem:BoundFraction}
	There exists a constant $M_1$ so that for any  three points $(a,b,c)$ on a  complete maximal surface $\Sigma$, we have the inequality
	\begin{eqnarray}
			\frac{1}{M_1}
			\exp\left(-\sqrt{2} d_\Sigma(b,c)\right)\leq 
			\left\vert \frac{\langle a,b\rangle}{\langle a,c\rangle}\right\vert
			\leq 
			M_1\exp\left(\sqrt{2} d_\Sigma(b,c)\right)\ .\label{eq:BF-CB1}
	\end{eqnarray}
\end{lemma}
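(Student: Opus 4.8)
The plan is to reduce inequality \eqref{eq:BF-CB1} to a statement about the horofunction and the spatial distance, and then invoke the gradient bound of Theorem \ref{theo:HorofunctionRigidity} together with an integration along a path. First I would fix a non-zero vector $z_0$ in the line $a\in\Sigma\sqcup\partial_\infty\Sigma$ and recall from paragraph \ref{sss:horofunction} (or \ref{ss:SpatialDistance}) that the function $h_a\defeq h_{z_0}$ on $\Sigma$, given by $h_a(x)=\log\vert\langle x,z_0\rangle\vert$, is a horofunction if $a\in\partial_\infty\Sigma$ and equals (up to additive constant) the spatial distance $\eth(a,\cdot)$ if $a\in\Sigma$. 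With the canonical lifts normalised so that $\bq(x)=-1$, we have
\[
\log\left\vert\frac{\langle a,b\rangle}{\langle a,c\rangle}\right\vert = h_a(b)-h_a(c)\ ,
\]
so the claimed inequality \eqref{eq:BF-CB1} is equivalent to
\[
\bigl\vert h_a(b)-h_a(c)\bigr\vert \leq \log M_1 + \sqrt{2}\,d_\Sigma(b,c)\ .
\]

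Next I would prove this last inequality by integrating along a path. Choose a smooth curve $\gamma$ in $\Sigma$ from $b$ to $c$ of length $d_\Sigma(b,c)$ (a minimizing geodesic), avoiding $a$ if $a\in\Sigma$ --- a minor point, since one can always perturb slightly, or use that $\Sigma\setminus\{a\}$ is still connected and the estimate is continuous. Then
\[
\bigl\vert h_a(b)-h_a(c)\bigr\vert = \left\vert \int_0^1 \d_{\gamma(t)}h_a\bigl(\dot\gamma(t)\bigr)\,\d t\right\vert \leq \int_0^1 \Vert\nabla h_a\Vert_{\gamma(t)}\,\Vert\dot\gamma(t)\Vert\,\d t\ .
\]
By Theorem \ref{theo:HorofunctionRigidity}, the square of the norm of the gradient of $h_a$ is at most $2-c\leq 2$ for any complete maximal surface (here $c=\inf(-K_\Sigma)\geq0$ by the Curvature Rigidity Theorem \ref{theo:CurvatureRigidity}), hence $\Vert\nabla h_a\Vert\leq\sqrt2$ everywhere. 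Therefore the integral is bounded by $\sqrt2\,d_\Sigma(b,c)$, which already gives the inequality with $M_1=1$; the constant $M_1$ is kept for uniformity with the later applications but is in fact not needed here. One should double-check the degenerate cases: if $a\in\Sigma$ coincides with $b$ or $c$, interpret $h_a$ via $\cosh^{-1}\vert\langle a,\cdot\rangle\vert$ and note the estimate extends by continuity using that $\Vert\nabla\eth(a,\cdot)\Vert$ is bounded by $\sqrt2$ away from $a$ and extended by $1$ at $a$ as in the statement of Theorem \ref{theo:HorofunctionRigidity}.

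The main obstacle --- such as it is --- is bookkeeping rather than substance: one must be careful that the horofunction/spatial-distance dictionary and the sign conventions for the canonical lift ($\langle z_0,x\rangle<0$, $\bq(x)=-1$) are applied consistently so that $h_a(b)-h_a(c)$ really is $\log\vert\langle a,b\rangle/\langle a,c\rangle\vert$ and not its negative or something off by an additive constant depending on the normalisation of $z_0$; but since we take the absolute value and then the difference, any rescaling of $z_0$ cancels, so this is harmless. A secondary subtlety is the case $a\in\partial_\infty\Sigma$, where one must make sure $b$ and $c$ lie in $\Sigma\setminus\P(z_0^\bot)$ so that $h_a$ is defined along $\gamma$; but by Proposition \ref{pro:BoundaryCompleteSurf} every pair of distinct points on $\Sigma$ is acausal and $a$ lies on $\partial_\infty\Sigma$, so $\langle x,z_0\rangle\neq0$ for all $x\in\Sigma$, and $\gamma$ stays in the domain of $h_a$. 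With these checks in place the proof is complete; the constant $M_1$ can be taken to be any value $\geq1$, e.g.\ $M_1=1$.
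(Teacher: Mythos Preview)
Your approach differs from the paper's and is more direct. The paper routes through Proposition~\ref{pro:UpperBoundGeod} (the Gromov-product bound, obtained by a compactness argument on $\mathcal N_2(n)/\G$) to get $\vert\langle a,b\rangle\vert\leq M_1\,\vert\langle a,c\rangle\vert\cdot\vert\langle b,c\rangle\vert$, and then controls $\vert\langle b,c\rangle\vert=\cosh\eth(b,c)\leq\exp\bigl(\sqrt2\,d_\Sigma(b,c)\bigr)$ via Theorem~\ref{theo:SD}. You instead integrate a pointwise gradient bound along a geodesic from $b$ to $c$, which bypasses the compactness step entirely and yields an explicit constant.

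There is, however, one genuine slip. When $a\in\Sigma$ (which is the only case in the lemma as stated), the function $x\mapsto\log\vert\langle a,x\rangle\vert$ is \emph{not} equal to the spatial distance $\eth(a,x)=\cosh^{-1}\vert\langle a,x\rangle\vert$ up to an additive constant: for $t\geq1$ one has $\cosh^{-1}(t)=\log t+\log\bigl(1+\sqrt{1-t^{-2}}\bigr)$, and the correction term varies between $0$ and $\log 2$. So Theorem~\ref{theo:HorofunctionRigidity}, which bounds the gradient of $\eth(a,\cdot)$, does not literally apply to the function you are integrating. The repair is easy. Either note that the two functions differ by at most $\log 2$, which gives the lemma with $M_1=4$; or, more cleanly, compare gradients directly: writing $a=-\langle x,a\rangle x+a^T+a^N$ in the orthogonal splitting at $x$, one finds
\[
\bigl\Vert\nabla\log\vert\langle a,\cdot\rangle\vert\bigr\Vert^2=\frac{\Vert a^T\Vert^2}{\langle x,a\rangle^2}\leq\frac{\Vert a^T\Vert^2}{\langle x,a\rangle^2-1}=\Vert\nabla\eth(a,\cdot)\Vert^2\leq 2\ ,
\]
using Lemma~\ref{lem:low--grad} for the middle identification and Theorem~\ref{theo:HorofunctionRigidity} for the last inequality. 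With this correction your integration argument goes through and even gives $M_1=1$, sharper than the compactness constant of the paper. As a bonus, $\log\vert\langle a,\cdot\rangle\vert$ is smooth at $a$ (its gradient vanishes there), so the worry about the geodesic meeting $a$ disappears.
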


\begin{proof}
Indeed, by Proposition \ref{pro:UpperBoundGeod}, we have
	\begin{eqnarray*}
\frac{1}{M_1}\left\vert \frac{\langle a,b\rangle}{\langle a,c\rangle}\right\vert \leq   \vert\braket{b,c}\vert=\cosh(\eth(b,c))\leq \exp(\eth(b,c))\leq \exp\left(\sqrt{2}d_\Sigma(b,c)\right)\ .
\end{eqnarray*}
where we used  Theorem \ref{theo:SD} for the last inequality. This proves the right inequality, the second one comes from reversing the role of $b$ and $c$.
\end{proof}
\begin{corollary}\label{cor:GP-lower}
There exists a constant $M$ so that for any  three points $(a,b,c)$ on a maximal surface, we have the inequality
	\begin{eqnarray}
			\frac{1}{M}
			\exp\left(-2\sqrt{2}d_\Sigma(b,c)\right)
			\leq 
			\left\vert \frac{\langle a,b\rangle}{\langle b,c\rangle \langle a,c\rangle}\right\vert
			\leq M\exp\left(\sqrt{2}d_\Sigma(b,c)\right)\ .
		\end{eqnarray}
\end{corollary}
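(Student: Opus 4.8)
The plan is to derive Corollary \ref{cor:GP-lower} directly from Lemma \ref{lem:BoundFraction} together with the bounds on the spatial distance from Theorem \ref{theo:SD}, by inserting the factor $\langle b,c\rangle$ and controlling it via $\eth(b,c)$. Recall that $|\langle b,c\rangle| = \cosh(\eth(b,c))$ whenever $b\neq c$, and $\eth(b,c)\leq \sqrt2\, d_\Sigma(b,c)$ by \eqref{ineq:SD1}; also $\eth(b,c)\geq d_\Sigma(b,c)$, so $|\langle b,c\rangle|=\cosh(\eth(b,c))\in[\tfrac12 e^{\eth(b,c)}, e^{\eth(b,c)}]$ lies between $\tfrac12 e^{d_\Sigma(b,c)}$ and $e^{\sqrt2\, d_\Sigma(b,c)}$ (using $e^u\leq 2\cosh(u)\leq 2e^u$).

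For the upper bound I would write
\[
\left\vert \frac{\langle a,b\rangle}{\langle b,c\rangle\langle a,c\rangle}\right\vert
= \left\vert\frac{\langle a,b\rangle}{\langle a,c\rangle}\right\vert\cdot\frac{1}{|\langle b,c\rangle|}
\leq M_1\exp\!\left(\sqrt2\, d_\Sigma(b,c)\right)\cdot\frac{2}{e^{d_\Sigma(b,c)}}
\leq 2M_1\exp\!\left((\sqrt2-1)d_\Sigma(b,c)\right),
\]
which is crude but harmless since $(\sqrt2-1)<\sqrt2$; if one wants the stated exponent $\sqrt2$ exactly, just note $(\sqrt2-1)d_\Sigma(b,c)\leq \sqrt2\, d_\Sigma(b,c)$, so the bound $2M_1\exp(\sqrt2\, d_\Sigma(b,c))$ holds. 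For the lower bound, using the left inequality of \eqref{eq:BF-CB1} and $|\langle b,c\rangle|\leq e^{\sqrt2\, d_\Sigma(b,c)}$ gives
\[
\left\vert \frac{\langle a,b\rangle}{\langle b,c\rangle\langle a,c\rangle}\right\vert
\geq \frac{1}{M_1}\exp\!\left(-\sqrt2\, d_\Sigma(b,c)\right)\cdot\exp\!\left(-\sqrt2\, d_\Sigma(b,c)\right)
= \frac{1}{M_1}\exp\!\left(-2\sqrt2\, d_\Sigma(b,c)\right).
\]
Taking $M\defeq\max\{M_1,\,2M_1\}=2M_1$ yields the corollary as stated.

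The only subtlety worth being careful about is the degenerate cases: if $b=c$ then $d_\Sigma(b,c)=0$ and $\langle b,c\rangle=-1$ (after the canonical lift), so both sides are comparable to $|\langle a,b\rangle/\langle a,c\rangle|=1$ and the inequalities are trivially satisfied by adjusting $M$; if $a$ coincides with $b$ or $c$ the same kind of elementary check applies. There is no real obstacle here — the statement is a one-line manipulation combining \eqref{eq:BF-CB1}, the identity $|\langle b,c\rangle|=\cosh\eth(b,c)$, and the two-sided inequality $d_\Sigma\leq\eth\leq\sqrt2\,d_\Sigma$ of Theorem \ref{theo:SD}; the mildly annoying part is bookkeeping the constant $M$ so that it absorbs the factor $2$ coming from $e^u\leq 2\cosh u$, but that is routine.
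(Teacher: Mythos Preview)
Your proof is correct and follows essentially the same route as the paper: split off the factor $|\langle b,c\rangle|$, apply Lemma \ref{lem:BoundFraction} to the remaining ratio, and control $|\langle b,c\rangle|=\cosh(\eth(b,c))$ via Theorem \ref{theo:SD}. The only cosmetic difference is that for the upper bound the paper uses the cruder estimate $|\langle b,c\rangle|\geq 1$ (valid since $b,c$ lie on a complete spacelike surface) rather than your sharper $|\langle b,c\rangle|\geq \tfrac12 e^{d_\Sigma(b,c)}$, which is why you end up with an intermediate exponent $(\sqrt2-1)$ before relaxing to $\sqrt2$.
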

\begin{proof}
	Indeed
	\begin{eqnarray*}
			\left\vert 
			\frac{\langle a,b\rangle}{\langle b,c\rangle \langle a,c\rangle}
			\right\vert
			&\geq& 
			\frac
			{\exp\left(
			-\sqrt{2} d_\Sigma(b,c)\right)
			}
			{M_1 \vert\langle b,c\rangle\vert }\\
			&\geq&  
			\frac
			{2\exp\left(-\sqrt{2} d_\Sigma(b,c)\right)}
			{M_1 \exp\left(\sqrt{2} d_\Sigma(b,c)\right)}
			=\frac{2}{M_1}\exp\left(-2\sqrt{2}d_\Sigma(b,c)\right)\ ,	\end{eqnarray*}
where we used the previous lemma for the first inequality and  Theorem \ref{theo:SD} for the last.
Similarly 
\begin{eqnarray*}
			\left\vert 
			\frac{\langle a,b\rangle}{\langle b,c\rangle \langle a,c\rangle}
			\right\vert
			\leq 
			\frac
			{M_1 \exp\left(
			\sqrt{2} d_\Sigma(b,c)\right)
			}
			{ \vert\langle b,c\rangle\vert }
			\leq  
	{M_1 \exp\left(\sqrt{2} d_\Sigma(b,c)\right)}\ ,
	\end{eqnarray*}
where we used the previous lemma for the first inequality and the inequality $\vert\braket{b,c}\vert\geq 1$ for the second.
\end{proof}

\subsubsection{Minima of horofunction}

\begin{lemma}\label{lem:restrictionhorofunction} Let $\Sigma$ be a quasiperiodic maximal surface. Then
the restriction  of any horofunction to a complete geodesic in $\Sigma$ has no local maximum.
\end{lemma}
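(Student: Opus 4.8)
The statement is that along any complete geodesic $\gamma$ in a quasiperiodic maximal surface $\Sigma$, a horofunction $h = h_z$ (with $z \in \partial_\infty\Hn$, or more generally $z \in \Sigma \sqcup \partial_\infty\Sigma$) has no local maximum. The natural approach is to compute the second derivative of $h$ along $\gamma$ at a critical point and show it is strictly positive, which rules out local maxima; then handle the degenerate case where $h$ is constant on a subarc by a connectedness/unique-continuation argument. The key computational input is the Hessian formula from Lemma \ref{lem:HessianHorofunction}, which gives
\[
\Hess_x h = \phi_z(x)\left(g - \d h \odot \d h + \beta_x\right),
\]
together with the bound from Lemma \ref{lem:InequalityCriticalPoint} controlling $\beta_x$ in terms of $\|\nabla h\|$ and $K_\Sigma$, and crucially the fact (Theorem \ref{theo:CurvBound}) that quasiperiodicity forces $K_\Sigma < -c < 0$.

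First I would set up the situation: let $\gamma$ be a unit-speed complete geodesic in $\Sigma$, and suppose $p = \gamma(t_0)$ is a local maximum of $f(t) := h(\gamma(t))$. Then $f'(t_0) = 0$, i.e. $\d_p h(\dot\gamma) = 0$, and $f''(t_0) = \Hess_p h(\dot\gamma, \dot\gamma) \leq 0$. Plugging into the Hessian formula (and noting $\phi_z > 0$ always, since $\phi_z = 1$ in the boundary case and $\phi_z(x) = -\langle x, z_0\rangle/\sqrt{\langle x,z_0\rangle^2 + \langle z_0,z_0\rangle} > 0$ in the interior case by the sign normalization of the lift), this forces
\[
1 - \d h(\dot\gamma)^2 + \beta_p(\dot\gamma,\dot\gamma) = 1 + \beta_p(\dot\gamma,\dot\gamma) \leq 0,
\]
so $\beta_p(\dot\gamma,\dot\gamma) \leq -1$, hence $\beta_p(\dot\gamma,\dot\gamma)^2 \geq 1$. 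On the other hand Lemma \ref{lem:InequalityCriticalPoint} with $u = \dot\gamma$ (unit spacelike) gives
\[
\beta_p(\dot\gamma,\dot\gamma)^2 \leq (\|\nabla h\|^2 - 1)(1 + K_\Sigma(p)).
\]
Since $\Sigma$ is quasiperiodic, $1 + K_\Sigma(p) < 1 - c < 1$, and by Proposition \ref{pro:MaxHoro}/Theorem \ref{theo:HorofunctionRigidity} we have $\|\nabla h\|^2 \leq 2 - c' $ for some $c' > 0$ depending only on the surface (the global bound $1 + h_0$ with $h_0 < 1$ from Theorem \ref{theo:ChaHo}), so $\|\nabla h\|^2 - 1 \leq h_0 < 1$. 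Therefore $\beta_p(\dot\gamma,\dot\gamma)^2 < 1$, contradicting $\beta_p(\dot\gamma,\dot\gamma)^2 \geq 1$.

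The remaining subtlety is the case of a \emph{non-strict} local maximum, e.g. $f$ constant on a subinterval, or the boundary between the strict-inequality chain and equality. In fact the argument above already gives a strict contradiction as soon as \emph{both} $\|\nabla h\|^2 - 1 < 1$ and $1 + K_\Sigma < 1$ hold, which is guaranteed by quasiperiodicity; so even at a point where $f'' = 0$ we get $\beta_p(\dot\gamma,\dot\gamma)^2 < 1$ while $f'(t_0)=0, f''(t_0)\le 0$ would force $\beta_p(\dot\gamma,\dot\gamma) \le -1$. Thus no $t_0$ can be a local maximum at all — not even a non-strict one. I would phrase this carefully: at any critical point $t_0$ of $f$, we have $f''(t_0) = \phi_z(p)\big(1 + \beta_p(\dot\gamma,\dot\gamma)\big)$ and $|\beta_p(\dot\gamma,\dot\gamma)| < 1$, hence $f''(t_0) > 0$; so every critical point of $f$ is a strict local minimum, which in particular excludes local maxima. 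The main obstacle is simply making sure the constants line up — that quasiperiodicity (via Theorem \ref{theo:CurvBound} and Theorem \ref{theo:ChaHo}) really does give the \emph{strict} inequalities $K_\Sigma < 0$ and $\|\nabla h\|^2 < 2$ simultaneously and uniformly, so that the product $(\|\nabla h\|^2-1)(1+K_\Sigma)$ is bounded strictly below $1$; I would cite those two theorems explicitly for this. No hard analysis is needed beyond the Hessian identity and the two pointwise estimates.
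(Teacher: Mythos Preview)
Your proof is correct and follows essentially the same approach as the paper: compute the Hessian of $h$ along $\gamma$ at a critical point via Lemma~\ref{lem:HessianHorofunction}, bound $|\beta_p(\dot\gamma,\dot\gamma)|$ via Lemma~\ref{lem:InequalityCriticalPoint}, and use the quasiperiodic bounds to force $f''(t_0)>0$. The paper is slightly more economical in that it only invokes the horofunction bound $\|\nabla h\|^2\le 1+h_0<2$ from Theorem~\ref{theo:ChaHo} together with the universal bound $0\le 1+K_\Sigma\le 1$ from Theorem~\ref{theo:CurvatureRigidity} (rather than also using Theorem~\ref{theo:CurvBound}), but this is a cosmetic difference.
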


\begin{proof}
By  the Horofunction Characterisation Theorem  \ref{theo:ChaHo} there is $h_0<1$, such that for any $z$ in $\partial_\infty \Sigma$ we have $\Vert \nabla h_z\Vert^2\leq 1+h_0$ -- where $h_z$ is the horofunction associated to $z$.
Consider a geodesic $\gamma$ in $\Sigma$ parametrised by arc-length, and let $x$ be a critical point of $h_z$ on $\gamma$. The tangent vector $\dot\gamma$ at  $x$ satisfies $\d h(\dot\gamma)=0$.   Using lemma \ref{lem:HessianHorofunction} with the same notations, we have
\begin{eqnarray*}
\Hess_x h_z(\dot\gamma,\dot\gamma) =  1+\beta_x(\dot\gamma,\dot\gamma) 
& \geq & 1 - \sqrt{(1+K_\Sigma(x))(\Vert\nabla h_z\Vert^2-1)} \\
& \geq & 1 - \sqrt{h_0}  >  0~,
\end{eqnarray*}
where we used lemma \ref{lem:InequalityCriticalPoint} in the second inequality, our hypothesis in the third and the fact that $0\leq 1+ K_\Sigma\leq 1$ by Theorem \ref{theo:CurvatureRigidity}. Thus the second derivative of $h_z$ at a critical point is positive and the result follows.
\end{proof}

\subsubsection{Gromov products on geodesics}

Let us concentrate on geodesics for the induced metric.

\begin{lemma}\label{lem:LowerBoundGeodI}
There  exists a  positive $M_3$ such that for any quasiperiodic maximal surface $\Sigma$, any oriented geodesic $\gamma$ in $\Sigma$, if $s$ and $t$ are positive  then 
\begin{eqnarray}
 \left\vert \frac{\langle \gamma(-s),\gamma(t)\rangle}{\langle \gamma(-s),\gamma(0)\rangle \langle\gamma(0),\gamma(t)\rangle}\right\vert&\geq& M_3\ .\label{def:*}
\end{eqnarray}
\end{lemma}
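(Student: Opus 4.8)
The plan is to prove \eqref{def:*} by a cocompactness argument, the quantity being manifestly $\G$‑invariant. First I would extend $\gamma$ to a complete geodesic $\gamma\colon\R\to\Sigma$ (the induced metric is complete), and then, writing $z=\gamma(-s)$, $x=\gamma(0)$, $w=\gamma(t)$, view the left–hand side of \eqref{def:*} as the value at the quadruple $(x,\Sigma,z,w)\in\mathcal N_2(n)$ (the space of Corollary \ref{cor:Np-compact}) of the function
\[
F(x,\Sigma,z,w)\defeq\left\vert \frac{\langle z,w\rangle}{\langle z,x\rangle\,\langle x,w\rangle}\right\vert\ ,
\]
where every point is lifted to $\{\bq=-1\}$, so that $F$ does not depend on the lifts; $F$ is $\G$‑invariant and continuous wherever the three inner products are non‑zero. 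It is enough to bound $F$ below by a positive constant on the set $\mathcal K$ of quadruples $(x,\Sigma,z,w)$ with $\Sigma$ quasiperiodic and $z$, $x$, $w$ occurring in this order on a complete geodesic of $\Sigma$ through $x$, since every configuration $(\gamma(0),\Sigma,\gamma(-s),\gamma(t))$ with $s,t>0$ lies in $\mathcal K$.

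The core of the argument is then to take the closure $\overline{\mathcal K}$ of $\mathcal K$ inside $\mathcal N_2(n)$. This set is $\G$‑invariant and closed — a limit of complete geodesics of smoothly converging complete maximal surfaces is again such a geodesic, and its endpoints in $\overline{\H}^{2,n}$ converge as well, so in the limit $z$ and/or $w$ simply land on $\partial_\infty\Sigma$ — hence $\G$ acts cocompactly on $\overline{\mathcal K}$ by Corollary \ref{cor:Np-compact}. On $\overline{\mathcal K}$ I would regard $F$ as a lower semi‑continuous function with values in $(0,\infty]$, assigning $F=+\infty$ where a denominator factor vanishes: on the degenerate locus $z=x$ or $w=x$ one has $F\equiv 1$, while off it $z\neq w$ and the numerator $\langle z,w\rangle$ is non‑zero because distinct points of $\Sigma\cup\partial_\infty\Sigma$ are transverse (see below). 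A positive lower semi‑continuous function on a space carrying a cocompact $\G$‑action attains a positive infimum on its compact quotient, so $M_3\defeq\inf_{\overline{\mathcal K}}F>0$, and this is exactly \eqref{def:*}.

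I expect the main obstacle to be the transversality input just invoked: that the two endpoints at infinity of a complete geodesic, and such an endpoint paired with an interior point, are transverse, so that the boundary values of $F$ are genuinely positive rather than an indeterminate $0/0$ — this is precisely what forces the lower semi‑continuity at the degenerate quadruples. This is where quasiperiodicity enters: for a quasiperiodic surface $\partial_\infty\Sigma$ is a \emph{positive} loop (Corollary \ref{cor:qp2Sqp}), so any two of its distinct points are transverse (a positive triple spans a signature $(2,1)$ subspace, which contains no totally isotropic plane), and $\Sigma\cup\partial_\infty\Sigma$ is acausal (\cite[Proposition 3.10]{LTW}); I would still have to check that these transversalities persist on $\overline{\mathcal K}\setminus\mathcal K$, which for Barbot surfaces is immediate from the explicit parametrisation of Proposition \ref{pro:bcrown-descr} and in general follows from the same acausality passing to limits. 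A secondary, routine point — the continuous dependence of complete geodesics and their endpoints on the ambient maximal surface — is needed only for the closedness of $\overline{\mathcal K}$ and is part of the compactness package of \cite{LTW}.
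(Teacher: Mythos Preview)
Your cocompactness scheme is natural, but there is a real gap precisely where you flag it, and your proposed patch does not close it. The crucial case is a sequence in $\mathcal K$ with $s_k,t_k\to\infty$: after normalising and extracting, the limit $(x,\Sigma,z,w)\in\overline{\mathcal K}$ has $z,w\in\partial_\infty\Sigma$, and nothing you have said prevents $\langle z,w\rangle=0$ there. Two things go wrong. First, the closure of the quasiperiodic locus in $\MH$ is \emph{all} of $\MH$ --- quasiperiodicity is not a closed condition --- so the limit $\Sigma$ may have a merely semi-positive boundary, on which distinct points can lie on a photon; your appeal to ``acausality passing to limits'' fails exactly here, since $\Sigma\cup\partial_\infty\Sigma$ is acausal only when $\partial_\infty\Sigma$ is positive. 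Second, and more basically, you never rule out $z=w$: the assertion ``off [the degenerate locus $z=x$ or $w=x$] $z\neq w$'' holds in $\mathcal K$ but not in $\overline{\mathcal K}$, and if the two normalised endpoints coalesce to a single null direction then $F=0$ on the limit and your lower-semicontinuity argument yields nothing. Your Barbot check, while correct for geodesics on a Barbot surface, does not cover all limit surfaces and does not address the coalescence of $z$ and $w$.

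The paper handles this with an analytic input you have bypassed: Lemma~\ref{lem:restrictionhorofunction}, that on a quasiperiodic surface every horofunction restricted to a geodesic has no local maximum (a consequence of the Hessian formula of Lemma~\ref{lem:HessianHorofunction} together with the bound $\Vert\nabla h\Vert^2<2$ from Theorem~\ref{theo:HorofunctionRigidity}). Running the same contradiction/cocompactness setup, when the limit satisfies $\langle u,v\rangle=0$ one chooses $w_k\in\partial_\infty\Sigma_k$ with $w_k\to u$ and observes that the associated horofunctions $h_k$ satisfy $h_k(\gamma_k(-s_k))\to-\infty$, $h_k(\gamma_k(t_k))\to-\infty$, while $h_k(\gamma_k(0))\to 0$; hence $h_k$ has an interior local maximum on $\gamma_k$ for large $k$, contradicting Lemma~\ref{lem:restrictionhorofunction} applied to the \emph{approximating} quasiperiodic $\Sigma_k$. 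The obstruction is thus detected on the sequence, not on the limit, and this is the idea your proposal is missing.
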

\begin{proof}

Suppose the result wrong. Then there exists a sequence $\{(x_k,\gamma_k,\Sigma_k)\}_{k\in\N}$, where $(x_k,\Sigma_k)$ is a pointed quasiperiodic surface, $\gamma_k$ is a complete geodesic with $\gamma_k(0)=x_k$, as well as sequences $\seqk{t}$ and $\seqk{s}$ of real numbers  such that 
\begin{eqnarray}
	\lim_{k\to\infty}\left\vert \frac{\langle \gamma_k(-s_k),\gamma_k(t_k)\rangle}{\langle \gamma_k(-s_k),x_k\rangle \langle x_k,\gamma_k(t_k)\rangle}\right\vert = 0~.\label{eq:ukvk}
\end{eqnarray} 
Observing that by Corollary \ref{cor:GP-lower},
$$
\left\vert \frac{\langle \gamma_k(-s_k),\gamma_k(t_k)\rangle}{\langle \gamma_k(-s_k),x_k\rangle \langle x_k,\gamma_k(t_k)\rangle}\right\vert \geq\frac{1}{M} \sup \big\{\exp(-2\sqrt 2s_k),\exp(-2\sqrt 2t_k)\big\}\ , 
$$
 this implies that $\seqk{s}$ and $\seqk{t}$ both converge to $\infty$.

For $(x,\Sigma)$ in ${\MH}$, an oriented geodesic $\gamma$ passing through $x$ is given by a linear ray $\T_x\gamma\subset\T_x\Sigma$. In particular, and one can find a sequence $\seqk{g}$ in $\G$ such that $g_k(x_k,\gamma_k,\Sigma_k)$ subconverges to $(x,\gamma,\Sigma)$ with $(x,\Sigma)$ quasiperiodic.

To make life simpler, we assume the sequence $\sek{g_k}$ is constant and equal to the identity. Let us then consider
$$
u_k=\frac{1}{\langle x_k,\gamma_k(t_k)\rangle }\gamma_k(t_k)\ , \ \ v_k=\frac{1}{\langle x_k,\gamma_k(-s_k)\rangle }\gamma_k(-s_k)\ .
$$
Up to extracting a subsequence, $\seqk{u}$  and   $\seqk{v}$ converge respectively  to  lightlike vectors $u$ and $v$. By \cite[Theorem 6.1]{LTW}, $u$ and $v$ gives rise in   the projective compactification  to elements of $\partial_\infty \Sigma$. By assertion \eqref{eq:ukvk}, $\langle u,v\rangle =0$. However since $\Sigma$ is quasiperiodic, $\partial_\infty\Sigma$ is positive, thus  $u=v$. 

Let $\seqk{w}$ be a sequence of  lightlike vectors  converging to $u$ so that $w_k$ belongs to $\partial_\infty \Sigma_k$ and normalised so that $\braket{x_k,w_k}=1$. Let $h_k$ be the horofunction  associated to $w_k$.   Then 
\begin{eqnarray*}
	\lim_{k\to\infty} h_k(\gamma_k(-s_k))=\lim_{k\to\infty}\log\vert\langle w_k,v_k\rangle\vert&=&-\infty\ ,\\
\lim_{k\to\infty} h_k(\gamma_k(t_k))=\lim_{k\to\infty}\log\vert\langle w_k,u_k\rangle\vert&=&-\infty\ , \\
\lim_{k\to\infty} h_k(\gamma_k(0))=\lim_{k\to\infty} \log\vert\langle x_k,w_k\rangle\vert&=&0\ .
\end{eqnarray*}
It follows that for $k$ large enough, $h_k$ has a local maximum on $\gamma_k([-s_k,t_k])$. This  contradicts Lemma \ref{lem:restrictionhorofunction}. 
\end{proof}

\begin{corollary}\label{cor:LowerBoundGeodII}
For any  $R$ there is a positive  constant $M_4$ such that if $\Sigma$ is a quasiperiodic maximal surface and $\gamma$ is an oriented geodesic in $\Sigma$ such that $x$ is at within a distance $R$ of $\gamma$,  both $s$ and $t$ are positive, then
\[\left\vert \frac{\langle \gamma(-s),\gamma(t)\rangle}{\langle \gamma(-s),x\rangle \langle x,\gamma(t)\rangle}\right\vert\geq M_4~.\]	
\end{corollary}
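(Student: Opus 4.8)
The plan is to deduce Corollary~\ref{cor:LowerBoundGeodII} from Lemma~\ref{lem:LowerBoundGeodI} together with the \emph{a priori} bounds of Lemma~\ref{lem:BoundFraction}. The only obstruction is that in Lemma~\ref{lem:LowerBoundGeodI} the basepoint $x$ is required to lie exactly on the geodesic $\gamma$, whereas here $x$ is merely within distance $R$ of $\gamma$; the strategy is therefore to move the estimate along $\gamma$ by a bounded amount.

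First I would fix a point $\gamma(r)$ on $\gamma$ realising $d_\Sigma(x,\gamma)\le R$, so that $d_\Sigma(x,\gamma(r))\le R$; after reparametrising $\gamma$ by $t\mapsto\gamma(t+r)$ we may assume $r=0$, i.e. $d_\Sigma(x,\gamma(0))\le R$. Apply Lemma~\ref{lem:LowerBoundGeodI} to the oriented geodesic $\gamma$ with basepoint $\gamma(0)$: for all positive $s,t$,
\[
\left\vert \frac{\langle \gamma(-s),\gamma(t)\rangle}{\langle \gamma(-s),\gamma(0)\rangle \langle\gamma(0),\gamma(t)\rangle}\right\vert\geq M_3\ .
\]
It remains to compare the left-hand side with $\bigl\vert \langle \gamma(-s),\gamma(t)\rangle/(\langle \gamma(-s),x\rangle \langle x,\gamma(t)\rangle)\bigr\vert$. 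Writing the ratio of the two quantities as
\[
\frac{\langle \gamma(-s),\gamma(0)\rangle\,\langle\gamma(0),\gamma(t)\rangle}{\langle \gamma(-s),x\rangle\,\langle x,\gamma(t)\rangle}
=\frac{\langle \gamma(-s),\gamma(0)\rangle}{\langle \gamma(-s),x\rangle}\cdot\frac{\langle\gamma(0),\gamma(t)\rangle}{\langle x,\gamma(t)\rangle}\ ,
\]
each factor is controlled by Lemma~\ref{lem:BoundFraction}, applied with the triple $(\gamma(-s),\gamma(0),x)$ for the first factor and $(\gamma(t),\gamma(0),x)$ for the second: since $d_\Sigma(\gamma(0),x)\le R$ we get
\[
\left\vert\frac{\langle \gamma(-s),\gamma(0)\rangle}{\langle \gamma(-s),x\rangle}\right\vert\geq \frac{1}{M_1}e^{-\sqrt2\,R}\ ,\qquad
\left\vert\frac{\langle\gamma(0),\gamma(t)\rangle}{\langle x,\gamma(t)\rangle}\right\vert\geq \frac{1}{M_1}e^{-\sqrt2\,R}\ .
\]

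Multiplying, the ratio of the two Gromov-product expressions is bounded below by $M_1^{-2}e^{-2\sqrt2\,R}$, hence
\[
\left\vert \frac{\langle \gamma(-s),\gamma(t)\rangle}{\langle \gamma(-s),x\rangle \langle x,\gamma(t)\rangle}\right\vert\geq \frac{M_3}{M_1^{2}}e^{-2\sqrt2\,R}\ ,
\]
and setting $M_4\defeq M_3 M_1^{-2}e^{-2\sqrt2\,R}$, which depends only on $R$ and $n$, finishes the proof. One small point to check carefully is that reparametrising $\gamma$ by a shift preserves the hypotheses of Lemma~\ref{lem:LowerBoundGeodI} — the geodesic stays complete and oriented, and the new basepoint $\gamma(0)$ still lies on it — so the reduction $r=0$ is legitimate; the rest is routine.
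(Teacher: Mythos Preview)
Your proof is correct and follows exactly the same route as the paper's own proof: reparametrise so that $d_\Sigma(x,\gamma(0))\le R$, apply Lemma~\ref{lem:LowerBoundGeodI} at $\gamma(0)$, and then pass from $\gamma(0)$ to $x$ using the two-factor decomposition together with Lemma~\ref{lem:BoundFraction}. Your explicit constant $M_4=M_3 M_1^{-2}e^{-2\sqrt2 R}$ is in fact the intended one (the paper's displayed $\frac{M_1}{M_2^2}$ is a typographical slip).
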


\begin{proof}
Let assume (after a change of time) that $d(x,\gamma(0))$ is less than $R$. Then 
\begin{eqnarray*}
	\left\vert\frac{\langle \gamma(-s),\gamma(t)\rangle}{\langle \gamma(-s),x\rangle \langle x,\gamma(t)\rangle}\right\vert
	&=&  \left\vert\frac{\langle\gamma(-s),\gamma(0)\rangle}{\langle\gamma(-s),x\rangle}\right\vert\cdotp\left\vert\frac{\langle\gamma(t),\gamma(0)\rangle}{\langle\gamma(t),x\rangle}\right\vert\ \cdotp	\left\vert\frac{\langle \gamma(-s),\gamma(t)\rangle}{\langle \gamma(-s),\gamma(0)\rangle \langle \gamma(0),\gamma(t)\rangle}\right\vert   \\
	& \geq &M_3 \left\vert\frac{\langle\gamma(-s),\gamma(0)\rangle}{\langle\gamma(-s),x\rangle}\right\vert\cdotp\left\vert\frac{\langle\gamma(t),\gamma(0)\rangle}{\langle\gamma(t),x\rangle}\right\vert\  \\
	&\geq &\frac{M_1}{M_2^2}\exp\left(-2\sqrt{2} d_\Sigma(x,\gamma(0))\right)\geq\frac{M_1}{M_2^2}\exp\left(-2\sqrt{2} R\right)\ .\end{eqnarray*}
Where we used lemma \ref{lem:LowerBoundGeodI} for the first inequality and lemma \ref{lem:BoundFraction} for the second. \end{proof}

\subsubsection{Proof of propositions \ref{pro:LowerBoundQG} and \ref{pro:ConverseBound}}\label{sss:proo2pro}.
\begin{proof}[Proof of Proposition \ref{pro:LowerBoundQG}]
Let $\gamma$ be the geodesic between $\eta(-s)$ and $\eta(t)$ and observe that $\gamma$ is at distance at most $K+R$ from $x$. The result now follows from corollary \ref{cor:LowerBoundGeodII}.
\end{proof}
\begin{proof}[Proof of Proposition \ref{pro:ConverseBound}]
	By the quasiisometry property, it is  enough to prove the proposition whenever $\eta$ is a geodesic arc.  In other words if 	\begin{eqnarray}
		 \left\vert \frac{\langle z,w\rangle}{\langle z,x\rangle \langle x,w\rangle}\right\vert \geq B\ , \label{eq:hyp:C-B1}
	\end{eqnarray}
	 then  $d_\Sigma(x,y)\leq A$  for some constant $A$ and some  $y$  on the arc $\eta$. Since the curvature of $\Sigma$ is bounded by $-c$, where $c$ is positive, there exists a point $y$ in $\eta$ which is $R$-close to both  geodesic arcs $[x,w]$ an $[x,z]$, for $R$ only depending on $c$. In particular, by Corollary \ref{cor:LowerBoundGeodII} we have
	 \begin{eqnarray}
	 	\left\vert \frac{\braket{x,w}}{\braket{x,y}\braket{y,w}}\right\vert\geq M_4&\hbox{ and }&	\left\vert \frac{\braket{x,z}}{\braket{x,y}\braket{y,z}}\right\vert\geq M_4\ .
	 \end{eqnarray}
	Multiplying both inequalities  and then inequality \eqref{eq:hyp:C-B1}, we get
	\begin{eqnarray}
		 	\left\vert \frac{\braket{z,w}}{\braket{x,y}^2\braket{y,w}\braket{y,z}}\right\vert \geq BM_4^2\ .
	\end{eqnarray}
	Thus
	\begin{eqnarray}
	\vert\braket{x,y}\vert^2\leq \frac{1}{BM^2_2}\left\vert \frac{\braket{z,w}}{\braket{y,w}\braket{y,z}}\right\vert\leq \frac{M_1}{BM_1^2}\ ,
	\end{eqnarray}
	where we used proposition \ref{pro:UpperBoundGeod} in the last inequality.
	As a conclusion, using Theorem \ref{theo:SD}
	$$
	d_\Sigma(x,y)\leq \sqrt 2 \eth(x,y)=\sqrt 2\cosh^{-1}(\vert\braket{x,y}\vert)\leq A\defeq \sqrt 2\cosh^{-1}\left(\frac{\sqrt{BM_1}}{BM_4}\right)\ .
	$$
The result follows.
\end{proof}

\subsection{Construction of a quasisymmetric map}
We now build in the second paragraph a candidate to be a quasisymmetric map and proves its property.
\subsubsection{A boundary coincidence}
We will need
\begin{proposition}\label{pro:bound-coinc}
	Let $\seqk{x}$ and $\seqk{y}$ be two sequences of points on a quasiperiodic maximal surface $\Sigma$ so that  $d_\Sigma(x_k,y_k)$ is uniformly bounded.  Assume that $\seqk{x}$ converges to $u$ in $\partial_\infty \Sigma$. Then $\seqk{y}$  also converges to  $u$.\end{proposition}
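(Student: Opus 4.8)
The plan is to argue by compactness in the projective model of $\Hn$, using the comparison $\eth\le\sqrt2\,d_\Sigma$ of Theorem \ref{theo:SD} to convert the metric hypothesis into a uniform bound on scalar products. First I would fix, by \cite[Proposition 2.15]{LTW}, a vector $p\in E$ such that $\P(p^\perp)$ is disjoint from the set $\overline\Sigma:=\Sigma\cup\partial_\infty\Sigma$; by Proposition \ref{pro:BoundaryCompleteSurf} this set is the closure of $\Sigma$ in $\P(E)$, hence compact, and it is contained in the affine chart $\{[v]:\langle v,p\rangle\neq0\}$, so after normalising lifts by $\langle\,\cdot\,,p\rangle=-1$ the ``radial projection'' $\pi_p$ maps $\overline\Sigma$ to a bounded subset. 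Since convergence $x_k\to u$ in $\P(E)$ is equivalent to $\pi_p(x_k)\to\pi_p(u)$, and since $\overline\Sigma$ is compact, it suffices to show that every subsequential limit $w\in\overline\Sigma$ of $\seqk y$ equals $u$.

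So let $\{y_{k_j}\}$ converge to $w\in\overline\Sigma$; write $R:=\sup_k d_\Sigma(x_k,y_k)<\infty$. The first step is to exclude $w\in\Sigma$: if $w$ were an interior point, then, $\Sigma$ being properly embedded, the $d_\Sigma$-topology agrees near $w$ with the subspace topology, so $d_\Sigma(y_{k_j},w)\to0$; together with $d_\Sigma(x_{k_j},y_{k_j})\le R$ this would confine the $x_{k_j}$ to a closed $d_\Sigma$-ball about $w$, which is compact by completeness of $\Sigma$, contradicting $x_k\to u\in\partial_\infty\Sigma$. Hence $w\in\partial_\infty\Sigma$.

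The second step identifies $w$ with $u$. By Proposition \ref{pro:BoundaryCompleteSurf} any two distinct points of $\Sigma$ are acausal, so Lemma \ref{lem:SpacelikePosition} together with Theorem \ref{theo:SD} gives, for the $\bq=-1$ lifts $x_k^0,y_k^0$,
\[\bigl|\langle x_k^0,y_k^0\rangle\bigr|=\cosh\eth(x_k,y_k)\le\cosh\!\bigl(\sqrt2\,d_\Sigma(x_k,y_k)\bigr)\le\cosh(\sqrt2\,R).\]
Writing $x_k^0=s_k\,\pi_p(x_k)$ and $y_k^0=t_k\,\pi_p(y_k)$, one has $s_k^2=-1/\bq(\pi_p(x_k))$, and since $x_k\to u$ with $\bq(u)=0$ this forces $|s_k|\to\infty$; likewise $|t_{k_j}|\to\infty$. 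As $\pi_p(x_{k_j})$ and $\pi_p(y_{k_j})$ stay bounded and converge to lifts $\xi_\infty$ of $u$ and $\eta_\infty$ of $w$, while $|s_{k_j}t_{k_j}|\,|\langle\pi_p(x_{k_j}),\pi_p(y_{k_j})\rangle|=|\langle x_{k_j}^0,y_{k_j}^0\rangle|$ is bounded and $|s_{k_j}t_{k_j}|\to\infty$, we get $\langle\xi_\infty,\eta_\infty\rangle=0$; that is, $u$ and $w$ are not transverse. But $\Sigma$ is quasiperiodic, so by Corollary \ref{cor:qp2Sqp} the loop $\partial_\infty\Sigma$ is positive, and any three pairwise distinct points of a positive loop form a positive triple, whose span has signature $(2,1)$ and therefore contains no totally isotropic $2$-plane; picking a third point of $\partial_\infty\Sigma$ distinct from $u$ and $w$ (possible, $\partial_\infty\Sigma$ being a circle) shows $u$ and $w$ cannot be distinct. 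Hence $w=u$, and $\seqk y$ converges to $u$.

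\textbf{Main obstacle.} The delicate point is the second step: one must correctly couple the ``escape rate'' $|s_k|\to\infty$ in the affine chart to the hypothesis $x_k\to u\in\partial_\infty\Sigma$ and to the boundedness of $\langle x_k^0,y_k^0\rangle$, and then invoke positivity of $\partial_\infty\Sigma$ at exactly the right moment. This is where quasiperiodicity is indispensable: for a merely semi-positive limit loop containing a photon segment (as in a Barbot crown) two distinct boundary points can fail to be transverse, and the conclusion $w=u$ would genuinely break down.
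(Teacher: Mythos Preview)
Your proof is correct and takes a different route from the paper's. The paper works with the horofunction $h=h_u$ at the limit point: from $\|\nabla h\|^2\le 2$ it bounds $|h(x_k)-h(y_k)|$, asserts $h(x_k)\to-\infty$ to force $\langle u_0,y_k^0\rangle\to 0$, and then finishes via positivity of $\partial_\infty\Sigma$ exactly as you do. You instead invoke the spatial--distance estimate $\eth\le\sqrt2\,d_\Sigma$ of Theorem~\ref{theo:SD} to bound the \emph{symmetric} quantity $|\langle x_k^0,y_k^0\rangle|$ directly, then pass to an affine chart where both normalising factors $|s_k|,|t_{k_j}|$ blow up, forcing $\langle u,w\rangle=0$. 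This buys you some robustness: the step ``$h_u(x_k)\to-\infty$'' is delicate, since horoballs at $u$ are only tangent to $\bHn$ and need not trap every sequence converging to $u$ in the projective compactification (already on a totally geodesic $\H^2$ one can approach a boundary point with the associated horofunction staying bounded), whereas your argument never needs to control $\langle x_k^0,u_0\rangle$ on its own. Your explicit exclusion of the case $w\in\Sigma$ via completeness and Hopf--Rinow is also a clean addition that the paper leaves implicit.
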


\begin{proof} Let $h$ be the horofunction associated to  a non zero vector $u_0$ in $u$. Since $\Vert \nabla h\Vert^2<2$ and  $d_\Sigma(x_k,y_k)$ is uniformly bounded, then $\vert h(x_k)-h(y_k)\vert$ is uniformly bounded. Since $\lim_{k\to\infty} h(x_k)=-\infty$, we have 
$
\lim_{k\to\infty} h(y_k)=-\infty$ hence $
\lim_{k\to\infty} \langle u_0, y_k\rangle=0$.  After extracting a subsequence, we may assume that  $\{y_k\}_{k\in\mathbb N}$ converges to a point $v$ in $\Sigma\cup\partial_\infty\Sigma$. Let us fix a point $z_0$ in $\Sigma$, then
$$
\lim_{k\to\infty}\frac{1}{\braket{z_0,y_k}}y_k=v_0\ ,
$$
where $v_0$ is the non zero vector in $v$ so that $\braket{v_0,z_0}=1$. Recall that on a maximal surface $\vert\braket{z_0,y_k}\vert\geq 1$,  thus 
$$
\vert\braket{u_0,v_0}\vert \leq  \lim_{k\to\infty} \vert\langle u_0, y_k\rangle\vert =0\ .
$$
Since $\Sigma$ is quasiperiodic maximal, $\partial_\infty\Sigma$ is positive, so $u_0=v_0$. This proves the result.
\end{proof}

\subsubsection{A quasisymmetric map} \label{sec:QSmap} Assume $\Sigma$ is a quasiperiodic  maximal surface. By Theorem \ref{theo:biLip}, the uniformisation is biLipschitz and thus a quasiisometry. In particular there is a constant $K$ so that any geodesic in $\H^2$ is mapped under the uniformisation to a $K$-quasigeodesic.

Let us choose once and for all a fixed point $x_0$ in $\Sigma$.
All the geodesics we consider here are assumed to be complete. Because $\Sigma$ is properly embedded, any geodesic $\gamma$ in $\H^2$ has a properly embedded image.  Thus  for any point $x$ in $\partial_\infty\H^2$,  fix a geodesic $\gamma$ converging to $x$ and let us consider the (non empty) set $B_x$  of limit  values  in $\bHn$ of all sequences converging to $x$ along $\gamma$. Observe that $B_x$ is connected, hence a closed interval in $\partial_\infty\Sigma$.

Let us choose a map
\[\Phi: \partial_\infty \H^2 \longrightarrow \partial_\infty \Sigma~,\]
such that for any point $x$ in $\partial_\infty \H^2$, the point $\Phi(x)$ belongs to $B_x$.

From the quasiperiodicity of $\Sigma$, we know that $\partial_\infty \Sigma$ is positive. In particular, for any pairwise distinct points $x,y,z$ and $t$ in $\partial_\infty\Sigma$, the cross-ratio $\bb(x,y,z,t)$ is well-defined (see definition \ref{def:cross-ratio}). Finally, denote by $[.,.,.,.]$ the cross-ratio on $\partial_\infty \H^2$ obtained by identifying $\partial_\infty \H^2$ with $\P(V)$.

\begin{proposition}\label{pro:QSGromovBoundary} Let $\Sigma$ and $\Phi$ be as above. If  $(a,b,c,d)$ is a  quadruple of  pairwise distinct  points  in $\partial_\infty \H^2$, then $(\Phi(a),\Phi(b),\Phi(c),\Phi(d))$ are pairwise transverse. Moreover, for every $A$ there is a $B$ such that
\[A^{-1}<\vert [a,b,c,d]\vert <A~ \hbox{ implies } B^{-1} < \left\vert\bb\big(\Phi(a),\Phi(b),\Phi(c),\Phi(d)\big)\right\vert< B~. \]
\end{proposition}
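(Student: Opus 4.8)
The plan is to translate the cross-ratio inequality into a statement about \emph{Gromov products} on the surface $\Sigma$, where the estimates of Propositions \ref{pro:UpperBoundGeod}, \ref{pro:LowerBoundQG} and \ref{pro:ConverseBound} can be applied, and then to transport everything back to $\H^2$ via the fact (Theorem \ref{theo:biLip}) that the uniformisation is biLipschitz, hence a quasiisometry sending geodesics to $K$-quasigeodesics.

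First I would record the elementary point that $\bb(\Phi(a),\Phi(b),\Phi(c),\Phi(d))$ is a product of four ratios of the form $\langle u,v\rangle/(\langle u,x_0\rangle\langle x_0,v\rangle)$ up to bounded multiplicative error: writing $p=\Phi(a)$, etc., and choosing lifts normalised against a fixed lift of $x_0$, one has
\[
\bb(p,q,r,s)=\frac{\langle p,q\rangle\langle r,s\rangle}{\langle p,s\rangle\langle r,q\rangle}
=\frac{\left\vert\dfrac{\langle p,q\rangle}{\langle p,x_0\rangle\langle x_0,q\rangle}\right\vert\cdot\left\vert\dfrac{\langle r,s\rangle}{\langle r,x_0\rangle\langle x_0,s\rangle}\right\vert}{\left\vert\dfrac{\langle p,s\rangle}{\langle p,x_0\rangle\langle x_0,s\rangle}\right\vert\cdot\left\vert\dfrac{\langle r,q\rangle}{\langle r,x_0\rangle\langle x_0,q\rangle}\right\vert}
\]
up to sign. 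So it suffices to control each of the four ``Gromov product'' quantities $\bigl\vert\langle u,v\rangle/(\langle u,x_0\rangle\langle x_0,v\rangle)\bigr\vert$ for $u,v\in\partial_\infty\Sigma$. By Proposition \ref{pro:UpperBoundGeod} each is bounded above by a universal $M_1$. For the lower bound: given distinct ideal points $x,y$ of $\H^2$, the geodesic between them maps under the uniformisation to a $K$-quasigeodesic $\eta$ in $\Sigma$ whose endpoints at infinity are $\Phi(x),\Phi(y)$ (this uses the boundary-coincidence Proposition \ref{pro:bound-coinc} to identify the limit set $B_x$ and to see that $\Phi(x)$ is indeed the forward endpoint of the image quasigeodesic, up to the bounded ambiguity of $B_x$). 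Then Proposition \ref{pro:LowerBoundQG} gives a lower bound $M_2=M_2(R,K)$ on $\bigl\vert\langle \Phi(x),\Phi(y)\rangle/(\langle\Phi(x),x_0\rangle\langle x_0,\Phi(y)\rangle)\bigr\vert$ \emph{provided} $x_0$ lies within bounded distance $R$ of $\eta$; and conversely Proposition \ref{pro:ConverseBound} says that when this quantity is large, $x_0$ \emph{is} close to $\eta$.

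Now I would run the standard cross-ratio/Gromov-product dictionary. Fix $A>1$ and suppose $A^{-1}<\vert[a,b,c,d]\vert<A$ in $\partial_\infty\H^2$. Choose any point $o\in\H^2$ at bounded distance from the geodesics $(a,c)$, $(b,d)$, $(a,b)$, $(c,d)$, $(a,d)$, $(b,c)$ — this is possible with a bound depending only on $A$, since a bounded cross-ratio forces the six geodesics to pass within bounded distance of a common point (thin triangles in $\H^2$). Replace $x_0$ by $x_0':=\Psi$-image of $o$, i.e. move the basepoint along $\Sigma$ by bounded distance; by Proposition \ref{pro:UpperBoundGeod} and the biLipschitz bound this changes each of the four ratios, hence $\bb$, by a bounded factor, so it suffices to prove the estimate with $x_0$ replaced by $x_0'$. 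With $x_0'$ now within bounded distance of all relevant quasigeodesics, Propositions \ref{pro:UpperBoundGeod} and \ref{pro:LowerBoundQG} bound each of the four ratios appearing above \emph{both} above and below by positive constants depending only on $A$ (and on the universal $K$ and the curvature bound, which by Theorem \ref{theo:CurvBound} exists once $\Sigma$ is quasiperiodic). Multiplying and dividing these four two-sided bounds yields $B^{-1}<\vert\bb(\Phi(a),\Phi(b),\Phi(c),\Phi(d))\vert<B$ with $B=B(A)$, as desired. The transversality of the quadruple $(\Phi(a),\Phi(b),\Phi(c),\Phi(d))$ is immediate: $\partial_\infty\Sigma$ is a positive loop (quasiperiodicity), so any pair of distinct points on it is transverse, and the $\Phi(\cdot)$ are distinct because the four ratios are finite and nonzero by the bounds just obtained.

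The main obstacle is the bookkeeping around the basepoint and the endpoints of the quasigeodesics: one must check that $\Phi(x)$ really serves as ``the forward endpoint at infinity'' of the image under the uniformisation of a geodesic ray to $x$ — which is exactly what Proposition \ref{pro:bound-coinc} is designed to give, since two rays to the same ideal point of $\H^2$ stay at bounded $d_\Sigma$-distance, hence land at the same point of $\partial_\infty\Sigma$, so the choice of $\Phi(x)\in B_x$ is harmless up to bounded error — and that the common ``center'' $o$ of the configuration in $\H^2$ maps to a point of $\Sigma$ lying within a distance depending only on $A$ of all six quasigeodesics, which uses the quasiisometry constant $K$ together with stability of quasigeodesics in the Gromov-hyperbolic surface $\Sigma$ (Theorem \ref{theo:ChaGro}). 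Once these qualitative facts are in place, the quantitative estimate is just the product of the four two-sided bounds, and no further computation is needed.
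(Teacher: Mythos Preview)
Your proof is correct and follows essentially the same route as the paper's: decompose $\bb$ as a product and quotient of four Gromov-product ratios based at a point close to the four relevant quasigeodesics (the paper works directly with such a point $p\in\Sigma$ rather than moving the basepoint, but this is equivalent), then bound each ratio above by Proposition~\ref{pro:UpperBoundGeod} and below by Proposition~\ref{pro:LowerBoundQG}, with Proposition~\ref{pro:bound-coinc} identifying the quasigeodesic endpoints with the $\Phi$-values. Note that Proposition~\ref{pro:ConverseBound}, which you cite, is not actually needed here (it is used later for the converse direction in Section~\ref{ss:proof*}), and your remark about $\bb$ changing by a bounded factor under basepoint change is harmless but redundant, since the basepoint cancels exactly in the identity you wrote.
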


\begin{proof} Since $\Phi$ takes value in $\partial_\infty\Sigma$ which is positive, transversality of the image of pairwise distinct points in $\partial_\infty\H^2$ is equivalent to the injectivity of $\Phi$.

For any $A$, there exists a constant $R$, such that for any any quadruple $(a,b,c,d)$ of points  in $\partial_\infty \H^2$ with 
$$A^{-1}\leq \vert [a,b,c,d]\vert \leq A\ ,$$
 we can find a point $p$ in $\Sigma$ within a distance $R$ from the four geodesics $\gamma_{ab},\gamma_{cd},\gamma_{ad}$ and $\gamma_{cb}$, where $\gamma_{ij}$ is the quasigeodesic in $\Sigma$ which is the image by the uniformisation of the geodesic between $i$ and $j$ in $\partial_\infty \H^2$ (where $i,j\in \{a,b,c,d\}$).
 
Since any two quasigeodesics converging to $\Phi(j)$  are within finite distance,  according to proposition \ref{pro:bound-coinc}, we can find sequences $\seqk{t^{ij}}$, for all distinct pairs $i,j\in\{a,b,c,d\}$ so that 
 
 \[\lim_{k\to\infty}\gamma_{ij}(t^{ij}_k)=\Phi(j)~ \text{ and }\lim_{k\to\infty}\gamma_{ij}(-t^{ij}_k)=\Phi(i) \]
Applying Proposition \ref{pro:LowerBoundQG} we get that for all distinct $i,j$ we get
 \begin{eqnarray}
 	\left\vert \frac{\braket{\Phi(i),\Phi(j)}}
 	 {\braket{p,\Phi(i)}\braket{p,\Phi(j)}}\right\vert
 	 =\lim_{k\to\infty} 	\left\vert\frac{\braket{\gamma_{ij}(-t^{ij}_k), \gamma_{ab}(t^{ij}_k)}}
 	 {\braket{p,\gamma_{ij}(-t^{ij}_k)}\braket{p,\gamma_{ij}(t^{ij}_k)}} \right\vert   \geq M_2\ . \label{ineq:lowCR}
 \end{eqnarray}
In particular, $\Phi$ is injective. Moreover, from proposition \ref{pro:UpperBoundGeod} we have
 \begin{eqnarray}
 	\left\vert \frac{\braket{\Phi(i),\Phi(j)}}
 	 {\braket{p,\Phi(i)}\braket{p,\Phi(j)}}\right\vert\leq M_1\ . \label{ineq:highCR}
 \end{eqnarray}
 Let us write  $(\alpha,\beta,\gamma,\delta)=(\Phi(a),\Phi(b),\Phi(c),\Phi(d))$ and observe that 
  \begin{eqnarray*}
 	 \frac
 	{\braket{\alpha,\beta}\braket{\gamma,\delta}}
 	{\braket{\alpha,\delta}\braket{\gamma,\beta}}
 	=\frac{\braket{\alpha,\beta}}{\braket{p,\beta}\braket{p,\alpha}}\cdotp\frac{\braket{\gamma,\delta}}{\braket{p,\gamma}\braket{p,\delta}}\cdotp\frac{\braket{p,\gamma}\braket{p,\beta}}{\braket{\gamma,\beta}}\cdotp\frac{\braket{p,\alpha}\braket{p,\delta}}{\braket{\alpha,\delta}}\ .
 \end{eqnarray*}
Then the inequalities \eqref{ineq:highCR} and \eqref{ineq:lowCR} yields
\begin{equation*}
 \left(\frac{M_2}{M_1}\right)^2\leq\left\vert \frac
 	{\braket{\alpha,\beta}\braket{\gamma,\delta}}
 {\braket{\alpha,\delta}\braket{\gamma,\beta}}\right\vert\leq \left(\frac{M_1}{M_2}\right)^2\ .
 \end{equation*}
Thus the result is proved with $B=\left(\frac{M_1}{M_2}\right)^2$.
 \end{proof}
 
 \begin{proposition}
 	\label{pro:PhiPos}
 	The map $\Phi$ is positive.
 \end{proposition}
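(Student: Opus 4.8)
The plan is to reduce the statement to a separation assertion and then to establish that assertion from the topology of the properly embedded disk $\Sigma$ together with the fact that, by quasiperiodicity, $\partial_\infty\Sigma$ is a positive loop. First, by Proposition~\ref{pro:QSGromovBoundary} the map $\Phi$ is injective, and since $\partial_\infty\Sigma$ is a positive loop every triple of pairwise distinct points of $\partial_\infty\Sigma$ is positive; hence, for any pairwise distinct $a,b,c,d$ in $\partial_\infty\H^2=\P(V)$, all sub\-triples of $(\Phi(a),\Phi(b),\Phi(c),\Phi(d))$ are automatically positive. Thus, given a positive quadruple $(a,b,c,d)$ in $\P(V)$, it remains only to check that $\Phi(b)$ and $\Phi(d)$ lie in the two distinct diamonds determined by $\Phi(a)$ and $\Phi(c)$, that is, $\Delta_{\Phi(b)}(\Phi(a),\Phi(c))=\Delta^*_{\Phi(d)}(\Phi(a),\Phi(c))$.

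Next I would exploit the uniformization. Fix a uniformization $f\colon\H^2\to\Sigma$; by Theorem~\ref{theo:biLip} it is a biLipschitz homeomorphism, hence a quasiisometry carrying geodesics to properly embedded quasigeodesics. Since two asymptotic rays in $\H^2$ remain at bounded distance, Proposition~\ref{pro:bound-coinc} shows that the limit set $B_x\subset\partial_\infty\Sigma$ of $f$ along a ray converging to $x$ is independent of the chosen geodesic. Let $\delta$ be the geodesic of $\H^2$ from $a$ to $c$: then $f(\delta)$ is a properly embedded line in $\Sigma$ whose two ends accumulate on $B_a\ni\Phi(a)$ and $B_c\ni\Phi(c)$, and $f(\delta)$ separates $\Sigma$ into the two open half-disks $\Sigma^+=f(\H^+)$ and $\Sigma^-=f(\H^-)$ associated with the two half-planes of $\H^2\setminus\delta$. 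Because $(a,b,c,d)$ is a positive quadruple in $\P(V)$, the points $b$ and $d$ lie in the two distinct arcs of $\partial_\infty\H^2\setminus\{a,c\}$, say $b$ on the $\H^+$-side and $d$ on the $\H^-$-side; as $b$ is then interior to the boundary arc of $\overline{\H^+}$, the geodesic ray toward $b$ eventually stays in $\H^+$, so $\Phi(b)$ is an accumulation point of a sequence in $\Sigma^+$, and likewise $\Phi(d)$ of a sequence in $\Sigma^-$.

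Finally I would run the separation argument. Since $\Sigma$ is properly embedded (Proposition~\ref{pro:BoundaryCompleteSurf} and the graph property), its closure in $\cHn$ is a closed topological disk with boundary the Jordan curve $\partial_\infty\Sigma$; hence the closures of $\Sigma^+$ and $\Sigma^-$ meet $\partial_\infty\Sigma$ in two arcs $\sigma^+\ni\Phi(b)$ and $\sigma^-\ni\Phi(d)$ with $\sigma^+\cup\sigma^-=\partial_\infty\Sigma$ and $\sigma^+\cap\sigma^-\subset B_a\cup B_c$, and $\Phi(b),\Phi(d)\notin\{\Phi(a),\Phi(c)\}$ by injectivity. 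Because $\partial_\infty\Sigma$ is a positive loop, each of its points other than $\Phi(a)$ (resp.\ $\Phi(c)$) is transverse to $\Phi(a)$ (resp.\ $\Phi(c)$), hence misses $L(\Phi(a))\cup L(\Phi(c))$; in a Minkowski chart for $\Phi(a)$ the two diamonds of $\Phi(a),\Phi(c)$ are the two connected components of the open set of points forming a positive triple with $\Phi(a)$ and $\Phi(c)$, whose complement meets $\partial_\infty\Sigma$ only at $\{\Phi(a),\Phi(c)\}$ (Proposition~\ref{pro:MinCharts}, Corollary~\ref{lem:bidia}). Therefore each arc $\sigma^\pm\setminus\{\Phi(a),\Phi(c)\}$ lies in a single diamond, and the two of them lie in different diamonds, for otherwise $\partial_\infty\Sigma$ would avoid one whole diamond, which is impossible for a positive loop by the graph description of Proposition~\ref{pro:PositiveLoopsAreGraphs}. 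Since $\Phi(b)\in\sigma^+$ and $\Phi(d)\in\sigma^-$, this gives $\Delta_{\Phi(b)}(\Phi(a),\Phi(c))=\Delta^*_{\Phi(d)}(\Phi(a),\Phi(c))$, so $(\Phi(a),\Phi(b),\Phi(c),\Phi(d))$ is positive.

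The main obstacle is the last step: rigorously controlling the ideal endpoints $B_x$ (a priori arcs, not points) so that the two boundary arcs $\sigma^\pm$ are genuinely cut off by $\{\Phi(a),\Phi(c)\}$, and proving that a positive loop always meets both diamonds determined by two of its transverse points. Both points are soft but require carefully unwinding the product/Lipschitz-graph picture of (semi-)positive loops and the Minkowski-chart description of diamonds; I expect this is where most of the work sits, while the reduction in the first paragraph and the quasiisometry bookkeeping in the second are routine.
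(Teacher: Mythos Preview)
Your strategy—separate $\Sigma$ by the image $f(\delta)$ of the $\H^2$-geodesic from $a$ to $c$, then push the separation to $\partial_\infty\Sigma$ to place $\Phi(b)$ and $\Phi(d)$ in distinct diamonds—is sound but genuinely different from the paper's. The paper instead fixes the pointed hyperbolic plane $(x_0,H_0)$ tangent to $\Sigma$ at $x_0$, uses the warped projection $p\colon\partial_\infty\Sigma\to\partial_\infty H_0$ (a homeomorphism, since the positive loop $\partial_\infty\Sigma$ is a graph over $\partial_\infty H_0$), and observes that positivity of a quadruple on $\partial_\infty\Sigma$ is equivalent to cyclic ordering of its $p$-projection. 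The key fact common to both approaches is that injectivity of \emph{every} choice of $\Phi$ forces the limit arcs $B_x$ to be pairwise disjoint; the paper then reads off their cyclic order by projecting four geodesic rays in $\H^2$ from $x_0$ toward $a,b,c,d$ onto $H_0$ and tracking their intersections with circles of radius $k$. Your diamond argument is closer to the definition of positive quadruple, while the paper's reduction to a one-dimensional cyclic-order problem on $\partial_\infty H_0$ sidesteps the diamond bookkeeping entirely.

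There is one imprecision in your final step: you assert that $\sigma^\pm\setminus\{\Phi(a),\Phi(c)\}$ are arcs, each lying in a single diamond. But if $B_a$ or $B_c$ is a nontrivial interval, removing only the single point $\Phi(a)$ (resp.\ $\Phi(c)$) can disconnect $\sigma^\pm$, and the extra pieces—contained in $B_a\cup B_c$—may spill into both diamonds. The fix is to work with the two open arcs $J_1,J_2$ of $\partial_\infty\Sigma\setminus(B_a\cup B_c)$ instead. Your separation argument actually gives the stronger inclusions $B_b\subset\sigma^+\setminus(B_a\cup B_c)$ and $B_d\subset\sigma^-\setminus(B_a\cup B_c)$ (using pairwise disjointness of the $B_x$), so $\Phi(b)$ and $\Phi(d)$ lie in distinct $J_i$. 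Since $\Phi(a)\in B_a$ and $\Phi(c)\in B_c$, each connected $J_i$ is contained in one of the two arcs of $\partial_\infty\Sigma\setminus\{\Phi(a),\Phi(c)\}$, and in different ones (otherwise one such arc would lie entirely in $B_a\cup B_c$, impossible since its endpoints $\Phi(a),\Phi(c)$ lie in different $B$'s). Those two arcs lie in different diamonds by the positivity of $\partial_\infty\Sigma$, as you argue, and the conclusion follows.
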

 \begin{proof} From  proposition \ref{pro:QSGromovBoundary}, for any choice of $\Phi$,  $\Phi$ maps transverse quadruples to transverse quadruples. 
 
 Let $H_0$ be the totally geodesic hyperbolic plane in $\Hn$ tangent to $\Sigma$ at our preferred point $x_0$. Recall that the positive curve $\partial_\infty\Sigma$ is a graph over any $\partial_\infty H_0$ and let $p$ be the warped projection  $\partial_\infty\Sigma\to\partial_\infty H_0$ -- see proposition \ref{pro:PositiveLoopsAreGraphs} . Observe that a quadruple $(x,y,z,t)$ in  $\partial_\infty\Sigma$ is positive if an only if its projection $(p(x),p(y),p(z),p(t))$ is cyclically oriented in $\partial_\infty H_0$.

  Recall that $\Phi(x)$ is any point in the  connected subset $B_x$ defined in the beginning of paragraph \ref{sec:QSmap}. The set $p(B_x)$ is an interval $I_x$. By injectivity of $\Phi$ - for any choice of $\Phi$ -- the intervals $I_x$ and  $I_y$ are disjoint of $x$ is different from $y$.
 
  Let $(a,b,c,d)$ be a positive quadruple in $\partial_\infty \H^2$.  To prove that  $$\left(p(\Phi(a)),p(\Phi(b)),p(\Phi(c)),p(\Phi(d))\right)\ ,$$ is cyclically oriented -- and hence that  $(\Phi(a),\Phi(b)),\Phi(c), \Phi(d)))$ is positive --  it is then enough to show that there exist points $z(a)$, $z(b)$ $z(c)$ and $z(d)$ in $B_a$, $B_b$, $B_c$ and $B_d$ respectively so that $(p(z(a)),p(z(b)),p(z(c)),p(z(d)))$  is cyclically oriented. This is what we are going to prove now.

Let  $\alpha$, $\beta$, $\gamma$ and $\delta$ be the four geodesics  --with respect to the hyperbolic distance -- in $\H^2$ joining  $x_0$ to $a$, $b$, $c$ and $d$ respectively. These arcs  are pairwise disjoint (except at the origin) and  cyclically ordered.  The projection  on $H_0$  of these four arcs are four non intersecting embedded arcs which are cyclically ordered.   Let now take $\alpha_k$, $\beta_k$, $\gamma_k$ and $\delta_k$ be points  in $\alpha$, $\beta$, $\gamma$ and $\delta$ respectively whose projection in $H_0$ has distance to $x_0$ exactly $k$. These four points are cyclically oriented on the circle at distance $k$ from $x_0$. We may extract a subsequence so that $\seqk{\alpha}$, $\seqk{\beta}$, $\seqk{\gamma}$ and $\seqk{\delta}$ also converges to points that we denote  $z(a)$,  $z(b)$, $z(c)$ and $z(d)$ respectively which belong to  $B_a$, $B_b$, $B_c$ and $B_d$ respectively. Their projections  $p(z(a))$,  $p(z(b))$, $p(z(c))$ and $p(z(d))$  to $\partial_\infty H_0$ is then cyclically oriented. This completes the proof that  the quadruple $(\Phi(a),\Phi(b),\Phi(c),\Phi(d))$ is positive. The proof follows. \end{proof}

 \subsubsection{Proof of Theorems \ref{theo:ExtUnif} and \ref{theo:BoundCompact}}\label{ss:proof*}
As a consequence of propositions \ref{pro:PhiPos} and \ref{pro:QSGromovBoundary}, $\Phi$ is quasisymmetric. Hence $\Phi$ is Hölder, so continuous, and so uniquely defined. 

Both Theorem  \ref{theo:ExtUnif} -- more precisely that $\Phi$ is a continuous extension --  and \ref{theo:BoundCompact} now follows from the following assertion:

{\em Let $\seqk{z}$ a sequence of points in $\H^2$ converging to $z$ in $\partial_\infty\H^2$, then $\sek{\Psi_k(z_k)}$ converges to $\Phi(z)$, where $\seqk{\Psi}$ is a sequence of uniformisation as in Theorem \ref{theo:BoundCompact}.}

Let then $\seqk{z}$ a sequence of points in $\H^2$ converging to $z$ in $\partial_\infty\H^2$, so that  $\sek{\Psi_k(z_k)}$ converges to a point  $u$.

By construction, there exists a sequence $\sek{y_k}$ of points in $\H^2$, so that $\sek{\Psi(y_k)}$ converges to $\Phi(z)$. Using the fact $\seqk{\Psi}$ converges on every compact of $\H^2$, we can find  a sequence $n_k$ going to $\infty$, so that $\sek{\Psi_{n_k}(y_k)}$ converges to $\Phi(z)$. Extracting subsequences and renaming, we have constructed two  sequences $\seqk{z}$ and $\seqk{y}$ so that   $\sek{\Psi_k(z_k)}$ converges to a point  $u$, and  $\sek{\Psi(y_k)}$ converges to $\Phi(z)$. The assertion now follows from the following lemma:

\begin{lemma}
		Let $\seqk{z}$ and $\seqk{y}$ be sequences of points in $\H^2$ converging to the same point $z$  in $\partial_\infty\H^2$. Then $\sek{\Psi_k(z_k)}$ and $\sek{\Psi_k(y_k)}$  converge to the same point in $\partial_\infty\Sigma$. \end{lemma}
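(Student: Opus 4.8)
Looking at this lemma, I need to show that two sequences in $\H^2$ converging to the same boundary point $z$ have their images under the uniformizations $\Psi_k$ converging to the same point in $\partial_\infty\Sigma$ (or $\partial_\infty\Sigma_k$, in the limiting setup).

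The plan is to use the coarse geometric properties established earlier. First I would recall that since the curvatures of all $\Sigma_k$ are bounded above by $-c$, the uniformizations $\Psi_k$ are uniformly biLipschitz by the Conformal Characterisation Theorem \ref{theo:biLip} (with constants depending only on $c$), hence uniform quasiisometries. Therefore the geodesic ray $\gamma$ in $\H^2$ from a fixed basepoint to $z$ is mapped by each $\Psi_k$ to a $K$-quasigeodesic ray $\eta_k$ in $\Sigma_k$ with $K$ independent of $k$. Moreover, since $z_k \to z$ and $y_k \to z$ in $\partial_\infty\H^2$, for large $k$ the points $z_k$ and $y_k$ lie within bounded hyperbolic distance of the geodesic ray $\gamma$, so $\Psi_k(z_k)$ and $\Psi_k(y_k)$ lie within a distance $R$ (uniform in $k$) of the quasigeodesic ray $\eta_k$ in $\Sigma_k$; in particular $d_{\Sigma_k}(\Psi_k(z_k),\Psi_k(y_k))$ is uniformly bounded only if $z_k$ and $y_k$ stay at bounded distance — which is NOT guaranteed, so I cannot directly invoke Proposition \ref{pro:bound-coinc}. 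Instead I would argue directly with horofunctions.

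The key step: suppose after extraction $\Psi_k(z_k) \to u$ and $\Psi_k(y_k) \to v$ in the projective compactification, both in $\partial_\infty\Sigma$ (the limiting surface, after applying a suitable sequence in $\G$ to renormalize and using the compactness results of \cite{LTW} so that $(x_k,\Sigma_k)$ subconverges to a quasiperiodic $(x,\Sigma)$; in the setting of Theorem \ref{theo:ExtUnif} the surface is simply fixed). I claim $u = v$. Pick the horofunction $h$ associated to a nonzero vector $u_0$ in $u$, normalized appropriately. Both $\Psi_k(z_k)$ and $\Psi_k(y_k)$ escape along the quasigeodesic ray $\eta_k$; by Lemma \ref{lem:restrictionhorofunction}, the restriction of any horofunction to a complete geodesic of $\Sigma_k$ has no local maximum, and one shows more precisely via the gradient bound $\Vert\nabla h\Vert^2 \leq 1+h_0 < 2$ (Theorem \ref{theo:ChaHo}) together with $\Vert\nabla h\Vert \geq 1$ (Lemma \ref{lem:low--grad}) that $h$ is a proper function tending to $-\infty$ along both ends of a complete geodesic and is coercive along the quasigeodesic ray — hence $h(\Psi_k(z_k)) \to -\infty$ forces $\langle u_0, \lim \Psi_k(z_k)\rangle = 0$, and similarly the estimate along $\eta_k$ combined with the fact that $z_k, y_k$ lie on the same side at comparable depth forces $h(\Psi_k(y_k)) \to -\infty$ as well, so $\langle u_0, v_0\rangle = 0$ where $v_0$ is a normalized vector in $v$. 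Since $\partial_\infty\Sigma$ is positive (quasiperiodicity), transversality fails only for equal points, so $u_0 = v_0$, i.e. $u = v$.

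The main obstacle, and the step requiring the most care, is making rigorous the claim that $h \circ \eta_k$ tends to $-\infty$ uniformly enough: a priori $z_k$ and $y_k$ need not be at bounded distance in $\H^2$, so I cannot simply compare their images pointwise. The cleanest route is: parametrize $\eta_k$ by arc length from near $\Psi_k(x_k)$; using that $\eta_k$ is a $K$-quasigeodesic ray and $h$ has gradient of norm in $[1,\sqrt{1+h_0}]$ with no local maximum on geodesics, show $h(\eta_k(t))$ is, up to additive constant and bounded fluctuation, a decreasing function of $t$ (this is exactly the content that prevents local maxima, extended along quasigeodesics via a standard fellow-traveling argument and the curvature bound $-c$). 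Then both $\Psi_k(z_k)$ and $\Psi_k(y_k)$ are within distance $R$ of $\eta_k$ at parameters $t^z_k, t^y_k \to \infty$, so both $h$-values tend to $-\infty$; the transversality/positivity of $\partial_\infty\Sigma$ then closes the argument. If instead one wants to avoid the fellow-traveling estimate, one can replace $\eta_k$ by the actual complete geodesic $\gamma_k$ in $\Sigma_k$ through $\Psi_k(z_k)$ and $\Psi_k(y_k)$ (which is $R$-close to $\eta_k$ by the Morse lemma with uniform constants, using the curvature bound $-c$), apply Lemma \ref{lem:restrictionhorofunction} directly to $\gamma_k$ to get monotonicity of $h$ on each side of its unique minimum, and conclude.
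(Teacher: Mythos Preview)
Your approach has a genuine gap at the very first geometric step. You claim that since $z_k\to z$ and $y_k\to z$ in $\partial_\infty\H^2$, the points $z_k$ and $y_k$ lie within \emph{bounded} hyperbolic distance of a fixed geodesic ray $\gamma$ from $x_0$ to $z$. This is false: a sequence in $\H^2$ converging to a boundary point need not stay within bounded distance of any geodesic ray to that point. For instance, in the upper half-plane with $z=\infty$ and $\gamma$ the positive imaginary axis, the sequence $z_k=k+i$ satisfies $z_k\to\infty$ in $\overline{\H^2}$, yet $d_{\H^2}(z_k,\gamma)=\operatorname{arccosh}\!\big(\sqrt{k^2+1}\big)\to\infty$. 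Everything you build afterwards (the fellow-travelling of $\Psi_k(z_k),\Psi_k(y_k)$ along $\eta_k$, the horofunction monotonicity argument, and the alternative via the Morse lemma) rests on this false premise, so the argument does not go through. The ``no local maximum'' Lemma \ref{lem:restrictionhorofunction} alone cannot rescue the situation either: on the complete geodesic through $\Psi_k(z_k)$ and $\Psi_k(y_k)$ it only tells you that $h$ is unimodal, which gives no relation between the two endpoint values.

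The paper's proof avoids this pitfall by never invoking a fixed ray to $z$. Instead it works with the geodesic \emph{segment} $[z_k,y_k]$ in $\H^2$ and uses the elementary fact that when both endpoints converge to the same boundary point, $d_{\H^2}(x_0,[z_k,y_k])\to\infty$. The contradiction is obtained via the Gromov-product estimate of Proposition \ref{pro:ConverseBound}: if $\Psi_k(z_k)\to u$ and $\Psi_k(y_k)\to v$ with $u\neq v$, then $\langle u,v\rangle\neq 0$ by positivity of $\partial_\infty\Sigma$, so the quantity $\big|\langle\Psi_k(z_k),\Psi_k(y_k)\rangle\big|\big/\big(\big|\langle\Psi_k(z_k),\Psi_k(x_0)\rangle\big|\,\big|\langle\Psi_k(x_0),\Psi_k(y_k)\rangle\big|\big)$ stays bounded below, which by Proposition \ref{pro:ConverseBound} forces $\Psi_k(x_0)$ to remain within bounded distance of the $\Sigma_k$-geodesic between $\Psi_k(z_k)$ and $\Psi_k(y_k)$; pulling back by the uniformly biLipschitz $\Psi_k$ contradicts $d_{\H^2}(x_0,[z_k,y_k])\to\infty$. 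If you want to repair your argument, the missing ingredient is precisely this Gromov-product control, not a horofunction estimate along a single ray.
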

\begin{proof} We work by contradiction. Then we would have, after extracting subsequences that $\sek{\Psi_k(x_k)}$ and $\sek{\Psi_k(y_k)}$ converges to different points $u$ and $v$ in $\partial_\infty\Sigma$. In particular the quantity
$$
\left\vert\frac{\braket{\Psi_k(z_k),\Psi_k(x_0)}\braket{\Psi_k(x_0),\Psi_k(y_k)}}{\braket{\Psi_k(z_k),\Psi_k(y_k)}}\right\vert\ ,
$$
is bounded above  by a positive constant. By proposition \ref{pro:ConverseBound}, this implies that the distance between $x_0$ and the geodesic arc between $y_k$ and $z_k$ is uniformly bounded from above. This would imply that $\seqk{y}$ and $\sek{z_k}$ converges to different points.  This is our contradiction and concludes the proof of the lemma.
\end{proof}

\section{Smooth spacelike curves and asymptotic hyperbolicity}\label{s:AsymHyper}

Let us say a complete maximal surface $\Sigma$ is {\em asymptotically hyperbolic}   if the norm of the second fundamental form of $\Sigma$ goes uniformly to zero as one approaches infinity, or equivalently if its intrinsic curvature goes to -1. We may also use the terminology  {\em asymptotically totally geodesic surfaces}.

Using the following theorem, we obtain asymptotically geodesic  complete maximal surface.

\begin{theorem}{\sc [Asymptotically Hyperbolic]}
\label{theo:AsymHyp} Any $\CC^1$ spacelike positive loop in $\bHn$ is quasiperiodic and bounds a complete maximal asymptotically hyperbolic surface.
\end{theorem}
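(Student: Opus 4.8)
The plan is to establish the two halves of the statement separately, using the characterisations already proved. The first claim --- that a $\CC^1$ spacelike positive loop $\Lambda$ is quasiperiodic --- I would prove by contradiction using Proposition \ref{pro:NonPositiveContainsBarbot} and the compactness of renormalised curves from Corollary \ref{cor:CompactnessSemi-positive}. If $\Lambda$ were not quasiperiodic, then (after lifting to $\bHn_+$ and using Proposition \ref{pro:PositiveLoopsAreGraphs}) there is a sequence of positive triples $\seqk\tau$ in $\Lambda$ and elements $g_k\in\G$ with $g_k(\tau_k)=\tau_0$ such that $g_k(\Lambda)$ converges as a graph to a semi-positive but non-positive loop, hence by Proposition \ref{pro:NonPositiveContainsBarbot} (or rather the structure of such degenerations) the limit contains a photon arc. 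The key point is that the $\CC^1$ spacelike condition is a $\G$-invariant \emph{open} condition on the $1$-jet at each point: a spacelike $\CC^1$ curve, renormalised at a point by $\G$, has a tangent direction at $\tau_0$ that is spacelike, and the renormalisations $g_k(\Lambda)$ therefore have tangent lines at $\tau_0$ staying in a compact set of spacelike directions. A photon, however, is lightlike, so no limit of such renormalised curves can contain a photon arc through $\tau_0$. The mild technical care needed here is to renormalise by triples $\tau_k=(x_k,x_k',x_k'')$ with $x_k$ a point and the other two shrinking towards it, so that the $\CC^1$ control at $x_k$ translates to control of the limit near $\tau_0$; this is exactly the kind of argument made in the proof of the Equicontinuity Theorem \ref{theo:Equicontinuity}.

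For the second claim I would first upgrade ``quasiperiodic loop'' to ``quasisymmetric loop'': by Theorem \ref{theo:ExtUnif} combined with Corollary \ref{cor:QS2QP}, every quasiperiodic loop bounds a quasiperiodic complete maximal surface $\Sigma=\Sigma(\Lambda)$ whose uniformisation extends to a quasisymmetric parametrisation of $\Lambda$. So it remains to prove that $\Sigma(\Lambda)$ is asymptotically hyperbolic, i.e. that $\|\II\|\to 0$ at infinity, equivalently $K_\Sigma\to -1$ by Gauss equation (Proposition \ref{pro:GaussEquation}). Here I would argue again by contradiction together with the compactness of $\MH$: if $K_\Sigma$ does not converge to $-1$ at infinity, there is a sequence $x_k\to\infty$ in $\Sigma$ with $K_\Sigma(x_k)\leq -1+\delta$ for some $\delta>0$ wait, that is automatic --- rather with $K_\Sigma(x_k)\ge -1+\delta$, i.e. $\|\II(x_k)\|^2\ge 2\delta$. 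Using the $\G$-action and the Compactness Theorem \ref{theo:CompactnessTheorem}, after applying $g_k\in\G$ sending $x_k$ to a fixed point and extracting, $(x_k,\Sigma)$ converges to a pointed complete maximal surface $(x_\infty,\Sigma_\infty)$ with $\|\II(x_\infty)\|^2\ge 2\delta>0$, so $\Sigma_\infty$ is not totally geodesic. The crucial input is to identify $\Sigma_\infty$: its boundary at infinity is a limit of renormalisations $g_k(\Lambda)$, and since $\Lambda$ is $\CC^1$ spacelike, these renormalisations (pointed at boundary points approaching infinity, i.e. boundary points that $x_k$ approaches) converge to the boundary of a hyperbolic plane --- a spacelike circle. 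Indeed the $\CC^1$-spacelike hypothesis means that, blown up at any boundary point, $\Lambda$ looks infinitesimally like its tangent spacelike line, whose closure is a spacelike circle (Proposition \ref{pro:MinCharts}(ii)). By the uniqueness in the Asymptotic Plateau Theorem \ref{theo:MainTheoPaper1}, $\Sigma_\infty$ is then the totally geodesic hyperbolic plane bounded by that circle, contradicting $\|\II(x_\infty)\|>0$.

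The main obstacle I anticipate is making precise the assertion that renormalisations of a $\CC^1$ spacelike loop, based at boundary points going to infinity along the surface, converge to a spacelike circle. Concretely: given $x_k\to\infty$ in $\Sigma(\Lambda)$, one must pick boundary points $z_k\in\partial_\infty\Sigma$ that $x_k$ ``accumulates onto'' in the renormalised picture, then show $g_k(\Lambda)$ --- normalised say by a positive triple built from a short arc of $\Lambda$ near $z_k$ --- converges as a graph to a spacelike circle rather than to a more degenerate semi-positive loop. The $\CC^1$ bound gives a uniform modulus: in a Minkowski chart centred near $z_k$, the curve $\Lambda$ differs from its tangent line by $o(1)$ on unit scales, and this is scale-invariant precisely because one rescales by the derivative; so the limit is the tangent line, whose closure is a spacelike circle by Proposition \ref{pro:MinCharts}. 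A clean way to package this is to observe that the tangent line at $z_k$ being uniformly spacelike (by compactness of $\partial_\infty\Sigma$ and continuity of the $\CC^1$ structure) forces the renormalised curves to have spacelike tangent at the marked point with bounded curvature-type defect, hence any graph-limit is a $\CC^1$ spacelike loop whose tangent at the marked point agrees with a circle's --- but one still needs the limit to be exactly that circle, which uniqueness of the maximal surface then exploits. I would also double-check the edge case where $\Sigma$ itself is bounded (compact image in $\MH/\G$), where the statement is vacuous or follows from periodicity/compactness directly. Once these convergence statements are in place, the contradiction with Theorem \ref{theo:MainTheoPaper1} and Theorem \ref{theo:CurvatureRigidity} closes the argument.
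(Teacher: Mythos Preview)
Your overall strategy matches the paper's: renormalise a diverging sequence on $\Sigma(\Lambda)$ and show the limit is a totally geodesic hyperbolic plane, whence the curvature tends to $-1$. But the paper's mechanism differs and supplies precisely the step you flag as the main obstacle.

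For quasiperiodicity (in fact quasisymmetry), the paper does not argue via tangent directions ruling out photon arcs. Instead Proposition~\ref{pro:C2} shows directly that for a $\CC^1$ spacelike lift $f$ of $\xi$, the function $\langle f(x),f(y)\rangle/\omega(x,y)^2$ extends continuously and nonvanishingly across the diagonal (the limit being $\tfrac12\q(\dot u_0)>0$ by the spacelike hypothesis), so $\bb_\xi/[\,\cdot\,]^2$ is bounded above and below on all of $\P(V)^4$. Your tangent-direction argument only controls the limit loop near the marked points of $\tau_0$ and does not obviously exclude a photon arc elsewhere; the cross-ratio computation is both shorter and global. The same computation yields the key refinement: on \emph{collapsing} quadruples the ratio tends to $1$, so by the cross-ratio characterisation of circles (Proposition~\ref{ex:MW2}) any renormalisation by a collapsing triple converges to a circle map (Proposition~\ref{pro:coll}).

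The genuine gap in your second part is the bridge from ``$x_k\to\infty$ in $\Sigma$'' to ``renormalise $\Lambda$ by a collapsing triple near the limit boundary point''. You choose $g_k$ to normalise the \emph{surface} at $x_k$, but nothing you wrote guarantees that this same $g_k$ acts on $\Lambda$ as a blow-up at a single boundary point. The paper closes this with the barycenter map: by its surjectivity (Proposition~\ref{pro:BarycenterProper}) one writes $(x_k,\Sigma)=B(\Lambda,\tau_k)$ for some positive triple $\tau_k\subset\Lambda$, and Corollary~\ref{cor:collapsing} forces $\tau_k$ to collapse to the boundary limit of $x_k$. Taking $g_k$ with $g_k(\tau_k)=\tau_0$ then normalises loop and surface simultaneously; Proposition~\ref{pro:coll} identifies the limit loop as a circle, and continuity of $B$ identifies the limit pointed surface as the hyperbolic plane it bounds. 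Your Minkowski-chart blow-up is a plausible alternative to Proposition~\ref{pro:coll}, but without the barycenter-map step it is not tied to the $g_k$ you actually chose.
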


Here is a corollary 

\begin{corollary}{\sc [Smooth rigidity]} Let $\rho$ be a maximal representation of a closed surface group in $\G$. The limit curve of $\rho$ is $\CC^1$ and spacelike if and only if $\rho$ factors through a subgroup isomorphic to $(\mathsf{O}(2,1)\times \mathsf{O}(n))_0$.
\end{corollary}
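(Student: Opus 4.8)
The corollary follows quickly from Theorem \ref{theo:AsymHyp} together with the rigidity of asymptotically hyperbolic surfaces. First I would observe that one implication is trivial: if $\rho$ factors through $(\mathsf{O}(2,1)\times\mathsf{O}(n))_0$, then (after conjugating so that the $\mathsf{O}(2,1)$-factor is the stabiliser of a fixed $(2,1)$-plane $F_0\subset E$) the limit curve of $\rho$ lies in $\partial_\infty\H^{2,1}\cap\Proj(F_0)$; since $\rho$ is maximal and hence Fuchsian into the $\mathsf{O}(2,1)$-factor, the limit curve is a round spacelike circle, which is $\CC^1$ and spacelike. So the content is the forward implication.

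\textbf{Forward implication.} Assume $\rho$ is a maximal representation of a closed surface group $\Gamma$ in $\G$ with limit curve $\Lambda$ which is $\CC^1$ and spacelike. By Proposition \ref{prop:MaximalReps}, maximality gives a $\rho$-equivariant continuous positive map $\xi\colon\Proj(V)\to\bHn$ with image $\Lambda$; being a positive loop, $\Lambda$ is in particular a semi-positive loop, so by Theorem \ref{theo:MainTheoPaper1} it bounds a unique complete maximal surface $\Sigma=\Sigma(\Lambda)$, which by uniqueness is $\rho(\Gamma)$-invariant, hence periodic and therefore quasiperiodic. Now apply Theorem \ref{theo:AsymHyp}: since $\Lambda$ is $\CC^1$ and spacelike, $\Sigma$ is asymptotically hyperbolic, i.e.\ $\|\II\|\to 0$ at infinity, equivalently $K_\Sigma\to -1$ at infinity by the Gauss equation (Proposition \ref{pro:GaussEquation}). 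But $\Sigma$ is periodic, so $\Sigma/\rho(\Gamma)$ is compact: the function $x\mapsto\|\II\|^2(x)$ is $\rho(\Gamma)$-invariant and descends to a continuous function on a compact manifold that must already be bounded below by its infimum; combined with the asymptotic vanishing, the correct statement is that $\|\II\|^2$, being invariant under the cocompact action, attains both its sup and inf on the quotient, while the limit at infinity is $0$ — so in fact $\|\II\|^2\equiv 0$ identically. (More carefully: pick $x_k\to\infty$ in $\Sigma$; by cocompactness there are $\gamma_k\in\rho(\Gamma)$ with $\gamma_k x_k$ in a fixed compact set, and $\|\II\|^2(x_k)=\|\II\|^2(\gamma_k x_k)$; passing to a limit and using $\|\II\|^2(x_k)\to 0$ shows $\|\II\|^2$ vanishes at some point, hence by the Curvature Rigidity Theorem \ref{theo:CurvatureRigidity} it would force $\Sigma$ to be a Barbot surface — which is impossible for a periodic surface — \emph{unless} $\|\II\|^2$ is already identically zero. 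So $\|\II\|\equiv 0$.)

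\textbf{From $\|\II\|\equiv 0$ to the factorisation.} A complete maximal surface with vanishing second fundamental form is totally geodesic, hence an entire copy of $\H^2$ sitting inside a totally geodesic hyperbolic plane $H=\Proj(F)\cap\Hn$ for some fixed $(2,1)$-plane $F\subset E$. Its boundary $\Lambda=\partial_\infty\Sigma$ is then the spacelike circle $\partial_\infty H\subset\Proj(F)$. Since $\Sigma$, and hence $\Lambda$ and $F$, are $\rho(\Gamma)$-invariant, $\rho$ preserves the orthogonal splitting $E=F\oplus F^\perp$ with $F$ of type $(2,1)$ and $F^\perp$ of type $(0,n)$; therefore $\rho(\Gamma)\subset(\mathsf{O}(F)\times\mathsf{O}(F^\perp))_0\cong(\mathsf{O}(2,1)\times\mathsf{O}(n))_0$. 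This is exactly the asserted factorisation, and the limit curve, being $\partial_\infty H$, is a circle.

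\textbf{Main obstacle.} The only real subtlety is the step ``asymptotically hyperbolic $+$ periodic $\Rightarrow$ totally geodesic'': one must rule out that $\|\II\|^2$ is a nonconstant invariant function that merely tends to $0$ along \emph{some} ends. Here the point is that periodicity means the action on $\Sigma$ is cocompact, so ``approaching infinity'' is the same as translating by group elements; any decay at infinity therefore propagates to a zero of $\|\II\|^2$ somewhere on $\Sigma$, and the Curvature Rigidity Theorem \ref{theo:CurvatureRigidity} turns a single zero into global vanishing (since a Barbot surface is not periodic). Once this is clean, everything else is bookkeeping with the totally geodesic picture and Proposition \ref{prop:MaximalReps}. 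I should also double-check the trivial direction uses only that a maximal representation into $(\mathsf{O}(2,1)\times\mathsf{O}(n))_0$ projects to a Fuchsian representation in the first factor (maximality of the Toledo invariant forces this), which is standard.
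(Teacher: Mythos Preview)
Your overall strategy matches the paper's: periodicity of the limit curve gives a periodic maximal surface $\Sigma$; Theorem~\ref{theo:AsymHyp} makes $\Sigma$ asymptotically hyperbolic; cocompactness then forces $\Sigma$ to be totally geodesic; the stabiliser of a totally geodesic hyperbolic plane is $(\mathsf{O}(2,1)\times\mathsf{O}(n))_0$. The converse is immediate.

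There is, however, a genuine error in your ``more careful'' parenthetical and in the ``Main obstacle'' paragraph: you misapply the Curvature Rigidity Theorem~\ref{theo:CurvatureRigidity}. That theorem says that if $K_\Sigma=0$ at a point (equivalently $\Vert\II\Vert^2=2$ by Gauss), then $\Sigma$ is a Barbot surface. A zero of $\Vert\II\Vert^2$ corresponds to $K_\Sigma=-1$, which is the \emph{opposite} extremum and carries no rigidity content whatsoever. So the sentence ``the Curvature Rigidity Theorem turns a single zero into global vanishing'' is simply false, and the Barbot dichotomy you set up does not arise.

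Fortunately the step does not need any rigidity theorem; your first, informal version already contains the right idea and you should just carry it through. Since $\rho(\Gamma)$ acts cocompactly on $\Sigma$ and $\Gamma$ is infinite, for any point $p\in\Sigma$ the orbit $\rho(\Gamma)\cdotp p$ is unbounded: choose $\gamma_k\in\Gamma$ with $\rho(\gamma_k)p\to\infty$. By $\rho(\Gamma)$-invariance of $\Vert\II\Vert^2$ and asymptotic hyperbolicity,
\[
\Vert\II\Vert^2(p)=\Vert\II\Vert^2\big(\rho(\gamma_k)p\big)\longrightarrow 0\ ,
\]
so $\Vert\II\Vert^2(p)=0$. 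Since $p$ was arbitrary, $\II\equiv 0$ and $\Sigma$ is totally geodesic. Delete the appeal to Theorem~\ref{theo:CurvatureRigidity} and the argument is clean.
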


\begin{proof}
The periodicity of the limit curves implies that the associated maximal surface $\Sigma$ is  periodic. If the limit curve is $C^1$, $\Sigma$ is asymptotically totally geodesic, thus it  is totally geodesic by cocompactness. The representation then preserves a totally geodesic hyperbolic plane, the stabiliser of which is isomorphic to $(\mathsf{O}(2,1)\times \mathsf{O}(n))_0$. The converse is obvious.
\end{proof}
One should compare with the smooth rigidity obtained by \cite{Glorieux-Monclair} in the case of AdS-quasi-Fuchsian groups (which are in general not surface groups). Glorieux--Monclair also have announced a similar result for $\H^{p,q}$-convex cocompact representations that are deformations of cocompact lattices in $\SO(p,1)$. 

Remark that limit curves of Hitchin representations in $\SO(2,3)$ have $\CC^1$ limit curves that are lightlike everywhere. In particular, the $\CC^1$ rigidity fails without the spacelike condition for $n\geq 2$.

\subsubsection{Smooth curves and quasisymmetry}
Let us start with a proposition of independent interest
\begin{proposition}\label{pro:C2}
 If $\xi$ is a  $\mathcal{C}^1$  spacelike positive  map from $\P(V)$ to $\bHn$, then it is quasisymmetric. Moreover 	if $\{(x^1_k,x^2_k,x^3_k,x^4_k)\}_{k\in\mathbb N}$ is a sequence of positive quadruples in $\bHn$, so that there exists a point $x_0$ so that for any  $i$ in $\{1,2,3,4\}$, $\seqk{x^i}$ converges to $x_0$, then
	$$
	\lim_{k\to\infty}\left(\frac{\bb^\xi(x^1_k,x^2_k,x^3_k,x^4_k)}{[x^1_k, x^2_k,x^3_k,x^4_k]^2}    \right)=1\ .
	$$
\end{proposition}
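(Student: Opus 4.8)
The strategy is to work in a $\tau$-chart and reduce the statement to a first-order Taylor expansion. The key observation, as in Proposition~\ref{ex:MW2} and Lemma~\ref{lem:Cross-ratioChart}, is that in a Minkowski chart for $a=x_0$ the cross-ratio $\bb(a,x,b,y)$ equals $\q(b-y)/\q(b-x)$, where $\q$ is the induced quadratic form of signature $(1,n)$. So first I would choose a point $a$ transverse to a neighbourhood of $x_0$ and work in a Minkowski chart $\psi$ for $a$; after the first part of the proposition is established we may even assume the four points all lie in a fixed such chart. Writing $f\defeq\psi^{-1}\circ\xi$, the map $f$ is a $\CC^1$ curve into $\Euc^{1,n}$ with $f'$ nowhere lightlike (this is the spacelike hypothesis, and is what forces $\q(f'(t))\neq 0$). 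The cross-ratio $\bb^\xi$ then becomes an honest ratio of the quadratic form $\q$ evaluated on chords $f(s)-f(t)$, while $[x^1,x^2,x^3,x^4]^2$ is the corresponding expression with the flat Euclidean affine cross-ratio on the parameter interval. For the first assertion (quasisymmetry), I would note that a $\CC^1$ spacelike positive map is in particular positive and, since $\P(V)$ is compact and $f'$ is continuous and nowhere lightlike, the ratios $\q(f(x)-f(y))/\q(f(z)-f(x))$ are pinched between two positive constants whenever the Euclidean cross-ratio $[x,y,z,t]$ is; invoking Lemma~\ref{lem:AB-CD} and the local-to-global patching via the three $\tau$-charts covering $\P(V)$ (as in the proof of Theorem~\ref{theo:Equicontinuity}) gives the quasisymmetry constants.

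\textbf{Key steps for the limit.} For the asymptotic statement I would expand each chord: writing $t^i_k$ for the parameters of $x^i_k$ (all converging to the parameter $t_0$ of $x_0$), the $\CC^1$ hypothesis gives
\[
f(t^i_k)-f(t^j_k) = (t^i_k-t^j_k)\,v_k^{ij},\qquad v_k^{ij}\to f'(t_0)=:v_0,
\]
where $v_k^{ij}\defeq \int_0^1 f'\big(t^j_k+s(t^i_k-t^j_k)\big)\,\d s$ is the mean value of $f'$ along the chord, which converges to $v_0$ because $f'$ is continuous at $t_0$. Since $v_0$ is spacelike, $\q(v_0)>0$; hence $\q(v_k^{ij})\to\q(v_0)\neq 0$, and bilinearity of $\q$ gives $\q\big(f(t^i_k)-f(t^j_k)\big) = (t^i_k-t^j_k)^2\,\q(v_k^{ij})$. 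Substituting into Lemma~\ref{lem:Cross-ratioChart}'s formula,
\[
\bb^\xi(x^1_k,x^2_k,x^3_k,x^4_k)
=\frac{(t^1_k-t^2_k)^2(t^3_k-t^4_k)^2}{(t^1_k-t^4_k)^2(t^3_k-t^2_k)^2}\cdot
\frac{\q(v_k^{12})\,\q(v_k^{34})}{\q(v_k^{14})\,\q(v_k^{32})}
=[t^1_k,t^2_k,t^3_k,t^4_k]^2\cdot\frac{\q(v_k^{12})\,\q(v_k^{34})}{\q(v_k^{14})\,\q(v_k^{32})}.
\]
Here I am using that the Euclidean (affine) cross-ratio on the chart parameters agrees with $[x^1_k,x^2_k,x^3_k,x^4_k]$ up to the conformal change between the $\tau$-chart coordinate and the projective coordinate on $\P(V)$ — a smooth change of variable whose Jacobian contributions cancel in the cross-ratio in the limit (one should record this cancellation carefully, or alternatively choose the chart so that the relevant chart of $\P(V)$ is affine). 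Dividing through and letting $k\to\infty$, the four factors $\q(v_k^{ij})$ all tend to the same nonzero value $\q(v_0)$, so the quotient of products tends to $1$, which is exactly the claim.

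\textbf{Main obstacle.} The genuinely delicate point is the bookkeeping of the change of coordinates: $[x^1,x^2,x^3,x^4]$ is defined projectively on $\P(V)$ via $\omega(\cdot,\cdot)$, whereas my computation naturally produces the flat affine cross-ratio of the $\tau$-chart parameters. These two cross-ratios differ by the ratio of derivatives of the (smooth) transition map at the four points, and in the limit where all four points collapse to $x_0$ these derivative factors all converge to the same value and so cancel in the ratio $\bb^\xi/[\cdot]^2$ — but making this precise requires a short first-order expansion of the transition function, parallel to the expansion of $f$ above. The other point needing care is uniformity: one must make sure the mean-value vectors $v_k^{ij}$ really do converge (not merely stay bounded), which is where continuity of $f'$ — rather than mere differentiability — is used, so the hypothesis $\CC^1$ is essential and should be flagged. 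Everything else is routine bilinear algebra with $\q$.
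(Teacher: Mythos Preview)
Your approach is correct and uses the same Taylor-expansion idea as the paper, but the paper takes a more global route that sidesteps both of your chart-related worries. Rather than passing to a Minkowski chart for $\bHn$, the paper fixes an $\R^*$-equivariant lift $f:V\setminus\{0\}\to E\setminus\{0\}$ of $\xi$ and shows directly that the function $\psi(x,y)=\langle f(x),f(y)\rangle/\omega(x,y)^2$ extends continuously and nowhere-vanishingly across the diagonal of $\P(V)\times\P(V)$, with diagonal value $\tfrac12\bq(\dot f)$. This is exactly your mean-value expansion, applied to the scalar product in $E$ instead of to $\q$ on chords in a chart. The quotient $\Phi=\bb^\xi/[\cdot]^2$ then factors as $\psi(x,y)\psi(z,t)/\psi(x,t)\psi(z,y)$, hence is continuous on the compact $\P(V)^4$; boundedness (quasisymmetry) and the limit $\Phi\to 1$ at a collapsing quadruple follow simultaneously.

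Two remarks on your version. First, your ``main obstacle'' is not one: the projective cross-ratio on $\P(V)$ coincides \emph{exactly} with the affine cross-ratio in any affine chart (write $\omega([s\!:\!1],[t\!:\!1])=s-t$), so there is no Jacobian to cancel. Second, you invoke Lemma~\ref{lem:Cross-ratioChart}, but that lemma places one of the four points at infinity in the chart; what you actually need (and implicitly use) is the general identity $\langle\psi_0(u),\psi_0(v)\rangle=-\tfrac12\q(u-v)$ in a canonical chart, which gives $\bb=\q(u_1-u_2)\q(u_3-u_4)/\q(u_1-u_4)\q(u_3-u_2)$ for four finite points. With that in hand your limit computation is clean. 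Where your argument is genuinely looser than the paper's is the global quasisymmetry: a single Minkowski chart misses a lightcone, so the ``patching via three $\tau$-charts'' must control quadruples straddling different charts, and Lemma~\ref{lem:AB-CD} does not by itself glue local estimates. The paper's lift-based argument avoids this by working on the compact $\P(V)^4$ from the outset.
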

\begin{proof}
Let us first show that the map $$
\Phi:(x,y,z,t)\mapsto\frac{\left\vert\bb_\xi(x,y,z,t)\right\vert}{[x,y,z,t]^2}\ ,
$$ defined for a quadruple of pairwise distinct points in $\P(V)$ extends continuously to $\left( \P(V)\right)^4$ to a nowhere vanishing function. 
Let  $f: V\setminus\{0\} \to E\setminus\{0\}$ be a smooth $\R^*$ equivariant lift of $\xi$. Consider a smooth path $\left(x(t)\right)_{t\in(-\epsilon,\epsilon)}$ in $V$ with $x(t)=x_0+t\dot{x}_0$ where  $\omega(x_0,\dot{x}_0)=1$ so that 
$$
\omega(x(t),x(s))=s-t\ .
$$
Write $u(t)=f(x(t))$ and $u(0)=u_0$.
Then  $$
u(t) -u(s)=  \int_s^t\dot u(x) {\rm d}x
 =(t-s)\dot u(0) + o(t-s)\ \ ,$$
where we used in the last equality that  $\dot u$ is continuous. Thus
\begin{eqnarray*}
	\lim_{(s,t)\to 0, s\not= t}\left(\frac{\langle u(t),u(s)\rangle}{\omega(x(t),x(s))^2}\right)
	\ =\ \lim_{(s,t)\to 0, s\not= t}
	\left(\frac{\q(u(t)-u(s))}{2\omega(x(t),x(s))^2}\right)
\ = \ \frac{1}{2}\q(\dot u_0) \ .
\end{eqnarray*}
Since $f$ is a  spacelike immersion,  $\dot{u}(t)$ is a non-zero spacelike vector.
Thus the function $\psi(x,y)={\langle f(x),f(y)\rangle}{\left(\omega(x,y)\right)^{-2}}$ defined for $x\neq y$ in $V\setminus\{0\}$ extends continuously when $x=y$ to a nowhere vanishing function. Thus the $(\R^*)^4$-invariant function on $(V\setminus\{0\})^4$
\[\frac{\psi(x,y)\psi(z,t)}{\psi(x,t)\psi(z,y)}\ ,\]
pushes down to a nowhere vanishing continuous function on $\left(\P(V)\right)^4$  extending $\Phi$. 

It follows  by compactness of $\P(V)^4$ that $\Phi$ is bounded below and above by positive constants: we have $A$ so that for all quadruples $(x,y,z,t)$
$$
\frac{1}{A}\leq\frac{\left\vert\bb_\xi(x,y,z,t)\right\vert}{[x,y,z,t]^2}\leq A~.
$$
Hence $\xi$ is quasisymmetric.
This completes the proof of the first part.

For the second statement, let us use the same notation. Then

$$
\left(\frac{\bb_\xi(x^1_k, x^2_k,x^3_k,x^4_k))}{[x^1_k, x^2_k,x^3_k,x^4_k]^2}    \right)=\frac{\psi(x^1_k,y^2_k)\psi(x^3_k,x^4_k)}{\psi(x^1_k,x^4_k)\psi(x^3_k,x^2_k)}\ .
$$
Thus by continuity of $\psi$,
$$
	\lim_{k\to\infty}\left(\frac{\bb_\xi(x^1_k,x^2_k,x^3_k,x^4_k)}{[x^1_k, x^2_k,x^3_k,x^4_k]^2} \right)=\frac{\psi(x_0,x_0)\psi(x_0,x_0)}{\psi(x_0,x_0)\psi(x_0,x_0)}=1\ .\qedhere
	$$

\end{proof}

\subsubsection{Collapsing triples}
Let us say a sequence $\seqk{\kappa}$ of positive triples in $\P(V)$ is {\em collapsing} if $\seqk{\kappa}$ converges to a point.

\begin{proposition}\label{pro:coll}
	Let $\xi$ be a $\CC^1$ spacelike positive map from $\P(V)$ to $\bHn$, and let $\seqk{\kappa}$ be a sequence of collapsing triples in $\P(V)$. Let $\tau_k=\xi(\kappa_k)$ and 
	\begin{enumerate}
		\item $g_k$ an element in $\G$ so that $g_k(\tau_k)=\tau_0$
		\item $h_k$ an element in $\SL(V)$ so that $h_k(\kappa_0)=\kappa_k$.
		\item Let finally $\xi_k=g_k\circ \xi\circ h_k$.
	\end{enumerate}
	Then the sequence of quasisymmetric maps $\seqk{\xi}$ subconverges to a circle map $\xi_\infty$.
\end{proposition}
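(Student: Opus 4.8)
The plan is to combine the equicontinuity machinery for quasisymmetric maps (Theorem \ref{theo:Equicontinuity}) with the asymptotic behaviour of the cross-ratio established in Proposition \ref{pro:C2}. First I would observe that by Proposition \ref{pro:C2}, the $\mathcal{C}^1$ spacelike positive map $\xi$ is $(A,B)$-quasisymmetric for some constants $A,B$. Since $h_k\in\SL(V)$ acts projectively it preserves cross-ratios, and $g_k\in\G$ acts by isometries of $\bHn$ so it preserves the cross-ratio $\bb$; hence each $\xi_k=g_k\circ\xi\circ h_k$ is again $(A,B)$-quasisymmetric, and by construction $\xi_k(\kappa_0)=g_k(\tau_k)=\tau_0$. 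By the Equicontinuity Theorem \ref{theo:Equicontinuity}, after extracting a subsequence $\seqk{\xi}$ converges uniformly to an $(A,B)$-quasisymmetric map $\xi_\infty$ sending $\kappa_0$ to $\tau_0$. It remains to identify $\xi_\infty$ as a circle map, for which by Proposition \ref{ex:MW2} it suffices to show that $\bb_{\xi_\infty}(x,y,z,t)=[x,y,z,t]^2$ for every quadruple $(x,y,z,t)$ of pairwise distinct points in $\P(V)$.

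The key computation is then the following. Fix a quadruple $(x,y,z,t)$ of pairwise distinct points in $\P(V)$. Since $\kappa_k$ is collapsing, $\kappa_k\to p$ for some $p\in\P(V)$, and we may choose $h_k\in\SL(V)$ with $h_k(\kappa_0)=\kappa_k$ so that $h_k$ contracts $\P(V)\setminus\{*\}$ towards $p$; concretely, for the fixed quadruple the four points $h_k(x),h_k(y),h_k(z),h_k(t)$ all converge to the same point $x_0\defeq p$ (or an adjacent point, depending on which point of $\kappa_0$ is ``expanded'' — in any case one arranges that the images of our fixed quadruple all converge to a common point). Applying the second assertion of Proposition \ref{pro:C2} to the sequence of positive quadruples $\bigl(h_k(x),h_k(y),h_k(z),h_k(t)\bigr)$, we obtain
\[
\lim_{k\to\infty}\frac{\bb_\xi\bigl(h_k(x),h_k(y),h_k(z),h_k(t)\bigr)}{[h_k(x),h_k(y),h_k(z),h_k(t)]^2}=1\ .
\]
Because $h_k$ preserves the projective cross-ratio and $g_k$ preserves $\bb$, the left-hand numerator equals $\bb_{\xi_k}(x,y,z,t)$ and the denominator equals $[x,y,z,t]^2$; hence $\bb_{\xi_k}(x,y,z,t)\to[x,y,z,t]^2$. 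On the other hand, $\xi_k\to\xi_\infty$ uniformly and the four points $\xi_\infty(x),\xi_\infty(y),\xi_\infty(z),\xi_\infty(t)$ are pairwise transverse (as $\xi_\infty$ is positive, hence injective with transverse image), so the cross-ratio $\bb$ is continuous near this configuration and $\bb_{\xi_k}(x,y,z,t)\to\bb_{\xi_\infty}(x,y,z,t)$. Therefore $\bb_{\xi_\infty}(x,y,z,t)=[x,y,z,t]^2$, and since the quadruple was arbitrary, Proposition \ref{ex:MW2} shows $\xi_\infty$ is a circle map.

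The main obstacle I anticipate is the bookkeeping around the choice of $h_k$: one needs to know that for a collapsing sequence of triples $\kappa_k\to p$ there exists a choice $h_k\in\SL(V)$ with $h_k(\kappa_0)=\kappa_k$ such that the images of any fixed finite set of points all converge to a single point of $\P(V)$. This is a statement purely about the dynamics of projective transformations: writing $\kappa_k=(a_k,b_k,c_k)$ with all three converging to $p$, the element $h_k$ must expand a small neighbourhood of some point of $\kappa_0$ to nearly all of $\P(V)$ while contracting the complement to a tiny neighbourhood of $p$, and one checks that such $h_k$ can be taken with $h_k(\kappa_0)=\kappa_k$. Once this is in hand, one also needs to confirm that the fixed quadruple $(x,y,z,t)$ lands in the ``contracting'' part so that all four images converge to the common point $p$ — this is automatic since $x,y,z,t$ are fixed and the contracting region eventually engulfs any fixed compact subset of $\P(V)$ minus the expanding point; at worst one of $x,y,z,t$ coincides with that distinguished expanding point, a case one handles by a preliminary change of the fixed projective identification. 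The rest is routine continuity and invariance of the cross-ratios.
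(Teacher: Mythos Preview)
Your proof is correct and follows essentially the same approach as the paper: quasisymmetry from Proposition~\ref{pro:C2}, equicontinuity to extract a limit $\xi_\infty$, then the ratio computation using the second part of Proposition~\ref{pro:C2} to identify $\xi_\infty$ as a circle map via Proposition~\ref{ex:MW2}.

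Two small points of comparison. First, you speak of ``choosing'' $h_k$, but $h_k$ is given in the hypothesis; since $\mathsf{PSL}(V)$ acts simply transitively on ordered triples, $h_k$ is determined up to sign by $h_k(\kappa_0)=\kappa_k$, so there is nothing to choose. The paper phrases the dynamics cleanly: because $\kappa_k$ collapses, $h_k\to\infty$ in $\mathsf{PSL}(V)$, and by the Cartan decomposition one extracts a subsequence that is proximal, with a repelling point $y_0$ and attracting point $x_0$. Second, your proposed fix for the case where one of $x,y,z,t$ equals the repelling point --- ``a preliminary change of the fixed projective identification'' --- does not quite work, since $y_0$ depends on the subsequence and not on any identification you fix in advance. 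The paper's resolution is simpler: establish $\bb_{\xi_\infty}=[\,\cdot\,]^2$ on quadruples avoiding $y_0$, then invoke continuity of $\xi_\infty$ (it is quasisymmetric, hence continuous) to extend the identity to all quadruples.
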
 
\begin{proof} Since $\xi$ is $\CC^1$ and spacelike, it is quasisymmetric by proposition \ref{pro:C2}. It follows by the Equicontinuity Theorem \ref{theo:Equicontinuity}, that we may extract a converging subsequence from $\seqk{\xi}$ converging to a limit that we denote  $\xi_\infty$.

The sequence $\seqk{h}$ goes to infinity in $\mathsf{PSL}(V)$ and thus using the Cartan decomposition and may assume after extracting a subsequence that it is proximal: there exists a point $y_0$, so that for any $z$ different from $y_0$, the sequence $\{h_k(y_0)\}_{k\in\mathbb N}$ converges to some $x_0$. 

In particular, for any quadruple $(x,y,z,t)$ of pairwise distinct points different from $y_0$
\begin{eqnarray*}
\frac{\bb_{\xi_\infty}(x,y,z,t))}{[x, y,z,t]^2}
=\lim_{k\to\infty}\left(\frac{\bb_{\xi_k}(x, y,z,t)}{[x, y,z,t]^2}   \right)=\lim_{k\to\infty}\left(\frac{\bb_\xi(h_k(x),h_k( y),h_k(z),h_k(t))}{[h_k(x), h_k(y),h_k(z),h_k(t)]^2}   \right)
	=1\ .
\end{eqnarray*}
where in the last equation we used the second assertion of Proposition 
\ref{pro:C2}, and in the others the invariance of the cross-ratio by $\mathsf{SL}(V)$ and $\G$.

Using the continuity of $\xi_\infty$, we deduce that $\xi_\infty$ is a positive map so that
$$
\bb_{\xi_\infty}(x,y,z,t)=[x, y,z,t]^2\ ,
$$
hence that $\xi_\infty$ is a circle map by proposition \ref{ex:MW2}.\end{proof}

\subsection{Proof of Theorem \ref{theo:AsymHyp}}

Consider $\Lambda$ be a $\CC^1$ spacelike positive loop in $\bHn$, $\xi$ a $\CC^1$ parametrisation of $\Lambda$ and $\Sigma(\Lambda)$ the solution to the asymptotic Plateau problem. Given a sequence $\seqk{x}$ of points in $\Sigma$ converging to $y_0$ in $\Lambda$, proposition \ref{pro:BarycenterProper} implies that we can find a sequence $\seqk{\tau}$ of positive triples  in $\Lambda$ so that $B(\Lambda,\tau_k)=(x_k,\Sigma)$, where $B$ is the barycenter map. By corollary \ref{cor:collapsing}, we can moreover choose $\seqk{\tau}$ to be collapsing to $y_0$. 

By proposition \ref{pro:coll}, we can find a sequence $\seqk{g}$ in $\G$ such that $\{g_k(\Lambda,\tau_k)\}_{k\in\N}$ converges to $(\Lambda_0,\tau_0)$, where $\Lambda_0$ is the circle through $\tau_0$. By continuity of $B$, the sequence $\{g_k(x_k,\Sigma)\}$ converges to the pointed hyperbolic plane $(x_0,\Sigma_0)=B(\tau_0,\Lambda_0)$. As a result, the curvature of $\Sigma$ goes uniformly to $-1$ on any compact containing $x_k$.

 \section{Relationship with the universal Teichmüller space}\label{s:UnivTeich}

In this final section, we construct in the first paragraph the analogue in our setting of the Bers' universal Teichmüller space $\mathcal T(\H^2)$,  \cite{Bers}. In the second paragraph, we describe a natural map from this space to $\mathcal T(\H^2)$ that generalises the picture for closed surfaces proved in \cite{CTT}. Finally, in the last two paragraphs, we propose an analogue of the Hitchin map, and state a conjecture related to works of Qiongling Li and Takuro Mochizuki \cite{LiMochizuki}. 

\subsection{An analogue of the universal Teichmüller space}

Let $\mathsf{QS}_n$ the space of quasisymmetric maps from $\P(\mathbb R^2)$ to $\bHn$. The group  $\psld$ acts by precomposition on  $\mathsf{QS}_n$ while $\G$ acts by postcomposition.  We denote $\QS_n\defeq \mathsf{QS}_n/\G$ and observe now that $\psld$ acts on $\QS_n$ by precomposition. We equip $\mathsf{QS}_n$  with the $C^0$ topology  and  $\QS_n$ with the quotient topology. 
 
 Let also $\mathcal T(\H^2)$ be the group of quasiconformal homeomorphisms of $\P(\mathbb R^2)$  preserving $(0,1,\infty)$ and observe  $\mathcal T(\H^2)$  acts  by precomposition on $\mathsf{QS}_n/\G$.  In particular that $\QS_0$ is a principal $\mathcal T(\H^2)$ space,  in otherwords a torsor over  $\mathcal T(\H^2)$,  which is classically called the {\em universal Teichmüller space} as introduced by Bers in \cite{Bers}. 
 
 We refer to \cite{Gardiner} for a  survey on the universal Teichmüller space and \cite{Lehto} for further references, in particular for the complex Banach manifold structure -- which is finer than the topology that we just  introduced.

{\em Our proposal is to consider $\QS_n$ as an analogue of the universal Teichmüller space in our setting}

Let us first explain in which sense one wants to consider $\QS_n$ as a universal (higher) Teichmüller space and first recall the $n=0$ case. 

 Let $S$ be closed surface of hyperbolic type and let $\Gamma$ be its fundamental group. The choice of a point $x$ in the Teichmüller space $\mathcal T(S)$ of $S$ defines  an identification $f_x$ of $\partial_\infty\Gamma$ with $\P(\mathbb R^2)$ unique up to postcomposition by elements of $\psld$, extending a holomorphic  map  $\phi_x$ from the universal cover of $S$ -- equipped with the complex structure coming from $x$ -- to $\H^2$.  Thus we obtain  a map 
\begin{eqnarray*}
\Phi:\left\{
\begin{array}{rcl}
		 \mathcal T(S)\times\mathcal T(S)&\to& \mathcal T(\H^2)/\psld \\
	(x,y)&\mapsto &f_y\circ f_x^{-1}
\end{array}\right. .
\end{eqnarray*}
Choosing a triple $\tau_0$ in $\partial_\infty\Gamma$, we normalize $f_x$ by setting $f_x(\tau_0)=(0,1,\infty)$ and thus the map $\Phi$ lifts to a map $\Phi^{\tau_0}$ from   $\mathcal T(S)\times\mathcal T(S)$ to $\mathcal T(\H^2)$.  Fixing a point $x$ in Teichmüller space and a triple in $\partial_\infty\Gamma$, we obtain the construction of an  embedding from $\mathcal T(S)$ to  the Bers universal Teichmüller space $\mathcal T(\H^2)$.
 
 More generally if $Q_p(S)$ denotes the vector bundle over the Teichmüller space $\mathcal{T}(S)$ whose fiber over a point $x$ is $H^0((S,x),\K_x^p)$,  and $H^0_b\left(\H^2,\K^p\right)$ the vector space of bounded $p$-holomorphic differentials on $\H^2$, we obtain  a map 
  \begin{eqnarray*}
\Phi_p^{\tau_0}:\left\{\begin{array}{rcl}
		 \mathcal T(S)\times Q_p(S)&\to& \mathcal T(\H^2)\times H^0_b(\H^2,\K^p) \\
		 (x,(y,q))&\mapsto& \left(f_y\circ f_x^{-1}, (\phi^y)_*(q)\right) 
\end{array}\right. .
\end{eqnarray*}
Recalling that $\mathcal T(\H^2)=\QS_0$, the picture describing $\Phi^{\tau_0}$ generalizes for any $n$. By \cite{BILW} an element $\rho$ in $\operatorname{Rep}^{\tiny\operatorname{max}}(\Gamma,\G)$ gives rise to a positive map $F_\rho$ from $\partial_\infty\Gamma$ to $\bHn$, which is unique up to postcomposing by an element in $\G$. So we get a map 
\begin{eqnarray*}
\Psi:\left\{
\begin{array}{rcl}
		 \mathcal T(S)\times\operatorname{Rep}^{\tiny\operatorname{max}}(\Gamma,\G)&\to& \QS_n/\psld \\
	(x,\rho)&\mapsto &F_\rho\circ f_y^{-1} 
\end{array}\right. .
\end{eqnarray*}
Fixing  $\tau_0$ in $\partial_\infty\Gamma$, $\Psi$ lifts to a map  $\Psi^{\tau_0}$ from 
$\mathcal T(S)\times\operatorname{Rep}^{\tiny\operatorname{max}}(\Gamma,\G)$ to $\QS_n$, satisfying  the relation
$$\Psi^{\tau_0}(x,\rho)\circ\Phi^{\tau_0}(y,x)=\Psi^{\tau_0}(y,\rho)~,$$ 
that reflects the action of $\mathcal T(\H^2)$ on $\QS_n$ described in the begining of the section. Hence fixing a point in Teichmüller space and a triple in $\partial_\infty\Gamma$, we obtain an embedding of $\operatorname{Rep}^{\tiny\operatorname{max}}(\Gamma,\G)$ in $\QS_n$, thus justifying our proposal of considering $\QS_n$ as a universal (higher) Teichmüller space.

\subsection{A fibration}  The action of $\mathcal T(\H^2)$ is transitive on $\QS_0$.  In our setting, we obtain from the action of  $\mathcal T(\H^2)$ a fibration  that we describe now.  Let $\Lambda_n$ be the space of  pointed quasicircles in $\bHn$ equipped with the $\CC^0$ topology. We have a continuous surjective map
\[\begin{array}{llll}
p : & \mathsf{QS}_n & \longrightarrow & \Lambda_n \ , \\
& \xi & \longmapsto & \big(\xi(\P(\R^2)),\xi(0,1,\infty)\big)\ .	
\end{array}
\]
Observe that $\Lambda_0/\G$ is a point. We have the following:
\begin{proposition}
The map $p$ from $\QS_n$ to $\Lambda_n$ constructed above is $\G$-equivariant and  a principal  $\mathcal T(\H^2)$-bundle. It carries a natural $\G$-equivariant section $\mu$.	This  section is continuous by restriction on the set of  $(A,B)$-quasicircles.
\end{proposition}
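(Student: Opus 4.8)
The plan is to establish the three assertions in turn: equivariance of $p$, the principal bundle structure, and the existence and partial continuity of the section $\mu$.

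\medskip

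First, $\G$-equivariance of $p$ is immediate from the definitions, since $\G$ acts by postcomposition on $\mathsf{QS}_n$ and on pointed quasicircles simultaneously: for $g\in\G$, $p(g\circ\xi)=\big(g(\xi(\P(\R^2))),g(\xi(0,1,\infty))\big)=g\cdot p(\xi)$. Next, the fibres of $p$: given $\xi,\xi'$ with $p(\xi)=p(\xi')$, the two maps parametrise the same quasicircle $\Lambda$ and agree on the triple $(0,1,\infty)$, so $\phi\defeq (\xi')^{-1}\circ\xi$ (well-defined since quasisymmetric maps are injective Hölder homeomorphisms onto their image by Theorem~\ref{theo:HolderContinuity}) is a homeomorphism of $\P(\R^2)$ fixing $(0,1,\infty)$, and by Corollary~\ref{cor:QShomeo} it is quasisymmetric, hence quasiconformal, hence lies in $\mathcal T(\H^2)$. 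Conversely precomposition by any element of $\mathcal T(\H^2)$ preserves $p$ and is free (a quasiconformal homeomorphism fixing $(0,1,\infty)$ which also fixes $\xi$ pointwise must be the identity, as $\xi$ is injective). This shows each fibre is a $\mathcal T(\H^2)$-torsor. Local triviality follows from the existence of the section $\mu$ together with the action: over any neighbourhood in $\Lambda_n$ on which $\mu$ is defined, the map $(\Lambda,\tau,\varphi)\mapsto \mu(\Lambda,\tau)\circ\varphi$ is a $\mathcal T(\H^2)$-equivariant homeomorphism onto $p^{-1}$ of that neighbourhood; here one uses that $p$ is open and surjective (surjectivity being Corollary~\ref{cor:QS2QP}, that every quasiperiodic loop has a quasisymmetric parametrisation).

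\medskip

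For the section $\mu$: given a pointed quasicircle $(\Lambda,\tau)$ with $\tau=(a,b,c)$, let $\Sigma(\Lambda)$ be the complete maximal surface bounding $\Lambda$ (Theorem~\ref{theo:MainTheoPaper1}), which is quasiperiodic by Corollary~\ref{cor:qp2Sqp}. Let $x_{\tau}$ be the point of $\Sigma(\Lambda)$ projecting to the barycenter $q_\tau$ of the ideal triangle $\tau$, i.e. $B(\Lambda,\tau)=(x_\tau,\Sigma(\Lambda))$. Apply Theorem~\ref{theo:ExtUnif} to obtain an extension of uniformisation $\Psi$ with $\Psi(x_0)=x_\tau$; its boundary restriction is a quasisymmetric parametrisation of $\Lambda$. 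To pin it down canonically (so that $\mu$ is well-defined and $\G$-equivariant) we normalise $\Psi$ so that the uniformising map $\H^2\to\Sigma(\Lambda)$ sends $x_0$ to $x_\tau$ and its derivative frame at $x_0$ to a frame determined by $\tau$ — concretely, by requiring the boundary map to send $(0,1,\infty)$ to the triple in $\partial_\infty\Sigma(\Lambda)\subset\bHn$ that is the image of $(0,1,\infty)$ under the conformal identification $\partial_\infty\H^2\cong\P(\R^2)$ and whose associated positive triple is $\tau$. This uses that a conformal diffeomorphism $\H^2\to\Sigma$ is unique once its $1$-jet at one point is fixed, and that such $1$-jets are in $\G$-equivariant bijection with positive triples via the barycenter/frame construction of Section~\ref{sss:PointedHyp}. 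Set $\mu(\Lambda,\tau)$ to be the resulting boundary quasisymmetric map; $\G$-equivariance is then built in by construction, since all the data ($\Sigma(\Lambda)$, $x_\tau$, the normalising frame) transform equivariantly.

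\medskip

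Finally, continuity of $\mu$ on $(A,B)$-quasicircles. On this set the curvature of $\Sigma(\Lambda)$ is bounded above by a negative constant $-c$ depending only on $A,B$, by Corollary~\ref{cor:QuasiCircle-curv}. If $(\Lambda_k,\tau_k)\to(\Lambda,\tau)$ among $(A,B)$-quasicircles, then by the Compactness Theorem~\ref{theo:CompactnessTheorem} and continuity of the barycenter map (Proposition~\ref{pro:BarycenterProper}) the pointed maximal surfaces $(x_{\tau_k},\Sigma(\Lambda_k))$ converge to $(x_\tau,\Sigma(\Lambda))$ in $\MH$; all these surfaces have curvature $\leq -c$, so Theorem~\ref{theo:BoundCompact} (Boundary compactness) applies and the normalised extensions of uniformisation $\Psi_k$ subconverge uniformly to the normalised extension $\Psi$ for $(\Lambda,\tau)$. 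Since the limit is independent of the subsequence, the full sequence of boundary maps converges, i.e. $\mu(\Lambda_k,\tau_k)\to\mu(\Lambda,\tau)$ in the $C^0$ topology. The main obstacle I anticipate is making the normalisation of $\Psi$ genuinely canonical and $\G$-equivariant — that is, verifying carefully that the $1$-jet-fixing data can be specified purely in terms of $\tau$ in a way compatible with the $\G$-action, so that $\mu$ is a bona fide section rather than merely a choice; once that is set up, the continuity statement is essentially a repackaging of Theorem~\ref{theo:BoundCompact}.
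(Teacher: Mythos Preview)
Your overall architecture matches the paper's: equivariance is immediate, the torsor structure on fibres comes from Corollary~\ref{cor:QShomeo}, the section is built from Theorem~\ref{theo:ExtUnif}, and continuity on $(A,B)$-quasicircles follows from Corollary~\ref{cor:QuasiCircle-curv} together with Theorem~\ref{theo:BoundCompact}. That part is fine.

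The genuine gap is in your definition of $\mu$. You impose \emph{both} an interior condition $\Psi(x_0)=x_\tau$ (with $x_\tau$ the barycenter point $B(\Lambda,\tau)$) \emph{and} a boundary condition $\Psi(0,1,\infty)=\tau$. A conformal isomorphism $\H^2\to\Sigma(\Lambda)$ has only a three-parameter ambiguity (the M\"obius group of $\H^2$); fixing three boundary points already determines it completely, and there is no reason the resulting image of $x_0$ should equal the warped-projection barycenter $x_\tau$. These are two different notions of barycenter and they do not coincide in general. So your normalisation is over-determined, and the ``$1$-jet'' discussion does not rescue it: a $1$-jet at $x_0$ is two real parameters (point plus rotation), not three, so it cannot encode a full boundary triple either.

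The fix, which is exactly what the paper does, is to drop the interior constraint entirely and define $\mu(\Lambda,\tau)$ as the boundary map of the \emph{unique} uniformisation of $\Sigma(\Lambda)$ whose extension sends $(0,1,\infty)$ to $\tau$. This is automatically $\G$-equivariant, and the obstacle you anticipate disappears. Your continuity argument then goes through essentially as written, with Theorem~\ref{theo:BoundCompact} doing the work once the uniform curvature bound from Corollary~\ref{cor:QuasiCircle-curv} is in place.
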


\begin{proof}
 By corollary  \ref{cor:QShomeo}, the action of $\mathcal T(\H^2)$ is simply transitive on the fibre of $p$, giving it the structure of a principal $\mathcal T(\H^2)$-bundle. 

By Theorem \ref{theo:ExtUnif}, any pointed quasicircle $(\tau,\Lambda)$ comes with a preferred quasisymmetric parametrisation $\mu(\tau,\Lambda)$ that we see as an element of $\mathsf{QS}_n$ --  which is the extension of the uniformisation of the quasiperiodic surface $\Sigma(\Lambda)$ sending $(0,1,\infty)$ to $\tau$.  The $\G$-equivariance is direct from the construction. By Theorem \ref{theo:BoundCompact} and corollary \ref{cor:QuasiCircle-curv},  the restriction of the section $\mu$ on the set of $(A,B)$-quasicircles is  continuous. 
\end{proof}

It follows from the $\G$-equivariance of $p$ and the fact that $\G$ acts properly on $\L$ -- thus on $\Lambda_n$ -- that the continuous action of $\G$ on $\mathsf{QS}_n$ is also proper. In particular, the quotient 
$\QS_n$
 is a Hausdorff topological space.   It would be interesting to see whether the fibration $p$ could make sense as a  fibration  of (conjectural) complex Banach manifolds.

\subsection{A map to the universal Teichmüller space}
Using the principal $\mathcal T(\H^2)$ bundle structure on $\QS_n$ and the section $\mu$, we can define a map
\[\pi_{\H^2}: \QS_n \longrightarrow \mathcal{T}(\H^2)~,\]
by setting $\pi_{\H^2}(\xi)=\varphi$, where $\varphi$ is such that $\xi\circ \varphi = \mu(p(\xi))$.  For $n=0$, this map is a homeomorphism. Moreover
\begin{proposition} 
	The restriction of the projection $\pi_{\H^2}$ to the space of $(A,B)$-quasisymmetric maps is continuous.  Moreover the image of the set of $(A,B)$-quasisymmetric maps is included in the space of $(A,D)$-quasisymmetric homeomorphisms where $D$ only depends on $A$ and $B$.
\end{proposition}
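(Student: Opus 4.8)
The plan is to unwind the definitions and reduce everything to corollaries already in hand, principally Corollary \ref{cor:QShomeo} and Theorem \ref{theo:BoundCompact}. For the quasisymmetry estimate: given $\xi$ that is $(A,B)$-quasisymmetric, set $(\Lambda,\tau)=p(\xi)$ and let $\varphi=\pi_{\H^2}(\xi)$, so that $\mu(\Lambda,\tau)=\xi\circ\varphi$. By the previous proposition $\mu(\Lambda,\tau)$ is a quasisymmetric parametrisation, and in fact it is the extension of uniformisation of the quasiperiodic surface $\Sigma(\Lambda)$. By Corollary \ref{cor:QuasiCircle-curv}, $\Sigma(\Lambda)$ has curvature bounded above by some $-c$ with $c=c(A,B)$; then Theorem \ref{theo:ExtUnif} gives constants $A',B'$ depending only on $c$ (hence only on $A,B$) so that $\mu(\Lambda,\tau)$ is $(A',B')$-quasisymmetric, and by Lemma \ref{lem:AB-CD} we may harmlessly arrange $A'=A$, i.e. $\xi\circ\varphi$ is $(A,B'')$-quasisymmetric for some $B''=B''(A,B)$. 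Now both $\xi$ and $\xi\circ\varphi$ are quasisymmetric with parameters depending only on $A,B$, so Corollary \ref{cor:QShomeo} applies: $\varphi=\pi_{\H^2}(\xi)$ is $(A,D)$-quasisymmetric with $D=D(A,B)$. This yields the second assertion.

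For continuity of $\pi_{\H^2}$ on the space of $(A,B)$-quasisymmetric maps, I would argue by sequences. Let $\seqk{\xi}$ be $(A,B)$-quasisymmetric maps converging in the $C^0$ topology to an $(A,B)$-quasisymmetric map $\xi$, and set $\varphi_k=\pi_{\H^2}(\xi_k)$, $\varphi=\pi_{\H^2}(\xi)$. By the paragraph above, all $\varphi_k$ are $(A,D)$-quasisymmetric homeomorphisms of $\P(V)$ fixing $(0,1,\infty)$; since such maps form a compact family (they are uniformly Hölder by Theorem \ref{theo:HolderContinuity}, equivalently the classical compactness of normalised quasisymmetric homeomorphisms), after extracting a subsequence $\seqk{\varphi}$ converges uniformly to some quasisymmetric homeomorphism $\psi$ fixing $(0,1,\infty)$. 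It remains to identify $\psi=\varphi$. On the one hand $\xi_k\circ\varphi_k=\mu(p(\xi_k))$; since $p$ is continuous, $p(\xi_k)\to p(\xi)$, and the quasicircles $\Lambda_k=\xi_k(\P(V))$ all have curvature bound $-c$, so Theorem \ref{theo:BoundCompact} gives $\mu(p(\xi_k))\to\mu(p(\xi))$ uniformly. On the other hand $\xi_k\circ\varphi_k\to\xi\circ\psi$ uniformly, because $\xi_k\to\xi$ uniformly, $\varphi_k\to\psi$ uniformly, and the $\xi_k$ are equicontinuous (Theorem \ref{theo:Equicontinuity} or the uniform Hölder estimate). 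Hence $\xi\circ\psi=\mu(p(\xi))$, which by definition of $\pi_{\H^2}$ and the simple transitivity of the $\mathcal T(\H^2)$-action on the fibre (Corollary \ref{cor:QShomeo}) forces $\psi=\varphi$. Since every subsequence has a further subsequence converging to $\varphi$, the whole sequence converges, proving continuity.

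The routine points I am suppressing are the precise bookkeeping of which constants depend on which (all ultimately on $A,B$ via $c$), and the standard fact that normalised quasisymmetric homeomorphisms of the circle with fixed quasisymmetry constant form a uniformly equicontinuous, hence precompact, family — this is exactly the $n=0$ specialisation of Theorem \ref{theo:HolderContinuity} applied to $\P(V)\cong\partial_\infty\mathbf H^{2,0}$. The only real subtlety, and the step I expect to be the main obstacle, is the identification $\psi=\varphi$ via passing the relation $\xi_k\circ\varphi_k=\mu(p(\xi_k))$ to the limit: one must be sure that the convergence $\mu(p(\xi_k))\to\mu(p(\xi))$ supplied by Theorem \ref{theo:BoundCompact} is genuinely uniform on all of $\P(V)$ (not merely locally), and that the composition $\xi_k\circ\varphi_k$ behaves well under simultaneous uniform limits of the two factors — this is where equicontinuity of the $\xi_k$ is essential and cannot be dispensed with.
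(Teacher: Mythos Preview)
Your proof is correct and follows essentially the same route as the paper's, which is extremely terse: the paper cites Corollary~\ref{cor:QuasiCircle} (which packages your chain Corollary~\ref{cor:QuasiCircle-curv} $\to$ Theorem~\ref{theo:ExtUnif} $\to$ Lemma~\ref{lem:AB-CD}) to get uniform quasisymmetry of $\mu(p(\xi))$, then Corollary~\ref{cor:QShomeo} for the bound on $\varphi$, and finally says only that continuity ``follows by the Equicontinuity Theorem~\ref{theo:Equicontinuity}''. Your sequential argument---extract a subsequential limit $\psi$ of the $\varphi_k$ by compactness of normalised $(A,D)$-quasisymmetric homeomorphisms, pass $\xi_k\circ\varphi_k=\mu(p(\xi_k))$ to the limit using continuity of $\mu$ on $(A,C)$-quasicircles and equicontinuity of the $\xi_k$, then identify $\psi=\varphi$ by injectivity of $\xi$---is exactly the unpacking that one-liner demands.
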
 
\begin{proof} For any constants $A$ and $B$ greater than 1, there exist positive constants $C$ and $D$ with the following properties.
	Given an $(A,B)$-quasisymmetric map $\xi$, its image is an $(A,C)$ quasicircle by corollary \ref{cor:QuasiCircle}. Then the reparametrisation $\pi_{\H^2}$ is an $(A,D)$-quasisymmetric homeomorphism by corollary \ref{cor:QShomeo}. Then the result follows by the Equicontinuity Theorem \ref{theo:Equicontinuity}.
\end{proof}

This map is an infinite dimensional analogue of the following finite dimensional case. When $S$ is a closed surface of negative Euler characteristic, it was proved in \cite{CTT} for $n\geq 2$ and \cite{BBZ} for $n=1$ that the space $\operatorname{Rep}^{\tiny\operatorname{max}}(S,\G)$ of maximal representations of $\pi_1(S)$ into $\G$ parametrises the space of $\pi_1(S)$-invariant maximal surfaces in $\Hn$. We thus obtain a map 
\begin{equation}\label{e:ProjTeich}
\pi_S: \operatorname{Rep}^{\tiny\operatorname{max}}(S,\G) \to \mathcal{T}(S)\ ,
\end{equation}
sending a representation to the complex structure of its induced metric. Then we have the  commutative diagram

\[\xymatrix{
 \ \ \ \ \ \ \ \ \ \ \ \ \ \mathcal T(S)\times \operatorname{Rep}^{\tiny\operatorname{max}}(S,\G) \ar[r]^-{\Psi^{\tau_0}} \ar[d]^{(\operatorname{Id},\pi_S)} & \QS_n \ar[d]^{\pi_{\H^2}}\\ \mathcal T(S)\times\mathcal T(S) \ar[r]^-{\Phi^{\tau_0}} & \mathcal T(\H^2)}\]

%\[ \begin{CD}
%\ \ \ \ \ \ \ \ \ \ \ \ \ \mathcal T(S)\times \operatorname{Rep}^{\tiny\operatorname{max}}(S,\G)  @>\Psi ^{\tau_0}>> \QS_n\\
%@VV(\Id ,\pi_S)V @VV\pi_{\H^2}V\\
%\mathcal T(S)\times\mathcal T(S) @>\Phi^{\tau_0}>> \mathcal T(\H^2) \end{CD}
%\]

\subsection{A universal Hitchin map}  The  (finite dimensional) Hitchin map for $\G$-Higgs bundles as defined in \cite{GP}, extending the classical picture discovered by Hitchin in \cite{HitchinStable}, also has an an infinite dimensional analogue in our setting.

When $S$ is a closed surface of negative Euler characteristic the Hitchin map associates a holomorphic quartic differential $q_4$ in $H^0(X,\K_X^4)$ to any maximal representation $\rho: \pi_1(S) \to \G$, where $X= \pi_S(\rho)$ (where $\pi_S$ is defined in equation (\ref{e:ProjTeich})). The Hitchin map defines a map
\begin{equation}\label{eq:HitchinMap}
h_S:\operatorname{Rep}^{\tiny\operatorname{max}}(S,\G) \longrightarrow Q_4(S)~.
\end{equation}
It is proved in \cite{HitchinStable} that $h_S$ is proper and related to integrable systems.

\medskip

The quartic differential corresponding to $h_S(\rho)$ has an explicit description in terms of pseudo-hyperbolic geometry. If $\Sigma$ is the maximal surface in $\Hn$ preserved by $\rho(\pi_1(S))$, the second fundamental form of $\Sigma$ is the real part of a holomorphic section $\sigma$ of $\K^2\otimes \Nn^\C\Sigma$, where $\Nn^\C\Sigma$ is the complexification of the normal bundle of $\Sigma$ -- see Paragraph \ref{ss:holomorphicpicture}. The tensor $q_4=g_N(\sigma,\sigma)$ is the holomorphic quartic differential associated to $h_S(\rho)$.

In our setting, we can thus associate a holomorphic quartic differential $q_4$ to any complete maximal surface $\Sigma$ in $\Hn$. Since maximal surfaces have bounded geometry, such a quartic differential is bounded with respect to the induced metric. When $\Sigma$ is quasiperiodic, by Theorem \ref{theo:biLip}, the uniformisation gives a biLipschitz map between $\H^2$ and $\Sigma$, so $q_4$ is bounded with respect to the hyperbolic metric. Denoting by $H^0_b(\H^2,\K^4)$ the vector space of holomorphic quartic differential on $\H^2$ that are bounded with respect to the hyperbolic metric, we finally obtain a map
\[h_{\H^2}: \QS_n \longrightarrow \mathcal{T}(\H^2)\times H^0_b(\H^2,\K^4)~.\]
This map is again related to the map described in equation \eqref{eq:HitchinMap} through the commutative diagram.

\begin{equation*}\label{cd:11}\xymatrix{
 \ \ \ \ \ \ \ \ \ \ \ \ \ \ \ \ \  \mathcal T(S)\times \operatorname{Rep}^{\tiny\operatorname{max}}(S,\G) \ \ar[r]^-{\Psi^{\tau_0}} \ \ \ \ar[d]^{(\operatorname{Id},\pi_S)} &\ \ \ \ \QS_n \ar[d]^{h_{\H^2}}
 \\ \ \ \ \ \mathcal T(S)\times Q_4(S)\ \ \ \  \ar[r]^-{\Phi^{\tau_0}} &\ \ \ \ \ \ \ \ \mathcal T(\H^2)\times  H^0_b(\H^2,\K^4)}\end{equation*}

%\begin{eqnarray}
%	 \begin{CD}
%\ \ \ \ \ \ \ \ \ \ \ \ \ \mathcal T(S)\times \operatorname{Rep}^{\tiny\operatorname{max}}(S,\G)  @>\Psi ^{\tau_0}>> \QS_n\\
%@VV(\Id ,h_S)V @VVh_{\H^2}V\\
%\mathcal T(S)\times Q_4(S) @>\Phi_4^{\tau_0}>>\ \ \ \ \ \ \ \mathcal T(\H^2)\times  H^0_b(\H^2,\K^4)\end{CD}\label{cd:11}
%\end{eqnarray}

When we wrote our first draft of the paper, Qiongling Li informed us she has a proof of the fact that $h_{\H^2}$ is proper. Her proof relies on our Theorem \ref{theo:main}, Theorem \ref{theo:BoundCompact} and her recent work with Takuro Mochizuki \cite[Proposition 3.12]{LiMochizuki}.

\subsection{The Hitchin--Li--Mochizuki section} The case $n=2$ is of special interest. In this case, the group $\SO_0(2,3)$ being $\R$-split, Hitchin result \cite{Hitchin} guarantees the existence of a {\em  Hitchin section} $\sigma_S$ of the Hitchin map $h_S$ whose image consists of Hitchin representations. These Hitchin representations being positive, they are associated to equivariant positive --hence quasisymmetric-- loops in $\bHn$ by \cite{BILW}. 

More precisely, Hitchin constructed in \cite{Hitchin} a section $\sigma_S$ of the map $h_S$ defined in equation (\ref{eq:HitchinMap}). His construction, together with the proof of Labourie's conjecture for $\SO_0(2,3)$ (see \cite{LabourieCyclic}) implies that $\sigma_S$ is a diffeomorphism onto its image, which corresponds to the Hitchin component. Finding the maximal surface corresponding to $(x,q_4)$ in $\mathcal{Q}_4$ is equivalent to solving the corresponding Toda equations -- see \cite[Section 2]{Baraglia} for more details.

In a recent preprint \cite{LiMochizuki}, Qiongling Li and Takuro Mochizuki  solve the Toda equations for a given bounded holomorphic quartic differential on $\H^2$. In particular, they prove in \cite[Theorem 1.8]{LiMochizuki} that the corresponding maximal surface is complete and conformally biLispchitz to the hyperbolic disc. Thus combining their work with Theorem \ref{theo:biLip}, we get

\begin{proposition}
For $n=2$, the Hitchin map $h_{\H^2}$ defined above admits a section $\sigma_{\H^2}$ -- that we call the {\em Hitchin--Li--Mochizuki section} -- related to the section $\sigma_S$ through the following commutative diagram.
\end{proposition}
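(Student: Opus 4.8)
The plan is to build $\sigma_{\H^2}$ by feeding the Li--Mochizuki solution of the Toda equations through the Asymptotic Plateau correspondence and the Extension of Uniformisation Theorem. Fix a pair $(x,q)$ in $\mathcal T(\H^2)\times H^0_b(\H^2,\K^4)$: here $x$ is a quasiconformal homeomorphism of $\P(\R^2)$ fixing $(0,1,\infty)$ and $q$ is a bounded holomorphic quartic differential on the hyperbolic disk. By \cite[Theorem 1.8]{LiMochizuki}, the Toda equations attached to $q$ admit a solution yielding a complete maximal surface $\Sigma_q$ in $\Hn$ (with $n=2$) together with a conformal, biLipschitz uniformisation $u_q\colon\H^2\to\Sigma_q$, whose holomorphic section $\sigma$ of $\K^2\otimes\Nn^\C\Sigma_q$ of Lemma \ref{lem:Kf} (the one whose real part is the second fundamental form) satisfies $u_q^*\, g_N(\sigma,\sigma)=q$. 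By the Conformal Characterisation Theorem \ref{theo:biLip}, a complete maximal surface carrying a biLipschitz conformal uniformisation is quasiperiodic; hence $\partial_\infty\Sigma_q$ is a quasicircle and, by Theorem \ref{theo:ExtUnif}, $u_q$ extends continuously to a quasisymmetric parametrisation of $\partial_\infty\Sigma_q$, whose class $\mu_q$ in $\QS_n=\mathsf{QS}_n/\G$ is the preferred parametrisation $\mu(p(\mu_q))$. I then let $\sigma_{\H^2}(x,q)$ be the image of $\mu_q$ under the element $x$ of $\mathcal T(\H^2)$ acting by precomposition on $\QS_n$, normalised so that the reparametrisation invariant $\pi_{\H^2}$ recovers $x$. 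This is well defined because $\Sigma_q$ lives in a fixed copy of $\Hn$ and the extension of its uniformisation is canonical up to postcomposition by $\G$.

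To see that $\sigma_{\H^2}$ is a section of $h_{\H^2}$ it suffices to unwind definitions. The image in $\bHn$ of $\sigma_{\H^2}(x,q)$ is the quasicircle $\partial_\infty\Sigma_q$, independently of $x$; by uniqueness in the Asymptotic Plateau Problem (Theorem \ref{theo:MainTheoPaper1}) the complete maximal surface it bounds is $\Sigma_q$ itself, so the quartic-differential component of $h_{\H^2}\big(\sigma_{\H^2}(x,q)\big)$, computed through the preferred uniformisation $u_q$, equals $q$; and the $\mathcal T(\H^2)$-component is $x$ by the very construction of $\sigma_{\H^2}$, using Corollary \ref{cor:QShomeo} to identify the fibre of $p$ through $\mu_q$ with $\mathcal T(\H^2)$. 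Hence $h_{\H^2}\circ\sigma_{\H^2}=\operatorname{Id}$, and continuity of $\sigma_{\H^2}$ on each locus of $(A,B)$-bounded data follows from Theorem \ref{theo:BoundCompact} and Corollary \ref{cor:QuasiCircle-curv}, exactly as for the section $\mu$; combined with Corollary \ref{cor:QuasiCircle} this shows $\sigma_{\H^2}$ maps the $(A,B)$-locus into the space of $(A,C)$-quasisymmetric maps.

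For the commutative diagram of the statement, take $(x_0,q_4)$ in $\mathcal T(S)\times Q_4(S)$ and $\rho=\sigma_S(x_0,q_4)$, a Hitchin — hence maximal — representation. By \cite{CTT}, $\rho$ preserves a maximal surface $\Sigma_\rho$ in $\Hn$ whose induced conformal structure is $x_0$ and whose Hitchin quartic is $q_4$; since $\rho(\pi_1(S))$ acts cocompactly, $\Sigma_\rho$ is periodic, and its universal cover $\widetilde\Sigma_\rho$ is a complete maximal surface, biLipschitz to $\H^2$, solving the Toda equations for the lift $\widetilde q_4$ of $q_4$. By the uniqueness part of \cite[Theorem 1.8]{LiMochizuki} — the bounded solution of the Toda equations attached to a given bounded quartic is unique — $\widetilde\Sigma_\rho$ is precisely the Li--Mochizuki surface $\Sigma_{\widetilde q_4}$; translating the $\tau_0$-normalisation and $\Phi^{\tau_0}$ into the identifications of this section, the equivariant boundary map $F_\rho$ of $\Sigma_\rho$, read in the $(0,1,\infty)$-chart, is at once $\sigma_{\H^2}\big(\Phi^{\tau_0}(x_0,q_4)\big)$ and the image $\Psi^{\tau_0}(x_0,\rho)$ of $\rho$ under the embedding $\operatorname{Rep}^{\mathrm{max}}(S,\G)\hookrightarrow\QS_n$ described in this section, which is the asserted commutativity.

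The main obstacle is reconciling the intrinsic analytic data of \cite{LiMochizuki} — a bounded Toda solution on the fixed hyperbolic disk with prescribed quartic — with the extrinsic boundary data in the Einstein Universe: one has to be sure that the quartic recovered by $h_{\H^2}$ through the uniformisation is literally the input quartic, with no conformal-structure discrepancy hidden in passing from $\H^2$ to $\Sigma_q$, and that in the equivariant case the uniqueness of the bounded Toda solution genuinely forces $\widetilde\Sigma_\rho=\Sigma_{\widetilde q_4}$, so that $\sigma_{\H^2}$ and $\sigma_S$ agree on the Hitchin locus. Once these identifications are in place, the rest is bookkeeping with the $\G$- and $\mathcal T(\H^2)$-actions already set up above.
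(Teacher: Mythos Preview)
Your proposal is correct and follows the same route as the paper: invoke \cite[Theorem 1.8]{LiMochizuki} to obtain, for each bounded quartic, a complete maximal surface conformally biLipschitz to $\H^2$, then use Theorem \ref{theo:biLip} to conclude quasiperiodicity and hence membership in $\QS_n$. The paper's argument is in fact just that one sentence preceding the proposition; you have supplied the bookkeeping (the extension of uniformisation, the $\mathcal T(\H^2)$-action giving the first coordinate, and the uniqueness in the equivariant case pinning down the diagram) that the paper leaves entirely to the reader.
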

\begin{equation*}\label{cd:12}\xymatrix{
 \ \ \ \ \ \ \ \ \ \ \ \ \ \ \  \ \mathcal T(S)\times \operatorname{Rep}^{\tiny\operatorname{max}}(S,\G) \ar[r]^-{\Psi^{\tau_0}}\ \ \  \ar[d]^{(\operatorname{Id},\pi_S)} & \ \ \ \ \QS_n \ar[d]^{h_{\H^2}}\\ \ \  \ \mathcal T(S)\times Q_4(S) \ar@/^1pc/[u]^{(\operatorname{Id},\sigma_s)}\ \ \  \ar[r]^-{\Phi^{\tau_0}} & \ \ \ \ \ \ \mathcal T(\H^2)\times  H^0_b(\H^2,\K^4) \ar@/^1pc/[u]^{\sigma_{\H^2}}}\end{equation*}
We finish with the following conjecture

\begin{conjecture}
The image of the Hitchin--Li--Mochizuki section consists of quasisymmetric maps whose image is $\CC^1$ and lightlike everywhere.
\end{conjecture}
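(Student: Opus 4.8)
The \enquote{quasisymmetric} half of the statement is already available: for a bounded holomorphic quartic differential $q_4$ on $\H^2$ the Hitchin--Li--Mochizuki surface $\Sigma\subset\mathbf H^{2,2}$ is complete and, by \cite[Theorem 1.8]{LiMochizuki}, conformally biLipschitz to the hyperbolic disc, hence quasiperiodic by Theorem \ref{theo:biLip}; its uniformisation then extends to a quasisymmetric parametrisation of $\partial_\infty\Sigma$ by Theorem \ref{theo:ExtUnif} and Corollary \ref{cor:QS2QP}, and this parametrisation is exactly $\sigma_{\H^2}(x,q_4)$. The whole content of the conjecture is therefore the \emph{boundary regularity}: that the quasisymmetric map $\xi_\infty\colon\P(V)\to\bHn=\Ein^{1,2}$ is of class $\CC^1$ and that its tangent line is isotropic everywhere.

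The plan is to reproduce, in the \enquote{universal} setting, the mechanism that produces a $\CC^1$ lightlike limit curve of isotropic lines from a Hitchin representation of a closed surface group in $\SO_0(2,3)$ \cite{LabourieHitchin}: in the periodic case the curve $\xi^1$ of isotropic lines is accompanied by a continuous curve $\xi^2$ of isotropic $2$-planes with $\xi^1\subset\xi^2$, coming from the cyclic Higgs bundle, and the osculation relation forces $\xi^1$ to be $\CC^1$ with tangent direction $\xi^2/\xi^1$, which is lightlike precisely because $\xi^2$ is isotropic (an isotropic flag satisfies $\xi^2\subseteq(\xi^2)^\perp$). Concretely, for the Hitchin--Li--Mochizuki surface I would fix a boundary point $a\in\P(V)$, take a geodesic ray $\gamma$ in $\Sigma$ converging to $\xi_\infty(a)$, and lift it to $\R^{2,3}$; rescaling so that $\langle\gamma(t),z_0\rangle\equiv 1$ for a fixed null vector $z_0\in\xi_\infty(a)$, one wants to show that the renormalised velocity $w\defeq\lim_{t\to\infty}\tfrac{d}{dt}\widehat{\gamma}(t)$ exists, is null, is independent of the ray, and that the $2$-plane $\mathrm{span}(z_0,w)$ is the osculating plane of the curve $\Lambda=\partial_\infty\Sigma$ at $\xi_\infty(a)$. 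The nullity $\bq(w)=0$ — which would fail for a generic complete maximal surface — is exactly where the cyclic/Toda structure of the Hitchin section enters, through the distinguished grading of the normal bundle $\Nn^\C\Sigma$; granting it, one obtains a continuous $\G$-equivariant \enquote{osculating curve} $\xi^2_\infty\colon\P(V)\to\mathrm{Gr}^{\mathrm{iso}}_2(E)$, and then $\CC^1$-regularity of $\xi_\infty$ together with the lightlikeness of its tangent follows formally from $\tfrac{d}{dt}\xi_\infty\subset\xi^2_\infty$ and the continuity of $\xi^2_\infty$.

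The estimates this requires should be made uniform via cocompactness: by Corollary \ref{cor:QuasiCircle-curv} and Theorem \ref{theo:CompactnessTheorem}, the family of pointed Hitchin--Li--Mochizuki surfaces whose quartic differential has bounded norm (equivalently, whose curvature is bounded above by a fixed negative constant) should be a $\G$-invariant closed, hence cocompact, subset of $\MH$, on which the horofunctions, the gradient bounds of Section \ref{s:RigidityTheorems}, and the candidate map $\xi^2_\infty$ all vary continuously and equivariantly; the required convergence of the renormalised frames along escaping geodesics then follows, as in Section \ref{s:ExtUnif}, from compactness and the rigidity theorems. The analytic heart of the argument — and the step I expect to be the main obstacle — is the fine asymptotic analysis of the Toda equations near the ideal boundary of $\H^2$: one must upgrade the completeness and biLipschitz statements of \cite{LiMochizuki} to a $\CC^1$ form of \enquote{asymptotic flatness in the cyclic direction}, precisely the existence and continuity of $\xi^2_\infty$. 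It is worth noting that a naive density argument cannot replace this: approximating a bounded $q_4$ by $\pi_1(S)$-invariant ones and passing to the limit only yields, via Theorem \ref{theo:BoundCompact}, uniform convergence of the parametrisations, and \enquote{$\CC^1$ and lightlike} is not a closed condition under $\CC^0$ convergence — which is exactly why the refined boundary asymptotics are indispensable.
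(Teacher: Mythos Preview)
This statement is labelled a \emph{Conjecture} in the paper and no proof is offered; the authors simply state it as an open problem following the construction of the Hitchin--Li--Mochizuki section. There is therefore no proof in the paper to compare your proposal against.

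What you have written is not a proof but a research programme, and you are evidently aware of this: you identify the quasisymmetric part as already established by the preceding results, and you correctly locate the entire difficulty in the boundary regularity claim. Your proposed mechanism --- produce a continuous curve $\xi^2_\infty$ of isotropic $2$-planes osculating $\xi_\infty$, by controlling the asymptotics of the cyclic Higgs bundle structure along escaping geodesics, and then deduce $\CC^1$ regularity and lightlikeness formally from the flag relation --- is a plausible line of attack and is indeed the natural analogue of the closed-surface picture in \cite{LabourieHitchin}. You are also right that a density argument from the periodic case cannot work, since $\CC^1$ is not closed under $\CC^0$ limits.

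The honest assessment is that the decisive step you flag --- upgrading the completeness and biLipschitz results of \cite{LiMochizuki} to $\CC^1$ asymptotic control of the cyclic frame near the ideal boundary --- is genuinely open and is precisely why the authors state this as a conjecture rather than a theorem. Your cocompactness heuristic is suggestive but not obviously sufficient: the set of Hitchin--Li--Mochizuki surfaces is not known to be closed in $\MH$ (limits of cyclic surfaces need not be cyclic without further argument), and even granting that, cocompactness gives uniform $\CC^0$ control of equivariant quantities, not the pointwise convergence of renormalised frames along a single ray that you need for the existence of $\xi^2_\infty$. So the proposal is a reasonable outline of where a proof might come from, but it does not close the gap that makes this a conjecture.
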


\appendix
\section{Hermitian bundles and Bochner formula}\label{app:BochnerFormula}

In this appendix we recall the basic definitions of Hermitian vector bundles over a Riemann surface and prove the classical Bochner's formula.

\subsection{Hermitian vector bundles}
Consider a Riemann surface $X$. A \emph{holomorphic vector bundle} over $X$ is a  is complex vector bundle $\E$ over $X$ equipped with a \emph{Dolbeaut operator},
\[\overline\partial_\E : \Omega^0(X,\E) \longrightarrow \Omega^{0,1}(X,\E)~,\]
satisfying the Leibniz rule $\overline\partial_\E(f\sigma)= \overline\partial f\otimes \sigma + f\cdotp\overline\partial_\E \sigma$ where $f$ is a function and $\sigma$ a section of $\E$. We say that $\sigma$ is a \emph{holomorphic section} if $\dbar_\E\sigma=0$.

Observe that, if $\nabla$ is a connection on a complex vector bundle, then $\nabla^{0,1}$ (the $(0,1)$-part of $\nabla$) is a Dolbeaut operator. In this case, a holomorphic section is a section $\sigma$ such that for any vector field $x$ we have
\begin{equation}\label{e:Jcommuting}
 \nabla_{jx}\sigma = J\nabla_x\sigma~,\end{equation}
where $j$ and $J$ are respectively the complex structure on $\T X$ and $\E$.

A \emph{Hermitian metric h} on $\E$ is a section of the bundle $\E^*\otimes \overline {\E^*}$ (where $\overline{\E^*}$ is the bundle of $\C$-antilinear form on $\E$) which is positive definite and satisfies $h(\sigma_1,\sigma_2)=\overline{h(\sigma_2,\sigma_1)}$.

A connection $\nabla$ on $(\E,h)$ is \emph{unitary} if 
\[\d  h(\sigma_1,\sigma_2)=h(\nabla\sigma_1,\sigma_2)+h(\sigma_1,\nabla\sigma_2)~.\]
If $(\E,h)$ is a holomorphic vector bundle equipped with a Hermitian metric, there is a unique unitary connection $\nabla$ such that $\nabla^{0,1}=\dbar_\E$. The connection $\nabla$ is called the \emph{Chern connection}.

Finally, if $\nabla$ is a unitary connection, its \emph{curvature} $R$ is the tensor in  $\Omega^2(X, \End(\E))$ defined by 
\[R(x,y)\sigma = \nabla_x\nabla_y\sigma - \nabla_y\nabla_x\sigma- \nabla_{[x,y]}\sigma~.\]
Observe that $R(x,y)$ is skew-symmetric with respect to $h$.

\subsection{Bochner formula} 

We now prove the Bochner formula used in Section \ref{proof:RigidityTheorem}.

Let $\E$ be a holomorphic vector bundle over a Riemann surface $X$, $h$ a Hermitian metric on $\E$. We denote by $\langle.,.\rangle$ the corresponding pairing and by $\nabla$ the Chern connection.

Fix a Riemannian metric $g$ on $X$ and denote by $\Delta$ the corresponding Laplace--Beltrami operator.

\begin{proposition}{\sc[Bochner formula]}\label{pro:BochnerFormula}
Using the same notations as above, if $\sigma$ is a holomorphic section of $\E$ and $f\defeq \frac{1}{2}\Vert \sigma\Vert^2$ , then
\[\Delta f = \langle R(e_1,e_2)\sigma, J\sigma\rangle + \Vert \nabla\sigma\Vert^2~,\]
where $R$ is the curvature of $\nabla$, and $(e_1,e_2)$ is an orthonormal framing of $\T X$.
\end{proposition}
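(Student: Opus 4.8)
The plan is to compute $\Delta f$ directly in a well-chosen local frame, using holomorphicity of $\sigma$ to convert mixed second derivatives into curvature terms. First I would fix a point $x_0\in X$ and choose a local orthonormal framing $(e_1,e_2)$ of $\mathbf{T}X$ that is \emph{synchronous} at $x_0$, i.e. $\nabla^{\mathbf{T}X}_{e_i}e_j=0$ at $x_0$ (geodesic/normal frame), so that at $x_0$ one has $\Delta f = e_1(e_1 f) + e_2(e_2 f)$ with no first-order correction terms. Writing $f=\tfrac12\langle\sigma,\sigma\rangle$ and using that $\nabla$ is unitary (so $\d\langle\sigma,\sigma\rangle = 2\,\mathrm{Re}\,\langle\nabla\sigma,\sigma\rangle$, and since $\langle\sigma,\sigma\rangle$ is real the real part is automatic), I get
\[
e_i f = \langle \nabla_{e_i}\sigma,\sigma\rangle,\qquad
e_i(e_i f) = \langle \nabla_{e_i}\nabla_{e_i}\sigma,\sigma\rangle + \langle \nabla_{e_i}\sigma,\nabla_{e_i}\sigma\rangle .
\]
Summing over $i$ yields $\Delta f = \sum_i \langle \nabla_{e_i}\nabla_{e_i}\sigma,\sigma\rangle + \Vert\nabla\sigma\Vert^2$, so the whole content is to identify $\sum_i \langle \nabla_{e_i}\nabla_{e_i}\sigma,\sigma\rangle$ with $\langle R(e_1,e_2)\sigma,J\sigma\rangle$.

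The key step is exploiting holomorphicity in the form \eqref{e:Jcommuting}: $\nabla_{jx}\sigma = J\nabla_x\sigma$ for all vector fields $x$. Since $(e_1,e_2)$ is orthonormal with $j e_1 = e_2$, $j e_2 = -e_1$, this gives $\nabla_{e_2}\sigma = J\nabla_{e_1}\sigma$. Now I would differentiate this relation again: applying $\nabla_{e_1}$ to $\nabla_{e_2}\sigma = J\nabla_{e_1}\sigma$ and $\nabla_{e_2}$ to $\nabla_{e_1}\sigma = -J\nabla_{e_2}\sigma$ (being careful that $J$ is parallel for the Chern connection and that at $x_0$, in the synchronous frame, $[e_1,e_2]$ vanishes so $\nabla_{[e_1,e_2]}\sigma=0$). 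Subtracting appropriately, the Hessian terms $\nabla_{e_1}\nabla_{e_1}\sigma + \nabla_{e_2}\nabla_{e_2}\sigma$ reorganize into $\pm J(\nabla_{e_1}\nabla_{e_2}\sigma - \nabla_{e_2}\nabla_{e_1}\sigma) = \pm J R(e_1,e_2)\sigma$ (using $[e_1,e_2]=0$ at $x_0$). Pairing with $\sigma$ and using that $J$ is skew-adjoint for $h$ (so $\langle J\eta,\sigma\rangle = -\langle \eta, J\sigma\rangle$) converts $\langle JR(e_1,e_2)\sigma,\sigma\rangle$ into $\langle R(e_1,e_2)\sigma, J\sigma\rangle$ up to sign, and I would chase the signs to land on the stated formula. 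A small point to handle: $\langle R(e_1,e_2)\sigma,\sigma\rangle$ is purely imaginary (since $R(e_1,e_2)$ is skew-adjoint), so only the $J\sigma$-pairing survives as the real contribution — consistent with $\Delta f$ being real.

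The main obstacle — really the only delicate point — is bookkeeping: getting the commutator $R(e_1,e_2)$ to appear with the correct sign and the correct slot, and verifying that the identity is frame-independent (so that choosing a synchronous frame at an arbitrary point $x_0$ suffices to prove it everywhere). I would double-check frame-independence by noting both sides of the claimed equality are tensorial expressions in $\sigma$ and $\nabla$ at $x_0$: $\Delta f$ and $\Vert\nabla\sigma\Vert^2$ manifestly are, and $\langle R(e_1,e_2)\sigma,J\sigma\rangle$ is unchanged under a rotation $(e_1,e_2)\mapsto(\cos\theta\,e_1+\sin\theta\,e_2,-\sin\theta\,e_1+\cos\theta\,e_2)$ because $e_1\wedge e_2$ is, so it only depends on the orientation/metric, not the particular orthonormal frame. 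Hence it is legitimate to compute at $x_0$ in the synchronous frame, and once the sign chase is done the proof is complete. I would also remark that the formula is exactly the Weitzenböck/Bochner identity $\tfrac12\Delta\Vert\sigma\Vert^2 = \langle\nabla^*\nabla\sigma,\sigma\rangle$-type statement specialized to holomorphic sections on a surface, where the rough Laplacian $\nabla^*\nabla$ differs from the (vanishing) complex Laplacian $\dbar_\E^*\dbar_\E$ by the curvature term, so no genuinely new input is needed beyond \eqref{e:Jcommuting}, unitarity, and parallelism of $J$.
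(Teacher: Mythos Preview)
Your approach is correct and is essentially the same as the paper's: both compute $\Delta f = \sum_i\langle\nabla_{e_i}\nabla_{e_i}\sigma,\sigma\rangle + \Vert\nabla\sigma\Vert^2$ and then use the holomorphicity relation $\nabla_{jx}\sigma = J\nabla_x\sigma$ together with parallelism of $J$ to convert the rough-Laplacian term into $\langle R(e_1,e_2)\sigma, J\sigma\rangle$. The only cosmetic difference is that you work in a synchronous frame to kill $[e_1,e_2]$ explicitly, while the paper handles this implicitly by computing with the tensorial Hessian.
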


\begin{proof}
Let $u,v$ be sections of $\T X$, then $\d f(u)=h(\nabla_u\sigma,\sigma)$, and
\begin{eqnarray*}
	\Hess f (u,v) &=& \langle \nabla_u\nabla_v\sigma,\sigma\rangle + \langle \nabla_v \sigma,\nabla_u\sigma \rangle~,\\
\Delta f  &=& \tr_g (\Hess f) = \sum_{i=1}^2 \langle \nabla_{e_i}\nabla_{e_i}\sigma,\sigma\rangle + \Vert \nabla \sigma\Vert^2~.
\end{eqnarray*} 
Using equation (\ref{e:Jcommuting}) and the fact that $J$ is $\nabla$-parallel, we have
\[\langle \nabla_{e_1}\nabla_{e_1}\sigma,\sigma\rangle = - \langle \nabla_{e_1}\nabla_{Je_2}\sigma, \sigma\rangle = - \langle J \nabla_{e_1}\nabla_{e_2}\sigma, \sigma\rangle = \langle \nabla_{e_1}\nabla_{e_2}\sigma , J\sigma\rangle~.\]
For the last equation, we used the $\langle J\alpha,J\beta\rangle = \langle \alpha,\beta\rangle$. The result follows from a similar computation for $\langle \nabla_{e_2}\nabla_{e_2}\sigma,\sigma\rangle$.
\end{proof}

\newcommand{\etalchar}[1]{$^{#1}$}

\end{document}